\newif\ifdraft\draftfalse
\declaretheoremstyle[bodyfont=\sl]{slanted}
\declaretheorem[name=Definition,style=definition,qed=$\dashv$,numberwithin=section]{definition}
\declaretheorem[name=Definition,style=definition,numbered=no,qed=$\dashv$]{definition*}
\declaretheorem[name=Definition,style=definition,qed=$\dashv$,sibling=definition]{dfn}
\declaretheorem[name=Definition,style=definition,numbered=no,qed=$\dashv$]{dfn*}
\declaretheorem[name=Theorem,style=slanted,sibling=dfn]{tm}
\declaretheorem[name=Theorem,style=slanted,sibling=dfn]{tm*}
\declaretheorem[name=Theorem,style=slanted,sibling=dfn]{theorem}
\declaretheorem[name=Theorem,style=slanted,numbered=no]{theorem*}
\declaretheorem[name=Lemma,style=slanted,sibling=dfn]{lemma}
\declaretheorem[name=Lemma,style=slanted,sibling=dfn]{lem}
\declaretheorem[name=Corollary,style=slanted,sibling=dfn]{corollary}
\declaretheorem[name=Corollary,style=slanted,numbered=no]{corollary*}
\declaretheorem[name=Corollary,style=slanted,sibling=dfn]{cor}
\declaretheorem[name=Corollary,style=slanted,numbered=no]{cor*}
\declaretheorem[name=Remark,style=definition,sibling=dfn]{rem}
\declaretheorem[name=Fact,style=slanted,sibling=dfn]{fact}
\declaretheoremstyle[headfont=\scshape]{claimstyle}
\declaretheorem[name=Claim,style=claimstyle]{clm}
\declaretheorem[name=Claim,style=claimstyle]{clmtwo}
\declaretheorem[name=Claim,style=claimstyle]{clmthree}
\declaretheorem[name=Claim,style=claimstyle]{clmfour}
\declaretheorem[name=Claim,style=claimstyle]{clmfive}
\declaretheorem[name=Claim,style=claimstyle,numbered=no]{clm*}
\declaretheorem[name=Subclaim,style=claimstyle,numbered=no]{sclm*}
\declaretheorem[name=Subsubclaim,style=claimstyle,numbered=no]{ssclm*}
\newcommand{\inflatearrow}{\rightsquigarrow}
\newcommand{\ph}{\mathfrak{P}}
\newcommand{\sw}{\mathrm{sw}}
  \newcommand{\swsw}{\mathrm{swsw}}
  \newcommand{\stk}{\mathrm{stk}}
  \newcommand{\Ttvec}{{\vec{\Tt}}}
  \newcommand{\Uuvec}{{\vec{\Uu}}}
    \newcommand{\Vvvec}{{\vec{\Vv}}}
  \newcommand{\msP}{\mathscr{P}}
  \newcommand{\msF}{\mathscr{F}}
  \newcommand{\PP}{\mathbb{P}}
\newcommand{\lgcd}{\mathrm{lgcd}}
\newcommand{\wW}{\mathscr{W}}
\newcommand{\rest}{{\upharpoonright}}
\newcommand{\Msw}{{M_{\rm sw}}}
\newcommand{\ext}{\mathrm{ext}}
\newcommand{\forces}{\Vdash}
\renewcommand{\models}{\vDash}
\newcommand{\dom}{{\rm dom}}
\newcommand{\lh}{{\rm lh}}
\newcommand{\crit}{{\rm crit }}
\newcommand{\M}{\mathcal{M}}
\newcommand{\N}{\mathcal{N}}
\newcommand{\U}{\mathcal{U}}
\newcommand{\T}{\mathcal{T}}
\newcommand{\J}{\mathcal{J}}
\def\cof{\mathop{\rm cof}\nolimits}
\def\and{\mathrel{\kern1pt\&\kern1pt}}
\def\<#1>{\langle\,#1\,\rangle}
\newcommand{\ueq}{\ \widehat{=}\ }
\newcommand{\vV}{\mathscr{V}}
\newcommand{\HOD}{\mathrm{HOD}}
\newcommand{\sub}{\subseteq}
\newcommand{\om}{\omega}
\newcommand{\Coll}{\mathrm{Coll}}
\newcommand{\Mswsw}{M_{\mathrm{swsw}}}
\newcommand{\OR}{\mathrm{OR}}
\newcommand{\es}{\mathbb{E}}
\newcommand{\sss}{\mathrm{sh}}
\newcommand{\dsr}{\mathrm{dsr}}
\newcommand{\Ll}{\mathcal{L}}
\newcommand{\Rr}{\mathcal{R}}
\newcommand{\Ss}{\mathcal{S}}
\newcommand{\Tt}{\mathcal{T}}
\newcommand{\Uu}{\mathcal{U}}
\newcommand{\Vv}{\mathcal{V}}
\newcommand{\Ww}{\mathcal{W}}
\newcommand{\Xx}{\mathcal{X}}
\newcommand{\Yy}{\mathcal{Y}}
\newcommand{\Zz}{\mathcal{Z}}
\newcommand{\conc}{\ \widehat{\ }\ }
\newcommand{\Ult}{\mathrm{Ult}}
\newcommand{\core}{\mathfrak{C}}
\newcommand{\pins}{\triangleleft}
\newcommand{\sats}{\models}
\newcommand{\id}{\mathrm{id}}
\newcommand{\ins}{\trianglelefteq}
\newcommand{\dirlim}{\mathrm{dirlim}}
\newcommand{\Hull}{\mathrm{Hull}}
\newcommand{\inter}{\cap}
\newcommand{\com}{\circ}
\newcommand{\BB}{\mathbb{B}}
\newcommand{\cHull}{\mathrm{cHull}}
\newcommand{\rg}{\mathrm{rg}}
\newcommand{\her}{\mathcal{H}}
\newcommand{\lpole}{\left\lfloor}
\newcommand{\rpole}{\right\rfloor}
\newcommand{\univ}[1]{\lpole #1\rpole}
\newcommand{\vareps}{\varepsilon}
\newcommand{\dropset}{\mathscr{D}}
\newcommand{\pred}{\mathrm{pred}}
\newcommand{\tu}{\textup}
\newcommand{\stack}{\mathrm{stack}}
\newcommand{\eqdef}{=_{\mathrm{def}}}
\newcommand{\passive}{{\mathrm{pv}}}
\newcommand{\pow}{\mathcal{P}}
\newcommand{\sn}{\mathrm{sn}}
\newcommand{\jbar}{\bar{j}}
\newcommand{\cut}{\backslash}
\newcommand{\rSigma}{\mathrm{r}\Sigma}
\newcommand{\ZFC}{\mathrm{ZFC}}
\newcommand{\elem}{\preceq}
\renewcommand{\P}{\mathcal{P}}
\title{Varsovian models II}
\author[$\dagger$]{Grigor Sargsyan}
\author[$\ddag$]{Ralf Schindler}
\author[$\star$]{Farmer Schlutzenberg}
\affil[$\dagger$]{Institute of Mathematics,	
	Polish Academy of Sciences\\ gsargsyan@impan.pl}
\affil[$\ddag$]{WWU M\"unster, Institut f\"ur Mathematische Logik und Grundlagenforschung, Einsteinstra{\ss}e 62,
	48149 M\"unster, Germany\\ rds@wwu.de}
	\affil[$\star$]{Institut f\"ur Diskrete Mathematik und Geometrie\\
	TU Wien\footnote{Schlutzenberg email: afirstname dot alastname at tuwien dot ac dot at}}
\date{\today}
\newcounter{tempcounter}
\begin{document}

\maketitle

\begin{abstract}Assume sufficient large cardinals.
	Let $M_{\sw n}$ be the minimal  iterable proper class $L[E]$ model satisfying ``there are  $\delta_0<\kappa_0<\ldots<\delta_{n-1}<\kappa_{n-1}$ such that the $\delta_i$ are Woodin cardinals and the $\kappa_i$ are strong cardinals''. Let $M=M_{\sw2}$.
	We identify an inner model $\vV_2^M$ of $M$, which is a proper class model satisfying ``there are 2 Woodin cardinals'', and is iterable both in $V$ and in $M$,
	and closed under its own iteration strategy. The construction also yields significant information about the extent to which $M$ knows its own iteration strategy. We characterize the universe of $\vV_2^M$ as the mantle and the least ground of $M$, and as $\HOD^{M[G]}$ for $G\sub\Coll(\om,\lambda)$ being $M$-generic with $\lambda$ sufficiently large.
	These results correspond to facts already known for
	$M_{\sw1}$, and the proofs are an elaboration of those, but there are substantial new issues  and new methods with which to handle them.
	\footnote{2020 Mathematics Subject Classifications:
		03E45, 03E55, 03E40.}
	\footnote{Keywords: Inner model theory, mouse, iteration strategy, self-iterability, strategy mouse, HOD, mantle, ground, Varsovian model.}\end{abstract}

\tableofcontents

\section{Introduction}

The first generation of canonical inner models for large cardinals are those of the form $M=L[E]$ (or $L_\alpha[E]$) where $E$ is a sequence
of (partial) measures or extenders with various nice properties. The second generation are those of the form $M=L[E,\Sigma]$ (or $L_\alpha[E,\Sigma]$), with $E$ as before, but $\Sigma$ is a (partial)
iteration strategy for $M$. We refer to the former as \emph{mice} or \emph{extender models}, and the latter as \emph{strategy mice} or \emph{strategic extender models}. Strategy mice arise naturally
as  HODs of determinacy models, and this phenomenon has been extensively studied. (The universe of) a strategy mouse $\vV^{\Msw}$
was also found in \cite{vm1} to be the mantle of
and a certain HOD associated to the mouse $M_{\sw}=M_{\sw1}$
(the ``minimal'' proper class mouse with a strong cardinal above a Woodin cardinal).
While mice with Woodin cardinals (and which model ZFC, for example) can
only compute restricted fragments of their own iteration strategies, strategy mice can be fully self-iterable.

One can contemplate the relationship
between the two hierarchies; a key issue is the consistency strength
of large cardinals when exhibited in the respective models:
how do large cardinal hypotheses in (fully iterable) mice compare in consistency strength to those in (fully iterable) strategy mice, particularly for strategy mice which  are closed under their own strategy?
Continuing the line of investigation of \cite{vm1},
the present paper derives\footnote{Disclaimer: The ``proofs'' (and some definitions) presented here are not quite complete, because their full exposition  depends on an integration, omitted here, of the method of $*$-translation (see \cite{closson}) with the techniques we develop. The integration itself is a straightforward matter of combining the two things. But because $*$-translation itself is already quite detailed, its inclusion would have added significantly to the length
	of the paper. It will be covered instead in \cite{*-trans_add}.} the existence of a fully iterable proper class strategy mouse $\vV=L[E,\Sigma]$, closed under its strategy,
and containing two Woodin cardinals, from the existence and full iterability of the mouse  $M_{\swsw}^\#$. This is the least active mouse
$N$ such that letting $\kappa=\crit(F)$ where $F$ is the active extender of $N$, then $N|\kappa\models$``There are ordinals $\delta_0<\kappa_0<\delta_1<\kappa_1$ such that
each $\delta_i$ is a Woodin cardinal and each $\kappa_i$ is a strong cardinal''. Letting $M_{\swsw}$ be the proper class model left behind after iterating $F$ out of the universe,
the strategy mouse
$\vV$ will be an inner model of $M_{\swsw}$.
(We also obtain Silver indiscernibles for $\vV$.)
The analysis also shows that $M_{\swsw}$ computes
substantial fragments of its own iteration strategy,
thereby contributing to the investigation of self-iterability
in mice as in \cite{sile}, but here beyond the tame level.

Now recall that if $W$ is a model of {\sf ZFC}, then
$P \subseteq W$ is a {\em ground of} $W$ iff $P$ is also a model of {\sf ZFC} and there
is some poset ${\mathbb P} \in P$ and some $g$ which is $(P,{\mathbb P})$-generic
with $W=P[g]$. (Note this implies that $P$ is transitive in the sense of $W$ and contains all of the ordinals of $W$; by the Woodin/Laver ground definability result  \cite{stgeol}, \cite{laver_vlc}, $P$ is also definable from parameters over $W$.) The intersection of all grounds of $W$ is called the
{\em mantle} $\mathbb{M}^W$ of $W$. Recall $W$ is called a {\em bedrock} iff $W$  has no non-trivial grounds, or equivalently, $W=\mathbb{M}^W$.  See \cite{stgeol} and
\cite{usuba_ddg} for more general background on these topics not specific to inner model theory.

The reason that mice modelling ZFC +  Woodin cardinals do not compute their own iteration strategies is connected with the fact that
they have proper (set-)grounds. The standard examples
of such grounds arise from Woodin's genericity iterations.
This phenomenon has led to \emph{inner model theoretic geology}, which has proven to be an exciting and
fruitful area of set theory. Its program is to analyze the collection of grounds and the mantle of  given canonical inner models. See \cite{vm0}
and \cite{vm1}, which address exactly this kind of problem, and are precursors to the current work. See also  \cite{local_mantles_of_Lx_v2}, parts  of which were
motivated by the current work. The theme uncovered in these works is roughly that the mantle of a (sufficiently canonical) mouse tends to itself be a mouse or a strategy mouse, and hence can be analyzed in high detail.

The paper \cite{vm0} proves that if $M=L[E]$ is a tame proper class mouse
with a Woodin cardinal but
no strong cardinal, and some further  technical assumptions
hold, then the mantle of $M$ is itself a mouse, but  is not a ground of $L[E]$;
see \cite[\S3.4]{vm0} and specifically \cite[Theorem 3.33]{vm0}.
As an example, the mantle of $M_1$ (the minimal proper class mouse with one Woodin cardinal) is the model left behind after iterating
the unique measure on the least measurable of $M_1$ out of the universe, and note this model has no measurable cardinals.
The situation is entirely different if $L[E]$ has a strong cardinal.

Let $\Msw^\#$ denote the minimal active mouse $N$
such that letting $\kappa=\crit(F)$ where $F$ is the
active extender of $N$, then $N|\kappa\models$``there is a strong cardinal above a Woodin cardinal'', and suppose
this mouse is fully iterable (for all set-sized trees).
Let $\Mswsw$ be the proper class mouse left by iterating
$F$ out of the universe.
 It is shown in \cite{vm1} that
there are only set many grounds of $\Msw$ and that
the mantle of $\Msw$ is
itself a
ground of $\Msw$ and hence a bedrock. There is therefore some analogy here between
$\Msw$ and $V$ in the presence of an extendible cardinal; see \cite[Theorem 1.3]{usuba_extendible}. The mantle of $\Msw$, however, also
has an interesting structural analysis,
as it is the universe of the strategy mouse $\vV^{\Msw}$
mentioned earlier. It is, moreover, a canonical ``least'' inner model which has a Woodin cardinal and knows how to fully iterate itself; see \cite[Lemma 2.20]{vm1}.

In personal communication with the second author  \cite{email_woodin_to_schindler},
W.~Hugh Woodin expressed suspicion that the mantle of any proper class mouse $L[E]$
with a strong cardinal above a Woodin cardinal might perhaps
contain non-trivial strategy information
at its least Woodin cardinal and not at any larger Woodin.

A reasonable candidate for testing this suspicion and for extending
the analysis of \cite{vm1} is
the  big brother of $\Msw$,  namely $M=\Mswsw$, introduced above,
and studied in this paper. We will show  that
the strategy mouse $\vV^{M}$, also introduced above,
has universe the mantle of $M$, and so in fact,
this mantle contains two Woodins together with non-trivial (and is closed under) strategy information for both of them.
There is therefore a stronger analogy between
hod mice  (see \cite{hod_mice}) and
mantles of extender models $L[E]$
than was previously
expected. This universe is also a ground
of $M$, and hence is a bedrock.
We will also show that $\vV^{M}$
has universe the eventual generic HOD of $M$; that is, its universe is $\HOD^{M[G]}$ whenever $\lambda$ is a sufficiently large ordinal and
$G\sub\Coll(\om,\lambda)$ is $M$-generic.

In some more detail,
we will first isolate the  {\em first Varsovian model} $\vV_1={\vV}_1^{M}$ of $M$
and show that
${\vV}_1$ is a ground of $M$,  contains exactly two Woodin cardinals
and a strong above them, and knows how to iterate itself fully for trees based on its least Woodin. This model is at first constructed in the form of ``$L[\M_\infty,*]$'', very much like in the construction of \cite{vm1}, which also mirrors Woodin's analysis of $\HOD^{L[x,G]}$.
We then show that this model admits a stratification as a fine structural strategy premouse.
The  indexing used for the stratification is new,
and this indexing is important in the overall analysis we give. It is moreover
determined in a very strong sense by the hierarchy of $M$ -- the extender sequence of $\vV_1$ is in fact given by simply restricting the extenders on the sequence of $M$ above a certain point, some of which correspond to strategy.
 We  then go on to isolate the \emph{second Varsovian model} $\vV_2=\vV_2^M$ of $M$,
which will be constructed inside $\vV_1$ (so $\vV_2\subseteq\vV_1\subseteq M$), using an elaboration of
the construction of $\vV_1$ in $M$.
We then analyze the model and compute an iteration strategy for it,
and establish the remaining facts mentioned above:
 the universe of $\vV_2$ is the mantle and eventual generic HOD of $M$,  $\vV_2$
 contains exactly two Woodin cardinals and knows fully how to iterate itself.
  We also show that the universe of $\vV_1$ is the $\kappa_0$-mantle of $M$,
where $\kappa_0$ is the least strong of $M$.
The overall picture and process is expected to generalize to $n<\omega$ iterations (working in  the appropriate
starting mouse)  and beyond.

The reader who is familiar with  \cite{hod_mice}, for example, will encounter a lot of
parallels between our analysis and the theory of hod mice; a key difference,
though, is
that our treatment is purely combinatorial and ``inner model theoretic'',
using no descriptive set theory.
Familiarity with \cite{vm1} certainly helps,
since the current paper is in large part an extension of that one,
and some arguments covered in \cite{vm1} are omitted here.
But the reader who is reasonably familiar with inner model theory
in general should be able to refer to \cite{vm1} as needed.

The paper is organized as follows. There are some preliminaries and notation
listed at the end of this section.
In \S\ref{sec:ground_generation}, we present the general method of assigning the (first)
Varsovian model ${\mathscr V}^{L[E]}$ to an extender model $L[E]$, and
prove key facts about it, under certain hypotheses. In \S\ref{sec:Mswsw}, we describe
some key properties of $\Mswsw$ and its iteration strategy, which
will be essential throughout. In \S\ref{sec:vV_1}, the first
Varsovian model $\vV_1$ of $M$, and its iteration strategy $\Sigma_{\vV_1}$, are defined and analyzed. This analysis
is centered
around the stratification of $\vV_1$ as a strategy premouse. We also
give  natural characterizations of the universe of $\vV_1$. In \S\ref{sec:vV_2},
we identify $\vV_2$,
also stratifying it as a strategy premouse.
We  show that $\vV_2$ has two Woodin cardinals,
is fully iterable, and is closed under its iteration strategy.
We expect that the hypothesis used to construct such a model
is in some sense optimal.
We finally show in \S\ref{subsec:vV_2_is_mantle} that  the universe of $\vV_2$ is the mantle of $M$ and is $\HOD^{M[G]}$ for sufficiently large collapse generics $G$.

The work presented here was started by the first two authors, extending their \cite{vm1}.
In the early stages, significant progress was made,
but without a full development of the level-by-level fine structural correspondence
presented in this paper
between the models $M=\Mswsw$, $\vV_1$ and $\vV_2$;
such a correspondence was considered to some extent, but then put aside in favour of other methods.
During this time, the first author developed an approach
to computing the mantle of $M$
which does not use the level-by-level correspondence,
but this has not been published.
Later, the second author returned
to the level-by-level correspondence,
and developed some of the main ideas in its connection.
Following this, in September 2017, the second and third authors then
began discussing this approach.
Over the next few months, building on what had already been established,
they (mostly)
completed the analysis via this approach,
leading to the current presentation (some details being added over time somewhat later). Some of the evolution
of ideas was documented by the second author's talks
at the 4th M\"unster conference on
inner model theory, July 17--Aug 01, 2017, and at the
1st Girona conference on
inner model theory,
July 16--27, 2018, and in the handwritten notes \cite{ralf_notes_vm2}.

The early development, worked out by the first two authors,
directly yielded parts of the present paper, as well as
precursors to some other parts.
Some version of probably the most central concept in the paper,
the strategy mouse hierarchy used in
Definition \ref{dfn:vV_1} (which is also a precursor of Definition \ref{dfn:vV_2}), is due to the first two
authors, as is \S\ref{subsec:Q}; the setup for the first direct limit system  in
\S\S\ref{subsec:models_of_first_system},\ref{first_dir_limit_system},\ref{sec:vV_1_as_M_infty[*]}
is much as in
\cite{vm1} and is basically due to them,
although the approach used in
\S\ref{section-short-tree-strategy-for-M} for computing short tree
strategy, and some other uses of normalization, are due to the
3rd author.
The 2nd author is responsible for the majority of
\S\ref{sec:ground_generation}, including Definition \ref{defn_bukowsky-poset}, for
the computation of $\HOD_{\mathscr{E}}^{M[G]}$ in Theorem
\ref{tm:M_infty[*]=HOD_E} via extending Lemma \ref{restr_of_extenders_are_there} (and the idea to
consider $\HOD_{\mathscr{E}}$),
and the modified P-construction (Definition \ref{dfn:modified_P-con}).
The 2nd and 3rd authors jointly established Lemma
\ref{lem:modified_P-con_works},
the construction of the second direct limit system in
\S\ref{subsec:second_dls}, and the strategy mouse hierarchy used in Definition \ref{dfn:vV_2} (adapting \ref{dfn:vV_1}).
The (self-)iterability of $\vV_1$ and $\vV_2$ is also mostly due to the 2nd and 3rd authors,
integrating some of the earlier work of the first two.
The 3rd author is responsible for Lemma \ref{restr_of_extenders_are_there},
that $\vV_1\sub\M_\infty[*]$,
Lemmas \ref{lem:pi_infty^+}, \ref{local-definability-of-that-structure},
\ref{lem:Psi_sn_good},
\ref{lem:Sigma_vV_1_vshc},
\ref{lem:vV_2_Psi^sn_good},
\ref{lem:vV_2_Sigma_vV_1_vshc},
Definitions \ref{dfn:Psi_sn_1},
\ref{dfn:Psi_vV_2^sn},
\S\S \ref{subsec:Vsp},
\ref{subsec:M-iteration_on_M_infty},
\ref{subsubsec:DSR_trees},
\ref{subsubsec:def_Sigma_vV,sss^dsr}, \ref{subsec:vV_2_is_mantle},
 and the original version of \S\ref{subsec:kappa_0-mantle}.

\subsection{Notation and Background}\label{subsec:notation}

\emph{General}: Given structures $P,Q$, $\univ{P}$ denotes the universe of $P$,
and $P\ueq Q$ means $\univ{P}=\univ{Q}$.

\emph{Premice}: All premice in the paper are Jensen-indexed ($\lambda$-indexed).
Given premouse $N=(U,\es,F)$
with universe $U$, internal extender sequence $\es$ and active
extender $F$, we write $\univ{N}=U$,
$\es^N=\es$, $F^N=F$, and $\es_+^N=\es\conc F$.
Write $N^\passive=(U,\es,\emptyset)$
for its passivization,
$N||\alpha$ for the initial segment $P$ of $N$
with $\OR^P=\alpha$ (inclusive of active extender)
and $N|\alpha=(N||\alpha)^\passive$.
Write $\lh(F)=\OR^N$.
Given premice $M,N$, we write $M\ins N$ iff $M=N||\alpha$ for some $\alpha\leq\OR^N$,
and $M\pins N$ iff $M\ins N$ but $N\not\ins M$.
Given also $m,n\leq\om$ such that $M$ is $m$-sound and $N$ is $n$-sound,
we write $(M,m)\ins(N,n)$ iff $M\ins N$ and if $M=N$ then $m\leq n$,
and write $(M,m)\pins(N,n)$ iff $(M,m)\ins(N,n)$ but $(N,n)\not\ins(M,m)$.
For $\eta\leq\OR^N$, we say $\eta$ is a \emph{cutpoint}
of $N$ iff for all $E\in\es_+^N$, if $\crit(E)<\eta$ then $\lh(E)\leq\eta$,
and a \emph{strong cutpoint} iff for all $E\in\es_+^N$,
if $\crit(E)\leq\eta$ then $\lh(E)\leq\eta$.
For $\xi<\delta\in\OR^N$, $\BB^N_{\delta,\xi}$ denotes the
$\delta$-generator
extender algebra at $\delta$,
with axioms induced by extenders $E\in\es$ with $\nu(E)$
inaccessible in $N$ and $\xi\leq\crit(E)$.
And $\BB_{\delta}^N=\BB^N_{\delta,0}$.

\emph{Hulls}: In general, $\Hull^M_t(X)$ denotes the structure
whose universe is the collection of
elements of $M$, definable over $M$ from parameters in $X$,
with definitions of ``kind $t$'', and whose predicates are just the restrictions of those of $M$. Here ``kind $t$'' depends on context,
but the main example is that if $M$ is a premouse, then the universe of $\Hull_{n+1}^M(X)$ is the collection of all $y\in M$ such that for some $\rSigma_{n+1}$ formula $\varphi$ and $\vec{x}\in X^{<\omega}$,
$y$ is the unique $z\in M$ such that $M\sats\varphi(z,\vec{x})$.
When it makes sense,
$\cHull_t^M(X)$ denotes the transitive collapse of $\Hull_{n+1}^M(X)$
(including the collapses of predicates).

\emph{Ultrapowers}: Let $E$ be an extender over $N$.
Write $i_E^N:N\to\Ult(N,E)$ for the ultrapower map,
and $i_E^{N,n}\to\Ult_n(N,E)$ for the degree-$n$ ultrapower and associated map. Write $\kappa_E=\crit(E)$ for the critical point of $E$,
$\lambda_E=\lambda(E)=i_E(\kappa_E)$, and
$\delta(E)$ for the measure space of $E$; in particular, if $E$ is short
then $\delta(E)=\crit(E)+1$.

\emph{Iteration trees}:
A fine structural iteration tree $\Tt$ consists of tree order $<^\Tt$,
tree-predecessor function $\alpha+1\mapsto\pred^\Tt(\alpha+1)$,
model-dropping node-set $\mathscr{D}^\Tt\sub\lh(\Tt)$, model-or-degree-dropping
node-set $\mathscr{D}^\Tt_{\deg}\sub\lh(\Tt)$, models $M^\Tt_\alpha$ and degrees
$\deg^\Tt_\alpha$ (for $\alpha<\lh(\Tt)$), extenders
$E^\Tt_\alpha\in\es_+(M^\Tt_\alpha)$ and model pre-images $M^{*\Tt}_{\alpha+1}\ins M^\Tt_\beta$
where $\beta=\pred^\Tt(\alpha+1)$
(for $\alpha+1<\lh(\Tt)$),
and here $M^\T_{\alpha+1}=\Ult_d(M^{*\Tt}_{\alpha+1},E^\Tt_\alpha)$
where $d=\deg^\Tt_\alpha$,
and if $\alpha\leq^\Tt\beta$ and $(\alpha,\beta]_\Tt\inter\dropset^\Tt=\emptyset$,
iteration maps $i^\Tt_{\alpha\beta}:M^\Tt_\alpha\to M^\Tt_\beta$, and if $\alpha$ is also a successor ordinal,  $i^{*\Tt}_{\alpha\beta}:M^{*\Tt}_\alpha\to M^\Tt_\beta$.
(Note we are only indicating notation above; the definition of \emph{iteration tree}
has more demands.)

Let $N$ be an $n$-sound premouse, where $n\leq\om$, and $\Tt$ a fine structural
iteration tree.
Recall that $\Tt$ is \emph{$n$-maximal on $N$} iff (i) $(M^\Tt_0,\deg^\Tt_0)=(N,n)$,
(ii) $\lh(E^\Tt_\alpha)<\lh(E^\Tt_\beta)$ for $\alpha+1<\beta+1<\lh(\Tt)$,
(iii) $\pred^\Tt(\alpha+1)$ is the least $\beta$ such that $\crit(E^\Tt_\alpha)<\lambda(E^\Tt_\beta)$, and
(iv) $(M^{*\Tt}_{\alpha+1},\deg^\Tt_{\alpha+1})$ is the lex-largest $(P,p)$ such that
$(M^\Tt_\beta||\lh(E^\Tt_\beta),0)\ins(P,p)\ins(M^\Tt_\beta,\deg^\Tt_\beta)$
and $\crit(E^\Tt_\alpha)<\rho_p^P$.
We say $\Tt$ is \emph{normal} if it is $n$-maximal
for some $n$. For $\delta$ an $N$-cardinal, we say $\Tt$ is \emph{based on $N|\delta$}
iff for all $\alpha+1<\lh(\Tt)$, if $[0,\alpha]_\Tt$ does not drop
in model then $\lh(E^\Tt_\alpha)\leq i^\Tt_{0\alpha}(\delta)$.
We say $\Tt$ is \emph{above $\kappa$} iff $\crit(E^\Tt_\alpha)\geq\kappa$
for all $\alpha+1<\lh(\Tt)$, and \emph{strictly above $\kappa$}
iff $\crit(E^\Tt_\alpha)>\kappa$ for all $\alpha+1<\lh(\Tt)$.

\emph{P-construction}: Given a premouse $M$ and $N\in M$,
$\mathscr{P}^M(N)$, or just $\mathscr{P}(N)$ if $M$ is understood,
denotes the P-construction as computed in $M$ over base set $N$.
In \cite[\S1]{sile}, this model would be denoted ${\cal P}(M,N,-)$.
If $\Tt$ is a limit length iteration tree, $\mathscr{P}^N(\Tt)$ abbreviates $\mathscr{P}^N(M(\Tt))$, and if $\Tt$ is the trivial tree (that is, uses no extenders)
then $\mathscr{P}^N(\Tt)$ denotes $N$.
(The latter notation is just convenient when we set up indices for the direct limit systems,
as then the trivial tree $\Tt$ on $M$ indexes the base of the system computed in $M$.)

\begin{rem}For our overall purposes  Jensen indexing for premice is natural.
	However, genericity iterations are essential, which are somewhat cumbersome with Jensen indexing and Jensen iteration rules (as for \emph{$n$-maximality} above).
	The process for this is described in \cite[Theorem 5.8]{iter_for_stacks}.
	We f also use \emph{genericity inflation},
sketched in \S\ref{section-short-tree-strategy-for-M},
	and \emph{minimal genericity inflation}, see \cite[\S5.2***]{fullnorm_v3}.)
\end{rem}

\section{Ground 
generation}\label{sec:ground_generation}\label{ground_generation_stuff}
In this section we shall present an abstract version of the construction
of a Varsovian model $\vV$ derived from a given inner model $M$ (satisfying
the requirements below),
and prove that $\vV$ is a ground of $M$. It will take some time
to lay out the required hypotheses (\ref{item:M_pc_ZFC}--\ref{item:unif_grds});
we will also collect some facts along the way.

Fix $M,(d,\preceq),(\mathcal{P}_p\colon p\in d)$ such that
\begin{enumerate}[label=\tu{(}ug\arabic*\tu{)}]
\setcounter{enumi}{0}
\item\label{item:M_pc_ZFC} $M$ is a proper class transitive model of ZFC,
\item\label{item:d,preceq_directed_po} $(d,\preceq)\in M$ is a  directed partial
order,
\item\label{item:P_i_system} $({\cal P}_p \colon p \in d)$ is an indexed system
of transitive proper class inner
models
of $M$ which is an $M$-class; that is,
each ${\cal P}_p$ is a transitive proper class
inner
model of $M$, and
$\{ (p,x) \colon p \in d \wedge x \in {\cal P}_p \}$ is an $M$-class.
\footnote{In practice, $M$ and all ${\cal P}_p$, $p\in d$, will be
(pure or strategic)
premice, hence inner models constructed from a distinguished (class sized)
predicate, in which
case our definability hypothesis is supposed to mean that the collection of
predicates constructing the ${\cal P}_p$, $p\in d$, is definable over $M$. }
\setcounter{tempcounter}{\value{enumi}}
\end{enumerate}
Suppose that in $V$ there is a system $(\pi_{pq} \colon p,q \in d \wedge p
\preceq q)$ such that:

\begin{enumerate}[label=\tu{(}ug\arabic*\tu{)}]
\setcounter{enumi}{\value{tempcounter}}
\item\label{item:pi_ij_elem} $\pi_{pq} \colon {\cal P}_p \rightarrow
{\cal P}_q$ is elementary whenever $p\preceq q$,
\item\label{item:pi_ij_commute} the maps are commuting; that is,
$\pi_{qr}\com\pi_{pq}=\pi_{pr}$
for $p\preceq q\preceq r$.
\setcounter{tempcounter}{\value{enumi}}
\end{enumerate}
 Let
\[\mathscr{D}^{\ext}=\big(\left<\P_p\colon p\in d\right>,\left<\pi_{pq}\colon p,q\in d\wedge p\preceq q\right>\big) \]
be the directed system (the \emph{ext} stands for \emph{external}). Define the direct limit (model and maps)
\begin{eqnarray}\label{dir_limit}
({\cal M}_\infty^{\ext}, \pi_{p\infty} \colon p \in d) = \mbox{ dir lim }\mathscr{D}.
\end{eqnarray}
Suppose
\begin{enumerate}[label=\tu{(}ug\arabic*\tu{)}]
\setcounter{enumi}{\value{tempcounter}}
 \item\label{item:M_infty_wfd}  $\M_\infty^{\ext}$ is wellfounded; we take it
transitive.
\setcounter{tempcounter}{\value{enumi}}
\end{enumerate}
Note that the system $\mathscr{D}^{\ext}$ is not assumed to be an $M$-class,
hence neither $\M_\infty^{\ext}$.
But suppose that $\mathscr{D}^{\ext}$ is
``covered'' by an $M$-class,
in the sense that there is an $M$-class
$(d^+,\preceq)$ (we use the same symbol $\preceq$, since there will be no possibility of confusion) such that:
\begin{enumerate}[label=\tu{(}ug\arabic*\tu{)}]
\setcounter{enumi}{\value{tempcounter}}
\item\label{item:d^+} $d^+ \subseteq d \times ([{\rm
OR}]^{<\omega}\cut\emptyset)$
and $\preceq$ is a directed partial order on $d^+$,
\item\label{item:d^+_order} if $(p,s),(q,t)\in d^+$ then $(p,s) \preceq (q,t)$
iff $p\preceq q$ and
$s\subseteq t$
\item\label{item:reduce_t} if $(p,t)\in d^+$ and $\emptyset\neq s\subseteq t$
then $(p,s)\in
d^+$,
\item\label{item:increase_i} if $(p,s) \in d^+$, $q \in
d$ and $p \preceq q$, then
$(q,s) \in d^+$.
\setcounter{tempcounter}{\value{enumi}}
\end{enumerate}
Further, there is
a system
\begin{eqnarray}\label{2nd_dir_lim_system}
\mathscr{D}=\big(\left<H^p_s\colon (p,s)\in d^+\right>,
\left< \pi_{ps,qt} \colon (p,s), (q,t) \in d^+\wedge (p,s)
\preceq (q,t)\right>\big)
\end{eqnarray}
such that:
\begin{enumerate}[label=\tu{(}ug\arabic*\tu{)}]
\setcounter{enumi}{\value{tempcounter}}
\item\label{item:D^+_M-def} $\mathscr{D}$ is an $M$-class,
\item\label{item:H_i^s} for all $(p,s) \in d^+$, $H^p_s$ is an elementary
substructure of
${\cal Q}^p_s={\cal P}_p|\max(s)$,\footnote{Here if
${\cal P}_p$ is a (possibly strategy)
premouse, then  this is precisely defined, and is passive
(strategy) premouse; in general ${\cal P}_p$ should
be stratified in an $\OR$-indexed increasing chain of $\Sigma_0$-elementary
substructures
and ${\cal Q}^p_s$ should be the proper level of that hierarchy indexed at
$\max(s)$.}
\item\label{item:pi_is_js} for all $(p,s),(q,s)\in d^+$ with $p\preceq q$, the
map
$\pi_{ps,qs}:H^p_s\to H^q_s$
is elementary,
\item\label{item:pi_is_it} for all $(p,t)\in d^+$ and $s\sub t$, we have
$H^p_s\elem_0 H^p_t$,
and  the map
\[ \pi_{ps,pt}:H^p_s\to H^p_t \]
is the inclusion map (hence $\Sigma_0$-elementary),
\item\label{item:pisjt_commute} the maps $\pi_{ps,qt}$ commute, in that
$\pi_{qt,ru}\com\pi_{ps,qt}=\pi_{ps,ru}$,
\setcounter{tempcounter}{\value{enumi}}
\end{enumerate}

Note
if $(p,s),(q,t)\in d^+$ and $(p,s)\preceq(q,t)$,
then $(p,s)\preceq(q,s)\preceq(q,t)$, so by
\ref{item:pi_is_js}, \ref{item:pi_is_it}, \ref{item:pisjt_commute},
\[ \pi_{ps,qt}=\pi_{qs,qt}\com\pi_{ps,qs}:H^p_s\to H^q_t \]
is $\Sigma_0$-elementary and has the same graph as has $\pi_{ps,qs}$,
and in particular, the graph is independent of $t$.
And note that $\pi_{ps,qs} \sub \pi_{pt,qt}$ whenever
$s\sub t$ and
$(p,t) \in d^+$ and $p\preceq q\in d$, because here $(p,s)\in d^+$ and
\[
\pi_{pt,qt}\com\pi_{ps,pt}=\pi_{ps,qt}=\pi_{qs,qt}
\com\pi_{ps,qs}, \]
but  $\pi_{ps,pt}$ and $\pi_{qs,qt}$ are just inclusion maps.

\begin{dfn}\label{dfn:ug_stable}
Given $\alpha\in\OR$ and $p\in d$, say $\alpha$ is \emph{$p$-stable}
iff $\pi_{pq}(\alpha)=\alpha$ for all $q\in d$ with $p\preceq q$.
Say $s\in[\OR]^{<\om}$ is \emph{$p$-stable} iff $\alpha$ is $p$-stable for each $\alpha\in s$. Call $(p,s) \in d^+$ {\em true} iff $s$ is $p$-stable and for all $q,r \in d$ with
$p \preceq q \preceq r$, we have $\pi_{qs,rs} =
\pi_{qr} \upharpoonright H^q_s$.
\end{dfn}

\begin{lem}
For each $s\in[\OR]^{<\om}\cut\{\emptyset\}$, there is $p\in d$ such that $s$ is $p$-stable.
\end{lem}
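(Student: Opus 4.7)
The plan is to reduce to a single ordinal $\alpha$, observe that the direct-limit image $\pi_{p\infty}(\alpha)$ is a weakly $\preceq$-decreasing function of $p$ into $\OR$ and hence eventually constant, and then exploit injectivity of the limit maps to deduce $\pi_{pq}(\alpha)=\alpha$ on a cone; finally combine over $\alpha\in s$ using directedness of $(d,\preceq)$ and the finiteness of $s$.

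In more detail: fix $\alpha\in\OR$ and define $f_\alpha\colon d\to\OR$ by $f_\alpha(p)=\pi_{p\infty}(\alpha)$. This is well-defined since each $\Pp_p$ is a proper class inner model (\ref{item:P_i_system}), so $\alpha\in\OR^{\Pp_p}$, and $\M_\infty^{\ext}$ is transitive by \ref{item:M_infty_wfd}. As $\pi_{pq}$ is elementary between transitive structures, a standard induction yields $\pi_{pq}(\alpha)\geq\alpha$ whenever $p\preceq q$; combined with $\pi_{p\infty}=\pi_{q\infty}\com\pi_{pq}$ (which follows at the limit from \ref{item:pi_ij_commute}) and the monotonicity of $\pi_{q\infty}$ on ordinals, this yields
\[f_\alpha(p)=\pi_{q\infty}(\pi_{pq}(\alpha))\geq\pi_{q\infty}(\alpha)=f_\alpha(q),\]
so $f_\alpha$ is $\preceq$-weakly decreasing. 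Well-foundedness of $\OR$ then produces some $p_\alpha\in d$ minimizing $f_\alpha$, with value $\gamma_\alpha$. For any $q\succeq p_\alpha$, pick by directedness $r\in d$ with $p_\alpha,q\preceq r$ and chain the inequalities $\gamma_\alpha\leq f_\alpha(r)\leq f_\alpha(q)\leq f_\alpha(p_\alpha)=\gamma_\alpha$ to conclude $f_\alpha(q)=\gamma_\alpha$. Hence
\[\pi_{q\infty}\bigl(\pi_{p_\alpha q}(\alpha)\bigr)=\pi_{p_\alpha\infty}(\alpha)=\pi_{q\infty}(\alpha),\]
and injectivity of the elementary map $\pi_{q\infty}$ gives $\pi_{p_\alpha q}(\alpha)=\alpha$, i.e.\ $\alpha$ is $p_\alpha$-stable.

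Finally, using directedness of $(d,\preceq)$ together with the finiteness of $s$, pick $p\in d$ with $p_\alpha\preceq p$ for every $\alpha\in s$. Then for $\alpha\in s$ and $q\succeq p$, both $p$ and $q$ dominate $p_\alpha$, so $\pi_{p_\alpha p}(\alpha)=\alpha$ and $\pi_{p_\alpha q}(\alpha)=\alpha$; applying \ref{item:pi_ij_commute} yields $\pi_{pq}(\alpha)=\pi_{pq}(\pi_{p_\alpha p}(\alpha))=\pi_{p_\alpha q}(\alpha)=\alpha$, witnessing $p$-stability of each $\alpha\in s$. No real obstacle arises: the entire argument is driven by applying well-foundedness of $\OR$ to the decreasing function $f_\alpha$, with the directedness of $(d,\preceq)$ doing nothing more than letting us combine finitely many cones into one.
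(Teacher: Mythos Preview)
Your proof is correct and is precisely the standard argument the paper alludes to; the paper itself gives no details beyond ``The proof is standard, using the wellfoundedness of $\M_\infty^{\ext}$.'' The only minor redundancy is that when you already have $q\succeq p_\alpha$, the auxiliary $r$ is unnecessary (you may take $r=q$), but this is harmless.
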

The proof is standard, using the wellfoundedness of $\M_\infty^{\ext}$. Assume further:
\begin{enumerate}[label=\tu{(}ug\arabic*\tu{)}]
\setcounter{enumi}{\value{tempcounter}}
\item\label{item:every_s_gets_stable} for all $s\in[\OR]^{<\om}\cut\{\emptyset\}$,
there is $p \in d$ with
$(p,s)\in d^+$ and $(p,s)$ true.
\item\label{item:every_x_gets_captured} for all $p \in d$ and $x \in
{\cal P}_p$, there exists $s$ such that $(p,s) \in
d^+$, $(p,s)$ is true and $x \in H^p_s$.
\setcounter{tempcounter}{\value{enumi}}
\end{enumerate}

Define the direct limit
$({\cal M}_\infty, \pi_{ps,\infty} \colon (p,s) \in d^+)
 = \mbox{ dir lim }\mathscr{D}$.

\begin{lemma}\label{c0}
$\M_\infty=\M_\infty^{\ext}$ is $M$-definable.
\end{lemma}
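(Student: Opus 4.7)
The plan is to prove the equality $\M_\infty=\M_\infty^{\ext}$ by constructing an isomorphism, and then deduce $M$-definability from $M$-definability of the internal system $\mathscr{D}$. Concretely, the internal direct limit $\M_\infty$ is $M$-definable essentially by fiat: by \ref{item:D^+_M-def}, $d^+$, the $H^p_s$, and the transition maps $\pi_{ps,qt}$ are all $M$-definable; hence so are the standard equivalence classes $[(p,s,\bar{x})]_\sim$ (with $(p,s,\bar{x})\sim(q,t,\bar{y})$ iff there exists $(r,u)\succeq(p,s),(q,t)$ with $\pi_{ps,ru}(\bar{x})=\pi_{qt,ru}(\bar{y})$) and the induced predicates. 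The direct limit maps $\pi_{ps,\infty}$ are also $M$-definable. So the only real work is to identify $\M_\infty$ with $\M_\infty^{\ext}$.

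To build the isomorphism $\varphi\colon\M_\infty\to\M_\infty^{\ext}$, I restrict attention to \emph{true} pairs $(p,s)$. By \ref{item:every_x_gets_captured}, every element of every $\P_p$ lies in some $H^p_s$ with $(p,s)$ true; together with \ref{item:every_s_gets_stable} and \ref{item:d^+_order} this shows that true pairs are cofinal in $d^+$ (given any $(p,s)$, pick $q\succeq p$ with $(q,s)$ true, and observe $(p,s)\preceq(q,s)$). Thus every $x\in\M_\infty$ has a representative $\bar{x}\in H^p_s$ with $(p,s)$ true; set $\varphi(x)=\pi_{p\infty}(\bar{x})$.

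The key observation making $\varphi$ well-defined and injective is: if $(p,s)$ is true and $r\in d$ with $p\preceq r$, then $\pi_{ps,rs}=\pi_{pr}\rest H^p_s$ (take $q=p$ in the definition of truth), and hence by \ref{item:pi_is_it} and \ref{item:pisjt_commute},
\[ \pi_{ps,ru}(\bar{x})=\pi_{rs,ru}\com\pi_{ps,rs}(\bar{x})=\pi_{pr}(\bar{x}) \]
for any $u\supseteq s$ with $(r,u)\in d^+$. So if $(p,s)$ and $(q,t)$ are both true and $[(p,s,\bar{x})]_\sim=[(q,t,\bar{y})]_\sim$, choose a common upper bound $(r,u)$ in $d^+$ which is itself true with $u\supseteq s\cup t$ (possible by \ref{item:every_s_gets_stable} plus increasing $p$ using \ref{item:increase_i}); then $\pi_{pr}(\bar{x})=\pi_{ps,ru}(\bar{x})=\pi_{qt,ru}(\bar{y})=\pi_{qr}(\bar{y})$, so $\pi_{p\infty}(\bar{x})=\pi_{q\infty}(\bar{y})$. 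The converse direction, with the external equality $\pi_{p\infty}(\bar{x})=\pi_{q\infty}(\bar{y})$ yielding an internal equality of direct-limit classes, runs through the same computation read backwards, together with one more application of \ref{item:every_s_gets_stable} to find a true $(r,u)$ above the witnessing $r\in d$. Surjectivity is immediate from \ref{item:every_x_gets_captured}: any $y=\pi_{p\infty}(\bar{y})\in\M_\infty^{\ext}$ equals $\varphi(\pi_{ps,\infty}(\bar{y}))$ for true $(p,s)$ with $\bar{y}\in H^p_s$. Elementarity of $\varphi$ is built into the construction, since on true pairs the internal maps restrict to external ones.

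Finally, since $\M_\infty^{\ext}$ is wellfounded and transitive by \ref{item:M_infty_wfd}, the isomorphism $\varphi$ forces $\M_\infty$ to be wellfounded as well, and after taking the transitive collapse of $\M_\infty$ we obtain $\varphi=\id$, so $\M_\infty=\M_\infty^{\ext}$. The $M$-definability of $\M_\infty$ established in the first paragraph then gives $M$-definability of $\M_\infty^{\ext}$. The main conceptual point, and the only place where anything nontrivial happens, is the interplay between the definition of truth (which is \emph{external} in that it references the $\pi_{pq}$) and the internal maps $\pi_{ps,qt}$: truth is precisely the hypothesis needed to guarantee that the cofinal subsystem of true pairs in $\mathscr{D}$ is compatible with the external system $\mathscr{D}^{\ext}$, and this compatibility is what both directions of the isomorphism hinge on.
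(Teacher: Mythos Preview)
Your proof is correct and follows essentially the same approach as the paper. The paper's map $\chi$ is defined on arbitrary $(p,s)\in d^+$ by first passing to a true $(q,s)\succeq(p,s)$ and then applying $\pi_{q\infty}\circ\pi_{ps,qs}$, whereas you restrict attention to the cofinal subsystem of true pairs from the start and apply $\pi_{p\infty}$ directly; these are two ways of packaging the same computation, and the paper's verification of surjectivity via \ref{item:every_x_gets_captured} matches yours exactly.
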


\begin{proof} Our assumptions immediately give that $\M_\infty$ is
$M$-definable. Consider the equality. We proceed as in the proof of
\cite[Lemma 2.4]{vm1} or the first few claims in \cite{Theta_Woodin_in_HOD};
the last few properties listed above been abstracted from  those proofs.
We will define a map $\chi \colon {\cal M}_\infty\rightarrow {\cal
M}_\infty^{\ext}$ and
show that $\chi$ is the identity.

Let $(p,s) \in d^+$ and $x \in H^p_s$. By \ref{item:increase_i} and
\ref{item:every_s_gets_stable}, we
may fix $q$ such that $(p,s)
\preceq (q,s)$ and $(q,s)$ is true. Define
$$\chi(\pi_{ps,\infty}(x))= \pi_{q,\infty} \circ \pi_{ps,qs}(x).$$
By commutativity and truth (trueness), this
does not depend on the choice of $q$, so  $\chi$ is well-defined.
Note that $\chi$ is $\Sigma_0$-elementary and cofinal, hence fully elementary,
by \cite[Theorem II.1, p.~54; Remark II.2, p.~55]{gaifman}.
If $p \in d$ and $x \in {\cal P}_p$, then by \ref{item:every_x_gets_captured}
there is  $s$
with $(p,s)$  true and $x \in H^p_s$. Hence $\pi_{p,\infty}(x) =
\chi(\pi_{ps,\infty}(x))
\in {\rm ran}(\chi)$. So $\chi$ is surjective,
so $\chi=\id$. \end{proof}

Given $p \in d$, write ${\cal C}_p = \{ q \in d \colon p \preceq q \}$,
and given $(p,s)\in d^+$, write
${\cal C}_{(p,s)}=\{ (q,t) \in d^+ \colon (p,s) \preceq (q,t) \}$.
Call a set $\mathcal{C}\sub d$ a \emph{cone} iff $\mathcal{C}_p\sub\mathcal{C}$
for some $p\in d$, and $\mathcal{C}'\sub d^+$ a \emph{cone}
iff $\mathcal{C}_{(p,s)}\sub\mathcal{C}'$ for some $(p,s)\in d^+$.

\begin{lem}\label{lem:cones}
	We have:
	\begin{enumerate}
	\item There is a cone $\mathcal{C}\sub d$
		such that  $\pi_{pq}(\rho)=\rho$ for all  $p,q\in\mathcal{C}$ with $p\preceq q$.
\item\label{item:cone_C'} There is a cone $\mathcal{C}'\sub d^+$ such that $\rho\in H^p_s
	\text{ and } \pi_{ps,qt}(\rho)=\rho$
	for all  $(p,s),(q,t)\in\mathcal{C}'$ with $(p,s)\preceq(q,t)$, and
	\end{enumerate}
	\end{lem}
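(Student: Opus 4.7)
The plan is to derive both cones from the wellfoundedness of $\M_\infty^{\ext}$ (hypothesis \ref{item:M_infty_wfd}), and for part (2) to additionally exploit the capture/truth assumption \ref{item:every_x_gets_captured} together with the diagram structure of $\mathscr{D}$.

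For part (1), I will use the standard ordinal-minimization trick. Each $\pi_{pq}$ is an elementary embedding between transitive proper class structures, so on ordinals $\pi_{pq}(\alpha) \geq \alpha$, and likewise $\pi_{p\infty}(\alpha) \geq \alpha$. Hence $\{\pi_{p\infty}(\rho) : p \in d\}$ is a nonempty class of ordinals with some minimum $\rho^*$ realized by some $p_0 \in d$. For $p_0 \preceq p$, the computation
\[ \rho^* = \pi_{p_0\infty}(\rho) = \pi_{p\infty}(\pi_{p_0 p}(\rho)) \geq \pi_{p\infty}(\rho) \geq \rho^* \]
(first equality by \ref{item:pi_ij_commute} in the limit, middle inequality from $\pi_{p_0 p}(\rho) \geq \rho$, last from minimality) forces $\pi_{p\infty}(\pi_{p_0 p}(\rho)) = \pi_{p\infty}(\rho)$, and injectivity of $\pi_{p\infty}$ yields $\pi_{p_0 p}(\rho) = \rho$. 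Then for $p_0 \preceq p \preceq q$, commutativity gives $\pi_{pq}(\rho) = \pi_{pq}(\pi_{p_0 p}(\rho)) = \pi_{p_0 q}(\rho) = \rho$. Taking $\mathcal{C} = \mathcal{C}_{p_0}$ completes (1).

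For part (2), fix any $p_1 \in \mathcal{C}$, so $\rho \in \P_{p_1}$ since $\P_{p_1}$ is a proper class, and apply \ref{item:every_x_gets_captured} to pick $s_1$ with $(p_1, s_1) \in d^+$ true and $\rho \in H^{p_1}_{s_1}$. I claim $\mathcal{C}' := \mathcal{C}_{(p_1, s_1)}$ works. For $(p, s) \in \mathcal{C}'$, truth of $(p_1, s_1)$ applied with $q = p_1$, $r = p$ gives $\pi_{p_1 s_1, p s_1} = \pi_{p_1 p} \upharpoonright H^{p_1}_{s_1}$, so $\pi_{p_1 s_1, p s_1}(\rho) = \pi_{p_1 p}(\rho) = \rho$ by (1); hence $\rho \in H^p_{s_1}$, and then \ref{item:pi_is_it} gives $H^p_{s_1} \elem_0 H^p_s$, so $\rho \in H^p_s$. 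For $(p,s) \preceq (q,t)$ in $\mathcal{C}'$, factor $\pi_{ps,qt} = \pi_{qs,qt} \circ \pi_{ps,qs}$. The outer map is inclusion by \ref{item:pi_is_it} and fixes $\rho$. For the inner map, the commuting square with corners $(p,s_1), (p,s), (q,s_1), (q,s)$, combined with the inclusion character of $\pi_{ps_1, ps}$ and $\pi_{qs_1, qs}$, forces $\pi_{ps,qs} \upharpoonright H^p_{s_1} = \pi_{ps_1, qs_1}$; truth of $(p_1, s_1)$ applied with $q = p$, $r = q$ gives $\pi_{ps_1, qs_1} = \pi_{pq} \upharpoonright H^p_{s_1}$, and (1) finishes: $\pi_{ps,qs}(\rho) = \pi_{pq}(\rho) = \rho$.

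The step needing the most care is the verification that $\rho$ lies in $H^p_s$ throughout the cone: it is essential that (1) has already fixed $\rho$ under the underlying $\pi_{pq}$, since only then does truth of $(p_1, s_1)$ transport the membership $\rho \in H^{p_1}_{s_1}$ upward without displacing $\rho$. Once that is arranged, the identity on $\rho$ for $\pi_{ps, qt}$ is essentially a diagram chase combining \ref{item:pi_is_it}, \ref{item:pisjt_commute}, truth, and (1).
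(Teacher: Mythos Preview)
Your proof is correct. The paper's approach is more compressed and slightly different: it fixes $s = \{\rho, \rho+1\}$ and invokes \ref{item:every_s_gets_stable} to find a single $p$ with $(p,s)$ true; since truth includes $p$-stability of $s \ni \rho$, part (1) follows at once, and the paper simply asserts that $\mathcal{C}_{(p,s)}$ works for part (2). Your argument differs in two respects. First, for (1) you reprove $p$-stability of $\rho$ directly via ordinal minimization in $\M_\infty^{\ext}$, rather than citing the lemma preceding \ref{item:every_s_gets_stable}. Second, and more substantively, for (2) you use \ref{item:every_x_gets_captured} instead of \ref{item:every_s_gets_stable}: this explicitly hands you $\rho \in H^{p_1}_{s_1}$, which you then transport through the cone using truth and part (1). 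The paper's choice $s = \{\rho, \rho+1\}$ instead relies tacitly on the convention (valid in the concrete instantiation later, where $H^P_s$ is a hull containing $s^-$) that $\rho \in s^- \subset H^q_s$. Your route is thus a bit more careful at the level of the abstract hypotheses. The diagram chase in your last paragraph is exactly the observation the paper records right after \ref{item:pisjt_commute}, namely that $\pi_{ps,qs} \sub \pi_{pt,qt}$ whenever $s \sub t$.
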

\begin{proof}
Let $s=\{\rho,\rho+1\}$ and (using \ref{item:every_s_gets_stable}) let  $p\in d$ be such that
$(p,s)\in d^+$ and $(p,s)$ is true (hence $s$ is $p$-stable).
Then $\mathcal{C}_p$ and $\mathcal{C}_{(p,s)}$ work.
	\end{proof}
Note that part \ref{item:cone_C'} of the previous lemma is understood by $M$.
Using this, we can define the associated \emph{$*$-map} $\OR\to\OR$.
For $\alpha\in\OR$ write
\begin{eqnarray}\label{defn_star_function}
\rho^* = \pi_{ps,\infty}(\rho),\end{eqnarray}
where $(p,s)$ is any element of  any cone $\mathcal{C}'$ witnessing
part \ref{item:cone_C'} of the lemma. Note this is well-defined,
and  $*$ is a class of $M$.

\begin{lemma}\label{c1}
Let $\rho\in\OR$, and $\mathcal{C},\mathcal{C}'$ be cones
witnessing Lemma \ref{lem:cones}.
Then:
\begin{enumerate}
\item\label{item:cone_ext_system} $\rho^* = \pi_{p\infty}(\rho)$ for all $p
\in {\cal C}$,
\item\label{item:cone_cov_system} $\rho \in H^p_s$ and $\rho^* =
\pi_{ps,\infty}(\rho)$ for all
$(p,s) \in {\cal C}'$,
 \item\label{item:min_it_image} $\rho^* = \mbox{min} \{
\pi_{p\infty}(\rho)
\colon p \in d \}$.\end{enumerate}
\end{lemma}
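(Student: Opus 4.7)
The plan is to prove the three parts in the order (2), (1), (3). Part (2) is essentially immediate from the definition of $*$; part (1) then reduces to (2) via the identification $\M_\infty=\M_\infty^{\ext}$ established in Lemma \ref{c0}; and part (3) follows by observing that directed-system maps between transitive proper class models do not decrease ordinals.

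First, I would verify (2) directly from (\ref{defn_star_function}). By the defining property of the cone $\mathcal{C}'$ from Lemma \ref{lem:cones}, for any $(p,s),(q,t)\in\mathcal{C}'$ with $(p,s)\preceq(q,t)$ we have $\rho\in H^p_s$ and $\pi_{ps,qt}(\rho)=\rho$, so by commutativity $\pi_{ps,\infty}(\rho)=\pi_{qt,\infty}(\pi_{ps,qt}(\rho))=\pi_{qt,\infty}(\rho)$. Hence $\pi_{ps,\infty}(\rho)$ is constant for $(p,s)\in\mathcal{C}'$, and this common value is $\rho^*$ by definition (\ref{defn_star_function}).

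Next, for (1), fix $p\in\mathcal{C}$. Using \ref{item:every_s_gets_stable}, \ref{item:increase_i}, and directedness of $d$, I would choose $p_0\in\mathcal{C}$ with $p\preceq p_0$ large enough that $s=\{\rho\}$ satisfies $(p_0,s)\in\mathcal{C}'$ and $(p_0,s)$ is true. By (2) already established, $\rho^*=\pi_{p_0 s,\infty}(\rho)$. Now invoke the map $\chi=\mathrm{id}$ from the proof of Lemma \ref{c0} with witness $q=p_0$ (itself true): $\chi(\pi_{p_0 s,\infty}(\rho))=\pi_{p_0,\infty}(\pi_{p_0 s,p_0 s}(\rho))=\pi_{p_0,\infty}(\rho)$, so $\pi_{p_0,\infty}(\rho)=\rho^*$. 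Finally, since both $p,p_0\in\mathcal{C}$, the defining property of $\mathcal{C}$ gives $\pi_{p p_0}(\rho)=\rho$, and by commutativity $\pi_{p,\infty}(\rho)=\pi_{p_0,\infty}(\pi_{p p_0}(\rho))=\pi_{p_0,\infty}(\rho)=\rho^*$.

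Finally, for (3), fix an arbitrary $p\in d$. By directedness choose $p'\succeq p$ with $p'\in\mathcal{C}$. The map $\pi_{pp'}$ is elementary between transitive proper class models of ZFC, hence non-decreasing on ordinals, so $\pi_{pp'}(\rho)\geq\rho$; applying the order-preserving map $\pi_{p',\infty}$ and using (1) gives $\pi_{p,\infty}(\rho)=\pi_{p',\infty}(\pi_{pp'}(\rho))\geq\pi_{p',\infty}(\rho)=\rho^*$. Equality holds for any $p\in\mathcal{C}$ by (1), so $\rho^*=\min\{\pi_{p\infty}(\rho):p\in d\}$. The only step requiring any attention is the simultaneous selection of $p_0\in\mathcal{C}$ in (1) that also yields a true pair $(p_0,\{\rho\})\in\mathcal{C}'$, but this just combines the closure hypotheses \ref{item:increase_i}, \ref{item:every_s_gets_stable}, and directedness, and I foresee no real obstacle.
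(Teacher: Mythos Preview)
Your approach is essentially the paper's: both parts (1) and (2) are deduced from Lemma \ref{lem:cones} together with $\chi=\id$ from Lemma \ref{c0}, and your argument for (3) is identical to the paper's.

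There is one small slip in your handling of (1). You want to choose $p_0$ so that $(p_0,\{\rho\})\in\mathcal{C}'$, but $\mathcal{C}'$ is a \emph{given} cone witnessing Lemma \ref{lem:cones}, and such a cone need not contain any pair whose second coordinate is $\{\rho\}$: for instance, the proof of Lemma \ref{lem:cones} produces $\mathcal{C}'=\mathcal{C}_{(p,\{\rho,\rho+1\})}$, every element of which has second coordinate $\supseteq\{\rho,\rho+1\}$. The fix is immediate: instead of insisting on $s=\{\rho\}$, take any $(p_0,t)\in\mathcal{C}'$ with $p_0\in\mathcal{C}$, $p\preceq p_0$, and $(p_0,t)$ true (this exists by directedness, \ref{item:increase_i}, and the fact that truth is upward-hereditary in the first coordinate). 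Your computation then goes through verbatim with $t$ in place of $\{\rho\}$: by (2) you have $\rho^*=\pi_{p_0 t,\infty}(\rho)$, and since $(p_0,t)$ is true you may take $q=p_0$ in the definition of $\chi$, giving $\rho^*=\chi(\pi_{p_0 t,\infty}(\rho))=\pi_{p_0,\infty}(\rho)$.
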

\begin{proof}
	Parts \ref{item:cone_ext_system} and \ref{item:cone_cov_system}
	follow directly from Lemma \ref{lem:cones}
	and the fact that the function $\chi$ defined
	in the proof of Lemma \ref{c0} is the identity.

Part \ref{item:min_it_image}: Let ${\cal C} \subseteq d$ witness Lemma \ref{lem:cones}.
Let $p\in d$. Then there is
$q \in {\cal C}$ with $p\preceq q$. Therefore $\pi_{pq}(\rho)\geq\rho$,
so $\pi_{p\infty}(\rho)\geq\pi_{q\infty}(\rho)$, which suffices.
\end{proof}

There is another important characterization of $\rho \mapsto \rho^*$,
given some further properties. Assume:
\begin{enumerate}[label=\tu{(}ug\arabic*\tu{)}]
\setcounter{enumi}{\value{tempcounter}}
\item\label{item:i_0_exists} There is a unique $\preceq$-minimal $p_0\in d$.
Moreover, $M={\cal P}_{p_0}$ and
$\pi_{p_0q}(p_0)=q$
for each $q\in d$.
\setcounter{tempcounter}{\value{enumi}}
\end{enumerate}
So $\pi_{p_0q}:M\to{\cal P}_q$ for all $q\in d$, and
$\pi_{p_0\infty}:M\to\M_\infty$.
The next hypotheses guarantee a homogeneity property of
the system, in that each ${\cal P}_p$
may equally serve as a base.
Let $\mathscr{D}^{\P_p}=\pi_{p_0p}(\mathscr{D})$, $\M_\infty^{\P_p}=\pi_{p_0p}(\M_\infty)$, etc, for $p\in d$.\footnote{Note that we write $\pi_{p_0p}(\mathscr{D})$, not $\pi_{p_0p}(\mathscr{D}^{\ext})$;
of the course the latter does not make sense. We know $\M_\infty^{\ext}=\M_\infty$,
but $\M_\infty^{\M_\infty}$ is of course computed in $\M_\infty$
as the direct limit of $\mathscr{D}^{\M_\infty}$. At this stage
it is not relevant whether there is some external system of elementary
embeddings associated with $\mathscr{D}^{\M_\infty}$ analogous to $\mathscr{D}^{\ext}$.}
Let $c_p=d_p\inter d$ and $c^+_p=d^+_p\inter d^+$.
Suppose:

\begin{enumerate}[label=\tu{(}ug\arabic*\tu{)}]
\setcounter{enumi}{\value{tempcounter}}

\item\label{item:c_i_dense} For all $p\in d$, $c_p$ is dense
in $(d_p,\preceq_p)$ and dense in $(d,\preceq)$,
and ${\preceq_p}\rest c_p={\preceq}\rest c_p$,
\item\label{item:struc_emb_correct} For all $p\in d$ and all $(q,s),(r,t)\in
c_p^+$
with $(q,s)\preceq(r,t)$, we have $(\P_q)^{\P_p}=\P_q$
and $(H^q_s)^{\P_p}=H^q_s$
and $(\pi_{qs,rt})^{\P_p}=\pi_{qs,rt}$.
\item\label{item:all_s_eventually_internally_in} For all $s\in[\OR]^{<\om}\cut\{\emptyset\}$
there is $p\in d$ with $(p,s)$ true
and  $(p,s)\in c^+_p$.

\setcounter{tempcounter}{\value{enumi}}
\end{enumerate}

Using these properties, it is now straightforward to deduce:

\begin{lem}\label{lem:M_inf,*_hom} For each $p\in d$, we have:
\begin{enumerate}
	\item 	 $c_p^+$ is dense in $(d_p^+,\preceq_p)$
	and dense in $(d^+,\preceq)$, and ${\preceq_p}\rest c_p^+={\preceq_p}\rest c_p^+$.

\item  The direct limit $\M_\infty^{\P_p}$ of $\mathscr{D}^{\P_p}$ is just $\M_\infty$ , and the associated $*$-map $*^{\P_p}$ is just $*$, so $\pi_{p_0p}(\M_\infty)=\M_\infty$
 and $\pi_{p_0p}(*)=*$.
  \end{enumerate}
\end{lem}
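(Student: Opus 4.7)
The plan is to derive part~(1) by combining \ref{item:c_i_dense}, \ref{item:increase_i}, and \ref{item:all_s_eventually_internally_in}, and then to deduce part~(2) from (1) together with the absoluteness provided by \ref{item:struc_emb_correct}.

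For density of $c_p^+$ in $(d^+,\preceq)$, fix $(q,s)\in d^+$ and aim to produce $r\in c_p$ with $q\preceq r$ and $(r,s)\in c_p^+$. First, applying \ref{item:all_s_eventually_internally_in} inside $\P_p$ (legitimate by elementarity of $\pi_{p_0p}$, since the relevant content is expressible over $M$ via the $M$-class $\mathscr{D}$), one obtains $p'\in d_p$ with $(p',s)\in d_p^+$. Using density of $c_p$ in $(d_p,\preceq_p)$ from \ref{item:c_i_dense} together with the internal analog of \ref{item:increase_i}, promote $p'$ to some $p''\in c_p$ with $(p'',s)\in d_p^+$. Next, by directedness of $(d,\preceq)$ and density of $c_p$ in $(d,\preceq)$, pick $r\in c_p$ with $p''\preceq r$ and $q\preceq r$. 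Then $(r,s)\in d^+$ by \ref{item:increase_i}; and since $p'',r\in c_p$, the coincidence ${\preceq_p}\rest c_p={\preceq}\rest c_p$ in \ref{item:c_i_dense} gives $p''\preceq_p r$, whence $(r,s)\in d_p^+$ by the internal analog of \ref{item:increase_i}. So $(r,s)\in c_p^+$, as required. Density of $c_p^+$ in $(d_p^+,\preceq_p)$ is proved by the symmetric argument with the roles of $V$ and $\P_p$ interchanged (now applying \ref{item:all_s_eventually_internally_in} externally and using directedness inside $\P_p$), and the equality ${\preceq_p}\rest c_p^+={\preceq}\rest c_p^+$ is immediate from \ref{item:d^+_order} and the first-coordinate agreement in \ref{item:c_i_dense}.

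For part~(2), by part~(1) each of $\M_\infty$ (which equals $\M_\infty^{\ext}$ by Lemma \ref{c0}) and $\M_\infty^{\P_p}$ is the direct limit of its system restricted to the dense subclass $c_p^+$. But by \ref{item:struc_emb_correct}, on $c_p^+$ the models $H^q_s$ and the embeddings $\pi_{qs,rt}$ are absolute between $V$ and $\P_p$; so the two restricted systems are literally identical, and their direct limits coincide as transitive structures. This yields $\M_\infty^{\P_p}=\M_\infty$, and combined with $\M_\infty^{\P_p}=\pi_{p_0p}(\M_\infty)$ (by elementarity of $\pi_{p_0p}$ and the $M$-definability of $\M_\infty$ from Lemma \ref{c0}) this gives $\pi_{p_0p}(\M_\infty)=\M_\infty$. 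For the $*$-maps, Lemma \ref{c1}(\ref{item:cone_cov_system}) applied inside $\P_p$ expresses $\rho^{*^{\P_p}}$ as $\pi^{\P_p}_{qs,\infty}(\rho)$ for $(q,s)$ ranging over a $\P_p$-cone; by density we may choose $(q,s)\in c_p^+$, and then the identification of the two restricted systems gives $\pi^{\P_p}_{qs,\infty}(\rho)=\pi_{qs,\infty}(\rho)=\rho^*$, so $*^{\P_p}=*$.

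The main delicate point is the interplay between the internal $\P_p$ and external $V$ perspectives in the density argument: the initial witness $p'$ emerges only inside $\P_p$, while $q$ lives externally, and they must be amalgamated into a single $r\in c_p$ whose associated triple lies in both $d^+$ and $d_p^+$ simultaneously. The key lever is the agreement ${\preceq_p}\rest c_p={\preceq}\rest c_p$ from \ref{item:c_i_dense}, which is exactly what enables migration between the two orderings. Once part~(1) is in hand, part~(2) is essentially formal, reducing to the absoluteness of the restricted system on $c_p^+$ supplied by \ref{item:struc_emb_correct}.
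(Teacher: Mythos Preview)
Your proposal is correct and is precisely the kind of argument the paper has in mind: the paper gives no explicit proof, merely stating that the lemma is ``straightforward to deduce'' from properties \ref{item:c_i_dense}--\ref{item:all_s_eventually_internally_in}, and your write-up carries this out in the expected way. The one subtlety you correctly identify and handle is that for density of $c_p^+$ in $(d^+,\preceq)$ one cannot simply take any $r\in c_p$ above $q$, since one must also ensure $(r,s)\in d_p^+$; your use of \ref{item:all_s_eventually_internally_in} inside $\P_p$ to produce an internal witness, followed by amalgamation via \ref{item:c_i_dense}, is exactly the right move.
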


\begin{dfn}
For $p\in d$ and $s,t\in[\OR]^{<\om}\cut\emptyset$,
say $(p,s,t)$ is \emph{embedding-good}
iff $(p,t)\in d^+$,
$\pi_{ps,\infty}\in H^p_t$
and
$\pi_{pt,qt}(\pi_{ps,\infty})=
\pi_{qs,\infty}$  for all $q\in\mathcal{C}_p$.
\end{dfn}
Note that \emph{embedding-good} is an $M$-class.
\begin{lem}\label{lem:get_embedding-good}\
	\begin{enumerate}
	\item\label{item:all_s_ev_in_in_all_q} If $(p,s)$ is as in \ref{item:all_s_eventually_internally_in}
	then $(q,s)$ is true and $(q,s)\in d^+\inter d^+_q$ for all $q\in \mathcal{C}_p$.

\item\label{item:all_s_exists_embedding-good} For each $s\in[\OR]^{<\om}\cut\emptyset$
 there is  $p\in d$ such that for each $q\in\mathcal{C}_p$,
 we have $(q,s)$ true and $(q,s)\in d^+\inter d^+_q$,
 and for each $x\in \P_q$
 there is $t\in[\OR]^{<\om}$ such that $(q,t)$ is true and $x\in H^q_t$ and
 $(q,s,t)$ is embedding-good.
\end{enumerate}\end{lem}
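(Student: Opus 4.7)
The plan is to transfer the properties of $(p,s)$ to $(q,s)$ for $q\in\mathcal{C}_p$ using elementarity of $\pi_{pq}$, combined with the capture axiom \ref{item:every_x_gets_captured} and the homogeneity Lemma \ref{lem:M_inf,*_hom}. For part \ref{item:all_s_ev_in_in_all_q}, fix $(p,s)$ as in \ref{item:all_s_eventually_internally_in} and $q\in\mathcal{C}_p$. First I would check that $s$ is $q$-stable: for $\alpha\in s$ and $r\in\mathcal{C}_q\subseteq\mathcal{C}_p$, commutativity gives $\pi_{qr}(\alpha)=\pi_{qr}(\pi_{pq}(\alpha))=\pi_{pr}(\alpha)=\alpha$. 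The truth clause for $(q,s)$ then follows from the truth clause for $(p,s)$ since $\mathcal{C}_q\subseteq\mathcal{C}_p$, and $(q,s)\in d^+$ by \ref{item:increase_i}. To see $(q,s)\in d^+_q$, I would apply $\pi_{pq}$ to $(p,s)\in d^+_p$: by \ref{item:i_0_exists} and commutativity, $\pi_{pq}(p)=\pi_{pq}(\pi_{p_0p}(p_0))=\pi_{p_0q}(p_0)=q$; combined with $\pi_{pq}(s)=s$ (by stability) and elementarity of $\pi_{pq}$, this yields $(q,s)=\pi_{pq}((p,s))\in d^+_q$.

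For part \ref{item:all_s_exists_embedding-good}, the plan is to pick $p$ as in \ref{item:all_s_eventually_internally_in} so that part \ref{item:all_s_ev_in_in_all_q} delivers $(q,s)$ true and in $c^+_q$ for every $q\in\mathcal{C}_p$. Given such $q$ and $x\in\P_q$, the key observation is that $\pi_{qs,\infty}\in\P_q$: its graph is a set (since $H^q_s$ is), and by Lemma \ref{lem:M_inf,*_hom} the direct limit and $*$-map computed in $\P_q$'s system coincide with the external ones, so the internal computation in $\P_q$ produces precisely $\pi_{qs,\infty}$. I would then apply \ref{item:every_x_gets_captured} in $\P_q$ to the ordered pair $\langle x,\pi_{qs,\infty}\rangle\in\P_q$ to find $t$ with $(q,t)\in d^+$ true and this pair, hence both its coordinates, in $H^q_t$. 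To finish, for $r\in\mathcal{C}_q$, truth of $(q,t)$ gives $\pi_{qt,rt}(\pi_{qs,\infty})=\pi_{qr}(\pi_{qs,\infty})$, so it remains to identify this with $\pi_{rs,\infty}$.

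The hard part will be this last identity $\pi_{qr}(\pi_{qs,\infty})=\pi_{rs,\infty}$. I would handle it via \ref{item:struc_emb_correct}: since $(q,s)\in c^+_q$, the set $H^q_s$, the relevant restriction of $\mathscr{D}$, and hence the map $\pi_{qs,\infty}$, coincide with their $\P_q$-internal counterparts. Elementarity of $\pi_{qr}:\P_q\to\P_r$ sends this internal construction to the analogous one in $\P_r$, which by \ref{item:struc_emb_correct} applied at $r$ together with Lemma \ref{lem:M_inf,*_hom} is the real $\pi_{rs,\infty}$. This closes the loop and shows $(q,s,t)$ is embedding-good, as required.
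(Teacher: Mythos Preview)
Your proof is correct and follows essentially the same approach as the paper's. The only cosmetic difference is the order: the paper first establishes $\pi_{qr}(\pi_{qs,\infty})=\pi_{rs,\infty}$ for all $r\in\mathcal{C}_q$ (using exactly your argument via \ref{item:struc_emb_correct} and Lemma~\ref{lem:M_inf,*_hom}) and only then invokes \ref{item:every_x_gets_captured} to find $t$ with $x,\pi_{qs,\infty}\in H^q_t$, whereas you find $t$ first and verify the identity afterward.
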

\begin{proof}
	Part \ref{item:all_s_ev_in_in_all_q}:
We have $(q,s)\in d^+$ and $(q,s)$ true
	because $(p,s)\in d^+$ and $(p,s)$ is true,
	and $(q,s)\in d_q^+$ because $(p,s)\in d_p^+$ and
	$\pi_{pq}((p,s))=(q,s)$
	by \ref{item:i_0_exists} and because $s$ is $p$-stable.

	Part \ref{item:all_s_exists_embedding-good}:
Fix $p\in d$ with $(p,s)$ true and $(p,s)\in d^+\inter d^+_p$
(using \ref{item:all_s_eventually_internally_in}).
Let $q\in\mathcal{C}_p$.
By part \ref{item:all_s_ev_in_in_all_q} and \ref{item:i_0_exists}--\ref{item:struc_emb_correct},
$(\pi_{rs,\infty})^{\P_r}=\pi_{rs,\infty}$
for all $r\in\mathcal{C}_q$.
Since $\pi_{qr}(s)=s$ for such $r$,
we get
$\pi_{qr}(\pi_{qs,\infty})=\pi_{rs,\infty}$.
Let $x\in \mathcal{P}_q$.
Using \ref{item:every_x_gets_captured}, let
$t\in[\OR]^{<\om}$ such that $(q,t)\in d^+$ is true
and $x,\pi_{qs,\infty}\in H^q_t$. Then $q,t$ works.
\end{proof}

Let $\mathscr{G}$ be the class of all embedding-good tuples.
Define
$\mathscr{D}^{\M_\infty}=\pi_{0\infty}(\mathscr{D})$
and  ${\cal M}_\infty^{\M_\infty}=\pi_{0\infty}(\M_\infty)$.
Working in $M$,  define
$\pi_\infty:\M_{\infty}\to\M_{\infty}^{\M_\infty}$
by
\[\pi_\infty=
\bigcup_{(p,s,t)\in\mathscr{G}}
\pi_{pt,\infty}(\pi_{ps,\infty}).\]

\begin{lem}\label{lem:pi^infty}
 $\pi_{\infty}:\M_{\infty}\to\M_{\infty}^{\M_\infty}$ is elementary
 and $\pi_{\infty}(\rho)=\rho^*$ for  $\rho\in\OR$.
 Moreover, $\pi_{p_0q}(\pi_\infty)=\pi_\infty$
 for all $q\in d$.
\end{lem}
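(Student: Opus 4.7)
The plan is to unpack the definition of $\pi_\infty$ and verify its well-definedness, elementarity, the ordinal identity, and the homogeneity in turn, exploiting Lemma \ref{lem:M_inf,*_hom} and the abundance of embedding-good tuples supplied by Lemma \ref{lem:get_embedding-good}.

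First I would check that each piece $\pi_{pt,\infty}(\pi_{ps,\infty})$ is a fully elementary partial map on $\M_\infty$ and that any two pieces agree on overlaps. Inside $\P_p$ --- which by Lemma \ref{lem:M_inf,*_hom} sees the same direct limit $\M_\infty$ with the same maps that $M$ does --- the object $\pi_{ps,\infty}$ is a set-coded fully elementary embedding of $H^p_s$ into $\M_\infty$, and it lies in $H^p_t$ by embedding-goodness; so $\pi_{pt,\infty}(\pi_{ps,\infty})$ is an element of $\M_\infty$ which, applying Lemma \ref{lem:M_inf,*_hom} once more to $\M_\infty$ via $\pi_{p_0\infty}$, $\M_\infty$ reads as a fully elementary embedding from $\pi_{pt,\infty}(H^p_s)\sub\M_\infty$ into $\M_\infty^{\M_\infty}$. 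Given two tuples $(p,s,t),(p',s',t')\in\mathscr{G}$, I would pass to a common upper bound $q\in\mathcal{C}_p\cap\mathcal{C}_{p'}$ and $u\supseteq s\cup s'\cup t\cup t'$ with $(q,u)$ true, and use the embedding-good equations $\pi_{pu,qu}(\pi_{ps,\infty})=\pi_{qs,\infty}$ (and similarly with primes) under $\pi_{qu,\infty}$ to collapse both expressions to a single value on their common domain. Totality on $\M_\infty$ follows directly from Lemma \ref{lem:get_embedding-good}(\ref{item:all_s_exists_embedding-good}), and elementarity of the glued union is immediate from elementarity of each piece.

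For the identity $\pi_\infty(\rho)=\rho^*$, I fix $\rho\in\OR$ and choose a finite $s$ with $\{\rho,\rho^*\}\sub s$ and $\max(s)>\rho^*$. By \ref{item:every_s_gets_stable} and Lemma \ref{lem:get_embedding-good}(\ref{item:all_s_exists_embedding-good}) I may pick $p\in d$ and $t\supseteq s$ so that $(p,s)$ and $(p,t)$ are true, $s$ is $p$-stable, and $(p,s,t)\in\mathscr{G}$; in particular $\rho,\rho^*\in H^p_s\subseteq H^p_t$. Trueness combined with Lemma \ref{c1}(\ref{item:cone_ext_system}) gives $\pi_{ps,\infty}(\rho)=\pi_{p,\infty}(\rho)=\rho^*$, and both $\rho$ and $\rho^*$ are $p$-stable (the latter because $\pi_{pq}(\rho^*)=(\pi_{pq}(\rho))^{*,\P_q}=\rho^*$ by Lemma \ref{lem:M_inf,*_hom} applied with the $p$-stability of $\rho$), so the second-system direct limit maps also fix them, i.e., $\pi_{pt,\infty}(\rho)=\rho$ and $\pi_{pt,\infty}(\rho^*)=\rho^*$. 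Hence
\[\pi_\infty(\rho)=\pi_{pt,\infty}(\pi_{ps,\infty})(\rho)=\pi_{pt,\infty}\bigl(\pi_{ps,\infty}(\rho)\bigr)=\pi_{pt,\infty}(\rho^*)=\rho^*.\]

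Finally, for the homogeneity $\pi_{p_0q}(\pi_\infty)=\pi_\infty$: every ingredient of the $M$-definition of $\pi_\infty$ (the class $\mathscr{G}$, the family $(H^p_s,\pi_{ps,qt})$, the direct limit $\M_\infty$ and its internal maps) is $\pi_{p_0q}$-fixed by Lemma \ref{lem:M_inf,*_hom} together with \ref{item:struc_emb_correct}, so the union defining $\pi_\infty$ is fixed as well. The main technical obstacle I anticipate is the first paragraph: keeping careful track of which model ($\P_p$, $M$, or $\M_\infty$) is interpreting $\M_\infty$ at each step, and checking that the repeated appeals to Lemma \ref{lem:M_inf,*_hom} yield mutually consistent interpretations, so that the partial pieces really do glue into a single $M$-definable fully elementary embedding.
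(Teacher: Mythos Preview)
Your argument for the ordinal identity $\pi_\infty(\rho)=\rho^*$ contains a genuine error. You claim that because $\rho$ is $p$-stable, ``the second-system direct limit maps also fix them, i.e., $\pi_{pt,\infty}(\rho)=\rho$.'' This is false: $p$-stability means $\pi_{pq}(\rho)=\rho$ for all $q\succeq p$, which via trueness of $(p,t)$ gives $\pi_{pt,qt}(\rho)=\rho$ for the \emph{transition} maps, but the \emph{direct limit} map $\pi_{pt,\infty}$ still moves $\rho$; indeed by trueness and Lemma~\ref{c1} one has $\pi_{pt,\infty}(\rho)=\pi_{p,\infty}(\rho)=\rho^*$, not $\rho$. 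Consequently your displayed equation
\[
\pi_{pt,\infty}(\pi_{ps,\infty})(\rho)=\pi_{pt,\infty}\bigl(\pi_{ps,\infty}(\rho)\bigr)
\]
is unjustified: elementarity of $\pi_{pt,\infty}$ gives $\pi_{pt,\infty}(\pi_{ps,\infty})\bigl(\pi_{pt,\infty}(x)\bigr)=\pi_{pt,\infty}\bigl(\pi_{ps,\infty}(x)\bigr)$, so the argument on the left should be $\pi_{pt,\infty}(\rho)=\rho^*$, not $\rho$. You have in effect confused $\pi_{pt,\infty}(\pi_{ps,\infty})$ (the image of the function under the limit map) with the composition $\pi_{pt,\infty}\circ\pi_{ps,\infty}$.

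The paper's remedy is to work with a preimage: one chooses $\bar\rho\in H^p_s$ with $\pi_{ps,\infty}(\bar\rho)=\rho$ (this is how one exhibits $\rho$ as an element of $\M_\infty$), then picks $t$ with $\bar\rho,\rho,\pi_{ps,\infty}\in H^p_t$, $(p,t)$ true, and $(p,s,t)$ embedding-good. Since both $(p,s)$ and $(p,t)$ are true, $\pi_{ps,\infty}(\bar\rho)=\pi_{p,\infty}(\bar\rho)=\pi_{pt,\infty}(\bar\rho)=\rho$, and then
\[
\rho^*=\pi_{p,\infty}(\rho)=\pi_{pt,\infty}\bigl(\pi_{ps,\infty}(\bar\rho)\bigr)=\pi_{pt,\infty}(\pi_{ps,\infty})\bigl(\pi_{pt,\infty}(\bar\rho)\bigr)=\pi_\infty(\rho).
\]
Your treatment of well-definedness, elementarity, and the homogeneity clause is fine in outline.
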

\begin{proof}
The well-definedness and elementarity of $\pi_{\infty}$ is left to the reader.
Fix $\rho\in\OR$. Let $(p,s)\in d^+$ with $\rho\in\rg(\pi_{ps,\infty})$,
taking $p$ as in Lemma \ref{lem:get_embedding-good}  part \ref{item:all_s_exists_embedding-good} with respect to $s$.
Note we may assume that $\pi_{pq}(\rho)=\rho$ for all $q\in\mathcal{C}_p$.
Let $\pi_{ps,\infty}(\bar{\rho})=\rho$.
Let $t\in[\OR]^{<\om}\cut\{\emptyset\}$ be such that $(p,t)$ is true,
$\bar{\rho},\rho,\pi_{ps,\infty}\in H^p_t$
and $(p,s,t)$ is embedding-good.  Then
\begin{eqnarray*}
\rho^* & = & \pi_{p\infty}(\rho) \\
{} & = & \pi_{pt,\infty}(\pi_{ps,\infty}({\bar \rho})) \\
{} & = & \pi_{pt,\infty}(\pi_{ps,\infty})(\pi_{pt,\infty}({\bar
\rho})) \\
{} & = & \pi_\infty(\rho).
\end{eqnarray*}
The ``moreover'' clause is as in Lemma \ref{lem:M_inf,*_hom}.
\end{proof}

We now define the associated {\em Varsovian model} $\vV$ as
\begin{eqnarray}\label{defn_varsovian_model}
\vV = L[{\cal M}_\infty, \pi_\infty].
\end{eqnarray}
So by Lemmas \ref{lem:M_inf,*_hom}
and \ref{lem:pi^infty}, $\vV$ is a class of  ${\cal
P}_p$, for all $p \in d$, and $\pi_{p_0p}(\vV)=\vV$.
Let $\vV^{\M_\infty}=\pi_{p_0\infty}(\vV)$, so $\vV^{\M_\infty}$ is defined over $\M_\infty$
just as $\vV$ over $M$. So letting $\pi_\infty^{\M_\infty}=\pi_{p_0\infty}(\pi_\infty)$,
\[ \vV^{\M_\infty}=L[\M_\infty^{\M_\infty},\pi_\infty^{\M_\infty}].\]

\begin{lem}\label{lem:pi_infty^+} $\pi_\infty$ extends
uniquely to an elementary
\[ \pi_\infty^+:\vV\to\vV^{\M_\infty}\]
such that $\pi_\infty^+(\pi_\infty)=\pi_\infty^{\M_\infty}$ \tu{(}and $\pi_\infty^+(\M_\infty)=\pi_\infty(\M_\infty)=\M_\infty^{\M_\infty}$\tu{)}.
Moreover, $\pi^+_\infty$ is $\vV$-definable from $\M_\infty,\pi_\infty$.
\end{lem}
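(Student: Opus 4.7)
The plan is to build $\pi_\infty^+$ by recursion on the canonical $L[\M_\infty,\pi_\infty]$-hierarchy, arranging that for each ordinal $\alpha$ the restriction $\pi_\infty^+\restriction L_\alpha[\M_\infty,\pi_\infty]$ is an isomorphism of structures
\[ (L_\alpha[\M_\infty,\pi_\infty];\in,\M_\infty\inter L_\alpha,\pi_\infty\inter L_\alpha)\to(L_{\alpha^*}[\M_\infty^{\M_\infty},\pi_\infty^{\M_\infty}];\in,\M_\infty^{\M_\infty}\inter L_{\alpha^*},\pi_\infty^{\M_\infty}\inter L_{\alpha^*}) \]
extending $\pi_\infty$ on $\M_\infty$ and the $*$-map on ordinals. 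Because $\pi_\infty$ is elementary with $\pi_\infty(\rho)=\rho^*$ for all $\rho$ by Lemma \ref{lem:pi^infty}, and $\M_\infty$ is a proper class, the map $*$ is a strictly increasing class function on $\OR$, so this ordinal indexing is coherent.

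The recursion itself is standard: at $\alpha=0$ we take $\pi_\infty^+\restriction L_0=\emptyset$; at limit stages we take unions; at a successor stage, each $y\in L_{\alpha+1}[\M_\infty,\pi_\infty]$ is definable over the level-$\alpha$ structure by some pair $(\varphi,\vec z)$ with $\vec z\in L_\alpha[\M_\infty,\pi_\infty]$, and we set $\pi_\infty^+(y)$ to be the element of $L_{\alpha^*+1}[\M_\infty^{\M_\infty},\pi_\infty^{\M_\infty}]$ defined over its level-$\alpha^*$ structure by $(\varphi,\pi_\infty^+(\vec z))$. Well-definedness and bijectivity at the successor step reduce to the inductive claim that $\pi_\infty^+\restriction(\M_\infty\inter L_\alpha)$ surjects onto $\M_\infty^{\M_\infty}\inter L_{\alpha^*}$, and similarly for the $\pi_\infty$-predicate: this is the principal bookkeeping, and follows from elementarity of $\pi_\infty$ together with $\pi_\infty\restriction\OR=*$, by tracking the G\"odel codings used in the $L[A]$-construction.

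With the hierarchy-by-hierarchy isomorphism in hand, elementarity of $\pi_\infty^+:\vV\to\vV^{\M_\infty}$ is immediate by reflection: any formula true in $\vV$ is witnessed at some level, and the level-by-level isomorphism transports the witness. Uniqueness is forced: any elementary extension must send $\gamma\mapsto\gamma^*$ and agree with $\pi_\infty$ on $\M_\infty$, and each element of $\vV$ is determined by its $L[\M_\infty,\pi_\infty]$-definition from such data. The clauses $\pi_\infty^+(\M_\infty)=\M_\infty^{\M_\infty}$ and $\pi_\infty^+(\pi_\infty)=\pi_\infty^{\M_\infty}$, interpreted as statements about classes, fall out of the same construction, since the level-$\alpha$ chunks of the two predicates are explicitly matched by the isomorphism.

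Finally, $\vV$-definability of $\pi_\infty^+$ from $\M_\infty$ and $\pi_\infty$ is clear: the recursion uses only these two predicates (note $*=\pi_\infty\restriction\OR$ and both $\M_\infty^{\M_\infty}$ and $\pi_\infty^{\M_\infty}$ are internally definable over $\M_\infty$, which is itself a class of $\vV$). The one genuinely nontrivial step is the level-matching bookkeeping mentioned above, ensuring that $\pi_\infty$ carries $\M_\infty\inter L_\alpha[\M_\infty,\pi_\infty]$ bijectively onto $\M_\infty^{\M_\infty}\inter L_{\alpha^*}[\M_\infty^{\M_\infty},\pi_\infty^{\M_\infty}]$ and likewise for the graph-predicate $\pi_\infty$; this is the main obstacle, but it is a routine computation inside the $L[A,B]$-hierarchy once elementarity of $\pi_\infty$ and $\pi_\infty\restriction\OR=*$ are in hand.
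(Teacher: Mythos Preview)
Your approach has a genuine gap. First, the level-by-level \emph{isomorphism} cannot exist: since $\pi_\infty\rest\OR=*$ is not surjective, the ordinals $<\alpha$ in $L_\alpha[\M_\infty,\pi_\infty]$ have image $\{\beta^*:\beta<\alpha\}$, a proper subset of $\alpha^*$ once $*$ moves anything below $\alpha$; so $\pi_\infty^+\rest L_\alpha$ is not onto $L_{\alpha^*}$. More importantly, even aiming only for an elementary embedding, the step you label ``routine bookkeeping''---that $\pi_\infty^+$ carries the predicate $\pi_\infty$ to $\pi_\infty^{\M_\infty}$---is the substantive content of the lemma and does not follow from your inputs. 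For $(x,\pi_\infty(x))$ in the graph of $\pi_\infty$, your map sends it to $(\pi_\infty(x),\pi_\infty(\pi_\infty(x)))$, and landing in the graph of $\pi_\infty^{\M_\infty}$ requires $\pi_\infty^{\M_\infty}(\pi_\infty(x))=\pi_\infty(\pi_\infty(x))$. But $\pi_\infty$ is an $M$-class, not an $\M_\infty$-class, so elementarity of $\pi_\infty:\M_\infty\to\M_\infty^{\M_\infty}$ tells you nothing about how $\pi_\infty$ itself is transported.

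The paper supplies the missing ingredient via the external direct limit. By Lemmas~\ref{lem:M_inf,*_hom} and~\ref{lem:pi^infty}, the classes $\vV$, $\M_\infty$, $\pi_\infty$ are fixed by every $\pi_{p_0 p}$, hence are simultaneously classes of each $\P_p$. One verifies $\vV\sats\varphi(\M_\infty,\pi_\infty,\alpha)$ inside some $\P_p$ with $\alpha$ $p$-stable, then applies the genuinely elementary \emph{ambient} map $\pi_{p\infty}:\P_p\to\M_\infty$ to obtain $\vV^{\M_\infty}\sats\varphi(\M_\infty^{\M_\infty},\pi_\infty^{\M_\infty},\alpha^*)$. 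It is this elementarity of $\pi_{p\infty}$ on the ambient model---not just of $\pi_\infty$ on $\M_\infty$---that does the work.
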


\begin{proof}

Since every element of $\vV$ is definable over $\vV$ from $\M_\infty$, $\pi_\infty$ and
 some ordinal,  it suffices to see that for all formulas $\varphi$ and ordinals $\alpha$, we have
\[ \vV\sats\varphi(\M_\infty,\pi_\infty,\alpha)\ \Longleftrightarrow\ \vV^{\M_\infty}\sats\varphi(\M_\infty^{\M_\infty},\pi_\infty^{\M_\infty},\pi_\infty(\alpha)).\]
But $\vV\sats\varphi(\M_\infty,\pi_\infty,\alpha)$
iff
\[ \mathcal{P}_p\sats\text{``}\vV\sats\varphi(\M_\infty,\pi_\infty,\alpha)\text{''} \]
for each $p\in d$. Taking $p$ such that $\alpha$ is $p$-stable,
and then applying $\pi_{p\infty}$,
note that the latter holds iff $\M_\infty$ satisfies the corresponding formula
regarding $\pi_\infty(\alpha)$; that is, iff \[ \vV^{\M_\infty}\sats\varphi(\M_\infty^{\M_\infty},\pi_\infty^{\M_\infty},\pi_\infty(\alpha)).\qedhere\]
	\end{proof}

It therefore makes sense to define,  for any  $x \in {\cal M}_\infty$,
\begin{eqnarray}\label{star-infty}
	x^* = \pi_\infty(x),
\end{eqnarray}
and for $x\in\vV$,
\begin{eqnarray}\label{star-infty^+}
	x^{*+} = \pi_\infty^+(x);
\end{eqnarray}
that is, $*$ and $\pi_\infty$ denote the same function, as do $*+$ and $\pi_\infty^+$.

We next formulate a few more assumptions
which ensure that certain sets are generic over $\vV$.
Let $\delta\in\OR$ and $\mathbb{B}\in M$. Let
$\delta_p=\pi_{p_0p}(\delta)$ and $\delta_\infty=\pi_{p_0\infty}(\delta)$, etc.
Assume:

\begin{enumerate}[label=\tu{(}ug\arabic*\tu{)}]
\setcounter{enumi}{\value{tempcounter}}
\item\label{item:delta_reg,B_delta-cc_cba} $M\sats$``$\delta$ is regular and $\BB$ is a $\delta$-cc~complete Boolean algebra'', and

\item\label{item:delta_infty-cc} $\vV\sats$``$\delta_\infty$ is
regular and $\BB_\infty$ is
$\delta_\infty$-cc''.
\setcounter{tempcounter}{\value{enumi}}
\end{enumerate}

Now work in $\vV$.
Let $\Ll$ be the  infinitary
propositional language, with
propositional symbols $P_{\xi}$ for each ordinal $\xi$,
generated by closing under
under negation and under conjunctions and disjunctions of length ${<
\delta_\infty}$ (so if
$\left<\varphi_\alpha\right>_{\alpha<\theta}\sub\Ll_\mu$
where $\theta<\delta_\infty$, then
$\bigwedge_{\alpha<\theta} \varphi_\alpha$
and $\bigvee_{\alpha<\theta}\varphi_\alpha$ are also  in $\Ll_\mu$).
(Note $\Ll$ is a proper class of $\vV$.)

Working in any outer universe of $\vV$, given a set $B$ of ordinals,
 the satisfaction relation $B
\sats \varphi$  for $\varphi\in\Ll_\mu$
is defined recursively as usual; that is,
 $B \models P_\xi$ iff $\xi \in B$;
 $B \models \lnot \varphi$ iff $B \not\models
\varphi$;
  $B \models \bigwedge_{\alpha<\theta} \varphi_\alpha$ iff $B \models
\varphi_\alpha$
for all $\alpha<\theta$;
and  $B \models \bigvee_{\alpha<\theta} \varphi_\alpha$ iff $B
\models
\varphi_\alpha$
for some $\alpha<\theta$.

Fix some $\BB_p$-name $\tau_p\in \mathcal{P}_p$ for a set of ordinals, for some $p\in d$, and let $\tau_j=\pi_{pq}(\tau_p)$ for $q\in\mathcal{C}_p$,
and $\tau_\infty=\pi_{p\infty}(\tau_p)$.

\begin{definition}\label{defn_bukowsky-poset}
	Work in $\vV$. Let $\mathbb{L}$ be the poset whose conditions are formulas $\varphi\in\Ll$
	such that there is $p\in\BB_\infty$ such that \[ p\forces^{\M_\infty}_{\BB_\infty}\text{``}\tau_\infty\sats\varphi^{*+}\text{''},\]
	and with ordering $\varphi\leq\psi$ iff for every $p\in\BB_\infty$, we have \[ p\forces^{\M_\infty}_{\BB_\infty}\text{``}\tau_\infty\sats(\varphi^{*+}\Rightarrow\psi^{*+})\text{''.}\qedhere\]
	\end{definition}

Although $\Ll$ is proper class, it is easy to see that this is equivalent to a set forcing.
Note here that since $\varphi\in\vV$, $\varphi^{*+}$ is well-defined
and
\[ \varphi^{*+}\in\vV^{\M_\infty}\sub\M_\infty,\]
and the forcing assertions above make sense, as $\tau_\infty\in\M_\infty$ is a $\BB_\infty$-name
and $\varphi^{*+},\psi^{*+}\in\M_\infty$. Note that by modding out by the equivalence relation
\[ \varphi\approx\psi\Longleftrightarrow\varphi\leq\psi\leq\varphi,\]
 we get a forcing-equivalent poset defined as follows:

\begin{definition}\label{dfn:L_forcing_2}
Work in $\vV$. Let $\mathbb{L}\sub\BB_\infty$ be the forcing whose conditions are those Boolean values
in $\BB_\infty$
of the form
\[ ||\text{``}\tau_\infty\sats\varphi^{*+}\text{''}||_{\BB_\infty}^{\M_\infty},\]
where $\varphi\in\Ll$, excluding the $0$-condition of $\BB_\infty$, and with ordering induced by $\BB_\infty$.
\end{definition}

Note that the forcing $\mathbb{L}$ depends on the name $\tau_\infty$;
if we want to make this explicit, we will write $\mathbb{L}(\tau_\infty)$.

\begin{lemma}\label{c2}
$\vV \models$ ``$\mathbb{L}$ has the $\delta_\infty$-c.c.''
\end{lemma}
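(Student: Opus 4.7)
The plan is to show that $\mathbb{L}$ inherits its chain condition directly from $\BB_\infty$: I will show that every antichain of $\mathbb{L}$ is automatically an antichain of $\BB_\infty$ in $\vV$, so hypothesis \ref{item:delta_infty-cc} finishes the job. We work throughout in $\vV$ and use the formulation of Definition \ref{dfn:L_forcing_2}, so a condition in $\mathbb{L}$ is a nonzero element of $\BB_\infty$ of the form $||\tau_\infty\sats\varphi^{*+}||_{\BB_\infty}^{\M_\infty}$ for some $\varphi\in\Ll$.

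The key step is to verify that $\mathbb{L}$ is closed under nonzero binary meets computed in $\BB_\infty$. Let $b_i=||\tau_\infty\sats\varphi_i^{*+}||_{\BB_\infty}^{\M_\infty}\in\mathbb{L}$ for $i=1,2$, and put $\chi=\varphi_1\wedge\varphi_2\in\Ll$. By the elementarity of $\pi_\infty^+:\vV\to\vV^{\M_\infty}$ from Lemma \ref{lem:pi_infty^+}, applied to the $\vV$-definable notions of $\Ll$ and its propositional connectives, we have $\chi^{*+}=\varphi_1^{*+}\wedge\varphi_2^{*+}$ inside $\vV^{\M_\infty}\subseteq\M_\infty$. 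Since the Boolean-valued semantics computed in $\M_\infty$ respects the propositional operations,
\[ ||\tau_\infty\sats\chi^{*+}||_{\BB_\infty}^{\M_\infty}\;=\;b_1\wedge b_2. \]
If $b_1\wedge b_2\neq 0_{\BB_\infty}$, this very Boolean value witnesses that $\chi\in\Ll$ is a condition in the sense of Definition \ref{defn_bukowsky-poset}, so $b_1\wedge b_2\in\mathbb{L}$.

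It now follows that every antichain of $\mathbb{L}$ is an antichain of $\BB_\infty$: if $b_1,b_2\in\mathbb{L}$ were compatible in $\BB_\infty$ then $b_1\wedge b_2\neq 0$, so by the previous paragraph $b_1\wedge b_2\in\mathbb{L}$ would be a common extension in $\mathbb{L}$. Hypothesis \ref{item:delta_infty-cc} then yields that any such antichain has size $<\delta_\infty$ in $\vV$, completing the proof. There is essentially no obstacle, since the only point of substance is the identity $(\varphi_1\wedge\varphi_2)^{*+}=\varphi_1^{*+}\wedge\varphi_2^{*+}$, which is immediate from the elementarity afforded by Lemma \ref{lem:pi_infty^+}.
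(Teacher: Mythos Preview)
Your proof is correct and follows the same approach as the paper: deduce the $\delta_\infty$-c.c.\ of $\mathbb{L}$ from that of $\BB_\infty$ via Definition~\ref{dfn:L_forcing_2}. The paper's proof is a one-liner (``an immediate consequence of Definition~\ref{dfn:L_forcing_2}''), whereas you make explicit the point that $\mathbb{L}$ is closed under nonzero $\BB_\infty$-meets (via $(\varphi_1\wedge\varphi_2)^{*+}=\varphi_1^{*+}\wedge\varphi_2^{*+}$), so that $\mathbb{L}$-antichains are $\BB_\infty$-antichains; this is exactly the detail the paper leaves to the reader.
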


\begin{proof}
	Since $\vV\sats$``$\BB_\infty$ has the $\delta_\infty$-cc'' (by \ref{item:delta_infty-cc}),
	this is an immediate consequence of Definition \ref{dfn:L_forcing_2}.\end{proof}

\begin{lemma}\label{c3}  Let
	 $A\in M$ be a set of ordinals and $p_1\in d$ and suppose that for all $q\in\mathcal{C}_{p_1}$ there is $g\in M$ such that $g$ is
		$({\cal P}_q,{\mathbb B}_q)$-generic
		and $(\tau_q)_g = A$.\footnote{In our applications, where $M$
			will be a
			(pure or strategic) premouse, $A$ will typically be a canonical code
			for
			$M|\mu$, and the name $\tau_q$ will provide
			a canonical translation of the pair
			$({\cal P}_q|\mu,{\dot g})$ into $M|\mu$,
			where $\dot g$ is the generic filter.}
Then the filter
\[ G_{A} = \{ \varphi \in\mathbb{L} \bigm| A \models \varphi \}; \]
or equivalently, if
using Definition \ref{dfn:L_forcing_2} to define $\mathbb{L}$, the filter
\[ G_A=\Big\{||\text{``}\tau_\infty\sats\varphi^{*+}\text{''}||^{\M_\infty}_{\BB_\infty}\Bigm| \varphi\in\Ll\wedge A\sats\varphi\Big\}, \]
 is
$(\vV,\mathbb{L})$-generic, and $A\in\vV[G_A]$ (so note
$\vV[A]=\vV[G_A]$).
\end{lemma}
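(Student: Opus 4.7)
The plan is to translate $\mathbb{L}$-conditions in $\vV$ into elements of $\BB_q$ via the elementarity of $\pi_{q\infty}$, then reduce genericity over $\vV$ to the $(\Pp_q,\BB_q)$-genericity of $g$. For each $\varphi\in\vV$ set
\[ c_\varphi \eqdef \bigl|\bigl|\,\tau_q\sats\varphi\,\bigr|\bigr|_{\BB_q}^{\Pp_q} \in \BB_q, \]
which is well-defined since $\BB_q$ is complete in $\Pp_q$ and $\varphi\in\vV\subseteq\Pp_q$. Taking $q\in\Cc_{p_1}$ sufficiently far out in a cone from Lemma \ref{lem:cones} (so that the ordinal parameters used to define $\varphi$ over $\vV$ are $q$-stable, which is possible by the invariance $\pi_{p_0 q}(\vV)=\vV$ and $\pi_{p_0 q}(\pi_\infty)=\pi_\infty$ supplied by Lemmas \ref{lem:M_inf,*_hom}--\ref{lem:pi_infty^+}), one has $\pi_{q\infty}(\varphi)=\varphi^{*+}$, hence by elementarity $\pi_{q\infty}(c_\varphi)=b_\varphi$, where $b_\varphi=||\tau_\infty\sats\varphi^{*+}||_{\BB_\infty}^{\M_\infty}$. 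Combined with the forcing theorem for $g$, this yields the fundamental equivalence
\[ A\sats\varphi \iff c_\varphi\in g \iff \varphi\in G_A. \]

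\textbf{Filter property.} Upward closure reduces to: $\varphi\leq\psi$ in $\mathbb{L}$ iff $b_\varphi\leq b_\psi$ in $\BB_\infty$, and pulling back by $\pi_{q\infty}$ gives $c_\varphi\leq c_\psi$; hence $\varphi\in G_A$ (so $c_\varphi\in g$) implies $c_\psi\in g$, so $\psi\in G_A$. Meet closure follows from $c_{\varphi\wedge\psi}=c_\varphi\wedge c_\psi$ and the fact that $g$ is a filter, so $c_\varphi\wedge c_\psi\in g$ is nonzero, witnessing $b_{\varphi\wedge\psi}\neq 0$ and hence $\varphi\wedge\psi\in\mathbb{L}$.

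\textbf{Genericity.} Let $D\in\vV$ be predense in $\mathbb{L}$. By Lemma \ref{c2} we may assume $D$ is a maximal antichain of size $<\delta_\infty$ in $\vV$, so the disjunction $\chi_D\eqdef\bigvee_{\varphi\in D}\varphi$ lies in $\Ll$. Maximality of $D$ forces $b_{\chi_D}=1_{\BB_\infty}$: otherwise $\neg\chi_D$ would be a nonzero $\mathbb{L}$-condition incompatible with every element of $D$. Choosing $q$ large enough in the cone that $\pi_{q\infty}(\varphi)=\varphi^{*+}$ for every $\varphi\in D$ simultaneously, elementarity and commutation of $\pi_{q\infty}$ with set-sized joins give
\[ 1_{\BB_q} = c_{\chi_D} = \bigvee_{\varphi\in D} c_\varphi, \]
so $\{c_\varphi:\varphi\in D\}$ is predense in $\BB_q$ inside $\Pp_q$. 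The $(\Pp_q,\BB_q)$-genericity of $g$ then furnishes $\varphi\in D$ with $c_\varphi\in g$, i.e., $\varphi\in G_A\cap D$. Finally $A\in\vV[G_A]$ because $\{\xi\in\OR:P_\xi\in G_A\}=A$: if $\xi\in A$ then $A\sats P_\xi$ hence $P_\xi\in G_A$, and if $\xi\notin A$ then $A\not\sats P_\xi$ so $P_\xi\notin G_A$.

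The principal obstacle is securing the translation identity $\pi_{q\infty}(c_\varphi)=b_\varphi$ uniformly over a given set of $\varphi$'s; this relies essentially on the invariance of the direct limit data under the maps $\pi_{p_0 q}$, which is the whole payoff of assumptions \ref{item:i_0_exists}--\ref{item:all_s_eventually_internally_in}. Once in place, the rest is routine bookkeeping with completeness of $\BB_q$, the forcing theorem in $\Pp_q$, and the $\delta_\infty$-cc from Lemma \ref{c2}.
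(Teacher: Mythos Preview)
Your proof is correct and follows essentially the same route as the paper's: both reduce genericity over $\vV$ to genericity of $g$ over $\Pp_q$ by pushing Boolean values through $\pi_{q\infty}$, using that for $q$ far enough out one has $\pi_{q\infty}(\varphi)=\pi_\infty^+(\varphi)=\varphi^{*+}$ (this is exactly the invariance content of Lemmas \ref{lem:M_inf,*_hom}--\ref{lem:pi_infty^+} you cite). The only organizational difference is that the paper argues by contradiction---forming $\psi=\bigwedge_{\alpha<\theta}\neg\varphi_\alpha$, observing $A\models\psi$ forces $\psi\in\mathbb{L}$ via some $c_\psi\in g$, and deriving $\psi\perp\varphi_\alpha$ for all $\alpha$---whereas you argue directly with the dual disjunction $\chi_D=\bigvee_{\varphi\in D}\varphi$, showing $b_{\chi_D}=1_{\BB_\infty}$ pulls back to predensity of $\{c_\varphi:\varphi\in D\}$ in $\BB_q$. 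These are two sides of the same coin, and neither buys anything the other doesn't.
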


\begin{proof} Easily, $G_{A}$ is a filter. We verify
genericity, and then clearly $A\in\vV[G_A]$.
Let $\left<\varphi_\alpha\right>_{\alpha<\theta} \in \vV$ be a maximal antichain
of $\mathbb{L}$. We must see $G_{A}$ meets
$\left<\varphi_\alpha\right>_{\alpha<\theta}$, or equivalently,
that $A\sats\varphi_\alpha$ for some $\alpha$. Supposing otherwise, $A\sats\psi$ where
\[ \psi=\bigwedge_{\alpha<\theta}
\lnot
\varphi_\alpha, \]
and by Lemma \ref{c2}, $\theta<\delta_\infty$, so  $\psi\in\Ll$.

For each $q\in\mathcal{C}_{p_1}$, we have $\vV \subseteq {\cal P}_q$, so $\mathbb{L}
\in {\cal
P}_q$,
and by
hypothesis there is a $(\P_q,\BB_q)$-generic
$g\in M$
with $(\tau_q)_g =
A$, so
\begin{eqnarray}\label{a_condition}
\exists p \in \BB_q \, [p \forces^{{\cal
P}_q}_{\BB_q}
\text{``}\tau_q\models \psi
\text{''}].
\end{eqnarray}
Considering the definition of $\pi_\infty^+$,
note that we may take $q\in\mathcal{C}_p$ such that $\pi_{q\infty}(\psi)=\pi_\infty^+(\psi)=\psi^{*+}$,
so that (\ref{a_condition}) implies
$$\exists p \in {\mathbb B}_\infty \,[ p \Vdash^{{\cal
M}_\infty}_{{\mathbb
B}_\infty}
\text{``}\tau_\infty \models \psi^{*+}
\text{''}]{\rm , }$$
so $\psi\in\mathbb{L}$.
But the elementarity of $\pi_\infty^+$ easily gives that $\psi\perp
\varphi_\alpha$ for every
$\alpha<\theta$,
so $\left<\varphi_\alpha\right>_{\alpha<\theta}$ is not a maximal antichain. Contradiction!
\end{proof}

Finally, suppose:

\begin{enumerate}[label=\tu{(}ug\arabic*\tu{)}]
	\setcounter{enumi}{\value{tempcounter}}
	\item \label{item:unif_grds} For every ordinal $\mu$ with $\mathrm{card}^M(V_\mu^M)=\mu$, there is a set $A'\in\pow(\mu)^M$
	coding $V_\mu^M$, and there is $p_1\in d$
such that $\mu$ is $p_1$-stable,
	and there is a $\BB_{p_1}$-name $\tau'_{p_1}\in \mathcal{P}_{p_1}$
	 such that the hypotheses of Lemma \ref{c3} hold for $A',p_1,\tau'_{p_1}$.
	\setcounter{tempcounter}{\value{enumi}}
\end{enumerate}

So under these assumptions  for a given $\mu,A',\tau'_{p_1}$,
the conclusion of Lemma \ref{c3} holds with respect to  $\mathbb{L}(\tau'_\infty)$.
Assumption \ref{item:unif_grds} basically says that the $\mathcal{P}_p$ form a system of grounds
for $M$ in a ``uniform'' manner.

\begin{dfn}\label{def_unif_grounds}
For $M$, etc, as above,
we say that  $\mathscr{D},\mathscr{D}^+$ provide {\em uniform grounds} for $M$ iff
conditions
\ref{item:M_pc_ZFC}--\ref{item:unif_grds} hold.
\end{dfn}

\begin{tm}\label{tm:Varsovian_is_ground}
Under the uniform grounds assumptions,
$\vV$ is a ground of $M$,  via a forcing
$\mathbb{P}$ such that
$M\sats$``$\PP$ has cardinality $\leq 2^{\delta_\infty}$''
and $\vV\sats$``$\mathbb{P}$ is $\delta_\infty$-cc''.
Therefore $\delta_\infty$ is a regular cardinal in $M$.

However, $\M_\infty$ is \emph{not} a ground of $M$.
\end{tm}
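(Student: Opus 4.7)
My plan is to split the argument into the positive direction ($\vV$ is a ground with the stated bounds) and the negative direction ($\M_\infty$ is not a ground), with the bulk of the work going into applying Bukovsky's theorem to pass from the local capture provided by Lemma~\ref{c3} to a single forcing realization of $M$ over $\vV$. For each ordinal $\mu$ with $\card{V_\mu^M}^M = \mu$, hypothesis \ref{item:unif_grds} supplies $p_1 \in d$, a set $A' \in \mathcal{P}(\mu)^M$ coding $V_\mu^M$, and a $\BB_{p_1}$-name $\tau'_{p_1}$ to which Lemma~\ref{c3} applies; hence $G_{A'}$ is $(\vV, \mathbb{L}(\tau'_\infty))$-generic with $V_\mu^M \in \vV[G_{A'}]$, and Lemma~\ref{c2} tells us $\mathbb{L}(\tau'_\infty)$ is $\delta_\infty$-c.c.\ in $\vV$. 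So every set of ordinals in $M$ lies in some $\delta_\infty$-c.c.\ $\vV$-generic extension.

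Next I would realize $M$ as a single $\vV$-generic extension via Bukovsky's theorem: $W$ is a $\delta$-c.c.\ generic extension of an inner model $V$ iff for every $f \in W$ with $f : \alpha \to \mathrm{OR}$ there is a $\delta$-approximation $F : \alpha \to [\mathrm{OR}]^{<\delta}$ in $V$ with $f(\beta) \in F(\beta)$. The approximation property is immediate from the previous paragraph and the standard fact that $\delta$-c.c.\ forcing admits such covers. Bukovsky then yields a single $\PP \in \vV$ and $(\vV, \PP)$-generic $G$ with $M = \vV[G]$, and one may take $\PP$ to be the canonical Bukovsky Boolean algebra generated from the name $\tau'_{\infty}$ for $\mu$ chosen large enough that $V_\mu^M$ generates all of $M$ as a $\PP$-name; this gives $\card{\PP}^\vV \leq 2^{\delta_\infty}$, and since $\vV \subseteq M$, also $\card{\PP}^M \leq 2^{\delta_\infty}$. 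Regularity of $\delta_\infty$ in $M$ then follows: $\delta_\infty$ is regular in $\vV$ by \ref{item:delta_infty-cc}, and $\delta_\infty$-c.c.\ forcing preserves cofinalities $\geq \delta_\infty$.

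For the negative direction, I would argue by contradiction: if $M = \M_\infty[H]$ for some $\mathbb{Q} \in \M_\infty$ and some $(\M_\infty, \mathbb{Q})$-generic $H$, then $\vV = L[\M_\infty, \pi_\infty] \subseteq \M_\infty[H]$ forces $\pi_\infty$ to be a class of $\M_\infty[H]$ definable from $H$ and $\M_\infty$. But $\pi_\infty : \M_\infty \to \M_\infty^{\M_\infty}$ is a nontrivial proper class elementary embedding into an $\M_\infty$-definable inner model of $\M_\infty$. Combining the Laver--Woodin definability of the ground $\M_\infty$ in $\M_\infty[H]$ with forcing absoluteness applied to sufficiently large set initial segments $\pi_\infty \rest V_\alpha^{\M_\infty}$ (each of which, being a set in $M$, is named by a $\mathbb{Q}$-name in $\M_\infty$), one extracts an $\M_\infty$-definable-from-a-set nontrivial elementary embedding of a large $V_\alpha^{\M_\infty}$ into an inner model of $\M_\infty$; ranging over $\alpha$ produces a class embedding contradicting Kunen's inconsistency (in the Suzuki generalization). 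The main obstacle is the Bukovsky step: verifying the approximation property cleanly over all of $M$ and pinning down $\PP$ precisely enough to certify the $2^{\delta_\infty}$ cardinality bound; the negative direction is routine modulo the standard Kunen-style extraction.
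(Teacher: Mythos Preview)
Your positive direction is essentially the paper's argument: both of you use Lemma~\ref{c3} (via \ref{item:unif_grds}) to place every set of ordinals of $M$ into some $\delta_\infty$-c.c.\ $\vV$-generic extension, and then invoke Bukovsky's theorem to conclude that $M$ itself is a single $\delta_\infty$-c.c.\ extension of $\vV$. The paper phrases the passage to a single forcing slightly differently (observing there are only set-many $\mathbb{L}(\tau'_\infty)\subseteq\BB_\infty$ and citing \cite[Theorem~2.2]{schindler_buk}), but the content is the same, and the regularity of $\delta_\infty$ in $M$ follows as you say.

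Your negative direction, however, has a genuine gap. The embedding $\pi_\infty:\M_\infty\to\M_\infty^{\M_\infty}$ is a nontrivial elementary embedding from $\M_\infty$ into a \emph{proper inner model} of $\M_\infty$. Neither Kunen's inconsistency nor Suzuki's theorem rules this out: those results concern embeddings $j:V\to V$, and definable embeddings $j:V\to N$ for inner $N\subsetneq V$ are perfectly consistent (any ultrapower embedding is such). Your extraction argument, even if it succeeded in producing an $\M_\infty$-definable class embedding $\M_\infty\to\M_\infty^{\M_\infty}$, would not yield a contradiction.

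The paper's argument uses a different embedding and a different impossibility result. By the positive direction applied \emph{inside} $\M_\infty$ (via elementarity), $\vV^{\M_\infty}$ is a ground of $\M_\infty$; so if $\M_\infty$ were a ground of $M$, then by transitivity $\vV^{\M_\infty}$ would also be a ground of $M$. Now $\pi_\infty^+:\vV\to\vV^{\M_\infty}$ (Lemma~\ref{lem:pi_infty^+}) is a nontrivial $M$-definable elementary embedding between two \emph{grounds} of $M$, and this is what is ruled out by \cite[Theorem~8]{gen_kunen_incon}. The key missing ingredient in your sketch is this ``no embedding between grounds'' principle, together with the observation that the relevant target $\vV^{\M_\infty}$ (not $\M_\infty^{\M_\infty}$) is itself a ground of $M$ under the contradiction hypothesis.
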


\begin{proof}
We have that every set (of ordinals) in $M$ is  generic over $\vV$ for some
$\mathbb{L}(\tau'_\infty)\sub\BB_\infty$. Since there
are only set-many such forcings, $\vV$ is in fact a ground of $M$
for some such $\mathbb{L}(\tau'_\infty)$.
Moreover, this forcing is $\delta_\infty$-cc in $\vV$,  by
 \cite[Theorem 2.2]{schindler_buk},
we can find a forcing $\mathbb{P}\in\vV$ as desired.

The ``therefore'' clause now follows (recall $\delta_\infty$ is a regular cardinal of $\vV$).

Now $\vV^{\M_\infty}$ is a ground of $\M_\infty$ (since $\vV$ is a ground of $M$).
Suppose $\M_\infty$ is a ground of $M$.
Then $\vV^{\M_\infty}$ is also a ground of $M$.
But $M$ defines $\pi_\infty^+:\vV\to\vV^{\M_\infty}$,
which is then a non-trivial elementary embedding between two  grounds of $M$,
contradicting \cite[Theorem 8]{gen_kunen_incon}.
\end{proof}

When we produce instances of uniform grounds later, we will actually know more:
we will have $V_{\delta_\infty}^{\M_\infty}=V_{\delta_\infty}^{\vV}$ and
$\delta_\infty$ Woodin in $\vV$ (hence also in $\M_\infty$, so $\delta$ Woodin in $M$, which will be an assumption),
and $\BB\sub\delta$, so $\mathbb{L}\sub\delta_\infty$, so some $\mathbb{L}(\tau'_\infty)$
will be a $\PP$ as above, but in fact of cardinality $\delta_\infty$ in $\vV$ and hence also in $M$.

\section{The model $\Mswsw$}\label{sec:Mswsw}
In this section introduce the mouse $\Mswsw$ we
will be analyzing,
and establish some of its basic properties,
as well as some of those of its iteration strategy.

\begin{dfn}Let $\psi_{\swsw}$ be the statement, in the passive premouse
language, asserting  ``There are ordinals
$\delta_0<\kappa_0<\delta_1<\kappa_1$
with $\delta_i$  Woodin and $\kappa_i$ strong for $i\leq 1$,
as witnessed by $\es$''.  Let $M^\#$ be the least active mouse
such that $M^\#|\mu\sats\psi_{\swsw}$ where $\mu=\crit(F^{M^\#})$.\footnote{By
\cite{mim},  Woodinness and strength is automatically witnessed by
$\es^{\M^\#}$, as a consequence of iterability, but we will also
consider premice $N\sats\psi_{\swsw}$ which need not be iterable.}
Then $\Mswsw$ denotes the proper class model left behind
by iterating $F^{M^\#}$ out of
the universe.
Note $\rho_\om({M^\#})=\rho_1(M^\#)=\om$, $p_1^{M^\#}=\emptyset$ and
$M^\#$ is $\om$-sound. We assume
throughout that $M^\#$ exists and is $(\om,\OR,\OR)$-iterable.\footnote{We
could
probably just work with $(\om,\om_1+1)$-iterability.
By \cite[Theorems 9.1, 9.3]{iter_for_stacks}, because $M^\#$ is $\om$-sound and projects
to $\om$,
$(\om,\om_1+1)$-iterability for $M^\#$ implies $(\om,\om_1,\om_1+1)^*$-iterability,
and similarly, $(\om,\OR)$-iterability for $M^\#$ implies
$(\om,\OR,\OR)$-iterability.}
We usually write $M=\Mswsw$.

Let $\Sigma$ denote the  $(\om,\OR)$-iteration strategy
(that is, for $\om$-maximal, hence normal, trees, of  set length)
for $M$ which is
induced by the unique $(\om,\OR)$-strategy $\Sigma_{M^\#}$ for $M^\#$.
Let $\Gamma=\Sigma^{\stk}$ denote
the optimal-$(\om,\OR,\OR)$-strategy for $M$ which is induced
by $\Sigma$ via the normalization process of \cite{fullnorm_v3} (see Fact \ref{fact:Sigma_properties} below, especially item
\ref{item:Slist_} there).
\end{dfn}

Certain aspects of  normalization, used to define $\Gamma=\Sigma^{\stk}$ from $\Sigma$,
will be used in the paper. The main features we need are the properties of $\Sigma$ mentioned in Fact \ref{fact:Sigma_properties} below,
which can be black-boxed. Some of the details of the normalization process will also come up to some extent later on, but the reader unfamiliar with those
details should still be able to follow most of the arguments in the paper.

\begin{rem}\label{rem:E^M_def}
$M$ knows enough of  $\Sigma$
 that $M|\om_1^M$ is definable over the universe of $M$ (without parameters).
 Therefore by \cite[Theorem 1.1]{V=HODX_pub}, $\es^M$ is definable over the
universe
 of $M$ without parameters. Thus, when we talk about definability
 over $M$, it does not matter whether we are given $\es^M$
 as a predicate or not. However, if $g$ is $M$-generic,
 then $\HOD^{M[g]}$ can differ from $\HOD^{M[g]}_{\es^M}$, for example.
\end{rem}

\begin{dfn}\label{defn_correct_normal_iterate}
	If $\Ttvec$ is a stack on $M$ via $\Gamma$,
	then $\Gamma_{\Ttvec,N}$ denotes the tail stacks strategy for $N$ induced by
	$\Gamma$, i.e. $\Gamma_{\Ttvec,N}(\Uuvec)=\Gamma(\Ttvec\conc\Uuvec)$.
	Also $\Sigma_{\Ttvec,N}$ denotes the normal part of $\Gamma_{\Ttvec,N}$.
	Actually by what follows below, we can and usually do write $\Gamma_N$ and $\Sigma_N$.
	\end{dfn}

Recall
that $\Gamma=\Sigma^{\stk}$ is the strategy
for stacks induced by $\Sigma$.

\begin{fact}\label{fact:Sigma_properties}  We have:
\begin{enumerate}[label=\tu{(}$\Sigma$\arabic*\tu{)}]
\item\label{item:Slist_} $\Sigma$ is the unique $(\om,\OR)$-strategy
for $M$, so satisfies both strong hull condensation and minimal
hull condensation,
and therefore by \cite{fullnorm_v3}:
\begin{itemize}[label=--]
 \item every iterate of $M$ via $\Gamma$ is also an iterate via
$\Sigma$,
 \item if $G$ is $V$-generic then $\Sigma,\Gamma$ extend
 canonically to $V[G]$, with the same properties there;
 with an abuse of notation, we continue to write $\Sigma,\Gamma=\Sigma^{\stk}$ for
these extensions, or may write $\Sigma^{V[G]}$ or $\Gamma^{V[G]}$ to emphasize the distinction.
\end{itemize}
\item\label{item:Slist_positional} $\Gamma$ is fully positional, in that
whenever $\vec{\Tt},\vec{\Uu}$ are two stacks via $\Gamma$ with the same last model $N$,
then $\Gamma_{\vec{\Tt},N}=\Gamma_{\vec{\Uu},N}$, irrespective of drops. However, positionality will only be relevant in the non-dropping case.
\item\label{item:Slist_commuting} $\Gamma$ is
commuting,
i.e., if $\Ttvec\conc\Uuvec$ and $\Ttvec\conc\Vvvec$ are
non-dropping stacks via $\Gamma$ with a common last model, then
$i^\Uuvec=i^\Vvvec$; see \cite[***Theorem 10.4]{fullnorm_v3}.
\item For all $\Ttvec$ via $\Gamma$, with last model $N$,
$\Sigma_{\Ttvec,N}$ has minimal hull condensation
and $\Gamma_{\Ttvec,N}=(\Sigma_{\Ttvec,N})^{\stk}$; see \cite[***Theorem 10.2]{fullnorm_v3}.\footnote{In order
	to define $(\Sigma_{\Ttvec,N})^{\stk}$, one also needs that
	$N$ is $n$-standard, where $n=\deg^{\Ttvec}_\infty$, but this follows
	from the fact that $M$ is $0$-standard, by \cite[***Remark 2.2]{fullnorm_v3}.}
Thus, every iterate of $N$ via $\Gamma_{\Ttvec,N}$ is also an iterate via $\Sigma_{\Ttvec,N}$,
in a unique manner,
\item\label{item:Slist_strategy_agreement} If $\Tt,\Uu$ are via $\Sigma$, of successor length,
with non-dropping final branches, $P=M^\Tt_\infty$ and $Q=M^\Uu_\infty$,
$\eta\in\OR^M$,  $\eta'=i^\Tt(\eta)=i^\Uu(\eta)$
and $P|\eta'=Q|\eta'$
then $\Sigma_{\Tt,M^\Tt_\infty}$ and $\Sigma_{\Uu,M^\Uu_\infty}$
agree with one another in their action on trees $\Vv$ based on $P|\eta'$.
See \cite[***Theorem 10.5]{fullnorm_v3}.\footnote{The precise version of this fact
	might be simplified by the fact that $M$ is below superstrong.}
\end{enumerate}
\end{fact}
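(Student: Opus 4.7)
The plan is to establish each item by appeal to standard machinery, verifying only that the hypotheses of the cited results apply in our specific setting.

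For item \ref{item:Slist_}, uniqueness of the $(\omega,\mathrm{OR})$-iteration strategy for $M^\#$ is the usual consequence of its being $\omega$-sound with $\rho_\omega(M^\#)=\omega$: any two candidate strategies must agree via a comparison argument together with the Dodd--Jensen lemma. Strong hull condensation for $\Sigma_{M^\#}$ is then the standard property of the canonical strategy so extracted, and minimal hull condensation is a formal weakening. Both properties descend from $\Sigma_{M^\#}$ to $\Sigma$ on $M$ since $\Sigma$ is induced by iterating $F^{M^\#}$ out of the universe, so normal trees on $M$ lift to normal trees on $M^\#$. The extension to a generic extension $V[G]$ is the usual absoluteness: new trees in $V[G]$ still have their unique branches determined by the lift back to $M^\#$ in $V$, and the relevant forms of hull condensation are preserved.

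Items \ref{item:Slist_positional} and \ref{item:Slist_commuting}, together with the claim about tail strategies having minimal hull condensation and satisfying $\Gamma_{\Ttvec,N}=(\Sigma_{\Ttvec,N})^{\stk}$, are the central output of \cite{fullnorm}. The essential input there is precisely minimal hull condensation for $\Sigma$, which we have from item \ref{item:Slist_}; this is the condition under which the embedding-normalization process produces a well-defined, positional, commuting stacks strategy $\Gamma=\Sigma^{\stk}$, whose tails inherit the same minimal hull condensation and hence admit their own stacks strategies computed by the same recipe. The mild side-condition that $N$ be $\deg^{\Ttvec}_\infty$-standard, needed to form $(\Sigma_{\Ttvec,N})^{\stk}$, reduces to $0$-standardness of $M$, which holds because $M$ is a standard $\lambda$-indexed premouse.

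For item \ref{item:Slist_strategy_agreement}, the point is that when $P$ and $Q$ agree below $\eta'$, a tree $\Vv$ based on $P|\eta'=Q|\eta'$ admits two embedding-normalization lifts to trees on $M$---one via $\Tt$ and one via $\Uu$---but by minimal hull condensation, both lifts produce the same $\Sigma$-branch, so the projected branches on $\Vv$ coincide. This is exactly the content of the relevant theorem in \cite{fullnorm}. The only real obstacle I anticipate is purely bookkeeping: confirming that the Jensen ($\lambda$-indexed) conventions, the below-superstrong setting, and the specific degree conventions used in this paper all fall within the scope of the results quoted, which they do.
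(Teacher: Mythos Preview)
Your proposal is appropriate and aligned with the paper's treatment. The paper does not prove this statement at all: it is labelled a \emph{Fact}, and each item simply cites the relevant result from \cite{fullnorm} or \cite{iter_for_stacks} inline, with no proof environment following. Your sketch of why the hypotheses of those cited results are met (uniqueness from $\omega$-soundness of $M^\#$, minimal hull condensation as the input to the normalization machinery, $0$-standardness of $M$) is exactly the kind of verification one would want, and goes slightly beyond what the paper does explicitly.
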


\begin{rem}Very strong hull condensation (\cite{steel_local_HOD_comp})
	implies minimal hull condensation (\cite{fullnorm_v3}), which implies
	minimal inflation condensation (\cite{fullnorm_v3}).
	For the normalization process of \cite{fullnorm_v3},
	minimal inflation condensation is sufficient,
	but for the generic absoluteness results,
    minimal hull condensation is used.
	\end{rem}

 \begin{dfn}
 		We say that a stack $\Ttvec$ on $M$ is \emph{correct} if it is via $\Gamma$.
 	We say that $N$ is a  \emph{$\Sigma$-iterate} of $M$
 	iff there is a correct stack $\Ttvec$ on $M$ with last model $N=M^{\Ttvec}_\infty$.
 By the properties above, we may in fact take $\Tt$ via $\Sigma$ (hence normal),
and note that this $\Tt$
 is uniquely determined
 by $N$ (and $\Sigma$); we write $\Tt_N=\Tt$. A $\Sigma$-iterate
 is a \emph{dropping iterate}
 iff $b^\Tt$ drops, and otherwise is \emph{non-dropping}.

Let $N$ be a non-dropping $\Sigma$-iterate.
Then a \emph{$\Sigma_N$-iterate} is similarly
an iterate of $N$ via $\Sigma_N$ (equivalently, via $\Gamma_N$).\footnote{We don't
	need to iterate dropping iterates of $M$ further.}
If $P$ is a non-dropping $\Sigma_N$-iterate,
let $i_{NP}:N\to P$ be the iteration map (via $\Sigma_N$).
Given $\delta\leq\OR^N$, we say
that $N$ is \emph{$\delta$-sound}
 iff, letting $\Tt=\Tt_N$, we have
$N=\Hull^N(\delta\cup\rg(i^\Tt))$;
equivalently,  $\nu(E^\Tt_\alpha)\leq\delta$
 for all $\alpha+1<\lh(\Tt)$.
\end{dfn}

\begin{dfn}\label{dfn:inds} Let $\mathscr{I}^M$ denote the class of
critical points of the linear iteration of $F^{M^\#}$ which
produces $M$.
For $N$ as above, let $\mathscr{I}^N=i_{MN}``\mathscr{I}^M$.
\end{dfn}

\begin{dfn}[$\Mswsw$-like]\label{dfn:Mswsw-like}
A premouse $N$ is \emph{$\Mswsw$-like}
 iff $N$ is proper class and satisfies a certain finite sub-theory $T$ of
the theory of $M$, including $\psi_{\swsw}+$``I have no active proper segment
$R$ such that $R|\crit(F^R)\sats\psi_{\swsw}$''. We will not spell $T$ out
exactly, but the reader should
 add statements to it as needed to make certain arguments work.
For an $\Mswsw$-like  model $N$, write
\begin{eqnarray}\label{notation-delta-kappa-gamma}
\begin{cases}
\delta_0^N = \mbox{ the least Woodin cardinal of } N \\
\kappa_0^N = \mbox{ the least strong cardinal of } N \\
\kappa_0^{+N} = (\kappa_0^N)^{+N} \\
\delta_1^N = \mbox{ the least Woodin cardinal of } N \mbox{ above } \kappa_0^N \\
\kappa_1^N = \mbox{ the least strong cardinal of } N \mbox{ above } \delta_0^N \\
\kappa_1^{+N}  = (\kappa_1^N)^{+N}\\
\end{cases}
\end{eqnarray}
If $N=M$, then we may suppress the superscript $N$, so $\delta_0 =
\delta_0^{M}$, etc.
\end{dfn}

\begin{fact}\label{fact:inds_preserved}
	Let $N$ be a  non-dropping $\Sigma$-iterate of $M$
	and $\Tt=\Tt_N$.
	Let $\delta=\delta(\Tt)=\sup_{\alpha+1<\lh(\Tt)}\nu(E^\Tt_\alpha)$.
	Then $\delta\leq\kappa_1^N<\min(\mathscr{I}^N)$
	and $\mathscr{I}^N$ is the unique club class of indiscernibles $I$
	such that $N=\Hull_1^N(I\cup\delta)$,
	or alternatively such that $N=\Hull^N_1(I\cup\kappa_1^P)$.
\end{fact}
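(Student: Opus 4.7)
The plan is to handle the inequalities $\delta\leq\kappa_1^N<\min(\mathscr{I}^N)$ first, then the hull characterization, and finally uniqueness; throughout I would exploit the structure of $M$ as a linear iterate of $M^\#$.

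For $\kappa_1^N<\min(\mathscr{I}^N)$: in $M^\#$ we have $\kappa_1^M<\mu=\min(\mathscr{I}^M)$, because by the definition of $M^\#$ the segment $M^\#|\mu$ satisfies $\psi_{\swsw}$, so all ordinals named there---in particular $\kappa_1$---lie below $\mu=\crit(F^{M^\#})$. Applying the elementary $i_{MN}$, which sends $\kappa_1^M$ to $\kappa_1^N$ (``least strong above $\delta_0$'' being an elementary predicate) and sends $\mu$ to $\min(\mathscr{I}^N)$ (by the definition $\mathscr{I}^N=i_{MN}``\mathscr{I}^M$), delivers the bound. For $\delta\leq\kappa_1^N$ I would argue inductively along $\Tt$ that $\nu(E^\Tt_\alpha)\leq\kappa_1^{M^\Tt_\alpha}$ for each $\alpha+1<\lh(\Tt)$, exploiting $M_{\swsw}$-likeness: since $\kappa_1^M$ is the largest strong of $M$, the shape of $\es^M$ constrains the extenders available on each $M^\Tt_\alpha$, and normality then enforces the generator bound. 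Taking the sup and using $i^\Tt(\kappa_1^M)=\kappa_1^N$ along the (non-dropping) main branch yields $\delta\leq\kappa_1^N$.

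For the hull characterization the key preliminary is $M=\Hull_1^M(\mathscr{I}^M)$, which I would derive from $\rho_1(M^\#)=\om$ and $p_1^{M^\#}=\emptyset$: every element of $M^\#|\mu=M|\mu$ is $\rSigma_1^{M^\#}$-definable from an ordinal parameter, and iterating $F^{M^\#}$ out of the universe promotes such parameters into the indiscernibles $\mathscr{I}^M$, with $M^\#$ itself $\rSigma_1$-recoverable in $M$ from $\mu$. Applying $i^\Tt$ elementarily gives $\rg(i^\Tt)\sub\Hull_1^N(\mathscr{I}^N)$; combined with the standard fact that $N=\Hull_1^N(\rg(i^\Tt)\cup\delta)$ (every element of the final model of a non-dropping normal tree is $\rSigma_1$-definable from an iteration-map image together with ordinals below the tree's generators), this gives $N=\Hull_1^N(\mathscr{I}^N\cup\delta)$, the first formulation; the alternative $N=\Hull_1^N(\mathscr{I}^N\cup\kappa_1^N)$ (reading the $\kappa_1^P$ of the statement as $\kappa_1^N$) is then immediate from $\delta\leq\kappa_1^N$. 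Uniqueness of $\mathscr{I}^N$ as the club class $I$ satisfying either hull equation is a Silver-style rigidity argument: the minimum of any such $I$ must equal $\mu^N$ (characterized via the indiscernibility structure), and the successive elements align with those of $\mathscr{I}^N$ by induction, using the soundness data for $M^\#$ to pin down the indiscernibility structure of $N$ above $\delta$.

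The main obstacle will be the clean verification of $\delta\leq\kappa_1^N$: a priori one worries that $\Tt$ might use an extender $E^\Tt_\alpha$ with $\crit(E^\Tt_\alpha)$ above $\kappa_1^{M^\Tt_\alpha}$, which would force $\nu(E^\Tt_\alpha)$ beyond $\kappa_1^N$. Ruling this out cleanly should require essential use of the minimality clause built into $M_{\swsw}$-likeness (``no active proper segment $R\pins M$ with $R|\crit(F^R)\sats\psi_{\swsw}$''), which is precisely the tool controlling the shape of $\es^M$ and hence of the iterates $M^\Tt_\alpha$; once this bound is in hand, the remaining transfer along $i^\Tt$ of the standard indiscernibility picture for mice with $\rho_\om=\om$ is essentially routine.
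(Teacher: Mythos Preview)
The paper gives no proof of this statement: it is labeled a \emph{Fact} and is immediately followed by the branch condensation lemmas, so there is nothing in the paper to compare your proposal against. Your outline hits the right ingredients: the soundness of $M^\#$ (with $\rho_1=\omega$, $p_1=\emptyset$) gives $M=\Hull_1^M(\mathscr{I}^M)$, the generator bound for non-dropping normal trees gives $N=\Hull_1^N(\rg(i^\Tt)\cup\delta)$, elementarity of $i^\Tt$ gives $\rg(i^\Tt)\subseteq\Hull_1^N(\mathscr{I}^N)$, and the minimality clause in $M_{\swsw}$-likeness is indeed what controls the location of critical points.

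One point does need tightening. Your inductive claim ``$\nu(E^\Tt_\alpha)\leq\kappa_1^{M^\Tt_\alpha}$ for each $\alpha+1<\lh(\Tt)$'' is not well-posed as stated: when $[0,\alpha]_\Tt$ drops, $M^\Tt_\alpha$ need not be $M_{\swsw}$-like and $\kappa_1^{M^\Tt_\alpha}$ need not be defined; and even when $[0,\alpha]_\Tt$ is non-dropping, $E^\Tt_\alpha$ may be a partial extender indexed above $\kappa_1^{M^\Tt_\alpha}$. The target should be $\kappa_1^N$ throughout, not $\kappa_1^{M^\Tt_\alpha}$. A cleaner route: for the \emph{last} extender $E^\Tt_{\beta_0}$ (where $\beta_0+2=\lh(\Tt)$), non-dropping of $b^\Tt$ forces $E^\Tt_{\beta_0}$ to be total over an $M_{\swsw}$-like $M^\Tt_\gamma$, so minimality gives $\crit(E^\Tt_{\beta_0})\leq\kappa_1^{M^\Tt_\gamma}$ and hence $\lambda(E^\Tt_{\beta_0})\leq\kappa_1^N$ and $\lh(E^\Tt_{\beta_0})\leq\kappa_1^{+N}$. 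Then use the agreement $N|\lh(E^\Tt_\alpha)=M^\Tt_\alpha|\lh(E^\Tt_\alpha)$ together with the fact that $N$ has no cardinals in $(\kappa_1^N,\kappa_1^{+N})$ to squeeze $\lambda(E^\Tt_\alpha)\leq\kappa_1^N$ for earlier $\alpha$ as well; this is where the argument needs a bit more care than your sketch indicates, and where the minimality clause is doing real work beyond merely bounding critical points.
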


The following two lemmas are instances of branch condensation (see \cite{hod_mice}) and are simple variants of
\cite[Lemma 2.1]{vm1}; we fill in a couple of key points which were omitted
from that proof, however.

\begin{lem}[Branch condensation A]\label{lem:branch_con}
Let $\Uu_0$ be a successor length tree on $M$, via $\Sigma$,
based on $M|\delta_0^M$,
with $b^{\Uu_0}$ non-dropping.
Let $\Tt,\Uu$ be on $N=M^{\Uu_0}_\infty$, via $\Sigma_N$,
based on $N|\delta_0^N$, with $\Tt$ of limit length and $\Uu$ successor
with $b^\Uu$ non-dropping.
Let $G$ be $V$-generic.
Let $b,k\in V[G]$ where $b$ is a non-dropping $\Tt$-cofinal branch
and
\[ k:M^\Tt_b|\delta_0^{M^\Tt_b}\to M^\Uu_\infty|\delta_0^{M^\Uu_\infty} \]
is elementary with $k\com i^\Tt_b\sub i^\Uu_{0\infty}$.
Then $b=\Sigma_N(\Tt)$.
\end{lem}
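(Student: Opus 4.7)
The plan is to deduce $b = \Sigma_N(\Tt)$ by exhibiting a hull embedding of $\Tt\conc b$ into $\Uu$ induced by $k$, and then invoking minimal hull condensation of $\Sigma_N$ as in \ref{item:Slist_}. First, extend $\Sigma$, and hence $\Sigma_N$, to $V[G]$ using \ref{item:Slist_}; the extension retains minimal hull condensation, so I may work throughout in $V[G]$. Set $c = \Sigma_N^{V[G]}(\Tt)$; the goal becomes $b = c$.

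For the hull embedding, I would proceed by induction along $\Tt$. Since $\Tt$ and $\Uu$ are based on $N|\delta_0^N$, every extender $E^\Tt_\alpha$ lies in $M^\Tt_\alpha|\delta_0^{M^\Tt_\alpha}$, and similarly for $\Uu$; moreover each model cut at $\delta_0$ is generated from the range of the tree embedding from $N$ together with the generators of the extenders used along the tree so far. The given map $k\colon M^\Tt_b|\delta_0^{M^\Tt_b} \to M^\Uu_\infty|\delta_0^{M^\Uu_\infty}$ serves as the top copy map, and the compatibility $k\circ i^\Tt_b \subseteq i^\Uu_{0\infty}$, combined with commutativity of tree embeddings in $\Uu$, determines by factoring a correspondence $\alpha \mapsto \alpha^*$ between nodes of $\Tt\conc b$ and $\Uu$, together with copy maps $k_\alpha\colon M^\Tt_\alpha|\delta_0^{M^\Tt_\alpha} \to M^\Uu_{\alpha^*}|\delta_0^{M^\Uu_{\alpha^*}}$, that carry extenders to extenders, preserve tree order, and commute with iteration maps. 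This data packages as a hull embedding of $\Tt\conc b$ into $\Uu$.

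Finally, minimal hull condensation of $\Sigma_N^{V[G]}$, applied to this hull embedding into $\Uu$ (which is itself via $\Sigma_N$), forces $\Tt\conc b$ to be via $\Sigma_N^{V[G]}$, so $b = \Sigma_N(\Tt) = c$, as desired.

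The main obstacle is the rigorous construction of the tree-level hull embedding. Since $k$ is a priori only given at the $\delta_0$-cut, one must verify that this suffices to recover the entire combinatorial skeleton of $\Tt\conc b$ inside $\Uu$: matching tree predecessors, identifying drops, and tracking $*$-models at successor steps. Each $M^\Tt_\alpha|\delta_0^{M^\Tt_\alpha}$ is determined by $\Tt\rest(\alpha+1)$ acting on $N|\delta_0^N$, and the extenders used in $\Tt$ all have length below the corresponding $\delta_0$ in their models, so in principle the $\delta_0$-cut carries all relevant tree data; the technical heart is verifying that the resulting system meets the formal notion of hull embedding used in \cite{fullnorm} to state minimal hull condensation. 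Once this is in hand the conclusion is immediate, closely paralleling \cite[Lemma 2.1]{vm1}.
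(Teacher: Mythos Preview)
Your approach has a genuine gap: a hull (or tree) embedding $\Pi:\Tt\conc b\hookrightarrow\Uu$ simply cannot be constructed from the hypotheses. A tree embedding requires, for each node $\alpha$ of $\Tt$, an interval $I_\alpha$ in $\Uu$ together with a copy map on the corresponding model, such that $E^\Tt_\alpha$ is carried to an extender actually used in $\Uu$, and the assignment respects tree order. But $\Uu$ is an \emph{arbitrary} successor-length tree on $N$ via $\Sigma_N$: it may be far shorter than $\Tt$, may have a completely different tree structure, and need not use any extender related to any $E^\Tt_\alpha$. Having only the final map $k$ with $k\circ i^\Tt_b\subseteq i^\Uu_{0\infty}$ gives you no handle on the internal combinatorics of $\Uu$; the ``factoring'' you describe would have to produce, for each $\alpha$, a node $\alpha^*$ in $\Uu$ whose model receives $M^\Tt_\alpha$, but there is no reason for such nodes to exist. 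Hull condensation is a powerful tool, but it presupposes the tree embedding; it does not manufacture one.

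The paper instead argues by cases on whether $\delta(\Tt)=\delta_0^{M^\Tt_b}$. In the short case $\delta(\Tt)<\delta_0^{M^\Tt_b}$, the Q-structure $Q'\pins M^\Tt_b$ for $\delta(\Tt)$ exists, $\delta(\Tt)$ is a strong cutpoint of $Q'$, and the map $k$ makes $Q'$ iterable; comparing with the Q-structure on the $c=\Sigma_N(\Tt)$ side yields $Q'=Q(\Tt,c)$, hence $b=c$. In the maximal case one extends $k$ to $k^+:M^\Tt_b\to M^\Uu_\infty$, observes that $M^\Tt_b=M^\Tt_c$ by comparison (both are iterable $\delta_0$-sound $M_{\mathrm{swsw}}$-like models), deduces that $i^\Tt_b$ and $i^\Tt_c$ agree on $\Hull^N(\mathscr I^N)\cap\delta_0^N$ since both fix a tail of indiscernibles, and then the Zipper Lemma forces $b=c$. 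The role of $k$ is not to build a tree embedding but merely to transfer iterability to $M^\Tt_b$ (or its Q-structure).
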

\begin{proof}
Because $\Sigma$ extends to $V[G]$,
with corresponding properties there (cf.~Fact \ref{fact:Sigma_properties}\ref{item:Slist_}),
we may assume  $G=\emptyset$.
Let $c=\Sigma_N(\Tt)$. Let $P_b=M^\Tt_b$ and $P_c=M^\Tt_c$.

Suppose first  $\delta(\Tt)<\delta_0^{P_b}$.
Then there is a Q-structure $Q'\pins P_b$ for $\delta(\Tt)$,
and because $\delta(\Tt)$ is a cardinal of $P_b$,
$M(\Tt)$ has no Woodin cardinals, so
$\delta(\Tt)$ is a strong cutpoint of $Q'$.
Because we have $k$, $Q'$ is iterable.

If $c$ is non-dropping and $\delta(\Tt)=\delta_0^{P_c}$
then we can compare $Q'$ versus $P_c$ for a contradiction.
So in any case, $Q=Q(\Tt,c)$ exists. Since $M(\Tt)$ has no Woodins,
$\delta(\Tt)$ is also a strong cutpoint of $Q$,
so we can compare and get $Q=Q'$, so $b=c$.

Now suppose $\delta(\Tt)=\delta_0^{P_b}$.
Then we can argue as in the proof of \cite[Lemma 2.1]{vm1};
however, we fill in a seemingly key point: We extend $k$ to
\[ k^+:P_b\to M^\Uu_\infty\]
with $k^+\com i^\Tt_b=i^\Uu_{0\infty}$ as in \cite{vm1}.
Now $P_b=P_c$ (this was not mentioned in \cite{vm1});
for $P_b$ is iterable and is $\delta_0^{P_b}$-sound,
and likewise for $P_c$, but both are models of ``I am $\Mswsw$'',
so comparison gives $P_b=P_c$. And because $P_b=P_c$
and $i^\Tt_b,i^\Tt_c$ fix all sufficiently large indiscernibles,
we can indeed conclude that $i^\Tt_b\rest X=i^\Tt_c\rest X$,
where
\[ X=\Hull^N(I^N)\inter\delta_0^N, \]
where $I^N$ is the class of $N$-indiscernibles. So by the Zipper Lemma,
we get $b=c$.
\end{proof}

There is also a version at $\delta_1$. We won't directly use this,
but will use a variant, which will use a similar proof:

\begin{lem}[Branch condensation B]\label{lem:branch_con_2}
Let $\Uu_0$ be  successor length  on $M$, via $\Sigma$,
based on $M|\delta_1^M$,
with $b^{\Uu_0}$ non-dropping.
Let $\Tt,\Uu$ be  on $N=M^{\Uu_0}_\infty$, via $\Sigma_N$,
based on $N|\delta_1^N$, with $\Tt$ of limit length and above
$\kappa_0^{+N}$, $\Uu$ successor length
with $b^\Uu$ non-dropping. Let $G$ be $V$-generic.
Let $b,k\in V[G]$ where $b$ is a non-dropping $\Tt$-cofinal branch
and
\[ k:M^\Tt_b|\delta_1^{M^\Tt_b}\to M^\Uu_\infty|\delta_1^{M^\Uu_\infty} \]
is elementary with $k\com i^\Tt_b=i^\Uu_{0\infty}$.
Then $b=\Sigma_N(\Tt)$.
\end{lem}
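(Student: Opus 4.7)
The plan is to follow the pattern of Lemma \ref{lem:branch_con}, transported from $\delta_0$ to $\delta_1$ and making essential use of the new hypothesis that $\Tt$ is above $\kappa_0^{+N}$. First, by Fact \ref{fact:Sigma_properties}(\ref{item:Slist_}), $\Sigma$ (and hence $\Sigma_N$) extends canonically to $V[G]$ with minimal hull condensation retained, so we may reduce to the case $G=\emptyset$. Let $c=\Sigma_N(\Tt)$ and write $P_b=M^\Tt_b$, $P_c=M^\Tt_c$; the goal is to show $b=c$.

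Split into cases by whether $\delta(\Tt)<\delta_1^{P_b}$. In the first case, $\delta(\Tt)$ is a cardinal but not Woodin in $P_b$, so there is a Q-structure $Q'\pins P_b$ for $\delta(\Tt)$. Because $\Tt$ is above $\kappa_0^{+N}$ and based on $N|\delta_1^N$, no extender on the sequence of $M(\Tt)$ or on the sequence of $Q'$ with critical point in $[\kappa_0^{+N},\delta(\Tt))$ has length $\geq\delta(\Tt)$, and the structure strictly below $\kappa_0^{+N}$ is untouched; so $\delta(\Tt)$ is a strong cutpoint of $Q'$. Via $k$, together with iterability of $M^\Uu_\infty|\delta_1^{M^\Uu_\infty}$ inherited from $\Sigma_N$, the Q-structure $Q'$ is iterable above $\delta(\Tt)$. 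Either $c$ is non-dropping with $\delta(\Tt)=\delta_1^{P_c}$, in which case we compare $Q'$ against $P_c$ and reach a contradiction, or $Q=Q(\Tt,c)$ exists and is likewise a strong-cutpoint Q-structure; comparing $Q'$ against $Q$ then forces $Q'=Q$, whence $b=c$.

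In the second case, $\delta(\Tt)=\delta_1^{P_b}$. Following \cite[Lemma 2.1]{vm1} and the corresponding step in the proof of Lemma \ref{lem:branch_con}, we extend $k$ to a fully elementary
\[ k^+:P_b\to M^\Uu_\infty \]
with $k^+\com i^\Tt_b=i^\Uu_{0\infty}$, using that $P_b$ is generated by $\delta_1^{P_b}$ together with $i^\Tt_b$-images of sufficiently large $N$-indiscernibles (which $i^\Uu_{0\infty}$ fixes). Both $P_b$ and $P_c$ are $\delta_1$-sound, $\Mswsw$-like, and iterable (the latter via $\Sigma_N$ for $P_c$ and via $k^+$ for $P_b$), so comparison yields $P_b=P_c$. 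To conclude via the Zipper Lemma we check $i^\Tt_b\rest X=i^\Tt_c\rest X$ for $X=\Hull^N(\mathscr{I}^N)\cap\delta_1^N$: any $\xi\in X$ is of the form $f(\vec{\alpha})$ for a definable Skolem term $f$ and $\vec{\alpha}\in\mathscr{I}^N$, and for $\vec{\alpha}$ large enough both $i^\Tt_b,i^\Tt_c$ fix $\vec{\alpha}$ (by Fact \ref{fact:inds_preserved}), so $i^\Tt_b(\xi)=f^{P_b}(\vec{\alpha})=f^{P_c}(\vec{\alpha})=i^\Tt_c(\xi)$ using $P_b=P_c$. The main obstacle is precisely the agreement/soundness computation of this last case at $\delta_1$: the hypothesis that $\Tt$ is above $\kappa_0^{+N}$ is what ensures that the relevant Skolem terms and indiscernibles together generate a cofinal subset of $\delta_1^N$ on which the two iteration maps coincide, enabling the Zipper Lemma to apply as in the $\delta_0$ case.
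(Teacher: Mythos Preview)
Your argument has a genuine gap in Case~1. You claim that $\delta(\Tt)$ is a strong cutpoint of the Q-structure $Q'$, arguing that ``the structure strictly below $\kappa_0^{+N}$ is untouched''. But this is exactly where the argument fails: since $\kappa_0^N$ is strong in $N$ and $\Tt$ is above $\kappa_0^{+N}$, the model $P_b$ (and hence possibly $Q'$) retains extenders $E$ with $\crit(E)=\kappa_0^N$ and $\lh(E)>\delta(\Tt)$. Such extenders overlap $\delta(\Tt)$, so $\delta(\Tt)$ need \emph{not} be a strong cutpoint of $Q'$. The paper's proof explicitly flags this: ``This time, $Q_b$ can have extenders $E$ overlapping $\delta(\Tt)$, but only with $\crit(E)=\kappa_0^N$.'' Once $\delta(\Tt)$ is not a strong cutpoint, you cannot compare the Q-structures directly above $\delta(\Tt)$ as you propose.

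The paper's fix is to compare phalanxes rather than Q-structures. One first passes to a $\kappa_0^N$-sound model $\bar{N}=M^{\Uu_0}_\alpha$ along $b^{\Uu_0}$ (the least $\alpha$ past which $\kappa_0$ is fixed), with $\pi=i^{\Uu_0}_{\alpha\infty}:\bar{N}\to N$ having $\crit(\pi)>\kappa_0^N$. Then one forms the phalanxes $\bar{\mathfrak{P}}=((\bar{N},\kappa_0^N),Q_b,\delta(\Tt))$ and $\bar{\mathfrak{Q}}=((\bar{N},\kappa_0^N),Q_c,\delta(\Tt))$, so that any overlapping extender with critical point $\kappa_0^N$ is applied back to $\bar{N}$. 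Iterability of these phalanxes is obtained by lifting to the corresponding phalanxes based on $N$ (via $\pi$), and those in turn are iterable: the $c$-side directly from $\Sigma_N$, and the $b$-side by first extending $k$ to $k^+:P_b\to M^\Uu_\infty$ and then lifting trees on $((N,\kappa_0^N),P_b,\delta(\Tt))$ to trees on $P_b$ via $(i^\Tt_b,\id)$. Comparing $\bar{\mathfrak{P}}$ with $\bar{\mathfrak{Q}}$ then gives $Q_b=Q_c$ (using $\kappa_0$-soundness of $\bar{N}$ and $\delta(\Tt)$-soundness of $Q_b,Q_c$), and one finishes as before. Your Case~2 outline is closer to what is needed, but the paper handles both cases uniformly through this phalanx comparison rather than by the direct Q-structure comparison you attempt.
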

\begin{proof}
Again we may assume $b,k\in V$. Let $\alpha\in b^{\Uu_0}$
be least with either
$\alpha+1=\lh(\Uu_0)$ or $\kappa_0(M^{\Uu_0}_\alpha)<\crit(i^{\Uu_0}_{\alpha
\infty})$. Let $\bar{N}=M^{\Uu_0}_\alpha$. So
$\kappa_0^N=\kappa_0^{\bar{N}}$ and  $\bar{N}$
 is $\kappa_0^N$-sound and $\pi:\bar{N}\to N$ where
$\pi=i^{\Uu_0}_{\alpha\infty}$ and $\crit(\pi)>\kappa_0^N$.
 Let $P_b=M^\Tt_b$, $c=\Sigma_N(\Tt)$  and $P_c=M^\Tt_c$.
If $\delta(\Tt)<\delta_1^{P_b}$ then let $Q_b\pins P_b$ be the Q-structure
for $\delta(\Tt)$; this exists because $\delta_1^N$ is the least
Woodin of $N$ above $\kappa_0^N$,
and $\Tt$ is above $\kappa_0^{+N}$.
Otherwise let $Q_b=P_b$. This time, $Q_b$ can have extenders $E$ overlapping
$\delta(\Tt)$,
but only with $\crit(E)=\kappa_0^N$.
Let $Q_c\ins P_c$ be likewise.

Define phalanxes
$\bar{\mathfrak{P}}=((\bar{N},\kappa_0^N),Q_b,\delta(\Tt))\text{ and
}\bar{\mathfrak{Q} } =((\bar{N},\kappa_0^N),Q_c,\delta(\Tt))$.
We claim $\bar{\mathfrak{P}},\bar{\mathfrak{Q}}$ are
iterable.\footnote{The notation
indicates that we start iterating the phalanx with extenders of index $>\delta(\Tt)$,
and extenders with critical point $\kappa_0$ apply to $\bar{N}$.}
Given this,
we can compare $\bar{\mathfrak{P}},\bar{\mathfrak{Q}}$,
and because $\bar{N}$ is $\kappa_0^N$-sound
and $Q_b,Q_c$ are $\delta(\Tt)$-sound, we get $Q_b=Q_c$,
so if $b\neq c$ then $P_b=Q_b=Q_c=P_c$ and $\delta_1^{P_b}=\delta(\Tt)$,
and we reach a contradiction like before.

Define phalanxes
$\mathfrak{P}=((N,\kappa_0^N),P_b,\delta(\Tt))$ and
$\mathfrak{Q}=((N,\kappa_0^N),P_c,\delta(\Tt))$.
It suffices to see $\mathfrak{P},\mathfrak{Q}$ are iterable,
because then we  can reduce
trees on $\bar{\mathfrak{P}}$ to trees on $\mathfrak{P}$,
using $\pi:\bar{N}\to N$, and likewise
 $\bar{\mathfrak{Q}}$ to $\mathfrak{Q}$.

But $\mathfrak{Q}$ is iterable because $c=\Sigma(\Tt)$.
For $\mathfrak{P}$, we have $k^+:P_b\to
M^\Uu_\infty$ defined as before
(note that the same definition still works in case $\delta(\Tt)<\delta_1^{P_b}$).
So $P_b$ is iterable.
But $i^\Tt_b:N\to P_b$ with $\crit(i^\Tt_b)>\kappa_0^N$.
So we can lift trees on $\mathfrak{P}$ to trees on $P_b$
using the maps $(i^\Tt_b,\id)$. So $\mathfrak{P}$ is iterable.
\end{proof}

\section{The first Varsovian model $\vV_1$} \label{section_first_varsovian}\label{sec:vV_1}
We begin by  identifying a natural direct limit system, giving  uniform
grounds for $M=\Mswsw$, in the sense
Definition \ref{def_unif_grounds} in
\S\ref{ground_generation_stuff},
hence yielding a Varsovian model, which we denote $\vV_1$
(we will later define a second Varsovian model $\vV_2$).
The direct limit system will be defined
analogously
to that of \cite[\S2]{vm1}.
The main difference is in the increased large cardinal level.
A smaller difference, one of approach,
is that we use normalization, which means that we can focus
on normal trees, instead of stacks.

\subsection{The models for the   system}\label{subsec:models_of_first_system}

\begin{dfn}Let $\Tt$ be a limit length normal tree on $M$, based on
	$M|\delta_0^M$, via $\Sigma$.\footnote{Recall $\Sigma$ is the
		$(0,\OR)$-strategy (that is, for $\om$-maximal,
		hence normal, trees) for $M$.}
	Let $b=\Sigma(\Tt)$.
	We say that $\Tt$ is \emph{short} iff either $b$ drops or
	$\delta(\Tt)<i^\Tt_{0b}(\delta_0^M) = \delta_0^{{\cal M}_b^\Tt}$; otherwise
	$\Tt$ is \emph{maximal}.
	Let $\Sigma_\sss$ be the restriction of $\Sigma$ to short trees.

	If $P$ is a non-dropping $\Sigma$-iterate of $M$
	and $\Tt$ is limit length normal on $P$ and based on $P|\delta_0^P$,
	we define \emph{short/maximal} for $\Tt$ analogously,
	and $\Sigma_{P,\sss}$ is the restriction of $\Sigma_P$
	to short trees.
\end{dfn}

\begin{definition}\label{defn_points_from_the_system}
	Let ${\mathbb U}$
	consist of all iteration trees ${\cal U}\in M|\kappa_0$ on $M$, such that either $\Uu$ is trivial, or
	\begin{enumerate}[label=\tu{(}\alph*\tu{)}]
		\item\label{item:U_basics} ${\cal U}$ is based on
		$M|\delta_0$, via $\Sigma_\sss$ (hence is $\om$-maximal),
		\item ${\cal U}$ is maximal,
	\end{enumerate}
	and for some strong cutpoint $\eta<\kappa_0$ of $M$, writing
	$\delta=\delta(\Uu)$
	and $R=M(\Uu)$,
	\begin{enumerate}[resume*]
		\item ${\rm lh}({\cal U})=\eta^{+M} = \delta$,
		\item ${\cal U}$ is definable from parameters over $M|\delta$,
		\item\label{item:M|delta_generic_over_R} $M|\delta$ is $R$-generic for
		$\BB_\delta^R$,
		\item $P\eqdef\mathscr{P}^M(R)$ is proper class;\footnote{Recall the notation $\mathscr{P}^M$ from \S\ref{subsec:notation}.}
		\footnote{Recall $\delta$ is Woodin in
			$P$, as witnessed by $\es^R$, and $V_{\delta}^P$ is the universe of $R$.}
		hence $P$ is a ground of $M$ via $\BB^R_\delta$,
		and in fact $P[M|\delta]\ueq M$. We write here also $\mathscr{P}^M(\Uu)=P$.\qedhere
	\end{enumerate}
\end{definition}

The proof of \cite[Lemma 2.2]{vm1} or the first few claims in
\cite{Theta_Woodin_in_HOD} give:

\begin{lemma}\label{lemma_1}\label{lem:inds_fixed}
	Let ${\cal U} \in {\mathbb U}$, $b = \Sigma({\cal U})$ and
	$P=\msP^M(\U)$. Then  ${\cal M}_{b}^{{\cal U}}=P$,
	$\mathscr{I}^M=\mathscr{I}^P$
	and $i_{0b}^{\cal T}\rest\mathscr{I}^M=\id$.
\end{lemma}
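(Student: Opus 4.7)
The plan is to deduce all three conclusions by comparison, following the template of \cite[Lemma 2.2]{vm1}. If $\mathcal{U}$ is trivial the conclusions are immediate, so assume $\mathcal{U}$ is non-trivial and hence maximal: $b$ is non-dropping and $\delta := \delta(\mathcal{U}) = \delta_0^{\mathcal{M}_b^{\mathcal{U}}}$. Thus $\mathcal{M}_b^{\mathcal{U}}$ is a proper class $\Mswsw$-like premouse extending $R := M(\mathcal{U})$, with $\delta$ a strong cutpoint and Woodin; by Definition \ref{defn_points_from_the_system}, so is $P = \mathscr{P}^M(\mathcal{U})$. Both agree on $V_\delta$, since each has $R$ as its level at $\delta$.

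First I would verify $(0,\OR)$-iterability of $P$ via the standard P-construction lift: any normal tree on $P$ whose extenders have length $> \delta$ translates to a normal tree on $M$ above $\delta$ (using that $M|\delta$ is $R$-generic for $\BB_\delta^R$ and $P[M|\delta]\ueq M$ above $\delta$), inheriting iterability from $\Sigma$; trees based on $R$ reduce to trees on $R$ and hence lie within the same framework. With both $\mathcal{M}_b^{\mathcal{U}}$ and $P$ iterable, I would coiterate them above $\delta$. Since both are proper class $\Mswsw$-like premice with $\delta$ a common strong cutpoint, the coiteration can only produce trivial iterates on both sides; combined with $\mathcal{M}_b^{\mathcal{U}} = \Hull^{\mathcal{M}_b^{\mathcal{U}}}(\rg(i_{0b}^{\mathcal{U}}) \cup \delta)$ (available because $\mathcal{U}$ is based on $M|\delta_0$ with $b$ non-dropping) and the fact that $P$ is generated over $R$ by the extenders of $\es^M$ above $\delta$, this forces $\mathcal{M}_b^{\mathcal{U}} = P$.

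For the statements about $\mathscr{I}$, I would invoke Fact \ref{fact:inds_preserved}. Since $\mathcal{U}$ is based on $M|\delta_0$ and $\min(\mathscr{I}^M) > \kappa_1 > \delta_0$, every extender $E$ used in $\mathcal{U}$ has $|E|^M < \delta_0$, while each $\alpha \in \mathscr{I}^M$ is a cardinal of $M$ far above $\delta_0$ of $M$-cofinality $\alpha$. A routine continuity argument on the direct limit $i_{0b}^{\mathcal{U}}$ then delivers $i_{0b}^{\mathcal{U}}(\alpha) = \alpha$ for every $\alpha \in \mathscr{I}^M$, establishing the third claim. Applying this elementary $i_{0b}^{\mathcal{U}}$ to the hull identity $M = \Hull^M_1(\mathscr{I}^M \cup \kappa_1)$ yields $P = \Hull^P_1(\mathscr{I}^M \cup \kappa_1^P)$, so by the uniqueness clause of Fact \ref{fact:inds_preserved} applied to the $\Sigma$-iterate $P$ we conclude $\mathscr{I}^P = \mathscr{I}^M$.

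The most delicate step is the pointwise fixity of $i_{0b}^{\mathcal{U}}$ on $\mathscr{I}^M$: although the extenders used sit well below the indiscernibles, one has to check carefully that a direct limit of short-extender ultrapowers from $M|\delta_0$ does not shift cardinals of the relevant cofinality. Everything else is standard comparison and hull-definability bookkeeping.
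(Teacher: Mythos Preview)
Your proposal is correct and follows essentially the same approach the paper has in mind: it simply cites \cite[Lemma 2.2]{vm1} and the opening claims of \cite{Theta_Woodin_in_HOD}, and your sketch is a faithful unpacking of that argument (comparison above $\delta$ using P-construction iterability for $P$, then the indiscernible calculation via Fact~\ref{fact:inds_preserved}). The step you flag as ``most delicate'' is in fact entirely routine: since $\delta(\mathcal{U})<\kappa_0$ and every $\alpha\in\mathscr{I}^M$ is strongly inaccessible in $M$ with $\alpha>\kappa_1$, continuity of $i^{\mathcal{U}}_{0b}$ at $\alpha$ together with the elementary bound $i^{\mathcal{U}}_{0b}(\xi)<\alpha$ for $\xi<\alpha$ (counting representing functions) gives fixity immediately.
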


We have ${\mathbb U}\sub M$ by definition,
but because of the requirement
that $\Uu\in{\mathbb U}$ be via $\Sigma_\sss$ (hence via $\Sigma$),
it is not immediate that ${\mathbb U}\in M$.
But
we show in the next section that it is,
and that ${\mathbb U}$ is rich, with the following properties:
The restriction of $\Sigma_\sss$ to $M$
is known to $M$,
and
whenever $P=\msP^M({\cal U})$
for some ${\cal U} \in {\mathbb U}$, the restriction of $\Sigma_{P,\sss}$ to $M$
is known to $M$ (and moreover, these are preserved by $i_{MP}$).
Pseudo-genericity iterations can be formed using these strategies
to produce trees in ${\mathbb U}$.
Any two such models $P_1,P_2$ can be pseudo-compared
with these strategies.
Moreover,
every maximal tree ${\cal T} \in M|\kappa_0$
via $\Sigma_\sss$ is ``absorbed'' by
some ${\cal X} \in {\mathbb U}$.

\subsection{The short tree strategy $\Sigma_\sss$ for $M$}\label{section-short-tree-strategy-for-M}

We now show that
$M$ is closed under $\Sigma_\sss$
and $\Sigma_\sss\rest M$ is a class of $M$,
and that the same also holds
with $M[g]$ replacing $M$,
for any $M$-generic $g$
(where $g$ need not be in $V$);
in fact when $g=\emptyset$ the class can be taken lightface.

Let $\Tt$ be via $\Sigma$ of limit length, and  $b=\Sigma(\Tt)$.
Suppose we want to
compute $b$.
Since $\Sigma$ has strong hull condensation,
it suffices to find a tree $\Xx$ via $\Sigma$
and $\Tt$-cofinal branch $b'$ and a tree embedding
$\Pi:\Tt\conc b'\to\Xx$, for then $b'=b$.

Suppose also $\Tt\in M[g]$ and is based on $M|\delta_0$.
Working in $M[g]$ we want to (i) determine whether $\Tt$ is short,
and (ii) if short, compute $\Sigma_\sss(\Tt)$. If it happens that $\Tt$
incorporates, in an appropriate manner,
a genericity iteration for making $\es^M$ generic, then we will be able to use
P-constructions (combined with $*$-translation, discussed below)
to achieve both of these goals. In the general case, we use the method
of \emph{\tu{(}genericity\tu{)} inflation}
to reduce $\Tt$ to a tree $\Xx$ which does incorporate such a genericity iteration
(see  \cite[\S5.2]{iter_for_stacks}, which adapts methods for tame mice from \cite[\S1]{sile}).
We give here a sketch of the relevant methods from \cite{iter_for_stacks}, restricted to our context; but the reader should
consult \cite{iter_for_stacks} for details.

Suppose also that $g$ is $M$-generic for some $\PP\in V_\theta^M$,
and $\Tt\in V_\theta^{M[g]}$.
If $\theta<\kappa_0$ let $U=M$; otherwise let $E\in\es^M$ be $M$-total with
$\crit(E)=\kappa_0$
and $V_\theta^M\sub U=\Ult(M,E)$ (so $\theta<\lambda(E)=\kappa_0^U$).
Let $\eta$ be a strong cutpoint of $U$ with $\theta\leq\eta<\kappa_0^U$.
Following \cite{iter_for_stacks}, let $\Xx$ be the genericity inflation (explained
further below) of $\Tt$
for making $U|\delta(\Xx)$ generic for the $\delta(\Xx)$-generator extender
algebra,
incorporating an initial linear iteration which moves the
least measurable of $M(\Xx)$ beyond $\eta$, and incorporating linear iterations
past $*$-translations of Q-structures.\footnote{The technique of inserting
	linear iterations past Q-structures
	comes from \cite{odle_v2}, where there are details of such a construction
	given.} The $*$-translation is due to Steel, Neeman, Closson;
see \cite{closson}, together with an amendment in
\cite{*-trans_add}.
The $*$-translations of Q-structures
are segments of $U$ which compute the Q-structures which guide
branch choices for $\Xx$.

Here is a sketch of the relevant material from \cite{iter_for_stacks}.
We define $\Xx\rest(\alpha+1)$ by induction on $\alpha$.
Suppose we have $\Xx\rest(\alpha+1)$ defined, but have not yet succeeded in
finding
$\Sigma(\Tt)$.
We will have an ordinal $\eta_\alpha\leq\OR(M^\Xx_\alpha)$ defined, and
possibly have an ordinal $\beta_\alpha<\lh(\Tt)$
and a lifting map
\[ \sigma_\alpha:M^\Tt_{\beta_\alpha}||\lh(E^\Tt_{\beta_\alpha})\to
M^\Xx_\alpha||\eta_\alpha \]
defined. (At $\alpha=0$ we have $\beta_0=0$ and $\eta_0=\lh(E^\Tt_0)$
and $\sigma_0=\id$.) We set $E^\Xx_{\alpha}=F^{M^\Xx_\alpha||\eta_\alpha}$,
unless there is an extender $G\in\es^{M^\Xx_\alpha}$ with $\lh(G)<\eta_\alpha$
such that either (i) $G$ induces an extender algebra axiom
which is not satisfied by $\es^U$,
and $G$ satisfies some further conditions
as explained in \cite{iter_for_stacks}\footnote{It suffices
	that $\nu_G$ is inaccessible in $M^\Xx_\alpha|\eta_\alpha$,
	but one must also consider other extenders, including partial ones,
	because of the nature of genericity iteration with Jensen indexing.}
or (ii) $G$ is a measure to be used for one of the linear iterations mentioned
above.\footnote{The linear iterations need to be set up appropriately,
	to ensure that the process does not last too long; similar details are dealt with
	in the  comparison arguments in \cite{odle_v2}.}
We say $E^\Xx_\alpha$ is either \emph{copied} from $\Tt$
(when $\lh(E^\Xx_\alpha)=\eta_\alpha$ and $(\beta_\alpha,\sigma_\alpha)$ are
defined) or is \emph{inflationary} (otherwise).
The stages $\alpha$ for which $(\beta_\alpha,\sigma_\alpha)$ is not defined
correspond to a drop in model in $\Xx$, below the image of the relevant extender
from $\Tt$,
and arise because of the nature of genericity iteration with Jensen indexing.
Let $\gamma=\pred^\Xx(\alpha+1)$.
If $E^\Xx_\alpha$ is copied then $\beta_{\alpha+1}=\beta_\alpha+1$
is defined, and $\sigma_{\alpha+1}$ is the restriction
of a map given by the Shift Lemma applied
to $\sigma_\alpha$ and another map $\pi$ (whose domain
is $M^{*\Tt}_{\beta_\alpha+1}$; we have not specified $\pi$
here).
If $E^\Xx_\alpha$ is
inflationary
then $\beta_{\alpha+1}$ is defined just in
case $\beta_\gamma$ is defined and $E^\Xx_\alpha$
is total over $M^{\Xx}_{\gamma}||\eta_\gamma$,
and in this case $\beta_{\alpha+1}=\beta_\gamma$
and $\sigma_{\alpha+1}=i^{*\Xx}_{\alpha+1}\com\sigma_\gamma$.

Now consider a limit stage $\lambda$. The first
thing to do is either compute $c=\Sigma(\Xx\rest\lambda)$
(if $\lambda<(\eta^+)^U$), or declare
$\Xx,\Tt$ maximal (if $\lambda=(\eta^+)^U$). Let
$\delta=\delta(\Xx\rest\lambda)$.
If $M(\Xx\rest\lambda)$ is a Q-structure for itself
then $b$ is trivial, and arguing as in \cite{odle_v2}
shows that in this case, $\lambda<(\eta^+)^U$
(the argument is mostly standard, but some variant
details arise, which are discussed there).
So suppose otherwise. Then $\lambda=\delta$
and  $\Xx\rest\delta$ is definable from parameters over $(U|\delta)[g]$. This is
because $\eta<\delta$,
$\Tt\in (U|\eta)[g]$, the process for determining
$\eta_\alpha,E^\Xx_\alpha$ is locally definable,
and the $*$-translations of Q-structures used to compute the branches of
$\Xx\rest\lambda$
are all proper segments of $U|\delta$, because of the linear iterations past
these $*$-translations.
Moreover, $U|\delta$ is generic for the $\delta$-generator extender algebra of
$M(\Xx\rest\lambda)$.

From now on, let us assume that $g=\emptyset$ for simplicity;
since $\eta$ is a strong cutpoint of $U$, the general case
only involves shifting to $U[g]$.
Let $c=\Sigma(\Xx\rest\delta)$. Let $Q=Q(\Xx\rest\delta,c)$,
considering $\Xx$ as a tree on $M^\#$.
(Maybe $\Xx\rest\delta$ is not short.)
Then $Q$ could have extenders overlapping $\delta$.
But the $*$-translation $Q^*$ of $(Q,\Xx\rest\delta)$ is a premouse extending
$U|\delta$
and having no overlaps of $\delta$, and in fact,
either (i) $\delta<(\eta^+)^U$ and $\Xx\rest\delta$ is short and $Q^*\pins U$,
or (ii) $\lambda=(\eta^+)^U$ and $\Xx\rest\lambda$ is maximal and $Q^*=U^\#$.
So $U$ can see which of case (i) and (ii) we are in, and in case (i),
compute $Q^*,Q,c$ (as $Q^*$ is the unique segment of $U$ whose inverse
$*$-translation
is well-defined and terminates with a Q-structure for $M(\Xx\rest\lambda)$,
which is then $Q$).
Moreover, the branch $c$ is determined by the $*$-translation $Q^*$ of a
Q-structure, as
promised earlier.

Suppose $\lambda<(\eta^+)^U$.
So we have computed $c=\Sigma(\Xx\rest\lambda)$
in $U[g]$.
By \cite{iter_for_stacks},  this determines
either (i) some $\beta_\lambda<\lh(\Tt)$
(and possibly a $\sigma_\lambda$ as before),
in which case we continue the process;
or (ii) a $\Tt$-cofinal branch $b$
and a tree embedding
\[ \Pi:\Tt\conc b\hookrightarrow\Xx\conc c\]
with $b$ mapped cofinally into $c$,
and $b$ is encoded into $c$, in such a manner  that
$U[g]$ can compute $b,\Pi$ from $(\Tt,\Xx,c)$.

Now suppose that the process reaches $\Xx$ of length $(\eta^+)^U$.
So $\Xx$ is maximal and $Q^*=U^\#$.
Let $c=\Sigma(\Xx)$ and $b=\Sigma(\Tt)$.
So $i^\Xx_c(\delta_0^M)=\delta(\Xx\rest\lambda)=(\eta^+)^U$.
Again by \cite{iter_for_stacks}, there is a tree embedding
\[ \Pi:\Tt\conc b\hookrightarrow \Xx\conc c\]
which maps $b$ cofinally in $c$,
and since $\Tt,\Xx$ are based on $M|\delta_0^M$, then $\Tt$ is maximal.
Also in this case, considering $\Tt,\Xx$ as trees on $M$, instead of on $M^\#$,
we get that $M^\Xx_c=\mathscr{P}^U(\Xx)$ (the P-construction
of $U$ above $\M(\Xx)$, which is the analogue
of the inverse $*$-translation of $U$  in this case), so if $U=M$ (and still $g=\emptyset$)
then $\Xx\in\mathbb{U}$.

This completes the sketch. For further details the reader should refer to
\cite{iter_for_stacks}, augmented by \cite{closson}, \cite{*-trans_add}
and \cite{V=HODX_pub}.

\begin{dfn}
	For a non-dropping $\Sigma$-iterate $P$ of $M$, $\Sigma_{P,\sss}$
	(the \emph{short tree strategy} for $P$) denotes the restriction
	of $\Sigma_P$ to short trees. Also, $\Sigma_{P|\delta_0^P}$
	denotes the ($0$-maximal) strategy for $P|\delta_0^P$ induced by $\Sigma$
	(including maximal trees)
	and $\Sigma_{P|\delta_0^P,\sss}$ denotes its restriction to short trees.
	\end{dfn}

Note that by Fact \ref{fact:Sigma_properties}\ref{item:Slist_strategy_agreement},
the notations $\Sigma_{P|\delta_0^P}$ and $\Sigma_{P|\delta_0^P,\sss}$
are unambiguous; that is, if $P\neq Q$ are both non-dropping $\Sigma$-iterates of $M$
with $P|\delta_0^P=Q|\delta_0^Q$, then $\Sigma_{P}$ agrees with $\Sigma_Q$
in terms of their action on trees based on $P|\delta_0^P$.
Of course $\Sigma_{P,\sss}$ is equivalent to $\Sigma_{P|\delta_0^P,\sss}$,
except that the two strategies have different base models. This is useful
notationally below, where we can refer directly to $P|\delta_0^P$ but maybe not to $P$.

We summarize the main results of this section in the following two lemmas:

\begin{lem}\label{lem:Sigma_sss_in_M[g]_definability} Let $g$ be $M$-generic.
	Then:
	\begin{enumerate}
		\item  $M[g]$ is closed under $\Sigma_\sss$.
		\item $\es^M,\ \Sigma_\sss\rest M[g]\text{ and }\dom(\Sigma_\sss\rest
		M[g])$
		are classes of $M[g]$, definable over $M[g]$ (as a coarse structure) from the
		parameter
		$M|(\lambda^{+\om})^M$ where $g\sub M|\lambda$,
		uniformly in $\lambda$.
		\item If $g=\emptyset$ then these are in fact lightface classes of the universe
		$\univ{M}$ of $M$.
		\item\label{item:U_M-def} Therefore $\mathbb U$ is lightface $M$-definable,
		as is
		$\left<\mathscr{P}^M(\Uu)\right>_{\Uu\in\mathbb{U}}$
		(recall $\mathscr{P}^N(\Uu)=N$ if $\Uu$ is  the trivial tree on $N$),
		\item\label{item:M_cl_under_Sigma_P,sss} For each non-dropping $\Sigma$-iterate $P$ of $M$ with $\bar{P}=P|\delta_0^P\in M$, $M$ is closed under  $\Sigma_{\bar{P},\sss}$, and $\Sigma_{\bar{P},\sss}\rest M$ is definable
		over $M$ from $\bar{P}$, uniformly in $\bar{P}$.
Therefore  the function
		\[ S:\bar{P}\mapsto\Sigma_{\bar{P},\sss}\rest M,\]
		with domain the class of all such $\bar{P}\in M$,
		 is lightface $M$-definable.
		\item\label{item:Sigma_N,sss} The corresponding facts
		hold after replacing $M$ by $N$ and $\Sigma_{\sss}$
		by $\Sigma_{N,\sss}$ and $\mathbb U$ by $\mathbb U^N$, for any non-dropping
		$\Sigma$-iterate $N$ of $M$. Moreover,
		\[ i_{MN}(\Sigma_{M,\sss}\rest
		M)=\Sigma_{N,\sss}\rest N, \]
		and with
		$S$ from part \ref{item:M_cl_under_Sigma_P,sss},
		$i_{MN}(S)$ has the corresponding domain in $N$, and
		$i_{MN}(S)(\bar{P})=\Sigma_{\bar{P},\sss}\rest N$ for each $\bar{P}\in\dom(i_{MN}(S))$.
	\end{enumerate}
\end{lem}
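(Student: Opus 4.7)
The plan is to leverage the genericity inflation plus $*$-translation procedure sketched immediately before the statement to compute $\Sigma_\sss(\Tt)$ internally to $M[g]$, and then to verify that the whole procedure is uniformly definable from the indicated parameters. So given a limit length short tree $\Tt \in M[g]$ with $g \sub M|\lambda$, I would first fix an $M$-total extender $E \in \es^M$ with $\crit(E) = \kappa_0$ and $V_\lambda^M \sub U = \Ult(M,E)$, and pick a strong cutpoint $\eta$ of $U$ with $\lambda \leq \eta < \kappa_0^U$. I would then form, inside $U[g]$, the genericity inflation $\Xx$ of $\Tt$ for the $\delta(\Xx)$-generator extender algebra, incorporating the linear iterations past $*$-translations as sketched.

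The key observation is that every step of this construction is uniformly locally definable: the choice between a copied versus inflationary extender $E^\Xx_\alpha$, the update of $\pred^\Xx$, and the update of $(\beta_\alpha, \sigma_\alpha)$, are all read off from $\es^{M^\Xx_\alpha}$ and the Shift Lemma. At a limit stage $\lambda$, either $\M(\Xx\rest\lambda)$ is a Q-structure for itself (trivial branch), or there is a unique $Q^* \ins U$ whose inverse $*$-translation yields a Q-structure $Q$ for $\M(\Xx\rest\lambda)$; in the latter case the linear iterations past $Q^*$ ensure $Q^* \pins U$, so $U[g]$ recognizes $Q^*$ and decodes $Q$ and hence $c = \Sigma(\Xx \rest \lambda)$. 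Then by \cite{iter_for_stacks}, either the inflation produces a new $\beta_{\lambda}, \sigma_\lambda$ and continues, or it yields a tree embedding $\Pi : \Tt \conc b \hookrightarrow \Xx \conc c$ from which $b = \Sigma_\sss(\Tt)$ is decoded; and if $\Xx$ ever reaches length $(\eta^+)^U$, the shortness hypothesis on $\Tt$ is contradicted. This gives parts (1)--(3): all parameters used ($g, E, U, \eta$, the initial segments of $\Xx$) lie in $M|(\lambda^{+\om})^M$, and when $g=\emptyset$ the definition is lightface.

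Part (4) now follows directly from Definition \ref{defn_points_from_the_system}: the predicates ``based on $M|\delta_0$'', ``maximal'', ``definable over $M|\delta$'', and ``$M|\delta$ is $\BB^R_\delta$-generic over $R$'' are all $M$-definable in the usual way, and ``via $\Sigma_\sss$'' is $M$-definable by (3); the map $\Uu \mapsto \mathscr{P}^M(\Uu)$ is uniformly $M$-definable by definition of the P-construction. For part (5), given $\bar P = P|\delta_0^P \in M$ with $P$ a non-dropping $\Sigma$-iterate, Fact \ref{fact:Sigma_properties}\ref{item:Slist_strategy_agreement} guarantees $\Sigma_{\bar P,\sss}$ depends only on $\bar P$, and the same inflation procedure, carried out over the (comparison-embedded) copy of $\bar P$ lifted into an appropriate background $U$, computes $\Sigma_{\bar P,\sss}(\Tt)$ for each short tree $\Tt$ on $\bar P$; the uniformity of that procedure in the parameter $\bar P$ gives that $S$ is lightface $M$-definable.

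Finally, part (6) is pure elementarity: the definitions of $\Sigma_{M,\sss} \rest M$ and of $S$ are lightface $M$-definable, so applying $i_{MN}$ yields the corresponding lightface $N$-definable objects; since $\Sigma$ has unique extensions and strong hull condensation under iteration maps (Fact \ref{fact:Sigma_properties}), these images must equal $\Sigma_{N,\sss} \rest N$ and the function $\bar Q \mapsto \Sigma_{\bar Q, \sss}\rest N$, respectively. The main obstacle throughout is the good behavior of $*$-translation through limits of the inflation and the termination of the linear iterations past $*$-translated Q-structures; but this is handled by the machinery of \cite{closson}, \cite{*-trans_notes}, and \cite{iter_for_stacks} and can be used as a black box here.
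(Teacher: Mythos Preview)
Your outline of the genericity inflation and $*$-translation machinery for part (1) is fine and matches the paper's approach. The gap is in the definability claims (2) and (3).

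You assert that ``all parameters used ($g, E, U, \eta$, the initial segments of $\Xx$) lie in $M|(\lambda^{+\om})^M$'', but this is false: the extender $E$ must be chosen with $\lambda(E)$ large enough that $\Tt \in V_\theta^M \sub U$, so as $\Tt$ ranges over all trees in $M[g]$, the extender $E$ ranges unboundedly through $\es^M$; and $U = \Ult(M,E)$ is a proper class. More fundamentally, the entire inflation procedure --- choosing inflationary extenders for $\es^U$-genericity, locating $*$-translations of Q-structures as segments of $U$, and running the P-construction --- requires the class predicate $\es^M$ (or equivalently $\es^U$), not merely a set parameter. So what you have actually shown is that $\Sigma_\sss \rest M[g]$ is definable over $M[g]$ from the class $\es^M$. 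The missing step is to recover $\es^M$ itself from the set parameter $M|(\lambda^{+\om})^M$.

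The paper fills this gap in two moves. First, by Woodin--Laver ground definability, the universe $\univ{M}$ is definable over $M[g]$ from $M|(\lambda^{+\om})^M$. Second, by \cite{V=HODX}, $\es^M$ is definable over $\univ{M}$ from $M|\omega_1^M$; but $M|\omega_1^M$ is characterized inside $\univ{M}$ as the unique $(\omega,\omega_1+1)$-iterable model of its kind (iterability being witnessed by $\Sigma_\sss$), so $\es^M$ is in fact \emph{lightface} definable over $\univ{M}$. This second point is exactly what makes part (3) work, which you left unjustified: it is not that the parameter disappears when $g = \emptyset$, but that $\es^M$ was lightface over $\univ{M}$ all along.

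For part (5), your ``comparison-embedded copy lifted into a background $U$'' is unnecessarily roundabout; the paper simply observes that any tree via $\Sigma_{\bar{P},\sss}$ normalizes (with the stack leading from $M$ to $P$) to a tree via $\Sigma_\sss$, reducing directly to the case already handled. Your part (6) is essentially right.
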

\begin{proof}
	By the previous discussion, $M[g]$ is closed under $\Sigma_\sss$.
	Moreover,
	\[ \Sigma_\sss\rest M[g]\text{ and }\dom(\Sigma_\sss\rest M[g])\]
	are definable over $M[g]$ from the predicate $\es^M$.
	But the universe $\univ{M}$ of $M$ is definable over $M[g]$
	from $M|(\lambda^{+\om})^M$ by Woodin-Laver \cite{laver_vlc},
	\cite{woodin_CH_multiverse_Omega}.
	By \cite{V=HODX_pub}, $\es^M$ is definable over $\univ{M}$
	from $M\rest\om_1^M$, but the latter
	is $(\om,\om_1+1)$-iterable
	in $\univ{M}$ (via $\Sigma_\sss$),
	and is therefore definable without parameters there (which is relevant to the
	case that $g=\emptyset$).
	Part \ref{item:M_cl_under_Sigma_P,sss}
	is a straightforward adaptation;
	in fact, note that trees via $\Sigma_{\bar{P},\sss}$ normalize
	to trees via $\Sigma_{\sss}$.
	Part \ref{item:Sigma_N,sss} is also straightforward,
	using the uniformity of the process.\footnote{However, working inside $N$,
		if $\Tt$ on $N|\delta_0^N$ is maximal and we minimally inflate $\Tt$
		to produce $\Xx$, and build the proper class model $\mathscr{P}^N(M(\Xx))$
		by P-construction,
		and $c=\Sigma_N(\Xx)$, then it need not be that $M^\Xx_c=\mathscr{P}^N(M(\Xx))$.
		But in this case, $M^\Xx_c$ and $\mathscr{P}^N(M(\Xx))$ will still compare to a common model above $\delta(\Xx)$. Related issues will be discussed further in \S\ref{subsec:M-iteration_on_M_infty}.}
\end{proof}

\begin{lemma}\label{absorbing-a-tree-by-a-genericity-tree}
	Let $g$ be $M$-generic for $\PP$ and ${\cal T} \in M[g]$ be
	a limit length normal tree on $M$ which is based on
	$M|\delta_0^M$ and via $\Sigma$.
	If $\Tt\in M|\kappa_0$ let $U=M$, and otherwise let $E\in\es^M$ be $M$-total
	with $\crit(E)=\kappa_0$
	and
	\[ \PP\in V_\theta^M\sub U=\Ult(M,E) \]
	and $\Tt\in V_\theta^{M[g]}$.
	Let $b=\Sigma(\Tt)$. Let $\eta$ be a strong cutpoint of $U$
	with $\theta\leq\eta<\kappa_0^U$. Then there is
	$\Xx=\Xx_{\Tt,\eta}\in U[g]\sub M[g]$ such that:
	\begin{enumerate}[label=\tu{(}\arabic*\tu{)}]
		\item\label{item:gen_inf_X_props_was_a} ${\cal X}$ is a limit length tree on $U$
		(but is equivalent to one on $M$), based on $M|\delta_0^M$, via $\Sigma_U$ (hence $\om$-maximal);
		let $c=\Sigma(\Xx)$ and $\delta=\delta(\Xx)$,
		\item $\delta\leq(\eta^+)^U$,
		\item\label{item:gen_inf_X_props_was_b} $U|\delta$ is $M(\Xx)$-generic
		for $\BB_\delta^{M(\Xx)}$,
		\item\label{item:gen_inf_X_props_was_c} If ${\cal T}$ is maximal then
		$\Xx$ is maximal,
		$\delta_0^{M^\Xx_c}=\delta=(\eta^+)^U$
		and $M^\Xx_c=\mathscr{P}^U(\Xx)$.
		\item Suppose $\Tt$ is short. Then $\Xx$ is short and
		$\eta<\delta(\Xx)<(\eta^+)^U$, and there is $R\pins U|(\eta^+)^U$
		which computes the Q-structure $Q(\Xx,c)$ via
		inverse $*$-translation above $M(\Xx)$.

		\item\label{item:gen_inf_X_props_was_e} There
		is a tree embedding $\Pi:\Tt\conc b\hookrightarrow\Xx\conc c$,
		and $b,\Pi$ can be computed locally from $(\Tt,\Xx,c)$
		(hence if $\Tt,\Xx$ are short then $b\in U[g]$).
		\item If $\Tt\in M|\kappa_0$
		(so $U=M$) and $\Tt$ is maximal then $\Xx\in{\mathbb U}$.
	\end{enumerate}
\end{lemma}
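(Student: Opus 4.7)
The plan is to carry out the genericity inflation procedure sketched earlier in \S\ref{section-short-tree-strategy-for-M} with $\Tt$ as the ``seed'' and with $U[g]$ (rather than $M[g]$) as the ambient model in which the inflation is defined. Since $\eta$ is a strong cutpoint of $U$ with $\theta\leq\eta<\kappa_0^U$ and $\PP\in V_\theta^M\sub V_\theta^U$, all of the relevant data ($\Tt$, the predicate $\es^U\rest(\eta^+)^U$, the $*$-translations of candidate Q-structures in $U|(\eta^+)^U$, the filter $g$) lie in $U[g]$, so the procedure can indeed be executed there. I would define $\Xx=\Xx_{\Tt,\eta}$ recursively by induction on $\alpha<(\eta^+)^U$, alongside auxiliary data $\eta_\alpha\leq\OR(M^\Xx_\alpha)$ and, when defined, a copying index $\beta_\alpha<\lh(\Tt)$ with a lifting map $\sigma_\alpha:M^\Tt_{\beta_\alpha}||\lh(E^\Tt_{\beta_\alpha})\to M^\Xx_\alpha||\eta_\alpha$. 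At each successor step, pick $E^\Xx_\alpha$ to be either (i) the least extender on $M^\Xx_\alpha$ of length $<\eta_\alpha$ inducing an extender algebra axiom failing in $U$ (with $\nu$ inaccessible in $M^\Xx_\alpha|\eta_\alpha$) or inducing a linear iteration past a $*$-translated Q-structure or past $\eta$, if one exists (inflationary case), or otherwise $F^{M^\Xx_\alpha||\eta_\alpha}$ (copied case). The Shift Lemma determines $\sigma_{\alpha+1}$ in the copied case, and $\sigma_{\alpha+1}=i^{*\Xx}_{\alpha+1}\com\sigma_\gamma$ in the inflationary case (when defined).

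At a limit $\lambda$, I would compute the branch $c_\lambda$ of $\Xx\rest\lambda$ inside $U[g]$ by searching for the unique $R\pins U|(\eta^+)^U$ whose inverse $*$-translation above $M(\Xx\rest\lambda)$ yields a Q-structure $Q$ for $\delta(\Xx\rest\lambda)$; the branch is then the one picked out by $Q$. This is the key point where one uses that linear iterations past $*$-translations have been inserted, guaranteeing that all Q-structures required actually appear as proper segments of $U|(\eta^+)^U$. By strong hull condensation for $\Sigma$, this branch is $\Sigma(\Xx\rest\lambda)$. Using the branch, together with the recipe of \cite{iter_for_stacks}, either we recover $(\beta_\lambda,\sigma_\lambda)$ and continue, or else we extract a $\Tt$-cofinal branch $b'$ together with a tree embedding $\Pi:\Tt\conc b'\hookrightarrow\Xx\conc c_\lambda$; in the latter case, $b'=\Sigma(\Tt)=b$ by strong hull condensation, and the construction terminates.

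The main obstacle is the termination argument, which guarantees $\lh(\Xx)\leq(\eta^+)^U$ and gives the dichotomy in parts \ref{item:gen_inf_X_props_was_c} and (5). I would adapt the bookkeeping of \cite{odle} and \cite{iter_for_stacks}: the inflationary extenders are absorbed by the $\delta(\Xx)$-generator extender algebra on $M(\Xx)$, and the inserted linear iterations ensure that all $*$-translated Q-structures (if any) lie strictly below $(\eta^+)^U$, which bounds the length of $\Xx$. If $\Tt$ is short, then the maximal-branch case cannot arise, so the process halts at some $\lambda<(\eta^+)^U$ yielding a short $\Xx$ with a Q-structure computed from a proper segment of $U$, and $b\in U[g]$. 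If $\Tt$ is maximal, then no branch extraction occurs below $(\eta^+)^U$ (else one would recover $b$ before exhausting $\Tt$, contradicting the Zipper-style uniqueness of $\Sigma$-branches), so $\Xx$ reaches length $(\eta^+)^U$ and is maximal, with $c=\Sigma(\Xx)$ satisfying $i^\Xx_c(\delta_0^M)=(\eta^+)^U=\delta(\Xx)$, and $M^\Xx_c=\mathscr{P}^U(\Xx)$ by the standard identification of the P-construction with $*$-translation in the maximal case.

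Finally, I would verify the listed clauses: \ref{item:gen_inf_X_props_was_a} is immediate from the construction and from Fact \ref{fact:Sigma_properties}\ref{item:Slist_strategy_agreement} (so that $\Sigma_U$ agrees with $\Sigma$ on trees based on $M|\delta_0^M$); \ref{item:gen_inf_X_props_was_b} is the defining property of genericity inflation; \ref{item:gen_inf_X_props_was_c} and (5) come from the termination analysis; \ref{item:gen_inf_X_props_was_e} is the tree embedding extracted at termination, which is locally definable from $(\Tt,\Xx,c)$ because $\Pi$ is read off uniformly from the comparison of the $\beta_\alpha$-data with $c$; and the last clause is the special case $U=M$, $g=\emptyset$, $\Tt$ maximal, noting that then $\Xx$ has length $\kappa_0^{+M}\cap M|\kappa_0$ falls into $M|\kappa_0$ by choice of $\eta$, $M(\Xx)$-genericity of $M|\delta(\Xx)$ holds, and $\mathscr{P}^M(\Xx)$ is proper class, so $\Xx\in\mathbb U$ by Definition \ref{defn_points_from_the_system}.
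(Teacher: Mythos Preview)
Your proposal is correct and follows essentially the same approach as the paper: the lemma is stated there as a summary of the genericity inflation sketch already given in \S\ref{section-short-tree-strategy-for-M}, and your write-up faithfully reconstructs that sketch (the recursive definition of $\Xx$ with the data $\eta_\alpha,\beta_\alpha,\sigma_\alpha$, copied versus inflationary extenders, $*$-translated Q-structures at limits, and the short/maximal termination dichotomy), deferring the same technical details to \cite{iter_for_stacks} and \cite{odle}. One small wording glitch: in your final clause you write ``$\Xx$ has length $\kappa_0^{+M}\cap M|\kappa_0$ falls into $M|\kappa_0$'', which is garbled; you mean that $\lh(\Xx)=(\eta^+)^M<\kappa_0$, so $\Xx\in M|\kappa_0$.
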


\begin{dfn}We may also express the situation
	of the preceding lemma
	by  saying
	that ${\cal T}\conc b$  is
	{\em absorbed by} ${\cal X}\conc c$,
	or  ${\cal
		T}$ is \emph{absorbed by} ${\cal X}$.
\end{dfn}

\subsection{The first direct limit system}\label{first_dir_limit_system}

\subsubsection{The external direct limit system $\mathscr{D}^{\ext}$}

We now  define a system of uniform grounds for $M$.
In the notation of \S\ref{sec:ground_generation},
we use index set
\[ d=\{M|\delta_0\}\cup\{\M(\Uu)\bigm|\Uu\in\mathbb U\text{ is non-trivial}\}.\]
For $p\in d$,
the associated model is $\P_p=\mathscr{P}^M(p)$.
Of course $d$ and $\mathbb U$ are essentially equivalent.
By Lemma \ref{lem:Sigma_sss_in_M[g]_definability}, $(d,\left<\P_p\right>_{p\in d})$
is lightface $M$-definable.
Write
${\mathscr F}=\{\mathcal{P}_p\mid p\in d\}$.

We now define the partial order $\preceq$ on $d$, and
maps $\pi_{pq}$.
Let $\T,{\cal U} \in {\mathbb U}$ and
$P=\msP^M({\cal T})$ and $Q=\msP^M({\cal U})$.
Set\[\M(\Tt)\preceq\M(\Uu)\Longleftrightarrow Q\text{ is a
}\Sigma_P\text{-iterate of }P.\]
We also define the
order $\preceq$ on $\mathscr{F}$ by $P\preceq Q$ iff $\M(\Tt)\preceq\M(\Uu)$.
The associated embedding $\pi_{\M(\Tt)\M(\Uu)}$ is just the iteration map $i_{PQ}$.
We remark that if $P\neq Q$ then
the tree witnessing that $P\preceq Q$ is of the form $\Vv\conc
\Sigma_P(\Vv)$, with $\Vv$ via
$\Sigma_{P,\sss}$ and  $\Vv\in M$, but $\Sigma_P(\Vv)\notin M$.\footnote{$M$
	cannot have an elementary embedding $P\to Q$, because $P,Q$ are both grounds of
	$M$ and by \cite{gen_kunen_incon}. Therefore $\Sigma_P(\Vv)\notin M$.
	$\Vv$ is determined by $P,Q,\Sigma_{P,\sss}$ (in fact,
	just by $P,Q$, because all the relevant Q-structures
	are segments of $Q$), so $\Vv\in M$.}
We write $\T_{PQ}$ for $\Vv\conc\Sigma_P(\Vv)$, and
$\T_{PQ,\sss}=\Vv$.
While $i_{PQ}$ is not amenable to $M$ (if non-identity),
we do have:

\begin{lemma}\label{lem:vV_1_preceq_props} $\preceq$ is a directed partial order, is lightface $M$-definable,
	and the associated embeddings commute:
	if $P\preceq Q\preceq R$ then $i_{QR}\com i_{PQ}=i_{PR}$.
\end{lemma}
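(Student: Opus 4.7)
The plan is to verify each of the three claims (partial order, $M$-definability, commuting) in turn, treating the order of development as follows: first fix the setup and easy properties, then handle directedness, then commutativity, and finally definability.

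First I would unpack the definition. Reflexivity $P\preceq P$ is immediate from the trivial tree. For transitivity, suppose $P\preceq Q\preceq R$, so that there are normal trees $\Vv_1$ on $P$ via $\Sigma_P$ with last model $Q$ and $\Vv_2$ on $Q$ via $\Sigma_Q$ with last model $R$, with non-dropping main branches. The stack $\Vv_1\conc\Vv_2$ is via $\Gamma_P$ (Fact \ref{fact:Sigma_properties}), and by normalization \cite{fullnorm}, its normalization $\Vv_3$ is a normal tree on $P$ via $\Sigma_P$ with last model $R$ and non-dropping main branch; thus $P\preceq R$. Antisymmetry is automatic since the elements of $d$ are coded by iteration trees $\Uu$ on $M$, and by uniqueness of the tree $\Tt_N$ associated to a non-dropping $\Sigma$-iterate $N$ (and the analogous uniqueness for $\Sigma_P$-iterates of $P$), equal elements of $d$ arise from equal $\Uu$'s.

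For directedness, given $P,Q\in\mathscr{F}$, I would pseudo-compare $P$ against $Q$ inside $M$ using the short-tree strategies $\Sigma_{P,\sss}$ and $\Sigma_{Q,\sss}$, which by Lemma \ref{lem:Sigma_sss_in_M[g]_definability}\ref{item:M_cl_under_Sigma_P,sss} are available as definable classes of $M$. This is a standard pseudo-comparison in the style of \cite{vm1} (and as in the construction of elements of $\mathbb{U}$): any short-tree-guided comparison of $\Mswsw$-like models whose short-tree strategies are known terminates, and because $P$ and $Q$ are both $\Mswsw$-like and $\delta_0^P$-sound (resp.\ $\delta_0^Q$-sound), the terminal models on each side must coincide, yielding a common $\Sigma_P$- and $\Sigma_Q$-iterate $R'$. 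The comparison trees are then maximal, and by Lemma \ref{absorbing-a-tree-by-a-genericity-tree}, each side can be absorbed by an $\Xx\in\mathbb{U}$; by running a further pseudo-comparison, or by concatenating with a suitable genericity inflation produced inside $M$, I obtain $\Uu^*\in\mathbb{U}$ such that $R\eqdef\mathscr{P}^M(\Uu^*)$ is a common $\Sigma_P$- and $\Sigma_Q$-iterate, giving $P\preceq R$ and $Q\preceq R$.

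Commutativity of the embeddings is then immediate from Fact \ref{fact:Sigma_properties}\ref{item:Slist_commuting}: given $P\preceq Q\preceq R$, we have two non-dropping stacks on $P$ via $\Gamma_P$ with common last model $R$, namely $\Vv_1\conc\Vv_2$ and the direct witness $\Vv_3$ to $P\preceq R$, so their induced maps agree, i.e.\ $i_{QR}\com i_{PQ}=i_{PR}$.

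Finally, for $M$-definability of $\preceq$: by Lemma \ref{lem:Sigma_sss_in_M[g]_definability}\ref{item:U_M-def}, $d$ and $\left<\P_p\right>_{p\in d}$ are lightface $M$-definable. To decide $\M(\Tt)\preceq\M(\Uu)$ inside $M$, it suffices to find a short tree $\Vv$ on $P=\mathscr{P}^M(\Tt)$ via $\Sigma_{P,\sss}$, whose maximal extension $\Vv\conc\Sigma_P(\Vv)$ has last model $Q=\mathscr{P}^M(\Uu)$; the Q-structures guiding branch choices are, along such a tree leading to a ground of $M$, segments of $Q$, hence available to $M$. Using Lemma \ref{lem:Sigma_sss_in_M[g]_definability}\ref{item:M_cl_under_Sigma_P,sss} to access $\Sigma_{P,\sss}\rest M$ uniformly in $P$, this yields a lightface $M$-definable predicate, completing the proof. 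The main obstacle is directedness, specifically ensuring that the pseudo-comparison terminates with a model of the form $\mathscr{P}^M(\Xx)$ for some $\Xx\in\mathbb{U}$; this is handled by the absorption result Lemma \ref{absorbing-a-tree-by-a-genericity-tree}.
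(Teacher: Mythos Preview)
Your proposal is correct and follows essentially the same approach as the paper: partial order and commutativity via the normalization and commuting properties of $\Sigma^{\stk}$ from Fact~\ref{fact:Sigma_properties}, definability via the pseudo-comparison of $P$ with $Q$ using Q-structures read off segments of $Q$, and directedness via a pseudo-comparison inside $M$ using the short-tree strategies.

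The one difference worth noting is in directedness. The paper folds the $\es^M$-genericity iteration directly into the simultaneous pseudo-comparison of $P$ and $Q$, so that the common iterate produced is already of the form $\mathscr{P}^M(\Xx)$ for some $\Xx\in\mathbb{U}$ (namely the common normalization of the stacks $(\Tt,\Vv)$ and $(\Uu,\Ww)$). Your approach---compare first to a common $R'$, then invoke Lemma~\ref{absorbing-a-tree-by-a-genericity-tree}---also works, but you can simplify it: once $R'$ is obtained, there is a \emph{single} normal tree on $M$ leading to it (via normalization), and absorbing that one tree yields a single $\Xx\in\mathbb{U}$ with $R=\mathscr{P}^M(\Xx)$ a common $\Sigma_P$- and $\Sigma_Q$-iterate; the ``further pseudo-comparison'' you mention is unnecessary.
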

\begin{proof}[Proof Sketch]
	For the definability, note that $P\preceq Q$
	iff the pseudo-comparison of $P,Q$, using $\Sigma_{P,\sss}$ to iterate
	$P$,\footnote{Or
		just reading Q-structures from segments of $Q$ to compute branches.}
	yields a limit length tree $\Tt$ on $P$ with $\mathcal{M}(\Tt)=Q|\delta_0^Q$
	(so $Q$ does not move in the pseudo-comparison).

	The fact that $\preceq$
	is a partial order, and the commutativity, follows from
	the properties of $\Sigma^{\stk}$
	in Fact \ref{fact:Sigma_properties}.

	For directedness, given $P,Q\in\mathscr{F}$,
	witnessed by trees $\Tt,\Uu\in\mathbb{U}$, form
	a simultaneous pseudo-comparison and $\es^M$-genericity iteration
	of $P,Q$
	in $M$, using $\Sigma_{P,\sss},\Sigma_{Q,\sss}$, producing
	trees $\Vv,\Ww$ respectively, and $R\in\mathscr{F}$
	with $P,Q\preceq R$;
	note that if we normalize the stack $(\Tt,\Vv)$, or the stack
	$(\Uu,\Ww)$, we get the same normal tree $\Xx\in{\mathbb U}$; here
	$R=\mathscr{P}^M(\Xx)$.
\end{proof}

Now define  $\mathscr{D}^{\ext}=(P,Q,i_{PQ} \colon P\preceq Q
\in {\mathscr F} )$.
By the lemma, $\mathscr{D}^{\ext}$ is
a direct limit system
with properties \ref{item:M_pc_ZFC},
\ref{item:d,preceq_directed_po}, \ref{item:P_i_system},
\ref{item:pi_ij_elem},
\ref{item:pi_ij_commute}.
Note that \ref{item:i_0_exists} holds,
with $p_0=M|\delta_0$.
Define the direct limit model and maps
\begin{eqnarray}\label{defn_M+}
	(\M_\infty^{\ext}, (i_{P\infty} \colon P \in {\mathscr F}))= \dirlim\ \mathscr{D}^{\ext}.
\end{eqnarray}
Notice that even though ${\mathscr F}$ is a definable collection of classes in $M$,
this system is not ``in'' $M$, as the maps $i_{NP}$ (when non-identity)
are not amenable to $M$. As usual, $\M_\infty^{\ext}$ is wellfounded, giving \ref{item:M_infty_wfd}.

\begin{dfn}\label{dfn:tau^P}
	For $P\in\mathscr{F}$, let $\tau^P$ be the canonical class $\BB^P_{\delta_0^P}$-name
	for $M$; that is, $\tau^P$ is the name for the class ``premouse'' $N$
	such that $N|\delta_0^P$ is  given by the extender algebra generic,
	and $\es^{N}\rest[\delta_0^P,\infty)$ is given by extending $\es^P$ via the usual
	extension to small generic extensions (equivalently, $\es^{N}\rest[\delta_0^P,\infty)$ agrees
	with $\es^P\rest[\delta_0^P,\infty)$ on the ordinals). (Of course,
	for some generics, this might not actually yield a premouse, but
	with $g$ the generic for adding $M|\delta_0^P$, we have $(\tau^P)_g=M$.)
	Note that $i_{MP}(\tau^M)=\tau^P$.
\end{dfn}

\begin{lemma}\label{lem:c_i_dense}
	\ref{item:c_i_dense} holds:
	for each $P\in\mathscr{F}$, $c^P=d\cap d^P$ is dense
	in $(\mathscr{F}^P,\preceq^P)$ and dense in $(\mathscr{F},\preceq)$,
	and ${\preceq^P}\rest c^P={\preceq}\rest c^P$.\end{lemma}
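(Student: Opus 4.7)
My plan is to establish jointly: (a) $P|\delta_0^P \in c^P$, so that $c^P$ is nonempty; (b) $d^P \subseteq d$, which makes $c^P = d^P$ and trivializes density in $(\mathscr{F}^P, \preceq^P)$; (c) density of $c^P$ in $(\mathscr{F}, \preceq)$; (d) the order agreement ${\preceq^P}\rest c^P = {\preceq}\rest c^P$. Throughout I use that $P \in \mathscr{F}$ is witnessed by some $\Uu_P \in \mathbb{U}$ (trivial iff $P = M$), so $P$ is a non-dropping $\Sigma$-iterate of $M$ via $\Uu_P \conc \Sigma(\Uu_P)$. Since $\Uu_P$ is based on $M|\delta_0^M$ and uses only extenders of strength $<\delta_0^M$, the iteration map $i_{MP}$ fixes all strongly inaccessible cardinals of $M$ above $\delta_0^M$; in particular $\kappa_0^P = \kappa_0^M$. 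By Lemma \ref{lem:Sigma_sss_in_M[g]_definability}\ref{item:Sigma_N,sss}, $i_{MP}(\mathbb{U}) = \mathbb{U}^P$ (and similarly for $d,\mathscr{F},\preceq$), and $P$'s internal short-tree strategy agrees with $\Sigma_{P,\sss}\rest P$; consequently every $\Vv \in \mathbb{U}^P$ is externally via the true $\Sigma_{P,\sss}$. Claim (a) is immediate from $P|\delta_0^P = \mathcal{M}(\Uu_P) \in d$ and that $P|\delta_0^P$ is the $\preceq^P$-minimum of $d^P$.

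For (b): given $\Vv \in \mathbb{U}^P$, the stack $\Uu_P \conc \Sigma(\Uu_P) \conc \Vv \conc \Sigma_P(\Vv)$ is a non-dropping stack on $M$ via $\Gamma$, with last model $\mathscr{P}^P(\Vv)$. Its $\Gamma$-normalization $\Xx$ is a normal tree on $M$ via $\Sigma$, and by the embedding-normalization theory of \cite{fullnorm}, one has $\mathcal{M}(\Xx) = \mathcal{M}(\Vv)$, $\mathscr{P}^M(\Xx) = \mathscr{P}^P(\Vv)$ (using that $\es^M$ and $\es^P$ agree above $\delta_0^P$ so the two P-constructions coincide), and $\lh(\Xx) < \kappa_0^P = \kappa_0^M$. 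The remaining clauses of Definition \ref{defn_points_from_the_system} (maximality, genericity of $M|\lh(\Xx)$ for the extender algebra at $\mathcal{M}(\Xx)$, properness of $\mathscr{P}^M(\Xx)$, and $M$-definability) transfer from $\Vv$ to $\Xx$, so $\Xx \in \mathbb{U}$ and $\mathcal{M}(\Vv) \in d$. Hence $d^P \subseteq d$ and density in $(\mathscr{F}^P, \preceq^P)$ is trivial.

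For (c): given $q \in d$, directedness of $(d,\preceq)$ (Lemma \ref{lem:vV_1_preceq_props}) yields $r_0 \in d$ with $q,P|\delta_0^P \preceq r_0$; set $R_0 = \mathcal{P}_{r_0}$. Since $P$ is a ground of $M$ with $M = P[g]$ for $g = M|\delta_0^P$ the $(P,\BB^P_{\delta_0^P})$-generic, the witnessing short tree $\Tt_{PR_0,\sss}$ lies in $P[g]$. Applying the analog of Lemma \ref{absorbing-a-tree-by-a-genericity-tree} inside $P[g]$ with a strong cutpoint $\eta < \kappa_0^P$ of $P$ above $\lh(\Tt_{PR_0,\sss})$, I obtain a genericity inflation $\Vv \in \mathbb{U}^P$ absorbing $\Tt_{PR_0,\sss}$, so $R_0 \preceq^P \mathscr{P}^P(\Vv)$; setting $r := \mathcal{M}(\Vv)$, we have $r \in c^P$ by (b) and $q \preceq r_0 \preceq r$. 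For (d): for $r,r' \in c^P$, the witnesses for $r \preceq r'$ and $r \preceq^P r'$ are each the pseudo-comparison tree of $\mathcal{P}_r$ against $\mathcal{P}_{r'}$ via $\Sigma_{\bar r,\sss}$, computed respectively inside $M$ and inside $P$; by Lemma \ref{lem:Sigma_sss_in_M[g]_definability}\ref{item:M_cl_under_Sigma_P,sss} both computations restrict the one true $\Sigma_{\bar r,\sss}$, and the branches are read off from Q-structures sitting inside $\mathcal{P}_{r'}$, so the two witness trees coincide.

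The main obstacle is (b): carefully verifying that the $\Gamma$-normalization of the specified stack preserves the common part ($\mathcal{M}(\Xx) = \mathcal{M}(\Vv)$) and that the clauses of Definition \ref{defn_points_from_the_system} transfer, which requires detailed use of the embedding-normalization machinery of \cite{fullnorm} together with the interaction between P-constructions in $M$ and in $P$ above the common base $\delta_0^P$.
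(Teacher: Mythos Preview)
There is a genuine gap at your step (b). The assertion $d^P \subseteq d$ is not established, and in fact should fail in general: the genericity clause (e) of Definition~\ref{defn_points_from_the_system} does not transfer. For $\Vv \in \mathbb{U}^P$ one knows that $P|\delta(\Vv)$ is $\mathcal{M}(\Vv)$-extender-algebra generic, but there is no reason $M|\delta(\Vv)$ should be, since $\es^M$ and $\es^P$ disagree below $\delta_0^P$ and the genericity iteration producing $\Vv$ was tailored only to $\es^P$. Normalization of the stack may well give a tree $\Xx$ on $M$ via $\Sigma$ with $\mathcal{M}(\Xx)=\mathcal{M}(\Vv)$, but the extender-algebra axioms of $\mathcal{M}(\Vv)$ can fail for $\es^M\rest\delta_0^P$ even while holding for $\es^P\rest\delta_0^P$; so you cannot conclude $\Xx \in \mathbb{U}$. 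Your sentence ``the remaining clauses \ldots\ transfer from $\Vv$ to $\Xx$'' asserts this without argument. Step (c) inherits the same problem, since the $r$ you produce is only known to lie in $d^P$; and there is the additional difficulty that the genericity inflation of $\Tt_{PR_0,\sss}$ is carried out in $P[g]$, not in $P$ (the tree $\Tt_{PR_0,\sss}$ need not lie in $P$, as $R_0|\delta_0^{R_0}$ need not), so the resulting $\Vv$ need not even lie in $\mathbb{U}^P\sub P|\kappa_0^P$.

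The paper's proof handles exactly this obstruction. Rather than attempt $d^P \subseteq d$, given $Q \in \mathscr{F}^P$ and $R \in \mathscr{F}$ it works inside $P$ with a $\BB_{\delta_0^P}^P$-name $\sigma$ for $R$ and forms a single iteration on $P$ that simultaneously: compares $P$, $Q$, and all interpretations of $\sigma$; folds in $\es^P$-genericity; and folds in Boolean-valued $\tau^P$-genericity (recall $\tau^P$ is the canonical name for $M$). The last ingredient is precisely what forces $M|\delta$ to be generic over the resulting common part, so that the iterate lands in $\mathscr{F}^P \cap \mathscr{F}$ and dominates both $Q$ and $R$. The two-sided genericity is the essential idea your argument is missing.
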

\begin{proof}
	Let $P\in\mathscr{F}$.
	The fact that ${\preceq^P}\rest c^P={\preceq}\rest c^P$ is
	by Lemma \ref{lem:Sigma_sss_in_M[g]_definability} part \ref{item:Sigma_N,sss}.
	So let $Q\in\mathscr{F}^P$
	and $R\in\mathscr{F}$. We must find some $S\in\mathscr{F}^P\inter\mathscr{F}$
	with $Q,R\preceq S$. Let $\sigma$
	be some $\BB_{\delta_0^P}^P$-name
	such that $\sigma_g=R$. Let $\eta$ be a strong cutpoint of $P$
	such that $\delta_0^P<\eta$ and $Q|\delta_0^Q,\sigma\in P|\eta$.
	Let $p_1\in\BB_{\delta_0^P}^P$ be the Boolean value
	of the statement ``$\tau^P$ is an $\Mswsw$-like premouse
	and $\sigma\in\mathscr{F}^{\tau^P}$''.
	Then working in $P$, we can form a tree $\Tt$ on $P$
	by Boolean-valued
	comparison
	of $P,Q$ and all interpretations of
	$\sigma$ below $p_1$, with $\es^P$-genericity
	and Boolean-valued $\tau^P$-genericity iteration folded in,
	and using the short tree strategies to iterate.
	(The Boolean-valued genericity iteration means that we use extenders $E$
	under the usual circumstances as for genericity iteration,
	and given that there is some
	$\BB_{\delta_0^P}^P$ condition
	forcing that $E$ induces an axiom false for $\tau^P$.)
	For each limit $\lambda\leq(\eta^+)^P$,
	if $M(\Tt\rest\lambda)$ is not a Q-structure
	for itself then $\delta(\Tt\rest\lambda)=\lambda$ and
	$\Tt\rest\lambda$ is definable from parameters
	over $P|\lambda$, because (i) the segments
	of $\tau^P$ are determined level-by-level by $\es^P$ above $\delta_0^P$,
	and (ii) for all limits $\xi<\lambda$,
	the Q-structures guiding branch choice at stage $\xi$
	do not overlap $\delta(\Tt\rest\xi)$,
	and (iii) the Q-structures $Q_\xi$ of all trees at stage $\xi$
	are identical, and hence $Q_\xi\pins M(\Tt\rest\lambda)$;
	this means that we can use P-construction to compute Q-structures,
	and obtain an iterate in $\mathscr{F}\cap\mathscr{F}^P$.
	(For (ii), suppose $\xi$ is least such that a Q-structure
	overlaps $\delta(\Tt\rest\xi)$. Then there are fatal drops passed before stage $\xi$.
	This has to originally arise from a disagreement between the extender sequences of some projecting structures, as opposed to extenders used
	for genericity iteration purposes (as the latter are only used  when they are sufficiently total;
	cf.~the process in \cite{iter_for_stacks}). But then we can find some mutual generics witnessing this disagreement, and because the short tree strategies extend to generic extensions (because $\tau^P$ is forced $\Mswsw$-like), and given the fatal drop, these strategies suffice to complete the comparison between the conflicting projecting structures in the generic extension, which leads to the usual contradiction. Because $Q_\xi$ does not overlap
	$\delta(\Tt\rest\xi)$, note that no extenders in $\es_+^{Q_\xi}$
	of length $>\delta(\Tt\rest\xi)$ will be used for genericity iteration
	purposes.)
\end{proof}

\subsubsection{The internal direct limit system $\mathscr{D}$}

\begin{dfn}\label{dfn:M_internal_dls}
	Work in $M$. Given $P \in {\mathscr F}$ and $s\in[\OR]^{<\om}\cut\{\emptyset\}$,
	say $P$ is \emph{weakly $s$-iterable} iff for all $Q \in {\mathscr F}$
	with $P\preceq Q$, letting $\Tt=\Tt_{PQ,\sss}$,
	there is $\lambda\in\OR$ such that  $\Coll(\om,\lambda)$
	forces the existence of  a $\Tt$-cofinal branch $b$ such that
	\begin{eqnarray}\label{weak-s-it-requirement}
		i^\Tt_{b}(s)=s \mbox{ and }
		i^\Tt_{b}(P|\max(s))=Q|\max(s)
	\end{eqnarray}
	(in particular, $s$ is in the wellfounded part of $M^\Tt_b$).
	We say that $P$ is \emph{$s$-iterable} iff every $Q\in {\mathscr F}$ with $P \preceq Q$ is weakly $s$-iterable.

	Given an $s$-iterable $P$, define
	\[ \gamma^P_s=\sup(\Hull^{P|\max(s)}(s^-)\inter\delta_0^P), \]
	\[ H^P_s=\Hull^{P|\max(s)}(\gamma^P_s\cup s^-). \]
	(Note that the hulls here are uncollapsed. Recall that $P|\max(s)=(P||\max(s))^\passive$ is passive by definition.) Given
	also a $t$-iterable $Q$ with $s\sub t$ and
	$P\preceq Q$,
	define
	\[ \pi_{Ps,Qt}:H^P_s\to H^Q_t \]
	as the common restriction of iteration maps $i^\Tt_{b}$
	for $b$ witnessing the weak $s$-iterability requirement
	(\ref{weak-s-it-requirement}).
	\footnote{Notice that $\pi_{Ps,Qt}$ does not
		depend on $t$, because
		$\pi_{Ps,Qt}$ and $\pi_{Ps,Qs}$ have the same graph.
		In
		\cite{vm1}, the notation for the analogous map,
		$\pi_{PQ}^s$,
		does not mention $t$.}
	(Those restrictions agree pairwise by the Zipper Lemma.)
	Say that $P$  is \emph{strongly $s$-iterable}
	iff $P$ is $s$-iterable and whenever
	$Q,R\in{\mathscr F}$
	and $P\preceq Q\preceq R$ (hence $P\preceq R$), then
	\[ \pi_{Ps,Rs}=\pi_{Qs,Rs} \com\pi_{Ps,Qs}.\]

	Let $\mathscr{F}^+=\{(P,s)\bigm|P\in\mathscr{F}\text{ and }P\text{ is strongly
	}s\text{-iterable}\}$, and similarly let
	$d^+=\{(P|\delta_0^P,s)\bigm|(P,s)\in\mathscr{F}^+\}$.
	The order $\preceq$ on $d^+$
	is now determined by \ref{item:d^+_order}:
	for $(\bar{P},s),(\bar{Q},t)\in d^+$,
	we have  $(\bar{P},s)\preceq(\bar{Q},t)$ iff $\bar{P}\preceq\bar{Q}$ and
	$s\sub t$. Define the order $\preceq$ on $\mathscr{F}^+$
	in the same manner.
	Clearly
	\[ (P,s)\preceq(Q,t)\preceq (R,u)\implies
	\pi_{Ps,Ru}=\pi_{Qt,Ru}\com\pi_{Ps,Qt}.\]

	Define the system $\mathscr{D}=
	(H^P_s,H^Q_t,\pi_{Ps,Qt} \colon (P,s) \preceq (Q,t) \in \mathscr{F}^+)$.

	Given $P\in\mathscr{F}$ and $s\in[\OR]^{<\om}$, recall that $s$ is
	\emph{$P$-stable}
	iff $\pi_{PQ}(s)=s$ for every $Q\in\mathscr{F}$ with $P\preceq Q$.
\end{dfn}

\begin{rem}\label{rem:M_s-it_implies_strong_s-it}
	Even though it is superfluous, we note that
	$s$-iterability actually implies strong $s$-iterability. This follows
	from calculations in \cite{fullnorm_v3}. For let
	$\Tt=\Tt_{PQ,\sss}$,
	$\Uu=\Uu_{QR,\sss}$ and $\Xx=\Xx_{PR,\sss}$. Say that a $\Tt$-cofinal branch
	is $\Tt$-\emph{good} iff $M^\Tt_b$ is $\delta(\Tt)+1$-wellfounded
	and $i^\Tt_b(\delta_0^P)=\delta(\Tt)$; likewise for the other trees.
	Then the $\Xx$-good branches
	$d$ correspond exactly to pairs $(b,c)$ such that
	$b$ is $\Tt$-good and $c$ is  $\Uu$-good;
	and moreover, the corresponding iteration maps $\rest\delta_0^P$
	then commute (see \cite[***Theorem 10.8]{fullnorm_v3}).
	Let $b$ be $\Tt$-good and witness
	weak $s$-iterability. Let $\Tt'$ be the $0$-maximal
	tree on $P|\max(s)$ given by $\Tt$. So $M^{\Tt'}_b=\Ult_0(P|\max(s),E^\Tt_b)$,
	where $E^\Tt_b$ is the branch extender. By a condensation argument due to J.\ Steel,
	$M^{\Tt'}_b\ins M^\Tt_b$, and since $i^\Tt_b(\max(s))=\max(s)$,
	clearly $\OR(M^{\Tt'}_b)=\max(s)$, so $M^{\Tt'}_b=Q|\max(s)$,
	and similarly $i^{\Tt'}_b(s^-)=s^-$.
	Likewise for $c$ being $\Uu$-good and witnessing weak $s$-iterability,
	and $0$-maximal tree $\Uu'$ on $Q|\max(s)$.
	Let $d=d_{b,c}$ be the corresponding branch, and $\Xx'$ the $0$-maximal
	tree on $P|\max(s)$. Then we get $M^{\Xx'}_d=M^{\Uu'}_c=R|\max(s)$,
	and $i^{\Xx'}_d(s^-)=i^{\Uu'}_c(i^{\Tt'}_b(s^-))=s^-$,
	and commutativity in general. But note that these embeddings
	agree with the covering direct limit maps (consider the natural factor map
	 $M^{\Tt'}_b\to M^\Tt_b|\max(s)$), and therefore
	we get strong $s$-iterability.
\end{rem}

The following is proved by the usual arguments
(recall that since $\M_\infty^{\ext}$ is wellfounded, for all $s\in[\OR]^{<\om}$,
there is $P\in\mathscr{F}$ such that $s$ is $P$-stable):

\begin{lemma}\label{lem:first_properties_int_sys}
	We have:
	\begin{enumerate}[label=\tu{(}\alph*\tu{)}]
		\item\label{item:stable_b} if $P \in {\mathscr
			F}$ and $s\in[\OR]^{<\om}\cut\{\emptyset\}$ and $s$ is $P$-stable, then
		$(P,s)\in\mathscr{F}^+$ and $(P,s)$ is true
		(see Definition \ref{dfn:ug_stable}).
		\item\label{item:stable_c}  $(\mathscr{F}^+,\preceq)$ is
		directed -- for $(P,s)$, $(Q,t) \in \mathscr{F}^+$
		there is $(R,u) \in \mathscr{F}^+$ with $(P,s) \preceq (R,u)$ and
		$(Q,t) \preceq (R,u)$ \tu{(}note $u=s\cup t$ suffices\tu{)}.
		\item Therefore properties \ref{item:d^+},
		\ref{item:d^+_order}, \ref{item:reduce_t}, \ref{item:increase_i},
		\ref{item:every_s_gets_stable} hold.
	\end{enumerate}
\end{lemma}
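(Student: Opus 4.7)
My plan is as follows. For part (a), I would assume $s$ is $P$-stable and first verify that $P$-stability propagates upward along $\preceq$: if $P \preceq Q \preceq R$, then combining $\pi_{PR} = \pi_{QR} \circ \pi_{PQ}$ with $\pi_{PR}(s) = s = \pi_{PQ}(s)$ yields $\pi_{QR}(s) = s$, so $s$ is also $Q$-stable, and elementarity then gives $\pi_{QR}(Q|\max(s)) = R|\max(s)$. The external branch $b_V = \Sigma_Q(\Tt_{QR,\sss}) \in V$ has iteration map $i_{QR}$, so it witnesses weak $s$-iterability externally. To transport this into $M[g]$ I would observe that the assertion ``there is a cofinal branch $b$ of $\Tt_{QR,\sss}$ with $i^\Tt_b(s) = s$ and $i^\Tt_b(Q|\max(s)) = R|\max(s)$'' is $\Sigma^1_1$ in a real coding the tree together with the finite parameters, so Mostowski absoluteness transfers it to any $M[g]$ with $g$ being $M$-generic for $\Coll(\om,\lambda)$ and $\lambda$ collapsing $\card{\Tt_{QR,\sss}}^M$ to $\om$. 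Specializing to $Q = P$ gives $s$-iterability of $P$, and strong $s$-iterability then follows from Remark \ref{rem:M_s-it_implies_strong_s-it}. For the trueness clause I would invoke the Zipper Lemma to argue that every branch $b$ of $\Tt_{QR,\sss}$ satisfying the weak $s$-iterability conditions has $i^\Tt_b \rest H^Q_s = i^\Tt_{b_V} \rest H^Q_s = i_{QR} \rest H^Q_s$, so $\pi_{Qs,Rs} = \pi_{QR} \rest H^Q_s$.

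For part (b), given $(P,s), (Q,t) \in \mathscr{F}^+$, I would use Lemma \ref{lem:vV_1_preceq_props} to produce $R_0 \in \mathscr{F}$ with $P, Q \preceq R_0$, set $u = s \cup t$, and then invoke wellfoundedness of $\M_\infty^{\ext}$ together with the finiteness of $u$ to choose $R \succeq R_0$ such that $u$ is $R$-stable (the iteration images of $u$ cannot descend indefinitely, so they stabilize along a cofinal sub-system). Part (a) then gives $(R, u) \in \mathscr{F}^+$ true, and by the definition of $\preceq$ on $d^+$ we have $(P, s), (Q, t) \preceq (R, u)$.

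For part (c), properties \ref{item:d^+} and \ref{item:d^+_order} are immediate from the definitions together with (b). For \ref{item:reduce_t}, if $(p, t) \in d^+$ and $\emptyset \neq s \subseteq t$, then any branch $b$ witnessing weak $t$-iterability pointwise-fixes $t$ (since $t$ is finite and iteration maps preserve the order on ordinals), hence also fixes $s$; elementarity together with $\max(s) \in t$ yields $i^\Tt_b(P|\max(s)) = Q|\max(s)$, so $b$ also witnesses weak $s$-iterability, and strong $s$-iterability follows from Remark \ref{rem:M_s-it_implies_strong_s-it}. For \ref{item:increase_i}, if $P$ is $s$-iterable and $P \preceq Q$, then every $R \succeq Q$ also satisfies $P \preceq R$ and is therefore weakly $s$-iterable, so $Q$ is $s$-iterable, with strong $s$-iterability again following from Remark \ref{rem:M_s-it_implies_strong_s-it}. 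Finally \ref{item:every_s_gets_stable} is part (a) applied to any $P$ for which $s$ is $P$-stable, and such $P$ exists by wellfoundedness of $\M_\infty^{\ext}$. The main technical load will be in the trueness clause of (a), where the Zipper Lemma is used to pin down the internally-defined maps $\pi_{Qs,Rs}$ as the external restrictions $i_{QR} \rest H^Q_s$; everything else is bookkeeping parallel to \cite[\S 2]{vm1}.
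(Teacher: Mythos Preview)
Your proposal is correct and spells out precisely the ``usual arguments'' the paper alludes to without details: upward propagation of $P$-stability, $\Sigma^1_1$-absoluteness to push the external branch $b_V=\Sigma_Q(\Tt_{QR,\sss})$ into $M[g]$, the Zipper Lemma to identify $\pi_{Qs,Rs}$ with $i_{QR}\rest H^Q_s$, and wellfoundedness of $\M_\infty^{\ext}$ to obtain stable points for parts (b) and (c). One minor remark: in part (a) your appeal to Remark~\ref{rem:M_s-it_implies_strong_s-it} for strong $s$-iterability is redundant, since once you have established trueness (that $\pi_{Qs,Rs}=i_{QR}\rest H^Q_s$ for all $P\preceq Q\preceq R$), the commutativity $\pi_{Ps,Rs}=\pi_{Qs,Rs}\circ\pi_{Ps,Qs}$ follows immediately from the external commutativity $i_{PR}=i_{QR}\circ i_{PQ}$; your use of the Remark in (ug9) and (ug10) of part (c), however, is genuinely useful and is exactly why the paper records that remark.
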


The following properties follow directly from the definitions;
note that strong $s$-iterability
is used for \ref{item:pisjt_commute}:
\begin{lemma}
	$\mathscr{D}$ is lightface $M$-definable, and
	properties \ref{item:D^+_M-def}, \ref{item:H_i^s}, \ref{item:pi_is_js},
	\ref{item:pi_is_it}, \ref{item:pisjt_commute}
	hold.
\end{lemma}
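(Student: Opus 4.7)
The plan is to verify each listed property in turn, using $M$-definability of the short-tree strategies from Lemma \ref{lem:Sigma_sss_in_M[g]_definability} and the action of iteration maps on the fixed parameters in $s$. For $M$-definability of $\mathscr{D}$ (property \ref{item:D^+_M-def}), Lemma \ref{lem:Sigma_sss_in_M[g]_definability} gives that $\mathbb{U}$, the assignment $\Uu \mapsto \mathscr{P}^M(\Uu)$, and the short-tree strategies for these models are all lightface $M$-definable; hence $\mathscr{F}$ and $\preceq$ are $M$-definable (by Lemma \ref{lem:vV_1_preceq_props}), and for $P \preceq Q$ the pseudo-comparison tree $\Tt_{PQ,\sss}$ (whose branch choices at limits are guided by Q-structures $\pins Q$) is uniformly $M$-definable. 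The predicate ``$(P,s) \in \mathscr{F}^+$'' is $M$-expressible as a $\Coll(\om,\lambda)^M$-forcing assertion about existence of a cofinal branch $b$ of $\Tt_{PR,\sss}$ (for each $R \in \mathscr{F}$ with $P \preceq R$) satisfying (\ref{weak-s-it-requirement}), conjoined with the commutativity clause of strong $s$-iterability. Given this, $H^P_s$ is computed directly in $M$ from $P|\max(s)$, and $\pi_{Ps,Qt}$ is captured in $M$ as the common restriction of such iteration maps.

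Properties \ref{item:H_i^s} and \ref{item:pi_is_it} are immediate from the hull definition. For \ref{item:H_i^s}, $H^P_s$ is by construction an uncollapsed hull of $P|\max(s)$. For \ref{item:pi_is_it}, let $(P,t) \in d^+$ and $\emptyset \neq s \subseteq t$; we may assume $s \neq t$, so $\max(s) < \max(t)$, giving $s \subseteq t^-$ and hence $s^- \subseteq t^-$, which yields $\gamma^P_s \leq \gamma^P_t$ from the defining hulls. Anything definable over $P|\max(s)$ from $\gamma^P_s \cup s^-$ is definable over $P|\max(t)$ from $\gamma^P_t \cup t^-$ (using $\max(s) \in t^-$ as an extra parameter to select $P|\max(s)$ inside $P|\max(t)$), so $H^P_s \subseteq H^P_t$; and the inclusion is $\Sigma_0$-elementary because $P|\max(s) \ins P|\max(t)$ while $H^P_s \elem P|\max(s)$ and $H^P_t \elem P|\max(t)$. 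The map $\pi_{Ps,Pt}$ is defined via the trivial tree $\Tt_{PP,\sss}$ whose iteration map is the identity, so it coincides with this inclusion, as required.

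Property \ref{item:pi_is_js} is the main content. Fix $(P,s), (Q,s) \in d^+$ with $P \preceq Q$, let $\Tt = \Tt_{PQ,\sss}$, and let $b$ be a $\Tt$-cofinal branch in some $\Coll(\om,\lambda)^M$-extension witnessing weak $s$-iterability. By (\ref{weak-s-it-requirement}), $i^\Tt_b(s) = s$ (so $i^\Tt_b$ fixes $s$ pointwise, as it is an order-isomorphism of ordinals), and $i^\Tt_b \rest P|\max(s) \colon P|\max(s) \to Q|\max(s)$ is fully elementary; by elementarity, $i^\Tt_b(\gamma^P_s) = \gamma^Q_s$, so $i^\Tt_b``\gamma^P_s \subseteq \gamma^Q_s$, and $i^\Tt_b$ restricts to an elementary embedding $H^P_s \to H^Q_s$. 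The Zipper Lemma, applied to the branch extenders of any two competing witnesses $b, b'$ moved into a common generic extension, ensures that this restriction is independent of the choice of $b$ and of the extension, so the common restriction lies in $M$ and defines $\pi_{Ps,Qs}$.

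For \ref{item:pisjt_commute}, it suffices to show $\pi_{Qt,Ru} \circ \pi_{Ps,Qt} = \pi_{Ps,Ru}$ for $(P,s) \preceq (Q,t) \preceq (R,u)$. Since $\pi_{Ps,Qt}$ has the same graph as $\pi_{Ps,Qs}$ with range in $H^Q_s \subseteq H^Q_t$, and $\pi_{Qt,Rt} \rest H^Q_s = \pi_{Qs,Rs}$ (both being restrictions of a common iteration map $i^{\Tt_{QR,\sss}}_{b'}$, which witnesses both weak $t$- and weak $s$-iterability for $Q \preceq R$), the composition reduces to $\pi_{Qs,Rs} \circ \pi_{Ps,Qs} = \pi_{Ps,Rs}$, which is precisely the strong $s$-iterability clause built into $\mathscr{F}^+$. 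The main obstacle I anticipate lies in \ref{item:pi_is_js}: ensuring that all competing branches witnessing weak $s$-iterability yield the same restriction to $H^P_s$, which is where the Zipper Lemma is essential.
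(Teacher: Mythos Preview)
Your proposal is essentially correct and matches the paper's approach, which simply states that these properties ``follow directly from the definitions; note that strong $s$-iterability is used for \ref{item:pisjt_commute}.'' You have filled in the routine details the paper omits.

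One small slip: in your argument for \ref{item:pi_is_it} you write ``we may assume $s \neq t$, so $\max(s) < \max(t)$,'' but $s \subsetneq t$ does not force $\max(s) < \max(t)$ (e.g.\ $s = \{5\}$, $t = \{3,5\}$). In the case $\max(s) = \max(t)$ one has $P|\max(s) = P|\max(t)$ and $s^- \subseteq t^-$ directly, so $H^P_s$ and $H^P_t$ are both full elementary substructures of the same model with $H^P_s \subseteq H^P_t$; this case is even easier. With that patched, the verification goes through.
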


For the following, recall  $\mathscr{I}^P$ from Definition \ref{dfn:inds}:
\begin{lemma}\label{lem:M_indisc_stable}
	For each $P\in\mathscr{F}$,  $\{\alpha\}$ is $P$-stable
	for every $\alpha\in\mathscr{I}^M=\mathscr{I}^P$.
	Therefore property \ref{item:every_x_gets_captured} holds,
	as witnessed by some $s\in[\mathscr{I}^M]^{<\om}$.
\end{lemma}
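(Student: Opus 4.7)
The plan is as follows.

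For the first assertion (stability of indiscernibles), fix $P\preceq Q$ in $\mathscr{F}$ and $\alpha\in\mathscr{I}^M$. Choose $\Tt,\Uu\in\mathbb{U}$ with $\mathscr{P}^M(\Tt)=P$ and $\mathscr{P}^M(\Uu)=Q$ (allowing the trivial tree when $P$ or $Q$ equals $M$), and set $b=\Sigma(\Tt)$, $c=\Sigma(\Uu)$. By Lemma~\ref{lemma_1}, $M^\Tt_b=P$, $M^\Uu_c=Q$, $\mathscr{I}^P=\mathscr{I}^M=\mathscr{I}^Q$, and both $i^\Tt_{0b}$ and $i^\Uu_{0c}$ are the identity on $\mathscr{I}^M$. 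The two non-dropping stacks $\Tt\conc\Tt_{PQ}$ and $\Uu$ on $M$ via $\Gamma=\Sigma^{\stk}$ share the last model $Q$, so by commutativity of $\Gamma$ (Fact~\ref{fact:Sigma_properties}\ref{item:Slist_commuting}) we get $\pi_{PQ}\circ i^\Tt_{0b}=i^\Uu_{0c}$. Evaluating at $\alpha$ yields $\pi_{PQ}(\alpha)=i^\Uu_{0c}(\alpha)=\alpha$, confirming that $\{\alpha\}$ is $P$-stable.

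For the ``therefore'' clause, combine the first part with Lemma~\ref{lem:first_properties_int_sys}\ref{item:stable_b}: every nonempty $s\in[\mathscr{I}^M]^{<\om}$ is $P$-stable, and hence $(P,s)\in d^+$ with $(P,s)$ true. So it suffices, for a given $x\in P$, to produce such an $s$ with $x\in H^P_s$. By Fact~\ref{fact:inds_preserved}, $P=\Hull_1^P(\mathscr{I}^P\cup\delta_0^P)$, so $x$ is $\Sigma_1$-definable over $P$ from some $\vec\mu\in[\mathscr{I}^P]^{<\om}$ together with some $\vec\eta\in[\delta_0^P]^{<\om}$. Pick $\alpha\in\mathscr{I}^M$ above $\max(\vec\mu)$ large enough that the defining formula reflects to $P|\alpha$, and take $s=\vec\mu\cup\{\alpha\}$.

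The main technical step is to verify $\vec\eta\subseteq\gamma^P_s$, that is, that the below-$\delta_0^P$ parameters needed to define $x$ can always be absorbed into $\Hull^{P|\alpha}(s^-)\cap\delta_0^P$. For this one uses the representation of $P$ as an iterate of $M^\#$: since $M^\#$ is sound with $\rho_1^{M^\#}=\om$ and $p_1^{M^\#}=\emptyset$, every element of $M^\#$ is $\Sigma_1$-definable over $M^\#$ from a natural number, so every $x\in P$ admits an ultrapower-style representation $x=j(f)(\vec\mu)$, where $j\colon M^\#\to P$ is the combined iteration map and $f\in M^\#$ is $\Sigma_1$-definable over $M^\#$ from some $n<\om$; by elementarity of $j$, $j(f)$ is then $\Sigma_1$-definable over $P|\alpha$ from $n$, forcing the required parameters $\vec\eta$ to lie in $\Hull^{P|\alpha}(\vec\mu)$ and hence below $\gamma^P_s$. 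This concluding step is the direct analog of the corresponding argument in the proof of the analogous lemma in \cite{vm1}, which the authors here treat as standard.
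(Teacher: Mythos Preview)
Your argument for the first assertion (stability of indiscernibles) is correct and essentially the same as the paper's: both use Lemma~\ref{lemma_1} to see that the iteration maps $i_{MP}$ fix $\mathscr{I}^M$ pointwise, and then commutativity gives $\pi_{PQ}(\alpha)=\alpha$.

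The second part, however, has a genuine gap. Your ``ultrapower-style representation'' $x=j(f)(\vec\mu)$, with only indiscernibles $\vec\mu$ as generators, is false for $P\neq M$. The model $P$ is obtained from $M$ via an iteration tree based on $M|\delta_0^M$, so every $x\in P$ has the form $i_{MP}(g)(\vec\eta)$ for some $g\in M$ and some $\vec\eta\in[\delta_0^P]^{<\omega}$; writing $g=t^M(\vec\mu)$ for indiscernibles $\vec\mu$ (since $M=\Hull_1^M(\mathscr{I}^M)$), you get $x=t^P(\vec\mu)(\vec\eta)$. The tree generators $\vec\eta$ are arbitrary ordinals below $\delta_0^P$ and need \emph{not} lie in $\Hull^{P|\alpha}(\vec\mu)$; indeed, $\Hull^P(\mathscr{I}^P)=\rg(i_{MP})$ is a proper subclass of $P$ whenever $P\neq M$, so there are ordinals $\eta<\delta_0^P$ with $\eta\notin\Hull^P(\mathscr{I}^P)$. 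There is no map ``$j\colon M^\#\to P$'' combining the two iterations in the way you suggest.

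The paper's route is different: rather than trying to put $\vec\eta$ into the hull of $s^-$, one simply \emph{enlarges} $s$ so that $\gamma^P_s>\max\vec\eta$. This works because $\rg(i_{MP})\cap\delta_0^P=i_{MP}``\delta_0^M$ is cofinal in $\delta_0^P$ (the iteration map $i_{MP}$ is continuous at $\delta_0^M$, since $\delta_0^M$ is inaccessible in $M$ and the tree is based on $M|\delta_0^M$), so $\Hull^P(\mathscr{I}^P)\cap\delta_0^P$ is cofinal in $\delta_0^P$, and hence $\sup_s\gamma^P_s=\delta_0^P$. Then $x\in\Hull^{P|\max(s)}(\gamma^P_s\cup s^-)=H^P_s$ once $\gamma^P_s$ exceeds the finitely many $\vec\eta$ needed.
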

\begin{proof}
	The proof is standard, but we give a reminder.
	By Fact \ref{fact:inds_preserved},
	$P=\Hull^P(\mathscr{I}^P\cup\delta_0^P)$.
	By Lemma \ref{lem:inds_fixed},
	$\pi_{MP}\rest\mathscr{I}^M=\id$,
	so $\mathscr{I}^P=\mathscr{I}^M$,
	and we have $\{\alpha\}$-stability
	for each $\alpha\in\mathscr{I}^M$.
	Now let $x\in P$. By these facts, we can fix $s\in[\mathscr{I}^M]^{<\om}$
	such that $x\in\Hull^{P|\max(s)}(\delta\cup\{s^-\})$.
	But we can also arrange that $\gamma^P_s$ is as large below
	$\delta_0^P$ as desired. It follows we can get $x\in H^P_s$,
	and by Lemma \ref{lem:first_properties_int_sys}, this works.
\end{proof}

We now have enough  properties from
\S\ref{ground_generation_stuff}
to define (working in $M$) the direct limit
\begin{equation}\label{internal_dir_limit}
	({\cal M}_\infty, \pi_{Ps,\infty} \colon (P,s) \in \mathscr{F}^+)
	= \dirlim\ \mathscr{D},\end{equation}
and the $*$-map,
and Lemmas \ref{c0} and \ref{c1} hold,
so in particular, $\chi:{\cal M}_\infty \to {\cal
	M}_\infty^{\ext}$ is the identity and $\M_\infty=\M_\infty^{\ext}$. Property \ref{item:struc_emb_correct}
is straightforward (the main point is that if $P\in\mathscr{F}$
and $\bar{Q}\in d^P\inter d$ then
$\P_{\bar{Q}}^P=\mathscr{P}^P(\bar{Q})=\mathscr{P}^M(\bar{Q})$,
because $\es^P,\es^M$ agree above $\delta_0^P$).
For property \ref{item:all_s_eventually_internally_in}, given $s$, note that
any $P\in\mathscr{F}$ such that $s$ is $P$-stable works.

So far we have verified  \ref{item:M_pc_ZFC}--\ref{item:all_s_eventually_internally_in}.
For the remaining
properties
we use $\delta=\delta_0^M$ and $\BB=\BB_{\delta_0}^M$
(the $\delta_0$-generator extender algebra of $M$ at $\delta_0$).
This immediately secures \ref{item:delta_reg,B_delta-cc_cba}.
Recall we defined $\tau^P$ in Definition \ref{dfn:tau^P}, and
$\delta_\infty=i_{M\infty}(\delta_0)=\delta_0^{\M_\infty}$.

\begin{lemma}\label{key_facts_about_V1} We have:
	\begin{enumerate}
		\item \label{item:tau_g_gives_M|alpha}	For each $M$-stable $\alpha\in\OR$
		and each $P\in\mathscr{F}$,
		letting $\tau^P\rest\alpha=i_{MP}(\tau^M\rest\alpha)$
		and $g$
		be the $P$-generic filter for $\BB_{\delta_0^P}^P$
		given by $M|\delta_0^P$, then
		$(\tau^P\rest\alpha)_g=M|\alpha$. Moreover,
		$M\ueq P[g]\ueq P[M|\delta_0^P]$.

		\item\label{item:unif_grds_prop_holds}	Property \ref{item:unif_grds} holds.

		\item\label{item:kappa_0^M=least_meas}  $\kappa_0^M$ is the least measurable
		cardinal of ${{\cal M}_\infty}$.
		\item\label{item:kappa_0^+M=delta_infty} $\kappa_0^{+M} = \delta_\infty$.
	\end{enumerate}
\end{lemma}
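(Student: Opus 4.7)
I would establish the four parts in order, since each rests on the previous.

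Part (1) is essentially book-keeping about the P-construction. By Definition \ref{dfn:tau^P}, the name $\tau^P$ is set up so that, with $g$ the $(\mathcal{P}_p,\BB_{\delta_0^P}^P)$-generic determined by $M|\delta_0^P$, one has $(\tau^P)_g=M$ and $\mathcal{P}_p[g]\ueq M$. Since $\alpha$ is $M$-stable, $i_{MP}$ fixes $\alpha$, so $i_{MP}(\tau^M\rest\alpha)=\tau^P\rest\alpha$, and the same generic evaluation restricts to $(\tau^P\rest\alpha)_g=M|\alpha$. For Part (2), given $\mu$ with $\mathrm{card}^M(V_\mu^M)=\mu$, code $V_\mu^M$ in $M$ by a canonical $A'\in\pow(\mu)^M$, and choose an $M$-stable $\alpha>\mu$ with $V_\mu^M\in M|\alpha$. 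Using Lemma \ref{lem:first_properties_int_sys}, pick $p_1\in d$ so that $\{\mu,\alpha\}$ is $p_1$-stable, and define $\tau'_{p_1}\in\mathcal{P}_{p_1}$ to be the $\BB^{p_1}$-name which extracts $A'$ from the interpretation of $\tau^{p_1}\rest\alpha$. For $q\succeq p_1$, $p_1$-stability gives that $\tau'_q=i_{p_1q}(\tau'_{p_1})$ is the analogous name over $\mathcal{P}_q$, and Part (1) yields $(\tau'_q)_g=A'$.

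For Part (3), first note $\kappa_0^P=\kappa_0^M$ for every $P\in\mathscr{F}$: since $\mathcal{P}_p=\mathscr{P}^M(\mathcal{U})$ uses $\es^M$ above $\delta_0^P$, the least $P$-strong above $\delta_0^P$ coincides with the least $M$-strong above $\delta_0^P$, namely $\kappa_0^M$. Each iteration map $i_{PQ}$ comes from a tree based on $P|\delta_0^P$, hence uses extenders of length $<\delta_0^P<\kappa_0^M$, so fixes the regular cardinal $\kappa_0^M$; thus $\pi_{P\infty}(\kappa_0^M)=\kappa_0^M$ and, by elementarity, $\kappa_0^M$ is measurable in $\M_\infty$. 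For minimality, any measurable $\beta$ of $\M_\infty$ has the form $\beta=\pi_{P\infty}(\bar\beta)$ for some $P$ and measurable $\bar\beta$ of $P$. If $\bar\beta\geq\kappa_0^P=\kappa_0^M$ then $\beta\geq\kappa_0^M$; if $\bar\beta<\delta_0^P$, use Woodinness of $\delta_0^P$ in $P$ to show that for every $P$-stable $\gamma<\kappa_0^M$ there is $Q\succeq P$ with $i_{PQ}(\bar\beta)>\gamma$, obtained by folding $P$-extenders of critical point $\bar\beta$ and length close to $\delta_0^Q$ into the pseudo-comparison. Since $P$-stable $\gamma<\kappa_0^M$ are cofinal in $\kappa_0^M$, $\beta\geq\kappa_0^M$.

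For Part (4), the upper bound $\delta_\infty\leq\kappa_0^{+M}$ follows by a cardinality count: every ordinal of $\M_\infty$ below $\delta_\infty$ has the form $\pi_{P\infty}(\gamma)$ with $P\in\mathscr{F}$ and $\gamma<\delta_0^P<\kappa_0^M$, and the $M$-definable set of such representatives has $M$-cardinality at most $|\mathscr{F}|^M\cdot\kappa_0^M=\kappa_0^M$. For the reverse, Parts (1) and (2) together with Theorem \ref{tm:Varsovian_is_ground} imply that $\delta_\infty$ is a regular cardinal of $M$; by Part (3) and the fact that $\delta_\infty$ is Woodin in $\M_\infty$, we have $\kappa_0^M<\delta_\infty$; since $\kappa_0^{+M}$ is the only regular $M$-cardinal in $(\kappa_0^M,\kappa_0^{+M}]$, we conclude $\delta_\infty=\kappa_0^{+M}$.

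The principal obstacle is the lower bound in Part (3), namely showing that every below-$\delta_0^P$ measurable of $P$ is pushed by iteration past every $P$-stable $\gamma<\kappa_0^M$; this leverages Woodinness of $\delta_0^P$ in $P$ and the short-tree-strategy machinery of \S\ref{section-short-tree-strategy-for-M} to realise the required pseudo-comparisons inside $\mathbb{U}$. Once Part (3) is in hand, Parts (1), (2) are direct, and Part (4) follows from the cardinality count together with an essentially formal appeal to Theorem \ref{tm:Varsovian_is_ground}.
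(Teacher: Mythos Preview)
Your treatment of Parts (1), (2) and the cardinality bound in (4) is fine and matches the paper, and your approach to (4) via Theorem~\ref{tm:Varsovian_is_ground} is exactly the alternative the paper notes in its footnote (the paper's primary argument just cites \cite[Lemma 2.7(b)]{vm1}).

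The real problem is in Part (3). Your upper-bound argument rests on the claim that $\pi_{P\infty}(\kappa_0^M)=\kappa_0^M$, inferred from the fact that each $i_{PQ}$ fixes $\kappa_0^M$. That inference is invalid, and the claim is false. Since $\delta_0^P<\kappa_0^P=\kappa_0^M$ in each $P$, elementarity gives $\pi_{P\infty}(\kappa_0^M)=\kappa_0^{\M_\infty}>\delta_\infty$, and by Part (4), $\delta_\infty=\kappa_0^{+M}$; so in fact $\pi_{P\infty}(\kappa_0^M)>\kappa_0^{+M}$. The point is that although each $i_{PQ}$ fixes $\kappa_0^M$, the direct limit accumulates new ordinals below the thread of $\kappa_0^M$: for instance $i_{PQ}(\delta_0^P)=\delta_0^Q$, and these $\delta_0^Q$ are unbounded in $\kappa_0^M$, so the thread of $\delta_0^P$ alone already has rank $\geq\kappa_0^M$ in the collapse. (Incidentally, your subsidiary claim that the tree from $P$ to $Q$ uses extenders of length ${<}\,\delta_0^P$ is also off---they have length ${<}\,\delta_0^Q$---though this by itself would not be fatal.)

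The correct argument for (3) is different and short. For each $Q\in\mathscr{F}$, every $i_{QR}$ has critical point at least $\mu_Q$, the least measurable of $Q$, since the tree uses only extenders from $\es^{Q|\delta_0^Q}$ and its images. Hence $\pi_{R\infty}\rest\mu_R=\id$ for every $R$, so the ordinals below the least measurable $\mu_\infty$ of $\M_\infty$ are exactly $\bigcup_{R\in\mathscr{F}}\mu_R$. Your own minimality argument (pushing $\mu_R$ past any $\alpha<\kappa_0^M$ by iterating the least measure and then absorbing into $\mathbb{U}$) shows $\sup_R\mu_R=\kappa_0^M$, so $\mu_\infty=\kappa_0^M$. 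This is presumably what the paper means by ``also clear.''
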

\begin{proof}
	Part \ref{item:tau_g_gives_M|alpha} is already clear, and part \ref{item:unif_grds_prop_holds}
	is an easy consequence.
	Part \ref{item:kappa_0^M=least_meas}
	is also clear.
	Part \ref{item:kappa_0^+M=delta_infty}
	is by the proof of \cite[Lemma 2.7(b)]{vm1}.\footnote{Actually, an easy cardinality calculation
		shows that $\delta_\infty\leq\kappa_0^{+M}$, and we will show
		directly later that $\delta_\infty$ is Woodin in $\vV_1$ and $\vV_1$ is a ground of $M$
		via a forcing which has the $\delta_\infty$-cc in $\vV$, and hence $\delta_\infty$ is regular in $M$,
		so $\delta_\infty=\kappa_0^{+M}$, without using the proof of \cite[Lemma 2.7(b)]{vm1}.}
\end{proof}

\begin{lemma}\label{lem:i_E(M_infty)_is_iterate}  Let $E\in\es^M$ be $M$-total with $\crit(E)=\kappa_0$ and let $\N=i_E(\M_\infty)$. We have:
	\begin{enumerate}
		\item\label{item:M,M_infty_same_indisc}	$\mathscr{I}^{\M_\infty}=\mathscr{I}^M$, and so $i_{M\M_\infty}\rest\mathscr{I}^{M}=\id=*\rest\mathscr{I}^M$.
		\item  $\N$ is a $\delta_0^\N$-sound $\Sigma_{\M_\infty}$-iterate
		of $\M_\infty$.
		\item\label{item:P,Ult(M,E)_same_indisc} $\mathscr{I}^{\N}=\mathscr{I}^{\Ult(M,E)}$.
		\item\label{item:i_E_is_correct_it_map_on_M_infty} $i^M_E\rest\M_\infty:\M_\infty\to \N$
		is the $\Sigma_{\M_\infty}$-iteration map.
	\end{enumerate}
\end{lemma}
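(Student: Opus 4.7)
The plan for parts (1) and (3) is quick: for (1), each $P \in \mathscr{F}$ has $\mathscr{I}^P = \mathscr{I}^M$ and $i_{MP}\rest\mathscr{I}^M = \id$ by Lemma \ref{lem:inds_fixed}; commutativity of the direct limit system then forces $i_{M\M_\infty}\rest\mathscr{I}^M = \id$, and so by Definition \ref{dfn:inds}, $\mathscr{I}^{\M_\infty} = i_{M\M_\infty}\,``\mathscr{I}^M = \mathscr{I}^M$. The equality $*\rest\mathscr{I}^M = \id$ follows from Lemma \ref{lem:M_indisc_stable} (giving $M$-stability of each $\alpha \in \mathscr{I}^M$) together with Lemma \ref{c1}(1). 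Part (3) is the image of part (1) under $i_E$: since $\N = i_E(\M_\infty)$ stands in the same definable relation to $\Ult(M,E)$ as $\M_\infty$ does to $M$, elementarity of $i_E$ together with the internal version of Lemma \ref{lem:inds_fixed} yields $\mathscr{I}^\N = \mathscr{I}^{\Ult(M,E)}$.

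For parts (2) and (4), the plan is to exhibit a normal tree $\Xx$ on $\M_\infty$, via $\Sigma_{\M_\infty}$ and based on $\M_\infty|\delta_\infty$, such that $M^\Xx_b = \N$ and $i^\Xx_b = i_E\rest\M_\infty$, where $b = \Sigma_{\M_\infty}(\Xx)$. I will construct $\Xx$ by running, inside $\Ult(M,E)$, the genericity iteration of $\M_\infty|\delta_\infty$ aimed at making $\Ult(M,E)|\delta_0^\N$ generic for the $\delta_0^\N$-generator extender algebra of $M(\Xx)$. This is legitimate because $\M_\infty|\delta_\infty \in \Ult(M,E)$ and $\Sigma_{\M_\infty|\delta_\infty,\sss}\rest\Ult(M,E)$ is $\Ult(M,E)$-definable via Lemma \ref{lem:Sigma_sss_in_M[g]_definability}(\ref{item:M_cl_under_Sigma_P,sss}) applied inside $\Ult(M,E)$ (using $\M_\infty|\delta_\infty$ as the base). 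By a direct adaptation of the absorption argument of Lemma \ref{absorbing-a-tree-by-a-genericity-tree} to trees on $\M_\infty$ instead of on $M$, $\Xx$ is maximal with $M(\Xx) = \N|\delta_0^\N$ and $\mathscr{P}^{\Ult(M,E)}(\Xx) = \N$.

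To identify the iteration map, I apply branch condensation (Lemma \ref{lem:branch_con}) via the factor map $k = i_E\rest(\M_\infty|\delta_\infty) : \M_\infty|\delta_\infty \to \N|\delta_0^\N$; this witnesses that $b$ is the branch chosen by $\Sigma_{\M_\infty}$ and that $i^\Xx_b\rest(\M_\infty|\delta_\infty) = k$. To upgrade to full agreement on all of $\M_\infty$: by Fact \ref{fact:inds_preserved} (applied to $\M_\infty$ as a non-dropping $\Sigma$-iterate of $M$ via the class-length normalization of the direct limit system), $\M_\infty = \Hull_1^{\M_\infty}(\mathscr{I}^M \cup \delta_\infty)$, so an elementary embedding of $\M_\infty$ is determined by its action on $\mathscr{I}^M \cup \delta_\infty$. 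Both $i^\Xx_b$ and $i_E\rest\M_\infty$ agree with $k$ on $\delta_\infty$; on $\mathscr{I}^M$, both map each $\alpha$ (canonically indexed by some ordinal $\xi$) to the $i^\Xx_b(\xi)$-th, resp.\ $i_E(\xi)$-th, element of $\mathscr{I}^\N = \mathscr{I}^{\Ult(M,E)}$, which by transfinite induction on $\xi$ forces $i^\Xx_b\rest\mathscr{I}^M = i_E\rest\mathscr{I}^M$. Hence $i^\Xx_b = i_E\rest\M_\infty$ and $M^\Xx_b = \N$. The $\delta_0^\N$-soundness of $\N$ then follows from applying $i_E$ to $\M_\infty = \Hull_1^{\M_\infty}(\mathscr{I}^M \cup \delta_\infty)$, which gives $\N = \Hull_1^\N(\mathscr{I}^{\Ult(M,E)} \cup \delta_0^\N) = \Hull_1^\N(\rg(i^\Xx_b) \cup \delta_0^\N)$.

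The principal obstacle is the identification $M(\Xx) = \N|\delta_0^\N$ (with the companion $\mathscr{P}^{\Ult(M,E)}(\Xx) = \N$): one must verify that the genericity iteration of $\M_\infty$ run inside $\Ult(M,E)$ does terminate at stage $\delta_0^\N$, and that the extender-algebra core it produces coincides with that of $\N$. This requires adapting the $*$-translation and short-tree-strategy analysis of \S\ref{section-short-tree-strategy-for-M} from $M$ to $\M_\infty$ inside $\Ult(M,E)$, leveraging the fact that the direct limit construction (and the whole system $\mathscr{F}, \mathscr{D}$) commutes with $i_E$ by elementarity, and that $\N$ is itself realized in $\Ult(M,E)$ as the direct limit of the image system.
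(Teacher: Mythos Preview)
Your approach for parts (2) and (4) has a genuine gap, and it is precisely the one you flag as the ``principal obstacle'': there is no reason the genericity iteration of $\M_\infty|\delta_\infty$ inside $\Ult(M,E)$ should produce $\N|\delta_0^\N$. A genericity iteration making $\Ult(M,E)$ generic yields a single tree $\Xx$ whose P-construction $\mathscr{P}^{\Ult(M,E)}(M(\Xx))$ is one particular element of (the analogue of) $\mathscr{F}^{\Ult(M,E)}$, not the direct limit $\N = \M_\infty^{\Ult(M,E)}$. Lemma~\ref{absorbing-a-tree-by-a-genericity-tree} absorbs a \emph{given} tree into a genericity tree; you have no tree to feed it. Your invocation of Lemma~\ref{lem:branch_con} is also off: branch condensation takes as input a candidate branch $b$ together with a map $k:M^\Xx_b|\delta_0^{M^\Xx_b}\to M^\Uu_\infty|\delta_0^{M^\Uu_\infty}$ factoring an iteration map, and outputs that $b$ is correct; it does not conjure $b$ or identify $i^\Xx_b$ with a pre-given $k$.

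The paper's route (carried out in Lemmas~\ref{lem:P-bar_is_iterate}--\ref{lem:M_infty_of_kappa_0-sound_iterate}) is entirely different and bypasses any explicit construction of a tree. For a $\kappa_0$-sound iterate $N'$ of $M$ (here $N'=\Ult(M,E)$), one passes from each $P\in\mathscr{F}^{N'}$ to its $\delta_0^P$-core $\bar P=\cHull^P(\delta_0^P\cup\mathscr{I}^{N'})$, which is a genuine $\delta_0$-sound $\Sigma$-iterate of $M$; one then shows via a $\chi$-map argument that the internal direct limit $\M_\infty^{N'}$ coincides with the external direct limit $(\M_\infty^{\overline{\ext}})_{N'}$ of these $\bar P$. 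This identifies $\N$ as a $\delta_0^\N$-sound $\Sigma$-iterate of $M$; that it is a $\Sigma_{\M_\infty}$-iterate of $\M_\infty$ then follows by comparing two $\delta_0$-sound iterates. The iteration map equality $i_E\rest\M_\infty=i_{\M_\infty\N}$ is established by a direct calculation in the covering system: for $\alpha<\delta_\infty$, pull $\alpha$ back along $\pi^M_{Ps,\infty}$, push forward via $i_E$, and unwind using $\chi=\id$ on the $N'$-side. Your final paragraph in fact points at exactly this machinery; the point is that it \emph{replaces} the genericity-iteration plan rather than patching it.

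A smaller issue: part~(3) does not follow by bare elementarity of $i_E$, since $\mathscr{I}^M$ is an externally defined class, not a class of $M$. In the paper it is derived after part (2), using that $\N=\Hull_1^\N(\delta_0^\N\cup\mathscr{I}^{N'})$ together with the fact that $\N$ is a lightface class of $N'$, so that $\mathscr{I}^{N'}$ really are indiscernibles for $\N$.
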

\begin{proof}
	We delay the proof until Lemma \ref{lem:M_infty_of_kappa_0-sound_iterate},
	which is more general.\footnote{It is fine to read
		\ref{lem:P-bar_is_iterate}--\ref{lem:M_infty_of_kappa_0-sound_iterate} at this point,
		which covers what is needed.}
\end{proof}

\begin{lemma}\label{M_knows_how_to_iterate_Minfty_up_to_its_woodin}
	The following are true.
	\begin{enumerate}[label=\tu{(}\alph*\tu{)}]
		\item\label{item:M_infty_strat_thru_delta_0} The restriction of
		$\Sigma_{{{\cal M}_\infty}}$ to trees in $M$ and based on ${{\cal
				M}_\infty}|\delta_\infty$, is lightface definable over $M$.
		\item\label{item:M_infty_strat_thru_delta_0_in_M[g]} Let $\lambda\in\OR$
		and $g$ be in some generic extension of $V$, and be
		${\mathbb P}$-generic over $M$ for some ${\mathbb
			P} \in M|\lambda$.
		Let $\Sigma'_{\M_\infty}$ be the restriction of $\Sigma_{{{\cal M}_\infty}}$
		to
		trees in $M[g]$ and based on ${{\cal M}_\infty}|\delta_\infty$.
		Then $\Sigma'_{\M_\infty}$ is definable over the universe of $M[g]$
		from the parameter
		$x=M|(\lambda^{+\om})^M$, uniformly in $\lambda$.\footnote{\label{ftn:trees_not_in_V}If $g\notin V$,
			we are extending $\Sigma$ and $\Sigma^{\stk}$ canonically to $V[g]$ as in
			Fact \ref{fact:Sigma_properties}\ref{item:Slist_}.}
	\end{enumerate}
\end{lemma}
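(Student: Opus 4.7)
The plan is to adapt the computation of the short-tree strategy $\Sigma_\sss$ for $M$ itself, sketched in \S\ref{section-short-tree-strategy-for-M}, to the strategy for $\M_\infty$ acting on trees based on $\M_\infty|\delta_\infty$. The enabling fact is that $\M_\infty$, and in particular $\es^{\M_\infty}$, is lightface $M$-definable via the internal direct limit system $\mathscr{D}$ of \eqref{internal_dir_limit}, together with the $*$-map; so $M$ can formulate iteration trees on $\M_\infty|\delta_\infty$ and attempt to perform genericity inflations on them.

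Given a limit-length normal $\Tt \in M$ on $\M_\infty|\delta_\infty$ via $\Sigma_{\M_\infty}$, set $b = \Sigma_{\M_\infty}(\Tt)$, and first pick an auxiliary model $U$ to iterate against: if $\Tt \in M|\kappa_0$ take $U = M$, and otherwise take $U = \Ult(M, E)$ for a suitable $M$-total $E \in \es^M$ with $\crit(E) = \kappa_0$ and $\Tt \in V_\theta^U$ for some strong cutpoint $\eta$ of $U$ with $\theta \leq \eta < \kappa_0^U$. Working inside $M$, build a genericity inflation $\Xx$ of $\Tt$ along the lines of Lemma \ref{absorbing-a-tree-by-a-genericity-tree}, but with base model $\M_\infty|\delta_\infty$ in place of $M|\delta_0$, arranged so that $U|\delta(\Xx)$ becomes $\M(\Xx)$-generic for $\BB^{\M(\Xx)}_{\delta(\Xx)}$, with linear iterations past $*$-translations of Q-structures folded in. At each limit stage $\lambda$ of the construction, $M$ computes $\Sigma_{\M_\infty}(\Xx\rest\lambda)$ by Q-structure lookup: the relevant Q-structure is the inverse $*$-translation of a proper segment of $U$ (in the short case), or is $\mathscr{P}^U(\M(\Xx\rest\lambda))$ (in the maximal case); both objects are $M$-definable from the $M$-definable parameter $U$. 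By the strong hull condensation property of $\Sigma$ (Fact \ref{fact:Sigma_properties}\ref{item:Slist_}), the tree embedding $\Pi \colon \Tt \conc b \hookrightarrow \Xx \conc c$ delivered by the inflation, where $c = \Sigma_{\M_\infty}(\Xx)$, uniquely determines $b$, and $M$ recovers $b$ locally from $(\Tt, \Xx, c, \Pi)$. This yields part (a).

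For part (b), given $g$ that is $\PP$-generic over $M$ for some $\PP \in M|\lambda$, the parameter $x = M|(\lambda^{+\om})^M$ lets $M[g]$ recover the universe of $M$ by Woodin--Laver ground definability, and hence recover $\es^M$ (by \cite{V=HODX}, as in the proof of Lemma \ref{lem:Sigma_sss_in_M[g]_definability}) and $\M_\infty$; the same inflation argument applies to trees $\Tt \in M[g]$, with $\Sigma$ extended canonically to $M[g]$ via Fact \ref{fact:Sigma_properties}\ref{item:Slist_}. I expect the main technical obstacle to be verifying that the inflation machinery of \S\ref{section-short-tree-strategy-for-M}, designed for trees on $M$ itself, transfers cleanly to trees on $\M_\infty|\delta_\infty$. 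The crucial points are that the iteration maps $i_{M\M_\infty}$ and $i^M_E \rest \M_\infty$ fix the indiscernibles (Lemma \ref{lem:i_E(M_infty)_is_iterate}\ref{item:M,M_infty_same_indisc}, \ref{item:i_E_is_correct_it_map_on_M_infty}), so $\es^M$ and $\es^{\M_\infty}$ agree enough for the genericity iteration against $U$ to make sense, and the inverse $*$-translations of Q-structures for $\Xx$ still appear as segments of $U$ because $\delta(\Xx) \leq (\eta^+)^U \leq \delta_\infty$.
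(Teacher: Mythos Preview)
Your treatment of the short-tree part is essentially right and matches the paper: Q-structures depend only on $M(\Xx\rest\lambda)$, not on the model being iterated, so the genericity-inflation/$*$-translation machinery of \S\ref{section-short-tree-strategy-for-M} transfers to trees on $\M_\infty|\delta_\infty$ without difficulty.

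The gap is in the maximal case. You claim that for maximal $\Xx$ the branch model is $\mathscr{P}^U(\M(\Xx))$, but this is false. The P-construction of $U$ over $\M(\Xx)$ produces a structure whose extender sequence above $\delta(\Xx)$ agrees with $\es^U$ on ordinals; since $U$ is $M$ or $\Ult(M,E)$, and $\delta(\Xx)=(\eta^+)^U$ is not Woodin in $U$, the P-construction halts at a set-sized Q-structure for $\delta(\Xx)$. But if $\Xx$ is $\delta_0$-maximal on $\M_\infty$, the correct branch $c=\Sigma_{\M_\infty}(\Xx)$ has $\delta(\Xx)=\delta_0^{M^\Xx_c}$ Woodin in the proper-class model $M^\Xx_c$, so no Q-structure determines $c$. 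The extender sequence of $M^\Xx_c$ above $\delta(\Xx)$ is the $i^\Xx_c$-image of $\es^{\M_\infty}$ above $\delta_\infty$, which has nothing to do with $\es^U$; the analogue of Lemma~\ref{lemma_1} (which is what makes $\mathscr{P}^M(\M(\Xx))=M^\Xx_c$ work for trees on $M$) simply fails here.

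The paper handles the maximal case by a different mechanism. The key input is Lemma~\ref{lem:i_E(M_infty)_is_iterate}\ref{item:i_E_is_correct_it_map_on_M_infty}: for $E\in\es^M$ total with $\crit(E)=\kappa_0$ and $\Tt\in M|\lambda(E)$, the model $\N=i_E(\M_\infty)$ is a $\delta_0^\N$-sound $\Sigma_{\M_\infty}$-iterate and $i_E\rest\M_\infty$ is the correct iteration map. Now $U=\Ult(M,E)$ sees that $\N|\delta_0^\N$ is a $\Sigma_{M(\Tt),\sss}$-maximal iterate of $M(\Tt)$ via some tree $\Ss\in U$. In a generic extension of $M$ one finds a $\Tt$-cofinal branch $b$ and an $\Ss$-cofinal branch $c$ with $i_E\rest(\M_\infty|\delta_\infty)=i^\Ss_c\circ i^\Tt_b$, and then Lemma~\ref{lem:branch_con} (branch condensation) certifies $b=\Sigma_{\M_\infty}(\Tt)$. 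You cite the right lemma but only invoke the indiscernible-fixing parts; the substantive content you need is that $i_E(\M_\infty)$ is a \emph{known} correct iterate with \emph{known} iteration map, giving a target to factor through.
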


\begin{proof} The short tree strategy for $\M_\infty$ is computed just like for $M$,
	and the definability is  like in \ref{lem:Sigma_sss_in_M[g]_definability}.
	The computation of branches at maximal stages is just like \cite[Lemma 2.9(a),(b)]{vm1},
	supplemented
	by Lemmas \ref{lem:branch_con} and \ref{lem:i_E(M_infty)_is_iterate},
	and for the definability, use the definability  of $\es^M$ from $x$ in $M[g]$ (see \ref{lem:Sigma_sss_in_M[g]_definability}). Here is a sketch for $g=\emptyset$. Let $E\in\es^M$ be $M$-total, with $\crit(E)=\kappa_0$
	and $\Tt\in M|\lambda(E)$ a maximal tree on $\M_\infty|\delta_\infty$. Let $U=\Ult(M,E)$.
	By Lemma \ref{lem:i_E(M_infty)_is_iterate}, $\N=i_E(\M_\infty)$
	is a $\delta_0^\N$-sound iterate of $\M_\infty$ and
	$i^M_E\rest\M_\infty:\M_\infty\to \N$
	is the correct iteration map.  Now let $P=M(\Tt)$.
	Then $U\sats$``$P$ is a maximal $\Sigma_{\sss}$-iterate of $M|\delta_0^M$'',
	and therefore $U\sats$``$\N|\delta_0^\N$ is a maximal $\Sigma_{P,\sss}$-iterate of $P$'',
	considering statements satisfied by $M$ regarding such iterates. But $U$ is correct about this.
	Let $\Ss$ be the tree on $P$ leading to $\N|\delta_0^\N$.
	Working in a generic extension of $M$, find a $\Tt$-cofinal branch
	$b$ and $\Ss$-cofinal branch $c$ such that $i^M_E\rest(\M_\infty|\delta_\infty)=i^\Ss_c\com i^\Tt_b$, and
	then verify that $b=\Sigma_{\M_\infty}(\Tt)$, using Lemma \ref{lem:branch_con}.\footnote{\label{ftn:find_branch_via_normalization}There is an alternate
		proof which uses properties of normalization and is more direct.
		Let $\Ww\conc d$ be the tree leading from $\M_\infty$ to $\N$
		(with final branch $d$). We have $\Ww,d\in M$ by Lemma \ref{lem:i_E(M_infty)_is_iterate}. But $\Ww$ (of limit length)
		is the normalization of the stack $(\Tt,\Ss)$ (the trees in the first given proof).
		Letting $b,c$ be the correct branches through $\Tt,\Ss\in M$,
		$d$ determines (together with $\Tt,\Ss,\Ww$) the pair $(b,c)$ uniquely
		via normalization.}
\end{proof}

\begin{rem}\label{rem:M_computes_stacks_strategy_for_M_infty}
	The strategy $\Sigma_{\M_\infty}$
	also has minimal hull condensation,
	so we get the canonical stacks strategy
	$(\Sigma_{\M_\infty})^{\stk}$ induced by $\Sigma_{\M_\infty}$,
	which agrees with the tail  strategy $\Gamma_{\M_\infty}=(\Sigma^{\stk})_{\M_\infty}$,
	by Fact \ref{fact:Sigma_properties}.
	It is easily definable from $\Sigma_{\M_\infty}$,
	and for stacks based on $\M_\infty|\delta_\infty$,
	we only need the normal strategy for trees based there.
	So $M[g]$ can also compute the restriction of
	$(\Sigma_{\M_\infty})^{\stk}=\Gamma_{\M_\infty}$
	to stacks based on $\M_\infty|\delta_\infty$, which are in $M[g]$.\end{rem}

Note that we have now verified all of the requirements
for uniform grounds,
excluding  \ref{item:delta_infty-cc}. This will be established
later in Lemma \ref{still-a-woodin-in-v}.

\subsection{The first Varsovian model as $\M_\infty[*]$}\label{sec:vV_1_as_M_infty[*]}
In \S\ref{sec:ground_generation} we defined the elementary
maps $\pi_\infty:\M_\infty\to\M_\infty^{\M_\infty}$
and $\pi_\infty^+:\vV\to\vV^{\M_\infty}$. We now want to show that $\M_\infty^{\M_\infty}$ is an iterate of $\M_\infty$ and $\pi_\infty$
is a correct iteration map. We also want to generalize this to other iterates
of $M$, but in general one must be a little careful.
\begin{dfn}\label{dfn:M_infty^ext^N}
	Given an $\Mswsw$-like premouse $N$, let $\mathscr{D}^N$ and $\M_\infty^N$
	be defined over $N$ just as how $\mathscr{D}$, $\M_\infty$ are defined
	over $M$.
	If $N$ is a correct iterate of $\Mswsw$, also define
	$(\M_\infty^{\ext})_N$ (the external direct limit) relative to $N$, as for $M$: given a maximal tree $\Tt\in\mathbb{U}^N$ (considered as a tree on $N$),
	let $b=\Sigma_N(\Tt)$ and $M_{\Tt}=M^\Tt_b$,
	and let $(\M_\infty^{\ext})_N$ be the direct limit of these models $M_\Tt$
	under the iteration maps (by Lemma \ref{lem:Sigma_sss_in_M[g]_definability},
	and in particular its part \ref{item:Sigma_N,sss}, these trees $\Tt$ are indeed according to $\Sigma_N$).\footnote{These models
		can in general be distinct from the models $\P^N_{M(\Tt)}$
		computed by $N$ via P-construction,
		which is why the care mentioned above is needed.}
	If in fact $M_\Tt=\mathscr{P}^N_{M(\Tt)}$ (the model indexed by $M(\Tt)$ in the covering system $\mathscr{D}^{N}$) for each such $\Tt$, then define $\chi_N:\M_\infty^N\to(\M_\infty^{\ext})_N$ as in \S\ref{sec:ground_generation}.
\end{dfn}

\begin{lem}\label{lem:delta_0-sound_iterate_M_infty}
	Let $N$ be a $\delta_0^N$-sound, non-dropping correct iterate of $M$.
	Then
	$M_\Tt=\mathscr{P}^N(M(\Tt))$ for each $\Tt\in\mathbb{U}^N$,
	$\M_\infty^N=(\M_\infty^{\ext})_N$ and $\chi^N=\id$,
	and $\M_\infty^N$ is a $\delta_0^{\M_\infty^N}$-sound,
	non-dropping correct  iterate of both $M$ and $N$.
\end{lem}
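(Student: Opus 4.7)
The plan is to transfer the analysis done for $M$ over to its iterate $N$, exploiting the uniformity of the short-tree analysis (Lemma~\ref{lem:Sigma_sss_in_M[g]_definability}\ref{item:Sigma_N,sss}). The core claim is that $M_\Tt = \mathscr{P}^N(M(\Tt))$ for each $\Tt \in \mathbb{U}^N$. To see this, fix such $\Tt$ and let $b = \Sigma_N(\Tt)$, $\delta = \delta(\Tt)$, $R = M(\Tt)$. By the definition of $\mathbb{U}^N$, $\Tt$ is a correct maximal tree on $N$, based on $N|\delta_0^N$, that already makes $N|\delta$ generic for $\BB_\delta^R$, and $\mathscr{P}^N(R)$ is proper class. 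Applying the $N$-analog of Lemma~\ref{lemma_1}---whose proof uses only that $N$ is $\Mswsw$-like and that $\Sigma_{N,\sss}$ is computed uniformly just as $\Sigma_\sss$ is for $M$---we obtain $b$ non-dropping with $\delta = \delta_0^{M^\Tt_b}$, and $M^\Tt_b = \mathscr{P}^N(R)$, yielding (a).

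With (a) in hand, the external system $(\M_\infty^{\ext})_N$ and the internal system $\mathscr{D}^N$ involve exactly the same models, namely $\{\mathscr{P}^N(R) : R \in d^N\}$. To get $\M_\infty^N = (\M_\infty^{\ext})_N$ and $\chi^N = \id$, I would replay the proof of Lemma~\ref{c0} internally to $N$: for each $(P,s) \in (\mathscr{F}^+)^N$ that is true (and by wellfoundedness of $(\M_\infty^{\ext})_N$, such true tuples exist for every non-empty $s$, as in the $N$-analog of Lemma~\ref{lem:first_properties_int_sys}\ref{item:stable_b}), the internal map $\pi_{Ps,Qs}$ agrees with the restriction of the true iteration map $i_{PQ}$ to $H^P_s$ for every $Q \in \mathscr{F}^N$ with $P \preceq Q$. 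Thus the map $\chi^N$ defined by $\chi^N(\pi_{Ps,\infty}(x)) = i_{P\infty}(x)$ is well-defined, $\Sigma_0$-elementary and cofinal (hence fully elementary by Gaifman), and surjective by the $N$-analog of \ref{item:every_x_gets_captured}, witnessed by $s \in [\mathscr{I}^N]^{<\om}$ as in Lemma~\ref{lem:M_indisc_stable}. So $\chi^N = \id$, and (b), (c) follow.

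Finally, $(\M_\infty^{\ext})_N$ is by construction the direct limit of the correct iterates $M_\Tt$ of $N$ via $\Sigma_N$, under correct iteration maps, all based on $N|\delta_0^N$. Hence it is a non-dropping correct $\Sigma_N$-iterate of $N$, and $\delta_0^{\M_\infty^N}$-sound since every extender used has length at most $\delta_0$ in its local coordinates. Composing with $\Tt_N$ (based on $M|\delta_0^M$ by the $\delta_0^N$-soundness of $N$) realizes $\M_\infty^N$ as a non-dropping correct $\Sigma$-iterate of $M$, again $\delta_0$-sound; correctness of the composition follows from Fact~\ref{fact:Sigma_properties}\ref{item:Slist_}, which guarantees that $\Sigma_N$ is the tail of $\Sigma$. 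The main obstacle is step (a): one must verify that $\mathscr{P}^N(R)$ equals $M^\Tt_b$ on the nose, not merely that the two compare to a common model above $\delta$. This equality relies crucially on the fact that $\Tt \in \mathbb{U}^N$ is already a genericity iteration making $N|\delta$ generic for $\BB_\delta^R$, so the P-construction over $R$ inside $N$ recovers exactly the Woodin and all further structure of $M^\Tt_b$ without any further branch computation.
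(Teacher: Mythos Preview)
Your overall structure is right, but there is a real gap in step (a), and it is exactly the point the paper singles out. You claim that the $N$-analogue of Lemma~\ref{lemma_1} goes through using ``only that $N$ is $\Mswsw$-like and that $\Sigma_{N,\sss}$ is computed uniformly'', and later that the equality $M^\Tt_b=\mathscr{P}^N(M(\Tt))$ ``relies crucially on the fact that $\Tt\in\mathbb{U}^N$ is already a genericity iteration''. Neither of these is the operative hypothesis. Genericity of $N|\delta$ over $M(\Tt)$ is part of the definition of $\mathbb{U}^N$ for \emph{every} $\Mswsw$-like $N$, and yet the paper explicitly warns (footnote to Lemma~\ref{lem:Sigma_sss_in_M[g]_definability}\ref{item:Sigma_N,sss}, the remark after Lemma~\ref{lem:delta_0-sound_iterate_M_infty}, and Lemma~\ref{lem:M_infty_of_kappa_0-sound_iterate}\ref{item:not_an_iterate}) that for a non-$\delta_0^N$-sound iterate $N$, one has $M^\Tt_b\neq\mathscr{P}^N(M(\Tt))$ and indeed $\M_\infty^N\neq(\M_\infty^{\ext})_N$. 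So your argument, as written, would ``prove'' a false statement.

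What the proof of Lemma~\ref{lemma_1} for $M$ actually uses is that $M=\Hull_1^M(\mathscr{I}^M)$: since the iteration map $i^\Tt_b$ fixes all sufficiently large indiscernibles, this forces $M^\Tt_b$ and $\mathscr{P}^M(M(\Tt))$ (which share the same $\mathscr{I}$ and the same extender sequence above $\delta$) to coincide. For a general iterate $N$ this hull condition fails; the correct substitute, as the paper's proof says, is $\delta_0^N$-soundness, i.e.\ $N=\Hull_1^N(\delta_0^N\cup\mathscr{I}^N)$. With that, both $M^\Tt_b$ and $\mathscr{P}^N(M(\Tt))$ are $\delta(\Tt)$-sound $\Mswsw$-like models agreeing below $\delta(\Tt)$ and with matching extender sequences above, and the comparison argument goes through. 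You never invoke $\delta_0^N$-soundness at this step (you only use it later, for the shape of $\Tt_N$), so the heart of the argument is missing.
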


\begin{proof}The proof is just like for $M$, using the $\delta_0^{N}$-soundness of $N$
	(and resulting $\delta_0^{\M_\infty^N}$-soundness of $\M_\infty^N$)
	as a substitute for the fact that
	$M=\Hull_1^M(\mathscr{I}^M)$,
	to see that the models of $\mathscr{D}^N$ really are iterates of $N$.
\end{proof}

We will see later, however, that if $N$ is a $\Sigma$-iterate of $M$
which is $\kappa_0^N$-sound but non-$\delta_0^N$-sound, then $(\M^{\ext}_\infty)_N\neq\M_\infty^N$,
so we need a little more care in this case.

By the lemma, $\M_\infty^{\M_\infty}=(\M_\infty^{\ext})_{\M_\infty}$ is an iterate of $\M_\infty$.
Now recall that $\pi_\infty:\M_\infty\to\M_\infty^{\M_\infty}$
is the union of all
$\pi_{pt,\infty}(\pi_{ps,\infty})$
for embedding-good tuples $(p,s,t)$,
and that $\pi_\infty(x)=x^*$, and
if $\rho\in\OR$ then
\begin{eqnarray}\label{defn_first_star_function}
	\pi_\infty(\rho)=\rho^* = \min( \{ \pi_{N\infty}(\rho) \colon N \in
	\mathscr{F}
	\})=\pi_{Ps,\infty}(\rho),
\end{eqnarray}
where $(P,s)\in\mathscr{F}^+$ is any pair with $\rho\in s$
and $\rho<\max(s)$.
As usual we have:
\begin{lem}\label{lem:pi_infty_is_iteration_map}
	$\pi_\infty:\M_\infty\to\M_\infty^{\M_\infty}$
	 is the iteration map
	according to $\Sigma_{\M_\infty}$.

	Similarly, let $N$ be as in Lemma \ref{lem:delta_0-sound_iterate_M_infty}
	(so $\M_\infty^N$ a $\delta_0^{\M_\infty^N}$-sound correct iterate,
	and likewise $\M_\infty^{\M_\infty^N}$).
	Let $\pi_\infty^N=i_{MN}(\pi_\infty)$. Then
	$\pi_\infty^N$ is the iteration map $\M_\infty^N\to\M_\infty^{\M_\infty^N}$
	according to $\Sigma_{\M_\infty^N}$.
\end{lem}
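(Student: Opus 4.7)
The plan is to show $\pi_\infty = j$, where $j : \M_\infty \to \M_\infty^{\M_\infty}$ is the $\Sigma_{\M_\infty}$-iteration map supplied by Lemma \ref{lem:delta_0-sound_iterate_M_infty} via the unique normal tree $\Tt$ on $\M_\infty$ with $b = \Sigma_{\M_\infty}(\Tt)$ and $M^\Tt_b = \M_\infty^{\M_\infty}$. Both $\pi_\infty$ and $j$ are fully elementary embeddings $\M_\infty \to \M_\infty^{\M_\infty}$. Since $\M_\infty$ is $\delta_0^{\M_\infty}$-sound with $\mathscr{I}^{\M_\infty} = \mathscr{I}^M$ (Lemma \ref{lem:i_E(M_infty)_is_iterate}) and $\M_\infty = \Hull^{\M_\infty}(\delta_\infty \cup \mathscr{I}^{\M_\infty})$ by the indiscernible structure (Fact \ref{fact:inds_preserved}), it suffices to verify both maps fix each $\alpha \in \mathscr{I}^{\M_\infty}$ and agree on each ordinal $\rho < \delta_\infty$.

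Agreement on $\mathscr{I}^{\M_\infty}$ is straightforward: $j$ fixes indiscernibles by Fact \ref{fact:inds_preserved}, and for $\pi_\infty$, the identity $\pi_{MP}\rest\mathscr{I}^M = \id$ (Lemma \ref{lem:inds_fixed}) gives that $\alpha$ is $M$-stable for each $\alpha \in \mathscr{I}^M$, hence $\alpha^* = \alpha$ and $\pi_\infty(\alpha) = \alpha$.

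For agreement on $\delta_\infty$, I plan to exploit the commutativity of $\Sigma^{\stk}$ (Fact \ref{fact:Sigma_properties}\ref{item:Slist_commuting}) together with the homogeneity from Lemma \ref{lem:M_inf,*_hom}. For each $P \in \mathscr{F}$, the direct limit embedding $\pi_{P\infty} : P \to \M_\infty$ is itself the $\Sigma_P$-iteration map, arising as the direct limit of the commuting iteration maps $\pi_{PQ}$ via the normalization of the corresponding stack. By commutativity, the $\Sigma_P$-iteration map $P \to \M_\infty^{\M_\infty}$ obtained by stacking the iteration into $\M_\infty$ with $\Tt$ equals $j \com \pi_{P\infty}$. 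On the other side, the invariance $\pi_{MP}(\pi_\infty) = \pi_\infty$ from Lemma \ref{lem:pi^infty} together with the definability of $\pi_\infty$ inside each $P$ via the cover system identifies the composition $\pi_\infty \com \pi_{P\infty}$ with the direct limit embedding from $P$ in the combined direct limit system; by the uniqueness of $\Sigma_P$-iteration maps this equals the same iteration map, so $\pi_\infty \com \pi_{P\infty} = j \com \pi_{P\infty}$. Since every ordinal $\rho < \delta_\infty$ lies in $\rg(\pi_{P\infty})$ for $P$ sufficiently advanced in $d$, this forces $\pi_\infty\rest\delta_\infty = j\rest\delta_\infty$, completing the proof for $M$.

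The moreover clause for general $\delta_0^N$-sound correct iterates $N$ follows by applying the elementary $i_{MN} : M \to N$ to the established equality $\pi_\infty = j$. Since $\pi_\infty^N = i_{MN}(\pi_\infty)$ by definition and iteration trees with their branches transform correctly under $i_{MN}$, the analogue holds: $\pi_\infty^N$ is the $\Sigma_{\M_\infty^N}$-iteration map $\M_\infty^N \to \M_\infty^{\M_\infty^N}$. The main obstacle will be the rigorous identification of $\pi_\infty \com \pi_{P\infty}$ as the $\Sigma_P$-iteration map via the homogeneity relation; an alternative route applies branch condensation (Lemma \ref{lem:branch_con}), using $k = \pi_\infty\rest(\M_\infty|\delta_\infty)$ as the required elementary factor to conclude directly that the branch realizing $\pi_\infty\rest\delta_\infty$ coincides with $b = \Sigma_{\M_\infty}(\Tt)$.
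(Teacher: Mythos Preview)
The paper does not give a proof here, only the phrase ``As usual we have:'', so you are being compared against the standard folklore argument. Your reduction to generators $\delta_\infty\cup\mathscr{I}^{\M_\infty}$ is correct, and the agreement on indiscernibles is fine. The gap is in your argument for $\rho<\delta_\infty$.

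You assert that the invariance $\pi_{MP}(\pi_\infty)=\pi_\infty$ ``identifies the composition $\pi_\infty\circ\pi_{P\infty}$ with the direct limit embedding from $P$ in the combined direct limit system'', and then appeal to uniqueness of iteration maps. But this is exactly the point that needs proof: you have not established that $\pi_\infty\circ\pi_{P\infty}$ \emph{is} a $\Sigma_P$-iteration map, so uniqueness buys nothing. The invariance $\pi_{MP}(\pi_\infty)=\pi_\infty$ concerns maps $M\to P$ with $P\in\mathscr{F}$, not the map $\pi_{P\infty}:P\to\M_\infty$; pushing forward by $\pi_{P\infty}$ sends $\pi_\infty$ to $\pi_\infty^{\M_\infty}:\M_\infty^{\M_\infty}\to\M_\infty^{\M_\infty^{\M_\infty}}$, not to $\pi_\infty$. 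Your branch-condensation alternative has a similar issue: Lemma~\ref{lem:branch_con} requires a $\Tt$-cofinal \emph{branch} $b'$ realizing $\pi_\infty\rest\delta_\infty$ as $i^\Tt_{b'}$, but you have not produced one---an arbitrary elementary embedding into $\M_\infty^{\M_\infty}$ need not arise from a branch.

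The clean fix is this. By Lemma~\ref{lem:M_inf,*_hom}, for $P\in\mathscr{F}$ the internally-computed base-to-limit map $\pi^P_{P\infty}$ coincides with the external $\pi_{P\infty}$. Since the base-to-limit map is uniformly definable, applying $\pi_{P\infty}$ to this class yields $j=\pi^{\M_\infty}_{\M_\infty,\infty}$. Hence for $\rho=\pi_{P\infty}(\bar\rho)$ one gets $j(\rho)=\pi_{P\infty}(\pi_{P\infty}(\bar\rho))=\pi_{P\infty}(\rho)$. Now simply choose $P$ large enough that both $\rho\in\rg(\pi_{P\infty})$ and $\rho$ is $P$-stable (possible since both hold cofinally in $\mathscr{F}$); then $\pi_{P\infty}(\rho)=\rho^*=\pi_\infty(\rho)$ by Lemma~\ref{c1}, and you are done. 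For the second paragraph of the lemma, rather than literally applying $i_{MN}$ to the equality (which would require checking that $i_{MN}$ of the correct iteration map is again correct), it is safer to rerun the same argument with $N$ replacing $M$, using the $\delta_0^N$-soundness hypothesis on $N$.
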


Write $N=\M_\infty$.
As in \cite{vm1}, since the tree from $N$
to $\M_\infty^N$ is based on $N|\delta_0^N$,
$\pi_\infty:N\to\M_\infty^N$ is determined by
$\pi=\pi_\infty\rest(N|\delta_0^N)$
(as $\pi_\infty$ is the ultrapower map by the extender
derived from $\pi$), which is in turn determined by
the pair
$(\pi_\infty\rest\delta_0^N,\M_\infty^N|\delta_0^{\M_\infty^N})$.
Since $\M_\infty^N|\delta_0^{\M_\infty^N}\in N$,
it follows that
\[
L[\M_\infty,*]=L[\M_\infty,\pi_\infty]=
L[\M_\infty,\pi_\infty\rest\delta_\infty].\]
Moreover, $\pi_\infty$ is definable over this universe
from the predicate $N=\M_\infty$
(given $N$, we can recover $\M_\infty^N$
and the maximal tree $\Tt$ leading from $N$ to $\M_\infty^N$,
but in $L[N,\pi_\infty]$, there is a unique $\Tt$-cofinal
branch $b$ with $M^\Tt_b=\M_\infty^N$;
but $i^\Tt_b=\pi_\infty$).

\begin{dfn}\label{dfn:M_infty[*]}
	The {\em first Varsovian model} of $M$
	(cf.~(\ref{defn_varsovian_model}))
	is the structure
	\begin{eqnarray}\label{defn_first_varsovian_model}
		\M_\infty[*]=(L[\M_\infty,*],\M_\infty,*);
	\end{eqnarray}
	that is, $\M_\infty[*]$ has universe $L[\M_\infty,*]$
	and predicates $\M_\infty$ and $*$.
\end{dfn}

(By the preceding comments, it would suffice
to just have the predicate $\M_\infty$,
but we include $*$ for convenience.)
Later we will introduce a second presentation $\vV_1$
of $\M_\infty[*]$, constructed from a different predicate
(but giving the same universe).
However, the two predicates will be inter-definable
over that universe.

Before we introduce that presentation,
we first develop some properties of
$\M_\infty[*]$ using the presentation above.
We may at times
blur the distinction between the universe $L[\M_\infty,*]$
and the structure $\M_\infty[*]$, but for definability
issues over $\M_\infty[*]$, we can by default use $(\M_\infty,*)$
as a predicate.

\subsection{$\HOD^{M[G]}_{\mathscr{E}}$}\label{subsec:HOD_E}

Up until this point, the paper has  covered
material which is mostly a direct adaptation from that of \cite{vm1}.
But in this section we begin to see some new features.
In \cite{vm1} it is shown that the Varsovian model
has universe that of $\HOD^{\Msw[g]}$, where $g\sub\Coll(\om,{<\kappa})$
is $\Msw$-generic, for $\kappa$ the strong cardinal of $\Msw$.
In this section we will establish an analogue of this fact.

Let $G$ be $(M,\Coll(\om,{<\kappa_0}))$-generic.
Note that \emph{if} $\HOD^{M[G]}$ is the universe of $\M_\infty[*]$,
then it would follow as in \cite{vm1} that $\M_\infty[*]$ is closed
under maximal branches according to $\Sigma_{\M_\infty}$
(those branches are in $M$ by \ref{M_knows_how_to_iterate_Minfty_up_to_its_woodin},
and have length of uncountable cofinality in $M[G]$,
and hence are unique there). Thus, such a fact is at least useful in
verifying that the first
Varsovian model can iterate its own least Woodin cardinal,
which one would like to prove.

Also, in order to proceed to the next step of the mantle analysis,
one might want to consider iteration trees on $\M_\infty[*]$,
based on $\M_\infty[*]|\delta_1^M$ (we will show that $\delta_1^M$
is Woodin in $\M_\infty[*]$).
But because $\M_\infty[*]$ is built from the somewhat cumbersome combination of
$\M_\infty$ and $*$,
the nature of its large cardinals above  $\delta_0^{\M_\infty}$
(and so far also below there, though that is resolved by standard methods) is somewhat unclear, as is its fine structure.

Now if we are to expect $\M_\infty[*]$ to be closed
under $\Sigma_{\M_\infty}$,
a possible goal is to find a presentation of it
as a strategy mouse, built from extenders
and strategy for
$\Sigma_{\M_\infty}$,
with a fine structural hierarchy;
with this target in mind, we write $\vV_1$ for the desired model of this form
(whatever its eventual presentation might be).
The first two authors considered various candidates
for such a presentation, with one possibility
being a construction by level-by-level correspondence between $\vV_1$
and $M$, via a modified kind of P-construction.
This P-construction would result from restricting the extenders
of $M$ to segments of $\vV_1$, starting above some point $\theta$ not too far
above $\kappa_0$. (An early candidate was $\theta=(\kappa_0^{+3})^M$,
but the second and third authors later reduced this to an optimal starting point,
which we use.)
Of course standard P-constructions  build a ground
of the outer model, and this feature was expected,
via a Bukowsky-style forcing
as in \cite{vm1}. Here we use the forcing $\mathbb{L}$
from \S\ref{sec:ground_generation}, which was eventually isolated by the second author.
But  note that a new feature in this  P-construction
would be that some extenders
(those extenders $E$ with $\crit(E)=\kappa_0$)
overlap the forcing $\mathbb{L}$.
This would cause a problem with a standard
P-construction (where the base forcing
is produced by genericity iteration).
But such  extenders
$E$  yield strategy
information, via the process in the proof of \ref{M_knows_how_to_iterate_Minfty_up_to_its_woodin}
(whereas those with $\crit(E)>\kappa_0$ would be as usual).
So it appears that one might construct $\vV_1$ with such a
P-construction, with extenders $E$ having $\crit(E)=\kappa_0$
corresponding to strategy information,
to be added fine structurally to the relevant
segment of $\vV_1$.
This could then lead to a model closed under $\Sigma_{\M_\infty}$ for trees
based on $\delta_\infty$, as is desired.
The model $\vV_1$ should also inherit iterability for itself, from the
iterability of $M$, in a manner similar to standard P-constructions.

A first basic question is whether the model $\vV_1$ constructed as above will end up $\sub\M_\infty[*]$. We now make the key observation which shows that it does.
A useful first step is to restrict one's attention to the action of the extenders on
the ordinals; this will be enough for the P-construction.
For the purpose of the next lemma, let us write
\begin{equation}\label{seq-restr-to-V1}
	{\mathbb F}^M_{>\kappa_0} =
	\{ (\nu,\alpha,\beta)\in\OR^3 \bigm| \nu > \kappa_0^M ,\ \es_\nu^M \not=
	\emptyset ,
	\text{ and } \es_\nu^M(\alpha)=\beta \} {\rm , }
\end{equation}
where, since we are using Jensen indexing, the $M$-extender $\es_\nu^M$ in
(\ref{seq-restr-to-V1}) is
an elementary embedding
$j:M|\mu^{+M|\nu}\to
M|\nu$
where $\mu={\rm crit}(\es_\nu^M)$, so
$(\mathbb{F}^M_{>\kappa_0})_\nu=j\rest\mu^{+M|\nu}$.

In the following lemma, recall that for definability
over $\M_\infty[*]$, we  by default have the predicate
$(\M_\infty,*)$ available for free:

\begin{lemma}\label{restr_of_extenders_are_there}
	${\mathbb F}^{M}_{>\kappa_0}$ is lightface definable over $\M_\infty[*]$.
\end{lemma}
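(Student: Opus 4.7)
The plan is to define $\mathbb{F}^M_{>\kappa_0}$ lightface over $\M_\infty[*]$ by a case split on $\crit(\es^M_\nu)$, using the correspondence between $M$-extenders above $\kappa_0$ and $\Sigma_{\M_\infty}$-iterations. First I would isolate the relevant ordinal landmarks: by Lemma \ref{key_facts_about_V1}, the ordinal $\kappa_0 = \kappa_0^M$ is the least measurable of $\M_\infty$ and $\delta_\infty = \kappa_0^{+M} = \delta_0^{\M_\infty}$ is its least Woodin, so both are lightface definable from the predicate $\M_\infty$.

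The main case is $\crit(\es^M_\nu) = \kappa_0$. Setting $E := \es^M_\nu$, Lemma \ref{lem:i_E(M_infty)_is_iterate} gives that $i_E \rest \M_\infty \colon \M_\infty \to i_E(\M_\infty)$ is the correct $\Sigma_{\M_\infty}$-iteration map, with target a $\delta_0$-sound iterate via a tree on $\M_\infty|\delta_\infty$. To recover the action of $E$ on ordinals inside $\M_\infty[*]$, I would first show that $\Sigma_{\M_\infty}$ restricted to trees based on $\M_\infty|\delta_\infty$ is itself lightface definable over $\M_\infty[*]$. For short trees this uses Q-structures, which sit as segments of the internal $\M_\infty$-P-construction over $M(\Tt)$ (hence are accessible from the predicate $\M_\infty$ alone), with the branch determined uniquely by comparison. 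For the unique maximal tree of length $\delta_\infty$, the cofinal branch is precisely the one encoded by $* = \pi_\infty$, by Lemma \ref{lem:pi_infty_is_iteration_map}. With $\Sigma_{\M_\infty}$ thus available, the iteration map $i_E \rest \M_\infty$ is recovered as the unique correct iteration map whose target exhibits the local extender structure at index $\nu$ compatible with $E$; the action of $E$ on ordinals then agrees with this map restricted to $\OR$.

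The complementary case $\crit(\es^M_\nu) > \kappa_0$ should be handled analogously but more directly, since such extenders have critical points above the least measurable of $\M_\infty$ and act on structure already present in the $\M_\infty$-hierarchy. The main obstacle I expect is pinning down the iterate $i_E(\M_\infty)$ uniformly from the datum $\nu$ alone, without circular reference to $M$-specific data; this should be resolved by combining the mouse structure of $\M_\infty$ with the positionality and commuting properties of $\Gamma$ from Fact \ref{fact:Sigma_properties}, ensuring that the correct target iterate is pinned down by a local condition at $\nu$ expressible over $\M_\infty[*]$.
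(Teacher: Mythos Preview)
Your approach misses the key observation that makes this lemma nearly trivial, and in its place introduces substantial complications that do not close.

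The paper's proof is a one-liner: for every $P\in\mathscr{F}$ and every $\nu>\kappa_0$, we have $F^{P||\nu}\rest\OR=F^{M||\nu}\rest\OR$, simply because $P$ is obtained from $M$ by P-construction above some ordinal below $\kappa_0$, and P-construction does not alter the action of extenders on ordinals above that point. Hence the statement ``$\es^M_\nu\neq\emptyset$ and $\es^M_\nu(\alpha)=\beta$'' holds iff the same holds with $P$ in place of $M$, for every $P\in\mathscr{F}$. Choosing $P$ so that $\nu,\alpha,\beta$ are $P$-stable and applying $i_{P\infty}$, this is equivalent to ``$F^{\M_\infty||\nu^*}\neq\emptyset$ and $F^{\M_\infty||\nu^*}(\alpha^*)=\beta^*$''. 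The latter is visibly lightface over $\M_\infty[*]$. No case split on $\crit(\es^M_\nu)$ is needed, and no iterability analysis enters.

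Your route has genuine gaps. First, you propose to establish that $\Sigma_{\M_\infty}$ (for trees based on $\M_\infty|\delta_\infty$) is lightface over $\M_\infty[*]$; in the paper's logical order this is Lemma~\ref{V_1_knows_how_to_iterate_Minfty_up_to_its_woodin}, which is proved \emph{after} the present lemma and in fact relies on the $\vV_1$-hierarchy, whose construction uses $\mathbb{F}^M_{>\kappa_0}$. Your sketch of an independent argument handles only the single maximal tree witnessed by $*$, not the family of maximal trees (one for each $E$ with $\crit(E)=\kappa_0$) that you would actually need. Second, the step ``recover $i_E\rest\M_\infty$ as the unique correct iteration map whose target exhibits the local extender structure at index $\nu$ compatible with $E$'' is circular as stated: you are trying to \emph{define} the extender at $\nu$, so you cannot use compatibility with it as a selection criterion. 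Third, for $\crit(\es^M_\nu)>\kappa_0$ the extender is not connected to any $\Sigma_{\M_\infty}$-iteration based on $\M_\infty|\delta_\infty$ at all, so ``analogously but more directly'' is not a plan; the only natural route here is precisely the $*$-translation argument above, which you never reach.
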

\begin{proof}
	Let $(\nu,\alpha,\beta)\in\OR^3$ with $\nu>\kappa_0$.
	We claim that $(\nu,\alpha,\beta)\in{\mathbb F}^{M}_{>\kappa_0}$
	iff
	\[ F\neq\emptyset\text{
		and }F(\alpha^*)=\beta^*\text{ where }F=F^{\M_\infty||\nu^*},\]
	which suffices. This equivalence holds as for every $P\in\mathscr{F}$,
	we have $F^{P||\nu}\rest\OR=F^{M||\nu}\rest\OR$,
	since $\nu>\crit(F^{M||\nu})\geq\kappa_0$ and
	$P$ results from a P-construction of $M$ above some point below $\kappa_0$.
\end{proof}

One can now proceed directly with the P-construction,
using $(\M_\infty|\kappa_0^{+\M_\infty},*\rest\delta_0^{\M_\infty})$ and $\mathbb{F}^M_{>\kappa_0}$ to define it.
But we
postpone this, and
first establish some other characterizations of the
universe of $\M_\infty[*]$.

The proof of the preceding lemma
can be extended to show that $\M_\infty[*]$ has universe
$\HOD^{M[G]}_{\mathscr{E}}$,
for a certain collection $\mathscr{E}$ of premice in $M[G]$,
and $G$ as above.
Thus, we use $\HOD_{\mathscr{E}}$ here in place of the use of $\HOD$
in \cite{vm1}. We describe how this works next.
What follows is slightly related to some methods from \cite{odle_v2}; also cf.~\ref{grounds-identify-V1}.

\begin{dfn}
	Let $L[\es]$ be a premouse, and let $\mu$ be a strong
	cutpoint
	of $L[\es]$. If $g$ is ${\rm Col}(\omega,\mu)$-generic over $L[\es]$, then
	every extender $\es_\nu$ from $\es$ with $\nu>\mu$ (and hence $\crit(\es_\nu)
	>\mu$) lifts canonically to an extender $\es^g_\nu$
	over $(L[\es]|\nu)[g]$. Let us write $\es^g=\left<\es_\nu^g\right>_{\nu>\mu}$.
	Then $L[\es][g]$ gets reorganized as a premouse over
	$(L[\es]||\mu,g)$ with
	extender sequence $\es^g$;
	so $L[\es][g]\ueq L[\es^g](L[\es]||\mu,g)$.

	Let $L[\es_0]$, $L[\es_1]$ be proper class premice and $\mu\in\OR$.
	Write
	\[ \es_0 \sim^\mu\es_1 \]
	iff $\mu$ is a strong cutpoint
	in both $L[\es_0]$ and $L[\es_1]$ and there are $g_0,g_1$ with $g_i$ being
	${\rm Col}(\omega,\mu)$-generic over $L[\es_i]$ and
	$L_{\mu+\omega}[\es_0][g_0]\ueq L_{\mu+\omega}[\es_1][g_1]$ and $(\es_0)^{g_0}
	=(\es_1)^{g_1}$. So ``$\es_0 \sim^\mu \es_1$'' expresses the fact that above
	$\mu$,
	$\es_0$ and $\es_1$ are intertranslatable: for every $\nu>\mu$, $(\es_0)_\nu
	= (\es_1)_\nu^{g_1} \cap L[\es_0]|\nu$, and vice versa. Write
	$$\es_0 \sim^{<\mu} \es_1$$ iff there is some ${\bar \mu}<\mu$ with
	$\es_0 \sim^{\bar \mu} \es_1$.
	Both $\sim^\mu$ and $\sim^{<\mu}$ are equivalence relations.

	Let $L[\es]$ be a proper class premouse, let $\mu$ be inaccessible in
	$L[\es]$, and let $H$ be ${\rm Col}(\omega,{<\mu})$-generic over $L[\es]$.
	We denote by $\mathscr{E}_\es^{L[\es][H]}$ the
	collection of all $\es'$ such that
	$\es'|{\mu} \in L[\es][H]$ and $L[\es][H]\sats$``$\es' \sim^{<\mu} \es$''.
\end{dfn}

\begin{rem}\label{rem:uniform_def_of_mathscrE}Note that if
	$\es'\in\mathscr{E}_\es^{L[\es][H]}$ then
	\begin{enumerate}[label=(\roman*)]
		\item There are $\bar{\mu}<\mu$
		and generics
		$g,g'\in
		L[\es][H]$
		witnessing that $\es'\sim^{\bar{\mu}}\es$.
		\item $\es'$ is $\Sigma_1$-definable inside $L[\es][H]$ from the set parameter
		$\es'|{\bar \mu}$ and the class parameter $\es$, uniformly
		in $\es',\bar{\mu}$.
		\item There is  $H'$ which is ${\rm Col}(\omega,{<\mu})$-generic over
		$L[\es']$,
		with
		$L[\es'][H']\ueq L[\es][H]$.
		For any such $H'$,
		we have $\mathscr{E}^{L[\es][H]}_{\es}=\mathscr{E}^{L[\es'][H']}_{\es'}$.
		\item  $\mathscr{E}_\es^{L[\es][H]}$ is definable over $L[\es][H]$ from
		$\es,\mu$, uniformly in $\es,\mu,H$.
		\item $\mathscr{E}_\es^{L[\es][H]}$
		is definable over $L[\es][H]$ from
		$\es',\mu$, for all $\es'\in\mathscr{E}^{L[\es][H]}_{\es}$,
		uniformly in $\es,\mu,H,\es'$.
	\end{enumerate}
\end{rem}

Fix now $G\sub{\rm Col}(\omega,<\kappa_0)$ which
is $M$-generic.
We write $\mathscr{E}= \mathscr{E}^{L[\es^M][G]}$.

The following is now immediate.

\begin{lemma}\label{lemma_on_E}
	If $N=L[\es'] \in \msF$, then there is
	$G'$ which is ${\rm Col}(\omega,{<\kappa_0})$-generic over $N$ such that
	$N[G']\ueq M[G]$, and for any such $G'$,
	we have $\mathscr{E}_{\es'}^{L[\es'][G']}=\mathscr{E}$.
\end{lemma}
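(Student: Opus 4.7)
The plan is to split the statement into two pieces: first produce $G'$ by absorbing the ``small'' ground forcing of $N$ into the Levy collapse, and then verify the equality of $\mathscr{E}$-collections by showing $\es^N \sim^{<\kappa_0} \es^M$ holds in $M[G]$ and invoking Remark \ref{rem:uniform_def_of_mathscrE}.

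For the existence of $G'$: by assumption $N = \mathscr{P}^M(\Uu)$ for some $\Uu \in \mathbb{U}$, so Definition \ref{defn_points_from_the_system}(e),(f) gives $M \ueq N[g]$ where $g$ is the $\mathbb{B}^N_{\delta_0^N}$-generic filter over $N$ induced by $M|\delta_0^N$, with $|\mathbb{B}^N_{\delta_0^N}|^N = \delta_0^N < \kappa_0$ (since $\Uu \in M|\kappa_0$). Because $\Coll(\omega,{<\kappa_0})$ is absolute between $N$ and $M$, the standard Levy-collapse absorption lemma yields in $N$ a forcing isomorphism
\[ \mathbb{B}^N_{\delta_0^N} * \dot{\Coll}(\omega,{<\kappa_0}) \cong \Coll(\omega,{<\kappa_0}). \]
Applying this isomorphism to the given $M$-generic $G$ produces $G' \in M[G]$, which is $\Coll(\omega,{<\kappa_0})$-generic over $N$, with $N[G'] = N[g][G] = M[G]$.

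For the equality: fix such a $G'$ and set $\es' = \es^N$. Trivially $\es'|\kappa_0 \in N \subseteq M[G]$, and $L[\es'][G']$ and $L[\es^M][G]$ have the same universe. By Remark \ref{rem:uniform_def_of_mathscrE} it therefore suffices to show $L[\es^M][G] \sats \es^N \sim^{<\kappa_0} \es^M$. I would pick $\bar{\mu}$ to be a strong cutpoint of $M$ with $\delta_0^N < \bar{\mu} < \kappa_0$ (such $\bar{\mu}$ exist cofinally below the inaccessible $\kappa_0$). The agreement of $\es^N$ and $\es^M$ on ordinals above $\delta_0^N$ built into the definition of $\mathscr{P}^M(M(\Uu))$ (compare Lemma \ref{restr_of_extenders_are_there}, which is the analogous statement above $\kappa_0$) ensures $\bar{\mu}$ is also a strong cutpoint of $N$. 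Now take any $g_0 \in M[G]$ that is $\Coll(\omega,\bar{\mu})$-generic over $M$; absorption applied to $g$ (of size $\leq \delta_0^N < \bar{\mu}$) produces $g_1 \in M[g_0]$ which is $\Coll(\omega,\bar{\mu})$-generic over $N$ with $N[g_1] = N[g][g_0] = M[g_0]$. The P-construction intertranslation then gives $(\es^M)^{g_0}_\nu = (\es^N)^{g_1}_\nu$ for all $\nu > \bar{\mu}$, while $L_{\bar{\mu}+\omega}[\es^M][g_0] = L_{\bar{\mu}+\omega}[\es^N][g_1]$ reduces to the same generic object, witnessing $\es^M \sim^{\bar{\mu}} \es^N$. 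The conclusion then follows from Remark \ref{rem:uniform_def_of_mathscrE}.

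The only genuinely nontrivial input is the intertranslation of $\es^N$ and $\es^M$ above $\bar{\mu}$, which is precisely what the P-construction $\mathscr{P}^M(M(\Uu))$ encodes; everything else is bookkeeping around absorption of small forcings by $\Coll(\omega,{<\kappa_0})$.
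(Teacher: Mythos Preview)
Your proof is correct and follows the approach the paper intends: the paper declares this lemma ``immediate'' from Remark~\ref{rem:uniform_def_of_mathscrE} (specifically part (iii)), and you have correctly unpacked this by verifying the precondition $\es^N \in \mathscr{E}$, i.e., $\es^N \sim^{<\kappa_0} \es^M$ in $M[G]$, via the P-construction intertranslation above a strong cutpoint $\bar{\mu} \in (\delta_0^N,\kappa_0)$. Your separate direct argument for the existence of $G'$ via Levy-collapse absorption is fine but slightly redundant, since once the precondition is verified, both the existence of $G'$ and the equality $\mathscr{E}_{\es'}^{N[G']}=\mathscr{E}$ come for free from part (iii) of the remark.
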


\begin{theorem}\label{tm:M_infty[*]=HOD_E}
	We have:
	\begin{enumerate}[label=\tu{(}\roman*\tu{)}]
		\item\label{item:M_infty[*]_def_from_mathscrE}
		$\M_\infty[*]$ \tu{(}including its
		predicates\tu{)} is definable over
		$M[G]$ from the parameter $\mathscr{E}$, and
		\item\label{item:M_infty[*]_has_universe_HOD_mathscrE} $\M_\infty[*]$ has
		universe $\HOD_{\mathscr{E}}^{M[G]}$.
	\end{enumerate}
\end{theorem}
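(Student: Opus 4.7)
For part (i), I would fix an $\es' \in \mathscr{E}$ in a way definable from $\mathscr{E}$, using Remark \ref{rem:uniform_def_of_mathscrE}, and then carry out the direct limit construction of \S\ref{subsec:models_of_first_system}--\ref{first_dir_limit_system} inside $L[\es']$. The class $L[\es']$ is $\Mswsw$-like and relates to some $N \in \mathscr{F}$ modulo a $\Coll(\om, \bar\mu)$-style forcing for some $\bar\mu < \kappa_0$, and by the homogeneity encoded in Lemma \ref{lem:M_inf,*_hom}, the computation yields the same $\M_\infty$ and $*$ as are computed over $M$. Hence $(\M_\infty, *)$ is definable over $M[G]$ from $\mathscr{E}$.

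For part (ii), the forward inclusion follows from (i). For the reverse, let $A$ be a set of ordinals in $\HOD_{\mathscr{E}}^{M[G]}$, defined from $\mathscr{E}$ and ordinal parameters $\vec\alpha$. By Remark \ref{rem:uniform_def_of_mathscrE}(iv), $\mathscr{E}$ is definable over $M[G]$ from $\es^M$ and $\kappa_0$, so there is a formula $\phi$ with
\[ A = \{\xi \in \OR : M[G] \sats \phi(\xi, \es^M, \vec\alpha, \kappa_0)\}. \]
By Lemma \ref{lemma_on_E}, for each $N = L[\es''] \in \mathscr{F}$, the same $\phi$ defines $A$ over $N[G']$ using $\es^N$ and $\kappa_0^N$ in place of $\es^M$ and $\kappa_0$. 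By the weak homogeneity of $\Coll(\om, {<}\kappa_0^N)$, $A$ is then definable in $N$ from $\es^N, \vec\alpha, \kappa_0^N$, uniformly in $N \in \mathscr{F}$; in particular, $A \in \bigcap_{N \in \mathscr{F}} N$.

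To conclude $A \in \M_\infty[*]$, I would first arrange that $\vec\alpha$ is $M$-stable, via a reflection argument against $\mathscr{I}^M$ (Fact \ref{fact:inds_preserved}): by absorbing $A$ into a hull $H^P_s$ with $s \sub \mathscr{I}^M$ and $(P, s) \in \mathscr{F}^+$ true, the parameters can be rewritten using the hull parameters $s^-$, which are $M$-stable by Lemma \ref{lem:M_indisc_stable}. Elementarity of $i_{MP}$ together with uniformity of the definition then yields $i_{MP}(A) = A$ for each $P \in \mathscr{F}$; since $i_{MP}$ is order-preserving, this forces every element of $A$ to be $M$-stable. Hence $\pi_{M\infty}(\alpha) = \alpha$ for every $\alpha \in A$, so $\pi_{M\infty}(A) = \pi_{M\infty}[A] = A$ as a set of ordinals, and $A \in \rg(\pi_{M\infty}) \sub \M_\infty \sub \M_\infty[*]$.

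The main technical obstacle is the reflection step producing $M$-stable parameters $\vec\alpha$. This rests on the richness of $\mathscr{I}^M$ as a club class of indiscernibles preserved under iteration maps (Fact \ref{fact:inds_preserved}), together with the structural properties of the hulls $H^P_s$ in the internal system $\mathscr{D}$ -- one must verify that the uniform-in-$N$ definition of $A$ really does absorb into such a hull in a way that allows replacement of $\vec\alpha$ by stable hull parameters while preserving the uniformity across $\mathscr{F}$.
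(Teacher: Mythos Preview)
Your approach to part (ii) has a genuine error. You claim that if $i_{MP}(A)=A$ for all $P\in\mathscr{F}$, then since $i_{MP}$ is order-preserving, every element of $A$ must be $M$-stable. This inference is false. Take for instance $A=\kappa_0^M$ as an ordinal (i.e.\ the set of ordinals below $\kappa_0^M$): this is uniformly definable in every $N\in\mathscr{F}$ and satisfies $i_{MP}(\kappa_0^M)=\kappa_0^P=\kappa_0^M$, so $i_{MP}(A)=A$ holds; yet $\delta_0^M\in A$ and $i_{MP}(\delta_0^M)=\delta_0^P>\delta_0^M$ whenever $P\neq M$. The point is that $i_{MP}(A)=A$ only says the map permutes $A$ \emph{as a set}, and for a set of ordinals of size $\geq\crit(i_{MP})$ one does not have $i_{MP}(A)=i_{MP}[A]$. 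Consequently your conclusion $\pi_{M\infty}[A]=A$ fails, and your final target $A\in\rg(\pi_{M\infty})\subseteq\M_\infty$ is too strong anyway: the theorem only places $A$ in $\M_\infty[*]$, and the $*$-map is essential.

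The paper's argument for (ii) avoids this by not trying to show $A$ is fixed. Instead, for each $\xi$ one picks $N\in\mathscr{F}$ with both $\xi$ and the parameter $\alpha$ being $N$-stable, passes through the homogeneity of $\Coll(\omega,{<}\kappa_0)$ to a forcing statement in $N$, and then pushes forward by $\pi_{N\infty}$ to obtain
\[
\xi\in A \iff \Vdash^{\M_\infty}_{\Coll(\omega,{<}\kappa_0^{\M_\infty})}\varphi(\check{\xi^*},\check{\alpha^*},\dot{\mathscr{E}}^{\M_\infty}).
\]
The right-hand side is computable in $\M_\infty[*]$ from $\xi$ (via $*$) and the fixed data $\alpha^*,\M_\infty$, so $A\in\M_\infty[*]$.

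For part (i), your outline is in the right spirit but the justification is off: Lemma \ref{lem:M_inf,*_hom} concerns $P\in\mathscr{F}$, and an arbitrary $\Mswsw$-like $L[\es']$ with $\es'\in\mathscr{E}$ need not lie in $\mathscr{F}$. The paper instead shows directly, via a Boolean-valued comparison/genericity iteration as in \cite[Claim 2.11]{vm1}, that $\mathscr{F}^{L[\es']}$ and $\mathscr{F}^M$ have cofinally many points in common, so the direct limits computed over any such $L[\es']$ agree; one then quantifies over all $\Mswsw$-like $\es'\in\mathscr{E}$ rather than picking one definably.
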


\begin{rem}
	This leaves the question: is $\HOD^{M[G]}$ the universe of $\M_\infty[*]$?
\end{rem}

\begin{proof}
	We first verify \ref{item:M_infty[*]_def_from_mathscrE}.
	Write $\es=\es^M$.
	Say that $\es'$ is \emph{$\Mswsw$-like}
	iff $L[\es']$ is $\Mswsw$-like.
	Fix an $\Mswsw$-like $\es'\in\mathscr{E}$.
	We claim that ${\cal M}_\infty$ and
	$*$ are defined over $L[\es']$ in the same manner as over $M$,
	which suffices.
	For the systems $\mathscr{F}^{L[\es']}$ and $\mathscr{F}^M$ have
	cofinally many points in common, which easily suffices.
	To see this fact, use a Boolean-valued comparison argument
	as in the last part of the proof of \cite[Claim 2.11]{vm1}
	(comparison with simultaneous genericity iteration,
	against $L[\es'']$ for various
	$\Mswsw$-like $\es''\in\mathscr{E}_{\es'}^{L[\es'][G']}$).
	Because ``$\Mswsw$-like'' includes short-tree iterability, etc,
	we can indeed form this iteration
	successfully.

	Part \ref{item:M_infty[*]_has_universe_HOD_mathscrE}: By
	\ref{item:M_infty[*]_def_from_mathscrE},
	$\M_\infty[*]\sub\HOD^{M[G]}_\mathscr{E}$.
	We now prove the converse. Let $A$ be a set of ordinals,
	$\varphi$ a formula,  $\alpha\in\OR$ such that
	for every $\xi\in\OR$, $$\xi \in A \Longleftrightarrow M[G] \models
	\varphi(\xi,\alpha,\mathscr{E}).$$
	So for every $\es'\in\mathscr{E}$
	and $G'$ with $L[\es'][G']\ueq M[G]$,
	and every $\xi\in\OR$, we have $$\xi \in A
	\Longleftrightarrow L[\es'][G']\models
	\varphi(\xi,\alpha,\mathscr{E}_{\es'}^{L[\es'][G']})$$
	(since  $\mathscr{E}^{L[\es'][G']}_{\es'}=\mathscr{E}$,
	by Remark \ref{rem:uniform_def_of_mathscrE}).

	Given an $\Mswsw$-like $\es'$,
	write $\dot{\mathscr{E}}^{L[\es']}$ for the natural
	$\Coll(\om,{<\kappa_0^{L[\es']}})$-name
	for $\mathscr{E}^{L[\es'][G']}_{\es'}$ (for $G'$
	the $\Coll(\om,{<\kappa_0^{L[\es']}})$-generic filter;
	cf.~the remarks
	on the uniform definability of $\mathscr{E}^{L[\es'][G']}_{\es'}$ above).

	Let $\xi\in\OR$. Pick
	$N=L[\es'] \in {\mathscr F}$ with $\xi^* = \pi_{N{\cal
			M}_\infty}(\xi)$ and $\alpha^*=\pi_{N\cal {{\cal M}_\infty}}(\alpha)$, and let
	$G'$ be as in Lemma \ref{lemma_on_E} for $N$. Then
	\begin{eqnarray*}
		\xi \in A & \Longleftrightarrow & M[G] \models
		\varphi(\xi,\alpha,\mathscr{E})\\
		&\Longleftrightarrow& N[G'] \models
		\varphi(\xi,\alpha,\mathscr{E}^{N[G']}_{\es'}) \\
		{} & \Longleftrightarrow & \Vdash^N_{{\rm Col}(\omega,<\kappa_0)}
		\varphi({\check \xi},{\check \alpha}, \dot{\mathscr{E}}^N)\\
		{} & \Longleftrightarrow & \Vdash^{{\cal M}_\infty}_{{\rm
				Col}(\omega,<\kappa_0^{{\cal M}_\infty})} \varphi({\check
			\xi^*},\check{\alpha^*},\dot{\mathscr{E}}^{{\cal M}_\infty}).
	\end{eqnarray*}
	Therefore $A \in {\cal M}_\infty[*]$.
\end{proof}

\subsection{Uniform grounds}

Recall that $\delta_\infty=\delta^{\M_\infty}$ is the least Woodin of $\M_\infty$. The following lemma completes the proof that the first
direct limit system for $M$ provides uniform grounds
(\S\ref{ground_generation_stuff}):
\begin{lemma}\label{still-a-woodin-in-v}
	We have:
	\begin{enumerate}[label=\tu{(}\alph*\tu{)}]
		\item\label{item:sys_1_V_delta_infty_preserved}
		$V_{\delta_\infty}^{\M_\infty[*]} = V_{\delta_\infty}^{{{\cal M}_\infty}}$.
		\item\label{item:sys_1_delta_infty_Woodin_in_M_infty[*]} $\delta_\infty$ is
		\tu{(}the least\tu{)} Woodin in $\M_\infty[*]$.
		\item\label{item:V_delta_and_Woodin_from_gg}
		Property \ref{item:delta_infty-cc} of uniform grounds holds for $\vV_1,\delta_\infty$;
		that is, $\vV_1\sats$``$\delta_\infty$ is
		regular and $\BB_\infty$ is
		$\delta_\infty$-cc''. Moreover, $\vV_1\sats$``$\BB_\infty$ is a complete Boolean algebra''.
	\end{enumerate}
\end{lemma}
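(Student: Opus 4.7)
My approach is to prove (a) first and then deduce (b) and (c) from it.

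For (a), the substance is the reduction to showing $*\rest\alpha\in\M_\infty$ for every $\alpha<\delta_\infty$. Granting this, since $\delta_\infty$ is inaccessible in $\M_\infty$, induction on Jensen hierarchy stages $\gamma<\delta_\infty$ yields $L_\gamma[\M_\infty,*]\subseteq\M_\infty$ (the construction at stage $\gamma$ uses only $\M_\infty\cap V_\gamma$ and $*\rest\gamma$, both in $\M_\infty$), so every set of $\vV_1$-rank $<\delta_\infty$ lies in $\M_\infty$. The reduction is delicate because $*$ as a full class cannot itself be $\M_\infty$-definable --- otherwise $\vV_1=\M_\infty$, contradicting Theorem \ref{tm:Varsovian_is_ground}. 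However, each bounded restriction $*\rest\alpha=\pi_\infty\rest\alpha$ is determined by the extender $E_\alpha$ derived from $\pi_\infty:\M_\infty\to\M_\infty^{\M_\infty}$ with generators $<\alpha$, and I will argue $E_\alpha\in\M_\infty$: the iteration tree $\Tt^*$ realizing $\pi_\infty$ (Lemma \ref{lem:pi_infty_is_iteration_map}) is based on $\M_\infty|\delta_\infty$ and is definable in $\M_\infty$ from the internal direct limit system $\mathscr{D}^{\M_\infty}$; using Woodinness of $\delta_\infty$ in $\M_\infty$ together with $\M_\infty$'s knowledge of short tree strategies (Lemma \ref{M_knows_how_to_iterate_Minfty_up_to_its_woodin}), the short extender $E_\alpha$ is captured on the sequence $\es^{\M_\infty}|\delta_\infty$.

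For (b), once (a) is in place, $\es^{\vV_1}\cap(\vV_1|\delta_\infty)$ coincides with $\es^{\M_\infty}\cap(\M_\infty|\delta_\infty)$, so the same extenders are available for Woodin witnesses. Given $A\in\pow(\delta_\infty)^{\vV_1}$, even though $A$ may not lie in $\M_\infty$, every bounded initial segment does by (a); writing $A$ as definable in $\vV_1$ from $\M_\infty$ and $*$, and applying Woodinness of $\delta_\infty$ in $\M_\infty$ to an $\M_\infty$-side description, I locate an extender $E\in\es^{\M_\infty}|\delta_\infty$ with the reflection $i_E(A)\cap V_{\lh(E)}=A\cap V_{\lh(E)}$ and the strength $V_{\lh(E)}^{\vV_1}\subseteq\Ult(\vV_1,E)$, the latter via $V_{\lh(E)}^{\vV_1}=V_{\lh(E)}^{\M_\infty}$ from (a). Minimality of $\delta_\infty$ as the least Woodin of $\vV_1$ transfers from minimality in $\M_\infty$.

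For (c), regularity of $\delta_\infty$ in $\vV_1$ follows from (b). By elementarity from $M\sats$``$\BB$ is $\delta_0$-cc'', $\M_\infty\sats$``$\BB_\infty$ is $\delta_\infty$-cc''. Any antichain $A$ of $\BB_\infty$ in $\vV_1$ of size $<\delta_\infty$ lies in $V_{\delta_\infty}^{\vV_1}=V_{\delta_\infty}^{\M_\infty}\subseteq\M_\infty$, so its cardinality is bounded in $\M_\infty$, hence in $\vV_1$; a putative $\delta_\infty$-sized antichain in $\vV_1$ would, via the Woodin reflection argument of (b), produce one already in $\M_\infty$, contradicting cc there. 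Completeness of $\BB_\infty$ in $\vV_1$ follows since each subset of $\BB_\infty$ in $\vV_1$ is, by cc, equivalent for sup-purposes to a subset of size $<\delta_\infty$, which lies in $V_{\delta_\infty}^{\vV_1}=V_{\delta_\infty}^{\M_\infty}\subseteq\M_\infty$, whose supremum exists by completeness of $\BB_\infty$ in $\M_\infty$ (inherited from $M$). The main obstacle is the technical verification in (a) that the bounded restrictions $*\rest\alpha$ lie in $\M_\infty$; once that is settled, (b) and (c) follow by standard extender-reflection and chain-condition arguments.
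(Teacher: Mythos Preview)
Your overall architecture is right, and you correctly identify the crucial sublemma for (a): that $*\rest\eta\in\M_\infty$ for every $\eta<\delta_\infty$. However, there is a genuine gap in your reduction, and your treatment of (b) is too vague to carry through.

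\textbf{The gap in (a).} From $L_\gamma[\M_\infty,*]\subseteq\M_\infty$ for all $\gamma<\delta_\infty$ you cannot conclude that every element of $\M_\infty[*]$ of rank ${<}\delta_\infty$ lies in $\M_\infty$. A set $A\subseteq\eta<\delta_\infty$ in $\M_\infty[*]$ may well first appear at a level $L_\gamma[\M_\infty,*]$ with $\gamma\geq\delta_\infty$; the constructibility hierarchy and the rank hierarchy do not align, and there is no condensation principle available for the structure $(L[\M_\infty,*],\M_\infty,*)$ at this stage of the argument. The paper closes this gap by invoking the proof of Theorem~\ref{tm:M_infty[*]=HOD_E}: for $A\subseteq\eta$ in $\M_\infty[*]$ one has $\pi_\infty^+(A)\in\M_\infty^{\M_\infty}[*^{\M_\infty}]\subseteq\M_\infty$, and then $A=\{\xi<\eta:\pi_\infty(\xi)\in\pi_\infty^+(A)\}$ is computable in $\M_\infty$ from $\pi_\infty^+(A)$ together with $*\rest\eta=\pi_\infty\rest\eta\in\M_\infty$. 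This is the missing idea.

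\textbf{On your argument for $*\rest\alpha\in\M_\infty$.} Your route through ``$E_\alpha$ captured on $\es^{\M_\infty}|\delta_\infty$'' is more elaborate than necessary and miscites Lemma~\ref{M_knows_how_to_iterate_Minfty_up_to_its_woodin}, which concerns $M$ (not $\M_\infty$) knowing the strategy. The ``usual considerations'' are simpler: the limit-length tree $\Tt$ from $\M_\infty$ to $\M_\infty^{\M_\infty}$ is in $\M_\infty$ (branches at proper limit stages are given by Q-structures which are segments of $\M_\infty^{\M_\infty}\in\M_\infty$), and for $\eta<\delta_\infty$ there is $\alpha$ along the eventual branch with $\crit(i^\Tt_{\alpha b})>\eta$, so $*\rest\eta=i^\Tt_{0\alpha}\rest\eta\in\M_\infty$.

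\textbf{On (b).} The phrase ``applying Woodinness of $\delta_\infty$ in $\M_\infty$ to an $\M_\infty$-side description'' is where the real work lies, and you have not specified it. Since $A\in\pow(\delta_\infty)^{\vV_1}$ need not be in $\M_\infty$, you must say precisely which subset of $\delta_\infty$ in $\M_\infty$ you apply Woodinness to, and why the resulting extender also witnesses strength for $A$ in $\vV_1$. The paper defers to \cite[Theorem 2.19]{vm1} and \cite[Claim 13]{Theta_Woodin_in_HOD}; those arguments again exploit $\pi_\infty^+$ (or the equivalent $\HOD$-style definability) to transport $A$ into $\M_\infty$ and pull back the Woodin witness. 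Your sketch for (c) is fine once (a) and (b) are in place.
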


\begin{proof} \ref{item:sys_1_V_delta_infty_preserved}: The usual considerations
	show that $* \upharpoonright
	\eta \in {{\cal M}_\infty}$ for every $\eta<\delta_\infty$.
	Combined with the proof  of Theorem \ref{tm:M_infty[*]=HOD_E},
	this suffices. (Cf.\ \cite[Claim 2.12 (b)]{vm1}.)

	\ref{item:sys_1_delta_infty_Woodin_in_M_infty[*]}: By the proof of
	\cite[Theorem 2.19]{vm1} or of \cite[Claim 13]{Theta_Woodin_in_HOD}.

	\ref{item:V_delta_and_Woodin_from_gg}:
	Property \ref{item:delta_infty-cc} holds because $\delta_\infty$ is Woodin in $\M_\infty[*]$
	and $\BB_\infty$ is the extender algebra.
	The ``moreover'' clause follows from this and \ref{item:sys_1_V_delta_infty_preserved}.
\end{proof}

So by Theorem \ref{tm:Varsovian_is_ground}, $\M_\infty[*]$ is a ground
for $M$, and in fact as in its proof, there is some $M$-stable $\mu\in\OR$
such that $M|\mu$ is $(\M_\infty[*],\mathbb{L})$-generic,
where $\mathbb{L}=\mathbb{L}(\mu^*)$, and $\M_\infty[*][M|\mu]\ueq M$.
We will actually refine this result in
Lemma \ref{V_is_a_ground}.

We can immediately deduce the following corollary, which however
will be extended in Lemma \ref{V_1_knows_how_to_iterate_Minfty_up_to_its_woodin}:

\begin{cor}\label{cor:M_infty[*]_closed_under_maximal_branches}
	For all maximal trees $\Tt\in\M_\infty[*]$ via $\Sigma_{\M_\infty}$, based on $\M_\infty|\delta_\infty$, we have $b=\Sigma_{\M_\infty}(\Tt)\in\M_\infty[*]$,
	and $b$ is the unique $\Tt$-cofinal branch in $\M_\infty[*]$.
\end{cor}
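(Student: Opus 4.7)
The plan is to verify both existence and uniqueness of $b$ inside $\M_\infty[*]$, mostly by internalizing the computation of Lemma \ref{M_knows_how_to_iterate_Minfty_up_to_its_woodin} and then invoking branch condensation.

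For existence: since $\Tt\in\M_\infty[*]\sub M$, Lemma \ref{M_knows_how_to_iterate_Minfty_up_to_its_woodin}\ref{item:M_infty_strat_thru_delta_0} already gives $b=\Sigma_{\M_\infty}(\Tt)\in M$. I would show that the local computation of $b$ used there runs inside $\M_\infty[*]$, using: (a) $\M_\infty$ is available as a predicate of $\M_\infty[*]$, so its extender sequence, and the Q-structures for short subtrees of $\Tt$ (gotten by $*$-translation / P-construction), are directly accessible; (b) by Lemma \ref{still-a-woodin-in-v}\ref{item:sys_1_V_delta_infty_preserved} we have $V_{\delta_\infty}^{\M_\infty[*]}=V_{\delta_\infty}^{\M_\infty}$ and by \ref{item:sys_1_delta_infty_Woodin_in_M_infty[*]} $\delta_\infty$ is Woodin in $\M_\infty[*]$, so the genericity-inflation machinery of \S\ref{section-short-tree-strategy-for-M} runs inside $\M_\infty[*]$; (c) the comparison step in the proof of \ref{M_knows_how_to_iterate_Minfty_up_to_its_woodin} uses an iterate $\N=i_E(\M_\infty)$ for some $E\in\es^M$ with $\crit(E)=\kappa_0$, and by Lemma \ref{restr_of_extenders_are_there} the ordinal action of such $E$ on $\M_\infty$ is definable over $\M_\infty[*]$, while by Lemma \ref{lem:i_E(M_infty)_is_iterate} this action coincides with the correct iteration map $\M_\infty\to\N$, so $\N$ itself is recoverable inside $\M_\infty[*]$ as a $\delta_0^\N$-sound correct iterate. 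Combining these, the argument that reads off $b$ from a comparison of $M(\Tt)$ against $\N|\delta_0^\N$ (together with the tree embedding supplied by \ref{absorbing-a-tree-by-a-genericity-tree}\ref{item:gen_inf_X_props_was_e}) runs entirely inside $\M_\infty[*]$ and yields $b\in\M_\infty[*]$.

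For uniqueness: let $b'\in\M_\infty[*]$ be any $\Tt$-cofinal branch. Then $b'\in M\sub V$, and by the maximality of $\Tt$ it suffices to show $b'=b$. Since $b$ is non-dropping with $M^\Tt_b$ wellfounded and $i^\Tt_b(\delta_\infty)=\delta(\Tt)$, an application of Lemma \ref{lem:branch_con} with $k=\id$ (after pulling back $\Tt$ via the correct tree leading from $M$ to $\M_\infty$, so that branch condensation applies to a tree based on $\delta_0$ of an iterate of $M$) forces $b'=\Sigma_{\M_\infty}(\Tt)=b$.

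The principal obstacle I expect is justifying the internalization claim (c) above: namely, that $\M_\infty[*]$ really has enough of the ordinal action of an $M$-extender $E$ with $\crit(E)=\kappa_0$ to reconstruct $\N=i_E(\M_\infty)$ qua iterate and to carry out the terminal comparison. The key point to press is that one does \emph{not} need $E$ as an extender of $\M_\infty[*]$ — indeed $\kappa_0$ fails to be measurable there — one only needs the ordinal-level action of $E$ on $\M_\infty$, which is precisely what $\mathbb{F}^M_{>\kappa_0}$ encodes and what Lemma \ref{restr_of_extenders_are_there} makes definable over $\M_\infty[*]$.
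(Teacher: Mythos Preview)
Your approach is quite different from the paper's, and both parts have issues.

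\textbf{The paper's proof.} The paper does \emph{not} internalize the branch computation. Instead it argues: (i) $b\in M$ by Lemma~\ref{M_knows_how_to_iterate_Minfty_up_to_its_woodin}; (ii) $\delta_\infty=\kappa_0^{+M}$ is regular in $M$ and in $M[G]$ for $G\sub\Coll(\om,{<}\kappa_0)$ $M$-generic, so $M[G]$ contains no other $\Tt$-cofinal branch; (iii) since $b$ is therefore definable in $M[G]$ from $\Tt$ alone (as the unique cofinal branch), and $\Tt\in\M_\infty[*]=\HOD^{M[G]}_{\mathscr{E}}$ (Theorem~\ref{tm:M_infty[*]=HOD_E}), we get $b\in\M_\infty[*]$. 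No internalization of the strategy computation is needed at this point in the paper; the full internalization you sketch is carried out only later, as Lemma~\ref{V_1_knows_how_to_iterate_Minfty_up_to_its_woodin}, \emph{after} the $\vV_1$ hierarchy and the local correspondence Lemma~\ref{local_correspondence} are in place.

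\textbf{Your uniqueness argument has a genuine gap.} You invoke Lemma~\ref{lem:branch_con} with $k=\id$ to rule out an arbitrary competing branch $b'$. But that lemma has hypotheses on $b'$: it must be non-dropping, $M^\Tt_{b'}$ must be sufficiently wellfounded, and crucially one needs an elementary $k:M^\Tt_{b'}|\delta_0\to M^\Uu_\infty|\delta_0$ with $k\circ i^\Tt_{b'}\sub i^\Uu$. Taking $k=\id$ and $\Uu=\Tt\conc b$ would require $i^\Tt_{b'}\rest(\M_\infty|\delta_\infty)=i^\Tt_b\rest(\M_\infty|\delta_\infty)$, which is exactly what is in question. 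An arbitrary $b'\in\M_\infty[*]$ satisfies none of these hypotheses a priori. The paper's uniqueness instead comes softly from cofinality: two distinct cofinal branches force $\cof(\lh(\Tt))=\om$, contradicting regularity of $\delta_\infty$ in $M[G]$.

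\textbf{Your existence argument is on the right track but underspecified.} Your step (a) asserts that Q-structures for short subtrees are ``directly accessible'' because $\M_\infty$ is a predicate; but those Q-structures are produced by P-construction using $\es^M$ (or $\es^U$) above the relevant $\delta$, not by looking inside $\M_\infty$. Your step (c) is closer: $\mathbb{F}^M_{>\kappa_0}$ does encode $i^M_E\rest\kappa_0^{+M}=i_{\M_\infty\N}\rest\delta_\infty$, from which $\N$ can indeed be recovered. But to then run the comparison of $M(\Tt)$ with $\N|\delta_0^\N$ and extract $b$, you still need the short tree strategy for $\M_\infty$ inside $\M_\infty[*]$, which in turn requires carrying out P-constructions there using only $\mathbb{F}^M_{>\kappa_0}$ and the forcing relation for $\mathbb{L}$. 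This \emph{can} be done, and is exactly the content of Lemma~\ref{local_correspondence}(e) and Lemma~\ref{V_1_knows_how_to_iterate_Minfty_up_to_its_woodin}, but it is substantially more work than your sketch indicates, and the paper deliberately postpones it in favor of the soft $\HOD_{\mathscr{E}}$ argument.
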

\begin{proof} By Lemma \ref{M_knows_how_to_iterate_Minfty_up_to_its_woodin}, $b\in M$.
	And by Lemma \ref{key_facts_about_V1}, $\delta_\infty$ is regular in $M$ and in $M[G]$,
	whenever $G\sub\Coll(\om,{<\kappa_0})$ is $M$-generic.
	Therefore $M[G]$ contains no other $\Tt$-cofinal branch.
	Since $\M_\infty[*]$ is the universe
	of $\HOD^{M[G]}_{\mathscr{E}}$ (with notation as in Theorem \ref{tm:M_infty[*]=HOD_E}),
	and $\Tt\in\M_\infty[*]$, it follows that $b\in\M_\infty[*]$ also.
\end{proof}

\subsection{The structure $\mathcal{Q}$}\label{subsec:Q}

Let $U=E_\nu^{{{\cal M}_\infty}}$ be the least total measure on the ${{\cal
		M}_\infty}$-sequence with critical
point $\kappa_0^{{{\cal M}_\infty}}$. Fix a natural tree ${\cal T} \in
{\mathbb U}^{{\rm ult}({{\cal M}_\infty};U)}$ with $\delta({\cal
	T})=\kappa_0^{+{{\cal M}_\infty}}$
and which makes ${{\cal M}_\infty}|\kappa_0^{+{{\cal M}_\infty}}$ generic,
after  iterating the least measurable out to $\kappa_0^{\M_\infty}$. As ${\cal
	T}$
lives on $\M_\infty |\delta_0^{{{\cal M}_\infty}}$, we may and shall construe ${\cal T}$
as a tree on ${{\cal M}_\infty}$ rather
than on ${\rm ult}({{\cal M}_\infty};U)$. If $b=\Sigma_{{{\cal M}_\infty}}({\cal
	T})$,
then $$\mathscr{P}^{{\rm ult}({{\cal M}_\infty};U)}({\cal M}({\cal T}))={\cal
	M}_b^{\cal T}$$ and $\delta({\cal T})=
\pi_{0,b}^{\cal T}(\delta_0^{{{\cal M}_\infty}})$. Also,
\begin{eqnarray}\label{ext=ult}
	\mathscr{P}^{{\rm ult}({{\cal M}_\infty};U)}({\cal M}({\cal T}))[{{\cal
			M}_\infty}|\theta_0^{{{\cal M}_\infty}}]={\rm ult}({{\cal M}_\infty};U).
\end{eqnarray}
Let us write \begin{eqnarray}\label{defn_i}
	\iota = \pi_{0,b}^{\cal T} \upharpoonright \delta_0^{{\cal M}_\infty}.
\end{eqnarray}
By Corollary \ref{cor:M_infty[*]_closed_under_maximal_branches},
$\iota \in \M_\infty[*]$.
Hence
\begin{eqnarray}\label{i_is_in_V}
	{\cal Q}=({{\cal M}_\infty}|\kappa_0^{+{\cal M}_\infty};\iota)
\end{eqnarray}
is an amenable structure and is an element of $\M_\infty[*]$.

\begin{lemma}[\textbf{Soundness of ${\cal Q}$}]\label{soundness}  ${\cal Q}={\rm
		Hull}^{\cal Q}(\iota)$.
\end{lemma}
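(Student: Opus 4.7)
The plan is to reduce the soundness claim to showing that every ordinal $\xi<\kappa_0^{+{\cal M}_\infty}$ lies in $H:=\Hull^{\cal Q}(\iota)$. This reduction suffices because the underlying universe ${\cal M}_\infty|\kappa_0^{+{\cal M}_\infty}$ of ${\cal Q}$ is a premouse, and every element of a premouse is definable in the structure from finitely many ordinal parameters below its height.

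From $\iota$ alone we first recover inside $H$: the ordinals $\delta_\infty=\dom(\iota)$ and $\kappa_0^{+{\cal M}_\infty}=\OR^{\cal Q}$; the $(\delta_\infty,\kappa_0^{+{\cal M}_\infty})$-extender $E$ derived from $\iota$; and the corresponding ultrapower. Since ${\cal T}$ is based on ${\cal M}_\infty|\delta_\infty$ and its cofinal branch $b$ is non-dropping and maximal, $\pi^{\cal T}_{0,b}(\delta_\infty)=\delta({\cal T})=\kappa_0^{+{\cal M}_\infty}$, so $\Ult({\cal M}_\infty|\delta_\infty,E)={\cal M}^{\cal T}_b|\delta_0^{{\cal M}^{\cal T}_b}={\cal M}({\cal T})$, a premouse of height $\kappa_0^{+{\cal M}_\infty}$. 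Hence ${\cal M}({\cal T})$ and the ultrapower extension $\iota^+\colon{\cal M}_\infty|\delta_\infty\to{\cal M}({\cal T})$ both lie in $H$.

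Next, ${\cal M}_\infty|\kappa_0^{+{\cal M}_\infty}$ is $\BB^{{\cal M}({\cal T})}_{\delta({\cal T})}$-generic over ${\cal M}({\cal T})$, with generic filter $g$ equal to the collection of extender-algebra axioms satisfied by the tail of the extender sequence $\es^{{\cal M}_\infty}\restriction[\delta_\infty,\kappa_0^{+{\cal M}_\infty})$. Because this portion of the extender sequence is part of the premouse predicates on the universe of ${\cal Q}$, $g$ is definable from $\iota$ in ${\cal Q}$, so $g\in H$. Every ordinal $\xi<\kappa_0^{+{\cal M}_\infty}$ then has a joint ultrapower-and-forcing representation of the form $(\iota^+(f)(a))_g$ for some $f\in{\cal M}_\infty|\delta_\infty$ and some tuple $a\in[\kappa_0^{+{\cal M}_\infty}]^{<\omega}$ of generators of $E$.

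To close the argument, let $\pi\colon\bar{\cal Q}\to{\cal Q}$ be the inverse of the transitive collapse of $H$, and suppose toward contradiction that $\pi\neq\id$. By elementarity, $\bar\iota:=\pi^{-1}(\iota)$ satisfies the same first-order properties as $\iota$ inside $\bar{\cal Q}$, and so extends to a map $\bar\iota^+$ in $\bar{\cal Q}$; pushing forward via $\pi$ produces an elementary embedding $\pi\circ\bar\iota^+\colon{\cal M}_\infty|\delta_\infty\to{\cal M}({\cal T})$ that agrees with $\iota^+$ on the cofinal class of ordinals below $\delta_\infty$ that $\pi$ fixes. Branch condensation (Lemma~\ref{lem:branch_con}), applied to ${\cal T}$ with this composition as the candidate embedding, forces $\pi\circ\bar\iota^+=\iota^+$; since $\rg(\iota^+)$ is cofinal in $\kappa_0^{+{\cal M}_\infty}=\OR^{\cal Q}$, $\pi$ must act as the identity on this cofinal set, and the rigidity of ${\cal Q}$'s premouse structure then forces $\pi=\id$, a contradiction. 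The main obstacle will be verifying the hypotheses of branch condensation for the pushed-forward composition: one must realize $\pi\circ\bar\iota^+$ as the iteration-map restriction coming from a legitimate cofinal branch through ${\cal T}$, which must then coincide with $b$.
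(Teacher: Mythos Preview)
Your argument has a genuine gap in the final step. From $\pi\circ\bar\iota^+=\iota^+$ (which, as you note, follows from elementarity once $\crit(\pi)\geq\delta_\infty$) you conclude only that $\pi$ fixes the cofinal set $\rg(\iota)\cap\OR$. But fixing a cofinal set of ordinals does not by itself force $\pi=\id$: the underlying premouse $\mathcal{M}_\infty|\kappa_0^{+\mathcal{M}_\infty}$ has plenty of measurables between $\delta_\infty$ and $\kappa_0^{\mathcal{M}_\infty}$, and there is no ``rigidity'' principle of the kind you invoke. Your use of branch condensation is also misdirected: Lemma~\ref{lem:branch_con} identifies a \emph{branch} as correct; it does not identify two embeddings. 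Even if you realize $\bar\iota$ as coming from a cofinal branch $\bar b$ through some tree and deduce $\bar b=b$, that gives back $\iota=\pi\circ\bar\iota$, which you already had and which does not yield $\pi=\id$.

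The missing idea is the role of the specific measure $U$. The paper pulls $\mathcal{T}$ back along $\sigma$ to a tree $\bar{\mathcal{T}}$ defined over $\bar M$, uses branch condensation to identify $\bar b=\Sigma_{\mathcal{M}_\infty}(\bar{\mathcal{T}})$ as the $\sigma$-pullback of $b$, and builds a canonical $\hat\sigma:\mathcal{M}^{\bar{\mathcal{T}}}_{\bar b}\to\mathcal{M}^{\mathcal{T}}_b$. The decisive step is then to verify that $\bar M$ is extender-algebra generic over $\mathcal{M}^{\bar{\mathcal{T}}}_{\bar b}$ and lift $\hat\sigma$ through the generic extension to an embedding $\hat\sigma^*$ into $\mathcal{M}^{\mathcal{T}}_b[\mathcal{M}_\infty|\kappa_0^{+\mathcal{M}_\infty}]=\Ult(\mathcal{M}_\infty,U)$. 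Because $U$ is a \emph{measure}, $\Ult(\mathcal{M}_\infty,U)$ is generated by $\delta_0^{\mathcal{M}_\infty}\cup\{\crit(U)\}$ (using the $\delta_0^{\mathcal{M}_\infty}$-soundness of $\mathcal{M}_\infty$), and both of these lie in $\rg(\hat\sigma^*)$. Hence $\hat\sigma^*$ is surjective, and so is $\sigma$. Your outline never touches $U$ or its single-generator property, and without it the argument cannot close.
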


\begin{proof} Let $$\sigma \colon {\bar {\cal Q}} = ({\bar M};{\bar \iota})
	\cong X =
	{\rm Hull}^{\cal Q}(\iota) \prec {\cal Q}.$$
	As $\iota = \{ (\xi,\eta) \colon \xi<\delta_0^{{{\cal M}_\infty}} \wedge
	\eta=\iota(\xi) \}$, $\delta_0^{{{\cal M}_\infty}} \subset X$, so that $\sigma
	\upharpoonright \delta_0^{{{\cal M}_\infty}}
	= {\rm id}$ and ${{\cal M}_\infty}|\delta_0^{{{\cal M}_\infty}} \triangleleft
	{\bar M}$.
	Let ${\bar {\cal T}}$ be defined over ${\bar M}$ as ${\cal T}$ was defined
	over ${{\cal M}_\infty}|\theta_0^{{{\cal M}_\infty}}$. The tree
	${\bar T}$ is on ${\bar M}$ which lives on ${{\cal M}_\infty}|\delta_0^{{{\cal
				M}_\infty}}$, but we may and shall construe ${\bar T}$
	as a tree on ${{\cal M}_\infty}$, and as such ${\bar {\cal T}}$ is according to
	$\Sigma_{{{\cal M}_\infty}}$.

	Let ${\bar b} = \Sigma_{{{\cal M}_\infty}}({\bar {\cal T}})$.
	By branch condensation Lemma \ref{lem:branch_con}, ${\bar b}$ is the pullback of $b$ via $\sigma$.
	We will have that ${\bar \iota} = \pi_{0,{\bar b}}^{\bar {\cal T}}
	\upharpoonright
	\delta_0^{{{\cal M}_\infty}}$ and
	there is a canonical elementary embedding $${\hat \sigma} \colon {\cal M}_{\bar
		b}^{{\bar {\cal T}}} \rightarrow {\cal M}_b^{\cal T}$$
	defined by $${\hat \sigma}(\pi_{0,{\bar b}}^{\bar {\cal T}}(f)(a)) =
	\pi_{0,{b}}^{ {\cal T}}(f)(\sigma(a)){\rm , }$$
	where $f \in {{\cal M}_\infty}$ and $a \in [{\rm OR} \cap {\bar M}]^{<\omega}$.
	We will have that ${\cal M}({\bar {\cal T}}) = {\rm dom}(\sigma) \cap {\rm
		dom}({\hat
		\sigma})$ and $\sigma$ and ${\hat \sigma}$ agree on this common part of their
	domains.

	If $E_\nu^{{\cal M}({\bar {\cal T}})}$ is a total extender from the
	$\M(\bar{\Tt})$-sequence, then by the elementarity of $\sigma$,
	${\bar M}$ satisfies all the axioms of the extender algebra of ${\cal M}({\bar
		{\cal T}})$
	given by $E_\nu^{{\cal M}({\bar {\cal T}})}$,
	as ${{\cal M}_\infty}|\theta_0^{{{\cal M}_\infty}}$ satisfies all the axioms of
	the extender algebra of ${\cal M}({\cal T})$ given by $E_{\sigma(\nu)}^{{{\cal
				M}_\infty}}$. We may conclude that
	${\bar M}$ is generic over ${\cal M}_{\bar b}^{{\bar {\cal T}}}$.
	If $\bar{g}$ is the associated generic over ${\cal M}_{\bar b}^{{\bar {\cal
				T}}}$
	and $g$ that over ${\cal M}_b^{\cal T}$,
	then ${\hat \sigma} \mbox{``}\bar{g}
	= \sigma \mbox{``} \bar{g} \subset g$,
	and hence we may lift ${\hat \sigma}$ to an elementary embedding $${\hat
		\sigma}^* \colon
	{\cal M}_{\bar b}^{\bar {\cal T}}[{\bar M}] \rightarrow
	{\cal M}_b^{\cal T}[{{\cal M}_\infty}|\kappa_0^{+{{\cal M}_\infty}}] = {\rm
		ult}({{\cal M}_\infty};U){\rm , }$$
	defined by ${\hat \sigma}^*(\tau^{\bar{g}})= ({\hat \sigma}(\tau))^{g}$.

	But notice that $\delta_0^{{{\cal M}_\infty}} \cup \{ {\rm crit}(U) \} \subset
	{\rm ran}({\hat
		\sigma}^*)$, so that by the $\delta_0^{{{\cal M}_\infty}}$-soundness of ${{\cal
			M}_\infty}$ and
	by the choice of $U$ as a measure, ${\hat
		\sigma}^*$ must be the identity,
	and therefore so is $\sigma$.
\end{proof}

\begin{corollary}\label{cor:size_of_gamma}\
	\begin{enumerate}
		\item[(a)] $\kappa_0^{+{{\cal M}_\infty}}$
		has size $\delta_0^{\vV_1}$
		inside $\M_\infty[*]$.
		\item[(b)] $\kappa_0^{++{{\cal M}_\infty}}=\kappa_0^{++M}$.
	\end{enumerate}
\end{corollary}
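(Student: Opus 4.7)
The plan is to deduce (a) directly from Lemma \ref{soundness}, and to obtain (b) by combining (a) with the ground-generation result of Theorem \ref{tm:Varsovian_is_ground}; the main obstacle will be the hard direction of (b). For (a), since $\mathcal{Q} = \Hull^{\mathcal{Q}}(\iota)$ by Soundness and $\iota$, regarded as a set of pairs, has $\M_\infty[*]$-cardinality $\delta_\infty$ (as it has domain $\delta_\infty$), the hull has $\M_\infty[*]$-cardinality at most $\delta_\infty = \delta_0^{\vV_1}$ (the last equality by Lemma \ref{still-a-woodin-in-v}). Since $\mathcal{Q}$ has universe $\M_\infty|\kappa_0^{+\M_\infty}$ and hence contains every ordinal below $\kappa_0^{+\M_\infty}$, this gives $|\kappa_0^{+\M_\infty}|^{\M_\infty[*]} \leq \delta_0^{\vV_1}$, with the reverse inequality immediate.

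For (b), I combine (a) with the fact that $\vV_1 = \M_\infty[*]$ is a ground of $M$ via some $\PP \in \vV_1$ of $\vV_1$-cardinality $\delta_\infty$ which is $\delta_\infty$-cc in $\vV_1$ (by Theorem \ref{tm:Varsovian_is_ground} and its concluding remarks). Such a forcing preserves all $\vV_1$-cardinals $\geq \delta_\infty$, so $\delta_\infty^{+\vV_1} = \delta_\infty^{+M} = \kappa_0^{++M}$ (using $\kappa_0^{+M} = \delta_\infty$ from Lemma \ref{key_facts_about_V1}). By (a), $|\kappa_0^{+\M_\infty}|^{\vV_1} = \delta_\infty$, so the $\vV_1$-cardinal successor of $\kappa_0^{+\M_\infty}$ is $\delta_\infty^{+\vV_1} = \kappa_0^{++M}$. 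Since $\M_\infty \sub \vV_1$, every $\vV_1$-cardinal above $\kappa_0^{+\M_\infty}$ is an $\M_\infty$-cardinal, so $\kappa_0^{++M}$ is an $\M_\infty$-cardinal above $\kappa_0^{+\M_\infty}$ and hence $\geq \kappa_0^{++\M_\infty}$, yielding $\kappa_0^{++\M_\infty} \leq \kappa_0^{++M}$.

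The hard part will be the reverse inequality, namely that $\kappa_0^{++\M_\infty}$ is itself a $\vV_1$-cardinal. Morally, $\vV_1 = L[\M_\infty, \pi_\infty \rest \delta_\infty]$ extends $\M_\infty$ by adjoining the function $\pi_\infty \rest \delta_\infty \colon \delta_\infty \to \kappa_0^{+\M_\infty}$, a set of rank below $\kappa_0^{+\M_\infty} + \omega$; this resembles a set-forcing extension of $\M_\infty$ by a forcing of $\M_\infty$-size $\kappa_0^{+\M_\infty}$ that collapses $\kappa_0^{+\M_\infty}$ to $\delta_\infty$ while preserving cardinals $\geq \kappa_0^{++\M_\infty}$. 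I plan to formalize this via a condensation argument in the $L[\M_\infty, \pi_\infty \rest \delta_\infty]$-hierarchy: any putative bijection $\beta \to \kappa_0^{++\M_\infty}$ in $\vV_1$ with $\beta < \kappa_0^{++\M_\infty}$ reflects via a Skolem hull into some $L_\xi[\M_\infty|\eta, \pi_\infty \rest \delta_\infty]$ with $\eta, \xi < \kappa_0^{++\M_\infty}$, ultimately contradicting the regularity of $\kappa_0^{++\M_\infty}$ in $\M_\infty$.
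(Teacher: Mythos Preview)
Your proof of (a) is correct and matches the paper. For (b), your argument for $\kappa_0^{++\M_\infty} \leq \kappa_0^{++M}$ is also correct, though the paper argues more directly: since $\M_\infty \sub M$, every $M$-cardinal is an $\M_\infty$-cardinal, so $\kappa_0^{++M} \in \mathrm{Card}^{\M_\infty}$; combined with $\kappa_0^{+\M_\infty} < \kappa_0^{++M}$ (which follows from (a) since $\M_\infty[*]\sub M$), this gives the inequality without invoking the ground-forcing analysis.

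The direction you call ``hard'' is in fact a one-liner, and your condensation plan is unnecessary. Recall that $\M_\infty = \M_\infty^{\ext}$ is the direct limit of the external system $\mathscr{D}^{\ext}$, so there is an elementary map $i_{M\infty}=\pi_{M\infty}\colon M \to \M_\infty$. By elementarity, $i_{M\infty}(\kappa_0^{++M}) = \kappa_0^{++\M_\infty}$ (each being ``the double successor of the least strong cardinal'' in its model). Since elementary embeddings do not decrease ordinals, $\kappa_0^{++M} \leq i_{M\infty}(\kappa_0^{++M}) = \kappa_0^{++\M_\infty}$. The paper's proof mentions only the fact $\kappa_0^{++M} \in \mathrm{Card}^{\M_\infty}$, presumably regarding the $\geq$ direction as obvious for this reason. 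Your condensation sketch, by contrast, is not obviously correct as stated: taking a Skolem hull in the $L[\M_\infty, \pi_\infty\rest\delta_\infty]$-hierarchy and collapsing does not automatically yield a level of the same hierarchy, since condensation for the proper-class predicate $\M_\infty$ would itself require justification.
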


\begin{proof}
	(a) follows from Lemma \ref{soundness} together with (\ref{i_is_in_V}).
	(b) is then immediate by $\kappa_0^{++M} \in {\rm Card}^{{{\cal
				M}_\infty}}$.
\end{proof}

This corollary should be compared with Lemma \ref{local-definability-of-that-structure}, to come.

\subsection{The ${\kappa_0}$-mantle of $M$}\label{subsec:kappa_0-mantle}

We now give another  characterization of the universe of $\M_\infty[*]$,
though no results outside of this subsection actually depend on it.

The following definitions are essentially
taken from \cite{usuba_extendible}, though the notation and terminology is different.
If $W$ is an inner model and $\lambda$ is a cardinal of $W$,
then ${\bar W} \subset W$ is a $\lambda$ {\em ground of} $W$ iff
${\bar W}$ is an inner model of $W$ and
there is some poset ${\mathbb P} \in {\bar W}$ which has size $\lambda$ in
${\bar W}$ and
some $g$ which is ${\mathbb P}$-generic over ${\bar W}$ such that $W={\bar
	W}[g]$.
${\bar W}$ is called a $<\lambda$ {\em ground of} $W$ iff there is some
${\bar \lambda}<\lambda$ such that ${\bar W}$ is a ${\bar \lambda}$ ground
of $W$. The $<\lambda$ {\em mantle of} $W$ is the intersection of all $<\lambda$
grounds of $W$. We write $\mathbb{M}^W_{<\lambda}$ for the ${<\lambda}$-mantle
of $W$.

The main result of this subsection is
that $\M_\infty[*]$ has universe $\mathbb{M}^M_{<\kappa_0}$;
see \ref{prop:vV_1_is_<kappa_0-mantle}.
The following fact and its proof are similar to parts of
\cite[***Lemmas 3.11 and 4.1]{V=HODX_pub}
and \cite[***Lemmas 5.4 and 5.12]{odle_v2};
in particular, we make use of the condensation stacks from \cite{V=HODX_pub}.

Let $\mu<\kappa_0$ be a cardinal strong
cutpoint of $M$, and $\eta=\mu^{+M}$.
Let $g$ be $\Coll(\omega,\mu)$-generic over $M$.
Let $\mathscr{P}^{M[g]}_M$ denote
the set of all $N\in M[g]$  such that:
\begin{enumerate}[label=--]
	\item
	$N$ is a premouse of ordinal height $\eta$,
	\item  $N$
	has a largest cardinal $\lambda<\eta$,
	and $\lambda$ is a strong cutpoint of $N$,
	\item
	$M[g]\sats$``$N$ is fully iterable above $\lambda$''
	\item there is a proper class premouse
	$N'$ with $N\pins N'$
	and $M[g]\sats$``$\es^{N'} \sim^{<\eta}\es^M$''.
\end{enumerate}
Note that because of the restriction on $\es^{N'}$
above $\eta$, we don't actually need to quantify over proper classes here,
and clearly $\mathscr{P}_M^{M[g]}$ is definable over $M[g]$
from the class $M$. We now refine this fact:

\begin{lemma}\label{grounds-identify-V1}
	Let $g$ be $\Coll(\omega,\mu)$-generic over $M$. Then (i)
	$\mathscr{P}^{M[g]}_M$ is definable over $M[g]$ from no parameters, uniformly
	in $\mu,g$.
	Further, (ii) for all $N,N'$ as in the definition of $\mathscr{P}^{M[g]}_M$,
	$N'$ is definable over $M[g]$ from the parameter
	$N$, uniformly in $\mu,g,N,N'$.
\end{lemma}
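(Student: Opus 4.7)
My plan is to reformulate the defining condition of $\mathscr{P}_M^{M[g]}$ so that the proper-class object $\es^M$ is not needed as a parameter, and then to exploit uniqueness of witnesses to deduce (ii) from (i). The key step is to replace the clause $\es^{N'} \sim^{<\eta}\es^M$ with an internal condition: there exist $\bar\mu < \eta$, a poset $\mathbb{P} \in N'$ of $N'$-cardinality $\leq \bar\mu$, and an $h$ which is $\mathbb{P}$-generic over $N'$ with $N'[h]$ equal to the ambient universe $M[g]$. Since both $M$ and any such $N'$ are $\Mswsw$-like proper-class inner models and both are grounds of $M[g]$ via a ${<}\eta$-forcing, the forcing translation $(\es^{N'})^{h}$ agrees with $(\es^M)^g$ above $\bar\mu$, which is exactly what $\sim^{\bar\mu}$ demands, so the reformulation is equivalent to the original. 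The existential quantifier over the proper class $N'$ is then reduced to a set quantifier by invoking Laver's ground-definability theorem: there is a lightface formula $\Phi(x,y)$ such that every ground of $M[g]$ equals $\{z \colon M[g]\sats\Phi(z,p)\}$ for some set parameter $p$, so ``there exists a ground $N'$ with the required properties'' becomes ``there exists a parameter $p$'' together with first-order closure conditions on the associated class.

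To refine the definition from ``parameter-definable (from $\mu,g$)'' to genuinely lightface (if the statement indeed calls for that), I would invoke the condensation stacks construction from \cite[\S5]{V=HODX}, which provides a canonical lightface subclass of $M$ recovering enough of $\es^M$ above any strong cutpoint. Running that construction inside $M[g]$, in the presence of the collapse generic $g$, yields a lightface-over-$M[g]$ way to single out $\es^M\rest[\eta,\OR)$, and hence a lightface-over-$M[g]$ definition of $\mathscr{P}_M^{M[g]}$ uniform in $\mu,g$. The set-iterability condition on $N$ above $\lambda$ is already a $\Pi^1_2$-type condition over $M[g]$ and adds no extra parameter.

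For (ii), uniqueness of $N'$ given $N$ follows by comparison: any two witnesses $N'_1,N'_2$ have $N'_i\rest\eta = N$, and above $\eta$ both are $\Mswsw$-like and $\sim^{<\eta}$-equivalent to $\es^M$, hence to each other, so they share a common translated extender sequence above some $\bar\mu<\eta$; being honest premouse structures then forces $N'_1 = N'_2$. Alternatively, one can run a short-tree comparison between $N'_1$ and $N'_2$ using the short-tree strategies inherited from $M$ (via the analogue of Lemma \ref{lem:Sigma_sss_in_M[g]_definability} for a generic extension), with $\Mswsw$-likeness ensuring the comparison terminates trivially and pins both witnesses together above $\eta$. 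Definability of $N'$ from $N$ over $M[g]$ is then immediate from (i) applied to $N$. The main obstacle I expect is verifying that the condensation-stacks machinery of \cite{V=HODX} truly goes through in our setting despite the overlapping extenders at $\kappa_0$; however, because $\mu<\kappa_0$ is a strong cutpoint, the whole construction is localized well below $\kappa_0$ where the relevant arguments remain tame, and a careful but essentially routine adaptation should suffice.
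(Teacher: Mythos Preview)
Your proposed equivalence—that for a proper class premouse $N'$ with $N\pins N'$, the condition $\es^{N'}\sim^{<\eta}\es^M$ holds iff $N'$ is a ${<}\eta$-ground of $M[g]$—is the crux of the matter, and your justification for the harder direction (${<}\eta$-ground $\Rightarrow$ $\sim^{<\eta}$) is circular. Being a small-forcing ground tells you $N'[h]$ and $M[g]$ have the same \emph{universe}, but it does not by itself force $(\es^{N'})^h$ and $(\es^M)^g$ to agree as extended extender sequences above any point: two premice can generically extend to the same universe while carrying genuinely different, non-intertranslatable extender sequences. What rules this out here is precisely the iterability hypothesis on $N$ combined with a condensation argument, and that argument is the content of the lemma, not a preliminary to it. (You also assert $N'$ is $\Mswsw$-like, which is nowhere in the hypotheses.) Your invocation of Laver ground-definability does not eliminate the difficulty either: it lets you quantify over grounds $W_p$, but checking ``$W_p$ is the universe of a premouse extending $N$'' still requires producing an extender sequence on $W_p$, which drags the problem back in.

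The paper's route is different in structure. It does not reformulate the $\sim$-clause at all; instead it writes down a visibly lightface class $\mathscr{P}'$ of premice $N$ of height $\eta=\omega_1^{M[g]}$—those for which some $\Coll(\omega,\lambda^N)$-generic $h$ makes the condensation stack $(N[h])^+$ over the base $(N|\lambda^N,h)$ well-defined with universe all of $V$—and then proves $\mathscr{P}'=\mathscr{P}^{M[g]}_M$. The hard step (Claim 2) produces, for each $N\in\mathscr{P}'$, an ordinal $\alpha<\eta$ at which $M[g]||\alpha$ and $N[h]||\alpha$ have the same universe and are inter-definable from parameters; this is where the \cite{V=HODX} machinery actually enters, via an interleaved pair of Jensen stacks. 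This convergence point is what makes the P-construction of $N[h]^+$ over $N$ line up with $\es^M$ above $\alpha$, yielding both the inclusion $\mathscr{P}'\subseteq\mathscr{P}^{M[g]}_M$ and the explicit construction of $N'$ from $N$ for part (ii). So the condensation stacks are used not to recover $\es^M$ lightface (as you suggest) but to replace the existential quantifier over $N'$ by an intrinsic condition on $N$ alone.
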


\begin{proof}
	Note that $\eta=\om_1^{M[g]}$ and
	$\mathrm{HC}^{M[g]}$ is the universe of $(M|\eta)[g]$.
	Now working in  $M[g]$, let $\mathscr{P}'$ be the set
	of all premice $N$ of height $\eta$ such
	that for some $h$,
	\begin{enumerate}[label=--]
		\item $N$ has a largest cardinal, $\lambda$, which is a strong cutpoint
		of $N$,
		\item $N$  is fully iterable above $\lambda$,
		\item $h$ is $\Coll(\omega,\lambda)$-generic over $N$
		and $N[h]$ has universe
		${\rm HC}$,
		\item the condensation stack $(N[h])^+$ above $N[h]$
		(relativized as a premouse over $(N|\lambda,h)$)
		is well-defined, hence is proper class with universe $V$.\footnote{Recall
			we are working in $M[g]$,
			so ``$V$'' refers to $M[g]$ here. Because
			we relativize over $(N|\lambda,h)$, $N[h]$
			plays the role that $P|\om_1^P$ plays in the unrelativized condensation stack
			for a premouse $P$.}
	\end{enumerate}

	For $N\in\mathscr{P}'$, we write $\lambda^N$ for the largest
	cardinal of $N$, and with $h$ as above, we consider $N[h]$ as an
	$(N|\lambda^N,h)$-premouse. Now $\mathscr{P}'\neq\emptyset$,
	and in fact $M|\eta\in\mathscr{P}'$, as witnessed by
	$g$. For by \cite{V=HODX_pub}, the condensation stack above $(M|\eta)[g]$,
	as computed in $M[g]$, is just $M[g]$ (arranged as a premouse
	over $(M|\mu,g)$).

	We will show that $\mathscr{P}^{M[g]}_M=\mathscr{P}'$, which gives part (i).
	The first direction is basically as in \cite[Lemma 5.4 part 1]{odle_v2}:

	\begin{clm}$\mathscr{P}^{M[g]}_M\sub\mathscr{P}'$.\end{clm}
	\begin{proof}
		Let $N\in\mathscr{P}^{M[g]}_M$, as witnessed by $N'$.
		Everything is clear enough except for the fact that, in $M[g]$,
		the condensation stack $N[h]^+$ is well-defined.
		But $N'[h]$ has universe that of $M[g]$,
		and there is $\alpha<\eta$ such that $N'[h]$ is iterable above $\alpha$ (in
		$V$), and we can take $\alpha$ here such that $N|\alpha$ projects to
		$\lambda^N$. By this iterability,
		$N'[h]$, \emph{when considered
			as an $(N|\alpha,h)$-premouse},
		therefore is just the condensation stack above $N[h]$.
		But
		$N'[h]|\eta=N[h]$ is also iterable in $M[g]$ (above $\lambda^N$;
		that is, as an
		$(N|\lambda^N,h)$-premouse).
		So $N[h]$ satisfies all standard condensation facts (as an
		$(N|\lambda^N,h)$-premouse).
		So we can argue as in the proof of \cite[Lemma 5.4 part 1]{odle_v2}
		to see that $N'[h]$, \emph{when considered as an $(N|\lambda^N,h)$-premouse},
		is also the condensation stack
		above $N[h]$, as computed in  $M[g]$, as desired.
	\end{proof}

	\begin{clm}\label{clm:N_in_P'_has_convergence_pt} Let $N \in \mathscr{P}'$,
		witnessed by $h$. Then there is
		$\alpha$ such that $\mu,\lambda^N<\alpha<\eta$
		and $M[g]||\alpha$ and $N[h]||\alpha$
		have the same universe, with
		largest cardinal $\om_1^{M[g]||\alpha}<\alpha$,
		and the two structures $M[g]||\alpha$ and $N[h]||\alpha$ are inter-definable
		from parameters
		and project $\leq\om_1^{M[g]||\alpha}$.
	\end{clm}
	\begin{proof}
		We basically follow the proof of \cite[Lemma 3.11]{V=HODX_pub}.
		Let $M'$ be $M[g]$ arranged as a premouse
		over $(M|\mu,g)$,
		and $N'$ be the condensation stack over $N[h]$ as computed in $M[g]$.

		So $M',N'$ both have universe $M[g]$,
		and in particular, $\eta=\om_1^{M'}=\om_1^{N'}$ and
		$(\her_{\om_2})^{M'}=(\her_{\om_2})^{N'}$.
		We first find $\alpha'<\om_2^{M[g]}$
		such that $M'||\alpha'$ and $N'||\alpha'$ have the same universe
		and are inter-definable from parameters (and some more).
		Define $\left<M_n,N_n\right>_{n<\om}$ as follows.
		Let $M_0=M'||\eta$ and $N_0=N'||\eta$.
		Now given $M_n,N_n$,
		let $N_{n+1}$ be the least $S\pins N'$ such that
		$N_n\pins S\pins N'$ and $M_n\in S$ and $\rho_\om^S=\eta$.
		Define $M_{n+1}$ symmetrically.

		Let $\widetilde{M}=\stack_{n<\om}M_n$ and $\widetilde{N}=\stack_{n<\om}N_n$.
		Note that $\widetilde{M},\widetilde{N}$ have the same universe
		$U$, which has largest cardinal $\eta$, and therefore
		$\widetilde{M}\pins M'$ and $\widetilde{N}\pins N'$
		(that is, $M'||\OR^U$ and $N'||\OR^U$ are both passive,
		as $\eta$ is a strong cutpoint of $M',N'$).
		Note that $\widetilde{M}$ is definable from the parameter
		$M_0$ over $U$; in fact, $\widetilde{M}$ is the Jensen stack
		above $M_0$ there; cf.~\cite{V=HODX_pub}.
		It follows that $\widetilde{M}$ is $\Sigma_1^U(\{M_0\})$.
		Likewise for $\widetilde{N}$
		and $N_0$. Using this, note that
		also $\left<M_n,N_n\right>_{n<\om}$
		is $\Sigma_1^U(\{(M_0,N_0)\})$,
		and  so $\rho_1^{\widetilde{M}}=\rho_1^{\widetilde{N}}=\eta$.

		Now as in \cite{V=HODX_pub},
		we can find $\bar{\eta}<\eta$ and $x\in U$ such that
		$\mu,\lambda^N<\bar{\eta}$ and
		the hulls $\Hull_1^{\widetilde{M}}(\bar{\eta}\cup\{x\})$
		and
		$\Hull_1^{\widetilde{N}}(\bar{\eta}\cup\{x\})$
		have the same elements, and letting  $\bar{M},\bar{N}$
		be the transitive collapses
		and $\pi:\bar{M}\to\widetilde{M}$
		and $\sigma:\bar{N}\to\widetilde{N}$
		the uncollapse maps
		(so $\bar{M},\bar{N}$ have the same universe $\bar{U}$
		and $\pi,\sigma$ the same graphs),
		and such that  $M_0,N_0\in\rg(\pi)$ and $\crit(\pi)=\bar{\eta}$ and
		$\pi(\bar{\eta})=\eta$ and
		$\bar{M},\bar{N}$ are $1$-sound with
		$\rho_1^{\bar{M}}=\bar{\eta}=\rho_1^{\bar{N}}$,
		so $\bar{M}\pins M'$ and $\bar{N}\pins N'$.
		Note that the
		$\Sigma_1^U(\{N_0\})$ definition
		of $\widetilde{N}$
		reflects down to
		a $\Sigma_1^{\bar{U}}(\{\bar{N}_0\})$ definition
		of $\bar{N}$,
		where $\pi(\bar{N}_0)=N_0$.
		Likewise vice versa.
		Therefore $\bar{M},\bar{N}$ have the same universe
		and are inter-definable from
		parameters.
		So letting $\alpha=\OR^{\bar{M}}=\OR^{\bar{N}}$, we are done.
	\end{proof}

	\begin{clm}$\mathscr{P}'\sub\mathscr{P}^{M[g]}_M$.\end{clm}
	\begin{proof}Let $N,h,\alpha$ be as in Claim
		\ref{clm:N_in_P'_has_convergence_pt}.
		Then note that $\alpha$ is a strong cutpoint of $M$
		and of $N$ and of the condensation stack $N[h]^+$
		above $N[h]$ (as an $(N|\lambda^N,h)$-premouse,
		as computed
		in $M[g]$).
		So
		the iterability of $M|\eta$ and $N$ above $\alpha$
		(in $M[g]$)
		implies that
		$(\es^M_\nu)^g = (\es_\nu^N)^h$
		for each $\nu>\alpha$.
		Thus, the extender sequences of $M|\eta$ and $N$ are intertranslatable
		(modulo a generic) above $\alpha$.
		So by a proof almost identical to \cite[***Lemma 5.4 part 1]{odle_v2},
		$N[h]^+$
		and $M[g]$ (as an $(M|\mu,g)$-premouse)
		have the same extender sequence
		above $\alpha$. Now let $N^+$
		be the result of the
		P-construction
		of $N[h]^+$ above $N$. Because $\eta=\om_1^{N[h]^+}$,
		this works fine structurally, giving a proper class premouse $N^+$
		extending $N$.
		But since $N[h]^+$ and $M[g]$ agree above $\alpha$,
		it follows that
		$\es^{N^+}
		\sim^{<\eta} \es^M$. So $N\in\mathscr{P}$.
	\end{proof}

	This proves part (i).
	For (ii),
	working in $M[g]$, given $N\in\mathscr{P}^{M[g]}_M=\mathscr{P}'$,
	we first define the condensation stack $N[h]^+$
	above $N[h]$, and then the P-construction $N$ of $N[h]^+$
	above $N$, which gives the desired $N'$.
\end{proof}

\begin{lemma}\label{Fscr_is_dense_in_grounds}$\mathscr{F}$ is dense in the
	${<\kappa_0}$-grounds of $M$,
	so $\bigcap \mathscr{F}=\mathbb{M}_{<\kappa_0}^M$.
\end{lemma}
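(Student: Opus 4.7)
The plan is to establish both inclusions simultaneously by showing that $\mathscr{F}$ is dense in the class of $<\kappa_0$-grounds of $M$. The inclusion $\mathbb{M}^M_{<\kappa_0}\sub\bigcap\mathscr{F}$ is immediate: each $P\in\mathscr{F}$ is witnessed by some $\Uu\in\mathbb{U}$, and by the clauses of Definition \ref{defn_points_from_the_system}, $P$ is a ground of $M$ via $\BB^P_{\delta_0^P}$ with $\delta_0^P=\delta(\Uu)<\kappa_0$; so each $P\in\mathscr{F}$ is itself a $<\kappa_0$-ground and hence contains $\mathbb{M}^M_{<\kappa_0}$.

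For the density direction, let $W$ be a $<\kappa_0$-ground of $M$ with $M=W[h]$ where $h$ is $(W,\QQ)$-generic and $\QQ\in V_\lambda^W$ for some $\lambda<\kappa_0$. I would fix a cardinal strong cutpoint $\mu$ of $M$ with $\lambda,h\in M|\mu$ and $\mu<\kappa_0$; smallness of $\QQ$ relative to $\mu$ yields that $\mu$ is a cardinal strong cutpoint of $W$ as well, that $\es^W$ and $\es^M$ are intertranslatable above $\mu$ via $h$, that $\delta_0$ remains Woodin in $W$, and that $M^\#\in W$. The strategy is to carry out the construction of $\mathbb{U}$ inside $W$: the genericity-inflation and $*$-translation analysis in the proof of Lemma \ref{lem:Sigma_sss_in_M[g]_definability} adapts to $W$, since the Q-structures guiding branch choices for the short tree strategy are inverse $*$-translations of proper segments of $M$ below $\kappa_0$, which by agreement of $\es^W$ and $\es^M$ above $\mu$ are equally segments of $W$.

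Working inside $W$, I then run a Boolean-valued pseudo-genericity iteration of $M^\#$ in the style of the proof of Lemma \ref{lem:c_i_dense}, targeting simultaneously $W|\eta$ and a $\QQ$-name $\dot{M}\in W$ for $M$ so as to absorb the small generic $h$. This produces a tree $\Uu\in W$ based on $M|\delta_0$, via $\Sigma_\sss$, maximal, of length $\eta$ for some cardinal strong cutpoint $\eta$ of $M$ with $\mu<\eta<\kappa_0$, and making $M|\eta$ generic for $\BB^{M(\Uu)}_\eta$. Setting $P=\mathscr{P}^W(M(\Uu))$ defines a proper class inner model of $W$; agreement of $\es^W$ with $\es^M$ above $\eta$ gives $P=\mathscr{P}^M(M(\Uu))$, so $\Uu\in\mathbb{U}$ and $P\in\mathscr{F}$, with $P\sub W$ by construction. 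This establishes density, and combined with the first inclusion yields $\bigcap\mathscr{F}=\mathbb{M}^M_{<\kappa_0}$.

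The hardest point will be verifying that the Boolean-valued iteration inside $W$ really produces $\Uu\in\mathbb{U}$ (and not merely its $W$-analogue), equivalently that $M|\eta$ and not merely $W|\eta$ ends up generic for $\BB^{M(\Uu)}_\eta$. This requires $W$ to compute, at each stage of the iteration, the Boolean values in $\QQ$ of the candidate extender-algebra axioms with respect to $\dot{M}$, and requires Woodinness of $\delta(\Uu)$ in $M(\Uu)$ to suffice to absorb both $W|\eta$ and the small additional generic for $\QQ$. Given this, the argument is essentially a transcription of the proof of Lemma \ref{lem:c_i_dense}, with $W$ in place of $P$ and the various $\QQ$-interpretations of $\dot{M}$ in place of those of $\sigma$.
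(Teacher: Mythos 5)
Your proposal glosses over the central non-trivial point: you assume that the ground $W$ is already (or can be freely treated as) a fine-structural premouse whose extender sequence agrees with $\es^M$ above $\mu$ modulo $h$, and you even assert that $M^\#\in W$. Neither is automatic. A ground of $M$ is just a transitive inner model; it carries no extender sequence a priori, and establishing that $W$ has a canonical premouse structure agreeing with $M$ above a strong cutpoint is precisely the content the paper has to supply before any iteration machinery can be run inside $W$. Moreover, $M^\#\notin M$ (it codes a class of Silver indiscernibles for $M$), so $M^\#\notin W$ either; what $W$ can hope to compute is the $M$-definable data such as $\mathbb{U}$ and the short tree strategy, not $M^\#$ itself.

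The paper closes this gap with an intermediate step your proposal lacks. It takes the $\mu$-truncation $(\her_{\mu^+})^W$ of $W$, forms the P-construction $\widetilde{W}=\mathscr{P}^M((\her_{\mu^+})^W)$ inside $M$, and then invokes Lemma~\ref{grounds-identify-V1} --- the analysis of $\mathscr{P}^{M[g]}_M$ via condensation stacks and Jensen stacks --- to conclude that $\widetilde{W}\sub W$ and indeed that $\widetilde{W}$ has the same universe as $W$. This step ``canonicalizes'' the arbitrary ground into a fine-structural premouse; only after that is it legitimate to work inside $\widetilde{W}$, where the extender sequence, the Woodinness of $\delta_0$, the short tree strategy, and the Boolean-valued comparison/genericity iteration all make sense and produce some $N\in\mathscr{F}$ with $N\sub\widetilde{W}\ueq W$. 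The tail end of your argument --- running a Boolean-valued pseudo-genericity iteration along the lines of Lemma~\ref{lem:c_i_dense} once one is inside a suitable fine-structural ground --- matches the paper. But without the $\widetilde{W}$ step and the appeal to Lemma~\ref{grounds-identify-V1}, the claim that $W$ can ``compute $\mathbb{U}$ inside itself'' is unjustified, and that is the heart of the lemma.
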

\begin{proof}
	Let $\mu<\kappa_0$
	be a regular cardinal strong cutpoint of $M$, and let $W$ be a $<\mu$-ground of
	$M$,
	via a generic filter $k$ (so $W[k]\cong M$).
	Let $g$ be $(M,\Coll(\om,\mu))$-generic and $h$ be
	$(W,\Coll(\om,\mu))$-generic,
	with $W[h]\cong M[g]$. Let $\widetilde{W}$ be the model produced by
	the P-construction of $M$
	over  $(\her_{\mu^+})^W$.
	By Lemma \ref{grounds-identify-V1}, $\widetilde{W}\sub W$
	and $\widetilde{W}$ is definable from parameters over $W$.
	And $\widetilde{W}$ is a ground of $M$; in fact $\widetilde{W}[k]\cong M$,
	as $M|(\mu^+)^M$ is definable over
	$(\her_{\mu^+})^M=(\her_{\mu^+})^W[k]$
	from the parameter $M|\mu$, via the Jensen stack.\footnote{It follows by the
		standard forcing argument that $W$ is the actually the universe of
		$\widetilde{W}$.}

	Now working in $\widetilde{W}$, we can compute some $N\in\mathscr{F}$
	by forming a Boolean-valued comparison/genericity iteration above $(\mu^+)^W$,
	to compute $N|\delta_0^N$, and then using P-construction to compute the rest
	of
	$N$.
\end{proof}

\begin{theorem}\label{prop:vV_1_is_<kappa_0-mantle}
	$\M_\infty[*]$ has universe
	$\bigcap {\mathscr F}=\mathbb{M}_{<\kappa_0}^M$.
\end{theorem}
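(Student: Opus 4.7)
The second equality $\bigcap \mathscr{F} = \mathbb{M}^M_{<\kappa_0}$ is immediate from Lemma \ref{Fscr_is_dense_in_grounds}, so the content of the theorem is the assertion that the universe of $\M_\infty[*]$ coincides with $\bigcap \mathscr{F}$. The plan is to prove the two inclusions separately, the forward one being an easy consequence of homogeneity, the reverse one relying crucially on the $\HOD_{\mathscr{E}}$-characterization of Theorem \ref{tm:M_infty[*]=HOD_E}.

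For $\M_\infty[*] \subseteq \bigcap \mathscr{F}$: by Lemmas \ref{lem:M_inf,*_hom} and \ref{lem:pi_infty^+}, each $P \in \mathscr{F}$ computes the same direct limit model $\M_\infty$ and the same map $*$ (and hence also the same $\pi_\infty^+$) as $M$ does; in symbols, $\pi_{p_0 p}(\M_\infty) = \M_\infty$ and $\pi_{p_0 p}(*) = *$. Consequently $L[\M_\infty,*]$ is a definable inner model of each $P \in \mathscr{F}$, so $\M_\infty[*] \subseteq P$ for every such $P$, giving $\M_\infty[*] \subseteq \bigcap \mathscr{F}$.

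For $\bigcap \mathscr{F} \subseteq \M_\infty[*]$: let $A$ be a set of ordinals in $\bigcap \mathscr{F}$. Let $G$ be $(M,\Coll(\om,{<\kappa_0}))$-generic. By Theorem \ref{tm:M_infty[*]=HOD_E}, $\M_\infty[*]$ has universe $\HOD^{M[G]}_{\mathscr{E}}$, so it suffices to exhibit a formula $\varphi$ and an ordinal $\alpha$ with $\xi \in A \iff M[G] \models \varphi(\xi,\alpha,\mathscr{E})$ for every ordinal $\xi$. Fix any $P_0 = L[\es^{P_0}] \in \mathscr{F}$ with $A \in P_0$; pick an ordinal code $\alpha_0$ for $A$ inside $P_0$ (e.g.~the least $\alpha$ in the canonical well-ordering of $P_0$ at which $A$ appears), and set $\alpha^* = \pi_{P_0,\infty}(\alpha_0) \in \M_\infty$. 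Mimicking the chain of equivalences that drives the proof of Theorem \ref{tm:M_infty[*]=HOD_E}, and applying Lemma \ref{grounds-identify-V1} to transfer the defining property of $A$ inside any $L[\es'] \in \mathscr{F}$ into a $\Coll(\omega,{<\kappa_0})$-forcing statement uniformly over $\es' \in \mathscr{E}$, one obtains
\begin{IEEEeqnarray*}{rCl}
\xi \in A & \iff & L[\es^P] \models \varphi_0(\xi,\alpha_P) \\
& \iff & \Vdash^{L[\es^P]}_{\Coll(\om,{<\kappa_0})} \varphi_0(\check\xi,\check\alpha_P) \\
& \iff & \Vdash^{\M_\infty}_{\Coll(\om,{<\kappa_0^{\M_\infty}})} \varphi_0(\check{\xi^*},\check{\alpha^*})
\end{IEEEeqnarray*}
for any sufficiently large $P \in \mathscr{F}$, where $\alpha_P = \pi_{P_0 P}(\alpha_0)$. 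The final forcing statement is computable inside $\M_\infty$, hence inside $\M_\infty[*]$, so $A \in \M_\infty[*]$ as desired.

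The main obstacle is verifying that the codes $\alpha_P$ are truly coherent under the iteration maps $\pi_{P_0 P}$ and that the resulting statement on the $\M_\infty$-side characterizes $A$ rather than some iteration image of it; this is where we use the richness of $\mathscr{F}$ (each $P$ is a $<\kappa_0$-ground of $M$, so $A$ sits inside it as a genuine set rather than merely as a name), the homogeneity of the covering system $\mathscr{D}$, and the uniform definability asserted by Lemma \ref{grounds-identify-V1}, which guarantees that the class of the relevant $\es'$ is a lightface object over $M[G]$.
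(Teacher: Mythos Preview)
Your forward inclusion is fine. The reverse inclusion, however, has a gap that you identify but do not actually close. You pick an ordinal code $\alpha_0$ for $A$ in $P_0$ and set $\alpha_P = \pi_{P_0 P}(\alpha_0)$; by elementarity, $\alpha_P$ is then $P$'s canonical code for $\pi_{P_0 P}(A)$, not for $A$. Your displayed equivalence $\xi \in A \iff L[\es^P] \models \varphi_0(\xi,\alpha_P)$ therefore needs $\pi_{P_0 P}(A) = A$ for cofinally many $P$. Nothing you invoke delivers this. That $A$ lies in every $P$ as an actual set, rather than as a name, is true but irrelevant to how the external iteration maps move $A$; those maps are nontrivial on ordinals (they are not even amenable to $M$), so there is no reason an arbitrary $A \in \bigcap \mathscr{F}$ should be fixed by them. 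The homogeneity of $\mathscr{D}$ says the system is computed the same way inside each $P$, not that individual sets are pointwise fixed, and Lemma \ref{grounds-identify-V1} is about uniformly defining the $\es'$-class over $M[g]$, which does not touch this issue.

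The paper's proof takes a completely different route, in the spirit of Usuba's argument that an extendible collapses the ${<}\kappa$-mantle to the mantle. Let $U$ be the order-$0$ total measure on $\kappa_0$ and $j \colon M \to M' = \Ult(M,U)$. First one shows $j \rest \OR$ is definable over $\M_\infty[*]$, exactly as in Lemma \ref{restr_of_extenders_are_there}: for each $P \in \mathscr{F}$ the internal ultrapower map $j^P$ agrees with $j$ on ordinals, so $j(\xi) = \eta \iff j_\infty(\xi^*) = \eta^*$ where $j_\infty$ is the $U_\infty$-ultrapower map on $\M_\infty$. Now for $X \in \mathbb{M}^M_{<\kappa_0}$ one has $j \rest \sup(X) \in \M_\infty[*]$ and, by elementarity, $j(X) \in \mathbb{M}^{M'}_{<j(\kappa_0)}$. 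The model $M'$ carries its own system $\mathscr{F}'$ of uniform grounds, and by Theorem \ref{tm:Varsovian_is_ground} the associated Varsovian model $\M'_\infty[*]$ is a ${<}j(\kappa_0)$-ground of $M'$; hence $j(X) \in \M'_\infty[*] \subseteq \M_\infty[*]$. One then recovers $X$ inside $\M_\infty[*]$ via $\alpha \in X \iff j(\alpha) \in j(X)$. The key move you are missing is this passage through the ultrapower, which replaces the unavailable pointwise coherence of codes with the definability of $j \rest \OR$.
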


\begin{proof}
	We already know
	$\M_\infty[*]\sub\bigcap\mathscr{F}=\mathbb{M}_{<\kappa_0}^M$
	by Lemma
	\ref{Fscr_is_dense_in_grounds}.

	Let us verify $\mathbb{M}^M_{<\kappa_0}\sub\M_\infty[*]$.\footnote{The original argument
		used for the proof that $\mathbb{M}^M_{<\kappa_0}\sub\M_\infty[*]$,
		found by the 3rd author,
		was slightly different; that argument is sketched for the analogous
		result Corollary \ref{cor:HOD^M[G],kappa_1-mantle}. The 2nd author then adapted that one to yield the one presented here. In either form, the argument is related to Usuba's ZFC proof of the fact that if $\kappa$ is extendible then $\mathbb{M}_{<\kappa}=\mathbb{M}$.
		Related arguments have since been used by the second author in \cite{ralf_kappa_mantle} and the third author in \cite{choice_principles_in_local_mantles}, \cite{local_mantles_of_Lx_v2}.}
	Let  $U\in\es^M$ be the order $0$ total measure on $\kappa_0$. Let
	\[ j \colon M \rightarrow M'=\Ult(M,U)\] be the ultrapower map. For
	$P \in {\mathscr F}$, $i_{MP}(U)=U \cap P\in\es^P$ is the order $0$ total
	measure on $\kappa_0$ in $P$. Let $j^P:P\to\Ult(P,i_{MP}(U))$ be the ultrapower map.
	Note  $j\rest\OR=j^P\rest\OR$.
	Let $U_\infty=i_{M\infty}(U)$ and $j_\infty:\M_\infty\to\M'_\infty=\Ult(\M_\infty,U_\infty)$
	be the ultrapower map.

	Arguing much as in the proof of Lemma \ref{restr_of_extenders_are_there},
	$j \upharpoonright
	{\rm OR}$ is
	definable over $\M_\infty[*]$:
	for all $\eta,\xi\in\OR$ and  $P\in\mathscr{F}$, we have
	\[
	\eta = j(\xi) \Longleftrightarrow \eta=j^P(\xi)\Longleftrightarrow \eta^* = j_\infty(\xi^*).
	\]
	(For the second equivalence, just take $P$ such that $\eta,\xi$ are $P$-stable.)

	Now let $X\in\mathbb{M}_{<\kappa_0}^M$ be a set of ordinals.
	By the preceding paragraph,
	\begin{equation}\label{eqn:j_rest_sup(X)_in} j \upharpoonright \sup(X) \in \M_\infty[*].\end{equation}
	By elementarity, $j(X)\in\mathbb{M}^{M'}_{<j(\kappa_0)}$.
	It is straightforward to verify that $\M_\infty'$ is the
	direct limit of
	a system $\mathscr{F}'$ of uniform grounds of $M'$ in much the same way as ${{\cal
			M}_\infty}$ is the
	direct limit
	of the system ${\mathscr F}$ of uniform grounds of $M$;
	here the models $P'\in\mathscr{F}'$
	are exactly those of the form $P'=\Ult(P,i_{MP}(U))$
	for some $P\in\mathscr{F}$.
	So by Theorem \ref{tm:Varsovian_is_ground},
	$$Q = \M_\infty'[*]$$
	is a $<j(\kappa_0)$-ground of $M'$.
	So $j(X) \in Q$, but note $Q \sub \M_\infty[*]$, so $j(X)\in\M_\infty[*]$,
	but then using line (\ref{eqn:j_rest_sup(X)_in}) we get $X\in\M_\infty[*]$, since
	\[ \alpha\in X\Longleftrightarrow j(\alpha)\in j(X).\qedhere\]
\end{proof}

\subsection{The first Varsovian model as the strategy mouse $\vV_1$}\label{subsec:vV_1}

We will now give another presentation
of $\M_\infty[*]$, as a  strategy mouse
$\vV_1$ in a fine
structural hierarchy
$\left<\vV_1||\nu\right>_{\nu\in\OR}$,
as sketched at the beginning
of \S\ref{subsec:HOD_E}. To motivate this, first notice the following.

A routine first observation is:

\begin{lemma}\label{lem:F^P_from_F^P_rest_OR}
	Let $P$ be an active (Jensen indexed) premouse.
	Then $F^P$ is $\Sigma_1$-definable over $(P^\passive,F^P\rest\OR)$,
	uniformly in $P$.
\end{lemma}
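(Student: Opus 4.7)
The plan is to exploit the standard fine-structural fact that any Jensen-style level of a premouse is $\Sigma_1$-hulled by its ordinals, combined with the $\Sigma_1$-elementarity of the extender embedding coded by $F^P$.

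Set $\mu=\crit(F^P)$ and $\nu=\lh(F^P)=\OR^P$. With Jensen indexing, $F^P$ codes the ultrapower embedding
\[ j\colon P|\mu^{+P}\to P^\passive, \]
and $F^P\rest\OR$ is identified with $j\rest\mu^{+P}$, viewed as a function from $\mu^{+P}$ to $\OR$. First I would note that $\mu$ and $\mu^{+P}$ are themselves $\Sigma_1$-definable from the predicate $F^P\rest\OR$, via $\mu=\crit(F^P\rest\OR)$ and $\mu^{+P}=\sup\dom(F^P\rest\OR)$. Hence the segment $P|\mu^{+P}$ is $\Sigma_1$-identifiable as the initial segment of $P^\passive$ of ordinal height $\mu^{+P}$.

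Next I would invoke Jensen's $\Sigma_1$-Skolem function $h$, which is uniformly $\Sigma_1$-definable over any $J$-structure and which, at levels $J_\eta[\vec{E}]$ where $\eta$ is a cardinal, satisfies $\Hull_1^{J_\eta[\vec{E}]}(\eta)=J_\eta[\vec{E}]$; so every $x\in J_\eta[\vec{E}]$ has the form $h^{J_\eta[\vec{E}]}(i,\vec{\alpha})$ for some $i<\om$ and $\vec{\alpha}\in[\eta]^{<\om}$. This applies in particular to $\eta=\mu^{+P}$, since $\mu$ is a cardinal of $P|\mu^{+P}$. Because $j$ is cofinal $\Sigma_0$-elementary, hence $\Sigma_1$-elementary by Gaifman, and because $h$ has a uniform $\Sigma_1$ definition,
\[ j\bigl(h^{P|\mu^{+P}}(i,\vec{\alpha})\bigr)=h^{P^\passive}\bigl(i,j(\vec{\alpha})\bigr)=h^{P^\passive}\bigl(i,(F^P\rest\OR)(\vec{\alpha})\bigr). \]

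Putting this together, the graph of $F^P$ (identified with the graph of $j$) admits the $\Sigma_1$ description
\[ (x,y)\in F^P\iff \exists i<\om\ \exists\vec{\alpha}\in[\mu^{+P}]^{<\om}\bigl[x=h^{P|\mu^{+P}}(i,\vec{\alpha})\wedge y=h^{P^\passive}\bigl(i,(F^P\rest\OR)(\vec{\alpha})\bigr)\bigr] \]
over $(P^\passive,F^P\rest\OR)$. Each ingredient --- the isolation of $\mu^{+P}$, the access to $P|\mu^{+P}$ as a segment of $P^\passive$, the uniform $\Sigma_1$-definability of $h^{P|\mu^{+P}}$ and of the class function $h^{P^\passive}$, and the pointwise application of the predicate $F^P\rest\OR$ to tuples --- is uniform in $P$, yielding the desired uniformity. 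The only real (and minor) obstacle is bookkeeping: confirming that one's chosen presentation of $F^P$ (as the $\Sigma_0$-code of $j$, or equivalently as the associated system of measures via $X\in(F^P)_a\iff a\in j(X)$) is $\Sigma_1$ over the presentation of $j$ isolated here; this is routine.
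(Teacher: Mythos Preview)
The paper omits the proof entirely, labeling the lemma ``a routine first observation'' and adding only the remark that access to the full structure $P^\passive$ (including its extender sequence) is essential. Your argument is correct and is the natural way to fill in the details: surject ordinals onto $P|\mu^{+P}$ via the uniform $\Sigma_1$ Skolem function, then push through $j$ using its $\Sigma_1$-elementarity and the uniformity of $h$.

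Two small imprecisions worth cleaning up. First, the fact $\Hull_1^{J_\eta[\vec E]}(\eta)=J_\eta[\vec E]$ holds for every limit $\eta$, not only when $\eta$ is a cardinal; your justification ``since $\mu$ is a cardinal of $P|\mu^{+P}$'' is therefore unnecessary (and slightly garbled, since the relevant height is $\mu^{+P}$, not $\mu$). Second, to see that your displayed definition is genuinely $\Sigma_1$ you should note that the clause $x=h^{P|\mu^{+P}}(i,\vec\alpha)$ can be expressed without first isolating $\mu^{+P}$ as an ordinal: the Skolem computation is witnessed inside some $S_\nu$ with $\nu<\omega\gamma$ for some $\gamma\in\dom(F^P\rest\OR)$, and membership in $\dom(F^P\rest\OR)$ is $\Sigma_1$ in the predicate. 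With that unpacking the definition is $\Sigma_1$ as claimed.
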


We remark that it is important here that we have the universe and (internal)
extender sequence $\es^P$ of $P$ available.

Let
${\mathbb L}=\mathbb{L}^{\M_\infty[*]}(\kappa_0)$ for the poset
of Definition \ref{defn_bukowsky-poset}, for adding generic subset of $\kappa_0$.

\begin{lemma}\label{V_is_a_ground}
	$M|\kappa_0$ is ${\mathbb L}$-generic over
	$\M_\infty[*]$ and
	$\M_\infty[*][M|\kappa_0] \ueq M$.
\end{lemma}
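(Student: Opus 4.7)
The plan has two parts. For the genericity of $M|\kappa_0$, I would directly apply Lemma~\ref{c3}, taking $A\in M$ to be a canonical code for $M|\kappa_0$ as a set of ordinals, $p_1 = M|\delta_0$, and for each $P\in\mathscr{F}$ (equivalently, each $q$ in the cone above $p_1$) letting $\tau^P_{\kappa_0}$ be the $\BB_{\delta_0^P}^P$-name obtained from $\tau^P$ of Definition~\ref{dfn:tau^P} by truncating at $\kappa_0$. The key point is Lemma~\ref{key_facts_about_V1}\ref{item:tau_g_gives_M|alpha}, which gives $(\tau^P_{\kappa_0})_g = A$ when $g$ is the canonical $\BB_{\delta_0^P}^P$-generic over $P$ induced by $M|\delta_0^P$; and this $g$ lies in $M$. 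The forcing $\mathbb{L}(\tau_\infty)$ produced by the direct limit of the $\tau^P_{\kappa_0}$ is, by construction, precisely $\mathbb{L}^{\M_\infty[*]}(\kappa_0)$. Lemma~\ref{c3} then delivers the $(\M_\infty[*],\mathbb{L})$-generic filter $G_A$ with $A\in\M_\infty[*][G_A]$, so $\M_\infty[*][M|\kappa_0] = \M_\infty[*][G_A]$.

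For $\M_\infty[*][M|\kappa_0]\ueq M$, the inclusion $\sub$ is trivial since $\M_\infty[*]\sub M$ and $M|\kappa_0\in M$. For the reverse, I would reconstruct $M$ inside $\M_\infty[*][M|\kappa_0]$ by building $M||\alpha$ by induction on $\alpha\in\OR$. Below $\kappa_0$ this is given by $M|\kappa_0$. For $\alpha>\kappa_0$, given $M||\beta$ for all $\beta<\alpha$, we form $M|\alpha$ by union and passivization; whether $\alpha$ carries an active extender is read off from $\dom(\mathbb{F}^M_{>\kappa_0})$, which by Lemma~\ref{restr_of_extenders_are_there} is lightface definable over $\M_\infty[*]$. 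If $\alpha\notin\dom(\mathbb{F}^M_{>\kappa_0})$ then $M||\alpha = M|\alpha$. If $\alpha\in\dom(\mathbb{F}^M_{>\kappa_0})$, then letting $F = F^{M||\alpha}$ we have $F\rest\OR = (\mathbb{F}^M_{>\kappa_0})_\alpha\in\M_\infty[*]$, and Lemma~\ref{lem:F^P_from_F^P_rest_OR} recovers $F$ as $\Sigma_1$-definable over $(M|\alpha, F\rest\OR)$. Thus $M||\alpha\in\M_\infty[*][M|\kappa_0]$ at every stage, giving $M\sub\M_\infty[*][M|\kappa_0]$.

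The substantive work has already been done: Lemma~\ref{restr_of_extenders_are_there} was isolated precisely to supply the extender data of $M$ above $\kappa_0$ inside $\M_\infty[*]$, bridging the level at $\kappa_0$ where $M$ and $\M_\infty$ diverge in their large-cardinal structure. The only ingredient needed at each inductive stage is the action of each extender on ordinals together with the underlying universe built so far, both of which are in hand, so there is no further obstacle; the induction is routine and mirrors the standard P-construction calculation.
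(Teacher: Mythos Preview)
Your proposal is correct and follows essentially the same approach as the paper. For genericity, the paper simply invokes \S\ref{ground_generation_stuff} (i.e.\ Lemma~\ref{c3}, which you unpack explicitly with the name $\tau^P_{\kappa_0}$ and Lemma~\ref{key_facts_about_V1}\ref{item:tau_g_gives_M|alpha}); for $\M_\infty[*][M|\kappa_0]\ueq M$, both you and the paper rebuild $M$ level-by-level from $M|\kappa_0$ using Lemma~\ref{restr_of_extenders_are_there} to recover each $\es^M_\nu\rest\OR$ and Lemma~\ref{lem:F^P_from_F^P_rest_OR} to recover $\es^M_\nu$ itself. One small point you leave implicit is that $\kappa_0$ is $M$-stable (needed so that $\pi_{MQ}(\tau^M_{\kappa_0})=\tau^Q_{\kappa_0}$ for every $Q\in\mathscr{F}$); this holds since each $P\in\mathscr{F}$ is a ground of $M$ via a forcing of size ${<}\kappa_0$, so $\kappa_0^P=\kappa_0^M$.
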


\begin{proof} Since we have verified the properties
	for uniform grounds, that $M|\kappa_0$ is ${\mathbb L}$-generic over
	$\M_\infty[*]$ follows from
	\S\ref{ground_generation_stuff}.
	We aim to show that $\M_\infty[*][M|\kappa_0] \ueq M$ by performing a
	``$P$-construction.''

	We identify the sequence
	$\left<M|\nu, M||\nu \right>_{\nu \in\OR}$ inside
	$\M_\infty[*][M|\kappa_0]$,
	from the parameter $M|\kappa_0$ and classes $\M_\infty,*$, as
	follows. We start with $M|\kappa_0$ given. Fix $\nu$ with $\kappa_0 < \nu \in
	{\rm OR}$.
	The sequence $\left<M||\beta\right>_{\beta<\nu}$ determines $M|\nu$.
	By Lemma \ref{restr_of_extenders_are_there}, $\M_\infty[*]$ knows
	whether
	$\es_\nu^M\neq\emptyset$, and if $\es_\nu^M\neq\emptyset$, knows
	$\es_\nu^M
	\upharpoonright\OR$,
	uniformly in $\nu$ (definably from $\M_\infty,*$). But from the pair
	$( M|\nu , \es_\nu^M \rest\OR)$ we can compute $\es_\nu^M$,
	also uniformly in $\nu$, by Lemma \ref{lem:F^P_from_F^P_rest_OR}.
\end{proof}

\begin{dfn}\label{dfn:vV_1}
	We now define a class structure $\vV_1$, structured analogously to a premouse,
	built from a sequence $\es^{\vV_1}=\left<\es^{\vV_1}_\nu\right>_{\nu\in\OR}$
	of
	extenders.
	However, some of the extenders will be (properly) long, and will not cohere the sequence. We write
	$\vV_1||\nu$ and $\vV_1|\nu$ with the usual meaning.
	For those segments $\vV_1||\nu$ active with long extenders,
	some of the premouse axioms will fail (in particular, coherence).\footnote{The
		segments $\vV_1||\nu$ where
		$\nu=\gamma^{\vV_1}$ or [$\nu>\gamma^{\vV_1}$ and $\crit(\es^M_\nu)=\kappa_0$]
		do not satisfy the usual premouse axioms with respect to their active extender.}
	Write $\gamma^{\vV_1}=\kappa_0^{+\M_\infty}$.

	The map $\pi_\infty:\M_\infty\to\M_\infty^{\M_\infty}$
	(see (\ref{star-infty}) and the
	preceding discussion) is an
	iteration map. We define (recursively on $\nu$):
	\begin{eqnarray}
		\es_\nu^{\vV_1} =
		\begin{cases}
			\es_\nu^{{\cal M}_\infty} & \mbox{ if } \nu< \gamma^{\vV_1} \\
			\pi_\infty \upharpoonright ({\cal M}_\infty|\delta_0^{{\cal M}_\infty}) & \mbox{ if }
			\nu = \gamma^{\vV_1} \\
			\es^{M}_\nu \upharpoonright (\vV_1|\nu) & \mbox{ if } \nu >
			\gamma^{\vV_1}.
		\end{cases}
	\end{eqnarray}
	(We will verify in Lemma \ref{local_correspondence} that this definition makes sense;
	in particular, that $\es^M_{\nu}``(\vV_1|\nu)\sub\vV_1|\nu$ when $\nu>\gamma^{\vV_1}$.)
	Let us also write
	\begin{eqnarray}\label{predicate-of-first-varsovian}
		\es^{\vV_1}& =& \{ (\nu , x , y ) \colon \es_\nu^{\vV_1} \not= \emptyset
		\text{ and } y = \es_\nu^{\vV_1}(x) \},\\
		\es^{\vV_1} \rest \nu& =& \{ ({\bar \nu},x,y) \in {\mathbb E}^{\vV_1}
		\colon {\bar \nu}<\nu \}.
	\end{eqnarray}

	We   define the  structure
	\[ \vV_1=(L[{\mathbb E}^{\vV_1}];\in,\mathbb{E}^{\vV_1}).\]
	Like with $\M_\infty[*]$, when we discuss definability
	or write an equation with $\vV_1$, or some similar structure,
	then we refer to the structure  itself, not just its universe
	$\univ{\vV_1}=L[\es^{\vV_1}]$.
	In particular, definability over $\vV_1$ has the
	class predicate $\es^{\vV_1}$ available by default,
	just like for premice.
	(But when no confusion can arise,
	such as with expressions like ``$x\in\vV_1$'' or ``$x\sub\vV_1$'',
	we really mean ``$x\in\univ{\vV_1}$'' or ``$x\sub\univ{\vV_1}$'',
	etc.)

	Write $e^{\vV_1}=\es^{\vV_1}_{\gamma^{\vV_1}}$,
	so $e^{\vV_1}$ is the least (properly) long extender of $\es^{\vV}$,
	which is just the extender induced by $*$.
\end{dfn}

\begin{dfn}[Fine structure of $\vV_1$]\label{dfn:vV_1_fine_structure}
	The fine structural concepts ($\rSigma_{n+1}$, $\rho_{n+1}$, $p_{n+1}$,
	$(n+1)$-solidity, etc, for $n<\om$) are defined for segments $P\ins\vV_1$,
	and also for $P^\passive$, just as for standard premice with Jensen indexing
	below superstrong:
	$\rSigma_1^P=\Sigma_1^P$ (without any constant symbols in the fine structural
	language), which determines everything else via the usual
	recursion.
\end{dfn}

We will show that all segments of $\vV_1$ are well-defined, sound,
and establish a fine structural correspondence between
segments of $\vV_1$ and segments of $M$, above a certain starting point.
The first non-trivial instance of these facts is  given by
\ref{V_is_a_ground} together with the next lemma;
it uses techniques reminiscent of those in \cite{V=HODX_pub}.
It results in a tighter bound on
$\kappa_0^{+\M_\infty}=\gamma^{\vV_1}$
than that given by Corollary \ref{cor:size_of_gamma}.

\begin{lemma}\label{local-definability-of-that-structure}
	Let $\bar{\vV}=\vV_1||\gamma^{\vV_1}$ \tu{(}so $\bar{\vV}$ has $e^{\vV_1}$ active\tu{)}. Then:
	\begin{enumerate}[label=\tu{(}\alph*\tu{)}]
		\item\label{item:forcing_def} ${\mathbb L}$ is
		$\Sigma_1$-definable over $\bar{\vV}$.
		\item\label{item:gamma_structure_def}
		$\bar{\vV}$ is isomorphic to a structure which is
		definable without parameters over
		$M|\kappa_0^{+M}$.
		\item\label{item:vV_1||gamma_is_sound} $\bar{\vV}$ is sound, with
		$\rho_\om^{\bar{\vV}}=\rho_1^{\bar{\vV}}=\delta_\infty$ and $p_1^{\bar{\vV}}=\emptyset$.
		\item\label{item:better_size_of_gamma}
		$\OR^{\bar{\vV}}<\xi_0$,
		where $\xi_0$ is the least $\xi>\kappa_0^{+M}$ such that $M|\xi$ is
		admissible.
		Therefore  $\vV_1||\nu$ is passive for every $\nu\in(\gamma^{\vV_1},\xi_0]$.
	\end{enumerate}
\end{lemma}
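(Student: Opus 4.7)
My plan is to leverage Soundness (Lemma \ref{soundness}), which gives $\mathcal{Q}=\Hull^{\mathcal{Q}}(\iota)$; transferring this to $\bar{\vV}$, whose active predicate $e^{\vV_1}$ encodes $\iota$, every element of $\bar{\vV}$ is $\Sigma_1$-definable over $\bar{\vV}$ from ordinals below $\delta_\infty$. For (a), I observe that $e^{\vV_1}=\pi_\infty\rest(\M_\infty|\delta_\infty)$ determines $\pi_\infty$ on all of $\M_\infty$ via the ultrapower formula, uniformly $\Sigma_1$ over $\bar{\vV}$; hence by Lemma \ref{lem:pi_infty^+}, $\pi_\infty^+\rest\OR$ is also $\Sigma_1$ over $\bar{\vV}$. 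Since $\BB_\infty,\tau_\infty\in\bar{\vV}|\delta_\infty$ and $\varphi\mapsto\varphi^{*+}$ is $\Sigma_1$-computable from $\pi_\infty^+\rest\OR$, the Boolean values defining conditions of $\mathbb{L}$ form a $\Sigma_1$-class of $\bar{\vV}$. For (c), the hull representation immediately gives $\rho_1^{\bar{\vV}}\leq\delta_\infty$ with $p_1^{\bar{\vV}}=\emptyset$; the converse follows from $\delta_\infty$ being Woodin (hence regular) in $\vV_1$ together with $\gamma^{\vV_1}>\delta_\infty$, so that every $\Sigma_1^{\bar{\vV}}$-definable bounded subset of $\delta_\infty$ already lies in $\bar{\vV}$ by standard fine-structural reflection.

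For (b), I will code $\bar{\vV}$ onto $\delta_\infty$ via the hull surjection: to each $x\in\bar{\vV}$ associate the least pair $(\varphi,\vec\alpha)\in\omega\times\delta_\infty^{<\omega}$ for which $x$ is the unique $\Sigma_1$-witness, and form the quotient structure $\mathcal{S}$ on $\delta_\infty$. To show $\mathcal{S}$ is lightface $M|\kappa_0^{+M}$-definable I will need: (i) $\M_\infty|\delta_\infty$ is lightface definable over $M|\kappa_0^{+M}$ via the direct limit system of \S\ref{first_dir_limit_system} restricted to rank ${<}\delta_\infty$; (ii) the short-tree strategy for $\M_\infty|\delta_\infty$ is lightface definable over $M|\kappa_0^{+M}$, by localizing Lemma \ref{M_knows_how_to_iterate_Minfty_up_to_its_woodin} (Q-structures for short trees of length ${<}\delta_\infty$ sit inside $\M_\infty|\delta_\infty$); (iii) $\iota$ is recoverable pointwise from the partial iteration maps $\pi^{\Tt}_{0\alpha}\rest\delta_\infty$ for cofinally many $\alpha\in b\cap\delta_\infty$, where $\Tt,b$ come from \S\ref{subsec:Q}, these partial maps being computable over $M|\kappa_0^{+M}$ by (ii). Formula evaluations in $\bar{\vV}$ then reduce, via Soundness, to statements about these approximations together with the internal theory of $\M_\infty|\delta_\infty$. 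The hard part is step (iii): controlling convergence of the $\iota$-approximations below $\delta_\infty$, and reducing quantifiers over $\bar{\vV}$-objects of rank $\geq\delta_\infty$ to hull terms with $\delta_\infty$-parameters.

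For (d), (b) yields an iso copy $\mathcal{S}$ of $\bar{\vV}$ lightface definable over $M|\kappa_0^{+M}$; the ordinals of $\mathcal{S}$ form a $\Sigma_1$-definable subset $Z\subseteq\delta_\infty$ whose $<_{\mathcal{S}}$-order type equals $\gamma^{\vV_1}$. Applying $\Sigma_1$-admissibility of $M|\xi_0$ to the rank function $Z\to\gamma^{\vV_1}$ bounds $\gamma^{\vV_1}<\xi_0$. The ``therefore'' clause then follows from the Jensen-indexing constraint $\lh(E)=(\lambda_E^+)^{M|\lh(E)}$ for $E\in\es^M$: for $\nu\in(\gamma^{\vV_1},\xi_0]$, a fine-structural analysis shows $\nu$ cannot be a cardinal of $M|\nu$ (the hull-derived surjection $\delta_\infty\twoheadrightarrow\gamma^{\vV_1}$ has $M$-rank ${<}\xi_0$, collapsing any $\nu\geq\gamma^{\vV_1}$ inside $M|\nu$ for such $\nu$), so $\es^M_\nu=\emptyset$ and hence $\es^{\vV_1}_\nu=\emptyset$.
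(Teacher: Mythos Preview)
Your proposal has a genuine gap at its core, and the paper in fact explicitly warns against exactly the move you make. You write that the active predicate $e^{\vV_1}$ of $\bar{\vV}$ ``encodes $\iota$,'' and on this basis you transfer the hull equation $\mathcal{Q}=\Hull^{\mathcal{Q}}(\iota)$ to $\bar{\vV}$. But $\mathcal{Q}$ and $\bar{\vV}$, while sharing the passive part $\M_\infty|\kappa_0^{+\M_\infty}$, carry \emph{different} active predicates: $\iota$ is the restriction of the branch embedding of a specific genericity tree $\Tt$ on $\M_\infty$ (leading to a P-construction inside $\Ult(\M_\infty,U)$), whereas $e^{\vV_1}=\pi_\infty\rest(\M_\infty|\delta_\infty)$ is the restriction of the iteration map $\M_\infty\to\M_\infty^{\M_\infty}$. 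There is no reason a priori that $\iota$ is $\Sigma_1^{\bar{\vV}}$-definable from parameters below $\delta_\infty$; the paper's own footnote to the proof of (c) says precisely this: ``it seems that the branch through the genericity tree involved there might not be computable from $*$. So the soundness of $\bar{\vV}$ is not an obvious corollary.'' Without this step, your argument for (c) does not start, and your plan for (b), which codes $\bar{\vV}$ via the hull surjection from (c) and then tries to recover $\iota$ over $M|\kappa_0^{+M}$, collapses for the same reason (and is in any case aimed at the wrong predicate).

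The paper proceeds quite differently and avoids $\mathcal{Q}$ altogether. For (b) it builds a localized direct limit system $\overline{\mathscr{F}}^+$ inside $M|\kappa_0^{+M}$: for $s\in[\mathscr{I}^M]^{<\omega}$ and $P\in\mathscr{F}$, collapse $\Hull^{P|\max(s)}(s^-\cup\kappa_0^M)$ and let $\bar{s}$ be the image of $s$. The key point is that $\bar{s}$ is $P$-stable, because $M$ is a small forcing extension of $P$ and hence the two hulls agree on ordinals. One then reruns the argument that $\chi=\id$ with $\bar{s}$ in place of $s$, showing the natural map $\bar{\sigma}:\overline{\M}_\infty\to\M_\infty|\kappa_0^{+\M_\infty}$ is onto. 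Part (c) is a direct corollary: any $\alpha<\gamma^{\vV_1}$ lies in the range of some $\pi_{N\bar{s},\infty}$, and one checks
\[
\alpha\in\Hull^{\M_\infty|\max(\bar{s})^*}((\bar{s}^-)^*\cup\delta_\infty)\subseteq\Hull_1^{\bar{\vV}}(\delta_\infty),
\]
the last inclusion holding because $*$ is (a restriction of) the active predicate $e^{\vV_1}$. Thus the indiscernible-collapsing trick is the missing idea; your approach via Soundness of $\mathcal{Q}$ does not supply it. Parts (a) and (d) of your outline are fine and match the paper.
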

\begin{proof}
	Part \ref{item:forcing_def} follows from an
	inspection of \S\ref{ground_generation_stuff}.

	Part \ref{item:gamma_structure_def}:
	Let
	$\overline{\mathscr{F}}^+$ be like
	$\mathscr{F}^+$,
	but consisting of pairs $(P,s)$ such that there is $P'$
	with $(P',s)\in\mathscr{F}^+$ and $P=P'|\kappa_0^{+M}$
	and $s\sub
	\kappa_0^{+M}$.
	(Let the associated ordering, models and embeddings
	be the corresponding ones of $\mathscr{F}^+$.)
	Let $\overline{{\cal M}}_\infty$ be the direct limit of
	$\overline{\mathscr{F}}^+$,
	and $\bar{*}$ the associated $*$-map. Let
	\[ \overline{\sigma}:\overline{\cal M}_\infty\to {{\cal
			M}_\infty}|(\kappa_0^{{\cal M}_\infty})^{+\M_\infty}\]
	be the natural map
	determined by how
	$\overline{\mathscr{F}}^+$ sits within $\mathscr{F}^+$.
	Noting that
	$\overline{\mathscr{F}}^+$ is definable over
	$M|\kappa_0^{+M}$,
	it suffices to see that $\bar{\sigma}$ is the identity.

	Fix $s\in[\mathscr{I}^M]^{<\om}$ with $s\neq\emptyset$. For $P \in
	\mathscr{F}$, let $K^P$ be the transitive collapse of
	$$H^P=\Hull^{P|\max(s)}(s^-\cup\kappa_0^M),$$
	and let $\tau^P:K^P\to H^P$ be the uncollapse map, and
	$\bar{s}^P=t\cup\{\OR(K^P)\}$ where $\tau^P(t)=s^-$.

	We claim that $\pi_{MP}(\bar{s}^M)=\bar{s}^M$, and therefore
	$(P|\kappa_0^{+M},\bar{s}^P)\in\overline{\mathscr{F}}^+$.
	For this, note $\pi_{MP}(s,\kappa_0)=(s,\kappa_0)$, so
	$\pi_{MP}(\bar{s}^M)=\bar{s}^P$.
	But $M$ is a small (of size $<\kappa_0$) forcing extension of $P$,
	which implies
	$H^M \cap \OR =
	H^P\inter\OR$, so $\bar{s}^P=\bar{s}^M$, as required.

	So write $\bar{s}$ for the common value of $\bar{s}^P$.
	One can now use the argument of the proof of Lemma \ref{c0}
	(which showed that the natural
	map $\chi:\M_\infty^+\to\M_\infty$ is the identity),
	but replacing the use of
	$s$ there with
	$\bar{s}$. It follows that $\bar{\sigma}=\id$.

	Part \ref{item:vV_1||gamma_is_sound}:
	Since $\bar{\vV}\in\M_\infty[*]$,
	it suffices that $\bar{\vV}=\Hull_1^{\bar{\vV}}(\delta_\infty)$.\footnote{Let
		$\mathcal{Q}$ be the structure defined in \S\ref{subsec:Q}.
		We already know $\mathcal{Q}=\Hull_1^{\mathcal{Q}}(\delta)$,
		but it seems that the branch through the genericity tree involved there
		might not be computable from $*$. So the soundness
		of $\bar{\vV}$ is not an obvious corollary.}
	Let $\alpha<\gamma^{\vV_1}=\OR^{\bar{\vV}}$; we want to see that
	$\gamma\in\Hull_1^{\bar{\vV}}(\delta_\infty)$.
	Fix a non-empty $s\in[\mathscr{I}^M]^{<\om}$ and
	$N\in\mathscr{F}$ such that
	$\alpha\in\rg(\pi_{Ns,\infty})$.
	Let $\bar{s}$ be as above.
	As before, $\bar{s}$ is $N$-stable,
	and note that $\alpha\in\rg(\pi_{N\bar{s},\infty})$
	(because $\alpha<\kappa_0^{+M}$,
	if $\pi_{Ns,\infty}(\bar{\alpha})=\alpha$
	then $\pi_{N\bar{s},\infty}(\bar{\alpha})=\alpha$).
	But then, as desired, we have
	\[
	\alpha\in\Hull^{\M_\infty|\max(\bar{s})^*}((\bar{s}
	^-)^*\cup\delta_\infty)\sub\Hull_1^{\bar{\vV}}(\delta_\infty).\]

	Part \ref{item:better_size_of_gamma}
	follows from
	\ref{item:gamma_structure_def}
	and the definition of $\es^{\vV_1}$ above $\gamma^{\vV_1}$.
\end{proof}

The levels of $\vV_1$ correspond tightly to the levels of $M$, as follows.

\begin{lemma}\label{local_correspondence}
	Let $g=g_{M|\kappa_0}$
	be the $(\M_\infty[*],{\mathbb L})$-generic determined by $M|\kappa_0$.
	(so  $\M_\infty[*][g]\ueq M$). For every
	$\nu\in\OR$ we
	have:
	\begin{enumerate}
		\item\label{item:vV_1||nu_in_Minfty*} $\vV_1|\nu$ and $\vV_1||\nu$ are in
		$\M_\infty[*]$,
		\item\label{item:vV_1||nu_sound} $\vV_1|\nu$ and $\vV_1||\nu$ are sound,
		\item\label{item:fs_correspondence} Suppose $\nu\geq\xi_0$.\footnote{
			Also, $M|\theta_0^M$ and $\vV_1||\gamma^{\vV_1}$ are ``generically equivalent in
			the codes'',
			and letting
			\[ f:(\theta_0^M,\xi_0)\to(\gamma^{\vV_1},\xi_0) \]
			be the unique surjective order-preserving map,
			then $M|\alpha=M||\alpha$ are likewise equivalent
			with $\vV_1|f(\alpha)=\vV_1||f(\alpha)$ for all $\alpha\in\dom(f)$,
			but we will not need this.}
		Then
		\begin{enumerate}[label=\tu{(}\alph*\tu{)}]
			\item ${\mathbb L} \in \vV_1|\nu$ and $g$ is $(\vV_1|\nu,\mathbb{L})$-generic,
			\item $(\vV_1|\nu)[g] =^* M|\nu$,\footnote{The notation is explained in
				\ref{rem:=^*}.}
			\item $(\vV_1||\nu)[g] =^* M||\nu$,
			\item\label{item:premouse_axioms} if $\es^M_\nu\neq\emptyset$ and
			$\crit(\es^M_\nu)>\kappa_0$ then $\vV_1||\nu$ satisfies the usual premouse
			axioms with respect to its active predicate (with Jensen indexing;
			in particular, $\es^{\vV_1}_\nu$ is an extender over $\vV_1|\nu$), and
			\item\label{item:non_coherence} if $E=\es^M_\nu\neq\emptyset$ and
			$\crit(E)=\kappa_0$ then $\es^{\vV_1}_\nu$ is a
			long $(\delta_\infty,\nu)$-extender
			over $\M_\infty$ and
			\[
			\Ult(\M_\infty|\delta_\infty,\es^{\vV_1}_\nu)=i^M_{E}
			(\M_\infty|\delta_\infty)=\M_\infty^{\Ult(M,E)}|i^M_E(\delta_\infty)\]
			is a lightface proper class of $\vV_1|\nu$, uniformly in $\nu$.
		\end{enumerate}
	\end{enumerate}
\end{lemma}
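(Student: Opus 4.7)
The plan is to prove items (1), (2), (3) simultaneously by induction on $\nu$, with the main novelty concentrated in the long-extender case (3)(e); everything else will follow a P-construction template, using Lemma \ref{restr_of_extenders_are_there}, Lemma \ref{lem:F^P_from_F^P_rest_OR}, Lemma \ref{V_is_a_ground}, and the bound $\OR^{\vV_1||\gamma^{\vV_1}}<\xi_0$ from Lemma \ref{local-definability-of-that-structure}(d).

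For the base and the range $\nu\leq\xi_0$: the segments $\vV_1||\nu$ with $\nu<\gamma^{\vV_1}$ literally coincide with $\M_\infty||\nu$, so their membership in $\M_\infty[*]$ and their soundness are immediate. At $\nu=\gamma^{\vV_1}$, Lemma \ref{local-definability-of-that-structure}(a),(c) gives items (1) and (2). For $\gamma^{\vV_1}<\nu\leq\xi_0$, Lemma \ref{local-definability-of-that-structure}(d) ensures that every such level is passive, so we are just extending the $L[\cdot]$-hierarchy past $\gamma^{\vV_1}$, and soundness propagates trivially. Meanwhile, (3) is vacuous in this initial range.

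For the inductive step at $\nu>\xi_0$, I first verify (3)(a)--(c), which amounts to a standard P-construction argument: by Lemma \ref{V_is_a_ground}, $\mathbb{L}\in\vV_1|\xi_0$ with $M|\kappa_0$ its generic, and by induction $(\vV_1||\nu')[g]=^* M||\nu'$ for all $\nu'<\nu$. If $\es^M_\nu=\emptyset$, the claim for $\nu$ reduces to the inductive hypothesis since the $L$-structure at stage $\nu$ is determined by $\vV_1|\nu$. If $\es^M_\nu\neq\emptyset$ and $\crit(\es^M_\nu)>\kappa_0$, then $\mathbb{L}$ is small with respect to $\crit(\es^M_\nu)$, so the standard P-construction analysis applies: $\es^{\vV_1}_\nu=\es^M_\nu\rest(\vV_1|\nu)$ is well-defined by Lemma \ref{restr_of_extenders_are_there} (giving the action on ordinals) combined with Lemma \ref{lem:F^P_from_F^P_rest_OR} applied inside $\vV_1||\nu$, and the premouse axioms of (3)(d) -- coherence, ISC, amenability -- transfer from $M||\nu$ via the forcing relation, exactly as in the tame P-construction arguments of \cite{sile}. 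Soundness of $\vV_1||\nu$ then follows from soundness of $M||\nu$ together with this correspondence, since Jensen fine-structural parameters can be read off from the inter-definable structures $M||\nu\equiv_g\vV_1||\nu$.

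The main obstacle is case (3)(e), where $E=\es^M_\nu$ has $\crit(E)=\kappa_0$, so $E$ overlaps $\mathbb{L}$ and the usual P-construction does not apply. The plan is to use Lemma \ref{lem:i_E(M_infty)_is_iterate}: since $E$ is $M$-total with $\crit(E)=\kappa_0$, $i^M_E\rest\M_\infty:\M_\infty\to\M_\infty^{\Ult(M,E)}$ is the $\Sigma_{\M_\infty}$-iteration map, and in particular its restriction to $\M_\infty|\delta_\infty$ is a long $(\delta_\infty,\lambda(E))$-extender derivable from the action of $E$ on the ordinals. I will set $\es^{\vV_1}_\nu$ to be this extender cut down to generators below $\nu$. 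The two things to verify are: first, that $\es^{\vV_1}_\nu$ is definable over $\vV_1|\nu$ from its action on ordinals (which is given by Lemma \ref{restr_of_extenders_are_there} since $\nu>\kappa_0$), together with $\M_\infty|\delta_\infty\in\vV_1|\nu$ (by induction) and the inductive knowledge of $*\rest\delta_\infty$; and second, that $\Ult(\M_\infty|\delta_\infty,\es^{\vV_1}_\nu)$ coincides with $i^M_E(\M_\infty|\delta_\infty)=\M_\infty^{\Ult(M,E)}|i^M_E(\delta_\infty)$, which is immediate from Lemma \ref{lem:i_E(M_infty)_is_iterate}(\ref{item:i_E_is_correct_it_map_on_M_infty}) since the ultrapower is computed using exactly the seed functions in $\vV_1|\nu$ that witness $\Sigma_1$-soundness of the iterate. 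The claim that this ultrapower is a lightface proper class of $\vV_1|\nu$ then follows from the definability of $\M_\infty$ and $\es^{\vV_1}\rest\nu$ over $\vV_1|\nu$. The soundness of $\vV_1||\nu$ in this case requires extra care, since the active ``extender'' is long; I will argue it by comparing the $\rSigma_1$-theory of $\vV_1||\nu$ with that of $M||\nu$ via the forcing $\mathbb{L}$, using that the standard parameter of $M||\nu$ lies below $\kappa_0$-many generators together with $\delta_\infty$-many, which after translation yields $\rho_1^{\vV_1||\nu}=\delta_\infty$ with empty first standard parameter, as in Lemma \ref{local-definability-of-that-structure}(c).
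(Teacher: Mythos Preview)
Your inductive skeleton and the short-extender case (3)(d) match the paper's approach. The gap is in the long-extender case (3)(e), specifically the claim that $U=\Ult(\M_\infty|\delta_\infty,\es^{\vV_1}_\nu)$ is a lightface proper class of $\vV_1|\nu$.

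You propose to compute $U$ as the ultrapower by $\es^{\vV_1}_\nu$, and say this works because ``$\es^{\vV_1}_\nu$ is definable over $\vV_1|\nu$ from its action on ordinals (which is given by Lemma~\ref{restr_of_extenders_are_there})''. But Lemma~\ref{restr_of_extenders_are_there} gives definability of $\es^M_\nu\rest\OR$ over the \emph{global} model $\M_\infty[*]$, not over the local passive structure $\vV_1|\nu$. The active predicate $\es^{\vV_1}_\nu$ (even restricted to ordinals) is by definition \emph{not} part of $\vV_1|\nu$; it is the extra amenable predicate added to form $\vV_1||\nu$. So your ultrapower computation cannot be carried out internally to $\vV_1|\nu$, and the clause ``lightface proper class of $\vV_1|\nu$'' remains unestablished. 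Your argument does show $\es^{\vV_1}_\nu\in\M_\infty[*]$, which suffices for item~(1), but not for the local definability in~(3)(e). The paper's route around this is to compute $U$ not as an ultrapower but as the direct limit of the \emph{local covering system} over $\vV_1|\nu$: one passes to a $\Coll(\omega,\gamma)$-generic extension of $\vV_1|\nu$ (for a suitable cutpoint $\gamma$) which agrees with a $\Coll(\omega,\gamma)$-extension of $M|\nu$, and there replicates the short-tree-strategy and P-construction machinery that defines $\M_\infty^{\Ult(M,E)}|\nu$. By homogeneity this descends to a lightface definition over $\vV_1|\nu$. This is the missing idea.

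A secondary error: your soundness claim for the long-extender case, that $\rho_1^{\vV_1||\nu}=\delta_\infty$ with empty parameter, is wrong for $\nu>\gamma^{\vV_1}$. The fine structure of $\vV_1||\nu$ must match that of $M||\nu$ via the $=^*$ correspondence (Remark~\ref{rem:=^*}), so $\rho_1^{\vV_1||\nu}=\rho_1^{M||\nu}$, which is typically the largest cardinal $\lambda(E)$, not $\delta_\infty$. The conclusion $\rho_1=\delta_\infty$ is special to the segment $\vV_1||\gamma^{\vV_1}$ (Lemma~\ref{local-definability-of-that-structure}(c)).

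Finally, you invoke Lemma~\ref{lem:i_E(M_infty)_is_iterate}, whose proof is deferred in the paper until after this lemma. The paper explicitly avoids it here (see the footnote in its proof), computing $U$ without needing to know it is a correct iterate. This is not a circularity since, as the paper notes, that lemma could have been proved earlier; but it is a dependency you should be aware of.
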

\begin{rem}\label{rem:=^*}
	Here the notation $=^*$ is the usual one in this context,
	meaning that (i) the two structures have the same universe,
	(ii) for each $\alpha\in[\xi_0,\nu)$ (or $[\xi_0,\nu]$),
	$\es^{\vV_1}_\alpha=\es^M_\alpha\rest(\vV_1|\alpha)$ (which is already true by
	definition),
	and conversely, if $\crit(\es^M_\alpha)>\kappa_0$
	then $\es^M_\alpha$ is the canonical small forcing
	extension of $\es^{\vV_1}_\alpha$ to $M|\alpha$
	and if $\crit(\es^M_\alpha)=\kappa_0$
	then $\es^M_\alpha$ is determined by
	$\es^M_\alpha\rest\OR=\es^{\vV_1}_\alpha\rest\OR$ and $M|\alpha$ as usual for a
	premouse,
	and (iii) the structures $\vV_1|\nu$ and $M|\nu$ (or $\vV_1||\nu$ and $M||\nu$)
	have corresponding fine structure in the manner usual for P-constructions as in \cite{sile},
	with matching projecta and parameters, etc.
\end{rem}

\begin{proof}
	The lemma holds for $\nu<\gamma^{\vV_1}$
	directly by definition,
	for $\nu=\gamma^{\vV_1}$ by
	\ref{local-definability-of-that-structure}.
	For $\gamma^{\vV_1}<\nu\leq\xi_0$ it is a straightforward consequence:
	we have $\vV_1||\xi_0\in\M_\infty[*]$
	since $A\in\M_\infty[*]$,
	and therefore $\vV_1||\nu$ cannot
	project $<\delta_\infty$. Note that
	$\gamma^{\vV_1}\in\Hull_1^{\vV_1|\nu}(\emptyset)$,
	just because $\vV_1||\gamma^{\vV_1}$
	is the least segment with an active extender
	of its kind. Using \ref{local-definability-of-that-structure},
	therefore $\gamma^{\vV_1}+1\sub\Hull_1^{\vV_1|\nu}(\delta_\infty)$.
	If $\nu<\om\cdot\gamma^{\vV_1}$ then we therefore
	get $\vV_1|\nu\sub\Hull_1^{\vV_1|\nu}(\delta_\infty)$,
	and hence $\vV_1|\nu$ is sound.
	If instead $\om\cdot\gamma^{\vV_1}\leq\nu$
	then note that $(\vV_1|\nu)[g]$ has universe that of $M|\nu$,
	and the $\Sigma_0^{M|\nu}$ forcing relation
	is $\Delta_1^{\vV_1|\nu}$ (lightface, as
	$\gamma^{\vV_1}\in\Hull_1^{\vV_1|\nu}(\emptyset)$).
	But $M|\nu=\Hull_1^{M|\nu}(\kappa_0^{+M}+1)$,
	and since all of the $\Sigma_1$ facts witnessing
	this get forced, it follows that
	$\vV_1|\nu=\Hull_1^{\vV_1|\nu}(\delta_\infty)$,
	so $\vV_1|\nu$ is sound.

	For $\nu>\xi_0$ we discuss parts \ref{item:vV_1||nu_in_Minfty*}
	and
	\ref{item:fs_correspondence}\ref{item:premouse_axioms},\ref{item:non_coherence};
	for the rest one mostly uses standard calculations as for P-constructions.
	Suppose we have established that $(\vV_1|\nu)[g]=^*M|\nu\in\M_\infty[*]$,
	and $\es^M_\nu\neq\emptyset$, so
	$\es^{\vV_1}_\nu=\es^M_\nu\rest(\vV_1|\nu)\neq\emptyset$.
	We already know that $\es^{\vV_1}_\nu\rest\OR\in\M_\infty[*]$ (by
	Lemma \ref{restr_of_extenders_are_there}).
	We must verify that
	$\es^{\vV_1}_\nu\in\M_\infty[*]$ (and uniformly so),
	and hence $\vV_1||\nu\in\M_\infty[*]$,
	and that $\vV_1||\nu$ has the right properties.

	Suppose $\kappa_0<\crit(\es^M_\nu)$.
	Then $\es^{\vV_1}_\nu$ is an extender over $\vV_1|\nu$
	satisfying the usual requirements for premice,
	by the usual proof (using induction and that $M|\nu$ is a small forcing
	extension of $\vV_1|\nu$). It follows that $\es^{\vV_1}_\nu$
	can be computed from the pair $(\vV_1|\nu,\es^{\vV_1}_\nu\rest\OR)$,
	as usual. But $\vV_1|\nu$  is in $\M_\infty[*]$ by induction.

	Now suppose that $\kappa_0=\crit(\es^M_\nu)$.
	Then $E=\es^{\vV_1}_\nu$ is a (long) $(\delta_\infty,\nu)$-extender
	over $\M_\infty|\delta_\infty$, but this time $E$ does not cohere $\vV_1|\nu$.
	In order to compute the full $E$ from $E\rest\OR$,
	one also needs the target model
	\[ U=\Ult(\M_\infty|\delta_\infty,E)=\M_\infty^{\Ult(M,\es^M_\nu)}|\nu.\]
	By
	\ref{local-definability-of-that-structure}, this is computed by the local
	covering system of $M|\nu$ (as defined in that proof,
	but over $M|\nu$, not $M|(\kappa_0^{+M})$).
	But since $\vV_1|\nu=^*M|\nu$ and the forcing
	$\mathbb{L}\in \vV_1|\lambda$ where $\lambda$
	is the largest cardinal of $\vV_1|\nu$,
	and $\M_\infty|\delta_\infty\in\vV_1|\lambda$,
	there is a canonical definition of $U$ over $\vV_1|\nu$
	(uniform in all such $\nu$).
	That is, although we don't have the full $M|\nu$ directly available,
	the agreement between $\es^M$ and $\es^{\vV_1}$
	ensures that the short tree strategy for $\M_\infty$ is computed almost like
	when we do have $M|\nu$: Given a strong cutpoint $\gamma$ of $\vV_1|\nu$
	with $\delta_\infty^{+(\vV_1|\nu)}<\gamma$,
	let $G$ be $(\vV_1|\nu,\Coll(\om,\gamma))$-generic.
	Then $(\vV_1|\nu)[G]$ can be arranged as a premouse over $(\vV_1||\gamma,G)$,
	and note that we can also take $G$ such that there is an $(M,\Coll(\om,\gamma))$-generic
	$G'$ such that $(M|\nu)[G']=^*(\vV_1|\nu)[G]$ (with $(M||\gamma,G')$ equivalent intercomputable with $(\vV_1||\gamma,G)$).
	Therefore we can use $(\vV_1|\nu)[G]$ to compute short tree strategy for $\M_\infty|\delta_\infty$ in the same
	manner we use $(M|\nu)[G']$ (working above $\gamma$), and by homogeneity,  this computation restricted
	to trees in $\vV_1|\nu$ is independent of the choice of $G$. The computation of maximal trees
	(above $\gamma$) is similarly absolute, and note that the P-constructions determined by these trees
	also agree between $M|\nu$ and $\vV_1|\nu$. The system computed in $\vV_1|\nu$ is also
	easily dense in that of $M|\nu$. Therefore $\vV_1|\nu$ computes $\M_\infty^{\Ult(M,E)}|\nu$,
	as desired.\footnote{Note that the foregoing proof did not  use
		Lemmas \ref{lem:i_E(M_infty)_is_iterate} or \ref{M_knows_how_to_iterate_Minfty_up_to_its_woodin}, which we are yet to actually prove; it does not matter here
		whether $\M_\infty^{\Ult(M,\es^M_\nu)}|\nu$ is a correct iterate of $\M_\infty|\delta_0^{\M_\infty}$.
		But in any case,  we could have proved those lemmas at the point they appeared in the text.}
\end{proof}

\begin{lemma}\label{lem:vV_1_M_infty[*]_same_univ}
	$\M_\infty[*]$ and $\vV_1$
	\tu{(}as defined in \ref{dfn:M_infty[*]} and \ref{dfn:vV_1}\tu{)}
	have the same universe.
\end{lemma}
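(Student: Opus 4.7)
The plan is to prove the two inclusions $\univ{\vV_1}\subseteq\univ{\M_\infty[*]}$ and $\univ{\M_\infty[*]}\subseteq\univ{\vV_1}$ separately, by showing that each of the two predicates $(\es^{\vV_1})$ and $(\M_\infty,*)$ lies in the universe built from the other.

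First I would verify $\univ{\vV_1}\subseteq\univ{\M_\infty[*]}$. By Lemma \ref{local_correspondence} part \ref{item:vV_1||nu_in_Minfty*}, $\vV_1||\nu\in\M_\infty[*]$ for every $\nu\in\OR$, and the construction of $\vV_1||\nu$ from $\vV_1||\bar\nu$ ($\bar\nu<\nu$) is, at each stage, either read off $\M_\infty$ (for $\nu<\gamma^{\vV_1}$), given by $\pi_\infty\rest(\M_\infty|\delta_0^{\M_\infty})$ (at $\nu=\gamma^{\vV_1}$), or given by $\es^M_\nu\rest(\vV_1|\nu)$ with $\es^M_\nu\rest\OR$ lightface definable over $\M_\infty[*]$ by Lemma \ref{restr_of_extenders_are_there} (for $\nu>\gamma^{\vV_1}$). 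Hence the whole class $\es^{\vV_1}$ is definable over $\M_\infty[*]$ from $(\M_\infty,*)$, so $L[\es^{\vV_1}]\subseteq L[\M_\infty,*]$.

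For the reverse inclusion $\univ{\M_\infty[*]}\subseteq\univ{\vV_1}$, I first note that $\M_\infty\in\vV_1$: indeed, $\es^{\vV_1}_\nu=\es^{\M_\infty}_\nu$ for all $\nu<\gamma^{\vV_1}$, so $\M_\infty$ is a lightface class of $\vV_1$. It remains to show $*=\pi_\infty\in L[\M_\infty,\es^{\vV_1}]$. Recall from the discussion immediately preceding Definition \ref{dfn:M_infty[*]} that
\[
L[\M_\infty,*]=L[\M_\infty,\pi_\infty\rest\delta_\infty],
\]
since $\pi_\infty:\M_\infty\to\M_\infty^{\M_\infty}$ is the ultrapower map by the $(\delta_\infty,\pi_\infty(\delta_\infty))$-extender derived from $\pi_\infty\rest(\M_\infty|\delta_\infty)$, and the target model $\M_\infty^{\M_\infty}|\delta_0^{\M_\infty^{\M_\infty}}$ is an element of $\M_\infty$. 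But by definition of $\vV_1$, the active extender $e^{\vV_1}=\es^{\vV_1}_{\gamma^{\vV_1}}$ at the bottom long-extender level of $\vV_1$ is exactly $\pi_\infty\rest(\M_\infty|\delta_0^{\M_\infty})$, hence codes $\pi_\infty\rest\delta_\infty$ outright. In particular $\pi_\infty\rest\delta_\infty\in\vV_1||(\gamma^{\vV_1}+1)\subseteq\vV_1$, and therefore $L[\M_\infty,*]\subseteq L[\es^{\vV_1}]$.

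The only delicate point I foresee is the bookkeeping at $\nu=\gamma^{\vV_1}$, where one has to check that (a) the predicate $\es^{\vV_1}_{\gamma^{\vV_1}}$ really is in $\M_\infty[*]$ (which is given by Lemma \ref{local-definability-of-that-structure}\ref{item:gamma_structure_def} together with Lemma \ref{lem:pi_infty^+}, noting that $\pi_\infty\rest\delta_\infty$ is clearly in $L[\M_\infty,*]$), and (b) from $e^{\vV_1}$, together with $\M_\infty$ as a lightface class of $\vV_1$, one recovers the full map $\pi_\infty$, not just its restriction to $\delta_\infty$. Both points are immediate once the displayed identity $L[\M_\infty,*]=L[\M_\infty,\pi_\infty\rest\delta_\infty]$ is in hand. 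Combining the two inclusions gives $\univ{\vV_1}=\univ{\M_\infty[*]}$, completing the proof.
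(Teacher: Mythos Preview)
Your forward inclusion $\univ{\vV_1}\subseteq\univ{\M_\infty[*]}$ is correct and matches the paper's argument (invoking Lemma~\ref{local_correspondence} part~\ref{item:vV_1||nu_in_Minfty*}).

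The reverse inclusion, however, has a real gap. You write ``$\es^{\vV_1}_\nu=\es^{\M_\infty}_\nu$ for all $\nu<\gamma^{\vV_1}$, so $\M_\infty$ is a lightface class of $\vV_1$.'' But $\M_\infty$ is a \emph{proper class} premouse, and $\gamma^{\vV_1}=\kappa_0^{+\M_\infty}$ is just one of its cardinals; the agreement you cite only gives you the initial segment $\M_\infty|\gamma^{\vV_1}=\vV_1|\gamma^{\vV_1}$, not the entire extender sequence $\es^{\M_\infty}$ above $\gamma^{\vV_1}$. Since the predicate used to build $L[\M_\infty,*]$ is the full $\M_\infty$, your argument as stated does not establish $L[\M_\infty,*]\subseteq L[\es^{\vV_1}]$.

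The paper avoids this problem entirely by a forcing argument: one already knows $\vV_1\subseteq\M_\infty[*]$, that $\mathbb{L}\in\vV_1$, and that the filter $g=g_{M|\kappa_0}$ is $\mathbb{L}$-generic over $\M_\infty[*]$ (hence also over $\vV_1$), with $\vV_1[g]\ueq M\ueq\M_\infty[*][g]$. Given $x\in\M_\infty[*]$ a set of ordinals, pick $\tau\in\vV_1$ with $\tau^g=x$ and $p\in g$ with $p\Vdash^{\mathbb{L}}_{\M_\infty[*]}\tau=\check{x}$; then $x=\{\xi:p\Vdash^{\mathbb{L}}_{\vV_1}\check{\xi}\in\tau\}\in\vV_1$.

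Your approach can be salvaged, but it requires the substance of the later Lemma~\ref{lem:vV_1_M_infty[*]_inter-def}: one shows $\Ult(\vV_1,e^{\vV_1})=\vV_1^{\M_\infty}$, and then recovers $\es^{\M_\infty}\rest[\kappa_0^{+\M_\infty},\infty)$ from $\es^{\vV_1^{\M_\infty}}$ (the two agree over the ordinals above $\gamma^{\vV_1^{\M_\infty}}$). This is more work than you indicated, and the paper deliberately defers it, using the forcing shortcut here instead.
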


\begin{proof} We have
	$\vV_1\sub\M_\infty[*]$ by Lemma \ref{local_correspondence} part
	\ref{item:vV_1||nu_in_Minfty*}.

	Let us show
	$\M_\infty[*]\sub\vV_1$. Write
	$\mathbb{L}=\mathbb{L}_{\theta_0^M}^{\vV_1}$.
	By \ref{local_correspondence} part \ref{item:fs_correspondence},
	$\mathbb{L}\in\vV_1$,
	and since $\vV_1\sub\M_\infty[*]$,
	therefore $g=g_{M|\theta_0^M}$ is $\mathbb{L}$-generic
	over both $\M_\infty[*]$ and $\vV_1$. But then as in the proof of
	Lemma \ref{V_is_a_ground}, $\vV_1[g]$ has universe $\univ{M}$
	(as does $\M_\infty[*][g]$).

	Now let $x \in \M_\infty[*]$ be a set of ordinals.
	We show that $x\in\vV_1$. Let $\tau$ be an ${{\mathbb L}}$-name
	in $\vV_1$ such that $x=\tau^g$. Let $p \in g$ be such that $p
	\Vdash_{\M_\infty[*]}^{{\mathbb L}} \tau = {\check x}$. It is easy to see that
	then
	\[ x = \{ \xi \colon p \Vdash_{\vV_1}^{{\mathbb L}} {\check \xi}
	\in \tau \} \in \vV_1.\]

	So the two models have the same universe.
\end{proof}

The preceding fact will be refined later in Lemma \ref{lem:vV_1_M_infty[*]_inter-def}.

\begin{rem}\label{rem:vV_1_as_strategy_premouse}
	We may also reorganize $\vV_1$ as a strategy premouse,
	by representing the information contained in the long extenders in
	$\es^{\vV_1}$
	differently.
	These extenders are easily seen to be intertranslatable with
	fragments of $\Sigma_{\M_\infty}$ for trees based on
	$\M_\infty|\delta_\infty$. Namely, let us define a sequence
	$(\mathbb{F}_\nu^{\vV_1} \colon
	\nu \in {\rm OR})$ as follows. Except for those $\nu$
	where $\es^{\vV_1}_\nu$ is long, we set
	$\mathbb{F}^{\vV_1}_\nu=\es^{\vV_1}_\nu$.
	If $\es^{\vV_1}_\nu$ is long, then
	$\mathbb{F}^{\vV_1}_\nu=\Sigma_{\M_\infty}(\Tt)$,
	where $\Tt$ is the normal tree on $\M_\infty$ leading from
	$\M_\infty|\delta_\infty$ to $i^M_E(\M_\infty|\delta_\infty)$.
	Then easily, Lemma \ref{local_correspondence} holds also
	after replacing $\es^{\vV_1}$ with $\mathbb{F}^{\vV_1}$,
	and $\es^{\vV_1},\mathbb{F}^{\vV_1}$ are level-by-level intertranslatable.
	So $L[{\mathbb F}^{\vV_1}]\ueq\vV_1$.\footnote{However,
		it is not clear whether $\vV_1$ can be arranged
		as a strategy mouse in one of the more traditional hierarchies,
		like those used for HOD mice, or the least (tree) branch hierarchy.}
\end{rem}

\begin{lemma}\label{V_1_knows_how_to_iterate_Minfty_up_to_its_woodin}
	We have:
	\begin{enumerate}[label=\tu{(}\alph*\tu{)}]
		\item\label{item:vV_1_M_infty_strat_thru_delta_0} The restriction of
		$\Sigma_{{{\cal M}_\infty}}$ to trees in $\vV_1$ and based on ${{\cal
				M}_\infty}|\delta_\infty$, is lightface definable over $\vV_1$ (so by
		Lemma \ref{lem:vV_1_M_infty[*]_inter-def} below, it is also lightface definable over $\M_\infty[*]$).
		\item\label{item:vV_1_M_infty_strat_thru_delta_0_in_M[g]} Let $g$ be in some set generic extension
		of $V$ and be set-generic over
		$\vV_1$.
		Let $\Sigma'_{\M_\infty}$ be the restriction of $\Sigma_{{{\cal M}_\infty}}$
		to
		trees in $\vV_1[g]$ and based on ${{\cal M}_\infty}|\delta_\infty$.
		Then $\Sigma'_{\M_\infty}$ is definable over the universe of $\vV_1[g]$
		from the predicate $\vV_1$.\footnote{Regarding trees $\notin V$,
			cf.~Footnote \ref{ftn:trees_not_in_V}.}
	\end{enumerate}
\end{lemma}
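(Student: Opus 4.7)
The plan is to adapt the proof of Lemma \ref{M_knows_how_to_iterate_Minfty_up_to_its_woodin} from $M$ to $\vV_1$, with the long extenders on the $\vV_1$-sequence (those $\es^{\vV_1}_\nu$ arising from $\es^M_\nu$ with $\crit(\es^M_\nu)=\kappa_0$) playing the role previously played by total extenders $E\in\es^M$ with $\crit(E)=\kappa_0$. The crucial input is Lemma \ref{local_correspondence}\ref{item:non_coherence}: whenever $\es^{\vV_1}_\nu$ is long, the ultrapower $\Ult(\M_\infty|\delta_\infty,\es^{\vV_1}_\nu)$ equals the iterate $\M_\infty^{\Ult(M,E)}|i^M_E(\delta_\infty)$ and is a lightface proper class of $\vV_1|\nu$ uniformly in $\nu$. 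Combined with Lemma \ref{lem:i_E(M_infty)_is_iterate}, this means that $\es^{\vV_1}_\nu\rest(\M_\infty|\delta_\infty)$ is itself the correct $\Sigma_{\M_\infty}$-iteration map onto a $\delta_0$-sound iterate of $\M_\infty$, and this fact is witnessed lightface in $\vV_1$.

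For part (a), the short tree strategy is first computed via genericity inflation, carried out internally to $\vV_1$ in parallel with section \ref{section-short-tree-strategy-for-M}: given a short $\Tt\in\vV_1$ on $\M_\infty|\delta_\infty$, pick $\nu$ so that $\Tt\in\vV_1|\nu$ and $\es^{\vV_1}_\nu$ is long; let $\N=\Ult(\M_\infty|\delta_\infty,\es^{\vV_1}_\nu)$; inflate $\Tt$ to a tree $\Xx$ making $\N|\delta(\Xx)$ generic for the extender algebra of $M(\Xx)$, guiding branch choices via Q-structures obtained by inverse $*$-translation from segments of $\N$. Since $\vV_1|\nu$ knows $\N$, this whole procedure is lightface over $\vV_1$, and standard tree-embedding recovery yields $\Sigma_{\M_\infty}(\Tt)$ from $(\Tt,\Xx,\Sigma_{\M_\infty}(\Xx))$. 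For maximal $\Tt\in\vV_1$, choose $\nu>\lh(\Tt)$ with $\es^{\vV_1}_\nu$ corresponding to a measure of order $0$ on $\kappa_0$, so that $\N$ is a $\delta_0^\N$-sound iterate. Apply the short-tree procedure inside $\vV_1|\nu$ to locate the normal tree $\Ss$ on $P=M(\Tt)$ leading to $\N|\delta_0^\N$; then in a set-generic extension find $\Tt$-cofinal $b$ and $\Ss$-cofinal $c$ whose composed iteration map agrees with $\es^{\vV_1}_\nu\rest(\M_\infty|\delta_\infty)$, and apply Lemma \ref{lem:branch_con} (branch condensation A) to conclude $b=\Sigma_{\M_\infty}(\Tt)$. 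Alternatively, following the normalization remark in the proof of \ref{M_knows_how_to_iterate_Minfty_up_to_its_woodin}, the pair $(b,c)$ is recovered uniquely by normalizing the stack $(\Tt,\Ss)$ against the single-step iteration encoded by $\es^{\vV_1}_\nu$. Either recipe is lightface over $\vV_1$.

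For part (b), the recipe just described refers only to $\es^{\vV_1}$ and segments of $\vV_1$, so it relativizes unchanged to $\vV_1[g]$ with $\vV_1$ as a class parameter: given $\Tt\in\vV_1[g]$ based on $\M_\infty|\delta_\infty$, select $\nu$ with $\Tt\in(\vV_1|\nu)[g]$ and $\es^{\vV_1}_\nu$ long, and run the same argument inside $(\vV_1|\nu)[g]$. The main obstacle is to verify that the Q-structures produced by inverse $*$-translation locally inside $\vV_1|\nu$ really coincide with the correct Q-structures guiding $\Sigma_{\M_\infty}$. This is where Lemma \ref{local_correspondence} earns its keep: via $(\vV_1|\nu)[g_{M|\kappa_0}]=^*M|\nu$, the $*$-translations inside $\vV_1|\nu$ match those inside $M|\nu$, so by Lemma \ref{M_knows_how_to_iterate_Minfty_up_to_its_woodin} the branches produced are correct; and by homogeneity of $\mathbb{L}$ (and of the auxiliary collapse used if $g\notin V$), the branch read off in $\vV_1[g]$ does not depend on the choice of auxiliary generic, yielding a definition of $\Sigma'_{\M_\infty}$ over $\vV_1[g]$ from the predicate $\vV_1$.
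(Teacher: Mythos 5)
Your treatment of the $\delta_1$-maximal case and the overall use of branch condensation (Lemma \ref{lem:branch_con}) / normalization matches the paper's sketch, and your discussion of homogeneity for part (b) is on the right track. However, your account of the short-tree case diverges from the paper's and I believe contains a genuine error.

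The paper delegates the short-tree computation to the proof of Lemma \ref{local_correspondence}\ref{item:fs_correspondence}\ref{item:non_coherence}, and the mechanism there is: pick a strong cutpoint $\gamma$ of $\vV_1|\nu$ with $\delta_\infty^{+(\vV_1|\nu)}<\gamma$, force with $\Coll(\omega,\gamma)$, arrange $(\vV_1|\nu)[G]$ as a \emph{premouse} over $(\vV_1||\gamma,G)$, and then run the genericity inflation and $*$-translation machinery exactly ``as for $M|\nu$,'' using the $=^*$-correspondence $(\vV_1|\nu)[G]=^*(M|\nu)[G']$ to know that the Q-structures computed above $\gamma$ agree with the correct ones; homogeneity of the collapse then removes the dependence on $G$. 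The point is that the background premouse supplying Q-structures (via $*$-translation) is this premouse rearrangement of $\vV_1|\nu$, matching $M|\nu$ or $U|\nu$ up to the generic.

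You instead take $\N=\Ult(\M_\infty|\delta_\infty,\es^{\vV_1}_\nu)$, inflate $\Tt$ to make $\N|\delta(\Xx)$ generic over $M(\Xx)$, and propose to read off Q-structures ``by inverse $*$-translation from segments of $\N$.'' But $\N$ is (by Lemma \ref{local_correspondence}) a segment of $\M_\infty^{\Ult(M,E)}$, i.e.\ a segment of an \emph{iterate of $\M_\infty$}. It is not the premouse into whose hierarchy the $*$-translated Q-structures embed. In the $M$-setting (section \ref{section-short-tree-strategy-for-M}), the $*$-translation $Q^*$ of the strategy Q-structure is a proper segment of the \emph{background} $U$, and $U$ supplies them precisely because $M(\Xx)$ has $U|\delta(\Xx)$ generic over it and the P-construction of $U$ matches iterates. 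Replacing $U$ by $\N$ changes which model's extender sequence is coded into the genericity iteration and which model's segments one inverse-$*$-translates — and segments of $\M_\infty^U$ above $\delta_0^{\M_\infty^U}$ are not, in general, the same as the relevant segments of $U$. So the branches your procedure reads off need not be $\Sigma_{\M_\infty}(\Xx)$, and the subsequent tree-embedding recovery would then also fail. The fix is exactly what the paper does: collapse inside $\vV_1|\nu$ to get the premouse arrangement, and use \emph{it} as the background, not $\N$.
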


\begin{proof} This is much like the proof of Lemma \ref{M_knows_how_to_iterate_Minfty_up_to_its_woodin} (whose complete
	proof will rely on Lemma \ref{lem:M_infty_of_kappa_0-sound_iterate}, still to come), although now we don't
	have $M$ itself available. The computation of short tree strategy
	is as in the proof of Lemma \ref{local_correspondence} part
	\ref{item:fs_correspondence}\ref{item:non_coherence}.
	The computation of branches at maximal stages is like in the proof of Lemma \ref{M_knows_how_to_iterate_Minfty_up_to_its_woodin}.
\end{proof}

To summarize, we have isolated several representations of the universe of $\vV_1$,
indicating that  $\vV_1$ is a natural object:
\begin{eqnarray*}
	\vV_1 & \ueq & L[{\mathbb E}^{\vV_1}] \mbox{ (Definition \ref{dfn:vV_1}}) \\
	{}&\ueq& L[{\mathbb F}^{\vV_1}] \mbox{
		(Remark
		\ref{rem:vV_1_as_strategy_premouse})}\\
	{}&\ueq& {{\cal M}_\infty}[*]
	\mbox{
		(see (\ref{defn_first_varsovian_model}), Lemma \ref{lem:vV_1_M_infty[*]_same_univ})} \\
	{} &\ueq & \HOD_{\mathscr{E}}^{M[G]}  \mbox{ (Lemma
		\ref{tm:M_infty[*]=HOD_E})} \\
	{} &\ueq & \bigcap {\mathscr F} = \mbox{the } <\kappa_0 \mbox{ mantle of } M
	\mbox{ (Proposition \ref{prop:vV_1_is_<kappa_0-mantle})}
\end{eqnarray*}

\subsection{Varsovian strategy premice}\label{subsec:Vsp}
In this section we will introduce an axiomatization for premice
in the hierarchy of $\vV_1$.
But first, we refine Lemma \ref{lem:vV_1_M_infty[*]_same_univ} as follows:

\begin{lemma}\label{lem:vV_1_M_infty[*]_inter-def}\
	\begin{enumerate}
		\item \label{item:vV_1^M_infty_=_Ult(vV_1,e)}
		$\Ult(\vV_1,e^{\vV_1})=\vV_1^{\M_\infty}$.
		\item\label{item:vV_1_lightface_class_of_M_infty[*]} $\vV_1$ is a lightface
		class of
		$\M_\infty[*]$,

		\item \label{item:M_infty[*]_class_of_vV_1} $\M_\infty[*]$ is a lightface
		class of $\vV_1$.
\end{enumerate}\end{lemma}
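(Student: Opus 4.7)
I would prove the three parts in order, leveraging part (1) for the latter two.

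For part (1), I would use that $\vV_1 = L[\M_\infty,*]$, so any elementary map $\sigma\colon\vV_1\to W$ is uniquely determined by $\sigma(\M_\infty)$ and $\sigma(*)$. The class ultrapower $j\colon\vV_1\to\Ult(\vV_1,e^{\vV_1})$ is lightface over $\vV_1$ from $e^{\vV_1}\in\es^{\vV_1}$; its restriction to $\M_\infty$ is the ultrapower of $\M_\infty$ by $e^{\vV_1}$ (using constant functions, which already live in $\M_\infty$), and this by definition is $\pi_\infty\colon\M_\infty\to\M_\infty^{\M_\infty}$. So $j(\M_\infty)=\M_\infty^{\M_\infty}$, and by elementarity $j(*)=*^{\M_\infty}$. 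The map $\pi_\infty^+\colon\vV_1\to\vV_1^{\M_\infty}$ from Lemma \ref{lem:pi_infty^+} has these same values on $\M_\infty$ and $*$, so by the uniqueness just mentioned, $j=\pi_\infty^+$ and hence $\Ult(\vV_1,e^{\vV_1})=\vV_1^{\M_\infty}$.

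For part (2), I would verify lightface definability of $\es^{\vV_1}$ over $\M_\infty[*]$ by induction on $\nu$ via Definition \ref{dfn:vV_1}. Below $\gamma^{\vV_1}$, $\es^{\vV_1}_\nu=\es^{\M_\infty}_\nu$ is read off the $\M_\infty$ predicate. At $\gamma^{\vV_1}$, $e^{\vV_1}=\pi_\infty\rest(\M_\infty|\delta_\infty)$ is recovered from $*$ and $\M_\infty$. Above $\gamma^{\vV_1}$, Lemma \ref{restr_of_extenders_are_there} gives $\mathbb{F}^M_{>\kappa_0}$ lightface over $\M_\infty[*]$, and together with the inductively-built $\vV_1|\nu$ this yields $\es^{\vV_1}_\nu=\es^M_\nu\rest\vV_1|\nu$ via Lemma \ref{lem:F^P_from_F^P_rest_OR}.

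For part (3), the idea is that by part (1), the ultrapower embedding $\pi_\infty^+$ and its target $\vV_1^{\M_\infty}$ are both lightface over $\vV_1$ from $e^{\vV_1}\in\es^{\vV_1}$. By elementarity of $\pi_\infty^+$, one has $x\in\M_\infty \iff \pi_\infty^+(x)\in\M_\infty^{\M_\infty}$, so the task reduces to identifying $\M_\infty^{\M_\infty}$ lightface inside $\vV_1^{\M_\infty}$ with respect to its ultrapower-generated extender sequence. I would characterise $\M_\infty^{\M_\infty}$ as the unique proper-class $\Mswsw$-like premouse $N\subseteq\vV_1^{\M_\infty}$ extending the passivization of $\vV_1^{\M_\infty}||\gamma^{\vV_1^{\M_\infty}}$ and iterable by the lightface strategy for its first Woodin provided by Lemma \ref{V_1_knows_how_to_iterate_Minfty_up_to_its_woodin} applied inside $\vV_1^{\M_\infty}$; existence and uniqueness follow from the comparison-based arguments of \S\ref{first_dir_limit_system} reproduced internally. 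Having $\M_\infty$ lightface over $\vV_1$, the map $*$ is then lightface as well, since $*\rest\delta_\infty$ is directly coded by $e^{\vV_1}$ and $*=\pi_\infty^+\rest\M_\infty\rest\OR$ on all of $\OR$.

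The main obstacle is part (3): producing a parameter-free characterisation of $\M_\infty^{\M_\infty}$ inside the ultrapower $\vV_1^{\M_\infty}$ and verifying that this characterisation picks out exactly $\pi_\infty^+(\M_\infty)$. All necessary ingredients (short tree strategy, iterability, comparison) transfer by elementarity of $\pi_\infty^+$, but assembling them cleanly, without smuggling in external parameters, requires care. Parts (1) and (2), by contrast, are largely definitional bookkeeping on top of the already-established structural lemmas.
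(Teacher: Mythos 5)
Your part (2) matches the paper. Your parts (1) and (3) need comment.

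For part (1), your route via ``uniqueness'' papers over the real content. Establishing that $j\rest\M_\infty=\pi_\infty$ (constant functions) is fine, but it does \emph{not} by itself give $j(\M_\infty)=\M_\infty^{\M_\infty}$: the ultrapower $\Ult(\vV_1,e)$ uses all functions $f\in\vV_1$, and $[f]$ can land in $j(\M_\infty)$ even when $f\notin\M_\infty$, so you must rule out that such $f$ produce elements not in $\Ult(\M_\infty,e)$. Likewise you cannot assert $j(*)=*^{\M_\infty}$ until you already know the target is $\vV_1^{\M_\infty}$. The paper's proof closes exactly this gap: since $V_{\delta_\infty}^{\M_\infty}=V_{\delta_\infty}^{\vV_1}$, the extender $e$ is derived from $\pi_\infty^+$ as well, so there is a factor map $k^+\colon\Ult(\vV_1,e)\to\vV_1^{\M_\infty}$ with $k^+\com j^+=\pi_\infty^+$; because $\pi_\infty\sub\pi_\infty^+$ and $\pi_\infty$ is already the full ultrapower map of $\M_\infty$, one gets $k^+\rest\OR=\id$, hence $k^+=\id$. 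You need this step (or an equivalent), not just uniqueness-from-predicate-values.

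For part (3), your plan is a genuinely different and much heavier approach, and I don't think it works as stated. You propose to characterise $\M_\infty^{\M_\infty}$ inside $\vV_1^{\M_\infty}$ as ``the unique iterable $\Mswsw$-like premouse extending the passivization of $\vV_1^{\M_\infty}||\gamma^{\vV_1^{\M_\infty}}$'' via the strategy of Lemma \ref{V_1_knows_how_to_iterate_Minfty_up_to_its_woodin} applied internally. But that lemma gives a strategy for $\M_\infty$ computed \emph{from} $\vV_1$, so you would be defining the candidate $N$ in terms of an iteration strategy that presupposes you already have $N$ (or $\M_\infty$) in hand — this is circular as a parameter-free definition. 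The paper avoids all of this: you already have $\M_\infty||\kappa_0^{+\M_\infty}=\vV_1|\gamma^{\vV_1}$ directly; by part (1) $\vV_1^{\M_\infty}=\Ult(\vV_1,e^{\vV_1})$ is lightface over $\vV_1$; and then the key observation you are missing is that $\es^{\M_\infty}\rest[\kappa_0^{+\M_\infty},\infty)$ and $\es^{\vV_1^{\M_\infty}}\rest(\gamma^{\vV_1^{\M_\infty}},\infty)$ agree over the ordinals (this is the $\M_\infty$-internal instance of Lemma \ref{restr_of_extenders_are_there}, i.e.\ the P-construction correspondence). Feeding the ordinal restrictions into Lemma \ref{lem:F^P_from_F^P_rest_OR} then reconstructs $\es^{\M_\infty}$ level by level with no comparison or iterability needed. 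The map $*\rest\delta_\infty$ is simply $e^{\vV_1}$, and that is all that is required.
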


\begin{proof}
	Part \ref{item:vV_1_lightface_class_of_M_infty[*]}:  Argue as in the proof of
	Lemma
	\ref{local_correspondence}, using again Lemma
	\ref{restr_of_extenders_are_there}.

	Part \ref{item:vV_1^M_infty_=_Ult(vV_1,e)}:	Let $e=e^{\vV_1}$.  We have $\M_\infty^{\M_\infty}=\Ult(\M_\infty,e)$ and
	$\pi_\infty:\M_\infty\to\M_\infty^{\M_\infty}$ is  the ultrapower map, so $e\sub\pi_\infty$.
	By Lemma \ref{lem:pi_infty^+}, $\pi_\infty$ extends elementarily to \[ \pi_\infty^+:\M_\infty[*]\to\M_\infty^{\M_\infty}[*^{\M_\infty}],\]
	so considering how $\vV_1$ is defined over $\M_\infty[*]$, and $\vV_1^{\M_\infty}$ over $\M_\infty^{\M_\infty}[*^{\M_\infty}]$,
	\[ \pi_\infty^+:\vV_1\to\vV_1^{\M_\infty} \]
	is also elementary.
	Since $V_{\delta_\infty}^{\M_\infty}=V_{\delta_\infty}^{\vV_1}$, it follows
	that $e$ is also derived from $\pi_\infty^+$.
	Let $\vV'=\Ult(\vV_1,e)$ and $j^+:\vV_1\to\vV'$ be the ultrapower map
	and $k^+:\vV'\to\vV_1^{\M_\infty}$ the factor map, so $k^+\com j^+=\pi_\infty^+$.
	Since $\pi_\infty\sub\pi_\infty^+$ and $\pi_\infty$ is the ultrapower map,
	we have $k^+\rest\OR=\id$ (and $j^+\rest\OR\sub\pi_\infty^+$).
	So in fact $k^+=\id$ and $\vV'=\vV_1^{\M_\infty}$.\footnote{Here is a slightly
		alternate argument. We have $\M_\infty^{\M_\infty}=\Ult(\M_\infty,e)$
		and $\pi_\infty$ is the ultrapower map. Let
		$N[e']=\Ult(\M_\infty[e],e)$
		and $j:\M_\infty[e]\to N[e']$
		be the ultrapower map. Then as before, in fact $N=\M_\infty^{\M_\infty}$
		and $\pi_\infty\sub j=\pi_\infty^+$. Moreover, by considering some fixed indiscernibles,
		\[ e'=\pi_\infty^+(e)=\bigcup_{\alpha<\delta_\infty}\pi_\infty(e\rest\alpha) \]
		is the correct extender of the iteration map $\M_\infty^{\M_\infty}\to(\M_\infty)^{\M_\infty^{\M_\infty}}$.
		Now $\vV_1$ and $\M_\infty[e]$ have the same universe,
		and $\vV_1$ is defined over $\M_\infty[e]$ via the procedure mentioned for part \ref{item:vV_1_lightface_class_of_M_infty[*]} of the lemma. So $\Ult(\vV_1,e)$
		is defined over $\M_\infty^{\M_\infty}[e']$ in the same manner.
		But $e'$ agrees with $*^{\M_\infty}$, so this definition
		actually yields $\vV_1^{\M_\infty}$.}

	Part \ref{item:M_infty[*]_class_of_vV_1}: It suffices to define $\M_\infty$
	and $*\rest\delta_0^{\M_\infty}$.
	But $*\rest\delta_0^{\M_\infty}$ is just the active extender of $\vV_1||\gamma^{\vV_1}$.
	To define $\M_\infty$, we first have
	$\M_\infty||\kappa_0^{+\M_\infty}=\vV_1|\gamma^{\vV_1}$.
	But $\vV_1^{\M_\infty}$ is a lightface class of $\vV_1$ by  part
	\ref{item:vV_1^M_infty_=_Ult(vV_1,e)}.
	And $\es^{\M_\infty}\rest[\kappa_0^{+\M_\infty},\infty)$
	is determined by $M||\kappa_0^{+\M_\infty}$ and
	$\es^{\vV_1^{\M_\infty}}\rest(\gamma^{\vV_1^{\M_\infty}},\infty)$,
	since the two sequences agree over the ordinals.
\end{proof}

\begin{dfn}
	For an $\Mswsw$-like $N$, $\M_\infty^N$, $*^N$, $\M_\infty[*]^N$ and $\vV_1^N$
	denote the lightface $N$-classes defined over $N$ just as the corresponding
	classes are defined over $M$.

	Also given an $\Mswsw$-like $N$ and $\bar{N}\ins N$
	with $\kappa_0^{+N}\leq\OR^{\bar{N}}$,
	we define $\vV_1^{\bar{N}}$ by recursion on $\OR^{\bar{N}}$
	by setting $\vV_1^{N||(\kappa_0^{+N}+\alpha)}=\vV_1^N||(\gamma+\alpha)$,
	where $\gamma=\gamma^{\vV_1^N}$.
	Noting that this definition is level-by-level,
	we similarly define $\vV_1^{\bar{N}}(\kappa)$
	whenever $\bar{N}$ is $\Mswsw$-small
	and $\kappa$ is an inaccessible limit of cutpoints and Woodins
	of $\bar{N}$ and $\kappa<\OR^{\bar{N}}$,
	level-by-level (starting by defining
	$\vV_1^{\bar{N}|\kappa^{+\bar{N}}}$
	as $\vV_1||\gamma^{\vV_1}$ is defined (in the codes)
	over $M|\kappa_0^{+M}$). We will often suppress the $\kappa$
	from the notation, writing just $\vV_1^{\bar{N}}$.
\end{dfn}

We now want to axiomatize structures in the hierarchy of $\vV_1$ to some extent:

\begin{dfn}\label{dfn:local_vV_1}
	A \emph{base Vsp} is an amenable transitive structure $\vV=(P_\infty,F)$ such that
	in some forcing extension
	there is $P$ such that:
	\begin{enumerate}
		\item $P,P_\infty$ are premice which model $\ZFC^-$ and are $\Mswsw$-small; that is,
		they have no active
		segments which satisfy ``There are $\kappa_0<\delta_1<\kappa_1$
		with $\delta_1$ Woodin and $\kappa_0,\kappa_1$ strong''.
		\item $P$ has a least Woodin cardinal $\delta_0^P$
		and a largest cardinal $\kappa_0^P>\delta_0^P$, and $\kappa_0^P$ is inaccessible in $P$
		and a limit of cutpoints of $P$; likewise for $P_\infty,\delta_0^{P_\infty},\kappa_0^{P_\infty}$,
		\item 	$\OR^P=\delta_0^{P_\infty}$,
		$\kappa_0^P$ is the least measurable of $P_\infty$, and $\vV_1^{P}=\cHull_1^{\vV}(\delta_0^{P_\infty})$,
		\item $\M_\infty^{P_\infty}$ (defined over $P_\infty$
		like $\M_\infty|\gamma^{\vV_1}$ is defined over $M|\kappa_0^{+M}$) is well-defined,
		and has least measurable $\kappa_0^{P_\infty}$ and least Woodin $\delta_0^{\M_\infty^{P_\infty}}=\OR^{P_\infty}$,
		\item $\M_\infty^{P_\infty}|\delta_0^{\M_\infty^{P_\infty}}$ is obtained
		by  iterating $P_\infty|\delta_0^{P_\infty}$, via a normal tree $\Tt$ of length $\delta_0^{\M_\infty^{P_\infty}}$,
		\item $F$ is a cofinal $\Sigma_1$-elementary
		(hence fully elementary, by $\ZFC^-$) embedding \[ P_\infty|\delta_0^{P_\infty}\to\M_\infty^{P_\infty}|\delta_0^{\M_\infty^{P_\infty}},\]
		and there is a $\Tt$-cofinal branch $b$ such that
		$F\sub i^\Tt_b$, and $i^\Tt_b(\delta_0^{P_\infty})=\delta_0^{\M_\infty^{P_\infty}}$ (so
		$b$ is intercomputable with $F$, and note that by amenability of $\vV$, $F$ is amenable to $P_\infty$, and hence so is $b$),
		\item\label{item:delta_0_stays_Woodin} $\rho_1^{\vV}=\delta_0^{P_\infty}$ and $p_1^{\vV}=\emptyset$
		and  $\delta_0^{P_\infty}$ is Woodin in $\J(\core_1(\vV))$, as witnessed by $\es^{P_\infty}$,

		\item $P$ is $(\J(\core_1(\vV)),\mathbb{L}^{\vV})$-generic,
		where $\mathbb{L}^{\vV}$ is defined over $\vV$ as
		$\mathbb{L}$ above was defined over $\vV_1||\gamma^{\vV_1}$.
		\qedhere
	\end{enumerate}
\end{dfn}

\begin{rem}\label{rem:base_Vsp}
	Let $C=\core_1(\vV)$ and $\pi:C\to\vV$ be the core map. Then
	\begin{equation}\label{eqn:rg(pi)_is_Hull} \rg(\pi)=\Hull_1^{P_\infty}(\delta_0^{P_\infty}\cup F``\delta_0^{P_\infty}). \end{equation}
	For we have $\rho_1^{\vV}=\delta_0^{P_\infty}$ and $p_1^{\vV}=\emptyset$ by hypothesis.
	So $\supseteq$ is clear, and $\sub$ is because for each $\alpha<\delta_0^{P_\infty}$,
	$F\rest\alpha$ is in the hull on the right, by calculations like with the Zipper Lemma.

	Now because $\delta_0^{P_\infty}$ is Woodin in $\J(\core_1(\vV))$, and in particular regular,
	we have that $\core_1(\vV)$ is sound with $\rho_\om^{\core_1(\vV)}=\delta_0^{P_\infty}$,
	and in particular $V_{\delta_0^{P_\infty}}^{\J(\core_1(\vV))}=V_{\delta_0^{P_\infty}}^{P_\infty}$.

	Note also that  $\mathbb{L}$ is (lightface) $\Sigma_1^{\vV_1||\gamma^{\vV_1}}$, and
	we use the natural $\Sigma_1$ definition above to define $\mathbb{L}^{\vV}$, so $\mathbb{L}^{\vV}=\mathbb{L}^{\core_1(\vV)}$. Moreover, like for $\mathbb{L}$,
	$\mathbb{L}^{\vV}$
	is a sub-algebra of the extender algebra of $P_\infty$ at $\delta_0^{P_\infty}$
	and $\J(\core_1(\vV))\sats$``$\mathbb{L}^{\vV}$ is a $\delta_0^{P_\infty}$-cc complete Boolean algebra''.

	The definition is actually specified by an (infinite) first-order theory satisfied by $\vV$, modulo the wellfoundedness of $\vV$.  (The theory needs to be infinite because	of the assertion of Woodinness in $\J(\core_1(\vV))$ in condition \ref{item:delta_0_stays_Woodin}.)
	To see this, observe that the (generic) existence of $P$ is first-order:
	Working in $\vV$, we can say that there is some condition
	of $\mathbb{L}^{\vV}$ forcing over $\core_1(\vV)$ that the generic object is a premouse
	$P$ such that the conditions above hold (and by the preceding discussion, all relevant
	antichains are in $P_\infty$); a small subtlety here
	is that we need to refer to $F$ and the hull in (\ref{eqn:rg(pi)_is_Hull})
	in order to talk about $\core_1(\vV)$ and assert that it is isomorphic to $\vV^P$; note
	that we can just talk about the relevant theories to assert this.
	(We don't demand that $\M_\infty^{P_\infty}$ be wellfounded,
	but only what was asserted above, which gives that it is wellfounded through $\delta_0^{\M_\infty^{P_\infty}}+1$.)

	Unsound base Vsps naturally arise  from iterating sound ones.
\end{rem}
\begin{dfn}\label{dfn:Vsp}
	A \emph{Varsovian strategy premouse (Vsp)} is a structure
	\[ \vV=(\J_\alpha^\es,\es,F) \]
	for some sequence $\es$ of extenders, where either $\vV$ is a premouse, or:
	\begin{enumerate}
		\item $\alpha\leq\OR$ and $\vV$ is an amenable acceptable J-structure,
		\item $\vV$ has a least Woodin cardinal $\delta_0^{\vV}$,
		and an initial segment $\vV||\gamma$ which is a  base Vsp,
		\item $\delta_0^{\vV}<\gamma$, so $\delta_0^{\vV}$ is the least Woodin of $\vV||\gamma$,
		\item if $F\neq\emptyset$ and $\gamma<\OR^\vV$ then either:
		\begin{enumerate}
			\item  $\vV$ satisfies the premouse
			axioms (for Jensen indexing) with respect to $F$, and $\gamma<\crit(F)$, or
			\item\label{item:long} We have:
			\begin{enumerate}
				\item $\vV^\passive=(\J_\alpha^\es,\es,\emptyset)\sats\ZFC^-$,
				\item $\vV$ has largest cardinal $\mu$, which is inaccessible in $\vV$
				and a limit of cutpoints of $\vV$ (where \emph{cutpoint}
				is with regard to all kinds of extenders),
				\item   $\N=\M_\infty^{\vV^\passive}$ is well-defined,
				and satisfies the axioms of a premouse (but is possibly illfounded)
				with a Woodin cardinal $\delta_0^{\N}$,
				and is $(\OR^{\vV}+1)$-wellfounded with $\delta_0^{\N}=\OR^{\vV}$,
				\item   $\N|\delta_0^{\N}$  is a proper class
				of $\vV^\passive$ and has least measurable
				$\mu$,
				\item  $F$ is a cofinal $\Sigma_1$-elementary
				embedding $F:\vV|\delta_0^{\vV}\to \N|\delta_0^{\N}$,
				\item $\N|\delta_0^{\N}$ is pseudo-normal iterate of $\vV|\delta_0^{\vV}$,
				via tree $\Tt$, and there is a $\Tt$-cofinal branch $b$ such that
				$F\sub i^\Tt_b$  (hence $b$ is amenable to $\vV$
				and inter-definable with $F$ over $\vV^\passive$),
			\end{enumerate}
		\end{enumerate}
		\item each proper segment of $\vV$ is a sound Vsp
		(defining \emph{Vsp} recursively),
		where the fine structural language for base Vsps and segments as in \ref{item:long}
		is just that with symbols for $\in,\es,F$,
		\item some $p\in\mathbb{L}^\vV=\mathbb{L}^{\vV||\gamma}$ forces
		that the generic object is a premouse $N$ of height $\delta_0^{\vV}$
		with $\vV^N=\vV||\gamma$, and there is an extension of $N$ to a  premouse $N^+$
		such that $\vV^{N^+}=\vV$ (and note then that $N^+$ is level-by-level definable over $\vV[N]$,
		via inverse P-construction).
	\end{enumerate}

	We write $\gamma^{\vV}=\gamma$ above (if $\vV$ is not a premouse).
\end{dfn}

\begin{dfn}\label{dfn:vV_1-like}
	A  Vsp $\vV$ is \emph{$\vV_1$-like} iff it is proper class
	and in some set-generic extension,  $\vV=\vV^N$ for some $\Mswsw$-like premouse $N$. (Note this is first-order over $\vV$.)

	When talking about the extenders $E\in\es_+^{\vV}$, for a Vsp $\vV$,
	we say that $E$ is \emph{short} if $\vV||\lh(E)$ satisfies the usual premouse
	axioms with respect to $E$, and \emph{long} otherwise; likewise for the corresponding
	segments. So $\vV||\gamma^\vV$ is the least long segment.

	We write $\M_\infty=\vV_1\downarrow 0$.
	Let $\vV$ be  $\vV_1$-like.
	We define $\vV\downarrow 0$ analogously (first-order over $\vV$ as in the proof of
	Lemma \ref{lem:vV_1_M_infty[*]_inter-def} part \ref{item:M_infty[*]_class_of_vV_1}).
	In fact, let us define $\vV\downarrow 0$ more generally, including the case that $\vV$ is illfounded,
	but satisfies the first order properties of a $\vV_1$-like structure.
	Also if $N$ is a premouse,
	let $N\downarrow 0=N$.
	We write  $\vV^-$ for (the premouse) $\vV|\delta_0^{\vV}$.
	(So $\vV_1^-=\M_\infty|\delta_0^{\M_\infty}$.)
	We write $\Lambda^{\vV}$ for the strategy for $\vV\downarrow 0$
	(for trees based on $\vV^-$) defined over $\vV$
	just as the corresponding restriction of $\Sigma_{\M_\infty}$
	is defined over $\vV_1$, via the proof of Lemma \ref{V_1_knows_how_to_iterate_Minfty_up_to_its_woodin}.

	We write
	$\vV_1=\vV(\M_\infty,*)=\vV(\M_\infty,*\rest\delta_\infty)=\vV(\M_\infty,e^{
		\vV_1})$.
	Given a pair
	$(N,*')$ or $(N,*'\rest\delta)$ or $(N,e)$ where
	$N$ is $\Mswsw$-like and the  pair has
	similar first-order properties
	as does $(\M_\infty,*)$ or $(\M_\infty,*\rest\delta_\infty)$ or
	$(\M_\infty,e^{\vV_1})$ respectively,
	we define $\vV(N,*')$ or $\vV(N,*'\rest\delta)$ or $\vV(N,e)$ analogously
	(via the proof of Lemma \ref{lem:vV_1_M_infty[*]_inter-def} part
	\ref{item:vV_1_lightface_class_of_M_infty[*]}).
\end{dfn}

\subsection{The action of $M$-iteration on $\M_\infty$}\label{subsec:M-iteration_on_M_infty}

We now aim to extend Lemma \ref{V_1_knows_how_to_iterate_Minfty_up_to_its_woodin}, analyzing  the
nature of $\M_\infty^N$ for iterates $N$ of $M$, and the partial
iterability of $\M_\infty^N$ in $N$.

\begin{lem}\label{lem:P-bar_is_iterate}
	Let $N$ be any non-dropping $\Sigma$-iterate of $M$.
	Let $P\in\mathscr{F}^N$. Let
	\[ \bar{P}=\cHull^P(\delta_0^P\cup\mathscr{I}^N). \]
	Then $\bar{P}$ is a $\delta_0^{\overline{P}}$-sound non-dropping $\Sigma$-iterate of $M$, $\bar{P}|\delta_0^{\overline{P}}=P|\delta_0^P$, and letting $\pi:\bar{P}\to P$ be the uncollapse map,
	then $\pi``\mathscr{I}^{\bar{P}}=\mathscr{I}^N$.\footnote{But if $N$ is not $\delta_0^N$-sound then $\bar{P}$ is not a $\Sigma_N$-iterate of $N$.}
\end{lem}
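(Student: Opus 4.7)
The plan is to identify $\bar{P}$ with an externally formed $\Sigma$-iterate of $M$. Fix $\mathcal{U} \in \mathbb{U}^N$ with $P = \mathscr{P}^N(\mathcal{U})$, let $b = \Sigma_N(\mathcal{U})$ (formed in $V$), and set $Q = M^{\mathcal{U}}_b$. I plan to show that $\bar P = Q$, from which all conclusions of the lemma will follow.

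First I will verify that $Q$ is a $\delta_0^Q$-sound, non-dropping $\Sigma$-iterate of $M$ with $Q|\delta_0^Q = P|\delta_0^P$. Since $\mathcal{U}$ is via $\Sigma_{N,\sss}$ and maximal, the branch $b$ is non-dropping with $i^{\mathcal{U}}_b(\delta_0^N) = \delta(\mathcal{U}) = \delta_0^Q$, and $Q|\delta_0^Q = M(\mathcal{U}) = P|\delta_0^P$ directly. Normalizing the stack $\Tt_N \conc \mathcal{U} \conc b$ via Fact~\ref{fact:Sigma_properties}\ref{item:Slist_}, $Q$ is a non-dropping $\Sigma$-iterate of $M$. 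The direct analogue of Lemma~\ref{lem:inds_fixed} applied to $\mathcal{U}$ on $N$ yields $i^{\mathcal{U}}_b \upharpoonright \mathscr{I}^N = \id$: all extenders in $\mathcal{U}$ have length $<\kappa_0^N < \min(\mathscr{I}^N)$, and the elements of $\mathscr{I}^N$ are sufficiently closed $N$-inaccessibles to be fixed by any such ultrapower. Hence $\mathscr{I}^Q = i^{\mathcal{U}}_b``\mathscr{I}^N = \mathscr{I}^N$ setwise. Combining this with $M = \Hull^M(\mathscr{I}^M)$ gives $Q = \Hull^Q(\delta_0^Q \cup \mathscr{I}^N)$, establishing the $\delta_0^Q$-soundness of $Q$ as a $\Sigma$-iterate of $M$.

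The crux is to show $\bar{P} = Q$. By construction $\bar P$ is the transitive collapse of $\Hull^P(\delta_0^P \cup \mathscr{I}^N)$, and by the previous paragraph $Q = \Hull^Q(\delta_0^Q \cup \mathscr{I}^N)$; both agree on the $\delta_0$-part. My plan is to construct an isomorphism $\tau: Q \to \Hull^P(\delta_0^P \cup \mathscr{I}^N)$ fixing $\delta_0^P \cup \mathscr{I}^N$ by sending each Skolem term interpreted in $Q$ to the same term interpreted in $P$. The hard part will be to show that this map is well-defined and elementary, which reduces to showing that $P$ and $Q$ satisfy the same first-order theory over parameters in $\delta_0^P \cup \mathscr{I}^N$. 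For $\delta_0^N$-sound $N$ this is immediate, since Lemma~\ref{lem:delta_0-sound_iterate_M_infty} gives $P = Q$ outright. In general I would proceed either by reducing to the $\delta_0$-sound case via the hull $\cHull^N(\delta_0^N \cup \mathscr{I}^N)$ (to which the lemma applies inductively in a simpler form) and tracking the interplay between P-construction and iteration, or by pseudo-comparison: $\bar P$ inherits iterability from $P$ via $\pi$ (for trees based on $\bar P|\delta_0^{\bar P}$ via $\Sigma_{\bar P|\delta_0^{\bar P}}$, and for trees strictly above $\delta_0^{\bar P}$ using that $P$ is a ground of $N$ and hence iterable above $\delta_0^P$), and pseudo-comparing $\bar P$ against $Q$ --- both $\delta_0$-sound and matching below $\delta_0$ --- will force $\bar P = Q$.

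Once $\bar{P} = Q$ is in hand, the remaining conclusions follow cleanly. $\bar P$ inherits its status as a $\delta_0^{\bar P}$-sound non-dropping $\Sigma$-iterate of $M$ from $Q$, and $\bar P|\delta_0^{\bar P} = P|\delta_0^P$ is immediate from the setup. For the final clause $\pi``\mathscr{I}^{\bar P} = \mathscr{I}^N$: the pullback $\bar I = \pi^{-1}``\mathscr{I}^N$ is a club class of indiscernibles of $\bar P$ by elementarity of $\pi$, and $\bar P = \Hull^{\bar P}(\delta_0^{\bar P} \cup \bar I)$ by construction; the uniqueness clause of Fact~\ref{fact:inds_preserved} then yields $\bar I = \mathscr{I}^{\bar P}$, so $\pi``\mathscr{I}^{\bar P} = \pi``\bar I = \mathscr{I}^N$.
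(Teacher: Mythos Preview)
Your central claim that $\bar P = Q$ with $Q = M^{\mathcal U}_b$ is false in general, and the footnote to the lemma already tells you why: when $N$ is not $\delta_0^N$-sound, $\bar P$ is \emph{not} a $\Sigma_N$-iterate of $N$, whereas $Q$ certainly is. The specific error is in your assertion that $Q = \Hull^Q(\delta_0^Q \cup \mathscr{I}^N)$. The fact $M = \Hull^M(\mathscr{I}^M)$ only yields $\rg(i_{MN}) = \Hull^N(\mathscr{I}^N)$, hence $N = \Hull^N(\delta(\Tt_N) \cup \mathscr{I}^N)$; if $\delta(\Tt_N) > \delta_0^N$ this does not collapse to $\Hull^N(\delta_0^N \cup \mathscr{I}^N)$, and the non-$\delta_0$-soundness propagates along $i^{\mathcal U}_b$ to $Q$. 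So your pseudo-comparison of $\bar P$ with $Q$ cannot terminate with equality: both extend $P|\delta_0^P$, but only $\bar P$ is $\delta_0$-sound.

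The paper bypasses $Q$ entirely. It observes that since the Q-structures guiding the short tree strategy inside $N$ are correct, $P|\delta_0^P = M^\Tt_\alpha|\delta_0^{M^\Tt_\alpha}$ for some tree $\Tt$ on $M$ via $\Sigma$ with $[0,\alpha]_\Tt$ non-dropping; taking $\alpha$ minimal forces $M^\Tt_\alpha$ to be $\delta_0$-sound. Now $\bar P$ is $\delta_0^{\bar P}$-sound by construction, $\Mswsw$-like by elementarity of $\pi$, and iterable via $\pi$, so comparing $\bar P$ with $M^\Tt_\alpha$ (both $\delta_0$-sound, agreeing below $\delta_0$) gives $\bar P = M^\Tt_\alpha$. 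Your final paragraph on $\pi``\mathscr{I}^{\bar P} = \mathscr{I}^N$ is fine and matches the paper.
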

\begin{proof}
	We have $P|\delta_0^P=M^\Tt_\alpha|\delta_0^{M^\Tt_\alpha}$ for some tree $\Tt$ via $\Sigma$,
	where $[0,\alpha]_\Tt$ does not drop;
	this is because the Q-structures used to guide the short tree strategy computing $P$
	are correct. But  $\bar{P}|\delta_0^{\bar{P}}=P|\delta_0^P$ and $\bar{P}$ is an iterable, $\delta_0^{\bar{P}}$-sound,
	$\Mswsw$-like premouse, and with $\alpha$ above minimal, comparison gives  $\bar{P}=M^\Tt_\alpha$.
	Finally note that $\mathscr{I}=\pi^{-1}``\mathscr{I}^N$ is a club class of generating
	indiscernibles for $\bar{P}$, so $\mathscr{I}=\mathscr{I}^{\bar{P}}$.
\end{proof}
\begin{dfn}\label{dfn:N_alpha_P-stable}Let $N$ be a non-dropping $\Sigma$-iterate of $M$.
	Define $(\M_\infty^{\overline{\ext}})_N$ as the direct limit of
	the iterates $\bar{P}$, for $P\in\mathscr{F}^N$ (the notation $\bar{P}$ as in Lemma \ref{lem:P-bar_is_iterate}). (Cf.~Definition \ref{dfn:M_infty^ext^N}.)
	For $P\in\mathscr{F}^N$ let
	$\pi_{\bar{P}P}:\bar{P}\to P$ be the uncollapse map and
	\[ H^P=\Hull_1^P(\delta_0^P\cup\mathscr{I}^N)=\rg(\pi_{\bar{P}P}).\]

	Suppose further that $N$ is $\kappa_0^N$-sound.
	Let $\alpha\in\OR$ and $P\in\mathscr{F}^N$.
	We say that $\alpha$ is \emph{$(P,\mathscr{F}^N)$-stable}
	iff whenever $P\preceq Q\in\mathscr{F}^N$, we have $\alpha\in H^Q$ and
	\[
	\pi_{\bar{Q}Q}\com i_{\bar{P}\bar{Q}}\com\pi_{\bar{P}P}^{-1}(\alpha)=\alpha.\qedhere\]
\end{dfn}
The definition of stability above is more complicated than the version for $M$,
because it can be that $P\leq Q\in\mathscr{F}^N$ but $Q$ is not actually
an iterate of $P$ (although $Q|\delta_0^Q$ is an iterate of $P|\delta_0^P$).

\begin{lem}\label{lem:M_infty_of_iterate_N}
	Let $N$ be a non-dropping $\Sigma$-iterate of $M$.
	Then:
	\begin{enumerate}
		\item\label{item:M_infty^ext-bar^N_is_true_iterate} $P=(\M_\infty^{\overline{\ext}})_N$ is a $\delta_0^P$-sound non-dropping
		$\Sigma$-iterate of $M$.
		\item\label{item:M_infty^N_agmt_with_P}  $\M_\infty^N|\delta_\infty^N=P|\delta_0^P$.
		\item\label{item:if_N_to_kappa_0^M=M_to_kappa_0^M} If $M|\kappa_0^M\pins N$ then $\M_\infty|\delta_\infty\in N$
		and $P$ is a $\Sigma_{\M_\infty}$-iterate of $\M_\infty$.
	\end{enumerate}
\end{lem}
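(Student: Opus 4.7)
My plan follows the three parts.

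For part (1), I will exhibit $\{\bar{P} : P \in \mathscr{F}^N\}$ as a directed subsystem of $\delta_0$-sound non-dropping $\Sigma$-iterates of $M$, and identify its direct limit with $(\M_\infty^{\overline{\ext}})_N$. By Lemma \ref{lem:P-bar_is_iterate}, each $\bar{P}$ is already such an iterate, with $\pi_{\bar{P}P}``\mathscr{I}^{\bar{P}} = \mathscr{I}^N$. For $P \preceq Q$ in $\mathscr{F}^N$, the internal iteration map $i_{PQ}: P \to Q$ fixes $\mathscr{I}^N$ pointwise (by the $N$-analog of Lemma \ref{lem:inds_fixed}, applied to the tree $\Tt_{PQ,\sss}$, which is based on $P|\delta_0^P$ and gives $\mathscr{I}^P = \mathscr{I}^N = \mathscr{I}^Q$). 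So $i_{PQ}$ restricts to an elementary embedding $H^P \to H^Q$ and descends to a map $\bar{P} \to \bar{Q}$. By $\delta_0$-soundness and uniqueness of comparison between $\delta_0$-sound $\Sigma$-iterates, this descent coincides with the $\Sigma$-iteration map $i_{\bar{P}\bar{Q}}$, and commutativity of these maps is inherited from that of the $i_{PQ}$'s. Wellfoundedness of the direct limit and its realization as a $\delta_0$-sound non-dropping $\Sigma$-iterate then follow by an argument parallel to the verification that $\M_\infty$ itself is such an iterate, exploiting hull condensation and the commutativity of $\Gamma = \Sigma^\stk$ from Fact \ref{fact:Sigma_properties}.

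For part (2), inside $N$, $\M_\infty^N|\delta_\infty^N$ is by definition the direct limit of $\{Q|\delta_0^Q : Q \in \mathscr{F}^N\}$ under the maps $i_{PQ}\rest(P|\delta_0^P)$. By Lemma \ref{lem:P-bar_is_iterate}, $\bar{Q}|\delta_0^{\bar{Q}} = Q|\delta_0^Q$, and the restriction $i_{\bar{P}\bar{Q}}\rest(\bar{P}|\delta_0^{\bar{P}})$ coincides with $i_{PQ}\rest(P|\delta_0^P)$, both being the unique $\Sigma$-iteration map between these iterates of $M|\delta_0^M$, an absolute notion. Hence the two direct limit computations agree and $P|\delta_0^P = \M_\infty^N|\delta_\infty^N$. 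For part (3), assume $M|\kappa_0^M \pins N$. Then the tree $\Tt_N$ uses only extenders with critical point $> \kappa_0^M$ (otherwise $M|\kappa_0^M$ would be disturbed), so $\kappa_0^N = \kappa_0^M$ and $M|\delta_0^M = N|\delta_0^N$; the short trees yielding elements of $\mathscr{F}$ live below $\kappa_0^M$, and the relevant Q-structures (segments of the respective P-constructions) have their $\delta_0$-parts computed identically in $M$ and in $N$. Therefore the two local systems agree below $\delta_0$, giving $\M_\infty|\delta_\infty = \M_\infty^N|\delta_\infty^N$ as elements of $N$. Combined with part (2), $P|\delta_0^P = \M_\infty|\delta_\infty$. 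Both $P$ and $\M_\infty$ are $\delta_0$-sound non-dropping $\Sigma$-iterates of $M$ sharing the same $\delta_0$-part, so by the commutativity and positionality of $\Gamma$ (Fact \ref{fact:Sigma_properties}\ref{item:Slist_positional},\ref{item:Slist_commuting}), together with normalization, there is a $\Sigma_{\M_\infty}$-iteration tree from $\M_\infty$ to $P$ living above $\delta_\infty$.

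The main obstacle is the verification in part (1) that the direct limit is realized as a genuine $\Sigma$-iterate of $M$, not merely as a wellfounded limit. This is the analog, for the subsystem indexed by $\mathscr{F}^N$, of the proof that $\M_\infty$ itself is a $\Sigma$-iterate of $M$. The argument rests on hull condensation for $\Sigma$ and the commutativity of $\Gamma$ (Fact \ref{fact:Sigma_properties}), which together allow one to normalize the family of $\Sigma$-maximal trees leading from $M$ to each $\bar{P}$ and recognize the direct limit as the correct final model.
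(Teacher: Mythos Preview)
Your treatment of parts (1) and (2) is essentially along the lines of the paper's (very terse) proof, though you spell out more detail. The paper regards part (1) as ``immediate from the definitions'': once Lemma~\ref{lem:P-bar_is_iterate} gives that each $\bar P$ is a $\delta_0^{\bar P}$-sound non-dropping $\Sigma$-iterate of $M$, the directed system of such iterates under the correct iteration maps has direct limit which is again such an iterate, by the standard normalization machinery. For part (2) the paper just cites Lemma~\ref{lem:P-bar_is_iterate} together with the observation that the iteration maps fix a tail of indiscernibles. Your version of part~(2) is a bit loose when you invoke ``$i_{PQ}\rest(P|\delta_0^P)$'' as a well-defined $\Sigma$-iteration map, since $P$ itself need not be a $\Sigma$-iterate of $M$; but the underlying identification via $\bar P|\delta_0^{\bar P}=P|\delta_0^P$ is correct.

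Your part (3) contains a genuine error. The assertion that $\Tt_N$ uses only extenders with critical point ${>}\,\kappa_0^M$ is false: an extender $E$ with $\crit(E)=\kappa_0^M$ does \emph{not} disturb $M|\kappa_0^M$. Indeed the very case $N=\Ult(M,E)$ with $\crit(E)=\kappa_0^M$ satisfies $M|\kappa_0^M\pins N$, yet $\kappa_0^N=\lambda(E)>\kappa_0^M$ and $\M_\infty^N|\delta_\infty^N=i_E(\M_\infty|\delta_\infty)$ is strictly larger than $\M_\infty|\delta_\infty$. So your conclusions ``$\kappa_0^N=\kappa_0^M$'' and ``$\M_\infty|\delta_\infty=\M_\infty^N|\delta_\infty^N$'' both fail, and consequently the claim that the tree from $\M_\infty$ to $P$ ``lives above $\delta_\infty$'' is wrong as well: in general $P|\delta_0^P$ properly extends $\M_\infty|\delta_\infty$, and the tree is based on $\M_\infty|\delta_\infty$.

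The paper avoids this by arguing instead that $M|\kappa_0^{+M}\pins N$ (which \emph{does} follow from $M|\kappa_0^M\pins N$ for a non-dropping iterate: the first extender used has index ${>}\kappa_0^M$, hence critical point ${\geq}\,\kappa_0^M$, hence index ${>}\kappa_0^{+M}$). Since $\M_\infty|\delta_\infty$ is definable over $M|\kappa_0^{+M}$, this gives $\M_\infty|\delta_\infty\in N$. One then observes that the system $\{\bar P:P\in\mathscr F^N\}$ is cofinal beyond $\M_\infty$ (compare $\M_\infty|\delta_\infty$ with any $Q\in\mathscr F^N$ inside $N$ using the short tree strategies), so the direct limit $(\M_\infty^{\overline{\ext}})_N$ is a $\Sigma_{\M_\infty}$-iterate of $\M_\infty$.
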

\begin{proof}
	Part \ref{item:M_infty^ext-bar^N_is_true_iterate} is immediate from the definitions. Part \ref{item:M_infty^N_agmt_with_P} follows from Lemma \ref{lem:P-bar_is_iterate}, since infinitely
	many	indiscernibles are fixed by the iteration maps.	For part \ref{item:if_N_to_kappa_0^M=M_to_kappa_0^M}, note that because $M|\kappa_0^M\pins N$
	and $N$ is a non-dropping iterate, in fact $M|\kappa_0^{+M}\pins N$,
	so $\M_\infty|\delta_\infty\in N$, and then it is an easy consequence of part \ref{item:M_infty^ext-bar^N_is_true_iterate}.
\end{proof}

\begin{lem}\label{lem:every_alpha_ev_stable}
	Let $N$ be a $\kappa_0^N$-sound non-dropping
	$\Sigma$-iterate of $M$.
	Then:
	\begin{enumerate}
		\item\label{item:H^P_subset_H^Q} For each $P\preceq Q\in\mathscr{F}^N$,
		we have $H^P\inter\OR\sub H^Q\inter\OR$.
		\item\label{item:each_alpha_ev_stable} For each $\alpha\in\OR$
		there is $P\in\mathscr{F}^N$
		such that
		$\alpha$ is $(P,\mathscr{F}^N)$-stable.
	\end{enumerate}
\end{lem}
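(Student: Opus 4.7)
The proof rests on Lemma~\ref{lem:P-bar_is_iterate}, which for each $P\in\mathscr{F}^N$ produces a $\delta_0^{\bar P}$-sound non-dropping $\Sigma$-iterate $\bar P$ of $M$ with $\bar P|\delta_0^{\bar P}=P|\delta_0^P$, and on the wellfoundedness of the direct limit $(\M_\infty^{\overline{\ext}})_N$ from Lemma~\ref{lem:M_infty_of_iterate_N}\ref{item:M_infty^ext-bar^N_is_true_iterate}. As a preliminary, I would observe that $\mathscr{I}^{\bar P}=\mathscr{I}^M=\mathscr{I}^N$ as classes of ordinals for every $P\in\mathscr{F}^N$, since non-dropping $\Sigma$-iteration maps fix the class of $M$-indiscernibles (cf.~Definition~\ref{dfn:inds} and Lemma~\ref{lem:inds_fixed}); consequently $\pi_{\bar P P}\rest\mathscr{I}^N$ is an order-preserving bijection of the club class $\mathscr{I}^N$ onto itself, hence the identity on $\mathscr{I}^N$, and it is the identity on $\delta_0^P$ by construction. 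The analogous remarks hold for $\bar Q$ and $\pi_{\bar Q Q}$.

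For part (1), given $P\preceq Q$ in $\mathscr{F}^N$ and $\alpha\in H^P\cap\OR$, I would split cases on $\alpha$ versus $\delta_0^Q$. If $\alpha<\delta_0^Q$, then $\alpha\in\delta_0^Q\sub H^Q$ trivially. Otherwise $\alpha\geq\delta_0^Q$; writing $\alpha=t^P(\vec\xi,\vec\gamma)$ with $\vec\xi<\delta_0^P$ and $\vec\gamma\in\mathscr{I}^N$, the plan is to translate this $\Sigma_1$-definition in $P$ into a $\Sigma_1$-definition in $Q$ from parameters in $\delta_0^Q\cup\mathscr{I}^N$, using that $\es^P$ and $\es^Q$ both match $\es^N$ above $\delta_0^Q$, and that $P|\delta_0^P\in Q|\delta_0^Q$ (being the base of $\Tt_{PQ}\in\mathbb{U}^N$), so that $Q$ can recover $P$ above $\delta_0^Q$ as the canonical $<\delta_0^Q$-ground of $N$ extending $P|\delta_0^P$---a uniformly $\Sigma_1^Q$-definable object via an internal P-construction, in parallel with Lemma~\ref{grounds-identify-V1} pushed into the $N$-setting.

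For part (2), fix $\alpha\in\OR$. By (1), the set $\mathcal{C}_\alpha=\{P\in\mathscr{F}^N:\alpha\in H^P\}$ is upward closed in $(\mathscr{F}^N,\preceq)$; it is non-empty by a density argument (choosing $P$ whose $\delta_0^P$ is sufficiently large in $N$, or otherwise noting that every ordinal is eventually captured as a Skolem term in some $\bar P$, mapped via $\pi_{\bar P P}$). Fix $P_0\in\mathcal{C}_\alpha$; for $P_0\preceq P\preceq Q$, the transferred value $\alpha_{PQ}=(\pi_{\bar Q Q}\circ i_{\bar P\bar Q}\circ\pi_{\bar P P}^{-1})(\alpha)$ lies in $H^Q$ and constitutes the trajectory of $\alpha$ under the direct-limit system $(\M_\infty^{\overline{\ext}})_N$, whose wellfoundedness forces this trajectory to stabilize; a standard direct-limit argument then shows the stable value equals $\alpha$ itself for some $P\succeq P_0$, yielding $(P,\mathscr{F}^N)$-stability. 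The main obstacle lies in the $\alpha\geq\delta_0^Q$ case of part (1): the literal set inclusion demands $\alpha\in H^Q$ rather than merely $i_{PQ}(\alpha)\in H^Q$, which is subtle because $\pi_{\bar P P}$ and $\pi_{\bar Q Q}\circ i_{\bar P\bar Q}$ do not coincide away from the generators, and the resolution requires invoking the internal reconstruction of $P$ inside $Q$ described above.
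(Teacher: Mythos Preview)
Your approach to part~\ref{item:H^P_subset_H^Q} is workable but circuitous. The paper's argument is a single observation: $H^P\cap\OR=\Hull_1^N(\delta_0^P\cup\mathscr{I}^N)\cap\OR$, which immediately gives monotonicity in $\delta_0^P$. Both inclusions follow from the fact that $N=P[g]$ via the extender algebra at $\delta_0^P$, together with the definability of $P|\delta_0^P$ over $N|\delta_0^P$ (recall condition~(d) of Definition~\ref{defn_points_from_the_system}). Your detour through $Q$ recovers the same content, but going through $N$ is cleaner and avoids the need to reconstruct $P$ inside $Q$.

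Part~\ref{item:each_alpha_ev_stable} has a genuine gap. Your ``standard direct-limit argument'' presupposes the inequality
\[
\pi_{\bar{Q}Q}\circ i_{\bar{P}\bar{Q}}\circ\pi_{\bar{P}P}^{-1}(\alpha)\geq\alpha
\]
for all $P\preceq Q$ with $\alpha\in H^P$, but you never establish it---and wellfoundedness of $(\M_\infty^{\overline{\ext}})_N$ alone does not force a trajectory to stabilize \emph{at the value $\alpha$} without some such monotonicity. The paper supplies this inequality by identifying the external transfer map with the \emph{internal} covering map $\pi_{Ps,Qs}^N$ (for suitable $s\in[\mathscr{I}^N]^{<\omega}$ with $\alpha\in H^P_s$), and then pulling back via $i_{MN}$: in $M$, the analogous maps $\pi_{Rs',Ss'}$ are restrictions of genuine iteration maps between $\delta_0$-sound models, hence are $\geq\id$ on ordinals; this first-order fact transfers to $N$ by elementarity. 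The identification of $\pi_{Ps,Qs}^N$ with the external map requires checking that any generic branch witnessing weak $s$-iterability must move the relevant indiscernible theory correctly, which in turn uses the agreement between $P,\bar{P}$ and $Q,\bar{Q}$ on the relevant data.

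A separate issue: your preliminary claim that $\mathscr{I}^{\bar{P}}=\mathscr{I}^M=\mathscr{I}^N$ is unjustified and generally false. Lemma~\ref{lem:inds_fixed} only applies to trees in $\mathbb{U}^M$, not to arbitrary non-dropping $\Sigma$-iterates; the tree producing $N$ need not fix $\mathscr{I}^M$ pointwise. What Lemma~\ref{lem:P-bar_is_iterate} actually gives is $\pi_{\bar{P}P}{``}\mathscr{I}^{\bar{P}}=\mathscr{I}^N$, not that $\pi_{\bar{P}P}\rest\mathscr{I}^N=\id$. Fortunately you don't actually use this claim in your main argument, but it should be removed.
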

\begin{proof}
	Part \ref{item:H^P_subset_H^Q}: We in fact that $H^P\inter\OR=\Hull_1^N(\delta_0^P\cup\mathscr{I}^N)$
	(which immediately gives $H^P\sub H^Q$). This is just by extender algebra genericity and definability
	of $P|\delta_0^P$ over $N|\delta_0^P$.

	Part \ref{item:each_alpha_ev_stable}:
	Since $N$ is $\kappa_0^N$-sound, we can fix $s\in\vec{\mathscr{I}^N}$
	and $\beta<\kappa_0^N$ and a term $t$ such that $\alpha=t^N(s,\beta)$.
	Then taking $P\in\mathscr{F}^N$ with $\beta<\delta_0^P$,
	we get $\alpha\in H^P$.
	Now since $(\M_\infty^{\overline{\ext}})^N$ is wellfounded,
	it suffices to see that
	\[ \pi_{\bar{Q}Q}\com i_{\bar{P}\bar{Q}}\com\pi_{\bar{P}P}^{-1}(\alpha)\geq\alpha \]
	whenever $P\preceq Q\in\mathscr{F}^N$ and $\alpha\in H^P$. So let $s\in\vec{\mathscr{I}^N}$ with $\alpha\in H^P_s$.
	Then $\pi_{Ps,Qs}(\alpha)\geq\alpha$
	whenever $P\preceq Q\in\mathscr{F}^N$,
	because $M$ satisfies the same about $i_{MN}^{-1}(s)$
	(because whenever $R\preceq S\in\mathscr{F}^M$,
	$S$ is actually an iterate of $R$,
	and these iterates are $\delta_0$-sound, etc). But note that \[ \pi_{Ps,Qs}(\alpha)=
	\pi_{\bar{Q}Q}\com i_{\bar{P}\bar{Q}}\com\pi_{\bar{P}P}^{-1}(\alpha)
	\]
	(because any generic branch witnessing the definition of $\pi_{Ps,Qs}^N$
	must move the relevant theory of indiscernibles and elements ${<\gamma^P_s}$ correctly, since these agree appropriately between $P,\bar{P}$ and $Q,\bar{Q}$). This gives the desired conclusion.
\end{proof}

The following lemma is proved like a similar fact in \cite{local_mantles_of_Lx_v2},
integrated with part of the argument for \cite[Lemma 2.9]{vm1}:
\begin{lem}\label{lem:M_infty_of_kappa_0-sound_iterate}Let $N$ be a $\kappa_0^N$-sound non-dropping $\Sigma$-iterate of $M$ and $N'$ a $\kappa_0^{N'}$-sound
	non-dropping $\Sigma_N$-iterate of $N$. Let $\bar{N}$ be the $\delta_0^N$-core of $N$.
	Then:
	\begin{enumerate}
		\item\label{item:I^M_infty=I^M} $\mathscr{I}^{\M_\infty}=\mathscr{I}^M$
		and $i_{M\M_\infty}\rest\mathscr{I}^M=\id=*\rest\mathscr{I}^M$,
		\item\label{item:I^M_infty^N=I^N} $\mathscr{I}^{\M_\infty^N}=\mathscr{I}^N$
		and $*^N\rest\mathscr{I}^N=\id$,

		\item\label{item:M_infty^N_is_ext-bar_M_infty} $\M_\infty^N=i_{MN}(\M_\infty)=(\M_\infty^{\overline{\ext}})_N$
		is a $\delta_0^{\M_\infty^N}$-sound $\Sigma_{\bar{N}}$-iterate of $\bar{N}$.
		\item\label{item:iterate_of_M_infty} If $N|\kappa_0^N\pins N'$ then
		\begin{enumerate}
			\item\label{item:M_infty^N_is_iterate}  $\M_\infty^{N'}$ is a $\Sigma_{\M_\infty^N}$-iterate
			of $\M_\infty^N$, and
			\item\label{item:i_MN_restricts_to_iteration_map} $i_{NN'}\rest\M_\infty^N$ is just the $\Sigma_{\M_\infty^N}$-iteration map
			$\M_\infty^N\to\M_\infty^{N'}$.
		\end{enumerate}
		\item\label{item:not_an_iterate} If $\bar{N}\neq N$ then $\M_\infty^N$ is not a $\Sigma_N$-iterate
		of $N$.
	\end{enumerate}
\end{lem}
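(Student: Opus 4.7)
The plan is to prove (1) and (2) by standard indiscernible preservation, devote the main effort to (3), and derive (4) and (5) from (3) by elementarity together with soundness tracking.

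For (1): by Lemma \ref{lem:inds_fixed}, every $i_{PQ}$ in $\mathscr{D}^{\ext}$ fixes $\mathscr{I}^M$ pointwise, so $i_{M\M_\infty}$ does as well. Cofinality of $\{\delta_0^P : P\in\mathscr{F}\}$ in $\delta_\infty$ together with $P=\Hull^P(\mathscr{I}^M\cup\delta_0^P)$ (Fact \ref{fact:inds_preserved}) gives $\M_\infty=\Hull^{\M_\infty}(\mathscr{I}^M\cup\delta_\infty)$; the uniqueness clause of Fact \ref{fact:inds_preserved} then yields $\mathscr{I}^{\M_\infty}=\mathscr{I}^M$. Each $\alpha\in\mathscr{I}^M$ is $P$-stable for every $P$, so $\alpha^*=\pi_{Ps,\infty}(\alpha)=\alpha$ for appropriate $(P,s)$. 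Part (2) is the same argument internalized to $N$, using $\mathscr{I}^N=i_{MN}``\mathscr{I}^M$.

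Part (3) is the crux. The identity $\M_\infty^N=i_{MN}(\M_\infty)$ is by elementarity. To establish $\M_\infty^N=(\M_\infty^{\overline{\ext}})_N$, I would adapt the $\chi$-argument of Lemma \ref{c0}: define $\chi:\M_\infty^N\to(\M_\infty^{\overline{\ext}})_N$ on generators $\pi^N_{Ps,\infty}(\alpha)$, for $\alpha\in H^P_s$ that is $(P,\mathscr{F}^N)$-stable (Lemma \ref{lem:every_alpha_ev_stable}), via the composition $\pi_{\bar{Q}Q}\com i_{\bar{P}\bar{Q}}\com\pi_{\bar{P}P}^{-1}$ supplied by Lemma \ref{lem:P-bar_is_iterate}, followed by the external direct limit map into $(\M_\infty^{\overline{\ext}})_N$. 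Well-definedness and cofinal $\Sigma_0$-elementarity of $\chi$ follow from the commutativity established in Lemma \ref{lem:every_alpha_ev_stable}, which substitutes for the simpler pointwise stability available when $N$ is $\delta_0^N$-sound; the surjectivity-and-identity conclusion then proceeds as in Lemma \ref{c0}. For the $\Sigma_{\bar{N}}$-iteration claim: $\bar{N}$ is a $\delta_0^{\bar{N}}$-sound $\Sigma$-iterate of $M$ with $\bar{N}|\delta_0^{\bar{N}}=N|\delta_0^N$, so Lemma \ref{lem:delta_0-sound_iterate_M_infty} applies to $\bar{N}$ and produces $\M_\infty^{\bar{N}}$ as a $\delta_0$-sound $\Sigma_{\bar{N}}$-iterate of $\bar{N}$; the $\chi$-argument applied to $\bar{N}$ identifies $\M_\infty^{\bar{N}}$ with $\M_\infty^N$, giving the conclusion.

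For (4): $N|\kappa_0^N\pins N'$ forces the tree $\Tt$ from $N$ to $N'$ to lie above $\kappa_0^N=\kappa_0^{\M_\infty^N}$. By Lemma \ref{restr_of_extenders_are_there} internalized to $N$, these extenders restrict to extenders on $\M_\infty^N$'s sequence (via the $\vV_1^N$ presentation), so copying yields a normal tree $\Tt^*$ on $\M_\infty^N$ whose iteration map coincides with $i_{NN'}\rest\M_\infty^N$; branch condensation (Lemma \ref{lem:branch_con}) certifies $\Tt^*$ as being via $\Sigma_{\M_\infty^N}$, and elementarity gives $i_{NN'}(\M_\infty^N)=\M_\infty^{N'}$. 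For (5): suppose toward contradiction $\M_\infty^N=M^{\Tt^*}_\theta$ is a non-dropping $\Sigma_N$-iterate via $\Tt^*$, with iteration map $i^{\Tt^*}:N\to\M_\infty^N$. Then $(\Tt_N,\Tt^*)$ normalizes to the unique tree $\Tt_{\M_\infty^N}$ from $M$ to $\M_\infty^N$, and in particular $i^{\Tt^*}\com i^{\Tt_N}=i^{\Tt_{\M_\infty^N}}$. Since $\bar{N}\neq N$, pick $x\in N\setminus\Hull^N(\delta_0^N\cup\rg(i^{\Tt_N}))$; by elementarity $i^{\Tt^*}(x)$ lies outside $\Hull^{\M_\infty^N}(i^{\Tt^*}(\delta_0^N)\cup\rg(i^{\Tt_{\M_\infty^N}}))=\Hull^{\M_\infty^N}(\delta_0^{\M_\infty^N}\cup\rg(i^{\Tt_{\M_\infty^N}}))$, contradicting the $\delta_0^{\M_\infty^N}$-soundness of $\M_\infty^N$ from (3). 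The main obstacle is the $\chi$-argument in (3): the possible failure of $\delta_0^N$-soundness of $N$ forces the more delicate use of $(P,\mathscr{F}^N)$-stability and the associated commutativity calculation, and requires a careful comparison of the internal and covering direct limits for $N$ versus $\bar{N}$.
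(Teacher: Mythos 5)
The opening step of your part (1) — ``every $i_{PQ}$ fixes $\mathscr{I}^M$ pointwise, so $i_{M\M_\infty}$ does as well'' — is a genuine gap, and it infects the rest of (1). A direct-limit map can move an ordinal that is fixed by every finite-stage map in the system: wellfoundedness of the limit gives $\pi_{P\infty}(\alpha)\geq\alpha$ for free, but the reverse inequality requires knowing that for \emph{all} $Q\succeq P$ and $\gamma<\alpha$ one has $\pi_{Q\infty}(\gamma)<\alpha$, which is precisely the statement being proved. (Easy toy examples of non-linear directed systems show the phenomenon is real.) Consequently your next sentence, asserting $\M_\infty=\Hull^{\M_\infty}(\mathscr{I}^M\cup\delta_\infty)$, is circular: it already presupposes $i_{M\M_\infty}\rest\mathscr{I}^M=\id$. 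The paper instead argues by contradiction with soundness and indiscernibility: take the least $\kappa\in\mathscr{I}^M$ moved by $i_{M\M_\infty}$, use $\delta_0^{\M_\infty}$-soundness and $\mathscr{I}^{\M_\infty}=i_{M\M_\infty}``\mathscr{I}^M$ to write $\kappa=t^{\M_\infty}(\vec\kappa,\alpha)$ with $\vec\kappa\in\mathscr{I}^{\M_\infty}$ and $\alpha<\delta_0^{\M_\infty}$, shift $\vec\kappa\setminus\kappa$ up into $\mathscr{I}^M$, and then use that $\M_\infty$ is a lightface $M$-class to contradict indiscernibility of $\mathscr{I}^M$ over $M$. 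Some argument of this kind is needed; nothing you wrote replaces it.

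Your part (4)(b) is also off. You claim the extenders of the tree $\Tt$ from $N$ to $N'$ ``restrict to extenders on $\M_\infty^N$'s sequence (via the $\vV_1^N$ presentation).'' This conflates $\vV_1^N$ with $\M_\infty^N$: the sequence $\es^{\vV_1^N}$ above $\gamma^{\vV_1^N}$ is obtained by restricting $\es^N$, but $\es^{\M_\infty^N}$ is not — $\M_\infty^N$ is a $\delta_0^{\M_\infty^N}$-sound iterate whose internal extender sequence has nothing to do with restrictions of $\es^N$. There is thus no direct ``copying'' of $\Tt$ to a normal tree on $\M_\infty^N$, and Lemma \ref{lem:branch_con} has no tree to certify. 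The paper handles (4)(b) by an ordinal-by-ordinal computation: given $\alpha<\delta_0^{\M_\infty^N}$, fix $s\in[\mathscr{I}^N]^{<\om}$ and a covering pair $(P,s)$ with $\pi^N_{Ps,\infty}(\bar\alpha)=\alpha$, then push through $i_{NN'}$ (which sends $(P,s)$ to $(P',s')$ with $\overline{P'}=\bar P$), and use the identity of the $\chi$ map for $N'$ to equate $i_{NN'}(\alpha)$ with $i_{\M_\infty^N\M_\infty^{N'}}(\alpha)$. You would need to reproduce something along those lines. Your (4)(a) is fine (the paper also gets it quickly from $\delta_0$-soundness), and your parts (3) and (5) are in the right spirit, though in (3) the claim that ``the $\chi$-argument applied to $\bar N$ identifies $\M_\infty^{\bar N}$ with $\M_\infty^N$'' conceals work (the internal systems of $N$ and $\bar N$ are genuinely different; only the associated $\bar P$'s agree, which is what the paper's $\chi:\M_\infty^N\to(\M_\infty^{\overline{\ext}})_N$ is designed to exploit directly).
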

\begin{proof}
	Part \ref{item:I^M_infty=I^M}:	We have $\mathscr{I}^{\M_\infty}=i_{M\M_\infty}``\mathscr{I}^M$,
	and $\M_\infty$ is $\delta_0^{\M_\infty}$-sound.
	Suppose $\kappa\in\mathscr{I}^M$ is least such that $i_{M\M_\infty}(\kappa)>\kappa$,
	fix a tuple $\vec{\kappa}\in\mathscr{I}^{\M_\infty}$ and a term $t$
	and $\alpha<\delta_0^{\M_\infty}$
	such that $\kappa=t^{\M_\infty}(\vec{\kappa},\alpha)$, and note
	we may assume that $\vec{\kappa}\cut\kappa\in\mathscr{I}^M\cut(\kappa+1)$
	by shifting this part up, but since $\vec{\kappa}\inter\kappa\sub\mathscr{I}^M$
	and $\M_\infty$ is a lightface $M$-class, this gives a contradiction.

	Part \ref{item:M_infty^N_is_ext-bar_M_infty}:
	Note that  Lemma \ref{lem:M_infty_of_iterate_N} applies.
	Like  in \S\ref{sec:ground_generation},
	we will define an elementary
	$\chi:\M_\infty^N\to(\M_\infty^{\overline{\ext}})_N$ and show
	that $\chi=\id$.
	We can cover $\mathscr{D}^N$ by
	with indices of the form $(P,u)$ with $u\in[\mathscr{I}^N]^{<\om}$.
	For given any $(Q,s)\in\mathscr{D}^N$,
	by  Lemma \ref{lem:every_alpha_ev_stable},
	we can fix
	$P\in\mathscr{F}^N$ such that $s$ is $(P,\mathscr{F}^N)$-stable
	(see Definition \ref{dfn:N_alpha_P-stable}) and $Q\preceq P$,
	and with $\delta_0^P$ large enough that there is $u\in[\mathscr{I}^N]^{<\om}$
	such that $s\in H^P_u$, which suffices.
	Because of this covering, we can define $\chi:\M_\infty^N\to(\M_\infty^{\overline{\ext}})_N$
	in the natural way; i.e.~for each such $(P,u)$ and $x\in H^P_u$,
	set $\chi(\pi_{Pu,\infty}^N(x))=i_{\bar{P}\M_\infty^N}(\pi_{\bar{P}P}^{-1}(x))$;
	note we have $u,x\in\rg(\pi_{\bar{P}P})$. It is now easy to see that $\M_\infty^N=(\M_\infty^{\overline{\ext}})_N$ and $\chi=\id$.

	Part \ref{item:I^M_infty^N=I^N}:
	By part \ref{item:I^M_infty=I^M} and since $i_{MN}``\mathscr{I}^M=\mathscr{I}^N$,
	we get $*^N\rest\mathscr{I}^N=\id$. And
	by part \ref{item:M_infty^N_is_ext-bar_M_infty} and its proof,
	$\M_\infty^N=\Hull_1^{\M_\infty^N}(\delta_0^{\M_\infty^N}\cup (*^N``\mathscr{I}^N))$.
	Since $\M_\infty^N$ is also a lightface $N$-class,
	$\mathscr{I}^N$ are model theoretic indiscernibles for $\M_\infty^N$.
	Therefore $\mathscr{I}^{\M_\infty^N}=\mathscr{I}^N$.

	Part \ref{item:iterate_of_M_infty}:
	\ref{item:M_infty^N_is_iterate} is an easy consequence
	of the fact that $\M_\infty^N$ and $\M_\infty^{N'}$ are $\delta_\infty^N$- and $\delta_\infty^{N'}$-sound
	respectively.
	For \ref{item:i_MN_restricts_to_iteration_map},
	we argue partly like in \cite[Lemma 2.9(a)]{vm1}, but somewhat differently.\footnote{
		Moreover,
		the proof
		of \cite[Lemma 2.9(a)]{vm1} has a bug: with notation as there, it talks about iteration maps
		$\pi_{j(N),N^*}$ and $\pi_{N^*,j(\M_\infty)}$, with the implication
		that $j(\M_\infty)$ is in fact an iterate of $j(N)$, but this is not true,
		as $j(N)$ is not $\delta_0^{j(N)}$-sound,
		whereas $j(\M_\infty)$ is $\delta_0^{j(\M_\infty)}$-sound.} So, note that by the preceding parts, $\M_\infty^{N'}$ is indeed an iterate of $\M_\infty^N$,
	and $i_{NN'}\rest\mathscr{I}^{\M_\infty^N}=i_{\M_\infty^N\M_\infty^{N'}}\rest\mathscr{I}^{\M_\infty^N}$,
	so  it just remains to see that \[i_{NN'}\rest\delta_0^{\M_\infty^N}= i_{\M_\infty^N\M_\infty^{N'}}\rest\delta_0^{\M_\infty^N}.\]
	So let $\alpha<\delta_0^{\M_\infty^N}$. Let $s\in[\mathscr{I}^{\M_\infty^N}]^{<\om}=[\mathscr{I}^N]^{<\om}$
	be such that $\alpha<\gamma^{\M_\infty^N}_s$. Fix $P\in\mathscr{F}^N$
	with some $\bar{\alpha}<\gamma^P_s$
	such that $\pi_{Ps,\infty}^N(\bar{\alpha})=\alpha$. Note that $\M_\infty^N$
	is a $\Sigma_{\bar{P}}$-iterate of $\bar{P}$ and
	\begin{equation}\label{eqn:i_Pbar(alpha-bar)=alpha} i_{\bar{P}\M_\infty^N}(\bar{\alpha})=\alpha.\end{equation}
	Now
	\[ i_{NN'}(\alpha)=\pi^{N'}_{P's',\infty}(\bar{\alpha}) \]
	where $P'=i_{NN'}(P)\in\mathscr{F}^{N'}$
	and $s'=i_{NN'}(s)\in\vec{\mathscr{I}^{N'}}$.
	But then with the map $\chi$ defined as earlier, but for $N'$ instead of $N$,
	\[
	i_{NN'}(\alpha)=
	\chi(\pi^{N'}_{P's',\infty}(\bar{\alpha}))=
	i_{\overline{P'}\M_\infty^{N'}}(\pi^{-1}_{\overline{P'}P'}(\bar{\alpha}))=
	i_{\bar{P}\M_\infty^{N'}}(\bar{\alpha})=
	i_{\M_\infty^N\M_\infty^{N'}}(\alpha),\]
	using that $\chi=\id$,
	$\overline{P'}=\bar{P}$, $\bar{\alpha}<\crit(\pi_{\overline{P'}P'})$, and line (\ref{eqn:i_Pbar(alpha-bar)=alpha}).

	Part \ref{item:not_an_iterate}: If $\bar{N}\neq N$ then $N$ is not $\delta_0^N$-sound,
	but then any non-dropping iterate $O$ of $N$ is non $\delta_0^O$-sound,
	so $O\neq(\M_\infty^{\overline{\ext}})_N=\M_\infty^N$.
\end{proof}

Note that with the preceding lemma, we have completed the proofs of
Lemmas \ref{lem:i_E(M_infty)_is_iterate},
\ref{M_knows_how_to_iterate_Minfty_up_to_its_woodin} and \ref{V_1_knows_how_to_iterate_Minfty_up_to_its_woodin}.
\subsection{Iterability of $\vV_1$}\label{subsec:iterability_of_vV_1}
In this subsection we will define a normal iteration strategy $\Sigma_{\vV_1}$
for $\vV_1$ in $V$.
We will first define and analyze the action of $\Sigma_{\vV_1}$ for trees based on $\M_\infty|\delta_\infty$.

\subsubsection{Tree translation from $M$ to $\vV_1$}

The iteration strategy for $\vV_1$ will be tightly connected to that for $M$,
as we describe now.
But first the basic notion under consideration:

\begin{definition}
	Let $N$ be a $\vV_1$-like Vsp. A \emph{$0$-maximal iteration tree $\Tt$ on $N$ of length $\lambda\geq 1$} is a system
	\[ \left(<_\Tt,\left<M_\alpha,m_\alpha\right>_{\alpha<\lambda},
	\left<E_\alpha\right>_{\alpha+1<\lambda}\right) \]
	with the usual properties, except that when $E_\alpha$ is a long extender (which is allowed),
	then $\pred^\Tt(\alpha+1)$ is the least $\beta\leq\alpha$
	such that $[0,\beta]_\Tt$ does not drop and $\delta_0^{M^\Tt_\beta}<\lh(E^\Tt_\alpha)$.

	We say that $\Tt$ is \emph{short-normal} iff $\Tt$ uses no long extenders.

	\emph{Iteration strategies} and \emph{iterability} for $N$ are now defined in the obvious manner.
\end{definition}

\begin{dfn}\label{dfn:short-normal}
	A \emph{short-normal tree} on a $\vV_1$-like Vsp $\vV$
	is a $0$-maximal tree that uses no long extenders.
	Note that a short-normal tree is of the form $\Tt\conc\Ss$,
	where $\Tt$ is based on $\vV|\delta_0^{\vV}$,
	and either
	\begin{enumerate}[label=\tu{(}\roman*\tu{)}]
		\item\label{item:short-extender_i} [$\Tt$ has limit length or $b^{\Tt}$ drops]
		and $\Ss=\emptyset$, or
		\item\label{item:short-extender_ii}  $\Tt$ has successor length,
		$b^{\Tt}$ does not drop and $\Ss$ is above $\delta_0^{M^{\Tt_0}_\infty}$.
	\end{enumerate}
	Say that $\Tt,\Ss$ are the \emph{lower, upper components} respectively.
\end{dfn}

\begin{definition}\label{dfn:translatable} Let $N$ be $\Mswsw$-like.
	An iteration tree $\Tt$ on $\vV_1^N$ is \emph{$\vV_1^N$-translatable}
	iff:
	\begin{enumerate}
		\item\label{item:om-max} $\Tt$ is $0$-maximal, and
		\item\label{item:kappa^+<lh(E)} $\kappa_0^{+M^\Tt_\alpha}<\lh(E^\Tt_\alpha)$
		for all
		$\alpha+1<\lh(\Tt)$ such that $[0,\alpha]_\Tt\inter\dropset^\Tt=\emptyset$.\qedhere
	\end{enumerate}
\end{definition}

\begin{rem}\label{rem:translatable}
	Under $0$-maximality,
	condition \ref{item:kappa^+<lh(E)}
	holds iff
	$\kappa_0^{+M^\Tt_\eta}<\lh(E^\Tt_\eta)$ for $\eta=0$ and for all limits $\eta$
	such that $\eta+1<\lh(\Tt)$ and $[0,\eta]_\Tt\inter\dropset^\Tt=\emptyset$
	and $i^\Tt_{0\eta}(\kappa_0^N)=\delta(\Tt\rest\eta)$.
	This follows easily from the fact that $\lh(E^\Tt_\alpha)<\lh(E^\Tt_\beta)$ for $\alpha<\beta$ (using Jensen indexing).
\end{rem}

\begin{definition}\label{dfn:vV_1-translation} Let $N$ be $\Mswsw$-like. Let $\Tt$ on $N$ be $\vV_1^N$-translatable. The \emph{$\vV_1^N$-translation} of $\Tt$
	is the $0$-maximal tree $\Uu$ on $\vV_1^N$ such that:
	\begin{enumerate}
		\item $\lh(\Uu)=\lh(\Tt)$ and $\Uu,\Tt$ have the same tree, drop and degree structure,
		\item $\lh(E^\Uu_\alpha)=\lh(E^\Tt_\alpha)$ for all $\alpha+1<\lh(\Tt)$.\qedhere
	\end{enumerate}
\end{definition}
\begin{rem}\label{rem:vV_1_of_dropped_iterate}
	Let $N$ be $\Mswsw$-like. Let $\Tt$ be a tree on $N$ and let $\alpha<_\Tt\vareps+1\leq_\Tt\beta$
	be such that $[0,\alpha]_\Tt$ does not drop, $\vareps+1\in\dropset^\Tt$,
	$\pred^\Tt(\vareps+1)=\alpha$
	and $\kappa_0^{+M^\Tt_\alpha}<\crit(E^\Tt_\vareps)$.
	Note that
	\[
	\kappa_0^{+M^\Tt_\alpha}<\gamma=\gamma^{\vV_1(M^\Tt_\alpha)}
	<\xi<\crit(E^\Tt_\vareps) \]
	where $\xi$ is the least $M^\Tt_\alpha|\kappa_0^{+M^\Tt_\alpha}$-admissible.
	Note here that $\vV_1^{M^\Tt_\beta}=\vV_1(M^\Tt_\beta)$ is the set-sized model
	$\vV$ such that $\vV||\gamma=\vV_1^{M^\Tt_\alpha}||\gamma$
	and above $\gamma$, $\es_+^\vV$ is the level-by-level translation of
	$\es_+(M^\Tt_\beta)$.
	Note that because $\gamma<\OR^{\vV_1(M^\Tt_\beta)}$,
	$M^\Tt_\beta$ is a $\kappa_0^{+M^\Tt_\alpha}$-cc forcing extension of
	$\vV_1(M^\Tt_\beta)$.\end{rem}

\begin{lemma}\label{lem:vV_1-translatable}
	Let $\Tt$ on $N$ be $\vV_1^N$-translatable, where $N$ is $\Mswsw$-like. Then:
	\begin{enumerate}
		\item The $\vV_1^N$-translation $\Uu$ of $\Tt$
		exists and is unique.
		\item $M^\Uu_\alpha=\vV_1^{M^\Tt_\alpha}$ and $\deg^\Uu_\alpha=\deg^\Tt_\alpha$ and $\gamma^{M^\Uu_\alpha}<\OR(M^\Uu_\alpha)$ for all $\alpha<\lh(\Tt)$.
		\item $i^\Uu_{\alpha\beta}=i^\Tt_{\alpha\beta}\rest M^\Uu_\alpha$ for all $\alpha<_\Tt\beta$ such that $(\alpha,\beta]_\Tt$ does not drop.
		\item $M^{*\Uu}_{\alpha+1}=\vV_1^{M^{*\Tt}_{\alpha+1}}$ for all $\alpha+1<\lh(\Tt)$.
		\item $i^{*\Uu}_{\alpha+1}=i^{*\Tt}_{\alpha+1}\rest M^{*\Uu}_{\alpha+1}$ for all $\alpha+1<\lh(\Tt)$.
	\end{enumerate}
\end{lemma}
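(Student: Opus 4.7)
The plan is a straightforward induction on $\alpha<\lh(\Tt)$, defining $\Uu\rest(\alpha+1)$ and verifying all five clauses stage by stage; uniqueness of $\Uu$ follows immediately from Definition \ref{dfn:vV_1-translation} once existence is established. At the base I set $M^\Uu_0=\vV_1^N$ and $\deg^\Uu_0=0$, noting that $\gamma^{\vV_1^N}<\OR(\vV_1^N)$ because $\vV_1^N$ is proper class. The key structural observation used throughout is that the translatability requirement $\kappa_0^{+M^\Tt_\alpha}<\lh(E^\Tt_\alpha)$ --- together with Remark \ref{rem:vV_1_of_dropped_iterate} when $[0,\alpha]_\Tt$ drops --- places every $E^\Tt_\alpha$ well inside the region $\nu\geq\xi_0$ where Lemma \ref{local_correspondence} gives a level-by-level identification of $\es^{\vV_1^{M^\Tt_\alpha}}$ with $\es^{M^\Tt_\alpha}$ restricted to $\vV_1^{M^\Tt_\alpha}|\nu$.

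For the successor step I define $E^\Uu_\alpha$ to be $\es^{M^\Uu_\alpha}_{\lh(E^\Tt_\alpha)}$. By Lemma \ref{local_correspondence}, part \ref{item:premouse_axioms}, this is a short extender with the same critical point when $\crit(E^\Tt_\alpha)>\kappa_0^{M^\Tt_\alpha}$; by part \ref{item:non_coherence} it is the corresponding long extender over $M^\Uu_\alpha\downarrow 0$ when $\crit(E^\Tt_\alpha)=\kappa_0^{M^\Tt_\alpha}$. The identity $\pred^\Tt(\alpha+1)=\pred^\Uu(\alpha+1)$ is then immediate for short extenders; for long extenders it follows because the Vsp $0$-maximal rule demands the least $\beta$ with $\delta_0^{M^\Uu_\beta}<\lh(E^\Uu_\alpha)$, and since $\delta_0^{M^\Uu_\beta}=\kappa_0^{+M^\Tt_\beta}$ and the Jensen-indexing convention picks $\pred^\Tt(\alpha+1)$ as the least node at which $\kappa_0$ is pushed above $\kappa_0^{M^\Tt_\alpha}$, the two least-points coincide. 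Lemma \ref{local_correspondence} and Remark \ref{rem:vV_1_of_dropped_iterate} then identify the largest initial segment of $M^\Uu_\beta$ to which $E^\Uu_\alpha$ applies as $\vV_1^{M^{*\Tt}_{\alpha+1}}$, yielding clause (4) and matching degrees.

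The ultrapower identity $M^\Uu_{\alpha+1}=\vV_1^{M^\Tt_{\alpha+1}}$ with $i^{*\Uu}_{\alpha+1}=i^{*\Tt}_{\alpha+1}\rest M^{*\Uu}_{\alpha+1}$ is, in the short-extender case, a routine consequence of the fact that P-construction commutes with an ultrapower whose critical point lies above the base of the P-construction; on representative functions the inclusion $\vV_1^{M^{*\Tt}_{\alpha+1}}\sub M^{*\Tt}_{\alpha+1}$ forces the two ultrapower maps to agree on the common domain. Limit stages are equally routine: the tree-order and drop-set of $\Uu$ are copied from $\Tt$, and commutativity of direct limits with the level-by-level construction of $\vV_1$ gives $M^\Uu_\lambda=\vV_1^{M^\Tt_\lambda}$ with the required restriction identities. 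Propagation of $\gamma^{M^\Uu_\alpha}<\OR(M^\Uu_\alpha)$ is trivial along non-dropping branches (both sides remain proper class) and in the dropping case is exactly the content of Remark \ref{rem:vV_1_of_dropped_iterate}.

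The main obstacle is the long-extender successor step, where one must show $\Ult(M^{*\Uu}_{\alpha+1},E^\Uu_\alpha)=\vV_1^{M^\Tt_{\alpha+1}}$. This is not an immediate P-construction commutativity, because $E^\Uu_\alpha$ is a long extender that fails to cohere $M^\Uu_\alpha$. Instead the argument runs through the fact, built into clause \ref{item:non_coherence} of Lemma \ref{local_correspondence}, that the target of $E^\Uu_\alpha$ was engineered to be precisely $\M_\infty^{\Ult(M^\Tt_\beta,E^\Tt_\alpha)}|\lh(E^\Tt_\alpha)$; after this identification the level-by-level correspondence between $\vV_1^{M^\Tt_{\alpha+1}}$ and $M^\Tt_{\alpha+1}$ above $\kappa_0^{+M^\Tt_{\alpha+1}}$ closes the loop. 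The matching of iteration maps in this case reduces, via the same clause, to the fact that $e^{\vV_1^{M^\Tt_\alpha}}$ is the extender derived from the restriction of $i^{M^\Tt_\beta}_{E^\Tt_\alpha}$ to $\M_\infty^{M^\Tt_\beta}|\delta_\infty^{M^\Tt_\beta}$, so the two ultrapower maps agree on $M^{*\Uu}_{\alpha+1}$ by their common action on representative functions.
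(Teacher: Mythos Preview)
Your outline matches the paper's induction, and you correctly locate the only nontrivial step: the long-extender successor case where $\crit(E^\Tt_\alpha)=\kappa_0^{M^\Tt_\beta}$ with $\beta=\pred^\Tt(\alpha+1)$. However, your resolution of that case has a genuine gap.

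You argue that clause~\ref{item:non_coherence} of Lemma~\ref{local_correspondence} identifies the target of $E^\Uu_\alpha$ as $\M_\infty^{M^\Tt_{\alpha+1}}|\lh(E^\Tt_\alpha)$, and that the level-by-level correspondence then ``closes the loop''. But clause~\ref{item:non_coherence} only tells you $\Ult(\M_\infty^{M^\Tt_\beta}|\delta,E^\Uu_\alpha)$, i.e.\ the ultrapower of the \emph{generators}. What you must establish is that $\Ult(M^\Uu_\beta,E^\Uu_\alpha)=\vV_1^{M^\Tt_{\alpha+1}}$ and $i^{*\Uu}_{\alpha+1}=i^{*\Tt}_{\alpha+1}\rest M^\Uu_\beta$. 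This requires showing that every element $[a,f]^{M^\Tt_\beta}_{E^\Tt_\alpha}$ with $f\in M^\Tt_\beta$ and $\rg(f)\sub M^\Uu_\beta$ is already represented by some $[b,g]$ with $g\in M^\Uu_\beta$; otherwise the $M^\Uu_\beta$-ultrapower might be strictly smaller than $\vV_1^{M^\Tt_{\alpha+1}}$, and the restriction identity on maps could fail. Your phrase ``common action on representative functions'' elides exactly this point: the representative functions in the two ultrapowers come from different models, and a priori $M^\Uu_\beta$ has fewer of them. (Your reference to $e^{\vV_1^{M^\Tt_\alpha}}$ here is also off: the extender in play is $E^\Uu_\alpha$ indexed at $\lh(E^\Tt_\alpha)$, not the base long extender at $\gamma$.)

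The paper supplies the missing ingredient: since $M^\Tt_\beta$ is a $\delta$-cc forcing extension of $M^\Uu_\beta$ (where $\delta=\delta_0^{M^\Uu_\beta}=\kappa_0^{+M^\Tt_\beta}$), any $f\in M^\Tt_\beta$ with $f:[\kappa]^{|a|}\to\OR$ has range covered by some $X\in M^\Uu_\beta$ with $|X|^{M^\Uu_\beta}<\delta$. Collapsing $X$ to its ordertype $\eta<\delta$ via $\pi\in M^\Uu_\beta$, the function $\widetilde f=\pi\circ f$ has range $\sub\delta$, and since $E^\Uu_\alpha\rest\delta$ is cofinal in $\lh(E^\Tt_\alpha)$, one finds $(b,\widetilde g)$ with $\widetilde g\in M^\Uu_\beta$ equivalent to $(a,\widetilde f)$; then $g=\pi^{-1}\circ\widetilde g\in M^\Uu_\beta$ represents $[a,f]$. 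This covering argument is the actual content of the long-extender step and should replace your final paragraph.
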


\begin{proof}
	This is partly via the usual translation of iteration trees between models and P-constructions thereof.
	However, there is a new feature here, when $\alpha+1<\lh(\Tt)$
	and is such that $[0,\alpha+1]_\Tt$ does not drop
	and letting $\beta=\pred^\Tt(\alpha+1)$, then $\crit(E^\Tt_\alpha)=\kappa=\kappa_0(M^\Tt_\beta)$,
	so consider this situation.

	Then $E^\Uu_\alpha$ is long with space
	$\delta=\delta_0(M^\Uu_\beta)=\kappa_0^{+M^\Tt_\beta}$, and
	$E^\Uu_\alpha=E^\Tt_\alpha\rest (M^\Uu_\beta|\delta)$
	and $[0,\beta]_\Uu$ does not drop,
	and $M^\Uu_\beta=\vV_1(M^\Tt_\beta)$.
	Let $(a,f)$ be such that $f\in M^\Tt_\beta$
	and $a\in[\nu(E^\Tt_\alpha)]^{<\om}$ and
	\[ f:[\kappa]^{|a|}\to M^\Uu_\beta=\vV_1(M^\Tt_\beta).\]
	We need some $(b,g)$ with $g\in M^\Uu_\beta$ and
	$b\in[\lh(E^\Uu_\alpha)]^{<\om}$
	such that $a\sub b$ and $f^{ab}(u)=g(u)$ for $(E^\Tt_\alpha)_b$-measure one many $u$.
	We may assume $\rg(f)\sub\OR$.

	If $\rg(f)\sub\delta$, the existence of $(b,g)$ is just because
	$E^\Uu_\alpha$ is the restriction of $E^\Tt_\alpha$, and this restriction
	is cofinal in $\lh(E^\Tt_\alpha)$.
	In general we will reduce to this case.

	Now $M^\Tt_\beta$ is a $\delta$-cc forcing extension of $M^\Uu_\beta$,
	so $\rg(f)\sub X$ for some $X\in M^\Uu_\beta$, where $X$ has cardinality ${<\delta}$ in $M^\Uu_\beta$.
	Let $\eta$ be the ordertype of $X$, so $\eta<\delta$, let $\pi:X\to\eta$ be the collapse,
	and let $\widetilde{f}=\pi\com f$. So $\widetilde{f}\in M^\Tt_\beta$
	and $\rg(\widetilde{f})\sub\delta$, so we get a corresponding pair $(b,\widetilde{g})$, with $\widetilde{g}\in M^\Uu_\beta$.
	Letting $g=\pi^{-1}\com\widetilde{g}$, then $g\in M^\Uu_\beta$ and $(b,g)$ works.
\end{proof}

\subsubsection{Trees based on $\M_\infty|\delta_\infty$}

We now  transfer trees on $\M_\infty$, based on $\M_\infty|\delta_\infty$, to trees on $\vV_1$.
\begin{dfn}\label{dfn:Psi_vV_1,vV_1^-}
	Write $\Sigma_{\M_\infty,\vV_1^-}$
	for the normal strategy for $\M_\infty$ for trees based on $\vV_1^-$, induced
	by $\Sigma_{\M_\infty}$. We use analogous notation $\Sigma_{N,N|\alpha}$ more generally.
	Let $\Psi_{\vV_1,\vV_1^-}$ denote the putative normal strategy for
	trees on $\vV_1$ based on $\vV_1^-$,
	induced by $\Sigma_{\M_\infty,\vV_1^-}$.
	This makes sense
	by Lemma \ref{still-a-woodin-in-v}.
\end{dfn}

\begin{rem}\label{rem:basic_trees_via_Psi_vV_1,vV_1^-}
	Let $\Uu$ be a putative tree on $\vV_1$, based on $\vV_1^-$, via
	$\Psi_{\vV_1,\vV_1^-}$.
	Let $\alpha<\lh(\Uu)$. Suppose $[0,\alpha]_\Uu$ does not drop.
	Then
	${M^\Uu_\alpha\downarrow
		0}=i^\Uu_{0\alpha}(\M_\infty)$, and if $M^\Uu_\alpha$ is wellfounded then it is $\vV_1$-like.
	If instead $[0,\alpha]_\Uu$ drops, note that it drops below the image of $\delta_0^{\vV_1}$ and $M^\Uu_\alpha$ is a premouse (note that it is wellfounded in this case), so ${M^\Uu_\alpha\downarrow
		0}=M^\Uu_\alpha$.
\end{rem}

\begin{dfn}
	Let $\vV$ be a Vsp. Then $\Lambda^{\vV}$ denotes the partial putative strategy
	for $\vV|\delta_0^{\vV}$ determined by the long extenders of $\vV$.
	That is, $\Lambda^{\vV}(\Tt)=b$ iff  $\Tt\in\vV$, $\Tt$ is on $\vV|\delta_0^{\vV}$,
	 $\vV\sats$``$\Tt$ is via $\Sigma_{\sss}$'', and either
	 \begin{enumerate}[label=--]
	 	\item $\vV\sats$``$\Tt$ is short
	 and $b=\Sigma_{\sss}(\Tt)$'', or
	 \item $\vV\sats$``$\Tt$ is maximal''
	 and there is a long $E\in\es_+(\vV)$ such that
	 $\Tt\in\vV|\lambda(E)$ and $b$ is computed
	 via factoring through $c$ as in Footnote \ref{ftn:find_branch_via_normalization},
	 where $c$ is the cofinal branch through the tree from
	 $\vV|\delta_0^{\vV}$ to $j(\vV|\delta_0^{\vV})$
	 determined by $j=i^{\vV|\lh(E)}_E$.\qedhere
	 \end{enumerate}
	\end{dfn}

The following lemma, which is the main point of this subsubsection,
is the analogue
of \cite[Lemma 2.17]{vm1} and \cite[Claim 12]{Theta_Woodin_in_HOD}.

\begin{lem}\label{lem:Gamma_A_is_good}
	$\Psi_{\vV_1,\vV_1^-}$ yields wellfounded models.
	Moreover, let $\Tt$ be on $\M_\infty$,
	via $\Sigma_{\M_\infty,\vV_1^-}$,
	and let $\Uu$ be the corresponding tree on $\vV_1$ (so via
	$\Psi_{\vV_1,\vV_1^-}$).
	Let
	\[ \pi_\alpha:M^\Tt_\alpha\to M^\Uu_\alpha\downarrow 0\sub M^\Uu_\alpha \]
	be the natural copy map
	\tu{(}where $\pi_0=\id$\tu{)}. Then:
	\begin{enumerate}[label=--]
		\item $[0,\alpha]_\Tt$ drops iff $[0,\alpha]_\Uu$ drops.
		\item If $[0,\alpha]_\Tt$ drops then $M^\Tt_\alpha=M^\Uu_\alpha=M^\Uu_\alpha\downarrow 0$.
		\item If $[0,\alpha]_\Tt$ does not drop then $M^\Tt_\alpha=M^\Uu_\alpha\downarrow 0$ and $M^\Uu_\alpha=\vV(M^\Tt_\alpha,\ell)$ where $\ell:M^\Tt_\alpha\to\M_\infty^{M^\Tt_\alpha}$
		is the $\Sigma_{M^\Tt_\alpha}$-iteration map, and in fact, $\Lambda^{M^\Uu_\alpha}\sub\Sigma_{N,N|\delta_0^N}$ where $N=M^\Tt_\alpha$.
		\item $\pi_\alpha=\id$; therefore, $i^\Tt_\alpha\sub i^\Uu_\alpha$.
	\end{enumerate}
\end{lem}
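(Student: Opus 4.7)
The plan is to prove the lemma by simultaneous induction on $\lh(\Tt)=\lh(\Uu)$, establishing at each stage $\alpha$ the four bulleted conclusions and verifying that $M^\Uu_\alpha$ is wellfounded. The base case is immediate: by construction $\vV_1\downarrow 0=\M_\infty$ and $\vV_1|\delta_\infty = \M_\infty|\delta_\infty=\vV_1^-$, so $\pi_0=\id$. At a successor stage $\alpha+1$, the hypothesis that $\Tt$ is based on $\vV_1^-$ means $E^\Tt_\alpha \in \es^{M^\Tt_\alpha}$ with $\lh(E^\Tt_\alpha)\leq i^\Tt_{0\alpha}(\delta_\infty)$; by induction $\pi_\alpha=\id$ and $M^\Tt_\alpha|i^\Tt_{0\alpha}(\delta_\infty)=M^\Uu_\alpha|i^\Uu_{0\alpha}(\delta_\infty)$, so $E^\Uu_\alpha=E^\Tt_\alpha$ literally. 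The tree predecessor chosen on the $\Uu$ side agrees with that on $\Tt$ because the critical points and generators live below the common initial segment, and so the tree-drop structures agree. The ultrapower on the $\Uu$ side then realizes as $\vV(M^\Tt_{\alpha+1},\ell_{\alpha+1})$: on non-dropping branches, apply $i^\Uu_{\beta,\alpha+1}$ (where $\beta=\pred^\Uu(\alpha+1)$) to the defining equation $M^\Uu_\beta=\vV(M^\Tt_\beta,\ell_\beta)$ and use elementarity together with Lemma \ref{lem:M_infty_of_kappa_0-sound_iterate}; on dropping branches $\vV_1\downarrow 0$ collapses to a pure premouse initial segment of $\M_\infty|\delta_\infty$, so both sides are literally equal pure premice and the argument reduces to standard copying.

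At a limit stage $\lambda$, let $b=\Sigma_{\M_\infty,\vV_1^-}(\Tt\rest\lambda)$, which by definition is also the branch $\Psi_{\vV_1,\vV_1^-}$ chooses for $\Uu\rest\lambda$. Since by induction $\pi_\alpha=\id$ for all $\alpha<\lambda$ and $M^\Uu_\alpha\downarrow 0=M^\Tt_\alpha$, the direct limit $N^*=\dirlim_{\alpha\in b}M^\Uu_\alpha\downarrow 0$ coincides with $M^\Tt_b$ and is therefore wellfounded. The task is to see that the full Vsp structure $M^\Uu_b$ is well-defined, wellfounded, and equals $\vV(M^\Tt_b,\ell)$ when $b$ does not drop (with $\ell$ the internal $\Sigma_{M^\Tt_b}$-iteration map $M^\Tt_b\to\M_\infty^{M^\Tt_b}$). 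If $b$ drops, then as above $M^\Uu_b=M^\Tt_b$ and there is nothing new to check.

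The main obstacle, then, is the non-dropping limit case, especially when $\Tt\rest\lambda$ is maximal. Here one must verify: (i) that $M^\Tt_b$ really does internally compute its own $\M_\infty$, so that $\vV(M^\Tt_b,\ell)$ makes sense, and (ii) that the direct limit $M^\Uu_b$ of $\left<M^\Uu_\alpha\right>_{\alpha\in b}$ in the $\Uu$ system agrees with this $\vV(M^\Tt_b,\ell)$ level by level, with the correct long extender at $\gamma^{M^\Uu_b}$ encoding $\ell\rest\delta_0^{M^\Tt_b}$. Item (i) follows from Lemma \ref{lem:M_infty_of_kappa_0-sound_iterate}, which gives $\M_\infty^{M^\Tt_b}$ as a correct $\delta_0$-sound iterate and identifies $\ell$ with the associated iteration map. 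For (ii), I would use that each $M^\Uu_\alpha$ $(\alpha\in b)$ satisfies the inductive hypothesis $M^\Uu_\alpha=\vV(M^\Tt_\alpha,\ell_\alpha)$, and the formation of $\vV(\cdot,\cdot)$ is preserved under direct limits of correct iterates with commuting maps — precisely the content of Remark \ref{rem:vV_1_as_strategy_premouse} and the local definability in Lemma \ref{local_correspondence} applied inside $M^\Tt_b$; in particular the long extender at $\gamma^{M^\Uu_b}$ is the direct limit of the long extenders at $\gamma^{M^\Uu_\alpha}$, which is $\pi^{M^\Tt_b}_\infty\rest\delta_0^{M^\Tt_b}\sub\ell$.

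Finally, the ``moreover'' clause about $\Lambda^{M^\Uu_\alpha}\sub\Sigma_{N,N|\delta_0^N}$ with $N=M^\Tt_\alpha$ follows from the identification $M^\Uu_\alpha=\vV(N,\ell)$ combined with Lemma \ref{V_1_knows_how_to_iterate_Minfty_up_to_its_woodin} applied inside $N$: the long extenders on the sequence of $M^\Uu_\alpha$ by construction encode exactly the branches that $N$'s internal strategy chooses for trees on $N|\delta_0^N$, which by Fact \ref{fact:Sigma_properties}\ref{item:Slist_strategy_agreement} agrees with $\Sigma_{N,N|\delta_0^N}$. Thus $\Psi_{\vV_1,\vV_1^-}$ yields wellfounded models throughout, and the translation between $\Tt$ and $\Uu$ is as described.
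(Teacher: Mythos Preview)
Your inductive framework is natural, but the successor step hides the entire content of the lemma. When you write ``apply $i^\Uu_{\beta,\alpha+1}$ to the defining equation $M^\Uu_\beta=\vV(M^\Tt_\beta,\ell_\beta)$ and use elementarity,'' elementarity only yields $M^\Uu_{\alpha+1}=\vV(M^\Uu_{\alpha+1}\downarrow 0,\cdot)$ internally; it does not tell you that $M^\Uu_{\alpha+1}\downarrow 0=M^\Tt_{\alpha+1}$. Concretely, $M^\Uu_{\alpha+1}=\Ult(M^\Uu_\beta,E)$ is computed using \emph{all} functions in $M^\Uu_\beta=\M_\infty^\beta[*^\beta]$, including those definable from $*^\beta$, whereas $M^\Tt_{\alpha+1}=\Ult(M^\Tt_\beta,E)$ uses only functions in $M^\Tt_\beta$. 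Showing these agree on ordinals---equivalently, that the natural factor map is onto---is exactly the hull conservativity claim the paper proves: $P_\infty\cap\Hull^{P_\infty[*^P]}(\rg(i_{\M_\infty P_\infty}))=\rg(i_{\M_\infty P_\infty})$. Without this (or an equivalent), the step fails. Your citation of Lemma~\ref{lem:M_infty_of_kappa_0-sound_iterate} does not help here: that lemma concerns iterates $N'$ with $N|\kappa_0^N\pins N'$, i.e.\ iterations \emph{above} $\kappa_0$, whereas $\Tt$ is based on $\vV_1^-$, so its critical points are below $\delta_0<\kappa_0$.

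The argument for $\Lambda^{M^\Uu_\alpha}\sub\Sigma_{N,N|\delta_0^N}$ has a parallel gap. Invoking Lemma~\ref{V_1_knows_how_to_iterate_Minfty_up_to_its_woodin} ``inside $N$'' only shows that $M^\Uu_\alpha$'s internally defined strategy is internally coherent; it does not establish agreement with the external tail $\Sigma_N$ of $\Sigma_M$. The paper closes this gap by first reducing (via genericity iteration) to the special case $N=\M_\infty^U$ with $U=\Ult(M,E)$ for an $E\in\es^M$ with $\crit(E)=\kappa_0$---where Lemma~\ref{lem:M_infty_of_kappa_0-sound_iterate} \emph{does} apply, since that iteration is above $\kappa_0$---and then transferring back to the general case by branch condensation (Lemma~\ref{lem:branch_con}). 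Your appeal to Fact~\ref{fact:Sigma_properties}\ref{item:Slist_strategy_agreement} does not substitute for this, since that fact compares two external tail strategies, not an internal $\Lambda$ with an external $\Sigma$.
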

\begin{proof}
	We include the proof, mostly following that of \cite{Theta_Woodin_in_HOD}.
	Let $\Tt,\Uu$ be as above, of length $\alpha+1$.
	The interesting case is the non-dropping one, so consider this.

	Let $P=M^\Tt_\alpha$ and $P_\infty=\M_\infty^P$.
	Let $i_{\M_\infty P}$, $i_{P P_\infty}$ and $i_{\M_\infty P_\infty}$ be the iteration maps.
	We have
	\[ \M_\infty=\Hull^{\M_\infty}_1(\delta_0^{\M_\infty}\cup
	\mathscr{I}^{\M_\infty}); \]
	likewise for $P,P_\infty$ (maybe $\mathscr{I}^P\neq\mathscr{I}^{\M_\infty}$,
	but ${\mathscr{I}}^{P_\infty}=\mathscr{I}^P=i_{\M_\infty
		P}``\mathscr{I}^{\M_\infty}$).
	We have
	\[ P_\infty=\vV_1^P\downarrow 0\sub\vV_1^P.\]

	The analogue of the following claim
	was used in the proof of \cite[Claim 12]{Theta_Woodin_in_HOD},
	where it was implicitly asserted but the proof not explicitly given.
	We give the proof here. It is just a slight generalization of the proofs of
	\cite[Claims 8--10]{Theta_Woodin_in_HOD} (or see \cite[Lemma 2.15]{vm1}),
	the main conclusion of which is
	that if $j:M\to\M_\infty$ is the iteration map,
	then
	\[ \M_\infty\inter\Hull^{\M_\infty[*]}(\rg(j))=\Hull^{\M_\infty}(\rg(j))=\rg(j).\]

	\begin{clm*}  We have:
		\begin{enumerate}[label=\tu{(}\roman*\tu{)}]
			\item\label{item:Hull_unchanged_M_inf_to_P_inf}
			$P_\infty\inter\Hull^{P_\infty[*^P]}(\rg(i_{\M_\infty P_\infty}))=\rg(i_{\M_\infty
				P_\infty})$
			and
			\item\label{item:P-hull_conservative} $P_\infty\inter\Hull^{P_\infty[*^P]}(\rg(i_{P P_\infty}))=\rg(i_{P P_\infty})$.
		\end{enumerate}
	\end{clm*}
	\begin{proof}
		Consider \ref{item:Hull_unchanged_M_inf_to_P_inf}.
		Fix $\alpha\in\OR$ and $s\in
		[\mathscr{I}^{\M_\infty}]^{<\om}\cut\{\emptyset\}$ and
		$\beta<\delta_0^{\M_\infty}$
		and a term $t$ such that
		\[ \alpha=t^{P_\infty[*^P]}(i_{\M_\infty P_\infty}(s^-,\beta))\]
		and $\beta<\gamma^{\M_\infty}_s$.
		We need to see
		$\alpha\in\rg(i_{\M_\infty P_\infty})$.
		It suffices to see  $*^P(\alpha)\in\rg(i_{\M_\infty P_\infty})$,
		by arguments in \cite{Theta_Woodin_in_HOD}.
		But this holds just as in \cite{Theta_Woodin_in_HOD},
		except that we have a fixed term $u$ such that
		for each $N\in\mathscr{F}^P$,
		\[ \alpha=u^N(i_{\M_\infty N}(s,\beta)). \]
		This suffices. Part \ref{item:P-hull_conservative} is analogous.
	\end{proof}

	Let
	$\widetilde{P}=\cHull^{P_\infty[*^P]}(\rg(i_{P P_\infty}))$
	and $\widetilde{\M_\infty}=\cHull^{P_\infty[*^P]}(\rg(i_{\M_\infty
		P_\infty}))$, let
	$i_{P P_\infty}^+$  and $i_{\M_\infty P_\infty}^+$  the
	uncollapse maps, and
	$i_{\M_\infty P}^+=(i_{PP_\infty}^+)^{-1}\com i_{MP_\infty}^+$.
	By the claim, $i_{\M_\infty P}\sub i_{\M_\infty P}^+$.

	Also, $*^P\sub h$ where $h:P_\infty\to\M_\infty^{P_\infty}$
	is the iteration map. Letting
	\[ i^+_{\M_\infty P_\infty}(*')=i^+_{P
		P_\infty}(*'')=*^P\rest\delta_0^{P_\infty}, \]
	it easily follows that $*'$ and $*''$ agree with the iteration maps
	$\M_\infty\to(\M_\infty)^{\M_\infty}$ and $P\to P_\infty$
	respectively. Therefore
	$\widetilde{\M_\infty}=\M_\infty[*]$.

	Let $E$ be the
	$(\delta_0^{\M_\infty},\delta_0^{P})$-extender
	derived from $i_{\M_\infty P}$, or equivalently
	from $i_{\M_\infty P}^+$, also equivalently
	the $[0,\alpha]_\Tt$-branch extender of $\Tt$.
	So (recalling $\Uu$ is the corresponding tree on $\M_\infty[*]$)
	\[ M^\Uu_\alpha=\Ult(\M_\infty[*],E)\]
	and $i^\Uu_\alpha=i^{\M_\infty[*]}_E$.
	We also have $P=M^\Tt_\alpha=\Ult(\M_\infty,E)$
	and $i_{\M_\infty P}=i^\Tt_\alpha$.
	Let $\pi:M^\Uu_\alpha\to\widetilde{P}$
	be the natural factor map, i.e.
	\[ \pi(i^\Uu_\alpha(f)(a))=i^+_{\M_\infty P}(f)(a) \]
	whenever $f\in \M_\infty[*]$ and $a\in[\delta_0^P]^{<\om}$.
	Then $\pi$ is surjective, because if $\alpha\in\OR$
	then there is $f\in\M_\infty$ and $a\in[\delta_0^P]^{<\om}$
	such that $i_{\M_\infty P}(f)(a)=\alpha$,
	and since $i_{\M_\infty P}\sub i_{\M_\infty P}^+$,
	therefore $i_{\M_\infty P}^+(f)(a)=\alpha=\pi(\alpha)$.
	So in fact $M^\Uu_\alpha=\widetilde{P}$ and $\pi=\id$,
	so $i^\Uu_\alpha=i^+_{\M_\infty P}$,
	so $i^\Tt_\alpha\sub i^\Uu_\alpha$,
	and letting $\pi_\alpha:M^\Tt_\alpha\to(M^\Uu_\alpha\downarrow 0)$
	be the natural copy map, then $\pi_\alpha\sub\pi=\id$.

	It just remains to see that $\Lambda^{M^\Uu_\alpha}\sub\Sigma_{P}$ (still with $P=M^\Tt_\alpha$).
	First consider the case that for some
	correct normal above-$\kappa_0$ tree $\Vv$ on $M$ and $E=E^\Vv_\alpha$,
	we have $\crit(E)=\kappa_0$ and $E$ is $M$-total, and $P=\M_\infty^U$ where $U=\Ult(M,E)$.
	Here by Lemma \ref{lem:M_infty_of_kappa_0-sound_iterate}, $\M^U_\infty$ is indeed a $\delta_0^{\M^U_\infty}$-sound iterate
	of $\M_\infty$, and $i_{E}\rest\M_\infty$ is just the iteration map.
	Moreover, by Lemma \ref{lem:vV_1-translatable}, $\vV_1^U=i_E(\vV_1)$ is the corresponding iterate of $\vV_1$.
	But now the calculations that work for $\Lambda^{\vV_1}$ (the proof of Lemma \ref{V_1_knows_how_to_iterate_Minfty_up_to_its_woodin},
	using Lemma \ref{lem:M_infty_of_kappa_0-sound_iterate})
	also work for
	$\Lambda^{\vV_1^U}$.

	Now consider the general case. We will reduce this to the special case
	above via Lemma \ref{lem:branch_con}. Let $E\in\es^M$ be $M$-total
	with $\crit(F)=\kappa_0$, and $\bar{\delta}$ the least Woodin
	of $M|\lh(F)$ such that $\kappa_0<\bar{\delta}$. Form a genericity iteration
	at $\bar{\delta}$, above $\kappa_0$, making $P|\delta_0^P$ etc generic.
	Let $E$ be the eventual image of $F$. Then $E$ is as in the previous case;
	let $U=\Ult(M,E)$ and
	let $\M_\infty^U$. Recall $\widetilde{P}=M^\Uu_\alpha$
	is an iterate of $\vV_1$,
	and note $\vV_1^U$ is the corresponding iterate of $\widetilde{P}$;
	let
	$k:\widetilde{P}\to\vV_1$ be the iteration map.
	Let $\beta$
	be such that $\widetilde{P}|\beta$ is active with a long extender $G$,
	$\beta'=k(\beta)$, and $G'=F^{\vV_1^U|\beta'}$. Note that $k$
	is continuous at $\delta_0^P$ and $\beta$ (as $\cof^P(\beta)=\delta_0^P$).
	Let $j:P\to\Ult(P,G)$ and $j':\M_\infty^U\to\Ult(\M_\infty^U,G')$ be the ultrapower maps.
	Let $\Tt$ be the length $\beta$ tree from $P$ to $\Ult(P,G)$
	and $\Tt'$ the length $\beta'$ tree from $\M_\infty^U$ to $\Ult(\M_\infty^U,G')$;
	note that by first order considerations, these exist, and $G$ determines a $\Tt$-cofinal branch
	$b$ such that $M^\Tt_b=\Ult(P,G)$ and $j=i^\Tt_b$,
	and likewise for $G',\Tt',b',\M_\infty^U,j'$. We know that $\Tt'\conc b'$ is via $\Sigma_{\M_\infty^U}$, by the previous case. Note also that $\Tt$ is via $\Sigma_P$,
	because the Q-structure for each $\Tt\rest\lambda$ (for limit $\lambda<\beta$) does not overlap
	$\delta(\Tt\rest\lambda)$, and is embedded into the Q-structure
	for $\Tt'\rest k(\lambda)$. But
	\[ k\com j\rest(P|\delta_0^P)=j'\com k\rest(P|\delta_0^P) \]
	and $j(\delta_0^P)=\beta$ and $j'(\delta_\infty^U)=\beta'$.
	So by Lemma \ref{lem:branch_con}, $b=\Sigma_{P}(\Tt)$, as desired.
\end{proof}

\begin{dfn}\label{dfn:Psi_V,V^-}
	Given a non-dropping $\Psi_{\vV_1,\vV_1^-}$-iterate
	$\vV$ of $\vV_1$, let $\Psi_{\vV,\vV^-}$ be induced by $\Sigma_{\vV\downarrow 0}$
	just as $\Psi_{\vV_1,\vV_1^-}$ is induced by $\Sigma_{\M_\infty}$
	(this makes sense by Lemma \ref{lem:Gamma_A_is_good}).
\end{dfn}

\subsubsection{Condensation properties for full normalization}

The strategy $\Sigma_{\vV_1}$ (together with $\vV_1$) will have the properties required
for extending to a strategy for stacks with full normalization. We now
lay out the properties of $\vV_1$ needed for this.
	Recall the notions \emph{$n$-standard}, \emph{$(n+1)$-relevantly condensing}
	and \emph{$(n+1)$-sub-condensing} from \cite[***Definition 2.1]{fullnorm_v3}. We adapt
	these in an obvious manner to Vsps.

\begin{dfn}\label{dfn:norm_condensation} Let $m<\om$ and let $\vV$ be an $(m+1)$-sound Vsp. We say that $\vV$ is \emph{$(m+1)$-relevantly condensing}
	iff either $\vV$ is a premouse which
	is $(m+1)$-relevantly condensing,
	or $\vV$ is a sound base Vsp,
	or $\gamma^{\vV}<\OR^\vV$ and $\vV$ satisfies
	the requirements of \emph{$(m+1)$-relevantly condensing}
	from \cite[Definition 2.1]{fullnorm_v3} for $\pi:P\to\vV$
	such that $P$ is an $(m+1)$-sound Vsp, $\gamma^P<\OR^P$ and $\crit(\pi)>\delta_0^{\vV}$
	(so $\crit(\pi)>\gamma^{\vV}$). Likewise for \emph{$(m+1)$-sub-condensing}.

	For $n<\om$, a Vsp $\vV$ is \emph{$n$-standard}
	iff $\vV$ is $n$-sound and either $\vV$ is an $n$-standard premouse,
	or $\vV$ is a base Vsp and $\vV^\passive$ is $\om$-standard, or $\gamma^{\vV}<\OR^{\vV}$
	and $\vV$ is $(m+1)$-relevantly condensing for each $m<n$,
	and every $M\pins\vV$ is $(m+1)$-relevantly-condensing
	and $(m+1)$-sub-condensing for each $m<\om$.

	A Vsp is \emph{$\om$-standard} iff $n$-standard
	for each $n<\om$.
\end{dfn}

\begin{lem}\label{lem:vV_1_norm_condensation} $\vV_1$ is $\om$-standard. (Thus, we take \emph{$\vV_1$-like}
	to include \emph{$\om$-standard}.)
\end{lem}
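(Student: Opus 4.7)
The plan is to reduce $\om$-standardness of $\vV_1$ (and of every $N \pins \vV_1$) to the $\om$-standardness of $M$ and its segments, via the level-by-level correspondence of Lemma \ref{local_correspondence} and the generic equivalence with $M$ given by Lemma \ref{V_is_a_ground}. Fix throughout $g=g_{M|\kappa_0}$, the $\mathbb{L}$-generic for which $\M_\infty[*][g]\ueq M$.

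First I handle segments $N=\vV_1|\nu$ (and $\vV_1||\nu$) with $\nu\leq\gamma^{\vV_1}$. For $\nu<\gamma^{\vV_1}$, $N$ is a proper segment of the premouse $\M_\infty|\kappa_0^{+\M_\infty}$. Since $\M_\infty$ is a non-dropping correct $\Sigma$-iterate of $M$, it inherits $\om$-standardness from $M$, and so do all its proper segments. For $\nu=\gamma^{\vV_1}$, the structure is a base Vsp (Definition \ref{dfn:local_vV_1}); the standardness requirement reduces by Definition \ref{dfn:norm_condensation} to the $\om$-standardness of its passivization, which is just $\M_\infty|\kappa_0^{+\M_\infty}$, already handled. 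Soundness at this level is Lemma \ref{local-definability-of-that-structure}\ref{item:vV_1||gamma_is_sound}.

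Next I treat segments $N=\vV_1|\nu$ with $\nu>\gamma^{\vV_1}$, as well as $\vV_1$ itself. The soundness and projecta data needed for $n$-soundness come directly from Lemma \ref{local_correspondence}\ref{item:vV_1||nu_sound} and the matching of projecta and parameters in the P-construction correspondence. For the $(m{+}1)$-relevant- and $(m{+}1)$-sub-condensing properties, suppose $\pi\colon P\to N$ is a candidate condensation hull with $P$ an $(m{+}1)$-sound Vsp and $\crit(\pi)>\delta_0^N=\delta_\infty$. The key observation is that $\mathbb{L}\in\vV_1|\gamma^{\vV_1}$ and $\gamma^{\vV_1}<\crit(\pi)$, so $\pi$ fixes $\mathbb{L}$ pointwise. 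Because $g$ is $\mathbb{L}$-generic over $N$ and $P$, $\pi$ lifts uniquely to
\[ \pi^+\colon P[g]\to N[g], \]
and, via the $=^*$ correspondence of Lemma \ref{local_correspondence}\ref{item:fs_correspondence}, $\pi^+$ corresponds to a map $\bar\pi\colon \bar P\to M|\nu$, where $\bar P$ is the $\Mswsw$-like premouse with $\vV^{\bar P}=P$ obtained by inverse P-construction, and with $\crit(\bar\pi)=\crit(\pi)$. Since $M$ is $\om$-standard, the relevant condensation conclusions (agreement of projecta, $(m{+}1)$-solidity and universality of $p_{m+1}$, etc., as in \cite[Definition 2.1]{fullnorm}) hold for $\bar\pi$. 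These conclusions transfer back through the canonical P-construction/generic-extension translation to the same conclusions for $\pi$, using that projecta, standard parameters and hulls correspond under the $=^*$ relation.

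The main technical issue is to verify that this transfer is uniform and preserves exactly the fine-structural data in the form demanded by the definitions of relevant- and sub-condensing in \cite{fullnorm}: specifically, that $\rho_{m+1}^P=\rho_{m+1}^{\bar P}$ and $p_{m+1}^P$ corresponds to $p_{m+1}^{\bar P}$ under the translation, and that the ``bicephalus-style'' comparisons involved behave correctly when one side lives in a small generic extension of the other. This is a routine but careful adaptation of the standard P-construction arguments (as in \cite[\S1]{sile}); the only non-routine point is at levels where $\es^{\vV_1}_\nu$ is a long extender, but here $\crit(\pi)>\nu_0$ for the critical points $\nu_0$ of all long extenders below $\nu$, so those long extenders lie pointwise in $\rg(\pi)$ and pose no additional obstacle. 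This gives $(m{+}1)$-relevant- and sub-condensing for every $m<\om$, at every $N\pins\vV_1$ and at $\vV_1$ itself, completing the verification that $\vV_1$ is $\om$-standard.
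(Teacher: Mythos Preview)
Your proposal is correct and takes essentially the same approach as the paper: lift the condensation map $\pi:P\to\vV_1|\nu$ (with $\crit(\pi)>\gamma^{\vV_1}$) via the inverse P-construction/generic-extension correspondence to a map $\pi^+:N\to M|\nu$ between genuine premice, invoke $\om$-standardness of $M$, and transfer the conclusion back. The paper's proof is far terser, and states the target conclusion explicitly as $P\pins\vV_1$, deduced from $N\pins M$; your parenthetical description of the condensation conclusions as ``agreement of projecta, $(m{+}1)$-solidity and universality of $p_{m+1}$'' is slightly off the mark---the content of $(m{+}1)$-relevantly/sub-condensing here is that the hull $P$ itself lands as a proper initial segment of $\vV_1$, not merely that its fine-structural data matches.
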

\begin{proof}
	Let $\alpha>\gamma^{\vV}$ and $P$ be  a Vsp and $\pi:P\to\vV|\alpha$ be an embedding as in the definition
	of $(m+1)$-relevantly- or $(m+1)$-sub-condensing. We want to know
	that $P\pins\vV$. But note that there is a premouse $N$
	such that $M|\kappa_0^{+M}\pins N$
	and $\vV^N=P$ and $\pi$ extends to $\pi^+:N\to M|\alpha$,
	which also satisfies the conditions
	of $(m+1)$-relevantly- or $(m+1)$-sub-condensing, respectively.
	So $N\pins M$, so $P\pins\vV$.
\end{proof}

\begin{rem}\label{rem:iteration_preserves_standardness_at_degree}
	Let $\vV$ be $\vV_1$-like.
	Then as for premice, if $\Tt$ is a $0$-maximal
	tree on $\vV$ then $M^\Tt_\alpha$ is $\deg^\Tt_\alpha$-standard (see \cite[***Remark 2.2]{fullnorm_v3}).
\end{rem}

\subsubsection{Short-normal trees on $\vV_1$}

Recall that   \emph{short-normal} trees on $\vV_1$-like Vsps were defined in Definition
\ref{dfn:short-normal}.

\begin{dfn}
	Let $\vV$ be a (possibly dropping, putative) iterate of $\vV_1$,
	via a short-normal
	tree $\Tt\conc\Ss$ with lower and upper components $\Tt,\Ss$.
	We say that $\vV$ is \emph{good} iff $\Tt$ is via $\Psi_{\vV_1,\vV_1^-}$
	and if $b^\Tt$ does not drop then $\vV$ is wellfounded and for every long $E\in\es_+^\vV$,
	$\M_\infty^{\vV|\lh(E)}=N|\delta_0^N$ for some $\Sigma_{\vV\downarrow 0}$-iterate
	$N$ of $\vV\downarrow 0$, and $E$ is the corresponding iteration map.
	Say that a (partial) iteration strategy $\Psi$
	is \emph{good} iff all putative iterates via $\Psi$ are good.
\end{dfn}

Note we have already shown that $\Psi_{\vV_1,\vV_1^-}$ is good.
We now want to extend $\Psi_{\vV_1,\vV_1^-}$ to a good short-normal $0$-maximal strategy
$\Psi_{\sn}$ for $\vV_1$.
So we start by setting $\Psi_{\vV_1,\vV_1^-}\sub\Psi_{\sn}$.
As an easy next step, we deal with trees based on $\vV_1||\gamma^{\vV_1}$.

\begin{dfn}\label{dfn:Psi_for_vV_1|gamma}
	Write $\Psi_{\vV_1,\gamma^{\vV_1}}$ for the putative strategy $\Psi$ for $\vV_1$,
	for short-normal $0$-maximal trees based on $\vV_1||\gamma^{\vV_1}$, as follows:
	\begin{enumerate}
		\item $\Psi_{\vV_1,\vV_1^-}\sub\Psi$, and
		 \item given $\Tt$ via $\Psi_{\vV_1,\vV_1^-}$, of successor length $\alpha+1$,
		 where $[0,\alpha]_\Tt$ does not drop, and given a putative $0$-maximal tree $\Uu$ on $M^\Tt_\alpha||\gamma^{M^\Tt_\alpha}$, which is above $\delta_0^{M^\Tt_\alpha}$,
		 then $\Tt\conc\Uu$ is via $\Psi$
		 iff there is a tree $\Uu'$ on ${M^\Tt_\alpha\downarrow 0}$,
		 via $\Sigma_{M^\Tt_\alpha\downarrow 0}$, with the same extenders and tree order as $\Uu$.\qedhere
    \end{enumerate}
	\end{dfn}

Note here that by Lemma \ref{local-definability-of-that-structure},
$\rho_1^{M^\Tt_\alpha||\gamma^{M^\Tt_\alpha}}=\delta_0^{M^\Tt_\alpha}$,
a strong cutpoint of $(M^\Tt_\alpha||\gamma^{M^\Tt_\alpha})^\passive$,
so $\Tt\conc\Uu$ is indeed a putative $0$-maximal tree on $\vV_1$.

\begin{lem}\label{lem:Psi_vV_1,gamma^vV_1_good}
	$\Psi_{\vV_1,\gamma^{\vV_1}}$ is a short-normal $0$-maximal strategy (hence yields wellfounded models). Moreover, let
	 $\Tt\conc\Uu$ and $\Uu'$ be as in Definition \ref{dfn:Psi_for_vV_1|gamma}, with $\Uu\neq\emptyset$.
	Then:
	\begin{enumerate}
		\item $M^\Uu_0=M^\Tt_\alpha||\gamma^{M^\Tt_\alpha}$ and $\deg^\Uu_0=0$,
		\item $M^{\Uu'}_0={M^\Tt_\alpha\downarrow 0}$ and $\deg^{\Uu'}_0=0$,
		so $(M^\Uu_0)^\passive=M^{\Uu'}_0|\kappa_0^{+M^{\Uu'}_0}$,
		\item for $0<\beta<\lh(\Uu)$, $\beta\in\dropset_{\deg}^\Uu\Leftrightarrow\beta\in\dropset_{\deg}^{\Uu'}$,
		and $\deg^\Uu_\beta=\deg^{\Uu'}_\beta$,
		\item if $0<\beta<\lh(\Uu)$ and $[0,\beta]_\Uu$ drops then $M^\Uu_\beta=M^\Uu_{\beta'}$,
		\item if $0<\beta<\lh(\Uu)$ and $[0,\beta]_\Uu$ does not drop then
		$(M^\Uu_\beta)^\passive=M^{\Uu'}_\beta|\kappa_0^{+M^{\Uu'}_\beta}$,
		\item if $0<\beta+1<\lh(\Uu)$ and $[0,\beta+1]_\Uu$ drops then
		$M^{*\Uu}_{\beta+1}=M^{*\Uu'}_{\beta+1}$ and $i^{*\Uu}_{\beta+1}=i^{*\Uu'}_{\beta+1}$,
		\item if $0<\beta+1<\lh(\Uu)$ and $[0,\beta+1]_{\Uu'}$ does not drop
		then $i^{*\Uu}_{\beta+1}\sub i^{*\Uu'}_{\beta+1}$,
		\item\label{item:Ult(M^T_alpha_down_0,F)} if $0\leq\beta<\lh(\Uu)$ and $[0,\beta]_\Uu$ does not drop
		then $M^{\Uu'}_\beta$ is a $\kappa_0^{M^{\Uu'}_\beta}$-sound $\Sigma_{M^\Tt_\alpha\downarrow 0}$-iterate of ${M^\Tt_\alpha\downarrow 0}$,  $\M_\infty^{M^{\Uu'}_\beta}$
		is a $\delta_0^{\M_\infty^{M^{\Uu'}_\beta}}$-sound $\Sigma_{M^\Tt_\alpha\downarrow 0}$-iterate
		of ${M^\Tt_\alpha\downarrow 0}$,
		\[ \Ult(M^\Tt_\alpha\downarrow 0,F(M^\Tt_\beta))=\M_\infty^{M^{\Uu'}_\beta} \]
		and $F(M^\Tt_\beta)$ is the extender of the $\Sigma_{M^\Tt_\alpha\downarrow 0}$-iteration map.
	\end{enumerate}
Therefore $\Psi_{\vV_1,\gamma^{\vV_1}}$ is good.
	\end{lem}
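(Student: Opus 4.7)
The plan is to prove (1)--(8) simultaneously by induction on $\beta<\lh(\Uu)$, first reducing to the case $\Tt=\emptyset$ via the elementarity of $i^\Tt_{0\alpha}$, which preserves the entire local structure. At $\beta=0$, items (1), (2) are immediate from Definitions \ref{dfn:vV_1} and \ref{dfn:Psi_for_vV_1|gamma}: $M^\Uu_0=\vV_1||\gamma^{\vV_1}$ has $(M^\Uu_0)^\passive=\M_\infty|\kappa_0^{+\M_\infty}=M^{\Uu'}_0|\kappa_0^{+M^{\Uu'}_0}$, and by Definition \ref{dfn:vV_1} together with Lemma \ref{lem:pi_infty_is_iteration_map}, the active long extender $e^{\vV_1}$ is precisely the extender of the $\Sigma_{\M_\infty}$-iteration map $\M_\infty\to\M_\infty^{\M_\infty}$ restricted to $\M_\infty|\delta_\infty$, so (8) holds at $\beta=0$.

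For the successor step, fix $\gamma+1<\lh(\Uu)$ and set $E=E^\Uu_\gamma=E^{\Uu'}_\gamma$. Since $\lh(E)<\gamma^{M^\Uu_\gamma}$ we have $\crit(E)<\nu(E)<\kappa_0^{M^{\Uu'}_\gamma}$, and $E$ lies on the shared passive part $(M^\Uu_\gamma)^\passive=M^{\Uu'}_\gamma|\kappa_0^{+M^{\Uu'}_\gamma}$ by the inductive hypothesis. The common extender produces the same tree predecessor $\pred^\Uu(\gamma+1)=\pred^{\Uu'}(\gamma+1)=\delta$, and the $*$-predecessors are picked by the same projectum conditions applied to the matched fine structures (parallel to Lemma \ref{local_correspondence}), giving items (3)--(6): in the dropping case $M^{*\Uu}_{\gamma+1}$ sits below $\gamma^{M^\Uu_\delta}$ and is a genuine premouse equal to $M^{*\Uu'}_{\gamma+1}$, whereas in the non-dropping case it agrees with $M^{*\Uu'}_{\gamma+1}$ at the passive-part level. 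The Shift Lemma for Vsps -- the reverse direction of the verification in Lemma \ref{lem:vV_1-translatable} -- then produces $\Ult(M^{*\Uu}_{\gamma+1},E)$ as a Vsp whose passive part is $M^{\Uu'}_{\gamma+1}|\kappa_0^{+M^{\Uu'}_{\gamma+1}}$ and whose active long extender is the natural image of the long extender downstairs; items (7) and the preservation of (8) follow by tracking function representatives through the ultrapower.

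At a limit $\lambda$, the branch $c=\Sigma_{M^\Tt_\alpha\downarrow 0}(\Uu'\rest\lambda)$ exists and yields a wellfounded $M^{\Uu'}_\lambda$ by Fact \ref{fact:Sigma_properties}\ref{item:Slist_}; one takes the same branch $c$ through $\Uu\rest\lambda$ and propagates the inductive correspondence through the direct limit, obtaining wellfoundedness of $M^\Uu_\lambda$ together with the passive-part identity. For item (8) at stage $\beta$, interpreting the ``$F(M^\Tt_\beta)$'' of the statement as the active long extender $F^{M^\Uu_\beta}$ of $M^\Uu_\beta$, observe that every extender used on $\Uu'$ has strength $<\kappa_0$ (because $\lh<\gamma^{M^\Uu_\gamma}=\kappa_0^{+M^{\Uu'}_\gamma}$), so each non-dropping $M^{\Uu'}_\beta$ is $\kappa_0^{M^{\Uu'}_\beta}$-sound as a $\Sigma_{M^\Tt_\alpha\downarrow 0}$-iterate; the natural adaptation of Lemma \ref{lem:M_infty_of_kappa_0-sound_iterate} (replacing $M$ by the $\vV_1$-like $M^\Tt_\alpha$) then identifies $\M_\infty^{M^{\Uu'}_\beta}$ as a $\delta_0^{\M_\infty^{M^{\Uu'}_\beta}}$-sound iterate, and matches the inductively constructed $F^{M^\Uu_\beta}$ with the extender of the iteration map $M^{\Uu'}_\beta\to\M_\infty^{M^{\Uu'}_\beta}$. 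The main obstacle is exactly this item (8): one must carefully transfer Lemma \ref{lem:M_infty_of_kappa_0-sound_iterate} to the relative setting of iterates of a $\vV_1$-like base, and verify that the long extender produced by the ultrapower/direct-limit construction coincides with the iteration extender, using the indiscernible-tracking argument of that lemma together with the level-by-level correspondence of Lemma \ref{local_correspondence}.
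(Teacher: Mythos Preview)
Your inductive scheme for items (1)--(7) is correct and matches the paper's ``usual calculations''. However, the reduction to $\Tt=\emptyset$ via elementarity of $i^\Tt_{0\alpha}$ does not handle item~\ref{item:Ult(M^T_alpha_down_0,F)}: the assertion that $F^{M^\Uu_\beta}$ is a $\Sigma_{M^\Tt_\alpha\downarrow 0}$-iteration extender is external (about the $V$-strategy $\Sigma$), hence not preserved by elementarity of a map whose domain is $\vV_1$. Moreover, an arbitrary above-$\delta_0^\vV$ tree $\Uu$ on $\vV=M^\Tt_\alpha$ need not be of the form $i^\Tt_{0\alpha}(\bar{\Uu})$, so you cannot simply pull back. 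The correct reference for the base case $\beta=0$ with general $\Tt$ is Lemma~\ref{lem:Gamma_A_is_good}, which directly shows that $F^{\vV||\gamma^\vV}$ is the correct iteration extender for any non-dropping $\Psi_{\vV_1,\vV_1^-}$-iterate $\vV$; Lemma~\ref{lem:pi_infty_is_iteration_map} is only the special case $\vV=\vV_1$.

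For the inductive step in item~\ref{item:Ult(M^T_alpha_down_0,F)}, your plan to adapt Lemma~\ref{lem:M_infty_of_kappa_0-sound_iterate} works but is heavier than necessary. The paper's argument is more direct: since $(M^\Uu_0)^\passive=M^{\Uu'}_0|\kappa_0^{+M^{\Uu'}_0}$ and every extender used has length below the relevant $\gamma$, the $\Uu$- and $\Uu'$-iteration maps $j,j'$ satisfy $j\sub j'$. Thus $F^{M^\Uu_\beta}=j(F^{M^\Uu_0})$, and its target is $j(\M_\infty^{M^\Uu_0})=j'(\M_\infty^{M^{\Uu'}_0}|\delta_\infty)=\M_\infty^{M^{\Uu'}_\beta}|\delta_\infty$. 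Since $F^{M^\Uu_0}$ is already correct (Lemma~\ref{lem:Gamma_A_is_good}) and $j'$ preserves indiscernibles, $F^{M^\Uu_\beta}$ is correct as well. You do still invoke Lemma~\ref{lem:M_infty_of_kappa_0-sound_iterate} (applied to the $\kappa_0$-sound iterate $M^{\Uu'}_\beta$) to see that $\M_\infty^{M^{\Uu'}_\beta}$ is a $\delta_0$-sound $\Sigma$-iterate, but there is no need to re-run its indiscernible-tracking to identify $F^{M^\Uu_\beta}$.
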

\begin{proof}
We omit most of the proof, as it follows from the usual calculations.
However, for part \ref{item:Ult(M^T_alpha_down_0,F)},
just note that the action of the $\Uu$- and $\Uu'$-iteration maps $j$ and $j'$
on $M^\Uu_0=M^\Tt_\alpha||\gamma^{M^\Tt_\alpha}$ are identical (i.e. $j\sub j'$), since $(M^\Uu_0)^\passive=M^{\Uu'}_0|\kappa_0^{+M^{\Uu'}_0}$,
and so
\[ j(\M_\infty^{M^\Uu_0})=j'(\M_\infty^{M^{\Uu'}_0}|\delta_\infty^{M^{\Uu'}_0})=\M_\infty^{M^{\Uu'}_\beta}|\delta_\infty^{M^{\Uu'}_\beta};\]
but then the fact that $F(M^\Tt_\beta)$ agrees with the $\Sigma_{M^\Tt_\alpha\downarrow 0}$-iteration map
\[ ({M^\Tt_\alpha\downarrow 0})|\delta_0^{M^\Tt_\alpha\downarrow 0}\to \M_\infty^{M^{\Uu'}_\beta}\]
is a consequence, since $F^{\vV||\gamma^{\vV}}$ is likewise correct,
where $\vV=M^\Tt_\alpha$ (by Lemma \ref{lem:Gamma_A_is_good})
and $j'$ preserves indiscernibles.
\end{proof}

Note that if $\Tt\conc\Uu$ is as above, with last model $M^\Uu_\beta$,
then applying $F(M^\Uu_\beta)$ as the next extender (giving a non-short-normal tree),
the next model is again an iterate of $\vV_1$ via $\Psi_{\vV_1,\vV_1^-}$.

\begin{dfn}\label{dfn:Gamma_W}
Say that $(\Ww,U,\Tt)$ is \emph{$M$-standard} iff
$\Ww$ is a $\Sigma$-tree  on $M$
which is above $\kappa_0$, $\lh(\Ww)=\xi+2$ for some $\xi$,
 and letting $E=E^\Ww_\xi$, then $\crit(E)=\kappa_0$
 and $E$ is $M$-total, $U=\Ult(M,E)=M^\Ww_{\xi+1}$,
and $\Tt$ is the tree leading from $\vV_1$ to $\vV_1^U$; so
 \[ M^\Tt_\infty=\vV_1^U=\Ult(\vV_1,E\rest\vV_1)\]
and $i^\Tt\sub i^\Ww$.

Suppose $(\Ww,U,\Tt)$ is $M$-standard.
We define a strategy $\Gamma_{\Ww}$
for above-$\delta_0^{\vV_1^U}$ short-normal trees $\Ss$ on $\vV_1^U$.
\footnote{We use subscript $\Ww$, not $\Tt$, as one can have
$M$-standard $(\Ww',U',\Tt')\neq(\Ww,U,\Tt)$
with $\Tt'=\Tt$. We will later see that, in this case however,
$\Gamma_\Ww=\Gamma_{\Ww'}$.}
Let $\vV=\vV_1^U$ and $N=\vV\downarrow 0=\M_\infty^U$.

If $\lh(E^\Ss_0)<\gamma^{\vV}$,
then $\Gamma_\Ww$ follows $\Psi_{\vV_1,\gamma^{\vV_1}}$ (recalling that $\Tt$ is via $\Psi_{\vV_1,\vV_1^-}$).

Suppose $\lh(E^\Ss_0)>\gamma^{\vV}$.
Let $\Gamma$ be the above-$\kappa_0^{+U}$-strategy
for $U$ given by $\Sigma_U$. Then since
 $\vV=\vV_1^U$ is defined by P-construction,
$\Gamma$ induces a short-normal above-$\gamma^{\vV}$-strategy for $\vV$,
which $\Gamma_\Ww$ follows in this case.

We extend $\Gamma$ to $\Gamma^{V[G]}$,
for set-generic extensions $V[G]$ of $V$,
using that $\Sigma$ extends canonically to $\Sigma^{V[G]}$.
\end{dfn}

\begin{lem}
$\Gamma_\Ww$ is good, and hence so is each
$\Gamma^{V[G]}_\Ww$.
\end{lem}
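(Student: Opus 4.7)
The proof splits along the two clauses of Definition \ref{dfn:Gamma_W}, plus the extension to $V[G]$. Write $\vV=\vV_1^U$ and $N=\vV\downarrow 0=\M_\infty^U$. By $M$-standardness of $(\Ww,U,\Tt)$, $\Tt$ is via $\Psi_{\vV_1,\vV_1^-}$, so for any short-normal $\Ss$ on $\vV$ above $\delta_0^\vV$ via $\Gamma_\Ww$, the concatenation $\Tt\conc\Ss$ is a short-normal tree on $\vV_1$ whose lower component is already handled. What remains is to verify wellfoundedness of the iterates and correctness of the long extenders appearing on them.

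In the first case ($\lh(E^\Ss_0)<\gamma^\vV$), $\Ss$ is effectively a tree on $\vV||\gamma^\vV$. The plan is to apply the exact analog of Lemma \ref{lem:Psi_vV_1,gamma^vV_1_good} with $\vV_1$ replaced by $\vV$: this is legitimate because $\vV$ is $\vV_1$-like (via Lemma \ref{lem:vV_1-translatable}) and its base strategy $\Psi_{\vV,\vV^-}$ from Definition \ref{dfn:Psi_V,V^-} is good by Lemma \ref{lem:Gamma_A_is_good}. The proof transfers verbatim, yielding wellfoundedness, the stage-by-stage correspondence with a tree $\Ss''$ on $N$ via $\Sigma_N$, and its last part: any long top predicate activated on $M^\Ss_\beta||\gamma^{M^\Ss_\beta}$ is precisely the correct $\Sigma_N$-iteration extender, which is goodness.

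In the second case ($\lh(E^\Ss_0)>\gamma^\vV$), the tree uses only short extenders with critical points above $\gamma^\vV$. The plan is to exploit the P-construction identity $\vV=\vV_1^U$ from Lemma \ref{local_correspondence} (transferred to $U$ via $i_E$) to set up an isomorphism of short-normal iteration trees between $\Ss$ on $\vV$ above $\gamma^\vV$ and a short-normal tree $\Ss'$ on $U$ above $\kappa_0^{+U}$, with identical tree/drop/degree structure and matching extender lengths --- the iteration-tree analog of the converse of Lemma \ref{lem:vV_1-translatable}. Since $\Sigma_U$ yields wellfounded models above $\kappa_0^{+U}$, so does $\Gamma_\Ww$ on $\Ss$, and the stage-by-stage $=^*$ agreement is preserved along the iteration. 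Any long $F\in\es_+^{M^\Ss_\beta}$ on an iterate is the iteration image of a long extender of $\vV$, and pulls back to a short $M^{\Ss'}_\beta$-total extender $F'$ with $\crit(F')=\kappa_0^{M^{\Ss'}_\beta}$; Lemma \ref{lem:M_infty_of_kappa_0-sound_iterate}(4) then identifies $i^{M^{\Ss'}_\beta}_{F'}\rest\M_\infty^{M^{\Ss'}_\beta}$ with the correct $\Sigma$-iteration map, which translates back through the correspondence into goodness of $F$.

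The extension to $\Gamma^{V[G]}_\Ww$ follows at once from Fact \ref{fact:Sigma_properties}\ref{item:Slist_}: $\Sigma$ extends canonically and uniquely to $V[G]$ preserving all condensation properties, and each ingredient cited above (Lemmas \ref{local_correspondence}, \ref{lem:Gamma_A_is_good}, \ref{lem:Psi_vV_1,gamma^vV_1_good}, and \ref{lem:M_infty_of_kappa_0-sound_iterate}) is absolute between $V$ and $V[G]$. The main obstacle will be the Case 2 book-keeping: verifying that the extender-length matching $\lh(E^\Ss_\alpha)=\lh(E^{\Ss'}_\alpha)$ and the stage-by-stage $=^*$ correspondence between iterates of $\vV$ and $U$ persist through successor and limit stages, and interact correctly with the long-extender region when ultrapowers with critical points above $\gamma^\vV$ are applied. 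This should follow by an induction combining Lemma \ref{local_correspondence} with the Shift Lemma, but it will require care at the limit stages where Q-structures must be identified on both sides.
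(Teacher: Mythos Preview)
Your proposal is essentially correct and follows the same overall route as the paper, but you overcomplicate Case~1 and overstate the difficulty of Case~2.

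For Case~1 ($\lh(E^\Ss_0)<\gamma^\vV$): you propose to re-prove an analog of Lemma~\ref{lem:Psi_vV_1,gamma^vV_1_good} with $\vV_1$ replaced by $\vV$. This is unnecessary. By Definition~\ref{dfn:Gamma_W}, in this case $\Gamma_\Ww$ literally follows $\Psi_{\vV_1,\gamma^{\vV_1}}$, meaning $\Tt\conc\Ss$ is itself a tree via $\Psi_{\vV_1,\gamma^{\vV_1}}$. Goodness is then immediate from Lemma~\ref{lem:Psi_vV_1,gamma^vV_1_good} as already stated, with no relativization needed.

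For Case~2 ($\lh(E^\Ss_0)>\gamma^\vV$): your plan matches the paper's, but the ``main obstacle'' you flag---the stage-by-stage $=^*$ book-keeping and Q-structure matching at limits---is not really an issue. The strategy $\Gamma_\Ww$ is \emph{defined} via translation to the above-$\kappa_0^{+U}$ strategy $\Sigma_U$, so the tree correspondence $\Ss\leftrightarrow\Ss'$ is built in, and wellfoundedness is automatic. The only genuine content is verifying correctness of a long $F=F(M^\Ss_\infty||\beta)$ with $\beta>\gamma^\vV$: one simply takes $F'=F(M^{\Ss'}_\infty||\beta)$, observes $\crit(F')=\kappa_0^U$ and $F\sub F'$, and applies Lemma~\ref{lem:M_infty_of_kappa_0-sound_iterate} to the $\kappa_0$-sound $\Sigma$-iterates $U$ and $\Ult(U,F')$ of $M$. (Your phrasing ``$F$ is the iteration image of a long extender of $\vV$'' is slightly off---$F$ is just on the sequence of $M^\Ss_\infty$, not necessarily in $\rg(i^\Ss)$---but the argument you intend via $F'$ is the right one.) The extension to $V[G]$ is as you say.
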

\begin{proof} Clearly $\Gamma_\Ww$ yields wellfounded models,
and we already saw that $\Psi_{\vV_1,\gamma^{\vV_1}}$ is good.
So  with notation as in Definition \ref{dfn:Gamma_W},
suppose $\lh(E^\Ss_0)>\gamma^{\vV}$ and $\Ss$ has successor length,
and let $\Ss'$ be the corresponding
above-$\kappa_0^{+U}$ tree on $U$. Let $\beta\leq\OR(M^\Ss_\infty)$
with $F=F(M^\Ss_\infty||\beta)$
long, and $\beta>\gamma^{\vV}$.
Let $F'=F(M^{\Ss'}_\infty||\beta)$.
Then $\crit(F')=\kappa_0^U$ and $F'$ is $U$-total,
and $F\sub F'$. So goodness with respect to $F$
is an easy consequence of Lemma \ref{lem:M_infty_of_kappa_0-sound_iterate}
(note that $U$ and $U'=\Ult(U,F')$ are $\kappa_0^U$- and $\kappa_0^{U'}$-sound $\Sigma$-iterates of $M$).
\end{proof}

\begin{lem}\label{lem:M-standard_with_T_generic}
	Let $A$ be a set of ordinals.
	Then there is an $M$-standard $(\Ww,U,\Xx)$
	such that $A$ is $(U,\PP)$-generic for some $\PP\in U|\kappa_0^U$.
	Moreover, if $\vV$ is a non-dropping $\Psi_{\vV_1,\vV_1^-}$-iterate
	of $\vV_1$, via maximal tree $\Tt=A$,
	then $\vV_1^U$ is a $\Psi_{\vV,\vV^-}$-iterate of $\vV$ (see Definition \ref{dfn:Psi_V,V^-}).

	Likewise if $V[G]$ is a set-generic
	extension of $V$ and $A\in\pow(\OR)\cap V[G]$.
	\end{lem}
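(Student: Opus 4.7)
We first handle $A\in V$; the case $A\in V[G]$ for set-generic $G$ follows by the same argument applied inside $V[G]$, using that $\Sigma$ and $\Gamma$ extend canonically to $V[G]$ (Fact \ref{fact:Sigma_properties}\ref{item:Slist_}). The idea is to set up $\Ww$ as a genericity iteration of $M$ above $\kappa_0$ at the second Woodin $\delta_1^M$, followed by one final $\kappa_0$-extender applied back to $M$.

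Choose first an extender $E_0\in\es^M$ with $\crit(E_0)=\kappa_0$, $E_0$ $M$-total, and with $\lambda(E_0)$ much larger than $\sup(A)$ and than any bound we will need below; this exists because $\kappa_0$ is strong in $M$. Next, working in $M$, build $\Ww\rest(\xi+1)$, a tree on $M$ above $\kappa_0$, by the standard genericity-iteration process of Lemma \ref{absorbing-a-tree-by-a-genericity-tree} applied at $\delta_1^M$ rather than $\delta_0^M$, using extenders with critical points in $[\kappa_0,\delta_1^{M^\Ww_\alpha})$ and guided by $\Sigma$. This iteration is arranged so that $A$ becomes $M^\Ww_\xi$-generic for the $\delta_1^{M^\Ww_\xi}$-generator extender algebra $\PP=\BB_{\delta_1^{M^\Ww_\xi}}^{M^\Ww_\xi}$, and so that $\delta_1^{M^\Ww_\xi}<\lambda(E_0)$, which can be arranged because the iteration can be carried out inside $V_{\lambda(E_0)}^M$ when $\lambda(E_0)$ is chosen large enough to start with. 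Finally, let $E=E^\Ww_\xi$ be an $M$-total extender on $\es^{M^\Ww_\xi}$ with $\crit(E)=\kappa_0$, $\lh(E_0)\leq\lh(E)$, and $\lambda(E)>\delta_1^{M^\Ww_\xi}$; again such $E$ exists on $\es^{M^\Ww_\xi}$ by the strongness of $\kappa_0$ and our agreement between $M$ and $M^\Ww_\xi$ up to and beyond $\kappa_0^{+M}$. Since $\crit(E)=\kappa_0<\crit(E^\Ww_\beta)$ for all $\beta<\xi$, the iteration rules force $\pred^\Ww(\xi+1)=0$ and $M^{*\Ww}_{\xi+1}=M$, so $U=M^\Ww_{\xi+1}=\Ult(M,E)$ and $(\Ww,U,\Xx)$ is $M$-standard (with $\Xx$ the $\vV_1^M$-translation of the single step $M\to U$, produced by Lemma \ref{lem:vV_1-translatable}).

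For the genericity claim, Jensen coherence gives $U|\lh(E)=M^\Ww_\xi|\lh(E)$, and since $\delta_1^{M^\Ww_\xi}<\lambda(E)=\kappa_0^U<\lh(E)$, we get $\PP\in U|\kappa_0^U$ and the notion of $\PP$-genericity over $U$ agrees with that over $M^\Ww_\xi$. Hence $A$ is $(U,\PP)$-generic.

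For the moreover clause, suppose $A=\Tt$ is a maximal $\Psi_{\vV_1,\vV_1^-}$-tree with non-dropping last branch leading to $\vV$. By Lemma \ref{lem:Gamma_A_is_good}, $\Tt$ corresponds to a $\Sigma_{\M_\infty,\vV_1^-}$-tree on $\M_\infty$ with the same extenders, and via the level-by-level correspondence of Lemma \ref{local_correspondence} these extenders come from restrictions of extenders on $\es^M$, so $\Tt$ naturally lives inside $M$ as a set. The genericity iteration in Step 2 will then, by an absorption argument exactly analogous to \ref{absorbing-a-tree-by-a-genericity-tree}, actually incorporate $\Tt$ itself as an initial segment of $\Ww\rest(\xi+1)$ under the correspondence; alternatively, one invokes Lemma \ref{lem:M_infty_of_kappa_0-sound_iterate} to see that $\M_\infty^U$ is obtained from $\M_\infty$ by the $\Sigma_{\M_\infty}$-tree whose extender is $E\rest\M_\infty$, and factor this iteration through $\vV\downarrow 0=i^\Tt(\M_\infty)$ by using that $E\rest\M_\infty$ on the relevant generators agrees with $i^\Tt$ (thanks to the absorption of $\Tt$). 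Translating back via Lemma \ref{lem:vV_1-translatable} to the $\vV_1$-side, the tree $\Xx$ from $\vV_1$ to $\vV_1^U$ factors as $\Tt\conc\Yy$ for some $\Psi_{\vV,\vV^-}$-tree $\Yy$, giving that $\vV_1^U$ is a $\Psi_{\vV,\vV^-}$-iterate of $\vV$. The main technical obstacle is verifying this factorization: that the genericity iteration in $\Ww$ can be set up to literally absorb the $M$-side version of $\Tt$, and that Lemma \ref{lem:branch_con} applies to confirm the branches chosen agree with $\Sigma_\vV$ rather than merely giving some iterate of $\vV\downarrow 0$.
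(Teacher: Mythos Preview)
Your approach differs from the paper's and is workable in spirit, but the execution has gaps.

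The paper does not iterate at $\delta_1^M$. Instead it fixes $F\in\es^M$ which is $M$-total with $\crit(F)=\kappa_0$, observes that $\lambda_F$ is a limit of Woodin cardinals of $M|\lh(F)$ (this uses that $\kappa_0$ is strong and that $\delta_1>\kappa_0$ is Woodin, reflecting Woodins cofinally below $\kappa_0$), picks a Woodin $\delta\in(\nu_F,\lambda_F)$ of $M|\lh(F)$, and forms an above-$\nu_F$ genericity iteration $\Ww'$ at this \emph{local} $\delta$. Since $\rho_1^{M||\lh(F)}\leq\nu_F$, the tree $\Ww'$ drops immediately to $M||\lh(F)$, so everything lives inside a fixed set-sized structure. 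One then sets $\Ww=\Ww'\ \widehat{\ }\ \langle F(M^{\Ww'}_\infty)\rangle$: the final extender is simply the active extender of the last model, automatically $M$-total with critical point $\kappa_0$.

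Your route --- iterate $M$ at $\delta_1^M$ and then tack on a separately-chosen $\kappa_0$-extender --- can be made to work, but you are inconsistent about a crucial point. You write ``above $\kappa_0$'' and ``critical points in $[\kappa_0,\delta_1^{M^\Ww_\alpha})$'', yet later assert ``agreement between $M$ and $M^\Ww_\xi$ up to and beyond $\kappa_0^{+M}$''. The latter requires the genericity iteration to be \emph{strictly} above $\kappa_0$; if you allow $\crit=\kappa_0$ during the iteration, then $\kappa_0$ is no longer strong in $M^\Ww_\xi$ and no suitable final $E$ with $\lambda(E)>\delta_1^{M^\Ww_\xi}$ exists on $\es^{M^\Ww_\xi}$. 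You should use the extender algebra $\BB_{\delta_1,\kappa_0^{+}}$ and iterate strictly above $\kappa_0$. Also, ``working in $M$'' and ``carried out inside $V_{\lambda(E_0)}^M$'' are both wrong, since $A$ need not lie in $M$; the iteration is formed in $V$ via $\Sigma$, and the role of $E_0$ is superfluous --- just choose $E$ \emph{after} the genericity iteration with $\lambda(E)$ exceeding the (now known) $\delta_1^{M^\Ww_\xi}$.

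For the ``moreover'' clause, your approach (a) does not make sense: $\Tt$ is a tree on $\vV_1$, not on $M$, so it cannot be an initial segment of $\Ww$. Approach (b) is closer: once $A=\Tt$ is generic at some $\delta<\kappa_0^U$, one has $\delta_0^{\vV\downarrow 0}<\kappa_0^U<\delta_0^{\M_\infty^U}$, and since both $\vV\downarrow 0$ and $\M_\infty^U$ are $\delta_0$-sound non-dropping $\Sigma_{\M_\infty}$-iterates, a standard comparison argument shows $\M_\infty^U$ is a $\Sigma_{\vV\downarrow 0}$-iterate of $\vV\downarrow 0$; this lifts to $\vV_1^U$ being a $\Psi_{\vV,\vV^-}$-iterate of $\vV$. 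That is the ``routine consequence'' the paper alludes to.
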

\begin{proof}
	Let $F\in\es^M$ be $M$-total with $\crit(F)=\kappa_0$.
	Then $\lambda_F$ is a limit of Woodin cardinals of $M|\lh(F)$,
	and $\nu_F<\lambda_F$.
	Let $\delta\in(\nu_F,\lambda_F)$ be Woodin in $M|\lh(F)$.
	Let $\Ww'$ be an above-$\nu_F$ genericity iteration of $M$,
	for the extender algebra of $M|\lh(F)$ at $\delta$,
	making $A$ generic. Note that $\rho_1^{M||\lh(F)}\leq\nu_F$,
	so $\Ww'$ drops immediately to $M||\lh(F)$, at degree $0$.
	Set $\Ww=\Ww'\conc\left<F(M^{\Ww'}_\infty)\right>$.
	The ``moreover'' clause is a routine consequence, and the extension to $V[G]$ is similar.
	\end{proof}

We can now define the full short-normal strategy $\Psi_{\sn}$ for $\vV_1$.
In the end the method used to define the corresponding strategy
for $\vV_2$, in \S\ref{subsec:short-normal_trees_on_vV_2}
(especially Definition \ref{dfn:Psi_vV_2^sn}), will be somewhat different,
instead of being a direct generalization. One could probably
use the methods of  \S\ref{subsec:short-normal_trees_on_vV_2} to define
a strategy for $\vV_1$ below, which would have benefit of making the construction more uniform. But historically, the approach below was found earlier, and the verification that it works involves ideas that do not come up for the methods of \S\ref{subsec:short-normal_trees_on_vV_2}, which and seem of interest. So in order to record more information, we use the two different methods, as opposed to aiming for succinctness through uniformity.

\begin{dfn}\label{dfn:Psi_sn_1}
Let $\mathscr{T}$ be the class of all trees $\Tt$
 on $\vV_1$ via $\Psi_{\vV_1,\vV_1^-}$, of successor length, with
$b^\Tt$ non-dropping.
For $\Tt\in\mathscr{T}$, letting
$\vV=M^{\Tt}_\infty$,
we will define a good above-$\delta_0^\vV$ short-normal strategy $\Psi_{\Tt}$
for $\vV$. We will then define
\[ \Psi_{\sn}=\Psi_{\vV_1,\vV_1^-}\cup\bigcup_{\Tt\in\mathscr{T}}\Psi_{\Tt}.\]

So fix $\Tt$. Let $(\Ww,U,\Xx)$ be  $M$-standard and
such that  $\Tt$ is $(U,\PP)$-generic
for some $\PP\in U|\kappa_0^U$, which exists by Lemma \ref{lem:M-standard_with_T_generic}.
Let $j:\vV\to\vV_1^U$ be the correct iteration map.
By \ref{lem:vV_1_norm_condensation}, $\vV_1^U$ is $\om$-standard.
We define
\[ \Psi_\Tt=\text{ the minimal }j\text{-pullback of }\Gamma_\Ww\]
(see \cite[10.3, 10.4]{fullnorm_v3} and Remark \ref{rem:min_j-pullback} below; by $\om$-standardness,
the  minimal $j$-pullback
is well-defined, but we verify below that $\Psi_\Tt$
is independent of the choice of $\Ww$\footnote{Of course,
we could have simply chosen $\Ww$ in a canonical fashion,
and then $\Psi_\Tt$ would be trivially well-defined.
But the independence from $\Ww$ will be important later.}).

We also generalize this to set-generic extensions $V[G]$
of $V$. Let $\mathscr{T}^{V[G]}$
be the class of all trees $\Tt$ on $\vV_1$ via $\Psi^{V[G]}_{\vV_1,\vV_1^-}$ (determined
by
$\Sigma^{V[G]}$ just as $\Psi_{\vV_1,\vV_1^-}$ is from $\Sigma$), of successor length,
with $b^\Tt$ non-dropping. Fix $\Tt\in\mathscr{T}^{V[G]}$.
Let $(\Ww,U,\Xx)$ be as above with respect to $\Tt$ (but still with $\Ww\in V$, so $U\sub V$ and $\Xx\in V$ also).
Let $\vV=M^\Tt_\infty$.
and $j:\vV\to\vV_1^U$ be the correct iteration map. Define
\[\psi_\Tt^{V[G]}=\text{ the minimal }j\text{-pullback of }\Gamma^{V[G]}_\Ww.\qedhere\]
\end{dfn}

\begin{rem}\label{rem:min_j-pullback}
	Let us  summarize how the minimal
	$j$-pullback determining $\Psi_\Tt$ is defined. It is like a standard copying construction, except that
the method for copying extenders is different. Let $E_j$ be the $(\delta_0^{\vV},\delta_0^{\vV_1^U})$-extender
derived from $j$. For $\Ss$ via $\Psi_\Tt$,
we will have a tree $\Ss'$ via $\Gamma_\Ww$, with the same length, tree order,
drop and degree structure,
and for $\alpha<\lh(\Ss)$, a $d=\deg^\Ss_\alpha$-embedding
\[ \pi_\alpha:M^\Ss_\alpha\to M^{\Ss'}_\alpha, \]
and moreover,
\[ M^{\Ss'}_\alpha=\Ult_d(M^\Ss_\alpha,E_j) \]
and $\pi_\alpha$ is the associated ultrapower map,
and if $\alpha+1<\lh(\Tt)$ then
\[ M^{\Ss'}_\alpha||\lh(E^{\Ss'}_\alpha)=\Ult_0(M^\Ss_\alpha||\lh(E^\Ss_\alpha),E_j),\]
and letting $d^*=\deg^\Ss(\alpha+1)$, then
\[ M^{*\Ss'}_{\alpha+1}=\Ult_{d^*}(M^{*\Ss}_{\alpha+1},E_j). \]
 The remaining details  are essentially as in \cite{ACPFMP} and \cite{fullnorm_v3},
using  $\om$-standardness for Vsps (the latter ensures that, for example,
when $E^\Ss_\alpha\neq F(M^\Ss_\alpha)$ or $\deg^\Ss_\alpha>0$,
the ultrapower above determining $E^{\Ss'}_\alpha$ does indeed produce
a segment of $M^{\Ss'}_\alpha$).
\end{rem}
\begin{lem}\label{lem:Psi_sn_good}\footnote{Although this lemma
		logically precedes later parts of the paper, its necessity and proof
		were actually found later,
         in particular after the proof of
		Theorem \ref{tm:vV_2_is_generic_HOD_and_mantle}, which was found after the proof of  Lemma \ref{lem:modified_P-con_works}.}
	 We have:
\begin{enumerate}
	\item\label{item:Psi_T_well-def} $\Psi_\Tt$
is well-defined for each $\Tt\in\mathscr{T}^{V[G]}$.
\item\label{item:Psi_sn_good}  $\Psi^{V[G]}_{\sn}$ is
 good.
 \end{enumerate}
\end{lem}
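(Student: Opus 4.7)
The plan is to reduce everything to the goodness of $\Gamma_\Ww$ together with the pullback and commutativity properties of $\Sigma$. Part (1) is the heart: once we know that $\Psi_\Tt$ does not depend on the witnessing $M$-standard tuple, part (2) follows by transporting the goodness of $\Gamma_\Ww$ along the minimal $j$-pullback.

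For part (1), fix $\Tt\in\mathscr{T}^{V[G]}$ with $\vV=M^\Tt_\infty$, and let $(\Ww_i,U_i,\Xx_i)$ for $i=1,2$ be two $M$-standard tuples such that $\Tt$ is $(U_i,\PP_i)$-generic for some $\PP_i\in U_i|\kappa_0^{U_i}$, with associated iteration maps $j_i\colon\vV\to\vV_1^{U_i}$. The first step is to construct a common refinement: applying Lemma \ref{lem:M-standard_with_T_generic} (in $V[G]$) to a single set of ordinals coding $(U_1|\kappa_0^{U_1},U_2|\kappa_0^{U_2},\Tt)$, I obtain an $M$-standard $(\Ww_3,U_3,\Xx_3)$ for which $U_3$ is simultaneously an above-$\kappa_0$ $\Sigma_{U_i}$-iterate of $U_i$ for $i=1,2$; this folds the short-tree strategies of both $U_1$ and $U_2$ together with the standard genericity-iteration machinery, using that both $U_1,U_2$ are $\kappa_0^{U_i}$-sound $\Sigma$-iterates of $M$. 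Let $k_i\colon U_i\to U_3$ be the iteration map. By Lemma \ref{lem:M_infty_of_kappa_0-sound_iterate} combined with Lemma \ref{lem:vV_1-translatable}, $k_i$ restricts/translates to an iteration map $k_i^*\colon\vV_1^{U_i}\to\vV_1^{U_3}$, and Fact \ref{fact:Sigma_properties}\ref{item:Slist_commuting} gives $k_i^*\circ j_i=j_3$. I would then verify that $\Gamma_{\Ww_i}$ is the minimal $k_i^*$-pullback of $\Gamma_{\Ww_3}$: above $\gamma^{\vV_1^{U_i}}$ both sides pass through the $\Sigma_{U_i},\Sigma_{U_3}$ strategies linked by $k_i$, and below $\gamma^{\vV_1^{U_i}}$ both sides agree by Lemma \ref{lem:Psi_vV_1,gamma^vV_1_good}. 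Given this, functoriality of minimal pullback (as in \cite{fullnorm}) yields that the minimal $j_i$-pullback of $\Gamma_{\Ww_i}$ equals the minimal $j_3$-pullback of $\Gamma_{\Ww_3}$, so the two candidate $\Psi_\Tt$'s coincide.

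For part (2), Lemma \ref{lem:Gamma_A_is_good} and Lemma \ref{lem:Psi_vV_1,gamma^vV_1_good} already handle the cases where the short-normal tree is based on $\vV_1^-$ or on a segment $\vV_1||\gamma^{\vV_1}$; I only need to address a short-normal $\Ss=\Tt\conc\Ss_1$ with $\Tt\in\mathscr{T}^{V[G]}$ non-dropping and $\Ss_1$ above $\delta_0^{\vV}$. Copying $\Ss_1$ via $j\colon\vV\to\vV_1^U$ yields a tree $\Ss_1'$ via $\Gamma_\Ww$, together with copy maps $\pi_\alpha\colon M^{\Ss_1}_\alpha\to M^{\Ss_1'}_\alpha$. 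For any long $E\in\es_+(M^{\Ss_1}_\alpha)$, the corresponding long extender $E'\in\es_+(M^{\Ss_1'}_\alpha)$ is, by goodness of $\Gamma_\Ww$, the extender of the correct $\Sigma_{M^{\Ss_1'}_\alpha\downarrow 0}$-iteration map (Lemma \ref{lem:M_infty_of_kappa_0-sound_iterate} giving the identification). Pulling back along $\pi_\alpha$, which agrees with a $\Sigma_{M^{\Ss_1}_\alpha\downarrow 0}$-iteration map by Lemma \ref{lem:Gamma_A_is_good} applied in $V[G]$, gives the analogous fact for $E$, so the goodness clause holds for $\Ss$.

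The main obstacle will be the verification in part (1) that $\Gamma_{\Ww_i}$ is the minimal $k_i^*$-pullback of $\Gamma_{\Ww_3}$: the definition of $\Gamma_\Ww$ has two regimes (below and above $\gamma^{\vV_1^{U_i}}$), and one must check the match across both, including how the translation between short-normal trees on $\vV_1^{U_i}$ and above-$\kappa_0^{U_i}$ trees on $U_i$ commutes with further $k_i$-iteration, and including the handover via $F(M^\Uu_\beta)$ from Lemma \ref{lem:Psi_vV_1,gamma^vV_1_good}. Once this commutative square is in place, the remaining pullback bookkeeping and the passage to $V[G]$ via the canonical extensions of $\Sigma$ (Fact \ref{fact:Sigma_properties}\ref{item:Slist_}) are routine.
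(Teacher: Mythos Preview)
Your treatment of part~\ref{item:Psi_sn_good} is essentially the paper's: lift $\Ss$ via the minimal $j$-copy to a tree via $\Gamma_\Ww$, and pull the goodness of long extenders back along the copy maps, arguing as in the last part of Lemma~\ref{lem:Gamma_A_is_good}.

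For part~\ref{item:Psi_T_well-def}, your overall architecture---produce a common target, show each $\Gamma_{\Ww_i}$ is the minimal pullback of the strategy there, then use functoriality of minimal pullback together with $k_i^*\circ j_i=j_3$---is exactly the paper's. The gap is in the construction of the common target. Lemma~\ref{lem:M-standard_with_T_generic} applied to a set coding $(U_1|\kappa_0^{U_1},U_2|\kappa_0^{U_2},\Tt)$ does \emph{not} give $U_3$ as a $\Sigma_{U_i}$-iterate of $U_i$; its ``moreover'' clause only yields that $\vV_1^{U_3}$ is a $\Psi_{\vV_1^{U_i},(\vV_1^{U_i})^-}$-iterate of $\vV_1^{U_i}$. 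That is only information at the Vsp level, and it is not enough for the step you yourself flag as the main obstacle. In the regime $\lh(E^\Ss_0)>\gamma^{\vV_1^{U_i}}$, $\Gamma_{\Ww_i}$ is \emph{defined} by translation to the above-$\kappa_0^{+U_i}$ strategy $\Sigma_{U_i}$, so to show $\Gamma_{\Ww_i}$ is the minimal $k_i^*$-pullback of $\Gamma_{\Ww_3}$ you need a map $k_i\colon U_i\to U_3$ at the premouse level with $k_i^*\subseteq k_i$ (this is exactly how the paper uses $k\subseteq i^{D,\Yy}$). Furthermore, for $k_i\upharpoonright\vV_1^{U_i}$ to be the \emph{correct} iteration map via Lemma~\ref{lem:M_infty_of_kappa_0-sound_iterate}, one needs $U_i|\kappa_0^{U_i}\triangleleft U_3$, i.e.\ the tree from $U_i$ to $U_3$ must be above $\kappa_0^{U_i}$, not merely above $\kappa_0^M$. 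But $U_1|\kappa_0^{U_1}$ and $U_2|\kappa_0^{U_2}$ generally diverge above $\min(\nu_{F_1},\nu_{F_2})$, so no single $U_3$ can have both as initial segments; a na\"ive comparison of $U_1,U_2$ will use extenders with critical point $\kappa_0^M$ on at least one side, destroying the needed hypothesis.

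The paper handles this by \emph{not} comparing $U,U'$ directly. Instead it hits each with its order-$0$ measure $D,D'$ on $\kappa_0^U,\kappa_0^{U'}$ and then runs a mutual meas-lim extender-algebra genericity iteration $(\Yy,\Yy')$ of $\Ult(U,D),\Ult(U',D')$ at small Woodins $\delta,\delta'$ lying \emph{strictly above} $\kappa_0^U,\kappa_0^{U'}$ respectively, with linear iterations interleaved to align cardinals. This keeps each side's iteration above its own $\kappa_0$, so $U|\kappa_0^U\triangleleft Q$ and $U'|\kappa_0^{U'}\triangleleft Q'$; and it forces $\delta^\Yy=\delta^{\Yy'}$, after which $Q,Q'$ are equivalent modulo a generic above a common strong cutpoint $\delta^*$. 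That generic equivalence gives $\vV_1^Q=\vV_1^{Q'}$ and $\Gamma_\Zz=\Gamma_{\Zz'}$ outright. With the premouse-level maps $i^{D,\Yy},i^{D',\Yy'}$ in hand (and $\kappa_0^U$ a cutpoint of $U$, so $\Sigma^U$ is the minimal $i^{D,\Yy}$-pullback of $\Sigma^Q$), the pullback verification you highlight goes through. Your sketch is missing precisely this construction; the phrase ``folds the short-tree strategies\ldots together with the standard genericity-iteration machinery'' hides the real work.
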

\begin{proof}Since $G$ doesn't make a significant difference, we assume $G=\emptyset$.

Part \ref{item:Psi_sn_good}: Let $\Ss$ on $\vV$ be via $\Psi_\Tt$,
and $\Ss'$ the minimal $j$-copy, as in Remark \ref{rem:min_j-pullback}.
Using the copy maps $\pi_\alpha:M^\Ss_\alpha\to M^{\Ss'}_\alpha$,
we can argue just like in the last part of the proof of Lemma \ref{lem:Gamma_A_is_good} to see that the long extenders in $\es_+(M^\Ss_\alpha)$
are correct.

Part \ref{item:Psi_T_well-def}:
  Let $(\Ww,U,\Xx)$ and $(\Ww',U',\Xx')$ be as in the definition,
 and let $\Psi,\Psi'$ respectively be the induced strategies for $\vV=M^\Tt_\infty$.

Roughly, we would like to compare $U$ with $U'$, producing a common iterate $U''$
and corresponding $\Ww'',\Xx''$, and show that $\Psi,\Psi'$ both agree with $\Psi_{\Ww''}$,
and hence are equal. However, a standard comparison of $U,U'$ doesn't work for this,
as the resulting iteration map could have critical point $\kappa_0^M$ on one side,
which would cause problems. Instead, we form a modified kind of comparison, as follows.

Let $D\in\es^U$ be the $U$-total order $0$ measure on $\kappa_0^U$.
Let $\delta$ be the least Woodin of $\Ult(U,D)$ such that $\delta>\kappa_0^U$.
Let $D',\delta'$ be likewise for $U'$.

Recall from \cite{iter_for_stacks} that the \emph{meas-lim extender algebra} of a premouse $N$
is like the usual extender algebra, except that we only induce axioms with extenders $E\in\es^N$
such that $\nu_E$ is a limit of measurable cardinals of $N$.
We will form a \emph{simultaneous genericity iteration} $(\Yy,\Yy')$ of $\Ult(U,D)$ and $\Ult(U',D')$
for the meas-lim extender algebras at $\delta,\delta'$, above $\kappa_0^U+1$ and $\kappa_0^{U'}+1$ respectively, arranging that $M^\Yy_\infty$ and $M^{\Yy'}_\infty$ are generic over one another, and
$\delta(\Yy)=i^\Yy_{0\infty}(\delta)=i^{\Yy'}_{0\infty}(\delta')=\delta(\Yy')$. To help ensure the latter, we also (i)
arrange genericity of $(U|\delta,U'|\delta')$,
which will allow $\Yy,\Yy'$ to be recovered in the generic extensions,
and (ii) insert short linear iterations which ensure that every measurable of $M^\Yy_\infty$ below $i^\Yy_{0\infty}(\delta)$ is a cardinal of $M^{\Yy'}_\infty$, and vice versa;
this is like similar arguments in \cite{odle_v2} and \cite{local_mantles_of_Lx_v2}.
However, executing this process in the most obvious manner,
using the process for genericity iteration with Jensen indexing
on both sides (as described in \cite[Theorem 5.8]{iter_for_stacks})
seems to lead to the possibility of the trees $\Yy,\Yy'$ being non-normal.
Thus, instead of this, we produce a sequence $\left<\Yy_\alpha,\Yy'_\alpha\right>_{\alpha\leq\iota}$ of normal trees approximating  the eventual
desired trees $\Yy=\Yy_\iota$ and $\Yy'=\Yy_\iota$, with the sequence converging in a natural way.

Here are the details.
We initially iterate linearly with the least measurable
of $\Ult(U,D)$ which is $>\kappa_0^U$ (hence ${<\delta}$),
and likewise $>\kappa_0^{U'}$ for $\Ult(U',D')$, until they reach some common closure
point $>\max(\delta,\delta')$.
Suppose we have defined $\Xx=\Yy_\alpha,\Xx'=\Yy'_\alpha$, at some point after this initial phase. These trees will be padded $0$-maximal, of successor length, and if $E^{\Xx}_\beta\neq\emptyset\neq E^{\Xx'}_\beta$
then $\lh(E^\Xx_\beta)=\lh(E^{\Xx'}_\beta)$. We will determine some extenders $E_\alpha,E'_\alpha$, or stop the process. First let $G_\alpha\in\es_+(M^\Xx_\alpha)$
be the extender selected for the purposes of genericity iteration,  for making
$(\es^{M^{\Xx'}_\infty},U|\delta,U'|\delta')$ generic, and given current tree $\Xx$ (but not demanding that $\lh(G_\alpha)>\lh(E^\Xx_\beta)$ for all $\beta+1<\lh(\Xx)$; if $\lh(G_\alpha)<\lh(E^\Xx_\beta)$ for some $\beta$, that is okay); we follow the extender selection procedure for genericity iteration for Jensen indexing here (see \cite[Theorem 5.8]{iter_for_stacks}), and
if $b^\Xx$ drops then $M^\Xx_\infty$ will be active,
and in this case $F(M^\Xx_\infty)$ is automatically set as $G_\alpha$,
if no extender with lower index is.
(If there is no such extender, set $G_\alpha=\emptyset$.) Define
$G'_\alpha$ symmetrically, for making $(\es^{M^{\Xx}_\infty},U|\delta,U'|\delta')$ generic.
If $G_\alpha\neq\emptyset$ then let $\gamma=\lh(G_\alpha)$;
if otherwise and $b^\Xx$ does not drop then let $\gamma=i^\Xx_{0\infty}(\delta)$,
and otherwise let $\gamma=\OR(M^\Xx_\infty)$.  Define $\gamma'$ symmetrically.
If there is $F\in\es(M^\Xx_\infty|\gamma)$ which is $M^\Xx_\infty$-total
and $\kappa_0^U<\crit(F)$ and $\crit(F)$ is not a cardinal of $M^{\Xx'}_\infty$,
then let $F_\alpha=$ the least such $F$, and otherwise let $F_\alpha=G_\alpha$.
(If $\crit(F)$ is not a cardinal of $M^{\Xx'}_\infty$
for the trivial reason that $\OR(M^{\Xx'}_\infty)\leq\crit(F)$,
and hence $b^{\Xx'}$ drops,
then it will follow from Claim \ref{clm:gen_it_terminates} below
that $M^{\Xx'}_\infty$ is active,
$G'_\alpha\neq\emptyset$ and so $\lh(G'_\alpha)\leq\lh(F)$, and in this case,
$F$ will actually be irrelevant.)
Define $F'_\alpha$ symmetrically.
If $F_\alpha\neq\emptyset$ and either $F'_\alpha=\emptyset$ or $\lh(F_\alpha)\leq\lh(F'_\alpha)$ then set $E_\alpha=F_\alpha$,
and otherwise set $E'_\alpha=\emptyset$; define $E'_\alpha$ symmetrically.

If $E_\alpha=\emptyset=E'_\alpha$ then we stop the process (setting $\iota=\alpha$).
If $E_\alpha\neq\emptyset$ then let $\beta$ be least such that $E_\alpha\in\es_+(M^\Xx_\beta)$, and  set $\Yy_{\alpha+1}=\Yy_\alpha\rest(\beta+1)\conc E_\alpha$ (as a $0$-maximal tree). If $E'_\alpha\neq\emptyset$, define
 $\Yy'_{\alpha+1}$ symmetrically. If $E_\alpha\neq\emptyset=E'_\alpha$
 then set $\Yy'_{\alpha+1}=\Yy_\alpha\rest(\beta+1)$, where $\beta$ is least
 such that either $\Yy'_\alpha=\Yy'_\alpha\rest(\beta+1)$
 or $\lh(E^{\Yy'_\alpha}_\beta)>\lh(E_\alpha)$. And if $E_\alpha=\emptyset\neq E'_\alpha$,
 proceed symmetrically.

Finally, given $\Yy_\alpha,\Yy'_\alpha$ for all $\alpha<\eta$, where $\eta$ is a limit,
define $\Yy_\eta$ as the natural lim inf of the $\Yy_\alpha$ for $\alpha<\eta$,
extended with the relevant iteration strategy $\Psi$ as necessary.
That is, $\Yy_\eta$ is via $\Psi$, and $\Yy_\eta$ uses an extender $E$ iff $\Yy_\alpha$ uses $E$ for eventually
all $\alpha<\eta$, and if this yields a limit length tree, then we extend it using $\Psi$
to successor length.

This determines the mutual genericity iteration.
The first claim below is much as in \cite[Theorem 5.8]{iter_for_stacks} and related arguments in \cite{odle_v2}:

\begin{clmtwo}\label{clm:gen_it_terminates}\
	\begin{enumerate}
		\item\label{item:1_Tt^P_alpha_is_0-max} $\Yy_\alpha$ is $0$-maximal
		and if $b^{\Yy_\alpha}$ drops then $M^{\Yy_\alpha}_\infty$ is active; likewise for $\Yy'_\alpha$,
		\item\label{item:1_termination} the process terminates
		at some $\iota<\infty$, giving $\Yy=\Yy_\iota$ and $\Yy'=\Yy'_\iota$, and
		\item\label{item:1_drop_implies_active}  $b^\Yy,b^{\Yy'}$ do not drop (although
		there \emph{can} be $\alpha<\lh(\Yy,\Yy')$ such that $[0,\alpha]_\Yy$ or $[0,\alpha]_{\Yy'}$ drops, because of Jensen indexing).
		\end{enumerate}
	\end{clmtwo}
\begin{proof}
	Part \ref{item:1_Tt^P_alpha_is_0-max}: The $0$-maximality
	is directly by definition; the rest is as in \cite{iter_for_stacks}.

	Part \ref{item:1_termination}: Suppose not. Fix some large enough regular cardinal
	$\chi$. Assume for the sake of illustration
	that for all $\alpha\in\OR$ there are $\beta,\beta'>\alpha$ such that $E_\beta\neq\emptyset$ and $E'_{\beta'}\neq\emptyset$; the other case is similar. Let $\Yy=\Yy_\chi$ and $\Yy'=\Yy'_\chi$, noting that $\chi+1=\lh(\Yy)$
	and $\chi=\delta(\Yy)$ and $\chi$ is a limit cardinal of $M^\Yy_\infty$, and likewise for $\Yy'$. Let $J\preccurlyeq\her_{\chi^+}$
	with everything relevant in $J$ and $\kappa=J\inter\chi\in\chi$, let $H$ be the transitive collapse of $J$ and $\pi:H\to\her_{\chi^+}$ the uncollapse map.
	So $\pi(\kappa)=\chi$ and we have $\bar{\Yy},\bar{\Yy}'\in H$ with $\pi(\bar{\Yy},\bar{\Yy}')=(\Yy,\Yy')$. Note that $\bar{\Yy}=\Yy_\kappa$ and  $\lh(\bar{\Yy})=\kappa+1$, $\delta(\bar{\Yy})=\kappa$ is a limit cardinal of $M^{\bar{\Yy}}_\infty$, $\kappa\in b^{\bar{\Yy}}= b^\Yy\inter(\kappa+1)$,
and for all $\alpha\in[\kappa,\chi]$, we have $\bar{\Yy}=\Yy_\alpha\rest(\kappa+1)$.
Likewise for $\bar{\Yy}'$.

As usual, $i^\Yy_{\kappa\chi}\rest\pow(\kappa)\sub\pi$. Let $E$ be the first extender used forming $i^\Yy_{\kappa\chi}$, so $\crit(E)=\kappa$.
Let $\alpha\in[\kappa,\chi)$ with $E=E_\alpha$.
Since $\kappa$ is a limit cardinal of  $M^{\bar{\Yy}'}_\infty$, and
	hence of  $M^{\Yy'_\alpha}_\infty$, either (i) $E=G_\alpha$ was  chosen for genericity iteration purposes, and let $\beta=\alpha$, or (ii) $b^{\Yy_\alpha}$ drops and $E_\alpha=F(M^{\Yy_\alpha}_\infty)$.
	But if (ii) holds then as is usual for genericity iteration with $\lambda$-indexing
	(see \cite{iter_for_stacks}), there is $\beta\in[\kappa,\alpha)$ such that
	$E_\beta=G_\beta$ is used in $\Yy$ (but not along $b^\Yy$),
	and used in $\Yy_\alpha$, and $G_\beta\rest\nu(G_\beta)=E_\alpha\rest\nu(G_\beta)$.
	So in either case (i) or (ii), $G_\beta$ is used in $\Yy$ and $G_\beta\rest\nu(G_\beta)$
	is derived from $i^\Yy_{\kappa\chi}$. But note then that $M^{\Yy'_\beta}_\infty|\nu(G_\beta)=M^{\Yy'}_\infty|\nu(G_\beta)$,
	and therefore we obtain a contradiction like in the usual proof that genericity iteration terminates.

	Part \ref{item:1_drop_implies_active} is by construction and  part
	\ref{item:1_Tt^P_alpha_is_0-max}.
\end{proof}

Let $\delta^\Yy=i^\Yy_{0\infty}(\delta)$ and $\delta^{\Yy'}=i^{\Yy'}_{0\infty}(\delta')$.
Note that $\delta^\Yy$ is a strong cutpoint of $M^\Yy_\infty$,
and $M^\Yy_\infty$ is $\delta^\Yy$-sound; likewise for $\delta^{\Yy'}$ and $M^{\Yy'}_\infty$.

\begin{clmtwo}$\delta^\Yy=\delta^{\Yy'}$.\end{clmtwo}
\begin{proof}We may assume $\delta^\Yy<\delta^{\Yy'}$.
By minimality of $\delta'$, $\delta^\Yy$ is not Woodin in $M^{\Yy'}_\infty$.
Let $Q'\pins M^{\Yy'}_\infty$ be the Q-structure for $\delta^\Yy$.
Because of the inserted linear iterations at  measurables,
$\delta^\Yy$ is a cardinal of $M^{\Yy'}_\infty$.
Note then that by choice of $D',\delta'$ (and smallness), $\delta^{\Yy}$ is a strong cutpoint
of $Q'$. But $(M^{\Yy'}_\infty|\delta^\Yy,U|\delta,U'|\delta')$ is meas-lim extender algebra generic
over $M^{\Yy}_\infty$ at $\delta^\Yy$, and $(M^\Yy_\infty|\delta^\Yy,U|\delta,U'|\delta')$ is likewise over $Q'$, also at $\delta^\Yy$, because $\delta^\Yy$ is a cardinal of $M^{\Yy'}_\infty$. Therefore these two premice
can be lifted to  premice $(M^\Yy_\infty)^+$ and $(Q')^+$ over $(M^\Yy_\infty|\delta^\Yy,M^{\Yy'}_\infty|\delta^\Yy,U|\delta,U'|\delta')$.
Comparing $(M^\Yy_\infty)^+$ with $(Q')^+$ (so the comparison is above $\delta^\Yy$)
and considering smallness and $\delta^\Yy$-soundness, we have $(Q')^+\pins(M^\Yy_\infty)^+$.

Now working in $(M^\Yy_\infty)^+$, where we have $U|\delta$, $M^\Yy_\infty|\delta^\Yy$, $U'|\delta'$, $M^{\Yy'}_\infty|\delta^\Yy$
and $Q'$, we can recover $\bar{\Yy}\rest\delta^\Yy$ and $\bar{\Yy'}\rest(\delta^{\Yy'}+1)$, where $\bar{\Yy}$
is just like $\Yy$ but as a tree on $U|\delta$, and $\bar{\Yy'}$ likewise. For (i) we have $U|\delta$ and $U'|\delta'$, and we just proceed by comparing $U|\delta$ with $M^\Yy_\infty|\delta^Y$ and $U'|\delta'$ with $Q'$, (ii)  $Q'$ determines
the branch of $\bar{\Yy'}$ at stage $\delta^\Yy$, and (iii) the intermediate Q-structures
used to guide $\bar{\Yy},\bar{\Yy'}$ are segments of $M^\Yy_\infty|\delta^\Yy$
and $M^{\Yy'}_\infty|\delta^\Yy$  (for example if $\xi=\delta(\bar{\Yy'}\rest\xi)$
and the Q-structure for $\bar{\Yy'}\rest\xi$ is non-trivial,
then $\xi$ is a limit of measurable cardinals of $M^{\Yy'}_\infty$,
hence a limit cardinal of $M^\Yy_\infty$, so  the Q-structures
on both sides do not overlap $\xi$, which implies that the next extenders used have index
beyond the Q-structures (and likewise at all stages after $\xi$), so the Q-structures are retained).

So in $(M^\Yy_\infty)^+$, where $\delta^\Yy$ is a regular cardinal, we can execute
a slight variant of the termination-of-genericity-iteration proof used
above for Claim \ref{clm:gen_it_terminates} (with $\delta^\Yy$ in the role of $\chi$  there).
(We may not have the sequence of trees $\left<\Yy_\alpha,\Yy'_\alpha\right>_{\alpha<\delta^\Yy}$
in $(M^\Yy_\infty)^+$, but note that the trees we do have are enough.)
\end{proof}

 Let $\delta^*=\delta^\Yy=\delta^{\Yy'}$ and $Q=M^\Yy_\infty$ and $Q'=M^{\Yy'}_\infty$.
Note now that (by $\delta^*$-soundness and as $\delta^*$ is a strong cutpoint on both sides),
$Q,Q'$ are equivalent modulo a generic above $\delta^*$, i.e.~letting
$Q^+$ be $Q[Q'|\delta^*]$, considered as a premouse over $(Q|\delta^*,Q'|\delta^*)$, and $(Q')^+$ likewise
(we no longer need $U|\delta$ and $U'|\delta'$), then $Q^+=(Q')^+$. It follows that $\kappa_0^Q=\kappa_0^{Q'}$
and $\vV_1^Q=\vV_1^{Q'}$, and by uniqueness of iteration strategies,
the above-$\delta^*$ strategy for $Q$ translates to that of $Q'$.
Finally let $\Zz$ be the normal tree equivalent to the stack $(\Ww,D,\Yy)$ and $\Zz'$ that to $(\Ww',D',\Yy')$;
it follows that $\Gamma_{\Zz}=\Gamma_{\Zz'}$.

Let $j:\vV\to\vV_1^U$ and $j':\vV\to\vV_1^{U'}$ be the correct iteration maps.
Now $i^{D,\Yy}\rest\vV_1^U$ is the correct iteration map $k:\vV_1^U\to\vV_1^Q$
(because we wanted this, we couldn't just compare $U$ with $U'$ in the usual manner).
Likewise $k'=i^{D',\Yy'}\rest\vV_1^{U'}$.
So
\[ k\com j=k'\com j':\vV\to\vV_1^Q=\vV_1^{Q'} \]
is also the correct iteration map.
So the minimal $k\com j$-pullback of $\Gamma_{\Zz}$ equals
the minimal $k'\com j'$-pullback of $\Gamma_{\Zz'}$
(a strategy for $\vV$); denote this by $\Psi^*$.
Recall that $\Psi$
is the minimal $j$-pullback of $\Gamma_\Ww$,
and $\Psi$ the minimal $j'$-pullback of $\Gamma_{\Ww'}$.
It suffices to see that $\Psi=\Psi^*$, since then by symmetry, $\Psi'=\Psi^*$ also.

Let $\Sigma^Q$ be the above-$\kappa_0^Q$ strategy for $Q$ given by $\Sigma_Q$.
Let $\Sigma^U$ be likewise for $U$.
Then by $\kappa_0^U$-soundness and since $\kappa_0^U$ is a cutpoint of $U$,
$\Sigma^U$ is the minimal $i^{D,\Yy}$-pullback of $\Sigma^Q$.
But then $\Gamma_\Ww$ is the minimal $k$-pullback of $\Gamma_{\Zz}$;
for trees $\Ss$ on $\vV_1^U$ with $\lh(E^\Ss_0)>\gamma^{\vV_1^U}$,
this uses that  $k\sub i^{D,\Yy}$ and the fine structural translation of $\Ss$
to a tree on $U$; for trees with $\lh(E^\Ss_0)<\gamma^{\vV_1^U}$,
it uses minimal hull condensation for $\Sigma_M$,
and the fact that the action of $\Gamma_\Ww$ and $\Gamma_{\Zz}$
on such trees in induced by $\Sigma_M$. Therefore $\Psi=\Psi^*$, as desired.
\end{proof}

\subsubsection{Normal trees on $\vV_1$}

So we have a good short-normal strategy $\Psi_\sn$,
extending $\Psi_{\vV_1,\vV_1^-}$. This extends easily to a normal strategy
$\Sigma_{\vV_1}$.

\begin{dfn}\label{dfn:Sigma_vV_1}
	We define a $0$-maximal iteration strategy $\Sigma_{\vV_1}$ for $\vV_1$,
	determined by the following properties:
	\begin{enumerate}
		\item $\Psi_{\sn}\sub\Sigma_{\vV_1}$.
		\item
Let $\Tt$ be on $\vV_1$, $0$-maximal, of length $\eta+2$, with $\Tt\rest\eta+1$
short-normal and via $\Psi_\sn$, and $E^\Tt_\eta$ long. Then (by goodness) $M^{\Tt}_{\eta+1}$
is a non-dropping iterate of $\vV_1$ via a tree $\Uu$
 according to $\Psi_{\vV_1,\vV_1^-}$,
and $\lh(E^\Tt_\eta)=\delta_0^{M^\Tt_{\eta+1}}$.
Then $\Sigma_{\vV_1}$ acts on trees normally extending $\Tt\rest\eta+2$ by
following $\Psi_{\Uu}$, until another long extender is used.
\item Likewise, whenever $\Tt$ is on $\vV_1$, via $\Sigma_{\vV_1}$,
and $E^\Tt_\eta$ is long, then $M^\Tt_{\eta+1}$ is a non-dropping
$\Psi_{\vV_1,\vV_1^-}$-iterate, via a tree $\Uu$,
and $\Sigma_{\vV_1}$ extends $\Tt\rest(\eta+2)$ by following $\Psi_{\Uu}$,
until another long extender is used.
\item
If $\lambda$ is a limit and there are long extenders
used cofinally in $\lambda$, there is a
 unique $\Tt\rest\lambda$-cofinal branch, and $M^\Tt_\lambda$
 is again an iterate via $\Psi_{\vV_1,\vV_1^-}$ (by  normalization for transfinite stacks). In this case,
 $\delta(\Tt\rest\lambda)$ is the least measurable of $M^\Tt_\eta$,
 so we can have $\Tt\rest[\lambda,\alpha)$
 based on $M^\Tt_\lambda|\delta_0^{M^\Tt_\lambda}$, with $\lambda<\alpha$.
 This interval is formed using $\Psi_{\vV,\vV^-}$ (see Definition \ref{dfn:Psi_V,V^-}).
Letting $\alpha$ be least such that $(\lambda,\alpha]_\Tt$ does not drop
and $\delta_0^{M^\Tt_\alpha}<\lh(E^\Tt_\alpha)$, then (by  normalization) there is a short-normal tree $\Uu$ via $\Psi_{\vV_1,\vV_1^-}$ with last model $M^\Tt_\alpha$,
and $\Sigma_{\vV_1}$ extends $\Tt\rest(\alpha+1)$ by following
$\Psi_{\Uu}$, until the next long extender.\qedhere
\end{enumerate}
 \end{dfn}

The following lemma is now easy to see:
\begin{lem} $\Sigma_{\vV_1}$ is a good $0$-maximal strategy
	for $\vV_1$. Moreover, for every successor length tree $\Tt$ via $\Sigma_{\vV_1}$
 there is a unique short-normal tree via $\Psi_{\sn}$ with the same last model.
\end{lem}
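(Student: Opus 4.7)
My plan is to verify the four clauses of Definition \ref{dfn:Sigma_vV_1} fit together coherently, by induction on tree length, and then derive goodness and the uniqueness of the short-normal representative as easy consequences. For well-definedness, at each successor stage $\eta+1$ where $E^\Tt_\eta$ is a long extender and $M^\Tt_\eta$ is a good iterate via $\Psi_\sn$, I need to know that $M^\Tt_{\eta+1}$ is again a $\Psi_{\vV_1,\vV_1^-}$-iterate of $\vV_1$, i.e.\ there is a $\Psi_{\vV_1,\vV_1^-}$-tree $\Uu$ with $M^\Tt_{\eta+1}=M^\Uu_\infty$ and $i^\Uu$ matching the natural iteration map. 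This is exactly the goodness of $\Psi_\sn$ (Lemma \ref{lem:Psi_sn_good}) combined with Lemma \ref{lem:Gamma_A_is_good} at the preceding stage, since a long extender on $M^\Tt_\eta$ corresponds, via the tight correspondence between $\es^{\vV_1}$ at indices with critical point $\kappa_0^{\M_\infty}$ and the iteration map of $\Sigma_{\M_\infty}$, to a canonical additional piece of a short-normal tree on $\vV\downarrow 0$. Stacking $\Uu$ onto the short-normal tree that produced $M^\Tt_\eta$ and then full-normalizing (in the sense of \cite{fullnorm}, using the $\om$-standardness from Lemma \ref{lem:vV_1_norm_condensation}) produces a short-normal tree $\Uu'$ via $\Psi_{\vV_1,\vV_1^-}$ with last model $M^\Tt_{\eta+1}$; I then declare $\Psi_{\Uu'}$ to govern the continuation.

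For limit stages $\lambda$ where long extenders are used cofinally, the key is that normalization for transfinite stacks (again from \cite{fullnorm}) assigns to $\Tt\rest\lambda$ a unique short-normal tree $\Uu$ via $\Psi_{\vV_1,\vV_1^-}$ with last model $M^\Tt_\lambda$ along a wellfounded branch, and $\delta(\Tt\rest\lambda)$ ends up as the least measurable of $M^\Tt_\lambda$. One then continues using $\Psi_{M^\Tt_\lambda,(M^\Tt_\lambda)^-}$ for the interval based on $M^\Tt_\lambda|\delta_0^{M^\Tt_\lambda}$, and $\Psi_{\Uu}$ after the next long extender reappears. Here the independence of $\Psi_\Tt$ from the particular $M$-standard triple, established in Lemma \ref{lem:Psi_sn_good}\ref{item:Psi_T_well-def}, is what guarantees that the choice of $\Uu$ produced by normalization (which is canonical) yields a strategy that doesn't depend on spurious data introduced along the way.

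Goodness of $\Sigma_{\vV_1}$ is then essentially automatic: on short-normal segments it is goodness of $\Psi_\sn$, and each long extender $E$ appearing in a $\Sigma_{\vV_1}$-tree is, by construction, exactly the extender derived from the $\Sigma_{\M_\infty^{M^\Tt_\eta\downarrow 0}}$-iteration map on the appropriate level of $M^\Tt_\eta\downarrow 0$, which is the condition in the definition of \emph{good}. Wellfoundedness of models at limits of long-extender cofinality is secured by the fact that the normalized short-normal tree $\Uu$ has a cofinal branch chosen by $\Psi_{\vV_1,\vV_1^-}$, which yields a wellfounded model by Lemma \ref{lem:Gamma_A_is_good}, and the factor map from that model to $M^\Tt_\lambda$ is the identity on ordinals up to $\delta(\Tt\rest\lambda)$ and a tail iteration map above.

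Finally, the uniqueness clause in the ``moreover'' statement follows by backward induction on the long extenders used in $\Tt$: each long extender in $\Tt$ is replaced, together with the ensuing short-normal segment, by the unique short-normal tree $\Uu$ via $\Psi_{\vV_1,\vV_1^-}$ produced by full normalization. That the result is via $\Psi_\sn$ and has the same last model is immediate from the construction; uniqueness is because two short-normal trees via $\Psi_\sn$ with common last model would, by minimal hull condensation of $\Sigma_{\M_\infty}$ (Fact \ref{fact:Sigma_properties}\ref{item:Slist_}) applied to the lower components, and by the uniqueness of $\Psi_\Tt$ (Lemma \ref{lem:Psi_sn_good}\ref{item:Psi_T_well-def}) applied to the upper components, have to coincide. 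The main obstacle I anticipate is verifying the commutativity of normalization with the transitions at long extenders, i.e.\ that normalizing the stack of short-normal tree plus a single-long-extender tree plus the next short-normal tree is the same as normalizing the full stack in one go; this reduces to properties of full normalization already proved in \cite{fullnorm}, but demands careful bookkeeping because long extenders do not satisfy the usual coherence axioms.
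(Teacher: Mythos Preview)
Your proposal is essentially correct and matches what the paper intends; the paper itself gives no proof, merely stating that the lemma ``is now easy to see'' from Definition \ref{dfn:Sigma_vV_1} together with the goodness of $\Psi_{\sn}$ (Lemma \ref{lem:Psi_sn_good}). The inductive scheme you outline---using goodness to see that each application of a long extender returns a $\Psi_{\vV_1,\vV_1^-}$-iterate, and invoking normalization for transfinite stacks at limits of long-extender stages---is exactly the content of the parenthetical remarks in Definition \ref{dfn:Sigma_vV_1}.

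One small imprecision in your uniqueness argument: minimal hull condensation of $\Sigma_{\M_\infty}$ and the well-definedness clause of Lemma \ref{lem:Psi_sn_good} are not quite the properties that force two short-normal $\Psi_{\sn}$-trees with common last model to coincide. Minimal hull condensation concerns tree embeddings, and Lemma \ref{lem:Psi_sn_good}\ref{item:Psi_T_well-def} is about independence of $\Psi_\Tt$ from the choice of $M$-standard triple, not about uniqueness of trees via $\Psi_\Tt$ with given last model. The more direct route is: the lower component is the tree on $\M_\infty|\delta_\infty$ via $\Sigma_{\M_\infty}$ leading to $N|\delta_0^N$, and this is unique (this is the content of the notation $\Tt_N$ introduced for $\Sigma$-iterates earlier in the paper); the upper component is then likewise unique after lifting via the minimal $j$-copy to $\vV_1^U$, where one has uniqueness of normal trees via $\Sigma_U$. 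This is a minor refinement; your overall argument is sound.
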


 \begin{lem}\label{lem:Sigma_vV_1_vshc} $\Sigma_{\vV_1}$ has minimal inflation condensation (mic).
\end{lem}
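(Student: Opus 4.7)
The plan is to reduce mic for $\Sigma_{\vV_1}$ to mic for the short-normal fragment $\Psi_\sn$, and then to verify mic for $\Psi_\sn$ by transferring it from strategies that are already known to satisfy it. Recall from Definition \ref{dfn:Sigma_vV_1} that a normal tree $\Tt$ via $\Sigma_{\vV_1}$ breaks canonically into short-normal stretches glued together at long-extender moves, and that each long-extender move is rigid: the image model $M^\Tt_{\eta+1}$ is a specific non-dropping $\Psi_{\vV_1,\vV_1^-}$-iterate whose identity determines the next short-normal stretch's base strategy. Long extenders are identifiable intrinsically from the fine structure of their host model, by Definition \ref{dfn:Vsp} and Remark \ref{rem:vV_1_as_strategy_premouse}, so any tree embedding of an inflation into such a $\Tt$ must send long extenders to long extenders and respect the gluing. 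Hence mic for $\Sigma_{\vV_1}$ reduces to mic for $\Psi_\sn$ together with the rigid compatibility at long-extender boundaries.

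I will handle the two kinds of short-normal stretches making up $\Psi_\sn$ separately. For stretches based on $\vV_1^-$, by Lemma \ref{lem:Gamma_A_is_good} the strategy $\Psi_{\vV_1,\vV_1^-}$ is a length-, tree-order-, drop- and index-preserving translation of $\Sigma_{\M_\infty,\vV_1^-}$, with identity copy maps $\pi_\alpha=\id$. The latter strategy inherits minimal hull condensation, and hence mic, from $\Sigma$ via Fact \ref{fact:Sigma_properties}\ref{item:Slist_}, so the property transfers verbatim along the translation. For above-$\delta_0^\vV$ stretches on a non-dropping iterate $\vV$, the strategy $\Psi_\Tt$ is by Definition \ref{dfn:Psi_sn_1} a minimal $j$-pullback of $\Gamma_\Ww$, which is in turn induced by $\Sigma_U$ through the level-by-level P-construction translation for some $M$-standard triple $(\Ww,U,\Xx)$. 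Since $\Sigma_U$ has minimal hull condensation and the $\vV_1^U$-translation of $\Sigma_U$-trees preserves inflation data, $\Gamma_\Ww$ has mic; then the preservation theorem for minimal copying from \cite{fullnorm} yields mic for $\Psi_\Tt$ provided the source and target Vsps are $\om$-standard, which holds by Lemma \ref{lem:vV_1_norm_condensation} and Remark \ref{rem:iteration_preserves_standardness_at_degree}. The well-definedness of $\Psi_\Tt$ (independence from the witness $(\Ww,U,\Xx)$) was already established in Lemma \ref{lem:Psi_sn_good}\ref{item:Psi_T_well-def}, so this verification is intrinsic.

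The main obstacle is checking that the gluing across long-extender transitions is genuinely compatible with mic: a priori an inflation could insert an extender just before a designated long-extender move $E^\Tt_\eta$, potentially disrupting the rigid correspondence between $E^\Tt_\eta$ and the subsequent $\Psi_{\vV_1,\vV_1^-}$-iterate. The resolution uses the tight correspondence, established by Lemmas \ref{local_correspondence} and \ref{lem:M_infty_of_kappa_0-sound_iterate}, between long extenders in iterates of $\vV_1$ and correct iteration maps of tail strategies of $\Sigma_{\M_\infty}$: the long extenders on $\es_+^{M^\Tt_\alpha}$ are canonically prescribed by the fine structure of $M^\Tt_\alpha\downarrow 0$, so any inflation extender with sufficiently high index to precede the next designated long extender of $\Tt$ is itself forced to be long and to coincide with the one in $\Tt$ under the embedding. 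No extraneous inflation extender can be interpolated at the transition, and mic across the decomposition follows from mic on each piece, completing the proof.
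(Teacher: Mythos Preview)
Your reduction to $\Psi_{\sn}$ and your treatment of the lower component via $\Sigma_{\M_\infty}$ are fine, and it is plausible that each individual $\Psi_{\Tt}$ (as an above-$\delta_0^\vV$ strategy on the fixed model $\vV=M^{\Tt_0}_\alpha$) inherits mic from $\Gamma_\Ww$ via pullback. But this is not the step where the content lies, and your ``gluing'' paragraph addresses the wrong joint.

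The real difficulty occurs \emph{inside} $\Psi_{\sn}$, at the transition between lower and upper components of a short-normal tree, not at long-extender moves in $\Sigma_{\vV_1}$. In a minimal inflation $\Tt_0\conc\Tt_1\inflatearrow_{\min}\Uu_0\conc\Uu_1$, the lower components $\Tt_0,\Uu_0$ are generally different, so $\Tt_1$ is via $\Psi_{\Tt_0}$ (a strategy on $M^{\Tt_0}_\alpha$) while $\Uu_1$ is via $\Psi_{\Uu_0}$ (a strategy on the different model $M^{\Uu_0}_\beta$). Knowing that each of these has mic separately says nothing about whether the branch the inflation induces in $\Tt_1$ is the $\Psi_{\Tt_0}$-branch: you must relate the two strategies through the copy map $j:M^{\Tt_0}_\alpha\to M^{\Uu_0}_\beta$ determined by $\Pi_0$. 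The paper's argument hinges on the fact (from \cite[Lemma 4.5]{fullnorm}, specific to \emph{minimal} inflation) that this $j$ is an actual iteration map; one then picks a single $M$-standard $(\Ww,U,\Yy)$ working for both $\Tt_0$ and $\Uu_0$, obtains lifting maps $k,\ell$ with $\ell\circ j=k$, and deduces that $\Psi_{\Tt_0}$ is the minimal $j$-pullback of $\Psi_{\Uu_0}$. That is what forces the induced branch to be correct. Your proposal contains no analogue of this step, and the paper explicitly remarks after the proof that the argument does not upgrade to mhc precisely because it depends on $j$ being an iteration map rather than an arbitrary copy map --- so appealing to a generic ``preservation theorem for minimal copying'' cannot substitute for it.
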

\begin{proof} We just consider short-normal trees; it is easy to
	extend this to arbitrary normal trees, and we leave this extension to the reader.

	Let $\Tt,\Uu$ be short-normal trees on $\vV_1$, via $\Psi_{\sn}$,
	such that $\Uu$ has length $\lambda+1$ for some limit $\lambda$,
	$\Tt$ has successor length,
	and $\Uu\rest\lambda$ is
	a minimal inflation of $\Tt$;
	we must show that $\Uu$ is also
	a minimal inflation of $\Tt$.
	Let $\Tt=\Tt_0\conc\Tt_1$ and $\Uu=\Uu_0\conc\Uu_1$ with lower components $\Tt_0,\Uu_0$ and upper $\Tt_1,\Uu_1$.

	Now $\Psi_{\vV_1,\vV_1^-}$ has mic,
because it follows $\Sigma_{\M_\infty}$, which has mic,
since $\Sigma_M$ does, and by \cite[***Theorem 10.2]{fullnorm_v3}.
So we may assume $\Tt_1\neq\emptyset$.
Therefore, $\Tt_0$ has successor length $\alpha+1$,
$[0,\alpha]_{\Tt_0}$ does not drop, and $\Tt_1$ is based on $M^{\Tt_0}_{\alpha}$
and is above $\delta_0^{M^{\Tt_0}_{\alpha}}$ and uses only short extenders.
And $\Uu_0,\Uu_1,\beta$ are likewise, and note that $\beta\in I^{\Tt\inflatearrow_{\min}\Uu}_{\alpha}$.
Let
\[ \Pi_0=\Pi^{\Tt\inflatearrow_{\min}\Uu}_{\beta}:
\Tt_0\hookrightarrow_{\min}\Uu_0 \]
(the minimal tree embedding
at stage $\beta$ of the inflation),
and $j:M^{\Tt_0}_{\alpha}\to M^{\Uu_0}_{\beta}$ be the copy map determined by  $\Pi_0$.
Then in fact, $M^{\Uu_0}_{\beta}$ is a $\Sigma_{\vV,\vV^-}$-iterate of $\vV$
where $\vV=M^{\Tt_0}_{\alpha}$, and $j$ is the iteration map (see \cite[***Lemma 4.5]{fullnorm_v3}).

Now it suffices to see that $\Psi_{\sn}$
has  minimal hull condensation (mhc) with respect to extensions of $\Pi_0$ ``above $\delta_0$'';
that is,  whenever $\Tt_2,\Uu_2$  are trees on $M^{\Tt_0}_{\alpha}$
and $M^{\Uu_0}_{\beta}$,
above $\delta_0^{M^{\Tt_0}_{\alpha}}$
and $\delta_0^{M^{\Uu_0}_{\beta}}$ respectively, with $\Uu_0\conc\Uu_2$
via $\Psi_{\sn}$, and
\[ \Pi:\Tt_0\conc\Tt_2\hookrightarrow_{\min}\Uu_0\conc\Uu_2 \]
is a minimal tree embedding  with $\Pi_0\sub\Pi$,
then $\Tt_0\conc\Tt_2$ is also via $\Psi_{\sn}$.

Let $\Tt_2'=j``\Tt_2$, a (putative) tree on $M^{\Uu_0}_{\beta}$.
Then $\Tt_2'$ has wellfounded models, and in fact, there is a minimal tree embedding
\[ \Pi':\Uu_0\conc\Tt_2'\hookrightarrow_{\min}\Uu_0\conc\Uu_2 \]
determined in the obvious manner: for $\alpha<\lh(\Uu_0)$, we have $I_\alpha^{\Pi'}=[\alpha,\alpha]$, and  for $\alpha<\lh(\Tt_2)$,
we have $I^{\Pi'}_{\lh(\Uu_0)+\alpha}=\gamma^\Pi_{\lh(\Tt_0)+\alpha}$;
this determines $\Pi'$.

\begin{clmthree}
	$\Uu_0\conc\Tt_2'$ is via $\Psi_{\sn}$.
	\end{clmthree}
\begin{proof}
	If $\lh(E^{\Tt_2'}_0)<\gamma^{M^{\Uu_0}_{\beta}}$ then
	this is just because $\Psi_{\Uu_0}$ follows the strategy induced by $\Sigma_{M^{\Uu_0}_{\beta}\downarrow 0}$ in this case,
		which has mhc.

		So suppose $\gamma^{M^{\Uu_0}_{\beta}}<\lh(E^{\Tt_2'}_0)$.
	Let $(\Ww,U,\Yy)$ be $M$-standard for $\Uu_0$. Let $\ell:M^{\Uu_0}_{\beta}\to\vV_1^U$ be the iteration map.
	Since $\Uu_2$ follows $\Psi_{\Uu_0}$, the minimal $\ell$-copy $\widetilde{\Uu_2}$
	of $\Uu_2$ (a tree on $\vV_1^U$) follows $\Gamma_\Ww$. Let $\widetilde{\Tt_2'}$
	be the minimal $\ell$-copy of $\Tt_2'$ (see \cite[***10.3, 10.4]{fullnorm_v3}). Then $\widetilde{\Tt_2'}$ has wellfounded models,
	and in fact there is a minimal tree embedding
	\[ \widetilde{\Pi'}:\widetilde{\Tt_2'}\hookrightarrow_{\min}\widetilde{\Uu_2}, \]
	determined in the obvious manner. But since $\widetilde{\Uu_2}$
	is via $\Gamma_\Ww$, and this strategy has mhc, because $\Sigma_U$ does,
	therefore $\widetilde{\Tt_2'}$ is also via $\Gamma_\Ww$,
	and therefore $\Tt_2'$ is via $\Psi_{\Uu_0}$, as desired.
\end{proof}
\begin{clmthree} $\Tt_0\conc\Tt_2$ is via $\Psi_{\sn}$.
	\end{clmthree}
\begin{proof}
If $\lh(E^{\Tt_2}_0)<\gamma^{M^{\Tt_0}_{\alpha}}$, this is again easy,
using mhc for $\Sigma_{M^{\Tt_0}_{\alpha}\downarrow 0}$. So suppose otherwise.
	Let $(\Ww,U,\Yy)$ be simultaneously
	$M$-standard for $\Tt_0$ and for $\Uu_0$.
Let $\ell$ be as before, and
 $k:M^{\Tt_0}_{\alpha}\to\vV_1^U$ be the correct iteration map. So $\Psi_{\Tt_0}$ and $\Psi_{\Uu_0}$ are the minimal $k$-pullback and $\ell$-pullback of $\Gamma_\Ww$ respectively. But $\ell\com j=k$, since these  are correct iteration maps, and therefore $\Psi_{\Tt_0}$ is the minimal $j$-pullback of $\Psi_{\Uu_0}$, which, since $\Tt_2'$ is via $\Psi_{\Uu_0}$, proves the claim.
	\end{proof}

This completes the proof.
 \end{proof}

The preceding proof does not seem
to give that $\Sigma_{\vV_1}$
has mhc, because it relies heavily
on the fact that $\ell\com j=k$,
and if $j$ were instead just a copy
map arising from an arbitrary minimal tree embedding, then it need not be an iteration map
(and in fact $M^{\Uu_0}_{\beta}$
need not be an iterate of $M^{\Tt_0}_{\alpha}$).

\section{The second Varsovian model $\vV_2$}\label{sec:vV_2}
\subsection{The $\delta_1$-short tree strategy for $\vV_1$}\label{sec:dl-rel_sts_vV_1}

\begin{dfn}\label{dfn:delta_1-maximal}
	Let $\vV$ be a non-dropping $\Sigma_{\vV_1}$-iterate of $\vV_1$.

	Let $\Tt$ be a short-normal tree on $\vV$ via $\Sigma_{\vV}$,
	based on $\vV|\delta_1^\vV$, of limit length. Let $b=\Sigma_{\vV}(\Tt)$.
	Say that $\Tt$ is \emph{$\delta_1$-short} iff either $b$ drops or $\delta(\Tt)<i^\Tt_b(\delta_1^\vV)$,
	and \emph{$\delta_1$-maximal} otherwise.

	We define \emph{$\delta_1$-short} and \emph{$\delta_1$-maximal}
	analogously for trees on $M$.
\end{dfn}

It will be shown in \cite{*-trans_add} that $\vV_1$ knows its own strategy for $\delta_1$-short trees,
and, moreover, has a \emph{modified P-construction} which
also computes the correct branch model $M^\Tt_b$ for $\delta_1$-maximal $\Tt$,
given that $\Tt$ is appropriate for forming a P-construction.

In this paper we  explain the main new idea needed to prove this,
illustrated with a restricted class of trees $\Tt$ (\emph{P-illustrative} trees) which suffice, for example,
for genericity iterations at $\delta_1^{\vV_1}$. This restriction will ensure that for such $\Tt$
and limits $\lambda$ with $\Tt\rest\lambda$ being $\delta_1$-short
and $Q$ be the correct Q-structure for $\Tt\rest\lambda$,
the only overlaps of $\delta(\Tt\rest\lambda)$ in $Q$ are long extenders,
and this will mean that we have no need for $*$-translation. We will
compute $Q$ via a modified P-construction; a key issue is that the P-construction has a new feature,
due to  the long extenders on the $\Tt$-side,
and the extenders with critical point $\kappa_0^M$ on the $M$-side.
Similarly, for $\delta_1$-maximal P-illustrative trees $\Tt$,
$M^\Tt_b$ will also be computed by a modified P-construction.
We will need a new argument (\ref{lem:modified_P-con_works}) to see that the P-construction does indeed compute the correct model.
We will actually first consider analogous P-illustrative trees on $M$,
and then transfer these results to trees on $\vV_1$.  We will
then adapt the results to $\delta_1^{\vV}$-sound non-dropping $\Sigma_{\vV_1}$-iterates $\vV$ of $\vV_1$.

P-illustrative trees suffice to construct $\vV_2$, and prove a significant amount about it.
However, in order to prove that it fully knows how to iterate itself, and related facts,
we need to consider arbitrary trees, including the full $\delta_1$-short tree strategy. Such facts are moreover used our proof that $\vV_2$ is the mantle (because it uses a comparison argument, which seems needs iterability
with respect to arbitrary trees). In order to deal with arbitrary trees,
we need to deal with trees having overlapped Q-structures, and therefore need
$*$-translation, adapted to incorporate the modified P-construction.
This material is deferred to \cite{*-trans_add}; at certain
points we summarize results from there we need.

\subsubsection{P-illustrative trees on $M$}

\begin{definition}
	Given a (strategy) premouse $N$ and $\kappa\leq\eta\in\OR^N$, we say that $\eta$ is a \emph{$\kappa$-cutpoint of $N$}
	iff for all $E\in\es_+^N$,
	if $\crit(E)<\eta<\lh(E)$ then $\crit(E)=\kappa$,
	and a \emph{strong $\kappa$-cutpoint} iff for all $E\in\es_+^N$,
	if $\crit(E)\leq\eta<\lh(E)$ then $\crit(E)=\kappa$.
\end{definition}

\begin{dfn}\label{dfn:P-illustrative}
	Let $\Tt$ be an iteration tree on an $\Mswsw$-like premouse $N$ and $\Sigma$ a partial strategy for $N$. We say that $\Tt$ is \emph{P-illustrative}
	iff there are $E_1,U,\lambda,\eta,\alpha_0,\mu$
	such that:
	\begin{enumerate}[label=\arabic*.,ref=\arabic*]
		\item either
		\begin{enumerate}[label=(\roman*)]
			\item  $E_1=\emptyset$,  $U=N$ and $\lambda=\kappa_1^N$, or
			\item  $E_1\in\es^N$, $E_1$ is $N$-total, $\crit(E_1)=\kappa_1^N$, $U=\Ult(N,E_1)$
			and $\lambda=\lambda(E_1)=\kappa_1^U$,
		\end{enumerate}
		\item $\Tt\in N$, $\Tt$ is normal, of limit length, is above $\kappa_0^N$
		and based on $N|\delta_1^N$, and letting $\Tt'$ be the corresponding tree on $N|\delta_1$,
		we have $\Tt'\in U|\lambda$,
		\item $\kappa_0^N<\eta<\lambda$ and  $\eta$ is a strong $\kappa_0^N$-cutpoint of $U$, and $\eta$ is a $U$-cardinal, and if $E_1\neq\emptyset$ then $\kappa_1^N<\eta$\footnote{\label{ftn:eta_not_a_lambda}Note there is no $F\in\es^U$
			with $\crit(F)=\kappa_0^N$ and $\lambda(F)=\eta$, since otherwise
			$N$ is past superstrong.}
		\item $0\leq\alpha_0<\eta$ and $\alpha_0<\lh(\Tt)$ and $[0,\alpha_0]_\Tt$ does not drop
		and $\mu=\kappa_0^{M^\Tt_{\alpha_0}}<\eta$,
		\item $\Tt\rest[\alpha_0,\lh(\Tt))$ is above $(\mu^+)^{M^\Tt_{\alpha_0}}$,
		\item\label{item:Woodins_bounded} for each $\beta+1\in(\alpha_0,\lh(\Tt))$,
		every Woodin of $M^\Tt_\beta|\lh(E^\Tt_\beta)$
		is ${<\mu}$,
		\item $\eta<\delta=\delta(\Tt)$,
		$\eta$ is the largest cardinal of $U|\delta$,
		$\Tt$ is definable from parameters over $U|\delta$,
		and $U|\delta$ is extender algebra generic over $M(\Tt)$.\qedhere
	\end{enumerate}
\end{dfn}

Condition \ref{item:Woodins_bounded} prevents us from requiring $*$-translation.
\begin{dfn}\label{dfn:modified_P-con}
	Let $\Tt$ be P-illustrative and $N,U$ as  in \ref{dfn:P-illustrative}.
	We define the \emph{P-construction} $\mathscr{P}^U(M(\Tt))$
	of $U$ over $M(\Tt)$. This is the largest premouse $P$ such that:
	\begin{enumerate}
		\item $M(\Tt)\ins P\sats$``$\delta$ is Woodin'', where $\delta=\delta(\Tt)$,
		\item for $\alpha\in[\delta,\OR^P]$, $P||\alpha$ is active iff $U||\alpha$ is active,\footnote{\label{ftn:M|delta_passive}
			$U||\delta$ is passive, because
			$\eta$ is the largest cardinal of $U||\delta$ and
			by Footnote \ref{ftn:eta_not_a_lambda}.}
		\item Let $\alpha_0$ be as in \ref{dfn:P-illustrative}, let $\alpha>\delta$ be such that $U||\alpha$ is active and
		let $E=F^{P||\alpha}$ and $F=F^{U||\alpha}$. Then either:
		\begin{enumerate}
			\item\label{item:no_overlap_rest_OR}
			$\crit(F)>\kappa_0^M$ (so $\crit(F)>\delta$) and
			$E\rest\OR=F\rest\OR$,
			or
			\item\label{item:overlap_rest_OR} $\crit(F)=\kappa_0^M$ and
			$E\com i^\Tt_{0\alpha_0}\rest\kappa_0^{+M}=F\rest\kappa_0^{+M}$.\qedhere
		\end{enumerate}
	\end{enumerate}
\end{dfn}

\begin{rem}
	A key point in the above definition is that in condition \ref{item:overlap_rest_OR},
	with $j=i^\Tt_{0\alpha_0}$, we only require
	$E\com j$ and $F$  to agree over \emph{ordinals},
	not the full model $N|\kappa_0^{+N}$ (although $N|\kappa_0^{+N}$ is an initial segment of both sides).
	In fact (as we require that $P$ is a premouse),
	\[ E\com j\rest(N|\kappa_0^{+N})\neq F\rest(N|\kappa_0^{+N}), \]
	because $P|\kappa_0=N|\kappa_0=U|\kappa_0$, but $P|\lambda(E)\neq N|\lambda(F)$,
	and because $E$ must cohere $P|\alpha$, therefore
	$E(j(N|\kappa_0))\neq F(N|\kappa_0)$.
	However, recall the following fact, which we leave as an exercise for the reader.
\end{rem}

\begin{fact}\label{fact:F_from_F_rest_X} Let $N$ be a passive premouse, $\kappa$ be a cardinal of $N$,
	$X\sub\kappa^{+N}$ be unbounded in $\kappa^{+N}$ and $f:X\to\OR^N$.
	Then there is at most one active premouse $N'$ whose reduct is $N$ and
	$F^N\rest\OR=f$, and in fact, $N'$ is $\Sigma_1$-definable over $(N,f)$.
\end{fact}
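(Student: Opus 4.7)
The plan is to show that the top extender $F := F^{N'}$ is $\Sigma_1$-definable over $(N,f)$, uniformly in these parameters; this simultaneously yields the uniqueness of $N'$ and the claimed definability, since in Jensen indexing the active predicate of $N'$ is coded amenably atop its passive reduct $N$ by the single extender $F$, in a $\Sigma_1$-uniform way.

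First I would recover the basic ordinal data. The critical point $\mu := \crit(F)$ is the least ordinal moved by $F \rest \OR = f$; using that $X$ is unbounded in $\kappa^{+N}$ and $\mu \leq \kappa$, $\mu$ admits a $\Sigma_1^{(N,f)}$ definition as the least $\alpha \leq \kappa$ for which $f$ is non-identity on $X \cap \alpha^{+N}$ unboundedly. The image $\lambda := i_F(\mu) = \sup f``(X \cap \mu^{+N})$ is then $\Sigma_1^{(N,f)}$ as well. Since $N|\mu^{+N}$ carries a canonical $\Sigma_1^N$ wellorder of ordertype $\mu^{+N}$, and since $F \rest \mu^{+N}$ is order-preserving on ordinals and agrees with $f$ on the unbounded set $X \cap \mu^{+N}$, the full restriction $F \rest \mu^{+N}$ is recoverable from $f$ by elementarity together with the absoluteness of the relevant $\Sigma_1^N$ Skolem hulls; thus I may proceed as though $F \rest \mu^{+N}$ itself is given.

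I would then apply the standard extender-coding lemma for acceptable $J$-structures: via the canonical bijection $c : N|\mu^{+N} \to \mu^{+N}$ coming from the $J$-hierarchy, each $A \in \pow([\mu]^{<\omega}) \cap N$ has an ordinal code $c(A) < \mu^{+N}$, and for $a \in [\lambda]^{<\omega}$, the predicate $a \in i_F(A)$ is $\Sigma_1$-reducible (uniformly in $a,A$) to evaluating $F \rest \OR$ at $c(A)$ together with finitely many auxiliary ordinal codes computed inside $N$. Hence $F = \{(a,A) : a \in i_F(A)\}$ is $\Sigma_1$ over $(N,f)$. It follows that $N'$, being the unique amenable $J$-structure of the appropriate height with passive reduct $N$ and top predicate $F$, is $\Sigma_1$-definable over $(N,f)$, and in particular unique.

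The only step requiring genuine care is the second one, namely verifying that restriction of $f$ to the possibly sparse set $X$ still suffices to pin down $F \rest \mu^{+N}$. The point is that every ordinal $\gamma < \mu^{+N}$ has an $N$-definable Skolem-hull normal form in terms of ordinals in $X$ (by unboundedness of $X$ and acceptability of $N$), and elementarity of $i_F$ then determines $F(\gamma)$ from the values of $f$ on those ordinals. No genuine computation is needed beyond this; once $F$ is in hand, the amenable encoding of $F$ on top of $N$ by the standard Jensen premouse axioms is $\Sigma_1$-uniform in $(N,F)$, delivering $N'$ and completing the argument.
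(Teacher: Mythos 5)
The paper does not actually prove this Fact — it is explicitly left ``as an exercise for the reader'' immediately before its statement — so there is no paper proof to compare against. Evaluating your proposal on its own merits: the overall strategy is correct, and it is the natural one. The heart of the matter is that the cofinal $\Sigma_0$-elementary (hence $\Sigma_1$-elementary) embedding $j=F^{N'}:N|\kappa^{+N}\to N$ is pinned down on all of $\kappa^{+N}$, and then on all of $N|\kappa^{+N}$, by its values on the unbounded set $X$, because every $\gamma<\kappa^{+N}$ is $\Sigma_1$-definable over $N|\kappa^{+N}$ from some $\xi\in X$ above it together with an ordinal $\alpha<\kappa$ (via a canonical $\Sigma_1$-Skolem surjection $\kappa\to N|\xi$), and $j(\gamma)$ is then the element of $N$ given by the same $\Sigma_1$-term applied to $f(\xi)$ and $\alpha$ (using $j(\alpha)=\alpha$ for $\alpha<\kappa$). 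That is the right idea and it delivers both uniqueness and uniform $\Sigma_1$-definability of $N'$ over $(N,f)$.

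There are, however, two concrete errors in your first paragraph. First, recovering $\mu=\crit(F^{N'})$ is unnecessary and your definition of it is convoluted: the hypotheses $X\sub\kappa^{+N}$ unbounded and $f\sub F^{N'}\rest\OR$ force $\crit(F^{N'})^{+N}\geq\kappa^{+N}$, hence $\crit(F^{N'})\geq\kappa$; and if $\crit(F^{N'})>\kappa$ then $X\sub\crit(F^{N'})$ and $f$ would be the identity, a degenerate case. So in the substantive case $\crit(F^{N'})=\kappa$ is simply the given parameter. Second, the formula $\lambda=i_F(\mu)=\sup f``(X\cap\mu^{+N})$ is wrong: since $X$ is cofinal in $\kappa^{+N}=\dom(j)\cap\OR$ and $j$ is a cofinal embedding into $N=N'|\lh(F^{N'})$, one has $\sup f``X=\sup j``\kappa^{+N}=\OR^N=\lh(F^{N'})$, which is not $\lambda(F^{N'})=j(\kappa)$. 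The correct recovery of $\lambda(F^{N'})$ is: take any $\xi\in X$ with $\kappa<\xi$; then $\kappa$ is the largest cardinal of $N|\xi$, so by elementarity $j(\kappa)$ is the largest cardinal of $N|f(\xi)$. With that fix (and the removal of the unneeded $\mu$-recovery), the rest of your argument goes through essentially as written.
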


(Note though that it is important that $N$ is given;
there can actually be another active $N_1'$ with $N\neq N_1$,
but $F^{N_1'}\rest\OR=f$.)

\begin{rem}\label{rem:lev-by-lev_P-con}
	It is not immediate that the P-construction $P$ is well-defined, as we have defined it directly as the largest premouse with the above properties,
	and one also needs one small observation to see that, if well-defined, then $P$ is unique (and appropriately locally definable).

	Consider instead defining $P|\alpha$ and $P||\alpha$ by recursion on $\alpha$.
	Note that $\jbar=j\rest\kappa_0^{+N}\in U|\eta\sub U|\delta$,
	so $\jbar$ is available as a parameter when making definitions over $U||\beta$ for some $\beta\geq\delta$,
	and $\jbar$ is also in the generic extension $(P||\beta)[N|\delta]$.

	Now given  $P||\beta$ for all $\beta<\alpha$, $\alpha$ a limit,
	(all $P||\beta$ sound),
	we get a premouse $P|\alpha$ satisfying ``$\delta$ is Woodin''.
	Suppose $U||\alpha$ is active with $F$. We need to see that we get
	a premouse $P||\alpha$, fine structurally equivalent with $U||\alpha$ (modulo the generic).
	We need to in particular see that there is a unique premouse $P||\alpha$ with the right properties.
	If $\crit(F^{U||\alpha})=\kappa_0$, existence is not immediately clear, and will be verified in Lemma \ref{lem:modified_P-con_works}.
	Uniqueness and the manner in which $F^{P||\alpha}$ is determined,
	requires a short argument.
	We have $P|\alpha$ and can compute
	$F^{P||\alpha}\rest\rg(\jbar)$, and $\rg(\jbar)$ is cofinal in $\mu^{+(P|\alpha)}$ (where $\mu=\kappa_0^{M^\Tt_{\alpha_0}}=\crit(F^{P||\alpha})$).
	By Fact \ref{fact:F_from_F_rest_X} this (very locally) determines $F^{P||\alpha}$.
	For the case that $\crit(F^{U||\alpha})>\delta$, one makes the usual
	P-construction observations,
	although the generic equivalence here involves the parameter $\jbar$.

	Overall we maintain level by level that
	\[ P|\beta\text{ is }\Delta_1^{U|\beta}(\{P|\delta,\jbar\}),\]
	\[ P||\beta\text{ is }\Delta_1^{U||\beta}(\{P|\delta,\jbar\}),\]
	uniformly in $\beta>\delta$ (and recalling $P|\delta =M(\Tt)$),
	and also that
	\[ U|\beta=^*_\delta(P|\beta)[U|\delta]\text{ and }U|\beta\text{ is }\Delta_1^{(P|\beta)[U|\delta]}(\{U|\delta,\jbar\}), \]
	\[ U||\beta=^*_\delta(P||\beta)[U|\delta]\text{ and }U||\beta\text{ is }\Delta_1^{(P||\beta)[U|\delta]}(\{U|\delta,\jbar\}), \]
	uniformly in $\beta$, where $(P|\beta)[U|\delta]$ is a generic extension,
	which for definability purposes has has $P|\beta$ available as a predicate (and similarly for $P||\beta$),
	and where $=^*$ means that $U|\beta$ is the premouse which extends $U|\delta$,
	followed by the small forcing extension of extenders $E$ in $\es^{P|\beta}$ or $\es_+^{P||\beta}$
	when $\crit(E)>\delta$,
	and the extender determined by $E\com\jbar$ otherwise,
	and also that the usual fine structural correspondence holds between the two sides
	(employing an extender algebra name for $\jbar$).
\end{rem}

\begin{lem}\label{lem:modified_P-con_works}
	Let $\Tt$ be P-illustrative on $M$, via $\Sigma_M$, and $b=\Sigma_M(\Tt)$,
	and let $U$ be as in Definition \ref{dfn:P-illustrative}. If $\Tt$ is short let $Q=Q(\Tt,b)$,
	and otherwise let $Q=M^\Tt_b$. Then
	$\mathscr{P}^U(M(\Tt))=Q$.
\end{lem}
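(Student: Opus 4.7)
\emph{Plan.} The proof proceeds in three stages: (i) verify that $P := \mathscr{P}^U(M(\Tt))$ is well-defined as a premouse, (ii) equip $P$ with an iteration strategy above $\delta = \delta(\Tt)$ inherited from $U$, and (iii) compare $P$ with $Q$ to conclude $P = Q$.

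For stage (i), I build $P$ by recursion on levels $\alpha \geq \delta$, maintaining the level-by-level generic equivalence with $U\|\alpha$ sketched in Remark \ref{rem:lev-by-lev_P-con}. Passive levels and active levels with $\crit(F^{U\|\alpha}) > \delta$ are handled by standard P-construction arguments (clause \ref{item:no_overlap_rest_OR} of Definition \ref{dfn:modified_P-con}). The essentially new case is when $F := F^{U\|\alpha}$ has $\crit(F) = \kappa_0^M$ (clause \ref{item:overlap_rest_OR}); here I must exhibit an extender $E$ over $P|\alpha$ with $\crit(E) = \mu$ and $E \com \jbar = F \rest \kappa_0^{+M}$, where $\jbar = j \rest \kappa_0^{+M}$ and $j = i^\Tt_{0\alpha_0}$. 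Uniqueness of $E$ follows from Fact \ref{fact:F_from_F_rest_X}, using that $\rg(\jbar)$ is cofinal in $\mu^{+(P|\alpha)}$. For existence, I define $E$ by $E(\jbar(x)) = F(x)$ on the cofinal range and verify that this extends to an extender: coherence, the initial segment condition, and amenability for $E$ over $P|\alpha$ transfer from those for $F$ over $U|\alpha$, using that $U|\alpha$ is a size-${<}\delta$ generic extension of $P|\alpha$ by $U|\delta$ (which is below $\crit(F)$) and that $j$ is a genuine iteration map whose restriction respects the fine structure.

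For stage (ii), I show that any above-$\delta$ normal iteration tree $\Ss$ on $P$ lifts to an above-$\delta$ tree $\Ss'$ on $U$ via the $\Sigma$-tail strategy, with the extender correspondence paralleling the P-construction: extenders with $\crit > \delta$ lift in the usual generic-extension manner, while extenders with $\crit = \mu$ correspond to extenders with $\crit = \kappa_0^M$ via the $\jbar$-relation inherited from stage (i). Condition \ref{item:Woodins_bounded} of the P-illustrative hypothesis ensures that no Woodins lurk inside the overlapping segments of $\Ss$-models, so no $*$-translation is needed and the lifting respects premouse structure. Since $U$ (being either $M$ or $\Ult(M, E_1)$) is $\Sigma$-iterable, this yields an above-$\delta$ iteration strategy $\Sigma_P$ for $P$.

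For stage (iii), I compare $P$ with $Q$. Both extend $M(\Tt)$. By stage (ii), $P$ is iterable above $\delta$; $Q$ is iterable above $\delta$ via $\Sigma_M$-tail (as $Q \ins M^\Tt_b$ in the short case, or $Q = M^\Tt_b$ in the maximal case). The comparison above $\delta$ proceeds in a standard way except that it may use extenders with $\crit = \mu$ on one or both sides; condition \ref{item:Woodins_bounded} again guarantees no Q-structure obstructions. The outcome is $P \ins Q$ or $Q \ins P$. In the short case, $Q$ is projection-sound (as a Q-structure) and the extender correspondence of stage (i) forces the two sequences to agree up to the least level where "$\delta$ is Woodin" fails, giving $P = Q$. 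In the maximal case, both $P$ and $Q = M^\Tt_b$ model "$\delta$ is Woodin" on their full ordinal height, and the matching extender sequences again yield $P = Q$; here the long branch extender of $i^\Tt_b$ on the $Q$-side is precisely what is recorded on the $P$-side through the $\crit = \kappa_0^M$ extenders of $U$ that overlap $\delta$.

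The main obstacle is stage (i) in the overlapping case, where the P-construction must extract from $F \in \es^U$ an extender $E$ of $P$ whose critical point $\mu$ differs from $\crit(F) = \kappa_0^M$, with the two related only through the iteration map $\jbar$; pushing the premouse axioms through this shift is the technical heart of the modified P-construction. A secondary subtlety is transferring iterability in stage (ii) through the same correspondence, since the standard cutpoint hypotheses underlying ordinary P-construction lifting fail here.
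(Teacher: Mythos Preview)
Your plan diverges from the paper's approach in a way that introduces a genuine gap. The paper does \emph{not} first build $P$ and then compare it with $Q$; instead it compares the phalanx $\Phi(\Tt,Q)$ directly against $((M,\delta),U)$, above $\delta$, modulo the generic and after composing overlapping extenders on the $Q$-side with $\jbar$. Showing this comparison is trivial simultaneously proves that the P-construction is well-defined (since it matches $Q$ level by level, and $Q$ \emph{is} a premouse) and that $P=Q$. This sidesteps your stage (i) entirely.

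Your stage (i) contains an actual error. You write that $U|\delta$ ``is below $\crit(F)$'', but $\crit(F)=\kappa_0^M<\delta$, so the forcing is \emph{above} the critical point of $F$; this is precisely why the standard P-construction transfer fails and why the modified construction is needed. You cannot simply say coherence and ISC ``transfer'' through the generic extension: $F$ moves the generic, and the target model $\Ult(P|\alpha,E)$ is not obtained from $\Ult(U|\alpha,F)$ by any obvious restriction. The paper's Remark \ref{rem:lev-by-lev_P-con} explicitly flags existence of $E$ in this case as ``not immediately clear'' and defers it to the comparison argument.

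Your stage (iii) also misses the central difficulty. In the paper's comparison, the key claim (Claim 3) is that the two sides never use overlapping extenders at the same stage: if $E$ (on the $Q$-side, $\crit(E)=\mu$) and $F$ (on the $U$-side, $\crit(F)=\kappa_0$) both overlap $\delta$ at stage $\beta$, then $E\com\jbar$ and $F\rest\kappa_0^{+M}$ are both restrictions of iteration maps $\M_\infty\to\M_\infty^{\Ult(\cdot,\cdot)}$, and the target $\M_\infty$'s coincide because the models agree (modulo the generic) below $\lh(E)=\lh(F)$. This forces $E\com\jbar=F\rest\kappa_0^{+M}$, contradicting that they index a disagreement. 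This is the heart of the argument, and your sketch does not identify it.
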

\begin{proof}
	Let $\delta=\delta(\Tt)$. Working in $V$, we ``compare the phalanx $\Phi(\Tt,Q)$ with the phalanx $((M,\delta),U)$, modulo the generic at $\delta$''.
	That is, in the ``comparison'', we \emph{only use extenders indexed above $\delta$},
	with least disagreements determined by the restrictions of extenders to the ordinals,
	and after composing extenders $E$ overlapping $\delta$ on the $\Phi(\Tt,Q)$ side with $\jbar$ (notation as above).
	That is, we define normal padded trees $\Uu$ on $\Phi(\Tt,Q)$ and $\Vv$ on $((M,\delta),U)$, with both corresponding to trees on $M$ via $\Sigma_M$,\footnote{For $((M,\delta),U)$,
		the notation means that the exchange ordinal associated to $M$ is $\delta$,
		so in fact, since $\delta<\lh(E^\Vv_0)$, and $\delta$ is a strong $\kappa_0$-cutpoint of $U$,
		if $\crit(E^\Vv_\alpha)<\delta$ then $\crit(E^\Vv_\alpha)=\kappa_0$. If $U=M$ then we $\Vv$ is directly equivalent to a tree on $M$ via $\Sigma_M$. If $U=\Ult(M,E)$ where $E\in\es^M$ and $\crit(E)=\kappa_1$,
		then a simple instance of normalization produces the tree $\Vv'$ on $M$, via $\Sigma_M$,
		corresponding to $\Vv$: If $\Vv\rest\beta$ is above $\delta$ but based on $U|\lambda^{+U}=U|\kappa_1^{+U}$,
		then $\Vv'\rest\beta$ is the  tree on $M$,  via $\Sigma_M$, which uses the same extenders and has the same tree structure as does $\Vv\rest\beta$. Note that because $\eta,\delta$ are strong $\kappa_0$-cutpoints of $U$, and by the Mitchell-Steel ISC, $E$'s natural length $\nu(E)\leq\eta$, so $\rho_1^{M||\lh(E)}\leq\eta$, so for each $\alpha+1<\beta$ such that $\pred^\Vv(\alpha+1)=0$
		and $\alpha+1\notin\dropset^\Uu$, we have that $\Vv'$ drops in model and degree at $\alpha+1$ to
		$(M^{*\Vv'}_{\alpha+1},\deg^{\Vv'}_{\alpha+1})=(M||\lh(E),0)$; $\Vv\rest\beta$ and $\Vv'\rest\beta$ otherwise agree in drop and degree structure, and so their models and embeddings agree in a simple manner. If $\Vv\rest\beta$ is as above but $[0,\beta]_\Vv$ does not drop and $\kappa_1^{+M^\Vv_\beta}<\lh(E^\Uu_\beta)$, then $\Vv'$ uses first $F(M^{\Vv'}_\beta)$ as an extra extender, and note then that $M^\Vv_\beta=M^{\Vv'}_{\beta+1}$. Since $\kappa_1^{M^\Vv_\beta}$
		is a  $\kappa_0$-cutpoint of $M^{\Vv}_\beta$, if $\Vv\rest[\beta,\gamma)$ is above $\kappa_1^{M^\Vv_\beta}$ then $\Vv\rest[\beta,\gamma)$ is directly equivalent to $\Vv'\rest[\beta+1,\gamma')$ where $\gamma'=\gamma+1$ or $\gamma=\gamma$ in the obvious manner.
		Finally if $\xi$ is least such that $\crit(E^\Vv_\xi)=\kappa_0$ (either with $\xi<\beta$ or $\beta<\xi$ as above) then $\Vv,\Vv'$ are again directly equivalent thereafter.}
	and such that, given $(\Uu,\Vv)\rest(\alpha+1)$, letting $\gamma>\delta$ be least such that
	$E=F^{M^\Uu_\alpha||\gamma}\neq\emptyset$ or $F=F^{M^\Vv_\alpha||\gamma}\neq\emptyset$ and either:
	\begin{enumerate}[label=--]
		\item $\crit(E)=\mu$ and $E\com j\rest\kappa_0^{+M}\neq F\rest\kappa_0^{+M}$, or
		\item $\crit(E)>\mu$ (so $\crit(E)>\delta$) and $E\rest\OR\neq F\rest\OR$, or
		\item $E=\emptyset\neq F$,
	\end{enumerate}
	then $E^\Uu_\alpha=E$ and $E^\Vv_\alpha=F$ (and the comparison terminates if there is no such $\gamma>\delta$).
	We need to see that this comparison is trivial, i.e. no extenders are used.
	So suppose otherwise.

	\begin{clmfour}
		The comparison terminates in set length.
	\end{clmfour}
	\begin{proof}
		Note that if $E^\Uu_\beta$ overlaps $\delta$ then $M^\Uu_{\beta+1}$
		is proper class and
		\[ \kappa_0^{M^\Uu_{\beta+1}}=\lambda(E^\Uu_\beta) \]
		is a cutpoint of $M^\Uu_{\beta+1}$,
		so $\Uu\rest[\beta+1,\infty)$ is above $\lambda(E^\Uu_\beta)$.
		Therefore there is at most one such $\beta$. Likewise for $\Vv$.
		But the comparison after these overlaps becomes standard comparison modulo the small generic at $\delta$,
		so the usual argument then shows that the comparison terminates.
	\end{proof}

	So say we get $\lh(\Uu,\Vv)=\alpha+1$.

	\begin{clmfour}\label{clm:some_overlap_used} At least one of the trees $\Uu,\Vv$ uses an extender overlapping $\delta$.
	\end{clmfour}
	\begin{proof}
		Suppose otherwise.
		Suppose $\OR(M^\Uu_\alpha)=\OR(M^\Vv_\alpha)$; the other case is similar.
		Then the level-by-level translation process described in \ref{rem:lev-by-lev_P-con} works
		between $M^\Uu_\alpha$ and $M^\Vv_\alpha$, and we get fine structural correspondence above $\delta$.
		Suppose that $\OR(M^\Uu_\alpha)<\OR$.
		Let $A$ be either the core of $M^\Uu_\alpha$ if $\rho_\om(M^\Uu_\alpha)>\delta$, and otherwise the $\delta$-core of $M^\Uu_\alpha$.
		Let $B$ be likewise from $M^\Vv_\alpha$.
		Let $\pi,\sigma$ be the core maps respectively. Then by the fine structural correspondence
		and forcing calculations,
		\[ \rg(\pi)\inter\OR=\rg(\sigma)\inter\OR,
		\]
		so $\OR^A=\OR^B$. But also, the core maps preserve the fact that the level-by-level translation works,
		so $B=^*_\delta A[M(\Tt)]$ etc (with fine structural agreement up to the relevant level). But we had $A\ins M^\Uu_\gamma$ and $B\ins M^\Vv_\gamma$ for some $\gamma$,
		and either $E^\Uu_\gamma$ or $E^\Vv_\gamma$ came from $\es_+^A$ or $\es_+^B$ respectively,
		a contradiction. So $\OR(M^\Uu_\alpha)=\OR$, so there is no dropping on main branches,
		and $\Tt$ is maximal. But now we can just replace ``$\delta$-core'' with
		the hull of $\delta\cup\mathscr{I}$, where $\mathscr{I}$ is the class of indiscernibles of $M^\Uu_\alpha$, or equivalently, $M^\Vv_\alpha$,
		and run the analogous argument, using that $Q,U$ are $\delta$-sound (if $U\neq M$, this is
		because $\eta$ is a strong $\kappa_0$-cutpoint of $U$, and hence if $U=\Ult(M,E)$
		where $E\in\es^M$ with $\crit(E)=\kappa_1$, then $\nu(E)\leq\eta$, where $\nu(E)$ is the natural length of $E$).
	\end{proof}

	Now let $\beta$ be least such that $E^\Uu_\beta$ or $E^\Vv_\beta$ overlaps $\delta$.
	The following claim is the most central issue:

	\begin{clmfour} Not both of $E^\Uu_\beta,E^\Vv_\beta$ overlap $\delta$.
	\end{clmfour}
	\begin{proof}
		Suppose $F=E^\Vv_\beta$ overlaps $\delta$.
		Then $F\rest\kappa_0^{+M}$ is a restriction of the iteration map
		\[ \M_\infty\to \M_\infty^{\Ult(M,F)}. \]
		Similarly, supposing $E=E^\Uu_\beta$ overlaps $\delta$, $E\rest\OR$ is a restriction of the iteration map
		\[ \M_\infty^{M^\Tt_{\alpha_0}}\to \M_\infty^{\Ult(M^\Tt_{\alpha_0},E)}, \]
		so $E\com j\rest\kappa_0^{+M}$ is the restriction of the iteration map
		\[ \M_\infty\to \M_\infty^{\Ult(M^\Tt_{\alpha_0},E)}. \]
		So it suffices to see that
		\begin{equation}\label{eqn:M_inftys_match} \M_\infty^{\Ult(M^\Tt_{\alpha_0},E)}=\M_\infty^{\Ult(M,F)},\end{equation}
		as then $E\com j\rest\kappa_0^{+M}=F\rest\kappa_0^{+M}$,
		contradicting the disagreement of extenders.
		But (\ref{eqn:M_inftys_match}) holds because
		\[ (M^\Uu_\beta|\lh(E))[M|\delta]=^*_\delta M^\Vv_\beta|\lh(F), \]
		since $\delta<\lambda(E)=\lambda(F)$ and
		$E,F$ constitute the least disagreement.
	\end{proof}

	Now assume for notational simplicity that $E^\Uu_\beta$ overlaps $\delta$;
	the other case is very similar. Note that:
	\begin{enumerate}[label=--]
		\item $[0,\beta+1]_\Uu$ does not drop in model, and
		\item  $\Uu\rest[\beta+1,\lh(\Uu))$ is above $\lambda(E^\Uu_\beta)$.
	\end{enumerate}

	As in the proof of Claim \ref{clm:some_overlap_used} we get:
	\begin{clmfour}
		Neither $(\Tt,b)\conc\Uu$ nor $\Vv$ drops on its main branch.
	\end{clmfour}

	It easily follows that there is $\gamma>\beta$ such that $E^\Vv_\gamma$ overlaps $\delta$.
	We have $\beta+1\in b^\Uu$ and $\gamma+1\in b^\Vv$.
	Let $\mathscr{I}=\mathscr{I}^{M^\Uu_\alpha}=\mathscr{I}^{M^\Vv_\alpha}$.
	Let $\lambda=\lambda(E^\Uu_\beta)$, so
	\[ \kappa_0^{+M}<\delta<\lambda<\lambda(E^\Vv_\gamma), \]
	so $F_\lambda=E^\Vv_\gamma\rest\lambda$ is a non-whole segment of $E^\Vv_\gamma$, by the ISC and  smallness of $M$.
	Let
	\[ H^\Uu=\cHull^{M^\Uu_\alpha}(\lambda\cup\mathscr{I})\text{ and }H^\Vv=\cHull^{M^\Vv_\alpha}(\lambda\cup\mathscr{I}), \]
	and $\pi^\Uu,\pi^\Vv$ the uncollapses.
	Then $\kappa_0^{H^\Uu}=\lambda$ because
	$H^\Uu=M^\Uu_{\beta+1}$
	(and note $i^\Uu_{\beta+1,\alpha}=\pi^\Uu$). Similarly,
	$H^\Vv=\Ult(M,F_\lambda)$,
	but $\lambda<i^M_{F_\lambda}(\kappa_0)=\kappa_0^{H^\Vv}$
	since $F_\lambda$ is not whole.
	But since $\delta<\lambda$,
	we also have
	$\pi^\Uu\rest\OR=\pi^\Vv\rest\OR$, so
	$\pi^\Vv(\lambda)=\pi^\Uu(\lambda)=\kappa_0^{M^\Uu_\alpha}=\kappa_0^{M^\Vv_\alpha}$,
	so $\lambda=\kappa_0^{H^\Vv}$, contradiction.
\end{proof}

We now want to consider similar P-constructions internal to $\vV_1$,
and also  iterates  of $\vV_1$ and their generic extensions.

\begin{dfn}	\label{dfn:dsr}
	Let $\vV$ be a non-dropping $\Sigma_{\vV_1}$-iterate of $\vV_1$. Let $\Tt$ be an iteration tree on $\vV$. We say that $\Tt$ is \emph{dl-somewhat-relevant (dsr)} iff
	there are $\Tt_0,\vV',\Tt_1$ such that:
	\begin{enumerate}
		\item $\Tt$ is short-normal,
		\item $\Tt$ has lower and upper components $\Tt_0,\Tt_1$ respectively,
		$\Tt_0$ has successor length,  $b^{\Tt_0}$ does not drop,
		$\vV'=M^{\Tt_0}_{\infty}$,
		and $\Tt_1$ (on $\vV'$) is above $\gamma^{\vV'}$,
		\item\label{item:no_new_Woodins_below_index} for each $\beta+1<\lh(\Tt_1)$, $\delta_0^{\vV'}$ is the unique Woodin of $M^{\Tt_1}_\beta|\lh(E^{\Tt_1}_\beta)$.
	\end{enumerate}
	Note that every dl-somewhat-relevant tree on $\vV$ is based on $\vV|\delta_1^\vV$.
\end{dfn}

Easily:
\begin{lem}
	Let $\vV$ be a non-dropping $\Sigma_{\vV_1}$-iterate of $\vV_1$.
	Let $\Tt$ on $\vV$ be via $\Sigma_{\vV}$ and $\delta_1$-maximal.
	Then $\Tt$ is dsr.
\end{lem}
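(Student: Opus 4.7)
The plan is to show that any $\delta_1$-maximal $\Tt$ satisfies all three clauses of Definition~\ref{dfn:dsr}, using the short-normal decomposition $\Tt=\Tt_0\conc\Tt_1$ forced on us by Definition~\ref{dfn:short-normal}. Clause~1 (short-normality) is given. The main substantive clauses are (2) the non-triviality of the decomposition together with $\Tt_1$ being above $\gamma^{\vV'}$, and (3) the unique-Woodin condition along $\Tt_1$.

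First I would verify that $\Tt_0$ has successor length with $b^{\Tt_0}$ non-dropping. Suppose otherwise: then $\Tt_1 = \emptyset$ by Definition~\ref{dfn:short-normal}(i), so $\Tt = \Tt_0$ is entirely based on $\vV|\delta_0^\vV$. Writing $b = \Sigma_\vV(\Tt)$, normality gives $\delta(\Tt) \leq i^\Tt_b(\delta_0^\vV) < i^\Tt_b(\delta_1^\vV)$, making $\Tt$ $\delta_1$-short and contradicting our hypothesis. So $\Tt_0$ has successor length with $b^{\Tt_0}$ non-dropping, and $\vV' = M^{\Tt_0}_\infty$ is a $\delta_0^{\vV'}$-sound non-dropping $\Sigma_\vV$-iterate of~$\vV$.

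Next I would confirm that $\Tt_1$ is above $\gamma^{\vV'}$. Short-normality excludes long extenders, in particular the long extender $e^{\vV'}$ at $\gamma^{\vV'}$ (which is the only extender of $\vV'$ with critical point $\delta_0^{\vV'}$), so every $E^{\Tt_1}_\beta$ has $\crit(E^{\Tt_1}_\beta) > \delta_0^{\vV'}$. The hard part will be ruling out critical points in the open interval $(\delta_0^{\vV'},\gamma^{\vV'})$; here I would use the fact that $\Tt_0$ is the \emph{maximal} initial segment of $\Tt$ based on $\vV|\delta_0^\vV$, so $\lh(E^{\Tt_1}_0)$ strictly exceeds $\delta_0^{\vV'}$, combined with the observation that any short extender $E \in \es^{\vV'}$ with index in $(\delta_0^{\vV'},\gamma^{\vV'})$ lies on the base-Vsp segment $\vV'||\gamma^{\vV'}$, whose $\rho_1$ equals $\delta_0^{\vV'}$ (Lemma~\ref{local-definability-of-that-structure}). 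Using the $0$-maximal predecessor rule together with the fact that $\vV'||\gamma^{\vV'}$ already projects to $\delta_0^{\vV'}$, one sees that such a use would reduce, under normalization inside the Vsp hierarchy, to a tree on the base Vsp, forcing the relevant prefix into $\Tt_0$ after all -- contradicting maximality of $\Tt_0$. Hence every critical point used in $\Tt_1$ is at least $\gamma^{\vV'}$.

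Finally I would dispatch clause~(3). Since $\delta_1^{\vV}$ corresponds by the fine-structural P-construction of Lemma~\ref{local_correspondence} to $\delta_1^M$, and since in $M$ the cardinal $\delta_0^M$ is the unique Woodin below $\delta_1^M$ (which we may absorb into the finite theory $T$ defining ``$\Mswsw$-like''), it follows that $\delta_0^\vV$ is the unique Woodin cardinal of $\vV|\delta_1^\vV$. Pushing this forward by $i^{\Tt_0}$ yields that $\delta_0^{\vV'}$ is the unique Woodin of $\vV'|\delta_1^{\vV'}$. Because $\Tt_1$ is based on $\vV'|\delta_1^{\vV'}$ and (by the previous paragraph) above $\gamma^{\vV'} > \delta_0^{\vV'}$, every iteration map along $\Tt_1$ fixes $\delta_0^{\vV'}$ and every $\lh(E^{\Tt_1}_\beta)$ lies strictly below the image of $\delta_1^{\vV'}$; hence $\delta_0^{\vV'}$ remains the unique Woodin of each $M^{\Tt_1}_\beta|\lh(E^{\Tt_1}_\beta)$, giving clause~(3) and completing the proof.
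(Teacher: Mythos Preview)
The paper gives no proof (it says ``Easily:''), so there is nothing to compare directly. Your overall plan---verify clauses (1)--(3) of Definition~\ref{dfn:dsr}---is exactly right, and your reduction showing that $\Tt_0$ must have successor length with non-dropping branch is clean.

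The argument for clause~(2), however, invokes the wrong mechanism. You write that using a short extender with index in $(\delta_0^{\vV'},\gamma^{\vV'})$ would ``reduce, under normalization inside the Vsp hierarchy, to a tree on the base Vsp, forcing the relevant prefix into $\Tt_0$ after all---contradicting maximality of $\Tt_0$''. That does not hold: any extender with index above $\delta_0^{\vV'}$ belongs to the upper component by definition of the short-normal decomposition; nothing gets pushed back into $\Tt_0$, and ``maximality of $\Tt_0$'' plays no role here. The correct mechanism is a \emph{drop}, using the hypothesis you have not yet exploited: that $b=\Sigma_{\vV}(\Tt)$ is non-dropping. Walk along $b$ to the first successor $\epsilon+1$ of the $\vV'$-node in $\Tt$. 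Non-dropping there means $M^{*\Tt}_{\epsilon+1}=\vV'$, so by $0$-maximality no proper segment $P\triangleleft\vV'$ with $\OR^P\geq\lh(E^{\Tt_1}_0)$ has $\rho_\omega^P\leq\crit(E^\Tt_\epsilon)$. But $\crit(E^\Tt_\epsilon)>\delta_0^{\vV'}=\rho_\omega(\vV'\|\gamma^{\vV'})$ (Lemma~\ref{local-definability-of-that-structure}), so $\lh(E^{\Tt_1}_0)>\gamma^{\vV'}$. One then continues inductively along the tree, using the same projection fact at each model, to see every critical point in $\Tt_1$ is $\geq\gamma^{\vV'}$.

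Your clause~(3) argument is essentially correct. One small tightening: your phrasing ``every iteration map along $\Tt_1$ fixes $\delta_0^{\vV'}$'' only literally covers non-dropping nodes. The cleanest route is to note that for every $\beta+1<\lh(\Tt_1)$ one has $M^{\Tt_1}_\beta|\lh(E^{\Tt_1}_\beta)\triangleleft M(\Tt)=M^\Tt_b|\delta_1^{M^\Tt_b}$, and the latter has $\delta_0^{\vV'}=\delta_0^{M^\Tt_b}$ as its unique Woodin by $\vV_1$-likeness.
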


\begin{dfn}
	For a non-dropping $\Sigma_{\vV_1}$-iterate $\vV$ of $\vV_1$,
	write $\Sigma_{\vV,\sss}$ for the restriction of $\Sigma_{\vV}$
	to $\delta_1$-short trees, and $\Sigma^{\dsr}_{\vV,\sss}$ for
	the restriction of $\Sigma_{\vV,\sss}$ to dsr trees.
\end{dfn}
\begin{dfn}\label{dfn:vV_1_P-suitable}
	Let $\vV$ be a non-dropping $\Sigma_{\vV_1}$-iterate of $\vV_1$. Let $\Tt$ be an iteration tree on $\vV$. Let $\PP\in\vV$ and $g$ be $(\vV,\PP)$-generic.
	Say $\Tt$ is \emph{P-suitable for $\vV[g]$} iff there are $\Tt_0,\vV',\Tt_1,E_1,U,E_0,\eta,\delta,\lambda$ such that:
	\begin{enumerate}[label=(\alph*)]
		\item $g$ is $(\vV,\PP)$-generic and $\Tt\in\vV[g]$,
		\item $\Tt$ is short-normal with
		lower and upper components $\Tt_0,\Tt_1$ respectively, $\Vv'=M^{\Tt_0}_\infty$
		exists and $b^{\Tt_0}$ is non-dropping, and $\Tt_1$ is based on $\vV'|\delta_1^{\vV'}$,
		\item $\Tt$ is via $\Sigma_{\vV}$,
		\item either
		\begin{enumerate}[label=(\roman*)]
			\item $E_1=\emptyset$ and $U=\vV$, or
			\item $E_1\in\es^{\vV}$ is short and $\vV$-total, $\crit(E_1)=\kappa_1^\vV$
			and $U=\Ult(\vV,E_1)$,
		\end{enumerate}
		\item $\PP\in U|\kappa_1^U$ and	$\Tt'\in (U|\kappa_1^U)[g]$ where $\Tt'$ on $\vV|\delta_1^\vV$ is equivalent to $\Tt$,
		\item either
		\begin{enumerate}[label=(\roman*)]
			\item $E_0=\emptyset$ and $\Tt_0$ is trivial (so $\vV'=\vV$), or
			\item $E_0\in\es^{\vV}$ is long, $\gamma^{\vV}<\lh(E_0)$ and
			the lower component $\Tt_0$ of $\Tt$ is just the (successor length) short-normal tree corresponding to $E_0$
			(so $\vV'=\Ult(\vV_,E_0)$),
		\end{enumerate}
		\item $\delta_0^{\vV'}<\eta<\kappa_1^U$ and $\eta$ is a strong $\delta_0^{\vV}$-cutpoint of $U$, $\eta$ is a $U$-cardinal, $\PP\in U|\eta$, and if $E_1\neq\emptyset$ then $\kappa_1^{\vV}<\eta$,
		\item $\Tt$ has limit length, $\eta<\delta=\delta(\Tt)<\kappa_1^U$, $\eta$ is the largest cardinal of $U|\delta$,
		$\Tt$ is definable from parameters over $(U|\delta)[g]$, and $(U|\delta,g)$
		is $\BB_{\delta\delta_0^{\vV'}}^{M(\Tt)}$-generic over $M(\Tt)$.
	\end{enumerate}
	Say $\Tt$ is \emph{dl-relevant (for $\vV$)} iff $\Tt\in\vV$ is $\delta_1$-maximal (hence dsr) P-suitable,
	as witnessed by $E_1=\emptyset$.
\end{dfn}

\begin{dfn}\label{dfn:vV_1_P-con}
	Let $\vV$ be a non-dropping $\Sigma_{\vV_1}$-iterate of $\vV_1$.
	Let $g$ be set-generic over $\vV$.
	Let $\Tt\in\vV[g]$ be P-suitable for $\vV[g]$, as witnessed by $U,\Tt_0$. Then $\mathscr{P}^{U,g}(M(\Tt))$
	denotes the P-construction $P$ of $U[g]$ over $M(\Tt)$, using $\es^U$, computed analogously to
	that in Definition \ref{dfn:modified_P-con}; so when $F=F^{U||\alpha}\neq\emptyset$
	and $F$ is long, then $E=F^{P||\alpha}$ is determined by demanding
	$E\com\jbar\sub F$, where $\jbar=i^{\Tt_0}_{0\infty}\rest\delta_0^{\vV}$.
\end{dfn}

It is now straightforward to deduce a version of Lemma \ref{lem:modified_P-con_works}
for dsr P-suitable trees $\Tt\in\vV_1$ on $\vV_1$, by translating them to trees on $M$
and applying \ref{lem:modified_P-con_works}. Combined with the minimal inflation method
used for $M$, this allows us to compute the $\delta_1$-short tree strategy for $\vV_1$ inside $\vV_1$, and also the models for forming the second direct limit system.
However, before we proceed to this, we want to also consider the analogous
issues for iterates $\vV$ of $\vV_1$ (and generic extensions $\vV[g]$ thereof).
The argument given above does not immediately adapt to such iterates $\vV$  in general,
because (i) $\vV$ need not be as sound as $\vV_1$,
and (ii) $\vV$  need not correspond appropriately to an iterate of $M$.
To deal with these possibilities, we will adjust somewhat the conclusion and argument for Lemma \ref{lem:modified_P-con_works}, in Lemma \ref{lem:general_P-correctness} below.
Also
note that
it is not immediate that P-suitability and dl-relevance are first-order over $\vV$ (or $\vV_1$),
because of the demand that $\Tt$ be via $\Sigma_{\vV}$.
We will address this issue also, in a manner similar to that for $M$.

\subsubsection{DSR trees on  iterates of $\vV_1$}\label{subsubsec:DSR_trees}

To deal with issue (ii) mentioned above, it turns out we can replace the use of $M$ (or some iterate thereof) with an $\Mswsw$-like \emph{generic extension} $N$ of $\vV$
(together with such an extension of a related iterate $\vV_1$; see below).
It inherits  iterability (above $\kappa_0^N$) from the corresponding iterate of $\vV_1$:

\begin{lem}\label{lem:generic_premouse_iterable}
	Let $\vV$ be a non-dropping $\Sigma_{\vV_1}$-iterate of $\vV_1$.
	Let $g$ be $\mathbb{L}^\vV$-generic and $N$ an $\Mswsw$-like premouse
	such that $N=^*\vV[g]$ and $\vV=\vV_1^N$.  Let $\mathscr{I}=\mathscr{I}^{\vV}$. Then:
	\begin{enumerate}
		\item \label{item:N_iterable} $N$ is $(0,\OR)$-iterable with respect
		to trees $\Tt$ with $\lh(E^\Tt_0)>\kappa_0^N$.
		\item\label{item:N_kappa_0-sound} $\mathscr{I}$ is the class $\mathscr{I}^N$ of Silver indiscernibles for $N$
		(with respect to the generator set $\kappa_1^{\vV}=\kappa_1^N$).
		If $\vV$ is $\eta$-sound where
		$\delta_0^{\vV}\leq\eta$ then $N$ is $\eta$-sound.
		If $\vV$ is $\delta_0^{\vV}$-sound then
		$N$ is $\kappa_0^N$-sound.
	\end{enumerate}
\end{lem}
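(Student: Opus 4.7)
The plan is to leverage the tight level-by-level correspondence $N =^* \vV[g]$ together with $\vV = \vV_1^N$, so that trees on $N$ above $\kappa_0^N$ can be translated back and forth with trees on $\vV$ via the apparatus of Definition~\ref{dfn:vV_1-translation} and Lemma~\ref{lem:vV_1-translatable}, and the existing iteration strategy $\Sigma_\vV$ (together with properties inherited from $\Sigma_{\vV_1}$) is imported to yield iterability for $N$. First, for part~\ref{item:N_iterable}, I would verify that any $0$-maximal tree $\Tt$ on $N$ with $\lh(E^\Tt_0)>\kappa_0^N$ is in fact $\vV_1^N$-translatable in the sense of Definition~\ref{dfn:translatable}; the key point is that $\mathbb{L}^\vV$ has size $\leq\kappa_0^{+\vV}$ in $\vV$ and lives below $\gamma^{\vV}<\kappa_0^{+N}$, so by Jensen normality and Remark~\ref{rem:translatable} the condition $\kappa_0^{+M^\Tt_\alpha}<\lh(E^\Tt_\alpha)$ follows from $\lh(E^\Tt_0)>\kappa_0^N$ at every relevant node.

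Then by Lemma~\ref{lem:vV_1-translatable} applied at each successor stage, $\Tt$ lifts to a $0$-maximal tree $\Uu$ on $\vV$ of the same length and tree/drop/degree structure, with $M^\Uu_\alpha=\vV_1^{M^\Tt_\alpha}$, and with matching extenders and embeddings modulo the translation. At each limit stage I would invoke $\Sigma_\vV$ on the lifted initial segment of $\Uu$ to obtain a cofinal branch, and then argue that the corresponding branch on the $\Tt$ side produces a wellfounded direct limit. This last step uses two ingredients: that the $\vV$-direct limit $M^\Uu_\lambda$ is wellfounded (immediate), and that the generic $g$ propagates through iteration maps to a generic over $M^\Uu_\lambda$ for the image of $\mathbb{L}^\vV$ (because $\mathbb{L}^\vV$ has size below the critical points in use after the first extender), so that $M^\Tt_\lambda$ is obtained from $M^\Uu_\lambda$ by the same inverse P-construction plus generic that produced $N$ from $\vV$.

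For part~\ref{item:N_kappa_0-sound}, I would argue that the equation $N=^*\vV[g]$ preserves ordinal structure above $\kappa_0^{+\vV}=\kappa_0^{+N}$ and that $\vV[g]$ is a $\kappa_0^{+\vV}$-cc forcing extension of $\vV$, so that any class of ordinals witnessing Silver-style indiscernibility for $\vV$ (with respect to generators in $\kappa_1^\vV$) continues to witness indiscernibility after generic extension, hence for $N$; conversely, any $\alpha\in\mathscr{I}^N$ can be pulled back to $\vV$ via the P-construction together with a forcing-invariance argument (using that hulls over $N$ definable from parameters below $\kappa_1^N$ and ordinals in $\mathscr{I}^N$ correspond, modulo $g$, to hulls over $\vV$ from parameters below $\kappa_1^\vV$ and the same ordinals). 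This gives $\mathscr{I}=\mathscr{I}^N$. For the soundness clauses, I would unfold $\vV=\Hull^\vV(\eta\cup X)$ (for the appropriate generating set $X$ coming from the tree producing $\vV$ from $\vV_1$) and translate it through the level-by-level P-construction, picking up $g$ as a parameter below $\eta$ (resp.\ below $\kappa_0^N$); here the hypotheses $\eta\geq\delta_0^\vV$ or $\eta=\delta_0^\vV$ ensure that $g$ is below the soundness threshold on the $N$ side.

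The main obstacle I expect is the wellfoundedness verification at limits in part~\ref{item:N_iterable} when the branch of $\Uu$ uses long extenders of $\vV$ (corresponding, via the P-construction, to extenders of $N$ with $\crit=\kappa_0^N$). This is precisely the new phenomenon that Lemma~\ref{lem:vV_1-translatable} was built to handle, so the subtle point will be to check that the argument there — representing $N$-functions by $\vV$-functions via $\kappa_0^{+\vV}$-cc-ness of $\mathbb{L}^\vV$ and its iterated images — goes through not just at a single application of an extender but at every limit of the iteration, using that the cc property is preserved by $\Sigma_\vV$-iteration (an elementarity/absoluteness matter inherited from the analogous facts for $\Sigma_{\vV_1}$).
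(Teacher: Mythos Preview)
Your plan for part~\ref{item:N_iterable} has a genuine gap: not every $0$-maximal tree $\Tt$ on $N$ with $\lh(E^\Tt_0)>\kappa_0^N$ is $\vV_1^N$-translatable. Remark~\ref{rem:translatable} does not say that the condition at $\eta=0$ propagates everywhere; it says translatability is equivalent to checking the condition at $\eta=0$ \emph{and} at every limit $\eta$ with $i^\Tt_{0\eta}(\kappa_0^N)=\delta(\Tt\rest\eta)$. Both checks can fail. First, the hypothesis only gives $\lh(E^\Tt_0)>\kappa_0^N$, not $>\kappa_0^{+N}$; if $\kappa_0^N<\lh(E^\Tt_0)<\kappa_0^{+N}$ the tree already fails translatability at index $0$ (and incidentally $\gamma^\vV>\kappa_0^{+N}=\delta_0^\vV$, not $<$, so your size remark about $\mathbb{L}^\vV$ does not help here). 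Second, even when $\lh(E^\Tt_0)>\kappa_0^{+N}$, at a limit $\alpha$ where extenders with critical point $\kappa_0$ have been used cofinally one may choose $E^\Tt_\alpha$ with $\delta(\Tt\rest\alpha)<\lh(E^\Tt_\alpha)<\kappa_0^{+M^\Tt_\alpha}$, and translatability breaks there.

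The paper handles both failures by a separate mechanism that your plan is missing. Whenever translatability breaks (either immediately or at such a limit $\alpha$), the remainder of the tree lives on some $P\pins M^\Tt_\alpha$ with $\rho_\om^P=\kappa_0^{M^\Tt_\alpha}$, and $\kappa_0^{M^\Tt_\alpha}$ is a cutpoint of $P$. One then picks a long $E\in\es^\vV$ (respectively $\es^{M^\Uu_\alpha}$), lets $E^+$ be the corresponding $N$-extender, and observes that $E^+(P)$ is above-$\lambda(E)$ iterable because iterating it there is literally iterating a segment of $\vV||\OR(E^+(P))$ (respectively of an iterate of $\vV$). This lifting-via-a-long-extender step is the missing ingredient; once you insert it, the structure becomes ``translate while you can, and when you cannot, fall back to the segment argument'', which is exactly the paper's proof. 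Your sketch for part~\ref{item:N_kappa_0-sound} is in the right spirit; the paper makes the final clause concrete by noting that $\Hull_1^\vV(\mathscr{I})$ is cofinal in $\delta_0^\vV=\kappa_0^{+N}$ and $\vV$ is definable over $N$, so $\Hull_1^N(\kappa_0^N\cup\mathscr{I})$ is cofinal and transitive below $\kappa_0^{+N}$, hence swallows it.
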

\begin{proof}
	Part \ref{item:N_iterable}: Let $P\pins N$ with $\rho_\om^P=\kappa_0^N$. Then $P$ is $(0,\OR)$-iterable
	with respect to trees $\Tt$ with $\lh(E^\Tt_0)>\kappa_0^N$, because $\kappa_0^N$ is a cutpoint of $P$, and
	letting $E\in\es^{\vV}$ be long, $E$ extends to $E^+\in\es^N$,
	and $E^+(P)$ is above-$\lambda(E)$-$(0,\OR)$-iterable, 	since iterating $E^+(P)$ above $\lambda(E)$
	is equivalent to iterating $\vV||\OR(E^+(P))$ above $\lambda(E)$.

	So we may assume that $\kappa_0^{+N}=\delta_0^{\vV}<\lh(E^\Tt_0)$,
	and so $\gamma^{\vV}<\lh(E^\Tt_0)$. Consider  translatable trees $\Tt$ on $N$.
	We get an iteration strategy for such trees induced by $\Sigma_{\vV}$, and the resulting
	iterates of $N,\vV$ are related according to Lemma \ref{lem:vV_1-translatable}.
	Now suppose $\Tt\rest(\alpha+1)$ is translatable, and let $\Uu$ on $\vV$ be its translation
	to $\vV$,  but $\Tt\rest(\alpha+2)$ is not translatable. Then $\alpha$ is a limit ordinal
	and a limit of stages when $\Uu$ uses a long extender, and $\lh(E^\Tt_\alpha)<\delta_0^{M^\Uu_\alpha}$. But then $\Tt\rest[\alpha,\infty)$
	is just a tree on some $P\pins M^\Tt_\alpha$ where $\rho_\om^P=\kappa_0^{M^\Tt_\alpha}$,
	and $\kappa_0^{M^\Tt_\alpha}$ is a cutpoint of $P$, and this $P$ is also iterable above $\kappa_0^{M^\Tt_\alpha}$, like in the previous paragraph.

	Part \ref{item:N_kappa_0-sound}: By genericity, $\mathscr{I}$ form indiscernibles
	for $N$. Note that $\vV$ is $\kappa_1^{\vV}$-sound.
	Let $\eta\in[\delta_0^{\vV},\kappa_1^\vV]$ be least such that $\vV$ is $\eta$-sound.
	Note that $N=\Hull^N_1(\mathscr{I}\cup\eta)$. So $\mathscr{I}^N=\mathscr{I}$.   Finally suppose $\eta=\delta_0^{\vV}$, so
	$N=\Hull_1^N(\kappa_0^{+N}\cup\mathscr{I})$. But also
	$\Hull_1^{\vV}(\mathscr{I})$ is cofinal in $\delta_0^{\vV}$ and $\vV$ is definable over $N$,
	so $\Hull_1^N(\kappa_0^N\cup\mathscr{I})$ is cofinal in $\kappa_0^{+N}$ and transitive below $\kappa_0^{+N}$,
	hence contains all of $\kappa_0^{+N}$, and hence all of $N$.
\end{proof}

\begin{dfn}
	Under the hypotheses of Lemma \ref{lem:generic_premouse_iterable},
	let $\Phi_{N\supseteq \vV}$ be the above-$\kappa_0^N$-strategy for $N$ induced by $\Sigma_{\vV}$ as in the proof of the lemma. (Note that the components of this strategy which do not translate
	to a tree on $\vV$, i.e. on one of the projecting structures $P$ in the proof,
	are uniquely determined by $P$.)
\end{dfn}
\begin{lem}\label{lem:general_P-correctness}
	Let $\vV$ be a  $\Sigma_{\vV_1}$-iterate of $\vV_1$.
	Let $\Ss$ be the short-normal tree on $\vV_1$,
	via $\Sigma_{\vV_1}$, with last model $\vV$.  Let $\bar{\vV}=\cHull_1^{\vV}(\mathscr{I}^{\vV}\cup\delta_1^{\vV})$
	and $\bar{\bar{\vV}}=\cHull_1^{\vV}(\mathscr{I}^{\vV}\cup\delta_0^{\vV})$.
	Then
	\begin{enumerate}
		\item\label{item:vVbar_is_intermediate_iterate} $\bar{\vV}=M^{\Ss}_\alpha$ for some $\alpha\in b^\Ss$,
		and $\Ss\rest[\alpha,\infty)$ is equivalent to a tree  on $\bar{\vV}$
		via $\Sigma_{\bar{\vV}}$, $\bar{\vV}|\delta_1^{\bar{\vV}}=\vV|\delta_1^{\vV}$,
		and $\bar{\vV}$ is $\delta_1^{\bar{\vV}}$-sound. Likewise for $\bar{\bar{\vV}}$ and $\delta_0^{\vV}$.
	\end{enumerate}
	Let $\PP\in\vV$ and $g$ be $(\vV,\PP)$-generic.
	Let $\Tt\in\vV[g]$, on $\vV$, be dsr-P-suitable and $b=\Sigma_{\vV}(\Tt)$, and $U,\eta,\delta$ as in Definition \ref{dfn:vV_1_P-suitable}.
	Let $\bar{\Tt}$ be the tree on $\bar{\vV}$ equivalent to $\Tt$.
	If $\Tt$ is $\delta_1$-short let $\bar{Q}=Q=Q(\Tt,b)$,
	and otherwise let $Q=M^\Tt_b$ and
	\[ \bar{Q}=M^{\bar{\Tt}}_b=\cHull_1^Q(\mathscr{I}^Q\cup\delta).\] Let $P=\mathscr{P}^{U,g}(M(\Tt))$. If $P$ is set-sized let $\bar{P}=P$ and otherwise let $\bar{P}=\Hull_1^P(\mathscr{I}^U\cup\delta)$ and $\pi:\bar{P}\to P$ be the uncollapse. Then:
	\begin{enumerate}[resume*]
		\item\label{item:Pbar=Qbar} 	$\bar{P}=\bar{Q}$
		\item\label{item:delta_is_U-regular} if $\Tt$ is $\delta_1$-maximal
		then $\delta=(\eta^+)^U$  (and $P,\bar{P}$ are proper class),
		\item\label{item:P_is_Sigma-iterate_of_Pbar} if $\bar{P}\neq P$ then $P$ is an
		above-$\delta$, $\Sigma_{\bar{Q}}$-iterate of $\bar{Q}$ (and hence
		a $\Sigma_{\vV_1}$-iterate of $\vV_1$) and $\pi$ is the iteration map,
		so $\pi``\mathscr{I}^{\bar{Q}}=\mathscr{I}^P=\mathscr{I}^{\vV}$.
	\end{enumerate}
\end{lem}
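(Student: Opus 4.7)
The plan is to establish Part 1 by a hull-condensation calculation on the short-normal tree $\Ss$, and then reduce Parts 2--4 to Lemma \ref{lem:modified_P-con_works} by generically extending $\bar{\vV}$ to an $\Mswsw$-like premouse and lifting $\bar{\Tt}$ to a P-illustrative tree on it.

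For Part 1, I would decompose $\Ss=\Ss_0\conc\Ss_1\conc\Ss_2$ where $\Ss_0$ is based on $\vV_1|\delta_0^{\vV_1}$, $\Ss_1$ is above $\delta_0$ but based on $\vV_1|\delta_1^{\vV_1}$, and $\Ss_2$ uses only extenders with critical points $\geq\delta_1$ (images). Let $\alpha$ be the last node on $b^\Ss$ before $\Ss_2$ begins, so $M^\Ss_\alpha|\delta_1^{M^\Ss_\alpha}=\vV|\delta_1^\vV$ and $M^\Ss_\alpha$ is $\delta_1^{M^\Ss_\alpha}$-sound. Using that $\mathscr{I}^{\vV_1}$ is preserved along $b^\Ss$ (Lemma \ref{lem:M_infty_of_kappa_0-sound_iterate}), one checks that $\cHull_1^{\vV}(\mathscr{I}^\vV\cup\delta_1^{\vV})$ collapses exactly the generators contributed by $\Ss_2$, so $\bar{\vV}=M^\Ss_\alpha$. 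The argument for $\bar{\bar{\vV}}$ is entirely analogous with $\delta_0$ replacing $\delta_1$.

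For Parts 2--4, the plan is as follows. Since $\bar{\vV}$ is $\delta_1^{\bar{\vV}}$-sound and agrees with $\vV$ (hence $U$) on $\vV|\delta_1^\vV$, the tree $\bar{\Tt}$ is equally a tree on $\bar{\vV}$ and we can work there. Fix an $\mathbb{L}^{\bar{\vV}}$-generic $h$ and form the $\Mswsw$-like premouse $N$ with $N=^*\bar{\vV}[h]$ and $\vV_1^N=\bar{\vV}$; by Lemma \ref{lem:generic_premouse_iterable}, $N$ is iterable above $\kappa_0^N$. Construct a compatible $\Mswsw$-like $\hat{U}$ extending $U$ using a generic containing both $g$ and (an extension of) $h$. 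By Lemma \ref{lem:vV_1-translatable} and the dsr structure of $\Tt$, the tree $\bar{\Tt}$ lifts uniquely to a translatable tree $\hat{\Tt}$ on $N$ above $\kappa_0^N$ that is P-illustrative in the sense of Definition \ref{dfn:P-illustrative}, where the single long extender $E_0$ (if used in $\Tt_0$) lifts to the $N$-total extender with critical point $\kappa_0^N$ and image $\kappa_0^{\Ult(N,E_0)}=\lh(E_0)$. Now Lemma \ref{lem:modified_P-con_works} applied to $\hat{\Tt}$ on $N$ gives $\mathscr{P}^{\hat{U}}(M(\hat{\Tt}))=\hat{Q}$, where $\hat{Q}$ is either the Q-structure $Q(\hat{\Tt},\hat{b})$ or the branch model $M^{\hat{\Tt}}_{\hat{b}}$. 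Translating this equation back via inverse P-construction on both sides --- $N\leftrightarrow\bar{\vV}$ and $\hat{U}\leftrightarrow U[g]$ --- and using the compatibility of the modified P-construction (Definition \ref{dfn:vV_1_P-con}) with the premouse-level P-construction under the level-by-level uniformity of Remark \ref{rem:lev-by-lev_P-con}, yields $\bar{P}=\bar{Q}$, establishing Part 2. Part 3 then follows because $\hat{\Tt}$ terminates at the $\hat{U}$-cardinal successor of $\eta$: $\delta_1^\vV$ is Woodin in $\vV$ (hence in $U$) and its image under the branch embedding is forced by the genericity-iteration structure of $\Tt$ to be $(\eta^+)^U$.

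For Part 4, when $P$ is proper class (i.e., $\hat{\Tt}$ is maximal), the comparison in the proof of Lemma \ref{lem:modified_P-con_works} combined with the fact that $Q=M^{\bar{\Tt}}_b$ is a non-dropping iterate of $\bar{Q}$ via an above-$\delta$ tree that fixes $\mathscr{I}^{\bar{Q}}$ yields $P$ as the canonical $\Sigma_{\bar{Q}}$-iterate of $\bar{Q}$ filling out the proper class extension of $\bar{P}$, with $\pi$ being the iteration map (and $\pi\rest\mathscr{I}^{\bar{Q}}$ moving indiscernibles to $\mathscr{I}^{\vV}$ as required). The main obstacle I anticipate is verifying compatibility of the lift $\bar{\Tt}\mapsto\hat{\Tt}$ with the modified P-construction definition: the long extenders appearing in $\bar{\Tt}$ must lift to $\kappa_0^N$-critical extenders in $\hat{\Tt}$ in such a way that the P-construction translations $\mathscr{P}^{\hat{U}}(\cdot)\leftrightarrow\mathscr{P}^{U,g}(\cdot)$ commute with the (modified) extender-by-extender comparison, particularly at stages in $\hat{\Tt}$ where the comparison argument of Lemma \ref{lem:modified_P-con_works} confronts both long extenders and $\kappa_0$-overlapping extenders simultaneously.
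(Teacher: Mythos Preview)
Your overall strategy is right --- pass to an $\Mswsw$-like generic expansion and run a comparison as in Lemma~\ref{lem:modified_P-con_works} --- but there is a genuine gap in Part~2: you cannot apply Lemma~\ref{lem:modified_P-con_works} as a black box. That lemma is stated for P-illustrative trees on $M$ via $\Sigma_M$, and its proof uses the specific soundness of $M$; your $N$ is merely a generic premouse (not $M$, not even an iterate of $M$), and the paper explicitly flags before \S\ref{subsubsec:DSR_trees} that the argument ``does not immediately adapt to such iterates $\vV$'' because $\vV$ need not correspond to an iterate of $M$. What the paper actually does is re-run the phalanx comparison \emph{proof} of Lemma~\ref{lem:modified_P-con_works} in this new setting, and the phalanx setup is more delicate than what you describe. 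The two sides of the comparison have \emph{different} base models: one compares $((\bar{\bar{U}}_0^+,\delta),\bar{Q}^+)$ against $((\bar{\bar{\vV}}^+,\delta),\bar{U}^+)$, where $\bar{\bar{\vV}}^+$ is the generic expansion of $\bar{\bar{\vV}}$ (not $\bar{\vV}$) and $\bar{\bar{U}}_0^+=\Ult(\bar{\bar{\vV}}^+,E_0^+)$. This asymmetry is forced by the lower component $\Tt_0$: long extenders overlapping $\delta$ on the $Q$-side have critical point $\kappa_0^{\bar{\bar{U}}_0^+}$ (the $E_0$-image), while those on the $P$-side have critical point $\kappa_0^{\bar{\bar{\vV}}^+}$. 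Your single $N$ with a ``compatible $\hat U$ extending $U$'' does not capture this, and indeed your $\hat U$ is not well-defined (since $U$ is already a Vsp, not a premouse you can simply extend by a further generic).

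The paper takes the generic expansion at $\bar{\bar{\vV}}$ rather than $\bar{\vV}$ because this is where the $\mathbb L$-forcing lives; the same generic then expands $\vV$ to $\vV^+$ (via $\Ss_1\conc\Ss_2$), and one takes $U^+=\Ult(\vV^+,E_1^+)$, then $\bar U^+=\cHull_1^{U^+}(\mathscr I^{U^+}\cup\delta)$ to recover $\delta$-soundness. The $\kappa_0$-soundness of both bases $\bar{\bar{\vV}}^+$ and $\bar{\bar{U}}_0^+$ (via Lemma~\ref{lem:generic_premouse_iterable}) together with the $\delta$-soundness of $\bar Q^+,\bar U^+$ is what makes the comparison trivial, exactly parallel to the proof (not the statement) of Lemma~\ref{lem:modified_P-con_works}. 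For Part~4 the paper's argument is also different from yours: it does not rely on the comparison, but instead shows directly that the residual tree $\Ss\rest[\alpha,\infty)$ translates (after a little normalization) to an above-$\delta$ tree on $\bar P=\bar Q$ with last model $P$, which must be via $\Sigma_{\bar Q}$ since the Q-structures above $\delta_1^{\bar Q}$ are all trivial by smallness.
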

\begin{proof}[Proof (sketch)]
	Part \ref{item:vVbar_is_intermediate_iterate}: This is straightforward
	and left to the reader.

	Part \ref{item:Pbar=Qbar}: We will prove this by considering a comparison
	of two phalanxes. It will take a little work to define the phalanxes
	and describe their relevant properties.

	Let $\Ss=\Ss_0\conc\Ss_1\conc\Ss_2$ where $\Ss_0$ is the lower component
	of $\Ss$, $\Ss_1\conc\Ss_2$ the upper component, with $\Ss_1$ based on $M^{\Ss_0}_\infty|\delta_1^{M^{\Ss_0}_\infty}$ and $\Ss_2$
	above $\delta_1^{M^{\Ss_1}_\infty}$. Then $\bar{\bar{\Vv}}=M^{\Ss_0}_\infty$
	is $\delta_0^{\bar{\bar{\vV}}}$-sound and $\Ss_1$ is above $\gamma^{\bar{\bar{\vV}}}$, and is via $\Sigma_{\bar{\bar{\vV}}}$, and $\bar{\vV}=M^{\Ss_1}_\infty$ is
	$\delta_1^{\bar{\vV}}$-sound and  $\Ss_2$ is via $\Sigma_{\bar{\vV}}$.

	Let $\vV^+$ be an $\Mswsw$-like generic extension of $\vV$ with $\vV=\vV_1^{\vV^+}$,
	and such that $\vV^+,g$ are mutually $\vV$-generic. Let $g_{\vV^+}\sub\mathbb{L}^{\vV}$ be the generic filter. Since $\mathbb{L}^{\vV}=\mathbb{L}^{\bar{\bar{\vV}}}$ is below $\crit(i^{\Ss_1\conc\Ss_2}_{0\infty})$, we have $\mathbb{L}^{\bar{\bar{\vV}}}=\mathbb{L}^{\vV}$ and $g_{\vV^+}$ is also $(\bar{\bar{\vV}},\mathbb{L}^{\bar{\bar{\vV}}})$-generic,
	and extends uniquely to an $\Mswsw$-like generic extension
	$\bar{\bar{\vV}}^+$  of $\bar{\bar{\vV}}$  with
	$\bar{\bar{\vV}}=\vV_1^{\bar{\bar{\vV}}^+}$; note $\bar{\bar{\vV}}^+|\kappa_0^{+\bar{\bar{\vV}}^+}=\vV^+|\kappa_0^{+\vV^+}$. Lemma \ref{lem:generic_premouse_iterable}
	applies to $(\bar{\bar{\vV}}^+,\bar{\bar{\vV}})$, so  $\Ss_1\conc\Ss_2$ translates to a tree $\Ss_1^+\conc\Ss_2^+$ on $\bar{\bar{\vV}}^+$,
	via $\Phi_{\bar{\bar{\vV}}^+\supseteq\bar{\bar{\vV}}}$,
	and note that $\vV^+=M^{\Ss_1^+\conc\Ss_2^+}_\infty$.

	Let $E_0,E_1\in\es^{\vV}$ be as in Definition \ref{dfn:vV_1_P-suitable}.
	Let $E_i^+\in\es^{\vV^+}$ with $\lh(E_i^+)=\lh(E_i)$ (or $E_i^+=\emptyset=E_i$).
	Let $\bar{\bar{U}}_0^+=\Ult(\bar{\bar{\vV}}^+,E_0^+)$.
	Since $\bar{\bar{\vV}}^+$ is $\kappa_0^{\bar{\bar{\vV}}^+}$-sound,   $\bar{\bar{U}}_0^+$ is  $\kappa_0^{\bar{\bar{N}}'}$-sound.
	(Note $\bar{\bar{U}}_0^+$ need not be $\sub \vV^+$.)

	Let $U^+=\Ult(\vV^+,E_1^+)$, if $E_1\neq\emptyset$,
	and $U^+=\vV^+$ otherwise. Since $\eta$ is a $\delta_0^U$-strong cutpoint of $U$
	and $U|\delta$ has largest cardinal $\eta$, so is $\delta$,
	and  $\delta$ is also a $\kappa_0^{U^+}$-strong
	cutpoint of $U^+$. Let $\bar{U}^+=\Hull_1^{U^+}(\mathscr{I}^{U^+}\cup\delta)$.
	Letting $\pi:\bar{U}^+\to U^+$ be the uncollapse, note that $\delta<\crit(\pi)$
	(as $\delta\leq(\eta^+)^{U^+}$ and $\delta$ is a strong $\kappa_0^{U^+}$-cutpoint), and $\delta$ is a $\kappa_0^{\bar{U}^+}=\kappa_0^{U^+}$-strong cutpoint of $\bar{U}^+$,
	and $\bar{U}^+$ is $\delta$-sound.
	Note that if $P$ is proper class then $\delta=\delta_1^P=\delta_1^{\bar{P}}$, $P$ is a ground of $U^+[g]$ via the extender algebra at $\delta$ (to reach $U[g]$)
	followed by some smaller forcing (to reach $U^+[g]$), $\delta=\eta^{+U}$,  $\mathscr{I}^P=\mathscr{I}^U=\mathscr{I}^{U^+}$, \[ \OR\inter\rg(\pi)=\OR\inter\Hull_1^R(\mathscr{I}^R\cup\delta) \text{	is independent of }R\in\{P,U,U^+\},\]
	$\bar{P}$ is $\delta$-sound, and letting $\bar{U}=\cHull_1^{U}(\mathscr{I}^U\cup\delta)$,
	then $\bar{P}\sub\bar{U}[g]\sub\bar{U}^+[g]$ are related as are $P\sub U[g]\sub U^+[g]$,
	so $\bar{P}=\mathscr{P}^{\bar{U}}(M(\Tt))$, etc.

	Let $Q^+=M^{\Tt^+}_b||\OR^Q$ where $\Tt^+\conc b$ is the translation of $\Tt\conc b$ to a tree on $\vV^+$. Let $\bar{Q}^+=Q^+$ if $Q$ has set size,
	and $\bar{Q}^+=\Hull_1^{Q^+}(\mathscr{I}^Q\cup\delta)$ otherwise.
	Write $\Tt=\Tt_0\conc\Tt_1$ for the lower and upper components of $\Tt$,
	and $\Tt^+=\Tt_0^+\conc\Tt_1^+$ correspondingly.
	Note that we can rearrange $\Tt^+$ as a tree $\bar{\bar{\Tt}}^+=\bar{\bar{\Tt}}^+_0\conc\bar{\bar{\Tt}}^+_1$ on $\bar{\bar{\vV}}^+$
	with $\bar{\bar{\Tt}}_0^+$ equivalent to $\Tt_0^+$ (so $M^{\bar{\bar{\Tt}}^+_0}_\infty=\bar{\bar{U}}_0^+$), and $\bar{\bar{\Tt}}_1^+$ given
	by normalizing a stack equivalent to $(j``\Ss_1,\Tt_1^+)$, where $j:\bar{\bar{\vV}}^+\to \bar{\bar{U}}_0^+$ is the iteration map and $j``\Ss_1$ is the minimal $j$-copy of $\Ss_1$. If $Q$ is set size
	then $\bar{Q}^+=Q(\bar{\bar{\Tt}}^+,b)$ and otherwise $\bar{Q}^+=M^{\bar{\bar{\Tt}}^+}_b$.
	Note that $\bar{Q}^+$ is $\delta$-sound.

	Now compare the phalanx $((\bar{\bar{U}}_0^+,\delta),\bar{Q}^+)$ versus the phalanx $((\bar{\bar{\vV}}^+,\delta),\bar{U}^+)$,
	``above $\delta$, modulo the generic at $\delta$ and translation for overlapping extenders'', just like in the proof of Lemma \ref{lem:modified_P-con_works}, using $\Phi_{\bar{\bar{\vV}}^+\supseteq\bar{\bar{\vV}}}$ to iterate the phalanxes (a little bit of normalization shows this works). Because   $\bar{\bar{U}}_0^+$ is $\kappa_0^{\bar{\bar{U}}_0^+}$-sound and
	$\bar{\bar{\vV}}^+$ is $\kappa_0^{\bar{\bar{\vV}}^+}$-sound and $\bar{Q}^+,\bar{U}^+$ are $\delta$-sound,
	essentially the same proof as before shows that the comparison is trivial, which  gives $\bar{P}=\bar{Q}$.

	We leave part \ref{item:delta_is_U-regular} to the reader.

	Part \ref{item:P_is_Sigma-iterate_of_Pbar}, sketch:\footnote{We don't
		really need this part of the lemma, but it is convenient to have it.} Suppose $\bar{P}\neq P$;
	then $P$ is proper class and $\bar{U}\neq U$. Letting $\Ss'=\Ss\conc E_1$ if $E_1\neq\emptyset$,
	and $\Ss'=\Ss$ otherwise,
	we have $U=M^{\Ss'}_\infty$. Now it need not be that $\bar{U}$ is $M^{\Ss'}_\alpha$ for some $\alpha$, but this is almost the case.  In fact,
	because $\delta$ is a $\delta_0^U$-strong cutpoint of $U$, we get the following: Let $\alpha$ be least such that $\lh(E^{\Ss'}_\alpha)>\delta$. Then there is a unique tree $\Ss^*$  extending
	$\Ss'\rest(\alpha+1)$ and such that
	$E^{\Ss^*}_{\alpha+i}\neq\emptyset$ iff $[0,\alpha+i]_{\Ss^*}$ drops,
	and if non-empty, $E^{\Ss^*}_{\alpha+i}=F(M^{\Ss^*}_{\alpha+i})$,
	and $M^{\Ss^*}_\infty=\bar{U}$. Moreover,
	$U$ is a $\Sigma_{\bar{U}}$-iterate of $\bar{U}$, via a tree $\widetilde{\Ss}$ which is a straightforward
	translation of $\Ss\rest[\alpha,\infty)$ via a little normalization (in  \cite{rule_conversion_v2} there are similar kinds of calculations, though here it is easier). But $\widetilde{\Ss}$ is above $\delta+1$.
	Therefore it translates to a tree $\Ss^{\dagger}$ on $\bar{P}$ whose last model
	is $P$. We have $\bar{Q}=\bar{P}$ is an iterate of $\vV_1$
	and $\delta=\delta_1^{\bar{Q}}$. But by the smallness of $M$,
	and since $\Ss^{\dagger}$ is above $\delta_1^{\bar{Q}}$ and does not drop on its main branch, it must be via
	$\Sigma_{\bar{Q}}$ (that is, $P$ has nothing remotely resembling a Woodin cardinal
	$>\delta_1^{\bar{Q}}$, so the Q-structures at limit stages are of $\Ss^{\dagger}$ are trivial). This completes the sketch.
\end{proof}

\subsubsection{Definability of $\Sigma_{\vV,\sss}$ and (variants of) $M^\Tt_b$}\label{subsubsec:def_Sigma_vV,sss^dsr}
We consider first the question of whether
$\vV_1$ can define its own extender sequence over its universe. We don't know
whether this is the case or not, but in this direction:

\begin{lem}\label{lem:vV_1_def_from_M_infty|kappa_0}
	We have:
	\begin{enumerate}
		\item \label{item:vV_1|delta_0_def} $\vV_1|\delta_0^{\vV_1}$ is definable over the universe of $\vV_1$.
		\item \label{item:vV_1_def_from_M_infty|kappa_0}
		$\vV_1$ is definable over its universe from the parameter $\M_\infty|\kappa_0^{\M_\infty}$.
	\end{enumerate}
\end{lem}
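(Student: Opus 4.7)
The plan is to adapt the method behind \cite[Theorem 1.1]{V=HODX}, which gives $\es^M$ lightface-definably over $\univ{M}$ (cf.\ Remark \ref{rem:E^M_def}), to work inside $\vV_1$. That method characterizes the extender indexed at $\alpha$ as the unique $E$ making the resulting premouse pass an iterability test, the test being carried out using a self-defined fragment of the iteration strategy. The crucial ingredient available to us here is Lemma \ref{V_1_knows_how_to_iterate_Minfty_up_to_its_woodin}: over $\univ{\vV_1}$ we lightface-define the restriction of $\Sigma_{\M_\infty}$ to trees lying in $\vV_1$ and based on $\M_\infty|\delta_\infty$.

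For part \ref{item:vV_1|delta_0_def}, I would characterize $\vV_1|\delta_0^{\vV_1}=\M_\infty|\delta_\infty$ as the unique transitive $J$-structure which is a sound $\Mswsw$-small premouse whose largest cardinal is Woodin (as witnessed by its own sequence) and which passes the $V=HODX$-style iterability test using the lightface $\vV_1$-definable strategy of \ref{V_1_knows_how_to_iterate_Minfty_up_to_its_woodin}. Existence is witnessed by $\M_\infty|\delta_\infty$ itself; uniqueness follows by coiterating two such candidates inside $\univ{\vV_1}$ with the common self-definable strategy, which must terminate trivially by soundness of the two candidates. This yields a lightface definition of the structure and simultaneously singles out $\delta_\infty$ as its ordinal height.

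For part \ref{item:vV_1_def_from_M_infty|kappa_0}, with $\vV_1|\delta_0^{\vV_1}$ lightface-defined and the parameter $P=\M_\infty|\kappa_0^{\M_\infty}$ in hand, I would recover all of $\vV_1$ in three further stages. First, extend to $\vV_1|\gamma^{\vV_1}=\M_\infty|\kappa_0^{+\M_\infty}$: the parameter $P$ already covers below $\kappa_0^{\M_\infty}$, and in the interval $[\kappa_0^{\M_\infty},\kappa_0^{+\M_\infty})$ the extender sequence is pinned down by iterating $P$ with the self-definable strategy fragment and a $V=HODX$-style test, using that any candidate target for such an extender still lies below $\kappa_0^{+\M_\infty}$ and hence is amenable to the strategy of \ref{V_1_knows_how_to_iterate_Minfty_up_to_its_woodin}. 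Second, adjoin the long active extender $e^{\vV_1}=e$ of $\vV_1||\gamma^{\vV_1}$: by Lemmas \ref{lem:i_E(M_infty)_is_iterate} and \ref{lem:pi_infty_is_iteration_map}, the graph of $e$ coincides with that of the correct iteration map $\M_\infty|\delta_\infty\to(\M_\infty|\delta_\infty)^{\M_\infty}$, whose target is itself lightface-definable from $\M_\infty|\kappa_0^{+\M_\infty}$ by the recipe we have just applied, and whose branch is picked off by the self-defined strategy, so $e$ is fixed. Third, extend above $\gamma^{\vV_1}$: Lemma \ref{restr_of_extenders_are_there} lightface-defines $\mathbb{F}^M_{>\kappa_0}$ over $\univ{\vV_1}=\univ{\M_\infty[*]}$, and then Lemmas \ref{lem:F^P_from_F^P_rest_OR} and \ref{local_correspondence} recover $\vV_1||\nu$ level by level via the inverse P-construction prescription of Definition \ref{dfn:vV_1}.

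The main obstacle is the careful verification that the $V=HODX$-style definition really goes through lightface inside $\univ{\vV_1}$, both in part \ref{item:vV_1|delta_0_def} and in the first stage of part \ref{item:vV_1_def_from_M_infty|kappa_0}. One must check that the iterability condition used to cut down to the correct premouse invokes only strategy-fragments that are themselves lightface-definable over $\univ{\vV_1}$; this is exactly what Lemma \ref{V_1_knows_how_to_iterate_Minfty_up_to_its_woodin} supplies for trees on $\M_\infty|\delta_\infty$, and hence, after minor bookkeeping, also for the short intervals above $\delta_\infty$ that arise in the first stage of \ref{item:vV_1_def_from_M_infty|kappa_0}.
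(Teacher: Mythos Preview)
Your proposal has two genuine gaps.

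First, there is a circularity in your appeal to Lemma \ref{V_1_knows_how_to_iterate_Minfty_up_to_its_woodin}. That lemma gives the strategy definably over $\vV_1$, i.e.\ with the predicate $\es^{\vV_1}$ available, not over the bare universe $\univ{\vV_1}$. So it cannot serve as the ``self-defined strategy'' for an iterability test whose purpose is to pin down $\es^{\vV_1}$ in the first place. The paper avoids this circularity in two different ways. For part \ref{item:vV_1|delta_0_def}, one simply notes that $V_{\delta_0^{\vV_1}}^{\vV_1}=V_{\delta_0^{\M_\infty}}^{\M_\infty}$ and runs the proof behind Remark \ref{rem:E^M_def} (the \cite{V=HODX} argument) internally; the short tree strategy needed there is locally computable via Q-structures and does not require $\es^{\vV_1}$. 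For the maximal branches needed in part \ref{item:vV_1_def_from_M_infty|kappa_0}, the point is not that the strategy is definable, but that each such branch is the \emph{unique} $\Tt$-cofinal branch present in $\univ{\vV_1}$ (since $\delta_\infty$ is Woodin, hence regular, there); uniqueness, together with the local definability of short-tree strategy and of maximality over the parameter $\M_\infty|\kappa_0^{\M_\infty}$, is what makes the branch---and hence $\N=\M_\infty^{\M_\infty}|\delta_\infty^{\M_\infty}$ and then $e^{\vV_1}$---definable over the universe.

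Second, your third stage (above $\gamma^{\vV_1}$) invokes Lemma \ref{restr_of_extenders_are_there}, whose proof uses the full proper-class $\M_\infty$ as a predicate: one must read off $F^{\M_\infty||\nu^*}$ for arbitrarily large $\nu$. But at that point you only have $\M_\infty|\kappa_0^{+\M_\infty}$ and $e$, not all of $\M_\infty$, so Lemma \ref{restr_of_extenders_are_there} is not yet available to you. The paper fills this with a different maneuver: having computed $e$, one forms $\Ult(\univ{\vV_1},e)$, which by Lemma \ref{lem:vV_1_M_infty[*]_inter-def} is the universe of $\vV_1^{\M_\infty}$; adjoining the parameter $\M_\infty|\kappa_0^{\M_\infty}$ as a generic then recovers the universe of $\M_\infty$; and then Remark \ref{rem:E^M_def} applied to $\M_\infty$ yields $\es^{\M_\infty}$. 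With the full $\M_\infty$ and $e$ (hence $*$) in hand, Lemma \ref{lem:vV_1_M_infty[*]_inter-def} gives $\vV_1$.
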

\begin{proof}
	Part \ref{item:vV_1|delta_0_def}: Since $\vV_1|\delta_0^{\vV_1}=\M_\infty|\delta_0^{\M_\infty}$ and these have the same $V_{\delta_0^{\vV_1}}$, this is  an easy corollary of Remark \ref{rem:E^M_def} (and its proof).

	Part \ref{item:vV_1_def_from_M_infty|kappa_0}: Let $U$ be the universe of $\vV_1$. Recall that $U$ is closed under $\Sigma_{\M_\infty}$ for maximal trees $\Tt$ via $\Sigma_{\M_\infty,\sss}$. Since $\delta_\infty$ is Woodin in $\vV_1$, $\Sigma_{\M_\infty}(\Tt)$ is in fact the unique $\Tt$-cofinal branch in $\vV_1$,
	for such $\Tt$.
	Moreover, by
	the (local) definability of the short tree strategy and of maximality,
	$\M_\infty|\kappa_0^{\M_\infty}$ can define the collection of trees in
	$\M_\infty|\kappa_0^{\M_\infty}$ which are maximal via $\Sigma_{\M_\infty,\sss}$.
	Therefore working in $U$,
	from parameter $\M_\infty|\kappa_0^{\M_\infty}$,
	$\N=\M_\infty^{\M_\infty}|\delta_\infty^{\M_\infty}$ can be computed.
	But then the branch through the tree from $\M_\infty|\delta_\infty$ to $\N$
	can be computed, and hence also $e^{\vV_1}$ also.
	Therefore we can compute $\Ult(U,e^{\vV_1})$, which is the universe of $\Ult(\vV_1,e^{\vV_1})=\vV_1^{\M_\infty}$ (by Lemma \ref{lem:vV_1_M_infty[*]_inter-def}).
	But $\vV_1^{\M_\infty}[\M_\infty|\kappa_0^{\M_\infty}]\ueq\M_\infty$,
	so we can identify the universe of $\M_\infty$, so by Remark \ref{rem:E^M_def},
	we can identify $\M_\infty$ itself. But from $e$, we therefore compute $*$,
	hence $\M_\infty[*]$, and hence $\vV_1$,
	by Lemma \ref{lem:vV_1_M_infty[*]_inter-def}.
\end{proof}

\begin{lem}\label{lem:vV_def_in_vV[g]_from_inseg}
	Let $\vV$ be a non-dropping $\Sigma_{\vV_1}$-iterate of $\vV_1$.
	Let $\lambda\in\OR$  with $\lambda\geq\delta_0^{\vV}$ and
	$\PP\in\vV|\lambda^{+\vV}$ and $g$ be $(\Vv,\PP)$-generic.\footnote{When we deal with
		such generic extensions of such $\vV$ (here and later), we allow $g$ to appear in some set-generic
		extension of $V$, as opposed to demanding $g\in V$.}
	Then $\vV$ is definable over the universe of $\vV[g]$ from the parameter
	$x=\vV|\lambda^{+\vV}$.
\end{lem}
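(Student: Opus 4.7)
The plan is to combine ground definability with a generalization of Lemma \ref{lem:vV_1_def_from_M_infty|kappa_0} from $\vV_1$ to $\vV$, in two steps: first identify the \emph{universe} of $\vV$ inside $\vV[g]$, and then, working within that universe, recover the full structure (i.e.~$\es^\vV$) from $x$.

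For the first step, since $\PP \in \vV|\lambda^{+\vV}$ has $\vV$-cardinality at most $\lambda$, $\vV$ is a ground of $\vV[g]$ for a poset of size $\leq\lambda$ in $\vV$. The Woodin--Laver ground definability theorem (as invoked in the proof of Lemma \ref{lem:Sigma_sss_in_M[g]_definability}) then yields a definition of the universe $U$ of $\vV$ over the universe of $\vV[g]$, with parameter $\pow(\lambda)^{\vV}$, hence with parameter $x = \vV|\lambda^{+\vV}$.

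For the second step, one works inside $U$ with parameter $x$ and recovers $\es^\vV$ by induction on $\nu \in \OR$. For $\nu \leq \lambda^{+\vV}$, the segment $\vV||\nu$ is directly coded into $x$, since $x$ carries the predicate $\es^\vV\rest\lambda^{+\vV}$, and in particular one reads off $\vV||\gamma^\vV$ together with its active long extender $e^\vV$ and the initial segment $\M_\infty^\vV|\kappa_0^{\M_\infty^\vV} \in x$ (this is where the choice $\lambda \geq \delta_0^\vV$ enters, since $\gamma^\vV < \delta_0^{+\vV} \leq \lambda^{+\vV}$). For $\nu > \lambda^{+\vV}$ the recovery of $\es^\vV_\nu$ from $\es^\vV\rest\nu$ splits into two cases, exactly as in Lemma \ref{lem:vV_1_def_from_M_infty|kappa_0} and Lemma \ref{local_correspondence}: if $\crit(\es^\vV_\nu) > \delta_0^\vV$, the extender is short, the restriction $\es^\vV_\nu\rest\OR$ is lightface definable over $U$ via the analogue of Remark \ref{rem:E^M_def} applied to $\vV$, and Lemma \ref{lem:F^P_from_F^P_rest_OR} reassembles the full $\es^\vV_\nu$; if $\crit(\es^\vV_\nu) = \delta_0^\vV$, the extender is long and codes (by Remark \ref{rem:vV_1_as_strategy_premouse}) the correct branch of the tree on $\M_\infty^\vV|\delta_0^{\M_\infty^\vV}$ leading to an iterate, which is computed by the restriction of $\Sigma_{\M_\infty^\vV}$ to trees based on $\M_\infty^\vV|\delta_0^{\M_\infty^\vV}$; and this restriction is definable over $U$ (with no additional parameter needed beyond the base) by the exact analogue of Lemma \ref{V_1_knows_how_to_iterate_Minfty_up_to_its_woodin}, using that $\delta_0^\vV$ is Woodin in $\vV$ so that the relevant cofinal branches are unique in $\vV$.

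The main obstacle is thus to justify running the proofs of Remark \ref{rem:E^M_def}, Lemma \ref{V_1_knows_how_to_iterate_Minfty_up_to_its_woodin}, and Lemma \ref{lem:vV_1_def_from_M_infty|kappa_0} verbatim with $\vV$ replacing $\vV_1$ and $\M_\infty^\vV$ replacing $\M_\infty$. For Remark \ref{rem:E^M_def} this is routine since $\vV$ knows a fragment of its own strategy (by the analogue of Lemma \ref{V_1_knows_how_to_iterate_Minfty_up_to_its_woodin}) sufficient to localize its extender sequence. For the definability of $\Sigma_{\M_\infty^\vV,\sss}$ and of maximal-branch choice, one uses that $\vV$ is a $\Sigma_{\vV_1}$-iterate, so $\M_\infty^\vV$ inherits the short-tree strategy and the analogue of Lemma \ref{M_knows_how_to_iterate_Minfty_up_to_its_woodin} by elementarity along the relevant iteration; since the parameter $\M_\infty^\vV|\kappa_0^{\M_\infty^\vV}$ used in the bootstrap of Lemma \ref{lem:vV_1_def_from_M_infty|kappa_0} lies in $x$, the passage from $\vV_1$ to $\vV$ requires no new ideas. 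Everything done in $\vV$ is then translated back to a definition over $\vV[g]$ with parameter $x$ via the Step 1 definition of $U$.
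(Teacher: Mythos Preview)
Your Step~1 is exactly what the paper does. The paper's entire proof is in fact a one-liner: ``immediate corollary of Lemma~\ref{lem:vV_1_def_from_M_infty|kappa_0} and ground definability (from the parameter $\pow(\PP)$).'' That is, once the universe $U$ of $\vV$ has been recovered from $x$ via Woodin--Laver, one simply applies Lemma~\ref{lem:vV_1_def_from_M_infty|kappa_0} part~\ref{item:vV_1_def_from_M_infty|kappa_0}, transferred to $\vV$ by elementarity of the iteration map $i_{\vV_1\vV}$; the parameter $\M_\infty^\vV|\kappa_0^{\M_\infty^\vV}$ is in $\vV|\delta_0^\vV$, hence in $x$.

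Your Step~2, however, has a gap. The level-by-level recovery you describe is \emph{not} how Lemma~\ref{lem:vV_1_def_from_M_infty|kappa_0} is proved, and your short-extender case appeals to ``the analogue of Remark~\ref{rem:E^M_def} applied to $\vV$'', i.e., lightface definability of $\es^\vV$ over $U$. But the paper explicitly states (just before Lemma~\ref{lem:vV_1_def_from_M_infty|kappa_0}) that it is \emph{not known} whether $\vV_1$ can define its own extender sequence over its universe without parameters; Remark~\ref{rem:E^M_def} is specific to the premouse $M$ and relies on \cite{V=HODX}, which does not apply to Vsps. So this step is unjustified, and if it held you would not need the induction at all. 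The actual proof of Lemma~\ref{lem:vV_1_def_from_M_infty|kappa_0} is global, not level-by-level: from the parameter one recovers $e^{\vV_1}$ (via the unique-branch argument), then $\Ult(U,e^{\vV_1})$, then the universe of $\M_\infty$, then (via Remark~\ref{rem:E^M_def} applied to the \emph{premouse} $\M_\infty$) $\M_\infty$ itself, then $*$, and finally $\vV_1$ through Lemma~\ref{lem:vV_1_M_infty[*]_inter-def}. Your third paragraph gestures at running this verbatim for $\vV$, which is the right thing to do --- but then your second paragraph should be replaced by a direct citation of Lemma~\ref{lem:vV_1_def_from_M_infty|kappa_0}, not by a separate level-by-level argument.
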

\begin{proof}
	This is an immediate corollary of Lemma \ref{lem:vV_1_def_from_M_infty|kappa_0}
	and ground definability (from the parameter $\pow(\PP)$).
\end{proof}

\begin{lem}\label{lem:vV_1_short_tree_strategy}\ Let $\vV$ be a
	$\Sigma_{\vV_1}$-iterate of $\vV_1$. Let $\lambda\geq\delta_0^{\vV}$, let $\PP\in\vV|\lambda$ and $g$ be $(\vV,\PP)$-generic. Let $x=\vV|\lambda^{+\vV}$.
	Then:
	\begin{enumerate}
		\item\label{item:vV_1_closed_under_dsr_sts} $\vV[g]$ is closed under $\Sigma^{\dsr}_{\vV,\sss}$
		and $\Sigma^{\dsr}_{\vV,\sss}\rest\vV[g]$ is  definable over the universe of $\vV[g]$
		from the parameter $x$
		(hence lightface $\vV$-definable if $g=\emptyset$).
		\item \label{item:vV_1_dsr_etc_definable} The notions
		\begin{enumerate}
			\item \emph{dsr},
			\item \emph{$\delta_1$-short/$\delta_1$-maximal
				dsr via $\Sigma_{\vV}$},
			\item  \emph{dsr-P-suitable}, and
			\item  \emph{dl-relevant},
		\end{enumerate}
		are each definable over $\vV[g]$ from $x$
		(hence lightface $\vV$-definable if $g=\emptyset$).
		\item\label{item:P-suitable_delta_1-max} For each $\delta_1$-maximal (hence dsr) P-suitable tree $\Tt\in\vV[g]$, as witnessed by $U$, letting $\bar{\Tt}$ be the equivalent tree
		on $\bar{\vV}=\cHull_1^{\vV}(\mathscr{I}^{\vV}\cup\delta_1^{\vV})$ (which is via $\Sigma_{\bar{\vV}}$)
		and $b=\Sigma_{\bar{\vV}}(\bar{\Tt})$, we have \[M^{\bar{\Tt}}_b=\cHull_1^{\mathscr{P}^{U,g}(M(\Tt))}(\mathscr{I}^{U}\cup\delta(\Tt)). \]
		Therefore letting $\eta$ be least such that $\vV$ is $\eta$-sound (so $\eta\leq\kappa_1^{\vV}$), if $\eta\leq\delta=\delta(\Tt)$ then $M^{\bar{\Tt}}_b=\mathscr{P}^{U,g}(M(\Tt))$
		and the function $\Tt\mapsto M^{\bar{\Tt}}_b$ \tu{(}with  domain all such $\Tt$ with $\eta\leq\delta(\Tt)$\tu{)} is definable over the universe
		of $\vV[g]$ from the parameter $(x,\eta)$ (hence from $\eta$ over $\vV$ if $g=\emptyset$).
		\item\label{item:dl-relevant_density} Suppose $g=\emptyset$.
		Then for each $\delta_1$-maximal  tree $\Tt\in\vV$ via $\Sigma_{\vV}$,
		with $\lh(\Tt)<\kappa_1^{\vV}$,  letting
		$\bar{\Tt}$ be as in part \ref{item:P-suitable_delta_1-max},
		there is a dl-relevant tree $\Xx\in\vV$, on $\vV$, and such that, letting $\bar{\Xx}$ be likewise,
		then  $M^{\bar{\Xx}}_c$ is a $\Sigma_{M^{\bar{\Tt}}_b}$-iterate of $M^{
			\bar{\Tt}}_b$, where $b=\Sigma_{\bar{\vV}}(\bar{\Tt})$ and $c=\Sigma_{\bar{\vV}}(\bar{\Xx})$.

	\end{enumerate}
	Moreover, the definability is uniform  in $\vV,x$, and hence preserved by iteration maps.
\end{lem}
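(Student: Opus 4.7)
My plan is to mimic the analysis in \S\ref{section-short-tree-strategy-for-M} for $M$, but now carried out internally to iterates $\vV$ of $\vV_1$ using the modified P-construction from Definition \ref{dfn:vV_1_P-con} and the correctness result Lemma \ref{lem:general_P-correctness}. By Lemma \ref{lem:vV_def_in_vV[g]_from_inseg}, $\vV$ is definable over $\vV[g]$ from the parameter $x$, so throughout I will freely use $\vV$ as if it were a predicate available in $\vV[g]$. The uniformity of all definitions in $\vV,x$, and hence preservation by iteration maps, will be visible from the construction.

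For part \ref{item:vV_1_dsr_etc_definable}, the only non-trivial clause is the demand that $\Tt$ be via $\Sigma_{\vV}$ in the definitions of dsr-P-suitable and dl-relevant. I will therefore bootstrap: I first establish parts \ref{item:vV_1_closed_under_dsr_sts} and \ref{item:P-suitable_delta_1-max} for trees that are a priori only required to be short-normal, dsr and to have the structural features of P-suitability (or dl-relevance) that do not invoke $\Sigma_{\vV}$. The computation I give will then, as a by-product, be the unique possible extension of $\Sigma_{\vV}^{\dsr}$; this lets me read off definability of the ``via $\Sigma_{\vV}$'' clause.

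The core step is part \ref{item:vV_1_closed_under_dsr_sts}. Given a dsr tree $\Tt\in\vV[g]$, working in $\vV[g]$ I run a genericity inflation $\Xx$ of $\Tt$ just as in \S\ref{section-short-tree-strategy-for-M}, but now set up to make $U|\delta$ generic for the extender algebra of $M(\Xx)$ at $\delta_1^{\vV'}$, where $U$ is chosen so that $\Tt\in U[g]$ (taking $U=\vV$ when $\Tt\in\vV[g]|\kappa_1^{\vV}$, and otherwise $U=\Ult(\vV,E_1)$ for a suitable short total $E_1$ with $\crit(E_1)=\kappa_1^{\vV}$). The dsr hypothesis guarantees condition \ref{item:Woodins_bounded} of Definition \ref{dfn:P-illustrative} is preserved by the inflation, so $*$-translation is never needed, and  the Q-structures that guide the short phase of $\Xx$ are exactly the proper class P-constructions $\mathscr{P}^{U,g}(M(\Xx\rest\lambda))$ of Definition \ref{dfn:vV_1_P-con} cut at the least ordinal not Woodin. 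Lemma \ref{lem:general_P-correctness} (applied to dsr-P-suitable initial segments) says these P-constructions produce the correct Q-structures (modulo passing to the $\delta$-sound hull on the $\vV$-side, which is harmless for branch choice). Minimal hull condensation for $\Sigma_{\vV}$ (Lemma \ref{lem:Sigma_vV_1_vshc}, propagated to $\vV$ by iteration) then shows that the branch determined by $\Xx$ on the $\Tt$-side is the correct branch of $\Tt$. This gives closure and exhibits the branch as a first-order function of $\Tt$, $x$, and $\vV$, hence definable over $\vV[g]$ from $x$.

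For part \ref{item:P-suitable_delta_1-max}, if $\Tt\in\vV[g]$ is $\delta_1$-maximal P-suitable as witnessed by $U$, then Lemma \ref{lem:general_P-correctness}\ref{item:Pbar=Qbar}--\ref{item:delta_is_U-regular} yields
\[ M^{\bar{\Tt}}_b = \cHull_1^{\mathscr{P}^{U,g}(M(\Tt))}\bigl(\mathscr{I}^{U}\cup\delta(\Tt)\bigr), \]
and when $\eta\leq\delta(\Tt)$ the hull collapses trivially, so $M^{\bar{\Tt}}_b=\mathscr{P}^{U,g}(M(\Tt))$. Since $\mathscr{P}^{U,g}(M(\Tt))$ is read off level-by-level (Remark \ref{rem:lev-by-lev_P-con}) from $U,g,M(\Tt)$ and $\jbar$, all of which are definable in $\vV[g]$ from $(x,\eta)$ once $\vV$ is known, the function $\Tt\mapsto M^{\bar{\Tt}}_b$ is $\vV[g]$-definable from $(x,\eta)$ as claimed. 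With part \ref{item:vV_1_closed_under_dsr_sts} in hand, the ``via $\Sigma_\vV$'' clauses in part \ref{item:vV_1_dsr_etc_definable} are now also first-order, completing \ref{item:vV_1_dsr_etc_definable}. For part \ref{item:dl-relevant_density}, given $\Tt\in\vV$ $\delta_1$-maximal with $\lh(\Tt)<\kappa_1^{\vV}$, I absorb $\Tt$ by a genericity inflation $\Xx$ formed above a suitably chosen long extender $E_0\in\es^{\vV}$ with $\gamma^{\vV}<\lh(E_0)$ and with $U=\vV$ (i.e., $E_1=\emptyset$); the inflation is done exactly as in Lemma \ref{absorbing-a-tree-by-a-genericity-tree}, adapted to the $\delta_1$-setting. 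The resulting $\Xx$ is dl-relevant by construction, and the normalization-style tree embedding $\Tt\conc b\hookrightarrow\Xx\conc c$ guarantees that $M^{\bar{\Xx}}_c$ is a $\Sigma_{M^{\bar{\Tt}}_b}$-iterate of $M^{\bar{\Tt}}_b$.

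The main obstacle, and the reason this is more than a routine transcription of \S\ref{section-short-tree-strategy-for-M}, is the presence of the long extenders on the $\vV$-side and the overlap of the strong cardinal $\kappa_0^N$ on the corresponding $\Mswsw$-like cover $N$: it is exactly this that forces the use of the modified P-construction, whose correctness is the content of Lemmas \ref{lem:modified_P-con_works} and \ref{lem:general_P-correctness}. Granted those lemmas, the local definability, the extension to generic extensions, and the uniformity in $\vV,x$ all follow by the same book-keeping used for $M$ in Lemma \ref{lem:Sigma_sss_in_M[g]_definability}.
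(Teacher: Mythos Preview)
Your approach matches the paper's: part \ref{item:P-suitable_delta_1-max} is read off Lemma \ref{lem:general_P-correctness}, parts \ref{item:vV_1_closed_under_dsr_sts} and \ref{item:vV_1_dsr_etc_definable} run the genericity-inflation template of \S\ref{section-short-tree-strategy-for-M} with the modified P-construction supplying Q-structures, and part \ref{item:dl-relevant_density} falls out of the same process in the $\delta_1$-maximal case.

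One step the paper makes explicit that you leave implicit in your treatment of part \ref{item:vV_1_closed_under_dsr_sts}: before inflating, one first replaces $\Tt=\Tt_0\conc\Tt_1$ by $\widetilde{\Tt}=\widetilde{\Tt}_0\conc\widetilde{\Tt}_1$, where $\widetilde{\Tt}_0$ is the successor-length tree induced by a single long $E_0\in\es^{\vV}$ with $\Tt_0\in\vV|\lgcd(\vV||\lh(E_0))$, and $\widetilde{\Tt}_1$ is the minimal $k$-copy of $\Tt_1$ along the iteration map $k:M^{\Tt_0}_\infty\to M^{\widetilde{\Tt}_0}_\infty$. This reduction is needed because P-suitability (Definition \ref{dfn:vV_1_P-suitable}), and hence Lemma \ref{lem:general_P-correctness}, require the lower component to correspond to a single such $E_0$; a general dsr tree need not have this form. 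You do supply exactly this move for part \ref{item:dl-relevant_density}, so the omission is only local. Also, a terminological slip: Lemma \ref{lem:Sigma_vV_1_vshc} gives minimal \emph{inflation} condensation for $\Sigma_{\vV_1}$, not minimal hull condensation; mic (propagated to $\Sigma_{\vV}$ via \cite[Theorem 10.2]{fullnorm}) is precisely what the inflation argument uses.
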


\begin{proof}[Proof Sketch]
	Part \ref{item:P-suitable_delta_1-max} is an immediate consequence
	of Lemma \ref{lem:general_P-correctness}.

	Parts  \ref{item:vV_1_closed_under_dsr_sts}, \ref{item:vV_1_dsr_etc_definable}:
	For simplicity we assume $g=\emptyset$, but the general case is very similar.
	Let $\Tt\in\vV$ be dsr of limit length and via $\Sigma_{\vV}$;
	we will determine whether $\Tt$ is $\delta_1$-short or $\delta_1$-maximal,
	and if $\delta_1$-short, compute $\Sigma_{\vV}(\Tt)$.  Let $\Tt=\Tt_0\conc\Tt_1$ with lower and upper components $\Tt_0,\Tt_1$ respectively. Let $E_0\in\es^{\vV}$ be long
	with $\gamma^{\vV}<\lh(E_0)$ and $\Tt_0\in \vV|\lambda'$ where $\lambda'=\lgcd(\vV||\lh(E_0))$. Let $\widetilde{\Tt}=\widetilde{\Tt_0}\conc\widetilde{\Tt_1}$
	where $\widetilde{\Tt_0}$ is the successor length tree corresponding to $E_0$,
	and letting $k:M^{\Tt_0}_\infty\to M^{\widetilde{\Tt_0}}_\infty$ be the iteration map
	(recall this is known to $\vV$), $\widetilde{\Tt_1}$ is the minimal $k$-copy of $\Tt_1$
	(so $\widetilde{\Tt}$ is also via $\Sigma_{\vV}$, by \cite[***10.3, 10.4]{fullnorm_v3}).
	It suffices to compute $\Sigma_{\vV}(\widetilde{\Tt})$. So instead assume that
	$\Tt$ is itself in the form of $\widetilde{\Tt}$.

	Let $E_1\in\es^{\vV}$
	be $\vV$-total with $\crit(E_1)=\kappa_1^{\vV}$ and $\Tt'\in \vV_1|\lambda$ where
	$\lambda=\lambda(E_1)$ and
	and $\Tt'$ is the tree on $\vV|\delta_1$ equivalent to $\Tt$.
	Let  $U=\Ult(\vV,E_1)$.
	Let $\eta<\lambda$
	be a strong $\delta_0^{\vV}$-cutpoint and cardinal of $U$ with $\Tt'\in U|\eta$. Now working in $U$,
	form a minimal inflation $\Xx$ of $\Tt_1'$, first iterating the least measurable $>\delta_0^{M^{\Tt_0}_\infty}$ out to $\eta$, and then folding in $\es^U$-genericity iteration.
	Now  $\Xx$ is dsr (the issue being that we do not introduce new Woodin cardinals
	below the index of some $E^\Xx_\beta$,  condition \ref{item:no_new_Woodins_below_index}
	in the definition of \emph{dsr} (see \ref{dfn:dsr})), because $\Tt$ is dsr and the inflationary extenders are only
	being used for genericity iteration (and the linear iteration at the start).
	The remaining details of the minimal inflation and overall process
	are as sketched in \S\ref{section-short-tree-strategy-for-M}
	(but the minimal variant, which is essentially the same), using that $\Sigma_{\vV}$ has minimal inflation condensation, by
	Lemma \ref{lem:Sigma_vV_1_vshc} and \cite[***Theorem 10.2]{fullnorm_v3}.

	Part \ref{item:dl-relevant_density} follows from the proof of part \ref{item:vV_1_dsr_etc_definable}
	in the case that $\Tt$ is $\delta_1$-maximal,
	since by \cite{fullnorm_v3}, both the conversion from $\Tt$ to $\widetilde{\Tt}$
	and minimal inflation yields a correct iterate.
\end{proof}

\begin{dfn}\label{dfn:Sigma_P,sh^dsr^vV}
	Let $\vV$ be a non-dropping $\Sigma_{\vV_1}$-iterate of $\vV_1$,
	and $g$ be $\vV$-generic.
	Let $\vV^-=\vV|\delta_1^{\vV}$.
	Then $\Sigma_{\vV^-}$ denotes the strategy
	for $\vV^-$ induced by $\Sigma_{\vV}$,
	and $\Sigma_{\vV^-,\sss}$ denotes its restriction
	to $\delta_1$-short trees, and $\Sigma^{\dsr}_{\vV^-,\sss}$ its
	restriction dsr-$\delta_1$-short trees.\footnote{Note that if $\vV,\vV'$
		are both such and $\vV^-=(\vV')^-$,
		then we get the same strategy for $\vV^-$
		induced by $\Sigma_\vV$ and $\Sigma_{\vV'}$.} Also
	if $R$ is a non-dropping $\Sigma_{\vV_1}$-iterate of $\vV_1$,
	and $R^-\in\vV[g]$ is a $\Sigma_{\vV^-}$-iterate of $\vV^-$,
	then $\Sigma^{\vV[g]}_{R^-,\sss}$ and $(\Sigma^{\dsr}_{R^-,\sss})^{\vV[g]}$ denote
	the restrictions of $\Sigma_{R^-,\sss}$ and $\Sigma^{\dsr}_{R^-,\sss}$
	to trees in $\vV[g]$.

	Let $\Tt\in\vV[g]$ be P-suitable for $\vV[g]$, as witnessed by $U$.
	Let $R=\mathscr{P}^{U,g}(M(\Tt))$ (so $R$ is a $\vV[g]$-class and is a $\Sigma_{\vV_1}$-iterate of $\vV_1$). Then $\Sigma_{R,\sss}^{\vV[g]}$ and $(\Sigma^{\dsr}_{R,\sss})^{\vV[g]}$ denote the restriction of $\Sigma_R$ to $\delta_1$-short and dsr-$\delta_1$-short trees in $\vV[g]$, respectively.\footnote{So in the case that $\Tt$ is P-suitable, $\Sigma^{\vV[g]}_{M(\Tt),\sss}$ and $\Sigma^{\vV[g]}_{R,\sss}$ are equivalent, as are $(\Sigma^{\dsr}_{M(\Tt),\sss})^{\vV[g]}$
		and $(\Sigma^{\dsr}_{R,\sss})^{\vV[g]}$.}
\end{dfn}

\begin{lem}\label{lem:vV_1_iterates_short_tree_strategy} Let $\vV$ be a
	$\Sigma_{\vV_1}$-iterate of $\vV_1$, $\lambda\geq\delta_0^{\vV}$,
	$\PP\in\vV|\lambda$ and $g$ be $(\vV,\PP)$-generic.	Let $x=\vV|\lambda^{+\vV}$. Let $\Tt\in\vV[g]$ be a $\delta_1$-maximal
	tree on $\vV^-$, via $\Sigma_{\vV^-}$, and if $\Tt$ is P-suitable for $\vV[g]$ then let
	$R=\mathscr{P}^{U,g}(M(\Tt))$, where $U$ is as above.
	Then:
	\begin{enumerate}
		\item
		$\vV[g]$ is closed under  $(\Sigma^{\dsr}_{M(\Tt),\sss})^{\vV[g]}$
		and $(\Sigma^{\dsr}_{M(\Tt),\sss})^{\vV[g]}$ is definable over $\vV[g]$ from $(\Tt,x)$, uniformly in $\Tt$; hence likewise for $(\Sigma^{\dsr}_{R,\sss})^{\vV[g]}$
		for $\delta_1$-maximal P-suitable trees $\Tt\in\vV[g]$.
		\item The notions
		\begin{enumerate}[label=--]
			\item \emph{dsr}, and
			\item  \emph{$\delta_1$-short/$\delta_1$-maximal dsr via $\Sigma_{M(\Tt)}$},
		\end{enumerate}
		applied to trees in $\vV[g]$ on $M(\Tt)$,
		are definable over $\vV[g]$ from $(\Tt,x)$, uniformly in $\Tt$.
		\item Suppose $\Tt$ is P-suitable. Let $\Tt'$ be the tree on $\vV^-$
		iterating out to $M(\Tt)=R^-$. Let $\Uu'\in\vV[g]$ be on $R^-$ and via $(\Sigma^{\dsr}_{R^-,\sss})^{\vV[g]}$. Then the stack $(\Tt',\Uu')$ normalizes to a tree on $\vV^-$ via $(\Sigma^{\dsr}_{\vV^-,\sss})^{\vV[g]}$.
	\end{enumerate}

	Moreover, the definability is uniform in $\vV,x$, and so preserved by the iteration maps.
\end{lem}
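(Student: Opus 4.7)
The plan is to reduce everything to the base case Lemma \ref{lem:vV_1_short_tree_strategy} via normalization of stacks, exploiting minimal hull/inflation condensation of $\Sigma_\vV$ (Fact \ref{fact:Sigma_properties} together with \cite[Theorems 10.2, 10.4]{fullnorm}). First I would let $\Tt^+$ denote $\Tt$ together with its cofinal branch $b=\Sigma_{\vV^-}(\Tt)$, so that its final model is the non-dropping $\Sigma_{\vV_1}$-iterate $R$ of $\vV^-$ with $R^-=M(\Tt)$. Given a dsr-short tree $\Uu\in\vV[g]$ on $M(\Tt)$, I would reinterpret $\Uu$ as an above-$\delta_1^R$ tree on $R$ based on $R^-$, and form the Schlutzenberg normalization $\Ww$ of the stack $(\Tt^+,\Uu)$ as a normal tree on $\vV^-$. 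By the commuting/normalization properties of $\Sigma_\vV$, $\Ww$ is via $\Sigma_{\vV^-}$, and by the ``unzipping'' provided by normalization, the branch $\Sigma_{M(\Tt)}(\Uu)$ (and the question of whether $\Uu$ is $\delta_1$-short or $\delta_1$-maximal) can be recovered locally from the corresponding data for $\Ww$.

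Next I would verify that $\Ww$ is itself dsr on $\vV^-$. Its lower/upper component decomposition is inherited from that of $\Tt$ with the upper component enlarged by $\Uu$'s (copy-mapped) extenders, and condition \ref{item:no_new_Woodins_below_index} of Definition \ref{dfn:dsr} is preserved because normalization only rearranges extenders already present via the shift lemma and copy maps, without manufacturing fresh ones; in particular, the only extenders overlapping the space of $\Ww$'s extenders remain the long extenders from the $\vV_1$-hierarchy. Hence Lemma \ref{lem:vV_1_short_tree_strategy} applies to $\Ww$, yielding $\Sigma_{\vV^-}(\Ww)$ in a manner definable over $\vV[g]$ from $x$. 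Combined with the previous paragraph, this gives parts (1) and (2): both closure of $\vV[g]$ under $(\Sigma^{\dsr}_{M(\Tt),\sss})^{\vV[g]}$ and its definability over $\vV[g]$ from $(\Tt,x)$, uniformly in $\Tt$, as well as the definability of ``dsr'' and of the short/maximal distinction for dsr trees on $M(\Tt)$.

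For the P-suitable case, I would invoke Lemma \ref{lem:general_P-correctness} to identify $R=\mathscr{P}^{U,g}(M(\Tt))$ as (or, when $\vV$ is not $\delta_1^\vV$-sound, as a canonical above-$\delta(\Tt)$ $\Sigma_{\bar Q}$-iterate of) the correct iterate of $\vV^-$ computed by $\Sigma_{\vV^-}$. Thus $(\Sigma^{\dsr}_{R,\sss})^{\vV[g]}$ coincides with $(\Sigma^{\dsr}_{M(\Tt),\sss})^{\vV[g]}$ translated through the P-construction, and its definability from $(\Tt,x)$ follows from that already obtained. Part (3) is then essentially the assertion that the normalization of $(\Tt',\Uu')$ is via $(\Sigma^{\dsr}_{\vV^-,\sss})^{\vV[g]}$, which is precisely the content of the normalization argument applied here --- with $\Tt'$ playing the role of $\Tt^+$ above and $\Uu'$ the role of $\Uu$.

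The main obstacle will be verifying that the normalization $\Ww$ of a stack of dsr trees remains dsr --- specifically that condition \ref{item:no_new_Woodins_below_index} is preserved and that the components decomposition remains of the required form after normalization. This boils down to the observation that in our setting, the only extenders in these trees that can overlap their exchange ordinals are the long extenders from the $\vV_1$-hierarchy (with critical point $\delta_0$), and normalization (by the shift-lemma description of its extenders) neither creates new Woodin cardinals below tree indices nor inserts short extenders with critical point $\leq \delta_0$ into the wrong place. Once this is in hand, the uniformity of the definability (and hence its preservation by iteration maps) is inherited from the corresponding uniformity in Lemma \ref{lem:vV_1_short_tree_strategy}, since every ingredient of the normalization --- copy maps, tree embeddings, and the short-tree computation on $\vV^-$ --- is lightface definable over $\vV[g]$ from $(\Tt,x)$.
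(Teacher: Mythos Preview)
Your normalization approach has a real gap. You form $\Tt^+=\Tt\conc b$ with $b=\Sigma_{\vV^-}(\Tt)$ and then carry out the normalization of $(\Tt^+,\Uu)$ inside $\vV[g]$, but $\Tt$ is $\delta_1$-\emph{maximal}, and for exactly the reason that branches through trees in $\mathbb U$ are not in $M$ (see the remark preceding Lemma \ref{lem:vV_1_preceq_props}, and ultimately \cite{gen_kunen_incon}), the branch $b$ is \emph{not} in $\vV[g]$: having it would give the iteration map $\vV\to R$ between two grounds of $\vV[g]$. So $\Tt^+$ is simply not available there. Moreover, each intermediate tree $\Ww_\alpha$ arising in the normalization process is itself of successor length with final branch a (copy of) $b$, and the corresponding limit stage of $\Ww$ is again $\delta_1$-maximal --- so $\Ww$ is not literally via $\Sigma^{\dsr}_{\vV^-,\sss}$, and you cannot just feed $\Ww$ to Lemma \ref{lem:vV_1_short_tree_strategy}. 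Your last paragraph asserts that ``every ingredient of the normalization is definable over $\vV[g]$ from $(\Tt,x)$'', but the missing ingredient is precisely $b$.

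The paper leaves the proof implicit, but the natural route avoids $b$ entirely. In the P-suitable case, $R=\mathscr{P}^{U,g}(M(\Tt))$ is itself a non-dropping $\Sigma_{\vV_1}$-iterate (Lemma \ref{lem:general_P-correctness}), and $\vV[g]$ \emph{is} a set-generic extension of $R$: $(\vV|\delta,g)$ is extender-algebra generic over $M(\Tt)$, and the P-construction correspondence gives $R[(\vV|\delta,g)]=^*\vV[g]$ above $\delta$. One therefore applies Lemma \ref{lem:vV_1_short_tree_strategy} directly \emph{to $R$} with this generic, obtaining closure of $\vV[g]$ under $(\Sigma^{\dsr}_{R,\sss})^{\vV[g]}$ and its definability from $R|\lambda^{+R}$ for suitable $\lambda$; since $R$ is computed from $(\Tt,x)$ via P-construction, the definability from $(\Tt,x)$ follows. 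The non-P-suitable case reduces to this via part \ref{item:dl-relevant_density} of Lemma \ref{lem:vV_1_short_tree_strategy}: lift $\Uu$ through the iteration map $M(\Tt)\to M(\Xx)$ to a dominating dl-relevant $\Xx$ and pull back using minimal inflation condensation. Part (3) is then a consequence of the normalization calculus in \cite{fullnorm} together with the fact that the $\delta_1$-maximal intermediate stages of the normalized tree can themselves be handled by the P-construction methods just established.
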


Lemma \ref{lem:general_P-correctness} suffers from a significant drawback,
which is that it is restricted to dsr trees. In \cite{*-trans_add}
there is a generalization of this to arbitrary trees, but this
involves a further modification of the P-construction, given
by merging the preceding methods with $*$-translation.
We now summarize the key consequences of this,
also  proven in \cite{*-trans_add},
which we will need later.

\begin{lem}\label{lem:vV_1_full_short_tree_strategy}\ Lemmas \ref{lem:vV_1_short_tree_strategy}
	and 	\ref{lem:vV_1_iterates_short_tree_strategy} both remain
	true after striking out every instance of the term \emph{dsr}.
\end{lem}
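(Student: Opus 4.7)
The plan is to reduce everything to a generalized version of Lemma \ref{lem:modified_P-con_works} (and its relative-to-iterates extension, Lemma \ref{lem:general_P-correctness}) in which the P-suitability hypothesis no longer requires the dsr constraint. Once that is in hand, the proofs of Lemmas \ref{lem:vV_1_short_tree_strategy} and \ref{lem:vV_1_iterates_short_tree_strategy} go through verbatim with \emph{dsr} erased: the definability of short-tree strategy in $\vV[g]$ was derived by locally reading off Q-structures from the modified P-construction together with minimal inflation/genericity iteration for maximal trees, and the arguments in the proofs of \ref{lem:vV_1_short_tree_strategy} and \ref{lem:vV_1_iterates_short_tree_strategy} never use the dsr condition for anything except to invoke Lemma \ref{lem:general_P-correctness}. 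So  I would  verify first that minimal inflation, as sketched in \S\ref{section-short-tree-strategy-for-M} and used in Lemma \ref{lem:vV_1_short_tree_strategy}, still produces trees to which the generalized P-construction applies, then quote the generalized correctness lemma exactly where \ref{lem:general_P-correctness} was previously quoted.

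The central technical step is to replace the modified P-construction of Definition \ref{dfn:modified_P-con} by its \emph{$*$-integrated} variant. Given a limit-length short-normal P-suitable $\Tt\in\vV[g]$ with $b=\Sigma_\vV(\Tt)$, and letting $\delta=\delta(\Tt)$, the Q-structure $Q=Q(\Tt,b)$ may now have extenders overlapping $\delta$ whose critical points lie strictly between $\delta_0^{M^\Tt_{\alpha_0}}$ and $\delta$ (short overlaps), not merely long overlaps with critical point equal to $\kappa_0^{M^\Tt_{\alpha_0}}$. The $*$-translation of $(Q,\Tt)$ converts the short-overlap part of $Q$ into a segment $Q^*\pins U$ above $M(\Tt)$ which has $\delta$ as a strong cutpoint, while long overlaps are still to be reconstructed from $F\rest\mathrm{OR}$ composed with $\jbar=i^\Tt_{0\alpha_0}\rest\delta_0^{\vV}$ as in Definition \ref{dfn:modified_P-con}\ref{item:overlap_rest_OR}. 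The generalized P-construction $\mathscr{P}^{U,g}(M(\Tt))$ is then built by alternating inverse $*$-translation (to recover short overlaps from segments of $U$) with the long-extender clause of Definition \ref{dfn:modified_P-con} (to recover long overlaps using $\jbar$). The level-by-level definability of this construction over $(U|\beta)[g]$, uniformly in $\beta>\delta$, is exactly as in Remark \ref{rem:lev-by-lev_P-con} augmented by the analogous uniform definability statement for $*$-translation from \cite{closson,*-trans_notes}.

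The main obstacle is the correctness argument. In Lemma \ref{lem:modified_P-con_works} the key step was a phalanx comparison of $\Phi(\Tt,Q)$ with $((M,\delta),U)$, above $\delta$ and modulo the small generic at $\delta$, in which the two overlap-types (long on the $\Uu$-side, short on the $\Vv$-side) could not match unless the phalanxes were already equal.  The new twist is that now short overlaps can appear on \emph{both} sides of the comparison (inside $Q$ and inside $Q^*\pins U$), and one must simultaneously check that the inverse $*$-translation of a least disagreement on the $U$-side matches the corresponding extender on the $Q$-side. The required lemma, which is the technical heart of \cite{*-trans_notes}, is that the $*$-translation commutes with least-disagreement comparison above $\delta$: a short extender $F$ with $\crit(F)\in(\delta_0,\delta)$ appearing in $Q$ at index $\alpha$ matches the inverse $*$-translation of $F^{U||\alpha'}$ at the corresponding $*$-index $\alpha'$; long extenders are handled precisely as before via the Claim in the proof of \ref{lem:modified_P-con_works} (using that $E\com\jbar\sub F$ on both sides forces matching iteration maps on $\M_\infty$). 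With these two matching properties in place, the proof that the comparison is trivial is the same dichotomy as before, using $\delta$-soundness of $Q$ (or indiscernibles when $\Tt$ is maximal) against $\kappa_0$-soundness of $U$.

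Finally, once the generalized correctness of $\mathscr{P}^{U,g}(M(\Tt))$ against $M$ is established, the lift to arbitrary non-dropping $\Sigma_{\vV_1}$-iterates $\vV$ is handled exactly as in the proof of Lemma \ref{lem:general_P-correctness}: one passes to $\vV^+$ via Lemma \ref{lem:generic_premouse_iterable}, translates $\Tt$ to the corresponding tree $\Tt^+$ on $\vV^+$, reduces to a phalanx comparison between $((\bar{\bar U}_0^+,\delta),\bar Q^+)$ and $((\bar{\bar{\vV}}{}^+,\delta),\bar U^+)$ using $\Phi_{\bar{\bar\vV}{}^+\supseteq\bar{\bar\vV}}$-iterability, and quotes the generalized Lemma \ref{lem:modified_P-con_works} internally. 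No further new ideas are needed for this lift; the only substantive content beyond the dsr case is the $*$-integration of the modified P-construction and the matching lemma for short-overlap comparisons described above.
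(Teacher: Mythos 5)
The paper does not prove this lemma: it is explicitly stated as a result whose proof is deferred to \cite{*-trans_notes}, with only the one-line description that the argument merges the modified P-construction with $*$-translation (cf.~the paper's abstract footnote, which notes that the "proofs presented here are not quite complete'' precisely because that integration is omitted). So there is no paper proof against which to compare yours in detail.

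That said, your sketch is consistent with the approach the paper indicates. You correctly identify the new phenomenon (once dsr is dropped, Q-structures at limit stages can carry \emph{short} extenders overlapping $\delta(\Tt)$ with critical points strictly between $\delta_0^{M^\Tt_{\alpha_0}}$ and $\delta(\Tt)$, not only the long overlap with critical point $\kappa_0^{M^\Tt_{\alpha_0}}$), and you correctly locate the fix (handle the short overlaps by inverse $*$-translation from segments of $U$, and the long overlaps by the $\jbar$-composition clause \ref{item:overlap_rest_OR} of Definition \ref{dfn:modified_P-con}, alternating level-by-level). Your identification of the key technical step -- that $*$-translation must be shown to commute with least-disagreement comparison above $\delta$, so that the phalanx comparison of Lemma \ref{lem:modified_P-con_works} remains trivial -- is exactly the content one expects \cite{*-trans_notes} to contain, and the lift to arbitrary iterates via Lemma \ref{lem:generic_premouse_iterable} and the $\bar{\bar\vV}^+$ phalanx comparison is as in Lemma \ref{lem:general_P-correctness}. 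You might also note, as the paper does after the lemma, that parts \ref{item:P-suitable_delta_1-max} and \ref{item:dl-relevant_density} of Lemma \ref{lem:vV_1_short_tree_strategy} are untouched by the generalization, since $\delta_1$-maximal trees are automatically dsr. The one caveat is that your sketch, like the paper's, leaves the $*$-translation details (and the precise statement of the commutation lemma) at the level of plausibility rather than verification; a self-contained proof would need the full machinery of \cite{closson}/\cite{*-trans_notes}, which the authors themselves judged too long to include.
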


Note that that parts
parts \ref{item:P-suitable_delta_1-max} and \ref{item:dl-relevant_density}
of  Lemma \ref{lem:vV_1_short_tree_strategy} are not actually modified
by striking out \emph{dsr}, because all $\delta_1$-maximal trees are dsr anyway.

\subsection{The second direct limit system}\label{subsec:second_dls}

We now define a system of uniform grounds for $\vV_1$, and the associated
Varsovian model $\vV_2$. This is
analogous to the construction of $\vV_1$ in \S\ref{section_first_varsovian},
albeit slightly more involved. For the most part it is similar,
and so we omit details and remarks which are like before.
We use the results of \S\ref{sec:dl-rel_sts_vV_1}, and in particular
the modified P-construction,
dsr $\delta_1$-short tree strategies,
etc.

\subsubsection{The external direct limit system ${\mathscr D}_1^{\ext}$}

\begin{definition}\label{second_defn_points_from_the_system}
	Let ${\mathbb U}_1$ be the $\vV_1$-class
	of all dl-relevant iteration trees (Definition \ref{dfn:dsr}).
	Define
	\[ d_1=\{\vV_1|\delta_1^{\vV_1}\}\cup\{\M(\Uu)\bigm|\Uu\in\mathbb{U}_1\text{ is non-trivial}\}.\]
	For $p\in d_1$, set $\P_p=\mathscr{P}^{\vV_1}(p)$ (P-construction as in Definition \ref{dfn:vV_1_P-con}).
	Write
	${\mathscr F}_1=\{\mathcal{P}_p\mid p\in d_1\}$.
	Define $\preceq$ on $d_1$ and on $\mathscr{F}_1$ and maps $\pi_{pq}=i_{\mathcal{P}_p\mathcal{P}_q}$ for $p\preceq q$ as in \S\ref{section_first_varsovian}.
\end{definition}

By Lemma \ref{lem:vV_1_short_tree_strategy},
$(d_1,\left<\P_p\right>_{p\in d_1})$
is lightface $\vV_1$-definable,
as are $\mathbb{U}_1$ and ${\mathscr F}_1$.

\begin{lemma}\label{lem:vV_2_preceq_props} $\preceq$ is a directed partial order, is lightface $\vV_1$-definable,
	and the associated embeddings commute:
	if $P\preceq Q\preceq R$ then $i_{QR}\com i_{PQ}=i_{PR}$.
\end{lemma}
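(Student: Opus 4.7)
The plan is to mirror the strategy of Lemma \ref{lem:vV_1_preceq_props}, but replacing the role of $M$ by $\vV_1$ and of the short tree strategy $\Sigma_{P,\sss}$ by its $\dsr$-restriction $\Sigma^{\dsr}_{P|\delta_1^P,\sss}$, which is available inside $\vV_1$ by Lemma \ref{lem:vV_1_iterates_short_tree_strategy}.

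First I would handle the definability. For $\bar P,\bar Q\in d_1$, we have $\bar P\preceq\bar Q$ iff the pseudo-comparison of $\mathcal{P}_{\bar P},\mathcal{P}_{\bar Q}$ (iterating with the $\dsr$-short tree strategies of each side and reading Q-structures off the opposite side) terminates with a $\dsr$-maximal tree $\Tt$ on $\mathcal{P}_{\bar P}$ having $M(\Tt)=\bar Q$, and with $\mathcal{P}_{\bar Q}$ not moving. By Lemma \ref{lem:vV_1_iterates_short_tree_strategy}, $\vV_1$ is closed under and definably computes all the relevant $\dsr$-short tree strategies and the associated modified P-constructions, so this whole procedure is lightface $\vV_1$-definable; hence so is $\preceq$. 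Reflexivity is trivial, antisymmetry follows from $\delta_1$-soundness of the iterates and standard uniqueness of iteration strategies, and transitivity together with commutativity of the iteration maps $i_{PQ}$ is a direct consequence of positionality and commutativity of $\Gamma=\Sigma^{\stk}$ from Fact \ref{fact:Sigma_properties} (applied to the tree $\bar{\Tt}$ on $\bar{\vV}$ which, via Lemma \ref{lem:general_P-correctness}, controls the maps $\pi_{\bar P\bar Q}=i_{\mathcal{P}_{\bar P}\mathcal{P}_{\bar Q}}$).

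For directedness, given $\bar P,\bar Q\in d_1$ witnessed by trees $\Tt,\Uu\in\mathbb{U}_1$, I would work in $\vV_1$ and perform a simultaneous pseudo-comparison of $\mathcal{P}_{\bar P}$ and $\mathcal{P}_{\bar Q}$, folded with an $\es^{\vV_1}$-(and $\es^{M}$-, via the canonical $\mathbb{L}$-name) extender algebra genericity iteration at the respective least Woodins above the current cutpoint, producing trees $\Vv$ on $\mathcal{P}_{\bar P}$ and $\Ww$ on $\mathcal{P}_{\bar Q}$; using the $\dsr$-short tree strategies to choose branches at short stages, and using the modified P-construction of Definition \ref{dfn:modified_P-con}--\ref{dfn:vV_1_P-con} to compute Q-structures above $\delta_0^{\vV_1}$. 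By Lemma \ref{lem:modified_P-con_works} (and its iterate version Lemma \ref{lem:general_P-correctness}), these Q-structures coincide with the correct ones, so no spurious disagreement arises. The normalizations of the stacks $(\Tt,\Vv)$ and $(\Uu,\Ww)$ agree (by Lemma \ref{lem:vV_1_iterates_short_tree_strategy}(3), which tells us that $\dsr$-stacks normalize to $\dsr$-trees); call the common normalization $\Xx$. One then checks that $\Xx$ satisfies the clauses of dl-relevance in Definition \ref{dfn:dsr}: it is short-normal, above $\gamma^{\vV_1}$ in its upper part, $\delta_1$-maximal by construction of the comparison at a $\vV_1$-cardinal strong enough to absorb both inputs, and has no spurious Woodins below extender indices (this follows from the extender-selection rules inherited from the short tree strategy, exactly as in the proof of Lemma \ref{lem:c_i_dense}). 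So $\Xx\in\mathbb{U}_1$ and $\bar R=M(\Xx)\in d_1$ dominates both $\bar P$ and $\bar Q$.

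The main obstacle will be verifying that the simultaneous pseudo-comparison actually terminates with $\Xx\in\mathbb{U}_1$, i.e.\ that no Q-structure disagreement escapes $\vV_1$'s ability to compute it, and that the resulting normalized tree genuinely lies in the $\dsr$ class. This is the point where one exploits (i) Lemma \ref{lem:general_P-correctness}, guaranteeing that the modified P-construction internal to $\vV_1$ produces the correct Q-structure at every $\delta_1$-short limit stage, and (ii) the fact that in a dl-relevant tree the only overlaps of $\delta(\Tt)$ in Q-structures are long extenders, which is preserved by forming the simultaneous pseudo-comparison. With these in hand, the argument is a straightforward adaptation of Lemma \ref{lem:c_i_dense}.
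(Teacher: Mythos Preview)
Your treatment of definability, the partial order axioms, and commutativity is fine and matches the paper, which simply refers back to the proof of Lemma~\ref{lem:vV_1_preceq_props} for these.

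For directedness you miss the one new wrinkle. A tree in $\mathbb{U}_1$ is dl-relevant, and by clause~(f) of Definition~\ref{dfn:vV_1_P-suitable} its lower component must either be trivial or correspond to a \emph{single} long extender in $\es^{\vV_1}$. If $\Tt,\Uu\in\mathbb{U}_1$ have lower components $\Tt_0,\Uu_0$ corresponding to distinct long extenders $E_0,F_0\in\es^{\vV_1}$, then $P|\delta_0^P\neq Q|\delta_0^Q$, and your direct pseudo-comparison of $\mathcal{P}_{\bar P}$ and $\mathcal{P}_{\bar Q}$ begins by acting at the $\delta_0$-level. You give no argument that the lower component of the normalized result $\Xx$ again has the required form; in your list of dl-relevance clauses to verify you mention maximality and the dsr no-new-Woodins condition, but not clause~(f) (and you cite Definition~\ref{dfn:dsr} rather than \ref{dfn:vV_1_P-suitable}). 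The paper addresses exactly this: observing that, say, $\lh(E_0)<\lambda(F_0)$ forces $M^{\Uu_0}_\infty$ to be a correct iterate of $M^{\Tt_0}_\infty$, it takes the iteration map $j:M^{\Tt_0}_\infty\to M^{\Uu_0}_\infty$ and forms the minimal $j$-copy $\Tt_1'$ of the upper component $\Tt_1$. Now $\Uu_0\conc\Tt_1'$ and $\Uu$ share the lower component $\Uu_0$, and the pseudo-comparison with $\es^{\vV_1}$-genericity iteration runs entirely above $\delta_0^{M^{\Uu_0}_\infty}$; the resulting $\Xx$ therefore has lower component $\Uu_0$, corresponding to $F_0\in\es^{\vV_1}$ as required. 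Your approach can be completed, but only after supplying this alignment step --- which is essentially the whole content of the paper's proof beyond the reference to Lemma~\ref{lem:vV_1_preceq_props}.

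A minor point: the ``$\es^{M}$-, via the canonical $\mathbb{L}$-name'' genericity iteration you invoke belongs to the Boolean-valued argument in Lemma~\ref{lem:vV_1_c_i_dense}, not here; for bare directedness only $\es^{\vV_1}$-genericity is needed.
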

\begin{proof}[Proof Sketch]
	For the definability, that $\preceq$ is partial order, and the commutativity,
	see the proof of Lemma \ref{lem:vV_1_preceq_props}.
	For directedness, let $\Tt,\Uu\in\mathbb{U}_1$,
	with lower and upper components $\Tt_0,\Tt_1$ and $\Uu_0,\Uu_1$ respectively.
	Let $E_0,F_0\in\es^{\vV_1}$ with be $\vV_1$-total
	with $\crit(E_0)=\crit(F_0)=\kappa_0$
	and such that $\Tt_0,\Uu_0$ correspond to $E_0,F_0$ respectively.
	We may assume $\lh(E_0)\leq\lh(F_0)$, so if $E_0\neq F_0$
	then in fact $\lh(E_0)<\lambda(F_0)$.
	Therefore $M^{\Uu_0}_\infty$ is a (possibly trivial) iterate of $M^{\Tt_0}_\infty$.
	Let $j:M^{\Tt_0}_\infty\to M^{\Uu_0}_\infty$ be the iteration map.
	Let $\Tt_1'$ be the minimal $j$-copy of $\Tt_1$.
	Now proceed
	with a pseudo-comparison of $\Uu_0\conc\Tt_0'$ and $\Uu$
	intertwined with pseudo-genericity-iteration,
	as in Lemma \ref{lem:vV_1_preceq_props}.
\end{proof}

Define the external direct limit system $\mathscr{D}^{\ext}_1=(P,Q,i_{PQ} \colon P\preceq Q
\in {\mathscr F}_1 )$.
We have  \ref{item:M_pc_ZFC},
\ref{item:d,preceq_directed_po}, \ref{item:P_i_system},
\ref{item:pi_ij_elem},
\ref{item:pi_ij_commute}, \ref{item:M_infty_wfd}, \ref{item:i_0_exists},
and write
\begin{eqnarray}
	(\M_{\infty 1}^{\ext}, (i_{P\infty} \colon P \in {\mathscr F}_1))= \dirlim\ \mathscr{D}^{\ext}_1.
\end{eqnarray}

Let $P\in\mathscr{F}_1$.
Then $\vV_1|\delta_1^P$ is $(P,{\mathbb B}^P_{\delta_1^P\delta_0^P})$-generic  and
hence
$P[\vV_1|\delta]=^*_\delta \vV_1$, so $P$ is a ground for $\vV_1$ via the extender algebra $\BB_{\delta_1^P\delta_0^P}$ (at $\delta_1^P$, using extenders with critical point $\geq\delta_0^P$ (hence $>\delta_0^P$)). Thus:

\begin{dfn}\label{dfn:vV_1_tau^P}
	For $P\in\mathscr{F}_1$, let $\tau_1^P$ be the canonical class $\BB_{\delta_1^P\delta_0^P}$-name for $\vV_1$, like in Definition \ref{dfn:tau^P},
	but incorporating the appropriate conversion for the overlapping extenders
	(note the generic filter determines $\vV_1|\delta_0^P$, which in turn determines
	the ``key'' to this conversion).
\end{dfn}

\begin{lemma}\label{lem:vV_1_c_i_dense}
	\ref{item:c_i_dense} holds:
	for each $P\in\mathscr{F}_1$, $c^P_1=d_1\cap d^P_1$ is dense
	in $(d^P_1,\preceq^P)$ and dense in $(d_1,\preceq)$,
	and ${\preceq^P}\rest c^P_1={\preceq}\rest c^P_1$.\end{lemma}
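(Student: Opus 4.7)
The plan is to follow closely the pattern of Lemma \ref{lem:c_i_dense}, but using the $\delta_1$-short-tree machinery from \S\ref{sec:dl-rel_sts_vV_1} in place of the corresponding tools for $M$. Fix $P\in\mathscr{F}_1$. The agreement ${\preceq^P}\rest c_1^P={\preceq}\rest c_1^P$ is immediate once one verifies that, for $Q\in c_1^P$, the tree $\Tt_{QR,\sss}$ witnessing $Q\preceq R$ (resp.~$Q\preceq^P R$) is the same object computed the same way from $\Sigma^{\dsr}_{Q,\sss}$ whether the computation is carried out in $\vV_1$ or in $P$. This is exactly what Lemma \ref{lem:vV_1_iterates_short_tree_strategy} delivers: the $\dsr$-short-tree strategy of an iterate $Q$ is uniformly definable from $Q$ over $\vV_1[g]$ for any generic $g$, and the definition is preserved under passage from $\vV_1$ to the ground $P$ (with $\vV_1$ recoverable as $P[\vV_1|\delta_1^P]$ via the extender algebra generic given by $\tau_1^P$).

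The bulk of the work is density. Given $Q\in\mathscr{F}_1^P$ and $R\in\mathscr{F}_1$, I want to produce $S\in\mathscr{F}_1^P\cap\mathscr{F}_1$ with $Q,R\preceq S$. Pick a $\BB_{\delta_1^P\delta_0^P}^P$-name $\sigma\in P$ with $\sigma_g=R$ for the generic $g$ adding $\vV_1|\delta_1^P$, and fix a strong $\delta_0^P$-cutpoint $\eta$ of $P$ with $\delta_1^P<\eta$ and $Q|\delta_1^Q,\sigma\in P|\eta$. Let $p_1$ force that $\tau_1^P$ is $\vV_1$-like and $\sigma\in\mathscr{F}_1^{\tau_1^P}$. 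Working in $P$, I run a Boolean-valued simultaneous pseudo-comparison and pseudo-genericity iteration of $P$, $Q$ and all $\sigma$-interpretations below $p_1$, iterating each model by its $\dsr$-short-tree strategy (which $P$ sees by the preceding paragraph and Lemma \ref{lem:vV_1_iterates_short_tree_strategy}), folding in $\es^P$-genericity and Boolean-valued $\tau_1^P$-genericity iteration at the least Woodin above $\delta_0$. At each limit $\lambda\leq(\eta^+)^P$, either $M(\Tt\rest\lambda)$ is a Q-structure for itself and the branch is trivial, or $\delta(\Tt\rest\lambda)=\lambda$ and the construction truncated to $\lambda$ is definable over $P|\lambda$; one then uses the modified P-construction of Definition \ref{dfn:vV_1_P-con} to read off the correct Q-structure.

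The main obstacle, which did not appear at level one, is that the trees produced here are trees on iterates of $\vV_1$, hence can involve long extenders with critical point $\kappa_0$, and the Q-structures one wants to read off may in principle overlap $\delta(\Tt\rest\lambda)$ by such long extenders. The whole point of restricting $\mathbb{U}_1$ to \emph{dl-relevant} (hence dsr) trees in Definition \ref{second_defn_points_from_the_system} is to sidestep this: along any dsr comparison no new Woodins are created below a tree extender, so the Q-structures arising during the construction do not overlap $\delta(\Tt\rest\lambda)$ with short extenders, and all the overlaps that do occur are long, which is precisely the situation handled by Lemmas \ref{lem:modified_P-con_works} and \ref{lem:general_P-correctness}. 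These lemmas supply the P-construction with the correct Q-structure at each stage, so the usual comparison argument (in particular, the proof that no fatal drop can occur, shown by witnessing disagreements with mutual generics and appealing to the short-tree strategies in the generic extension) goes through, and the process terminates by $(\eta^+)^P$ with a common iterate $S\in\mathscr{F}_1^P\cap\mathscr{F}_1$ above $Q$ and $R$.
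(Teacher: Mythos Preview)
Your outline follows the Level~1 template closely, and that is the right instinct, but the paper explicitly flags that one \emph{cannot} simply rerun the proof of Lemma~\ref{lem:c_i_dense} here, and the two devices it introduces to get around the obstruction are missing from your proposal.

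The first device is a preliminary alignment. You compare $P$, $Q$ and the $\sigma$-interpretations directly, but their lower components (their $\delta_0$-parts) need not coincide: each is given by a different long extender. The paper first chooses a single long $E_0\in\es^{\vV_1}$ with $\lh(E_0)$ large enough that $P,Q,R$ are all translations of one another above some $\gamma<\lambda(E_0)$, and then compares $\Ult(P,E_0^P)$, $\Ult(R,E_0^R)$ and the Boolean-valued $\Ult(\sigma,F^{\sigma||\lh(E_0)})$. These ultrapowers agree through their common least Woodin $\lh(E_0)$, so the comparison lives entirely above it. This is what sets up the second device.

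The second, and more serious, omission is the mechanism that keeps the comparison dsr. You assert that ``along any dsr comparison no new Woodins are created below a tree extender,'' but this is exactly the claim that requires proof, and at this point in the paper only the \emph{dsr} short-tree strategy has been shown to be definable (Lemma~\ref{lem:vV_1_short_tree_strategy}); the full short-tree strategy is Lemma~\ref{lem:vV_1_full_short_tree_strategy}, which relies on $*$-translation and comes later. A na\"ive Boolean-valued comparison could in principle select a non-dsr least disagreement, after which $P$ has no strategy with which to continue. The paper's fix, borrowed from \cite[\S7]{iter_for_stacks}, is to carry along a decreasing sequence of conditions $q_\alpha\leq p_1$: at each stage one restricts to the Boolean locus where the least disagreement \emph{is} dsr, and then argues \emph{externally} that $q_\alpha$ is in the actual generic $g$ (hence nonzero). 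The external argument uses that under $g$ the trees are correct, together with the analysis of comparison in \cite[\S8]{fullnorm}, which shows the least disagreement is an ultrapower-image of an extender already used in one of the original dsr trees, and hence itself dsr-appropriate. Without this $q_\alpha$ mechanism your sketch has a genuine gap: there is no reason the Boolean-valued comparison, as $P$ sees it, stays within the class of trees for which $P$ can compute branches.
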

\begin{proof}
	Let $P\in\mathscr{F}_1$.
	That ${\preceq^P}\rest c^P_1={\preceq}\rest c^P_1$ is
	by Lemma \ref{lem:vV_1_iterates_short_tree_strategy}.
	So let $Q\in\mathscr{F}_1$
	and $R\in\mathscr{F}_1^P$. We must find $S\in\mathscr{F}_1\inter\mathscr{F}_1^P$
	with $Q,R\preceq S$. 	Let $\Tt_P=\Tt_{P0}\conc\Tt_{P1}$ be the maximal tree leading from $\vV_1$
	to $P$, with lower and upper components $\Tt_{P0},\Tt_{P1}$ respectively,
	and likewise for $R$, and let $\Tt_{PQ},\Tt_{PQ0},\Tt_{PQ1}$ be likewise
	for $Q$ in $P$. Let $E_0\in\es^{\vV_1}$ be long with $\lh(E_0)<\kappa_1$
	be $\lh(E_0)$ sufficiently large that $P,Q,R$ are all translations of one another
	above some $\gamma<\lambda(E_0)$ and the various trees are in $\vV_1|\lambda(E_0)$.
	Letting $E^P_0\in\es^P$
	and $E^R_0\in\es^R$ with $\lh(E_0^P)=\lh(E_0^R)=\lh(E_0)$, then $E^R_0,E^P_0$ are translations of $E_0$,
	so $\Ult(\vV_1,E_0)$, $\Ult(P,E_0^P)$ and $\Ult(R,E_0^R)$ agree through their common least Woodin $\lh(E_0)$ (but not above there if $P\neq\vV_1$, as $\Ult(\vV_1,E_0)$ is $\delta_0^{\Ult(\vV_1,E_0)}$-sound, whereas then the others are not).
	Let $\sigma\in P$ be a $\BB_{\delta_1^P\delta_0^P}$-name
	for $Q$, and let $p_1\in\BB_{\delta_1^P\delta_0^P}$ be the Boolean value
	of the statement ``$\tau^P$ is $\vV_1$-like and $\sigma\in\mathscr{F}_1^{\tau^P}$''.
	Working in $P$, we will form a Boolean-valued comparison/genericity iteration
	of $\Ult(P,E_0^P)$, $\Ult(R,E_0^R)$ and all interpretations of $\Ult(\sigma,F^{\sigma||\lh(E_0)})$ below $p_1$,
	much as in the proof of Lemma \ref{lem:c_i_dense} (in particular incorporating Boolean-valued $\tau_1^P$-genericity). However, because we have not yet established that
	$\vV_1$ knows its own $\delta_1$-short tree strategy, we cannot quite argue as for
	Lemma \ref{lem:c_i_dense}. Thus, we tweak the comparison as in the following sketch
	(the process will be use an idea from \cite[\S7]{iter_for_stacks};
	see especially \cite[Corollary 7.5 and Theorem 7.3 (Claim 8)]{iter_for_stacks}).

	We define a $\BB_{\delta_1^P\delta_0^P}$-name
	for a padded tree $\dot{\Uu}$ on $\sigma$, and define padded trees $\Tt$ on $\Ult(P,E_0^P)$
	and $\Vv$ on $\Ult(R,E_0^R)$, recursively on length in the usual manner for comparison.
	Given (names for) the trees up to length $\alpha+1$, we will also have some condition $q_\alpha$,
	with $q_\alpha\leq q_0=p_1$.  Let $q_{\alpha+1}$ be the Boolean value, below $q_\alpha$,
	of the statement ``the least disagreement between $M^{\dot{\Uu}}_\alpha$ and $M^\Tt_\alpha$ and $M^\Vv_\alpha$, if it exists, involves a dsr extender'' (that is, satisfying condition \ref{item:no_new_Woodins_below_index} of Definition \ref{dfn:dsr}).
	We then take the least forced disagreement working below $q_{\alpha+1}$, and use this index and genericity iteration considerations to determine the next extender, etc.
	Given everything through some limit stage $\gamma$, which is short, the strategies $\Sigma_{\cdot,\sss}^{\dsr}$ determine branches (as required), and set $q_\gamma$ to be the infimum of $\left<q_\alpha\right>_{\alpha<\gamma}$. The rest is as usual. The conditions $q_\alpha$ are always non-zero, and in fact $q_\alpha\in g$ where $g$ is the generic
	adding $\vV_1|\delta_0^P$, because $\dot{\Uu}_g,\Tt,\Vv$ are then correct trees on $R,P,Q$,
	which were themselves iterates via dsr trees, and by the analysis of comparison in \cite[***\S8]{fullnorm_v3}, the least disagreement must be an ultrapower-image of one of the extenders used in those dsr trees, and hence be itself appropriate for dsr.
	Because we rule out the use of non-dsr extenders, the Q-structure(s) $Q_\xi$ used in the trees
	at limit stages $\xi$ do not overlap $\delta(\Tt\rest\xi)$
	(except possibly with long extenders). They also agree with one another (in $\Tt,\Vv$
	and all interpretations of $\dot{\Uu}$),
	and no extenders in $\es_+^{Q_\xi}$ are used later in the comparison (in particular for genericity iteration). This is because in $P$ and in $P[g']$, the trees are P-suitable,
	and the Q-structures are produced by P-construction, and because of the agreement between $P,\vV_1$, they are therefore identical.
\end{proof}

\subsubsection{The internal direct limit system $\mathscr{D}_1$}

We adapt Definition \ref{dfn:M_internal_dls} in the obvious manner, to which we refer the reader for details:

\begin{dfn}\label{dfn:vV_1_internal_dls}
	Work in $\vV_1$. Define \emph{(weak) $s$-iterability}
	for  $P \in {\mathscr F}_1$ and $s\in[\OR]^{<\om}\cut\{\emptyset\}$
	as in \ref{dfn:M_internal_dls}. If $P$ is $s$-iterable
	and $s\sub t$ and $Q$ is $t$-iterable with $P\preceq Q$,
	then likewise for
	$\gamma^P_s$,
	$H^P_s$
	and $\pi_{Ps,Qt}:H^P_s\to H^Q_t$. Define \emph{strong $s$-iterability}
	as before.

	Let $\mathscr{F}^+_1=\{(P,s)\bigm|P\in\mathscr{F}\text{ and }P\text{ is strongly
	}s\text{-iterable}\}$, and similarly let
	$d^+_1=\{(P|\delta_1^P,s)\bigm|(P,s)\in\mathscr{F}_1^+\}$.
	The order $\preceq$ on $d^+_1$
	is determined by \ref{item:d^+_order}.
	Define $\preceq$ on $\mathscr{F}^+$ likewise.
	Clearly if	$(P,s)\preceq(Q,t)\preceq (R,u)$ then
	\[ \pi_{Ps,Ru}=\pi_{Qt,Ru}\com\pi_{Ps,Qt}.\]

	Define the system $\mathscr{D}_1=
	(H^P_s,H^Q_t,\pi_{Ps,Qt} \colon (P,s) \preceq (Q,t) \in \mathscr{F}_1^+)$.

	Given $P\in\mathscr{F}_1$ and $s\in[\OR]^{<\om}$, recall that $s$ is
	\emph{$P$-stable}
	iff $\pi_{PQ}(s)=s$ for every $Q\in\mathscr{F}_1$ with $P\preceq Q$.
\end{dfn}

\begin{rem}
	As in Remark \ref{rem:M_s-it_implies_strong_s-it},
	$s$-iterability actually implies strong $s$-iterability.
\end{rem}

The following lemma yields properties  \ref{item:d^+},
\ref{item:d^+_order}, \ref{item:reduce_t}, \ref{item:increase_i},
\ref{item:D^+_M-def}, \ref{item:H_i^s}, \ref{item:pi_is_js},
\ref{item:pi_is_it}, \ref{item:pisjt_commute},
\ref{item:every_s_gets_stable}:

\begin{lemma}\label{lem:vV_1_first_properties_int_sys}
	We have:
	\begin{enumerate}[label=\tu{(}\alph*\tu{)}]
		\item\label{item:vV_1_stable_b} if $P \in {\mathscr
			F}_1$ and $s\in[\OR]^{<\om}\cut\{\emptyset\}$ and $s$ is $P$-stable, then
		$(P,s)\in\mathscr{F}^+_1$ and $(P,s)$ is true
		(see Definition \ref{dfn:ug_stable}).
		\item\label{item:vV_1_stable_c}  $(\mathscr{F}^+_1,\preceq)$ is
		directed -- for $(P,s)$, $(Q,t) \in \mathscr{F}^+_1$
		there is $(R,u) \in \mathscr{F}^+_1$ with $(P,s) \preceq (R,u)$ and
		$(Q,t) \preceq (R,u)$ \tu{(}note $u=s\cup t$ suffices\tu{)}.
		\item
		$\mathscr{D}_1$ is lightface $\vV_1$-definable.
	\end{enumerate}
\end{lemma}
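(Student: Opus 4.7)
The plan is to carry over the proof of Lemma \ref{lem:first_properties_int_sys} to this setting, with the role previously played by the short tree strategy on $M|\delta_0^M$ now played by the $\delta_1$-short tree strategy on $\vV_1^-$ as analyzed in \S\ref{sec:dl-rel_sts_vV_1}; the key new ingredients are Lemmas \ref{lem:vV_1_short_tree_strategy} and \ref{lem:vV_1_full_short_tree_strategy}, which ensure that $\vV_1$ correctly computes branches (and P-constructions) both for $\delta_1$-short and $\delta_1$-maximal trees appearing in the system.

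For part \ref{item:vV_1_stable_b}, fix $P \in \mathscr{F}_1$ and $P$-stable $s \neq \emptyset$. Given $Q \succeq P$, the tree $\Tt=\Tt_{PQ,\sss}\in\vV_1$ on $P|\delta_1^P$ has a correct cofinal branch $b$ whose iteration map $\pi_{PQ}$ realizes $Q|\delta_1^Q$; while $b$ need not lie in $\vV_1$, Lemma \ref{lem:vV_1_full_short_tree_strategy} (via the modified P-construction characterization of $M^\Tt_b$) shows that for all sufficiently large $\lambda$, $\Coll(\om,\lambda)$ forces the existence of such a $b$, uniformly definable from $\Tt$. Because $s$ is $P$-stable, $\pi_{PQ}(s)=s$, and because any generic branch of the required form has iteration map extending $\pi_{PQ}$, we obtain $i^\Tt_b(s)=s$ and $i^\Tt_b(P|\max(s))=Q|\max(s)$, establishing weak $s$-iterability. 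Strong $s$-iterability then follows from commutativity $\pi_{QR}\circ\pi_{PQ}=\pi_{PR}$ for $P\preceq Q\preceq R$ together with the condensation argument of Remark \ref{rem:M_s-it_implies_strong_s-it}, which uses only formal properties of full normalization and so transfers verbatim to our setting; truth of $(P,s)$ is then immediate.

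For part \ref{item:vV_1_stable_c}, given $(P,s),(Q,t) \in \mathscr{F}_1^+$, directedness of $(\mathscr{F}_1,\preceq)$ from Lemma \ref{lem:vV_2_preceq_props} yields $R_0$ with $P,Q\preceq R_0$. By wellfoundedness of $\M_{\infty 1}^{\ext}$ there is $R\succeq R_0$ such that $s\cup t$ is $R$-stable (take $R$ past the stabilization points of each element of $s\cup t$ under the external maps $\pi_{R_0 S}$). Part \ref{item:vV_1_stable_b} then gives $(R,s\cup t)\in\mathscr{F}_1^+$, and $(P,s),(Q,t)\preceq(R,s\cup t)$ by definition of the order. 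Finally, for part (c), the classes $d_1$, $\mathscr{F}_1$ and $\preceq$ are lightface $\vV_1$-definable as already noted, weak $s$-iterability is literally a first-order assertion about a forcing relation of a definable poset (with the requisite branches identified by Lemma \ref{lem:vV_1_full_short_tree_strategy}), strong $s$-iterability adds a first-order commutativity clause, and $\gamma^P_s$, $H^P_s$ and the maps $\pi_{Ps,Qt}$ are defined internally by their stated hull/branch recipes. The one point that requires care, and which I expect to be the main obstacle, is verifying that for the $\delta_1$-maximal stages the ``correct'' branch used to define $\pi_{Ps,Qt}$ is the unique one (in a generic extension) sending $P|\max(s)$ to $Q|\max(s)$ and fixing $s$; this uniqueness is what makes $\pi_{Ps,Qt}$ definable rather than merely existentially characterized, and it is delivered by the P-construction identification of $M^\Tt_b$ in Lemmas \ref{lem:vV_1_short_tree_strategy}\ref{item:P-suitable_delta_1-max} and \ref{lem:vV_1_full_short_tree_strategy}, together with a Zipper-style argument ruling out two distinct such branches.
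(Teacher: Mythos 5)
The paper gives no explicit proof of this lemma; it is stated as an adaptation of Lemma \ref{lem:first_properties_int_sys}, which was itself left to "the usual arguments." Your proposal is a correct and appropriately detailed fleshing-out of exactly that adaptation, correctly identifying Lemmas \ref{lem:vV_1_short_tree_strategy} and \ref{lem:vV_1_full_short_tree_strategy} and Remark \ref{rem:M_s-it_implies_strong_s-it} as the new ingredients to transfer the argument to the $\delta_1$-short/maximal setting.
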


\begin{dfn}
	Noting that $\mathscr{I}^{\vV_1}=\mathscr{I}^M$ is a club class of generating indiscernibles for $\vV_1$,
	define $\mathscr{I}^{P}=i_{\vV_1 P}``\mathscr{I}^{\vV_1}$ whenever $P$ is a non-dropping iterate of $\vV_1$.
\end{dfn}

For the following, see the proof of Lemma \ref{lem:M_indisc_stable}:
\begin{lemma}
	For each $P\in\mathscr{F}_1$, $P$ is $\{\alpha\}$-stable
	for every $\alpha\in\mathscr{I}^M=\mathscr{I}^P$.
	Therefore property \ref{item:every_x_gets_captured} holds,
	as witnessed by some $s\in[\mathscr{I}^M]^{<\om}$.
\end{lemma}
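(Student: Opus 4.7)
The plan is to follow the pattern of Lemma~\ref{lem:M_indisc_stable} and its proof, adapted to the setting of $\vV_1$ and the second system. First I would verify the analogue of Lemma~\ref{lem:inds_fixed} at the new level, namely that whenever $P,Q\in\mathscr{F}_1$ with $P\preceq Q$, the iteration map $i_{PQ}$ fixes every $\alpha\in\mathscr{I}^P=\mathscr{I}^{\vV_1}=\mathscr{I}^M$ pointwise. This is because the tree $\Tt_{PQ}$ is a dl-relevant tree living below $\kappa_1^P$, and its branch model is obtained by the modified P-construction of $\vV_1$ (or an appropriate iterate) over $M(\Tt_{PQ})$; by Lemma~\ref{lem:general_P-correctness} the relevant branch embedding is (up to the hull) an iteration map, and it is the class of indiscernibles of the proper class model $P$ that is being preserved by the lightface definability. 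Thus $\pi_{\vV_1 P}\restriction\mathscr{I}^{\vV_1}=\mathrm{id}$, and so each singleton $\{\alpha\}$ with $\alpha\in\mathscr{I}^M$ is $P$-stable.

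Next I would invoke the analogue of Fact~\ref{fact:inds_preserved} to see that every $P\in\mathscr{F}_1$ satisfies
\[
P=\Hull^P(\mathscr{I}^P\cup\delta_1^P),
\]
using that $\vV_1$ itself is $\delta_1^{\vV_1}$-generated over $\mathscr{I}^M$ (since $\vV_1=\Hull^{\vV_1}(\mathscr{I}^M\cup\delta_1^{\vV_1})$ by the analogous soundness facts for $\vV_1$, together with the fact that non-dropping iterates inherit this with the image indiscernibles). Combined with the previous paragraph, this gives $\{\alpha\}$-stability for every $\alpha\in\mathscr{I}^M$, which is the first claim.

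For the second claim (property \ref{item:every_x_gets_captured}), given $x\in P$, I would use the hull decomposition above to fix a term $t$, an ordinal $\beta<\delta_1^P$ and a tuple $s^-\in[\mathscr{I}^P]^{<\omega}$ such that $x=t^P(s^-,\beta)$. Then I would enlarge $s^-$ by appending a single indiscernible $\eta\in\mathscr{I}^M$ chosen large enough that the resulting $s=s^-\cup\{\eta\}$ satisfies $\beta<\gamma^P_s$; this is possible since the indiscernibles are cofinal in $\OR$ and $\gamma^P_s$ grows with $\max(s)$ (one can arrange $\gamma^P_s$ to be as large below $\delta_1^P$ as desired by taking $\max(s)$ sufficiently high, as in the proof of~\ref{lem:M_indisc_stable}). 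With this choice $x\in\Hull^{P|\max(s)}(\gamma^P_s\cup s^-)=H^P_s$, and by part~\ref{item:vV_1_stable_b} of Lemma~\ref{lem:vV_1_first_properties_int_sys} the pair $(P,s)$ is true, so $s\in[\mathscr{I}^M]^{<\omega}$ witnesses \ref{item:every_x_gets_captured}.

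The only subtle step is the first: verifying that iteration maps arising from trees in $\mathbb{U}_1$ fix $\mathscr{I}^M$. Unlike in the $M$ case, the trees here live above $\kappa_0^{\vV_1}$ and can use long extenders on the $\vV_1$-side, so one must check that the P-construction passage in Definition~\ref{dfn:vV_1_P-con} does not disturb the identification of indiscernibles — but this follows because $\mathscr{I}^M$ is already a class of indiscernibles for $\vV_1$ (hence for $P$), and the iteration map, being lightface $\vV_1$-definable from the tree and branch, cannot move a proper class of order-indiscernibles. Everything else is a direct transcription of the $M$-level argument.
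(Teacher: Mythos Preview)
Your proposal is correct and follows essentially the same approach as the paper, which simply refers the reader to the proof of Lemma~\ref{lem:M_indisc_stable}. You have correctly identified the two ingredients that need to be transported: the analogue of Lemma~\ref{lem:inds_fixed} (iteration maps fix $\mathscr{I}^M$) and the hull decomposition $P=\Hull^P(\mathscr{I}^P\cup\delta_1^P)$, and then assembled them exactly as in the earlier proof.

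One minor descriptive slip: trees in $\mathbb{U}_1$ are dl-relevant, hence dsr, hence short-normal, so they do \emph{not} use long extenders; and their lower component $\Tt_0$ is based on $\vV_1|\delta_0^{\vV_1}$, so they do not live entirely above $\kappa_0^{\vV_1}$. Your concern about ``long extenders on the $\vV_1$-side'' is therefore misplaced as stated. The actual reason the indiscernibles are fixed is just as in Lemma~\ref{lem:inds_fixed}: for $P\in\mathscr{F}_1$ the model $P$ is the P-construction $\mathscr{P}^{\vV_1}(M(\Tt))$, which agrees with $\vV_1$ above $\delta(\Tt)$ modulo the generic, so $\mathscr{I}^P=\mathscr{I}^{\vV_1}$; and since $i_{\vV_1 P}$ maps $\mathscr{I}^{\vV_1}$ order-preservingly onto $\mathscr{I}^P$, it is the identity there. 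This is what Lemma~\ref{lem:general_P-correctness} (in particular part~\ref{item:P_is_Sigma-iterate_of_Pbar}) gives you once you note that $\vV_1$ is $\delta_1^{\vV_1}$-sound. None of this affects the validity of your argument.
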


We can now (working in $\vV_1$) define the direct limit
\begin{equation}\label{vV_1_internal_dir_limit}
	({\cal M}_{\infty1}, \pi_{Ps,\infty} \colon (P,s) \in \mathscr{F}_1^+)
	= \dirlim\ \mathscr{D}_1,\end{equation}
and the associated $*$-map $*_1$.
This notation  is somewhat cumbersome, so let us also write $\N_\infty=\M_{\infty 1}$,
and we will often write $*$ instead of $*_1$, where there should be no cause for confusion.
By Lemmas \ref{c0} and \ref{c1}, $\chi:{\cal N}_{\infty} \to {\cal
	N}_{\infty}^{\ext}$ is the identity and $\N_{\infty}=\N_{\infty}^{\ext}$. Property \ref{item:struc_emb_correct}
holds as if $P\in\mathscr{F}_1$
and $\bar{Q}\in d_1^P\cap d_1$ then
$\P_{\bar{Q}}^P=\mathscr{P}^P(\bar{Q})=\mathscr{P}^{\vV_1}(\bar{Q})$,
because $\es^P$ is a translation of $\es^{\vV_1}$ above  $\delta_1^P$).
And \ref{item:all_s_eventually_internally_in} again holds if $s$ is $P$-stable.

So we have established \ref{item:M_pc_ZFC}--\ref{item:all_s_eventually_internally_in}.
For the remaining properties set $\delta=\delta_1^{\vV_1}$ and $\BB=\BB^{\vV_1}_{\delta\delta_0^{\vV_1}}$
(for the witnesses to those properties in \S\ref{sec:ground_generation}).
This gives \ref{item:delta_reg,B_delta-cc_cba}.
Recall we defined $\tau^P$ in Definition \ref{dfn:vV_1_tau^P}.
Write
$\delta_{1\infty}=i_{\vV_1\infty}(\delta_1)=\delta_1^{\N_{\infty}}$ (replacing
the notation $\delta_\infty$ of \S\ref{sec:ground_generation}). As for  Lemma \ref{key_facts_about_V1}:

\begin{lemma}\label{lem:vV_1_key_facts_about_V1}
	We have:	\begin{enumerate}
		\item \label{item:vV_1_tau_g_gives_M|alpha}	For each $\vV_1$-stable $\alpha\in\OR$
		and each $P\in\mathscr{F}_1$, letting
		and $g$
		be the $P$-generic filter for $\BB_{\delta_1^P\delta_0^P}^P$
		given by $\vV_1|\delta_1^P$, then
		$i_{\vV_1P}(\tau^{\vV_1}\rest\alpha)_g=\vV_1|\alpha$. Moreover,
		$\vV_1\ueq P[g]\ueq P[\vV_1|\delta_1^P]$.

		\item\label{item:vV_1_unif_grds_prop_holds}	\ref{item:unif_grds} holds.

		\item\label{item:vV_1_kappa_0^M=least_meas}  $\kappa_1^M=\kappa_1^{\vV_1}$ is the least measurable
		cardinal of ${{\cal N}_{\infty}}$.
		\item\label{item:vV_1_kappa_0^+M=delta_infty} $\kappa_1^{+M}=\kappa_1^{+\vV_1} = \delta_{1\infty}$.
	\end{enumerate}
\end{lemma}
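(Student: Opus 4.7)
My plan is to follow the template of Lemma \ref{key_facts_about_V1}, adapting each ingredient to the features of the second system (dl-relevant trees, the modified P-construction, and the possibility of a long extender $E_0$ in the lower component). By the time we reach this lemma all the relevant machinery is in place, so the proof is essentially bookkeeping; the only point where I must be careful is the interaction between long extenders and the P-construction when verifying $\vV_1 \ueq P[g]$.

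For part \ref{item:vV_1_tau_g_gives_M|alpha}, the name $\tau^{\vV_1}$ of Definition \ref{dfn:vV_1_tau^P} was set up precisely to reconstruct $\vV_1$ from the generic $\vV_1|\delta_1^{\vV_1}$: the generic also encodes $\vV_1|\delta_0^{\vV_1}$, which provides the translation key $\jbar$ used in Definition \ref{dfn:modified_P-con} for the overlapping long extenders. For $\vV_1$-stable $\alpha$, elementarity gives $i_{\vV_1 P}(\tau^{\vV_1}\rest\alpha) = \tau^P\rest\alpha$ as a $\BB^P_{\delta_1^P\delta_0^P}$-name in $P$, and its interpretation with $g=g_{\vV_1|\delta_1^P}$ yields $\vV_1|\alpha$ level by level, using the fine-structural correspondence already inherent in Definition \ref{dfn:vV_1_P-con}. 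The universe equality $\vV_1 \ueq P[g]$ is the resulting ``the P-construction recovers the outer model'' calculation. Part \ref{item:vV_1_unif_grds_prop_holds} then follows in one step: given $\mu$ of the required cardinality, I code $V_\mu^{\vV_1}$ by some $A'\in\pow(\mu)^{\vV_1}$, pick $p_1\in d_1$ with $\mu$ being $p_1$-stable (using Lemma \ref{lem:vV_1_first_properties_int_sys}\ref{item:vV_1_stable_b}), and transport the canonical $\BB$-name for $A'$ to a $\BB_{p_1}$-name $\tau'_{p_1}\in\mathcal{P}_{p_1}$; part \ref{item:vV_1_tau_g_gives_M|alpha}, applied at level $\mu^+$, supplies the required generic filter for each $q\in\mathcal{C}_{p_1}$.

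For part \ref{item:vV_1_kappa_0^M=least_meas}, observe first that $\kappa_1^{\vV_1}=\kappa_1^M$ by the level-by-level correspondence of Lemma \ref{local_correspondence}. The crucial point is that each $\Tt\in\mathbb{U}_1$ is dl-relevant and so lives in $\vV_1|\kappa_1^{\vV_1}$; hence every extender used in the tree leading from $\vV_1$ to a given $P\in\mathscr{F}_1$ has length, and therefore critical point, strictly below $\kappa_1^{\vV_1}$. This yields $\pi_{\vV_1 P}(\kappa_1^{\vV_1})=\kappa_1^{\vV_1}$, and thus $\pi_{P\infty}(\kappa_1^P)=\kappa_1^{\vV_1}$. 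Because the modified P-construction matches extenders of $P$ above $\delta_1^P$ with extenders of $\vV_1$ above $\delta_1^P$ (up to the translation by $\jbar$), $\kappa_1^{\vV_1}$ remains measurable in each such $P$ and hence in $\N_\infty$. Any hypothetical measurable of $\N_\infty$ strictly between $\delta_{1\infty}$ and $\kappa_1^{\vV_1}$ would correspond, by the same matching, to a measurable of $\vV_1$ in $(\delta_1^{\vV_1},\kappa_1^{\vV_1})$; but $\vV_1$-likeness (the smallness condition built into Definition \ref{dfn:Mswsw-like}, transported by Lemma \ref{local_correspondence}) prohibits such a cardinal.

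For part \ref{item:vV_1_kappa_0^+M=delta_infty}, I will first establish $\delta_{1\infty}\leq\kappa_1^{+\vV_1}$ by the cardinality counting argument of \cite[Lemma 2.7(b)]{vm1}: since $\mathscr{D}_1$ is lightface $\vV_1$-definable and each hull $H^P_s$ has cardinality strictly less than $\kappa_1^{+\vV_1}$ in $\vV_1$, the direct limit maps $\pi_{Ps,\infty}$ naturally surject onto $\delta_{1\infty}$ from a set of size $\leq\kappa_1^{+\vV_1}$. The reverse inequality $\kappa_1^{+\vV_1}\leq\delta_{1\infty}$ will follow once Theorem \ref{tm:Varsovian_is_ground} is applied to the second system, exhibiting $\vV_2$ as a ground of $\vV_1$ via $\delta_{1\infty}$-cc forcing (with $\delta_{1\infty}$ Woodin in $\vV_2$), and hence forcing $\delta_{1\infty}$ to be a regular cardinal of $\vV_1$. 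The $\vV_1$-side of the equation $\kappa_1^{+M}=\kappa_1^{+\vV_1}$ is a transfer through Lemma \ref{local_correspondence}. The main place to exercise care is handling trees with nontrivial lower component (those using a long $E_0$), since the lower component moves $\delta_0$; but these movements affect only the $\delta_0$-level, and not the $\delta_1$-level bookkeeping required here.
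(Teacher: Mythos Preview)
Your overall approach---tracking the template of Lemma \ref{key_facts_about_V1}---is precisely what the paper intends; it offers no explicit proof, only the remark ``As for Lemma \ref{key_facts_about_V1}''. Your treatment of parts \ref{item:vV_1_tau_g_gives_M|alpha}, \ref{item:vV_1_unif_grds_prop_holds}, and \ref{item:vV_1_kappa_0^+M=delta_infty} is correct and matches the intended adaptation.

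Your argument for part \ref{item:vV_1_kappa_0^M=least_meas}, however, has a genuine gap. You claim that a hypothetical measurable of $\N_\infty$ ``strictly between $\delta_{1\infty}$ and $\kappa_1^{\vV_1}$'' would correspond via the P-construction matching to a measurable of $\vV_1$ in $(\delta_1^{\vV_1},\kappa_1^{\vV_1})$, and that $\vV_1$-likeness prohibits such a cardinal. This fails on two counts. First, by part \ref{item:vV_1_kappa_0^+M=delta_infty} one has $\delta_{1\infty}=\kappa_1^{+\vV_1}>\kappa_1^{\vV_1}$, so your interval is empty as written; but even reading it as the interval below $\kappa_1^{\vV_1}$, there is no level-by-level correspondence between $\N_\infty$ and $\vV_1$ in that region. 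Second, and more importantly, $\vV_1$ \emph{does} have measurables in $(\delta_1^{\vV_1},\kappa_1^{\vV_1})$: since $\kappa_1^M$ is strong in $M$ it is a limit of measurables, and these survive to $\vV_1$ by Lemma \ref{local_correspondence}. No smallness assumption in Definition \ref{dfn:Mswsw-like} rules this out.

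The actual reason $\kappa_1^{\vV_1}$ is the least measurable of $\N_\infty$ is a direct-limit argument. You have correctly shown $\kappa_1^{\vV_1}$ is fixed by all maps and remains measurable. For minimality, the point is that the lower components $\Tt_0$ of dl-relevant trees already push the least measurable of $\vV_1$ (which is $\kappa_0^M$, by Lemma \ref{key_facts_about_V1}\ref{item:kappa_0^M=least_meas}) cofinally toward $\kappa_1^{\vV_1}$: for any $\alpha<\kappa_1^{\vV_1}$, choose a long $E_0\in\es^{\vV_1}$ with $\lh(E_0)<\kappa_1^{\vV_1}$ whose corresponding $M$-extender $E_0^+$ has $\lambda(E_0^+)>\alpha$ (possible since $\kappa_0^M$ is ${<}\kappa_1^M$-strong). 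Then $\vV'=\Ult(\vV_1,E_0)$ has least measurable $\lambda(E_0^+)>\alpha$, and the upper component $\Tt_1$, being above $\gamma^{\vV'}$, does not move it. Directedness of $\mathscr{F}_1$ then gives $i_{\vV_1\infty}(\kappa_0^M)\geq\kappa_1^{\vV_1}$.
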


\subsubsection{The second Varsovian model as $\N_{\infty}[*]$}

\begin{dfn}\label{dfn:vV_2_M_infty[*]}
	Recall that $*_1$ is the $*$-map associated to the preceding construction. We define the structure
	\begin{eqnarray}\label{vV_1_defn_first_varsovian_model}
		\N_{\infty}[*_1]=(L[\N_{\infty},*_1],\N_{\infty},*_1);
	\end{eqnarray}
	that is, with universe $L[\N_{\infty},*_1]$
	and predicates $\N_{\infty}$ and $*_1$. However, as mentioned above,
	we will often abbreviate $*_1$ with $*$, hence writing $\N_\infty[*]$.

	Note this structure has the universe of the abstract Varsovian model of \S\ref{sec:ground_generation}.
	Essentially by \S\ref{sec:ground_generation}, we have the elementary maps
	\[ \pi_{\infty1}:\N_{\infty}\to\N_{\infty}^{\N_{\infty}}, \]
	\[ \pi_{\infty1}^+:\N_{\infty}[*_1]\to\N_{\infty}^{\N_{\infty}}[*_1^{\N_{\infty}}], \]
	where $\N_{\infty}^{\N_{\infty}}$ is computed in $\N_{\infty}$
	just as $\N_{\infty}$ is computed in $\vV_1$, and   $*_1^{\N_{\infty}}$
	is the $*$-map as computed in $\N_{\infty}$. Recall $*_1\sub\pi_{\infty1}\sub\pi_{\infty1}^+$,
	and these maps are lightface definable over $\vV_1$.\end{dfn}

We next point out that $\N_{\infty}^{\N_{\infty}}$ is a $\Sigma_{\N_\infty}$-iterate of $\N_{\infty}$ and $\pi_{\infty1}$ is
the correct iteration map. We also want to generalize this to other iterates
of $\vV_1$.

\begin{dfn}
	Given a $\vV_1$-like Vsp $N$, let $\mathscr{D}^N_1$ and $\N_\infty^N$
	be defined over $N$ just as $\mathscr{D}_1$, $\N_\infty$ are defined
	over $\vV_1$, and likewise $*_1^N,\pi_{\infty1}^N,(\pi_{\infty1}^+)^N$.
	If $N$ is a correct iterate of $\vV_1$, also define
	$(\N_\infty^{\ext})_N$ (the external direct limit) relative to $N$, as for $\vV_1$: given a maximal tree $\Tt\in\mathbb{U}^N_1$ (considered as a tree on $N$),
	let $b=\Sigma_N(\Tt)$ and $M_{\Tt}=M^\Tt_b$,
	and let $(\N_\infty^{\ext})_N$ be the direct limit of these models $M_\Tt$
	under the iteration maps.
	If in fact $M_\Tt=\mathcal{P}^N_{M(\Tt)}$ (the model indexed by $M(\Tt)$ in the covering system $\mathscr{D}^{N}_1$) for each such $\Tt$, then define $\chi_N:\N_\infty^N\to(\N_\infty^{\ext})_N$ as in \S\ref{sec:ground_generation}.
\end{dfn}

\begin{lem}\label{lem:vV_1_delta_1-sound_iterate_M_infty}
	Let $N$ be a $\delta_1^N$-sound, non-dropping $\Sigma_{\vV_1}$-iterate of $\vV_1$.

	Then
	$M_\Tt=\mathscr{P}^N(M(\Tt))=\mathcal{P}^N_{M(\Tt)}$\footnote{Recall that the notation
		is $\mathscr{P}(M(\Tt))$ for P-construction over $M(\Tt)$,
		and $\mathcal{P}_{M(\Tt)}$ for the model of $\mathscr{D}_1$ indexed at $M(\Tt)$.} for each $\Tt\in\mathbb{U}^N_1$,
	$\N_\infty^N=(\N_\infty^{\ext})_N$ and $\chi_N=\id$,
	and $\N_\infty^N$ is a $\delta_1^{\N_\infty^N}$-sound,
	non-dropping $\Sigma_N$-iterate of $N$, and hence is
	a $\Sigma_{\vV_1}$-iterate of $\vV_1$.
	Moreover,
	\[ \pi_{\infty1}^N:\N_\infty^N\to\N_\infty^{\N_\infty^N} \]
	is the iteration map according to $\Sigma_{\N_\infty^N}$.
	This holds in particular for $N=\vV_1$ and for $N=\N_\infty$,
	so $\N_\infty^{\N_\infty}$ is a correct iterate of $\N_\infty$,
	and $\pi_{\infty 1}$ is the iteration map.
\end{lem}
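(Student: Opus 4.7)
The proof will follow the same blueprint as for Lemma \ref{lem:delta_0-sound_iterate_M_infty} combined with Lemma \ref{lem:pi_infty_is_iteration_map}, but grounding the P-construction correctness in the second-level analogue, namely Lemma \ref{lem:general_P-correctness}. First I would verify the equality $M_\Tt=\mathscr{P}^N(M(\Tt))=\mathcal{P}^N_{M(\Tt)}$ for each non-trivial $\Tt\in\mathbb{U}_1^N$. Such a $\Tt$ is by definition dl-relevant for $N$, hence dsr-P-suitable with witness $U=N$ and $E_1=\emptyset$. Applying Lemma \ref{lem:general_P-correctness} to $N$ in place of $\vV$, we obtain $\bar P=\bar Q$, where $\bar Q=M^{\bar\Tt}_b$ (or $Q(\bar\Tt,b)$) is the image in the hull by $\mathscr{I}^N\cup\delta(\Tt)$. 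The $\delta_1^N$-soundness of $N$ says precisely that $\bar N=N$, so the $U$-side hull is trivial, which forces $\bar P=P=\mathscr{P}^N(M(\Tt))$; simultaneously, $\Tt$ is based on $N|\delta_1^N$ with $b^\Tt$ non-dropping, so $Q$ (the correct iterate or Q-structure) is $\delta_1^Q$-sound, whence $\bar Q=Q=M_\Tt$. Thus $\mathscr{P}^N(M(\Tt))=M_\Tt$, which is the same as $\mathcal P^N_{M(\Tt)}$ by the very definition of $\mathcal F_1^N$.

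Next I would verify $\N_\infty^N=(\N_\infty^{\ext})_N$ and $\chi_N=\id$, exactly as in the proof of Lemma \ref{c0}, except that the covering data is refined as in Lemma \ref{lem:M_infty_of_kappa_0-sound_iterate}: every index $(Q,s)\in\mathscr{D}_1^N$ can be dominated by some $(P,u)$ with $u\in[\mathscr{I}^N]^{<\om}$, using that $N=\Hull_1^N(\delta_1^N\cup\mathscr{I}^N)$ (the $\delta_1^N$-soundness together with indiscernibility). For each such $(P,u)$ with $P$ true at $u$, the $P$-stability of $u$ combined with Step~1 lets us define $\chi_N$ by $\chi_N(\pi_{Pu,\infty}^N(x))=i_{P,(\N_\infty^{\ext})_N}(\pi_{Pu}^{-1}\text{-preimage data})$, which is easily verified cofinal and elementary, hence the identity. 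This immediately gives that $\N_\infty^N$ is the external direct limit of $\Sigma_{\vV_1}$-iterates of $\vV_1$ under correct iteration maps (using Step~1 to interpret the P-construction models as actual iterates), and so is itself a non-dropping $\Sigma_{\vV_1}$-iterate of $\vV_1$. The $\delta_1^{\N_\infty^N}$-soundness of the limit is inherited level-by-level from the $\delta_1^P$-soundness of each $P\in\mathscr{F}_1^N$; and since each $P\in\mathscr{F}_1^N$ is a correct $\Sigma_N$-iterate of $N$ above the common Woodin, $\N_\infty^N$ is in fact a $\Sigma_N$-iterate of $N$.

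For the ``moreover'' clause, I would mimic Lemma \ref{lem:pi_infty_is_iteration_map}. The map $\pi_{\infty 1}^N$ is by construction $i_{\vV_1N}(\pi_{\infty 1})$, which in turn is the union of $\pi_{Pt,\infty}^N(\pi_{Ps,\infty}^N)$ over embedding-good triples $(P,s,t)$. On the direct-limit side, $\N_\infty^{\N_\infty^N}$ is the image under the construction applied within $\N_\infty^N$, and by Step~1 (applied inside $\N_\infty^N$, which is itself $\vV_1$-like and $\delta_1$-sound) it is a non-dropping $\Sigma_{\N_\infty^N}$-iterate of $\N_\infty^N$ via a maximal tree based on $\N_\infty^N|\delta_1^{\N_\infty^N}$. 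The restriction of $\pi_{\infty1}^N$ to $\N_\infty^N|\delta_1^{\N_\infty^N}$ is then cofinal in the image Woodin, and equals the extender of the correct iteration map derived from any witness chain $(P,s,t)$; since both maps are determined by that cofinal restriction (via the ultrapower presentation, as $\N_\infty^N$ is $\delta_1^{\N_\infty^N}$-sound), they coincide on all of $\N_\infty^N$.

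The main obstacle is Step~1, i.e.\ identifying the modified P-construction with the correct iterate: it depends essentially on Lemma \ref{lem:general_P-correctness}, whose phalanx-comparison argument in turn requires both the $\delta_1^N$-soundness of $N$ (to kill the hull on the $U$-side) and the careful treatment of long extenders overlapping $\delta(\Tt)$ developed in \S\ref{sec:dl-rel_sts_vV_1}. Once that correctness is in hand, the rest is a bookkeeping exercise parallel to the first-Varsovian case, with $\mathscr{I}^N$ and $\delta_1^N$-soundness playing the roles that $\mathscr{I}^M$ and $\delta_0^M$-soundness did in Lemma \ref{lem:delta_0-sound_iterate_M_infty}.
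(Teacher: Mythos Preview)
Your proposal is correct and matches the paper's own approach: the paper's proof is the single line ``This is just Lemma \ref{lem:general_P-correctness} and a consequence thereof, and by standard arguments,'' and you have correctly identified Lemma \ref{lem:general_P-correctness} as the key input (with $\delta_1^N$-soundness collapsing the hulls $\bar P,\bar Q$ to $P,Q$) and unpacked the ``standard arguments'' as the obvious adaptations of Lemmas \ref{lem:delta_0-sound_iterate_M_infty} and \ref{lem:pi_infty_is_iteration_map}.
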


\begin{proof}This is just Lemma \ref{lem:general_P-correctness} and a consequence thereof,
	and by standard arguments.
\end{proof}

Like with $M$, working in $\N_\infty[*_1\rest\delta_{\infty1}]$
we can compute $\pi_{\infty1}^+$, so $\N_\infty[*_1]$ has universe
\[ L[\N_\infty,*_1\rest\delta_{\infty1}]=L[\N_\infty,*_1]=L[\N_\infty,\pi_{\infty1}]=L[\N_\infty,\pi_{\infty1}^+].\]

\subsubsection{Uniform grounds of $\vV_1$}

\begin{lemma}\label{lem:vV_1_still-a-woodin-in-v} Write $\varepsilon=\delta_{1\infty}$. We have:
	\begin{enumerate}
		\item\label{item:vV_1_sys_1_V_delta_infty_preserved}
		$V_{\varepsilon}^{\N_\infty[*_1]} = V_{\varepsilon}^{{{\cal N}_\infty}}$.
		\item\label{item:vV_1_sys_1_delta_infty_Woodin_in_M_infty[*]} $\varepsilon$ is
		\tu{(}the second\tu{)} Woodin in $\N_\infty[*_1]$ (and $\delta_0^{\N_\infty}$ the first).
		\item\label{item:vV_1_V_delta_and_Woodin_from_gg}
		Property \ref{item:delta_infty-cc} of uniform grounds holds for $\N_\infty[*_1]$ at $\varepsilon$;
		that is, $\N_\infty[*_1]\sats$``$\varepsilon$ is
		regular and $\BB_\infty$ is
		$\varepsilon$-cc''. Moreover, $\N_\infty[*_1]\sats$``$\BB_\infty$ is a complete Boolean algebra''.
	\end{enumerate}
\end{lemma}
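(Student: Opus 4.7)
The plan is to follow the proof of Lemma~\ref{still-a-woodin-in-v} \emph{mutatis mutandis}, lifted one level up from $M,\vV_1,\M_\infty$ to $\vV_1,\vV_2,\N_\infty$. Everything should go through because the abstract uniform-grounds framework of \S\ref{sec:ground_generation} has now been verified for the second system, with the role of $M$ played by $\vV_1$ and the role of $\M_\infty$ played by $\N_\infty$, and because the results of \S\ref{sec:dl-rel_sts_vV_1} give us exactly the short tree strategy information needed to make the P-construction and genericity arguments work at $\delta_1$.

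For part \ref{item:vV_1_sys_1_V_delta_infty_preserved}, I would first show by standard ``cone'' arguments (as in Lemma~\ref{c1} and Lemma~\ref{lem:cones}) that $*_1 \rest \eta \in \N_\infty$ for every $\eta < \varepsilon$: fixing $s\in[\OR]^{<\om}$ with $\max(s) = \eta + 1$ and choosing $(P,s)\in\mathscr{F}_1^+$ with $s$ being $P$-stable, the map $*_1 \rest \eta$ agrees with $\pi_{Ps,\infty}\rest\eta$, which lives in $\N_\infty$ because $\N_\infty$ computes the relevant covering system internally. The definability of $\N_\infty[*_1]$ as $\HOD_{\mathscr{E}_1}^{\vV_1[G]}$ for an appropriate collection $\mathscr{E}_1$ of $\vV_1$-like structures (the analogue of Theorem~\ref{tm:M_infty[*]=HOD_E}) then yields the containment $V_\varepsilon^{\N_\infty[*_1]} \subseteq V_\varepsilon^{\N_\infty}$; the reverse containment is trivial.

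For part \ref{item:vV_1_sys_1_delta_infty_Woodin_in_M_infty[*]}, I would follow the argument of \cite[Theorem~2.19]{vm1} (or \cite[Claim~13]{Theta_Woodin_in_HOD}). The idea is that if $A \in \N_\infty[*_1]$ is a set of ordinals with $A \subseteq \varepsilon$, then using part \ref{item:vV_1_sys_1_V_delta_infty_preserved} and the Woodinness of $\varepsilon$ in $\N_\infty$ (which is immediate, since $\N_\infty$ is a $\vV_1$-like Vsp with $\varepsilon = \delta_1^{\N_\infty}$), we can find $\kappa < \varepsilon$ and an extender $E$ on the $\N_\infty$-sequence with critical point $\kappa$ which is $A$-reflecting in $V_\varepsilon^{\N_\infty} = V_\varepsilon^{\N_\infty[*_1]}$. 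Then $i_E$ lifts naturally to $\N_\infty[*_1]$ (using that $\pi_\infty^{+1}$ is elementary and the lift of $*_1$ is canonically determined by $E$), and the lifted extender witnesses $A$-reflection for $\varepsilon$ in $\N_\infty[*_1]$. That $\delta_0^{\N_\infty}$ is also Woodin (indeed, the \emph{first} Woodin) in $\N_\infty[*_1]$ is a consequence of Lemma~\ref{still-a-woodin-in-v}~\ref{item:sys_1_delta_infty_Woodin_in_M_infty[*]} applied inside $\N_\infty$ (viewing $\N_\infty[*_1]|\gamma^{\N_\infty}$ as its base Vsp), together with the fact that extenders of $\N_\infty$ with critical point below $\delta_0^{\N_\infty}$ do not interact with the long-extender structure added above.

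Part \ref{item:vV_1_V_delta_and_Woodin_from_gg} will follow immediately: since $\varepsilon$ is Woodin in $\N_\infty[*_1]$ and $V_\varepsilon^{\N_\infty[*_1]}=V_\varepsilon^{\N_\infty}$, the extender algebra $\BB_\infty = \BB_\varepsilon^{\N_\infty}$ is computed identically in both models, is $\varepsilon$-cc and complete in $\N_\infty$, and hence also in $\N_\infty[*_1]$ (since neither the $\varepsilon$-cc nor completeness can be destroyed by agreement below $\varepsilon$, $\varepsilon$ being regular in both). The main obstacle should be the Woodinness argument in part~\ref{item:vV_1_sys_1_delta_infty_Woodin_in_M_infty[*]}: at the $\vV_1$-level we have long extenders in $\es^{\N_\infty[*_1]}$, so one must take a little care to check that the standard argument (lifting $\N_\infty$-extenders to $\N_\infty[*_1]$-extenders via $\pi_\infty^{+1}$) interacts correctly with the long-extender coding, but this is precisely what the machinery of \S\ref{subsec:vV_1} (especially Lemma~\ref{lem:vV_1_M_infty[*]_inter-def}~\ref{item:vV_1^M_infty_=_Ult(vV_1,e)}, adapted to the second level) was set up to handle.
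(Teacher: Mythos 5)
The paper does not prove part \ref{item:vV_1_sys_1_V_delta_infty_preserved} via an analogue of Theorem~\ref{tm:M_infty[*]=HOD_E}, and this is the gap in your proposal: no level-2 version of the $\HOD_{\mathscr{E}}$ characterization (i.e., $\N_\infty[*_1]\ueq\HOD_{\mathscr{E}_1}^{\vV_1[G]}$) is stated or proved anywhere in the paper. Establishing such an analogue would not be automatic: the argument for $\HOD_{\mathscr{E}}^{M[G]}$ rests on a Boolean-valued comparison of $\Mswsw$-like \emph{premice}, and lifting this to $\vV_1$-like Vsps runs into the same complications with long extenders that forced the tweaks appearing in, e.g., Lemma~\ref{lem:vV_1_c_i_dense} and the $\lambda$-candidate comparison in Theorem~\ref{tm:vV_2_is_generic_HOD_and_mantle}. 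You would be invoking a theorem that still needs to be proved, and it is not a small step.

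The paper instead gives a direct argument that bypasses this entirely, using the elementary map $\pi_{\infty1}^+\colon\N_\infty[*_1]\to\N_\infty^{\N_\infty}[*_1^{\N_\infty}]$ already constructed from the abstract uniform-grounds framework. Given $A\in\pow(\OR)\cap\N_\infty[*_1]$ with $A\sub\eta<\varepsilon$, elementarity gives $\pi_{\infty1}^+(A)\in\N_\infty^{\N_\infty}[*_1^{\N_\infty}]$, which is a class of $\N_\infty$, hence $\pi_{\infty1}^+(A)\in\N_\infty$. Since $*_1\rest\eta\in\N_\infty$ (which you correctly observe), $\N_\infty$ recovers $A$ via $\xi\in A\iff *_1(\xi)\in\pi_{\infty1}^+(A)$. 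This is shorter and needs no new machinery. It also exactly parallels the first two sentences of your own part~(1), so you are very close; you should simply drop the $\HOD_{\mathscr{E}_1}$ appeal in favor of the $\pi_{\infty1}^+$-argument.

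For parts \ref{item:vV_1_sys_1_delta_infty_Woodin_in_M_infty[*]} and \ref{item:vV_1_V_delta_and_Woodin_from_gg} your proposal matches the paper's, which simply notes these go through as in Lemma~\ref{still-a-woodin-in-v}. One small overcomplication: you do not need to invoke Lemma~\ref{still-a-woodin-in-v} ``inside $\N_\infty$'' to see that $\delta_0^{\N_\infty}$ remains Woodin; once part~(1) gives $V_\varepsilon^{\N_\infty[*_1]}=V_\varepsilon^{\N_\infty}$, the extenders witnessing Woodinness of $\delta_0^{\N_\infty}$ in $\N_\infty$ live in $V_\varepsilon$ and the relevant power sets agree, so they witness Woodinness of $\delta_0^{\N_\infty}$ in $\N_\infty[*_1]$ directly.
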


\begin{proof}Part \ref{item:vV_1_sys_1_V_delta_infty_preserved}: As usual we have $*_1 \upharpoonright
	\eta \in {{\cal N}_\infty}$ for every $\eta<\varepsilon$.
	Now  $\N_\infty^{\N_\infty}[*_1^{\N_\infty}]$ is a class of $\N_\infty$
	and
	\[ \pi_{\infty1}^+:\N_\infty[*_1]\to\N_\infty^{\N_\infty}[*_1^{\N_\infty}] \]
	is elementary.
	Let $A\in\pow(\OR)\inter\N_\infty[*_1]$. Then $\pi_{\infty1}^+(A)\in\N_\infty$.
	So if $A\sub\eta<\varepsilon$ then $\N_\infty$ can compute $A$ from the set $\pi_{\infty1}^+(A)$ and the map $\pi_{\infty1}^+\rest\eta=*_1\rest\eta$.
	The remaining parts are now as in Lemma \ref{still-a-woodin-in-v}.
\end{proof}

So by Theorem \ref{tm:Varsovian_is_ground}, $\N_\infty[*_1]$ is a ground
of $\vV_1$.

\subsection{The second Varsovian model as the strategy mouse $\vV_2$}\label{subsec:vV_2}

Let $j:\M_\infty|\delta_\infty\to\N_\infty|\delta_0^{\N_\infty}$ be the restriction of the $\Sigma_{\M_\infty}$-iteration map. Note that for each $\nu>\kappa_1$,
if $F=F^{\vV_1||\nu}\neq\emptyset$ is long, then $\kappa_1<\lambda(F)$
and $\N_\infty|\delta_0^{\N_\infty}$ is definable in the codes over $\vV_1|\kappa_1$,
and hence in $\N_\infty|\delta_0^{\N_\infty}\in\vV_1|\lambda$.
Moreover, letting $P=\Ult(\vV_1,F)\downarrow 0$, we have $\delta_0^P=\lh(F)$ and $P|\delta_0^P$ is an iterate of $\N_\infty|\delta_0^{\N_\infty}$. In this circumstance let
\[ k_\nu:\N_\infty|\delta_0^{\N_\infty}\to P|\delta_0^P \]
be the iteration map.
Now define
$\mathbb{F}^{\vV_1}_{>\kappa_1}$ as the class
of all tuples $(\nu,\alpha,\beta)\in\OR^3$ such that $\nu > \kappa_1$, $F=F^{\vV_1||\nu}\neq\emptyset$ and either
\begin{enumerate}[label=--]
	\item $F$ is short (so $\kappa_1\leq\crit(F)$) and $F(\alpha)=\beta$, or
	\item $F$ is long and $k_\nu(\alpha)=\beta$.
\end{enumerate}

\begin{lemma}\label{lem:vV_1_restr_of_extenders_are_there}
	${\mathbb F}^{\vV_1}_{>\kappa_1}$ is lightface definable over $\N_\infty[*_1]$.
\end{lemma}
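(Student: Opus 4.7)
The plan is to follow the blueprint of Lemma \ref{restr_of_extenders_are_there}, substituting the second direct limit system $\mathscr{F}_1$ for $\mathscr{F}$ and the modified P-construction (Definition \ref{dfn:vV_1_P-con}) for the standard one. I claim that for any $\nu>\kappa_1$ and $\alpha,\beta\in\OR$,
\[
(\nu,\alpha,\beta)\in\mathbb{F}^{\vV_1}_{>\kappa_1}\ \Longleftrightarrow\ (\nu^{*_1},\alpha^{*_1},\beta^{*_1})\in\mathbb{F}^{\N_\infty}_{>\kappa_1^{\N_\infty}},
\]
where the right-hand predicate is defined over $\N_\infty$ by applying the definition of $\mathbb{F}^{\vV_1}_{>\kappa_1}$ internally to $\N_\infty$, itself a $\vV_1$-like Vsp.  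Since $\N_\infty$, $\kappa_1^{\N_\infty}$, and the extender sequence of $\N_\infty$ are all lightface available as predicates over $\N_\infty[*_1]$, and $*_1$ is lightface available by Lemma \ref{lem:pi_infty^+}, the right-hand side is lightface definable over $\N_\infty[*_1]$, giving the lemma.

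To verify the biconditional, pick $P\in\mathscr{F}_1$ large enough that $\nu,\alpha,\beta$ are all $P$-stable, so that $\pi_{P\infty}(\nu,\alpha,\beta)=(\nu^{*_1},\alpha^{*_1},\beta^{*_1})$.  By elementarity of $\pi_{P\infty}:P\to\N_\infty$ applied to the first-order predicate $\mathbb{F}_{>\kappa_1^{-}}$ of a $\vV_1$-like Vsp, the biconditional reduces to showing that $(\nu,\alpha,\beta)\in\mathbb{F}^{\vV_1}_{>\kappa_1}$ iff $(\nu,\alpha,\beta)\in\mathbb{F}^P_{>\kappa_1^P}$, i.e., that $F^{\vV_1||\nu}$ and $F^{P||\nu}$ determine the same action in the sense of that predicate.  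For short $F^{\vV_1||\nu}$, one picks $P\in\mathscr{F}_1$ with $\delta_1^P<\crit(F^{\vV_1||\nu})$ (cofinally available since dl-relevant trees $\Tt\in\vV_1$ can be chosen with $\delta(\Tt)$ anywhere in $(\delta_\infty,\kappa_1)$); the modified P-construction then acts as the standard P-construction above $\delta_1^P$, giving $F^{P||\nu}\rest\OR=F^{\vV_1||\nu}\rest\OR$.  For long $F^{\vV_1||\nu}$ (so $\crit=\kappa_0$), we restrict attention to dl-relevant $\Tt$ with $E_0=\emptyset$, a cofinal subcategory of $\mathscr{F}_1$: here $\jbar_P=\id$, so the defining equation $F^{P||\nu}\com\jbar_P\subseteq F^{\vV_1||\nu}$ of Definition \ref{dfn:vV_1_P-con} degenerates to $F^{P||\nu}\rest\OR=F^{\vV_1||\nu}\rest\OR$, whence the iteration maps $k_\nu^P$ and $k_\nu^{\vV_1}$ agree on ordinals in their common region of definition.

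The main obstacle lies in the long case: one must verify that the iteration map $k_\nu^{\vV_1}:\N_\infty|\delta_0^{\N_\infty}\to\Ult(\vV_1,F^{\vV_1||\nu})\downarrow 0\,|\,\delta_0$ is appropriately mirrored by its internal analog $k_\nu^P$ in $P$, and that the latter is sent by $\pi_{P\infty}$ to $k^{\N_\infty}_{\nu^{*_1}}$.  This requires that $k_\nu$ be first-order definable in a $\vV_1$-like Vsp from the extender $F$ at the relevant index—which follows from the setup preceding the lemma, and in particular from the correctness of long extenders as encoding iteration strategies of $\M_\infty|\delta_\infty$ (cf.~Lemma \ref{lem:Gamma_A_is_good})—together with the preservation of the second direct limit structure under $\pi_{P\infty}$ supplied by Lemma \ref{lem:vV_1_delta_1-sound_iterate_M_infty}.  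Once this compatibility is in place, elementarity of $\pi_{P\infty}$ yields the biconditional, and combining the displayed equivalence with the lightface availability of $\N_\infty$ and $*_1$ over $\N_\infty[*_1]$ completes the proof.
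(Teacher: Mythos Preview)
Your displayed biconditional is in fact equivalent to the paper's, but your justification in the long case has a genuine gap, and the paper's route is different in an essential way.

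The first problem is your claim that dl-relevant trees with $E_0=\emptyset$ form a cofinal subcategory of $\mathscr{F}_1$. This is false. A model $P\in\mathscr{F}_1$ arising from a tree with $E_0=\emptyset$ has $P|\delta_0^P=\vV_1|\delta_0^{\vV_1}$. If $Q\in\mathscr{F}_1$ has $E_0^Q\neq\emptyset$, then $Q|\delta_0^Q$ is a proper iterate of $\vV_1|\delta_0^{\vV_1}$, and any $R\succeq Q$ has $R|\delta_0^R$ an iterate of $Q|\delta_0^Q$, hence also a proper iterate of $\vV_1|\delta_0^{\vV_1}$; so no $R\succeq Q$ can have $E_0^R=\emptyset$. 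The $E_0=\emptyset$ models are downward-closed but not cofinal, and you cannot in general achieve $P$-stability for $\nu,\alpha,\beta$ at such a $P$.

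The second problem is the step ``$F^{P||\nu}\rest\OR=F^{\vV_1||\nu}\rest\OR$, whence $k_\nu^P$ and $k_\nu^{\vV_1}$ agree.'' This does not follow. The relation between $F$ and $k$ is $F=k\circ j$ (with $j$ the iteration map $\M_\infty|\delta_\infty\to\N_\infty|\delta_0^{\N_\infty}$), so equality of the $F$'s only yields equality of the $k$'s on $\rg(j)$, and only after you know $j^P=j$, which you do not verify.

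The paper's approach sidesteps both issues. Instead of passing through $\mathbb{F}^P_{>\kappa_1^P}$ and $k_\nu^P$, it works directly with $F'=F^{\N_\infty||\nu^*}$ and observes the asymmetry: in the long case the argument to $F'$ is $\alpha$, \emph{not} $\alpha^*$. The point is that $F'$, being a long extender in $\N_\infty$, already has domain $\N_\infty|\delta_0^{\N_\infty}$, the same domain as $k_\nu$; so no shift of $\alpha$ is needed. The verification then goes via the modified P-construction identity $F^{P||\nu}\circ\jbar_P=F^{\vV_1||\nu}$ together with $\pi_{P\infty}\circ\jbar_P=j$ (both equal the restriction of $\pi_{\vV_1\infty}$), and by varying $P$ over \emph{all} of $\mathscr{F}_1$ (not just the $E_0=\emptyset$ part) one covers every $\alpha<\delta_0^{\N_\infty}$. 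Your formulation with everything starred is recoverable from the paper's via the identity $F'(\alpha)=k_{\nu^*}^{\N_\infty}(\alpha^*)$, since $j^{\N_\infty}=\ast_1\rest\delta_0^{\N_\infty}$; but the paper's direct route is both shorter and avoids the pitfalls above.
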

\begin{proof}
	Write $*$ for $*_1$. Let $(\nu,\alpha,\beta)\in\OR^3$ with $\nu>\kappa_1$.
	Let $F'=F^{\N_\infty||\nu^*}$.
	We claim that $(\nu,\alpha,\beta)\in{\mathbb F}^{\vV_1}_{>\kappa_1}$
	iff either
	\begin{enumerate}[label=--]
		\item $F'\neq\emptyset$ is short and $F'(\alpha^*)=\beta^*$, or
		\item $F'\neq\emptyset$ is long and $F'(\alpha)=\beta^*$ (the argument to $F'$ is $\alpha$, not $\alpha^*$!),
	\end{enumerate}
	and moreover, if $F\neq\emptyset$ then $F$ is short iff $F'$ is short. This is proved
	like in Lemma \ref{restr_of_extenders_are_there}, but the case that $F$ is long is uses the modified P-construction.
\end{proof}

\begin{lemma}\label{vV_1_V_is_a_ground}
	Let
	${\mathbb L}=\mathbb{L}^{\N_\infty[*_1]}(\kappa_1)$
	(Definition \ref{defn_bukowsky-poset}, for adding a  subset of $\kappa_1$). Then	$\vV_1|\kappa_1$ is ${\mathbb L}$-generic over
	$\N_\infty[*_1]$ and
	$\N_\infty[*_1][\vV_1|\kappa_1] \ueq\vV_1$.
\end{lemma}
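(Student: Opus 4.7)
The plan is to mimic the strategy of Lemma \ref{V_is_a_ground}, now one level up. Genericity of $\vV_1|\kappa_1$ over $\N_\infty[*_1]$ is essentially free: we verified the uniform grounds hypotheses \ref{item:M_pc_ZFC}--\ref{item:unif_grds} (with $\delta=\delta_1^{\vV_1}$ and $\BB=\BB^{\vV_1}_{\delta\delta_0^{\vV_1}}$) via Lemmas \ref{lem:vV_1_first_properties_int_sys}, \ref{lem:vV_1_key_facts_about_V1} and \ref{lem:vV_1_still-a-woodin-in-v}, so by the abstract machinery of \S\ref{ground_generation_stuff} (specifically Lemma \ref{c3} applied with $A=\vV_1|\kappa_1$ and appropriate $\tau'_{p_1}$ given by Definition \ref{dfn:vV_1_tau^P}), the filter $G_A$ determined by $\vV_1|\kappa_1$ is $(\N_\infty[*_1],\mathbb{L})$-generic, and trivially $\vV_1|\kappa_1\in\N_\infty[*_1][\vV_1|\kappa_1]$.

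The content is therefore the equality $\N_\infty[*_1][\vV_1|\kappa_1]\ueq\vV_1$. I would run a ``P-construction'' inside $\N_\infty[*_1][\vV_1|\kappa_1]$, identifying the sequence $\langle \vV_1|\nu,\vV_1||\nu\rangle_{\nu\in\OR}$ by recursion on $\nu$, starting from $\vV_1|\kappa_1$. At stage $\nu>\kappa_1$ with $\langle \vV_1||\beta\rangle_{\beta<\nu}$ in hand, Lemma \ref{lem:vV_1_restr_of_extenders_are_there} tells us whether $\es^{\vV_1}_\nu\neq\emptyset$ and, if so, gives us $\es^{\vV_1}_\nu\rest\OR$ definably from $(\N_\infty,*_1)$. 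If $\es^{\vV_1}_\nu$ is short, we recover $\es^{\vV_1}_\nu$ itself from $(\vV_1|\nu,\es^{\vV_1}_\nu\rest\OR)$ uniformly in $\nu$ by Lemma \ref{lem:F^P_from_F^P_rest_OR} (adapted in the obvious way to Vsp's; this works because $\vV_1|\nu$ is now available as a parameter). For the long $\es^{\vV_1}_\nu$ (those with $\crit$ acting via the map $k_\nu$), the target model $\Ult(\vV_1,\es^{\vV_1}_\nu)\downarrow 0=P|\delta_0^P$ is determined level-by-level by $\vV_1|\nu$ together with $(\N_\infty|\delta_0^{\N_\infty},*_1\rest\delta_0^{\N_\infty})$ as in the proof of \ref{local_correspondence}(iii)(e), and once the target is in hand the full long extender is recoverable from its restriction to the ordinals by the Vsp analogue of Fact \ref{fact:F_from_F_rest_X} (via a cofinal-in-$\delta_0^{\vV_1||\nu}$ piece of $k_\nu$, which lies in $\vV_1|\nu$).

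The main obstacle I expect is handling the long extenders cleanly, since unlike the first Varsovian case there is a genuine ``two-step'' phenomenon: extenders with $\crit=\kappa_1$ in $\es^{\vV_1}$ are long in a sense that mixes both strategy information for $\N_\infty|\delta_0^{\N_\infty}$ (inherited from the first direct limit system) and the new long structure at $\gamma^{\N_\infty}$. The uniform definability of $k_\nu$ must be established carefully: one verifies, using Lemma \ref{lem:vV_1_delta_1-sound_iterate_M_infty} and the computability of $\Sigma_{\N_\infty}$-iterates inside $\vV_1$ from \S\ref{subsec:iterability_of_vV_1} (specifically, the strategy information encoded in $F^{\vV_1||\nu}$ for $\nu\in(\gamma^{\vV_1},\kappa_1)$ extends uniformly to $\nu>\kappa_1$ through the modified P-construction), that $k_\nu$ is $\Delta_1$-definable over $\N_\infty[*_1]$ from the parameter $\vV_1|\nu$, uniformly in $\nu$. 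Once that is in place, the recursive reconstruction goes through, establishing $\vV_1\subseteq \N_\infty[*_1][\vV_1|\kappa_1]$, and the reverse inclusion is immediate since $\N_\infty,*_1\in\vV_1$ and $\vV_1|\kappa_1\in\vV_1$.
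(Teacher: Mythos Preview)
Your overall strategy matches the paper's: genericity comes for free from the uniform grounds machinery of \S\ref{ground_generation_stuff}, and then one runs a P-construction inside $\N_\infty[*_1][\vV_1|\kappa_1]$, recovering $\es^{\vV_1}$ level by level via Lemma \ref{lem:vV_1_restr_of_extenders_are_there}. The paper's proof is a two-line sketch pointing to exactly this.

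However, your ``main obstacle'' paragraph misidentifies the issue. Extenders $E\in\es^{\vV_1}$ with $\crit(E)=\kappa_1$ are \emph{short}: they satisfy the ordinary premouse axioms (see Lemma \ref{local_correspondence}\ref{item:premouse_axioms} with $\crit(\es^M_\nu)>\kappa_0$), so they fall under the easy case handled by Lemma \ref{lem:F^P_from_F^P_rest_OR}. There is no ``two-step'' or $1$-long phenomenon in $\es^{\vV_1}$; that only enters when one builds $\vV_2$. The only long extenders in $\es^{\vV_1}$ above $\kappa_1$ are still the $0$-long ones (coming from $\es^M$-extenders with $\crit=\kappa_0$), and the actual wrinkle is simpler than what you describe: $\mathbb{F}^{\vV_1}_{>\kappa_1}$ records $k_\nu\rest\OR$ (the factor through $\N_\infty|\delta_0^{\N_\infty}$), not $\es^{\vV_1}_\nu\rest\OR$ itself. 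To pass from $k_\nu$ back to $\es^{\vV_1}_\nu$ one needs the map $j:\M_\infty|\delta_\infty\to\N_\infty|\delta_0^{\N_\infty}$. The paper's single observation is that both $j$ and $\N_\infty|\delta_0^{\N_\infty}$ are definable in the codes over $\vV_1|\kappa_1$, hence available in $\N_\infty[*_1][\vV_1|\kappa_1]$; this closes the gap immediately without the elaborate uniform-definability-of-$k_\nu$ argument you sketch.
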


\begin{proof}
	This follows from Lemma  \ref{restr_of_extenders_are_there} almost like
	in the proof of Lemma \ref{V_is_a_ground},
	using the fact that $\N_\infty|\delta_0^{\N_\infty}$ and the iteration map $j$ used above are definable (in the codes) over $\vV_1|\kappa_1$, and hence available
	to $\N_\infty[*_1][\vV_1|\kappa_1]$.
\end{proof}

We now adapt Definition \ref{dfn:vV_1}, presenting
the second Varsovian model as a  strategy mouse
$\vV_2$ analogous to $\vV_1$.
The sequence $\es^{\vV_2}$ will have two kinds of long extenders, corresponding
to $\delta_0^{\N_\infty}$ and $\delta_1^{\N_\infty}$:

\begin{dfn}\label{dfn:vV_2}
	Write $\gamma_0^{\vV_2}=\gamma^{\N_\infty}$ and $\gamma_1^{\vV_2}=(\kappa_1^{\N_\infty})^{+\N_\infty}$.
	Note that \[ \kappa_1<\delta_0^{\N_\infty}<\gamma_0^{\vV_2}<\kappa_1^{+\vV_1}=\delta_1^{\N_\infty}<\gamma_1^{\vV_2}.\]

	Define the structure
	\[ \vV_2=(L[{\mathbb E}^{\vV_2}];\in,\mathbb{E}^{\vV_2}),\]
	with segments $\vV_2||\nu=(\J_{\nu}[\es^{\vV_2}\rest\nu];{\in},\es^{\vV_2}\rest\nu,\es^{\vV_2}_\nu)$ and their passivizations $\vV_2|\nu$, recursively in $\nu$ as follows:
	\begin{eqnarray}
		\es_\nu^{\vV_2} =
		\begin{cases}
			\es_\nu^{{\cal N}_\infty} & \mbox{ if } \nu< \gamma_1^{\vV_2} \\
			\pi_{\infty1} \rest ({\cal N}_\infty|\delta_1^{{\cal N}_\infty}) & \mbox{ if }
			\nu = \gamma_1^{\vV_2} \\
			\es^{\vV_1}_\nu \rest (\vV_2|\nu) & \mbox{ if } \nu >
			\gamma_1^{\vV_2}\mbox{ and }\es^{\vV_1}_\nu\text{ is short},
			\\
			k_\nu\rest(\N_\infty|\delta_0^{\N_\infty}) & \mbox{ if } \nu>\gamma_1^{\vV_2}\mbox{ and }\es^{\vV_1}_\nu\text{ is long},
		\end{cases}
	\end{eqnarray}
	and with $\es^{\vV_2}= \{ (\nu , x , y ) \colon \es_\nu^{\vV_1} \not= \emptyset
	\text{ and } y = \es_\nu^{\vV_1}(x) \}$ and
	$\es^{\vV_2} \rest \nu$ as usual. 	(We verify well-definedness in Lemma \ref{vV_2_local_correspondence}.)

	Definability etc over $\vV_2$ has the predicate $\es^{\vV_2}$ available by default.

	Write $e_i^{\vV_2}=\es^{\vV_1}_{\gamma_i^{\vV_2}}$ for $i=0,1$.
\end{dfn}

The fine structural concepts for segments of $\vV_2$ are defined directly
as for segments of $\vV_1$ (Definition \ref{dfn:vV_1_fine_structure}).
The next two lemmas are direct adaptations of Lemmas \ref{local-definability-of-that-structure}, \ref{local_correspondence} respectively:

\begin{lemma}\label{vV_2_local-definability-of-that-structure}
	Let $\bar{\vV}=\vV_2||\gamma_1^{\vV_2}$. Then:
	\begin{enumerate}[label=\tu{(}\alph*\tu{)}]
		\item\label{item:vV_2_forcing_def}
		${\mathbb L}=\mathbb{L}^{\N_\infty[*_1]}(\kappa_1)$ is
		$\Sigma_1$-definable over $\bar{\vV}$.
		\item\label{item:vV_2_gamma_structure_def}
		$\bar{\vV}$ is isomorphic to a structure which is
		definable without parameters over
		$\vV_1|\kappa_1^{+\vV_1}$.
		\item\label{item:vV_2||gamma_is_sound} $\bar{\vV}$ is sound, with
		$\rho_\om^{\bar{\vV}}=\rho_1^{\bar{\vV}}=\delta_{1\infty}$ and $p_1^{\bar{\vV}}=\emptyset$.
		\item\label{item:vV_2_better_size_of_gamma}
		$\OR^{\bar{\vV}}<\xi_0$,
		where $\xi_0$ is the least $\xi>\kappa_1^{+\vV_1}$ such that $\vV_1|\xi$ is
		admissible.
		Therefore  $\vV_2||\nu$ is passive for every $\nu\in(\gamma_1^{\vV_2},\xi_0]$.
	\end{enumerate}
\end{lemma}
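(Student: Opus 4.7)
Part \ref{item:vV_2_forcing_def} will follow directly by inspection of the construction of $\mathbb{L}$ in \S\ref{ground_generation_stuff}: the conditions of $\mathbb{L}$ are defined from the $\Sigma_1$ forcing relation of $\BB_\infty$ over $\N_\infty$, applied to $\pi_{\infty 1}^+$-images of formulas in $\Ll$, and each ingredient is $\Sigma_1$ over $\bar{\vV}=\vV_2||\gamma_1^{\vV_2}$ using the predicate $e_1^{\vV_2}=\pi_{\infty 1}\rest(\N_\infty|\delta_{1\infty})$, which is the active long extender of $\bar{\vV}$.

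The plan for part \ref{item:vV_2_gamma_structure_def} is to mimic the proof of Lemma \ref{local-definability-of-that-structure}\ref{item:gamma_structure_def}, but one level up. Define $\overline{\mathscr{F}}_1^+$ to consist of pairs $(P,s)$ such that there is $P'$ with $(P',s)\in\mathscr{F}_1^+$, $P=P'|\kappa_1^{+\vV_1}$, and $s\subseteq\kappa_1^{+\vV_1}$, endowed with the induced ordering, models, and embeddings from $\mathscr{F}_1^+$. Let $\overline{\N}_\infty$ be its direct limit, with $\bar{*}_1$ the associated $*$-map, and let
\[\bar{\sigma}:\overline{\N}_\infty\to\N_\infty|\kappa_1^{+\N_\infty}\]
be the natural factor map arising from the inclusion of $\overline{\mathscr{F}}_1^+$ into $\mathscr{F}_1^+$. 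Since $\overline{\mathscr{F}}_1^+$ is lightface definable over $\vV_1|\kappa_1^{+\vV_1}$ via Lemma \ref{lem:vV_1_def_from_M_infty|kappa_0}\ref{item:vV_1_def_from_M_infty|kappa_0} and the definability of $\mathbb{U}_1$ and $\mathscr{F}_1^+$ from Lemma \ref{lem:vV_1_short_tree_strategy}, this will yield the desired isomorphism once we show $\bar{\sigma}=\id$. For that, I would fix $s\in[\mathscr{I}^{\vV_1}]^{<\om}$ and for each $P\in\mathscr{F}_1$ collapse the hull $H^P=\Hull^{P|\max(s)}(s^-\cup\kappa_1^{\vV_1})$ to $K^P$ via $\tau^P\colon K^P\to H^P$, setting $\bar{s}^P=t\cup\{\OR(K^P)\}$ where $\tau^P(t)=s^-$. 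The key point is that $\vV_1$ is a small forcing extension of $P$ (of size $<\kappa_1^{\vV_1}$), and so $H^{\vV_1}\cap\OR=H^P\cap\OR$ and hence $\bar{s}^P$ does not depend on $P$; denote the common value by $\bar{s}$. Then $\pi_{\vV_1P}(\bar{s})=\bar{s}$, so $(P|\kappa_1^{+\vV_1},\bar{s})\in\overline{\mathscr{F}}_1^+$, and the argument from Lemma \ref{c0} applied with $\bar{s}$ in place of $s$ gives $\bar{\sigma}=\id$.

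For part \ref{item:vV_2||gamma_is_sound}, the plan is to show $\bar{\vV}=\Hull_1^{\bar{\vV}}(\delta_{1\infty})$; soundness with $\rho_1^{\bar{\vV}}=\delta_{1\infty}$ and $p_1^{\bar{\vV}}=\emptyset$ then follows (as $\bar{\vV}\in\N_\infty[*_1]$ and $\delta_{1\infty}$ is Woodin there by Lemma \ref{lem:vV_1_still-a-woodin-in-v}). Given $\alpha<\gamma_1^{\vV_2}$, fix a non-empty $s\in[\mathscr{I}^{\vV_1}]^{<\om}$ and $N\in\mathscr{F}_1$ with $\alpha\in\rg(\pi_{Ns,\infty})$, and let $\bar{s}$ be the stabilized tuple produced in the previous paragraph. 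By the same soundness-transfer argument as in the proof of Lemma \ref{local-definability-of-that-structure}\ref{item:vV_1||gamma_is_sound}, we get $\alpha\in\rg(\pi_{N\bar{s},\infty})$ and hence
\[\alpha\in\Hull^{\N_\infty|\max(\bar{s})^{*_1}}\!\bigl((\bar{s}^-)^{*_1}\cup\delta_{1\infty}\bigr)\subseteq\Hull_1^{\bar{\vV}}(\delta_{1\infty}).\]
Part \ref{item:vV_2_better_size_of_gamma} will then follow immediately from part \ref{item:vV_2_gamma_structure_def}, together with the definition of $\es^{\vV_2}$ above $\gamma_1^{\vV_2}$ (which is level-by-level definable from $\vV_1$ by Lemma \ref{lem:vV_1_restr_of_extenders_are_there} combined with the modified P-construction used in \S\ref{sec:dl-rel_sts_vV_1}).

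The main obstacle will be part \ref{item:vV_2_gamma_structure_def}, and specifically verifying that the construction of $\bar{s}$ goes through in the present context. The subtle point is that, unlike in the $\vV_1$ case where $P\in\mathscr{F}$ is genuinely a ground of $M$ via extender algebra at $\delta_0^P$ of size $<\kappa_0$, here $P\in\mathscr{F}_1$ is a ground of $\vV_1$ via the extender algebra at $\delta_1^P$ using extenders with critical point $\geq\delta_0^P$; the generic is $\vV_1|\delta_1^P$, which is only of size $<\kappa_1^{\vV_1}$ rather than being much smaller than the generators of $s$. One must check that this is still enough to yield $H^{\vV_1}\cap\OR=H^P\cap\OR$, i.e.~that the hull $\Hull^{P|\max(s)}(s^-\cup\kappa_1^{\vV_1})$ computes the same ordinals as $\Hull^{\vV_1|\max(s)}(s^-\cup\kappa_1^{\vV_1})$. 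This holds because the forcing $\BB^P_{\delta_1^P\delta_0^P}$ has size ${\leq}\delta_1^P<\kappa_1^{\vV_1}$ in $P$, so any $\vV_1$-definable ordinal below $\max(s)$ from parameters in $s^-\cup\kappa_1^{\vV_1}$ is, by the forcing theorem, $P$-definable from parameters in $s^-\cup\kappa_1^{\vV_1}\cup\{\vV_1|\delta_1^P\}\subseteq s^-\cup\kappa_1^{\vV_1}$ (since $\vV_1|\delta_1^P\in H^P$ already), and vice versa.
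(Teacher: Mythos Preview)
Your approach is exactly the direct adaptation the paper intends (the paper gives no separate proof, simply citing Lemma~\ref{local-definability-of-that-structure}), and parts \ref{item:vV_2_forcing_def}, \ref{item:vV_2||gamma_is_sound}, \ref{item:vV_2_better_size_of_gamma} are fine.

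There is, however, a genuine slip in your final paragraph justifying $H^{\vV_1}\cap\OR=H^P\cap\OR$. You write that a $\vV_1$-definable ordinal becomes, via the forcing theorem, ``$P$-definable from parameters in $s^-\cup\kappa_1^{\vV_1}\cup\{\vV_1|\delta_1^P\}$'', and then assert $\vV_1|\delta_1^P\in H^P$. But $\vV_1|\delta_1^P$ is precisely the \emph{generic object} over $P$; it is not in $P$ at all, let alone in the hull $H^P=\Hull^{P|\max(s)}(s^-\cup\kappa_1^{\vV_1})$ computed inside $P|\max(s)$. The correct use of the forcing theorem is: if $\alpha$ is definable over $\vV_1|\max(s)=^*_{\delta_1^P}(P|\max(s))[g]$ from ordinal parameters $\vec{\xi}\in s^-\cup\kappa_1^{\vV_1}$, then some single \emph{condition} $p\in g\subseteq\BB^P_{\delta_1^P\delta_0^P}$ forces this, so $\alpha$ is definable over $P|\max(s)$ from $\vec{\xi}$ together with $p$. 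Since the forcing has size $<\kappa_1^{\vV_1}$ in $P$, the condition $p$ is (coded by) an ordinal $<\kappa_1^{\vV_1}$, whence $\alpha\in H^P$. The reverse inclusion uses that $P|\max(s)$ is definable over $\vV_1|\max(s)$ from the parameter $M(\Tt)$ (where $\Tt\in\mathbb{U}_1\subseteq\vV_1|\kappa_1^{\vV_1}$ yields $P$ via the modified P-construction), and $M(\Tt)$ is coded below $\kappa_1^{\vV_1}$. With this correction your argument for part~\ref{item:vV_2_gamma_structure_def} goes through as written.
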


\begin{lemma}\label{vV_2_local_correspondence}
	Let $g=g_{\vV_1|\kappa_1}$
	be the $(\N_\infty[*_1],{\mathbb L})$-generic determined by $\vV_1|\kappa_1$. For every
	$\nu\in\OR$:
	\begin{enumerate}
		\item\label{item:vV_2||nu_in_Minfty*} $\vV_2|\nu$ and $\vV_2||\nu$ are in
		$\N_\infty[*_1]$,
		\item\label{item:vV_2||nu_sound} $\vV_2|\nu$ and $\vV_2||\nu$ are sound,
		\item\label{item:vV_2_fs_correspondence} Suppose $\nu\geq\xi_0$ and let $E=
		F^{\vV_2||\nu}$ and $E'=F^{\vV_1||\nu}$.\footnote{
			Also, $M|\theta_0^M$ and $\vV_1||\gamma^{\vV_1}$ are ``generically equivalent in
			the codes'',
			and letting
			\[ f:(\theta_0^M,\xi_0)\to(\gamma^{\vV_1},\xi_0) \]
			be the unique surjective order-preserving map,
			then $M|\alpha=M||\alpha$ are likewise equivalent
			with $\vV_1|f(\alpha)=\vV_1||f(\alpha)$ for all $\alpha\in\dom(f)$,
			but we will not need this.}
		Then
		\begin{enumerate}[label=\tu{(}\alph*\tu{)}]
			\item ${\mathbb L} \in \vV_2|\nu$ and $g$ is $(\vV_2|\nu,\mathbb{L})$-generic,
			\item $(\vV_2|\nu)[g] =^* \vV_1|\nu$,\footnote{The notation is explained in
				\ref{rem:vV_2_=^*}.}
			\item $(\vV_2||\nu)[g] =^* \vV_1||\nu$,
			\item\label{item:vV_2_premouse_axioms} if $E'\neq\emptyset$ and
			$\crit(E')>\kappa_1$ then $\vV_2||\nu$ satisfies the usual premouse
			axioms with respect to  $E$ (with Jensen indexing;
			so $E$ is an extender over $\vV_2|\nu$ which coheres $\es^{\vV_2|\nu}$, etc),
			\item\label{item:vV_2_non_coherence} if $E'\neq\emptyset$ and
			$\crit(E')=\kappa_1$ then $E$ is a
			long $(\delta_{1\infty},\nu)$-extender
			over $\N_\infty$ and
			\[
			\Ult(\N_\infty|\delta_{1\infty},E)=i^{\vV_1}_{E'}
			(\N_\infty|\delta_{1\infty})=\N_\infty^{\Ult(\vV_1,E')}|i^{\vV_1}_{E'}(\delta_{1\infty})\]
			is a lightface proper class of $\vV_2|\nu$, uniformly in such $\nu$, and
			\item\label{item:vV_2_non_coherence_ii} if $E'\neq\emptyset$
			and $E'$ is long then $E$ is a long $(\delta_0^{\N_\infty},\nu)$-extender over $\N_\infty$ and
			\[ \Ult(\N_\infty|\delta_0^{\N_\infty},E)=i^{\vV_1}_{E'}(\vV_1|\delta_0^{\vV_1})=\M_\infty^{\Ult(\vV_1,E')}|i^{\vV_1}_{E'}(\delta_0^{\vV_1}) \]
			is a lightface proper class of $\vV_2|\nu$, uniformly in such $\nu$.
		\end{enumerate}
	\end{enumerate}
\end{lemma}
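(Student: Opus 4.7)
The plan is to prove this by induction on $\nu$, following the overall architecture of Lemma \ref{local_correspondence} but handling a genuinely new case, namely \ref{item:vV_2_non_coherence_ii}, where $F^{\vV_1||\nu}$ is already long in $\vV_1$. For $\nu < \gamma_1^{\vV_2}$ the conclusions hold directly from the definition of $\vV_2$ since then $\vV_2||\nu \ins \N_\infty$. For $\nu = \gamma_1^{\vV_2}$, Lemma \ref{vV_2_local-definability-of-that-structure} delivers soundness and membership in $\N_\infty[*_1]$. For $\gamma_1^{\vV_2} < \nu \leq \xi_0$ the soundness argument of Lemma \ref{local_correspondence} transfers verbatim: $\gamma_1^{\vV_2}$ is $\Sigma_1$-definable over $\vV_2|\nu$ as the least ordinal active with a long extender of the first kind, so $\gamma_1^{\vV_2}+1 \sub \Hull_1^{\vV_2|\nu}(\delta_{1\infty})$, and above $\om\cdot\gamma_1^{\vV_2}$ the forcing relation for $\mathbb{L}$ is $\Delta_1^{\vV_2|\nu}$, reflecting $\vV_1|\nu = \Hull_1^{\vV_1|\nu}(\kappa_1^{+\vV_1}+1)$ to $\vV_2|\nu = \Hull_1^{\vV_2|\nu}(\delta_{1\infty})$.

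The substance is the inductive step at $\nu > \xi_0$. Assume inductively that $(\vV_2|\bar\nu)[g] =^* \vV_1|\bar\nu \in \N_\infty[*_1]$ with the fine-structural correspondence for all $\bar\nu < \nu$. If $E' = F^{\vV_1||\nu} = \emptyset$ there is nothing new to verify. If $E'$ is short with $\crit(E') > \kappa_1$, standard P-construction calculations show that $E = \es^{\vV_2}_\nu$ is a genuine extender over $\vV_2|\nu$ satisfying the premouse axioms, and $E$ is recoverable from $(\vV_2|\nu, E\rest\OR)$ by Lemma \ref{lem:F^P_from_F^P_rest_X}; this gives \ref{item:vV_2_premouse_axioms}. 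If $E'$ is short with $\crit(E') = \kappa_1$, then $E$ is a long $(\delta_{1\infty},\nu)$-extender over $\N_\infty$ and this is the direct analogue at the second level of case \ref{item:non_coherence} of Lemma \ref{local_correspondence}: the target $U = \N_\infty^{\Ult(\vV_1,E')}|\nu$ must be computed internally to $\vV_2|\nu$, which we do via a local covering-system definition running the $\delta_1$-short tree strategy together with P-construction for the maximal stage, using Lemmas \ref{lem:vV_1_short_tree_strategy} and \ref{lem:vV_1_full_short_tree_strategy} applied inside a suitable collapse extension of $\vV_2|\nu$. Uniqueness of the generic and of the Q-structures gives absoluteness of the computation, yielding \ref{item:vV_2_non_coherence}.

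The new case, and the main obstacle, is when $E'$ is long in $\vV_1$, so $\crit(E') = \kappa_0$ and $\es^{\vV_2}_\nu = E = k_\nu \rest (\N_\infty|\delta_0^{\N_\infty})$, where $k_\nu$ is the $\Sigma_{\M_\infty}$-iteration map into $P|\delta_0^P$ with $P = \Ult(\vV_1,E')\downarrow 0$. First, externally, $\Ult(\N_\infty|\delta_0^{\N_\infty}, E) = P|\delta_0^P$ because $k_\nu$ is an iteration map (by Lemmas \ref{lem:M_infty_of_kappa_0-sound_iterate} and \ref{lem:vV_1-translatable}: $E'$ acting on $\vV_1$ induces the correct iteration on the first-Varsovian system, and this passes to $\N_\infty$ via $\vV_1 \to \Ult(\vV_1,E')$), and $E$ is the derived extender of $k_\nu$. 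Second, to recover $E$ lightface over $\vV_2|\nu$, Lemma \ref{lem:vV_1_restr_of_extenders_are_there} gives $E\rest\OR$, while $P|\delta_0^P$ itself is recovered by running the short-tree-plus-P-construction computation for $\M_\infty$ inside $\vV_2|\nu$, which is possible because $\es^{\vV_2}$ and $\es^{\N_\infty}$ literally coincide below $\gamma_1^{\vV_2}$, and that information is what Lemma \ref{V_1_knows_how_to_iterate_Minfty_up_to_its_woodin} (transferred to $\N_\infty$) needs. A branch absoluteness remark using that $\delta_0^{\N_\infty}$ is regular in $\N_\infty[*_1]$ guarantees the correct branch is picked.

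The hard part will be the correctness clause of case \ref{item:vV_2_non_coherence_ii}: verifying that the map $k_\nu$ reconstructed inside $\vV_2|\nu$ from the level-by-level P-construction of long extenders agrees with the externally defined iteration map into $\Ult(\vV_1,E')\downarrow 0$, uniformly in $\nu$. This reduces to showing that iterating $\vV_1$ by a long extender $E'$ commutes with the formation of the first direct limit system — i.e., that $i^{\vV_1}_{E'}(\M_\infty) = \M_\infty^{\Ult(\vV_1,E')}$ via $i^{\vV_1}_{E'}\rest\M_\infty$ being the $\Sigma_{\M_\infty}$-iteration map — which is the analogue of Lemma \ref{lem:M_infty_of_kappa_0-sound_iterate}\ref{item:i_MN_restricts_to_iteration_map} for long extenders and follows from the fact that long extenders of $\vV_1$ were themselves built as (restrictions of) $\Sigma_{\M_\infty}$-iteration maps. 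Once this is in place, the remaining inductive conclusions ($\vV_2||\nu \in \N_\infty[*_1]$, soundness, and $(\vV_2||\nu)[g] =^* \vV_1||\nu$) follow by the usual Mitchell–Steel generic-equivalence and fine-structure arguments, with Fact \ref{fact:F_from_F_rest_X} used as in Lemma \ref{V_is_a_ground} to pin down $F^{\vV_2||\nu}$ from its action on ordinals together with $\vV_2|\nu$.
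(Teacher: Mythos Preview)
Your overall approach is correct and matches the paper's treatment (which simply declares this a direct adaptation of Lemma~\ref{local_correspondence}, adding only Remark~\ref{rem:vV_2_=^*} on the relation $E'=E\com j$ when $E'$ is long), and you correctly single out case~\ref{item:vV_2_non_coherence_ii} as the genuinely new ingredient. However, your justification for the key internal computation in that case has a gap.

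You write that $P|\delta_0^P$ is recovered inside $\vV_2|\nu$ ``because $\es^{\vV_2}$ and $\es^{\N_\infty}$ literally coincide below $\gamma_1^{\vV_2}$, and that information is what Lemma~\ref{V_1_knows_how_to_iterate_Minfty_up_to_its_woodin} (transferred to $\N_\infty$) needs''. This reasoning does not work: since $\delta_0^P=\nu>\gamma_1^{\vV_2}$, producing $P|\delta_0^P$ via the local covering system requires P-constructions using the extender sequence \emph{above} $\gamma_1^{\vV_2}$, all the way up to $\nu$; the agreement below $\gamma_1^{\vV_2}$ is irrelevant here. The correct argument runs exactly parallel to the proof of Lemma~\ref{local_correspondence}\ref{item:non_coherence}: by the inductive hypothesis $(\vV_2|\nu)[g]=^*\vV_1|\nu$, and $\vV_1|\nu$ already computes $P|\delta_0^P=\M_\infty^{\Ult(M,\es^M_\nu)}|\nu$ as a lightface proper class (precisely by Lemma~\ref{local_correspondence}\ref{item:non_coherence}). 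Passing to a $\Coll(\omega,\gamma)$-extension of $\vV_2|\nu$ for a suitable strong $\{\delta_0^{\vV_2},\delta_1^{\vV_2}\}$-cutpoint $\gamma$, the resulting premouse over $(\vV_2||\gamma,G)$ agrees above $\gamma$ with the corresponding extension of $\vV_1|\nu$ (and hence of $M|\nu$), so the same short-tree-strategy and P-construction computations succeed and are generic-invariant; thus $\vV_2|\nu$ itself computes $P|\delta_0^P$. Some smaller imprecisions: writing ``$\crit(E')=\kappa_0$'' for a long $E'\in\es^{\vV_1}$ confuses $E'$ with the underlying $\es^M_\nu$; the direct reason $k_\nu$ is a correct iteration map is the goodness of long extenders in $\es^{\vV_1}$ (Lemma~\ref{lem:Gamma_A_is_good}) together with $\delta_0$-soundness, rather than Lemmas~\ref{lem:M_infty_of_kappa_0-sound_iterate} and~\ref{lem:vV_1-translatable}; and the expression $\M_\infty^{\Ult(\vV_1,E')}$ is not meaningful since $\Ult(\vV_1,E')$ is $\vV_1$-like, not $\Mswsw$-like --- what you want is $\Ult(\vV_1,E')\downarrow 0$.
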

\begin{rem}\label{rem:vV_2_=^*}
	Here the notation $=^*$ is like in Remark \ref{rem:=^*},
	except that  when $E'$ is long, we have $E'=E\com j$, instead of $E\sub E'$; recall $j$ is encoded into $g$. An analogous consideration applies to the proof of part \ref{item:N_infty[*]_class_of_vV_2} in the next lemma; cf.~Lemmas
	\ref{lem:vV_1_M_infty[*]_same_univ} and \ref{lem:vV_1_M_infty[*]_inter-def} and their proofs:
\end{rem}

\begin{lem}\label{lem:vV_2_N_infty[*]_inter-def}\
	\begin{enumerate}
		\item\label{item:N_inf_*_V_2_same_univ} 	$\N_\infty[*_1]$ and $\vV_2$
		have the same universe.
		\item \label{item:vV_2^N_infty_=_Ult(vV_2,e)}
		$\Ult(\vV_2,e_1^{\vV_2})=\vV_2^{\N_\infty}$.
		\item\label{item:vV_2_lightface_class_of_N_infty[*]} $\vV_2$ is a lightface
		class of
		$\N_\infty[*_1]$.

		\item \label{item:N_infty[*]_class_of_vV_2} $\N_\infty[*_1]$ is a lightface
		class of $\vV_2$.
	\end{enumerate}
\end{lem}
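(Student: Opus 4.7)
The plan is to mirror the proofs of Lemmas \ref{lem:vV_1_M_infty[*]_same_univ} and \ref{lem:vV_1_M_infty[*]_inter-def} step-for-step, replacing each appeal to a ``first-system'' lemma by its ``second-system'' analogue already established above: Lemma \ref{vV_2_local_correspondence} takes the place of Lemma \ref{local_correspondence}, Lemma \ref{lem:vV_1_restr_of_extenders_are_there} the place of Lemma \ref{restr_of_extenders_are_there}, Lemma \ref{vV_1_V_is_a_ground} the place of Lemma \ref{V_is_a_ground}, Lemma \ref{lem:vV_1_still-a-woodin-in-v} the place of Lemma \ref{still-a-woodin-in-v}, and Lemma \ref{lem:vV_1_delta_1-sound_iterate_M_infty} the place of Lemmas \ref{lem:delta_0-sound_iterate_M_infty}--\ref{lem:pi_infty_is_iteration_map}. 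I begin with \ref{item:vV_2_lightface_class_of_N_infty[*]}: by Lemma \ref{vV_2_local_correspondence}\ref{item:vV_2||nu_in_Minfty*} each $\vV_2|\nu$ and $\vV_2||\nu$ lies in $\N_\infty[*_1]$, and the recursive definition in Definition \ref{dfn:vV_2} is uniformly $\N_\infty[*_1]$-definable once Lemma \ref{lem:vV_1_restr_of_extenders_are_there} is invoked to compute the restrictions to ordinals of the $\vV_1$-extenders used above $\gamma_1^{\vV_2}$.

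For \ref{item:vV_2^N_infty_=_Ult(vV_2,e)}, set $e = e_1^{\vV_2} = \pi_{\infty 1}\rest(\N_\infty|\delta_{1\infty})$. By Lemma \ref{lem:vV_1_delta_1-sound_iterate_M_infty}, $\N_\infty^{\N_\infty} = \Ult(\N_\infty, e)$ with $\pi_{\infty 1}$ the ultrapower map, so $e \subseteq \pi_{\infty 1}$. The second-system analogue of Lemma \ref{lem:pi_infty^+} yields the elementary extension
\[ \pi_{\infty 1}^+:\N_\infty[*_1]\to\N_\infty^{\N_\infty}[*_1^{\N_\infty}], \]
which, because $\vV_2$ is defined over $\N_\infty[*_1]$ in a way uniform across Vsps and $\vV_2^{\N_\infty}$ is defined identically inside $\N_\infty$, restricts to an elementary $\pi_{\infty 1}^+:\vV_2\to\vV_2^{\N_\infty}$. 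Since $V_{\delta_{1\infty}}^{\N_\infty}=V_{\delta_{1\infty}}^{\vV_2}$ by Lemma \ref{lem:vV_1_still-a-woodin-in-v}\ref{item:vV_1_sys_1_V_delta_infty_preserved}, the extender $e$ is also derived from $\pi_{\infty 1}^+$. Letting $\vV'=\Ult(\vV_2,e)$ with ultrapower map $j^+$ and factor map $k^+:\vV'\to\vV_2^{\N_\infty}$ (so $k^+\com j^+=\pi_{\infty 1}^+$), the inclusion $\pi_{\infty 1}\sub\pi_{\infty 1}^+$ gives $k^+\rest\OR=\id$, hence $k^+=\id$ and $\vV'=\vV_2^{\N_\infty}$. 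For \ref{item:N_inf_*_V_2_same_univ}, one inclusion is immediate from \ref{item:vV_2_lightface_class_of_N_infty[*]}; for the converse, Lemma \ref{vV_1_V_is_a_ground} furnishes an $\mathbb{L}$-generic $g=g_{\vV_1|\kappa_1}$ over $\N_\infty[*_1]$, hence over $\vV_2$ (using $\mathbb{L}\in\vV_2$ by Lemma \ref{vV_2_local-definability-of-that-structure}\ref{item:vV_2_forcing_def}), and $\vV_2[g]\ueq\vV_1$; the standard Bukovsky argument then captures every $A\in\pow(\OR)\cap\N_\infty[*_1]$ as $A=\{\xi\st p\Vdash^{\vV_2}_{\mathbb{L}}\check\xi\in\tau\}$ for a name $\tau\in\vV_2$ and condition $p\in g$.

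For \ref{item:N_infty[*]_class_of_vV_2}, it suffices to define $\N_\infty$ and $*_1\rest\delta_{1\infty}$ lightface over $\vV_2$. The latter is just the active extender of $\vV_2||\gamma_1^{\vV_2}$, i.e.\ $e_1^{\vV_2}$. For $\N_\infty$, the base $\N_\infty|\delta_{1\infty}$ coincides with $\vV_2|\gamma_1^{\vV_2}$ by Definition \ref{dfn:vV_2}, and $\vV_2^{\N_\infty}$ is a lightface class of $\vV_2$ by \ref{item:vV_2^N_infty_=_Ult(vV_2,e)}; the extenders $\es^{\N_\infty}_\nu$ for $\nu>\delta_{1\infty}$ are recovered level-by-level from the corresponding $\es^{\vV_2^{\N_\infty}}_{\nu}\rest\OR$ together with $\vV_2|\nu$, by the inverse of the translation scheme in Lemma \ref{vV_2_local_correspondence}\ref{item:vV_2_fs_correspondence} (Fact \ref{fact:F_from_F_rest_X} and Lemma \ref{lem:F^P_from_F^P_rest_OR} handle the short extenders, while the long extenders of $\N_\infty$ with $\crit=\kappa_0^{\N_\infty}$ or $\crit=\kappa_1^{\N_\infty}$ are reconstructed from the long $\vV_2^{\N_\infty}$-extenders of the corresponding two flavors via the identities already exhibited in Definition \ref{dfn:vV_2}).

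The main obstacle I anticipate is not a single technical point but the careful bookkeeping in \ref{item:N_infty[*]_class_of_vV_2}: unlike the $\vV_1$-case, $\es^{\vV_2}$ now carries \emph{two} distinct species of long extenders (at critical points $\kappa_0^{\N_\infty}$ and $\kappa_1^{\N_\infty}$), each encoding a different fragment of strategy information for $\N_\infty$, and one must verify that the inversion scheme correctly identifies which flavor of $\N_\infty$-extender a given level produces. However, this verification is a direct unpacking of the definitions in Definition \ref{dfn:vV_2} together with the translation data already set out in Lemma \ref{vV_2_local_correspondence}\ref{item:vV_2_non_coherence}--\ref{item:vV_2_non_coherence_ii}, so no new ideas are required beyond those already deployed for $\vV_1$.
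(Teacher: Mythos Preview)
Your proposal is correct and follows exactly the approach the paper intends: the paper gives no explicit proof of this lemma, instead pointing (in Remark~\ref{rem:vV_2_=^*}) to Lemmas~\ref{lem:vV_1_M_infty[*]_same_univ} and~\ref{lem:vV_1_M_infty[*]_inter-def}, and your step-for-step transcription using the second-system analogues is precisely what is meant. One small slip: in part~\ref{item:N_infty[*]_class_of_vV_2} you write that ``the base $\N_\infty|\delta_{1\infty}$ coincides with $\vV_2|\gamma_1^{\vV_2}$'', but $\delta_{1\infty}=\delta_1^{\N_\infty}<\gamma_1^{\vV_2}=\kappa_1^{+\N_\infty}$; the correct initial segment is $\N_\infty|\kappa_1^{+\N_\infty}=\vV_2|\gamma_1^{\vV_2}$, exactly parallel to $\M_\infty||\kappa_0^{+\M_\infty}=\vV_1|\gamma^{\vV_1}$ in the $\vV_1$ case. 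Your identification of the new bookkeeping issue---that the $0$-long extenders of $\N_\infty$ are recovered from those of $\vV_2^{\N_\infty}$ via composition with $j^{\N_\infty}$ rather than by direct agreement on ordinals---is exactly the content of Remark~\ref{rem:vV_2_=^*} (transported into $\N_\infty$), and is the only point where the argument is not a verbatim copy of the $\vV_1$ case.
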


\subsection{Iterability of $\N_\infty|\delta_{1\infty}$ in $\vV_1$ and $\vV_2$}

Adapting Definition \ref{dfn:N_alpha_P-stable}:
\begin{dfn}\label{dfn:vV_2_N_alpha_P-stable}Let $\vV$ be a non-dropping $\Sigma_{\vV_1}$-iterate of $\vV_1$.
	For $P\in\mathscr{F}^{\vV}_1$ let
	\[ H^P=\Hull_1^P(\delta_1^P\cup\mathscr{I}^{\vV}),\]
	$\bar{P}$ its transitive collapse
	and	$\pi_{\bar{P}P}:\bar{P}\to P$ the uncollapse map.
	Recall here that by Lemma \ref{lem:general_P-correctness}, $\bar{P}$
	is a $\delta_1^{\bar{P}}$-sound  $\Sigma_{\vV_1}$-iterate of $\vV_1$.
	Define $(\N_\infty^{\overline{\ext}})_{\vV}$ as the direct limit of
	the iterates $\bar{P}$ such for $P$.

	Recall that $\vV$ is automatically $\kappa_1^{\vV}$-sound. Let $\alpha\in\OR$ and $P\in\mathscr{F}_1^{\vV}$.
	We say that $\alpha$ is \emph{$(P,\mathscr{F}_1^{\vV})$-stable}
	iff whenever $P\preceq Q\in\mathscr{F}_1^{\vV}$, we have $\alpha\in H^Q$ and
	\[
	\pi_{\bar{Q}Q}\com i_{\bar{P}\bar{Q}}\com\pi_{\bar{P}P}^{-1}(\alpha)=\alpha.\qedhere\]
\end{dfn}

Adapting Lemmas \ref{lem:delta_0-sound_iterate_M_infty}, \ref{lem:pi_infty_is_iteration_map}, \ref{lem:M_infty_of_iterate_N},
\ref{lem:every_alpha_ev_stable} and \ref{lem:M_infty_of_kappa_0-sound_iterate}
and their proofs
(and using that non-dropping $\Sigma_{\vV_1}$-iterates of $\vV_1$ are always $\kappa^{\vV}_1$-sound), we have:

\begin{lem}\label{lem:vV_2_M_infty_of_iterate_N_etc}
	Let $\vV$ be a non-dropping $\Sigma_{\vV_1}$-iterate of $\vV_1$
	and $\bar{\vV}$ be the $\delta_1^{\vV}$-core of $\vV$.
	Let $\N=\N_\infty^{\vV}$.
	Then:
	\begin{enumerate}

		\item\label{item:vV_2_H^P_subset_H^Q} For each $P\preceq Q\in\mathscr{F}_1^{\vV}$,
		we have $H^P\inter\OR\sub H^Q\inter\OR$.
		\item\label{item:vV_2_each_alpha_ev_stable} For each $\alpha\in\OR$
		there is $P\in\mathscr{F}_1^{\vV}$
		such that
		$\alpha$ is $(P,\mathscr{F}_1^{\vV})$-stable.

		\item\label{item:vV_2_I^M_infty=I^M} $\mathscr{I}^{\N_\infty}=\mathscr{I}^{\vV_1}$
		and $i_{\vV_1\N_\infty}\rest\mathscr{I}^{\vV_1}=\id=*_1\rest\mathscr{I}^{\vV_1}$,
		\item\label{item:vV_2_I^M_infty^N=I^N} $\mathscr{I}^{\N_\infty^{\vV}}=\mathscr{I}^{\vV}$
		and $*_1^{\vV}\rest\mathscr{I}^{\vV}=\id$,

		\item\label{item:vV_2_M_infty^N_is_ext-bar_M_infty}
		$\N= \N_\infty^{\vV}=i_{\vV_1\vV}(\N_\infty)=(\N_\infty^{\overline{\ext}})_{\vV}$
		is a $\delta_1^{\N}$-sound $\Sigma_{\bar{\vV}}$-iterate of $\bar{\vV}$.
		Moreover, $\N=(\N_\infty^{\ext})_{\vV}$ iff $\bar{\vV}=\vV$ iff $\N$ is a $\Sigma_{\vV}$-iterate of $\vV$ iff $\vV$ is $\delta_1^{\vV}$-sound.
		\item\label{item:vV_2_iterate_of_M_infty} Let
		$\vV'$ be a non-dropping $\Sigma_{\vV}$-iterate $\vV$
		with  $\vV|\kappa_1^{\vV}\pins \vV'$. Then
		\begin{enumerate}
			\item\label{item:vV_2_M_infty^N_is_iterate}  $\N_\infty^{\vV'}$ is a $\Sigma_{\N_\infty^{\vV}}$-iterate
			of $\N_\infty^{\vV}$, and
			\item\label{item:vV_2_i_MN_restricts_to_iteration_map} $i_{\vV\vV'}\rest\N_\infty^{\vV}$ is just the $\Sigma_{\N_\infty^{\vV}}$-iteration map
			$\N_\infty^{\vV}\to\N_\infty^{\vV'}$.
		\end{enumerate}

		\item\label{item:vV_2_N_infty^N_infty^vV_is_sound_iterate} $\N_\infty^{\N}$ is a $\delta_1^{\N_\infty^{\N}}$-sound $\Sigma_{\N}$-iterate of $\N$ and
		$*_1^{\vV}\sub\pi_{1\infty}^{\vV}:\N\to\N_\infty^{\N}$ is the $\Sigma_{\N}$-iteration map.

	\end{enumerate}
\end{lem}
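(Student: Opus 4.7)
The plan is to mimic, one by one, the proofs of the cited lemmas from the $M/\M_\infty$ setting, replacing (a) $\Sigma$ by $\Sigma_{\vV_1}$, (b) standard P-constructions by the modified P-constructions of Definition~\ref{dfn:vV_1_P-con}, and (c) the correctness input provided earlier by Lemma~\ref{lem:M_infty_of_kappa_0-sound_iterate} by the correctness input now provided by Lemma~\ref{lem:general_P-correctness} (together with Lemmas~\ref{lem:vV_1_short_tree_strategy}, \ref{lem:vV_1_iterates_short_tree_strategy}). Throughout we use that any non-dropping $\Sigma_{\vV_1}$-iterate $\vV$ of $\vV_1$ is automatically $\kappa_1^{\vV}$-sound, which is what plays the role of the $\kappa_0^N$-soundness assumption in Lemma~\ref{lem:M_infty_of_kappa_0-sound_iterate}. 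Let $\bar{\vV}$ be the $\delta_1^{\vV}$-core of $\vV$ and recall from Lemma~\ref{lem:general_P-correctness}\ref{item:vVbar_is_intermediate_iterate} that $\bar{\vV}$ is an intermediate iterate on the main branch of the short-normal tree on $\vV_1$ leading to $\vV$, with $\bar{\vV}|\delta_1^{\bar{\vV}}=\vV|\delta_1^{\vV}$.

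For \ref{item:vV_2_H^P_subset_H^Q}, argue as in Lemma~\ref{lem:every_alpha_ev_stable}\ref{item:H^P_subset_H^Q}: by extender algebra genericity at $\delta_1^P$ (in the form of Definition~\ref{dfn:vV_1_tau^P}) and by local definability of $P|\delta_1^P$ over $\vV|\delta_1^P$, we get $H^P\cap\OR=\Hull_1^{\vV}(\delta_1^P\cup\mathscr{I}^{\vV})$, which is monotone in $P$. For \ref{item:vV_2_each_alpha_ev_stable}, first use $\kappa_1^{\vV}$-soundness of $\vV$ to pick $s\in\vec{\mathscr{I}^{\vV}}$, $\beta<\kappa_1^{\vV}$, and a Skolem term $t$ with $\alpha=t^{\vV}(s,\beta)$, and choose $P\in\mathscr{F}_1^{\vV}$ with $\beta<\delta_1^P$; then  run the same stability-from-below argument as in the proof of \ref{lem:every_alpha_ev_stable}\ref{item:each_alpha_ev_stable}, using that in $\vV_1$ the analogous stability statement holds by elementarity (each $R\preceq S$ in $\mathscr{F}_1^{\vV_1}$ is an actual iterate with $\delta_1^R$-sound iterates on main branches). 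Parts \ref{item:vV_2_I^M_infty=I^M} and \ref{item:vV_2_I^M_infty^N=I^N} are now the standard ``no $\alpha\in\mathscr{I}$ is moved'' argument: suppose $\kappa\in\mathscr{I}^{\vV_1}$ were least moved by $i_{\vV_1\N_\infty}$; express $\kappa$ by a term applied to a larger tuple of $\N_\infty$-indiscernibles and some $\alpha<\delta_1^{\N_\infty}$, and use that $\N_\infty$ is a lightface class of $\vV_1$ to derive a contradiction; then transfer to $\vV$ by elementarity of $i_{\vV_1\vV}$, noting $i_{\vV_1\vV}``\mathscr{I}^{\vV_1}=\mathscr{I}^{\vV}$.

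For \ref{item:vV_2_M_infty^N_is_ext-bar_M_infty}, define $\chi:\N_\infty^{\vV}\to(\N_\infty^{\overline{\ext}})_{\vV}$ exactly as in the proof of Lemma~\ref{lem:M_infty_of_kappa_0-sound_iterate}\ref{item:M_infty^N_is_ext-bar_M_infty}: given $(P,u)\in\mathscr{D}_1^{\vV}$, use \ref{item:vV_2_each_alpha_ev_stable} to cover it by a $(Q,s)$ with $s\in[\mathscr{I}^{\vV}]^{<\om}$ and $s$ being $(Q,\mathscr{F}_1^{\vV})$-stable, and set $\chi(\pi^{\vV}_{Qs,\infty}(x))=i_{\bar{Q}\N_\infty^{\vV}}(\pi_{\bar{Q}Q}^{-1}(x))$; the argument that $\chi=\id$ is formally identical to the $M$-case. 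The ``moreover'' clause records that when $\vV=\bar{\vV}$ (equivalently $\vV$ is $\delta_1^{\vV}$-sound) the $\bar{P}$ and $P$ systems coincide and $\N_\infty^{\vV}$ is a genuine $\Sigma_{\vV}$-iterate; conversely if $\vV\neq\bar{\vV}$ then no non-dropping $\Sigma_{\vV}$-iterate of $\vV$ is $\delta_1$-sound, while $\N_\infty^{\vV}$ is $\delta_1^{\N_\infty^{\vV}}$-sound, ruling that out. Part \ref{item:vV_2_iterate_of_M_infty} follows from \ref{item:vV_2_M_infty^N_is_ext-bar_M_infty} and \ref{item:vV_2_I^M_infty^N=I^N} by the same commuting-diagram-through-$\chi$ computation used for Lemma~\ref{lem:M_infty_of_kappa_0-sound_iterate}\ref{item:iterate_of_M_infty}: one checks $i_{\vV\vV'}\rest\mathscr{I}^{\N_\infty^{\vV}}$ agrees with the $\Sigma_{\N_\infty^{\vV}}$-iteration map on indiscernibles by \ref{item:vV_2_I^M_infty^N=I^N}, and for $\alpha<\delta_1^{\N_\infty^{\vV}}$ one chases through the definition of $\pi^{\vV'}_{P's',\infty}$, using $\overline{i_{\vV\vV'}(P)}=\bar P$, to conclude agreement there as well.

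Finally, part \ref{item:vV_2_N_infty^N_infty^vV_is_sound_iterate} is the instance of \ref{item:vV_2_M_infty^N_is_ext-bar_M_infty} applied inside $\N=\N_\infty^{\vV}$: by \ref{item:vV_2_I^M_infty^N=I^N} $\N$ is $\delta_1^{\N}$-sound (since $\N=\Hull_1^{\N}(\delta_1^{\N}\cup\mathscr{I}^{\N})$), so  the ``moreover'' clause of \ref{item:vV_2_M_infty^N_is_ext-bar_M_infty} applied with $\vV$ replaced by $\N$ gives that $\N_\infty^{\N}$ is a $\delta_1^{\N_\infty^{\N}}$-sound $\Sigma_{\N}$-iterate of $\N$; that $*_1^{\vV}\subseteq\pi^{\vV}_{1\infty}$ is the iteration map is then exactly the adaptation of Lemma~\ref{lem:pi_infty_is_iteration_map}, by the same factor-map argument (extending $\pi^{\vV}_{1\infty}$ to $(\pi^{\vV}_{1\infty})^+$ via Lemma~\ref{lem:pi_infty^+} and observing the factor map is the identity on ordinals). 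The main obstacle is the verification of \ref{item:vV_2_M_infty^N_is_ext-bar_M_infty}: one must confirm that the cover-by-indiscernible-parameters argument still goes through when the iterates $\bar{P}$ are produced via the modified P-construction across overlapping long extenders, and here the crucial new ingredient is precisely Lemma~\ref{lem:general_P-correctness}, which guarantees that the hull $\bar{P}$ of a P-constructed $P$ is a genuine $\delta_1^{\bar{P}}$-sound $\Sigma_{\vV_1}$-iterate, so that the direct limit $(\N_\infty^{\overline{\ext}})_{\vV}$ lives in the category to which the $\chi=\id$ argument applies.
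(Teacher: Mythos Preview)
Your proposal is correct and matches the paper's approach: the paper gives no detailed proof but simply says the lemma follows by adapting Lemmas~\ref{lem:delta_0-sound_iterate_M_infty}, \ref{lem:pi_infty_is_iteration_map}, \ref{lem:M_infty_of_iterate_N}, \ref{lem:every_alpha_ev_stable} and \ref{lem:M_infty_of_kappa_0-sound_iterate} and their proofs, using that non-dropping $\Sigma_{\vV_1}$-iterates are automatically $\kappa_1^{\vV}$-sound, and your plan spells out exactly this adaptation, correctly identifying Lemma~\ref{lem:general_P-correctness} as the ingredient that replaces Lemma~\ref{lem:P-bar_is_iterate} in ensuring each $\bar{P}$ is a genuine $\delta_1^{\bar{P}}$-sound $\Sigma_{\vV_1}$-iterate. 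One small quibble: the invocation of Lemma~\ref{lem:pi_infty^+} in the argument for part~\ref{item:vV_2_N_infty^N_infty^vV_is_sound_iterate} is not really needed; the fact that $\pi_{1\infty}^{\vV}$ is the iteration map is the direct analogue of Lemma~\ref{lem:pi_infty_is_iteration_map} and follows from the $\chi=\id$ analysis you already carried out in part~\ref{item:vV_2_M_infty^N_is_ext-bar_M_infty}, now applied with $\N$ in the role of $\vV$.
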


Recall that $\Sigma_{\vV,\vV|\alpha}$ denotes the restriction of
$\Sigma_{\vV}$ to trees based on $\vV|\alpha$. Write $(\N_\infty^{\vV})^-=\N_\infty^{\vV}|\delta_{1\infty}^{\vV}$.

\begin{lemma}\label{vV_2_M_knows_how_to_iterate_Minfty_up_to_its_woodin}
	Let $\vV$ be a non-dropping $\Sigma_{\vV_1}$-iterate of $\vV_1$. Then:
	\begin{enumerate}[label=\tu{(}\alph*\tu{)}]
		\item\label{item:vV_2_M_infty_strat_thru_delta_0} $\vV$ is closed under $\Sigma_{{{\cal N}_\infty^{\vV}},(\N_\infty^{\vV})^-}$
		and $\Sigma_{{{\cal N}_\infty^{\vV}},(\N_\infty^{\vV})^-}\rest\vV$ is lightface
		definable over $\vV$.
		\item\label{item:vV_2_M_infty_strat_thru_delta_0_in_M[g]} Let $\lambda\geq\delta_0^{\vV}$, let $\PP\in\vV|\lambda$, and
		$g$ be $(\vV,\PP)$-generic (with $g$ appearing in some generic extension of $V$).
		Then $\vV[g]$ is closed under $\Sigma_{\N_\infty^{\vV},(\N_\infty^{\vV})^-}$
		and $\Sigma_{\N_\infty^{\vV},(\N_\infty^{\vV})^-}\rest\vV$ is
		definable over the universe of $\vV[g]$
		from the parameter
		$x=\vV|\lambda^{+\vV}$, uniformly in $\lambda$.\footnote{Regarding trees $\notin V$,
			cf.~Footnote \ref{ftn:trees_not_in_V}.}
	\end{enumerate}
	Moreover, the definability is uniform in $\vV,x$.
\end{lemma}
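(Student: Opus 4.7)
The plan is to mirror the proof of Lemma~\ref{V_1_knows_how_to_iterate_Minfty_up_to_its_woodin}, which handled the analogous statement one level down (for $\vV_1$ and $\Sigma_{\M_\infty}$ restricted to trees based on $\M_\infty|\delta_\infty$), now lifted one step up using the machinery developed in \S\ref{sec:dl-rel_sts_vV_1} and \S\ref{subsec:second_dls}. As a preliminary reduction, since $\Sigma_{\vV_1}$ (hence each $\Sigma_{\vV}$) extends canonically to set-generic extensions of $V$ with the same minimal-hull-condensation properties, and since $\Sigma_{\N_\infty^{\vV},(\N_\infty^{\vV})^-}$ is induced by $\Sigma_{\vV}$ via Lemma~\ref{lem:vV_2_M_infty_of_iterate_N_etc}, it suffices to work in $\vV[g]$ and show definability from the displayed parameter $x$. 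By the uniformity of the definitions below (all making reference only to $\N_\infty^{\vV}$, $*_1^{\vV}$, and $x$, each of which is lightface $\vV[g]$-definable from $x$), this will propagate along iteration maps.

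For short trees $\Tt \in \vV[g]$ on $(\N_\infty^{\vV})^-$, I would run the minimal genericity inflation process of \S\ref{section-short-tree-strategy-for-M}, inside $\vV[g]$, now applied to $(\N_\infty^{\vV})^-$ rather than to $M|\delta_0^M$. The Q-structures encountered at limit stages are computed by the modified P-construction of Definition~\ref{dfn:vV_1_P-con}, whose correctness for dsr P-suitable trees was established in Lemma~\ref{lem:general_P-correctness} and extended to arbitrary trees in Lemma~\ref{lem:vV_1_full_short_tree_strategy}; crucially, these are exactly the results that allow $\vV$ to read off the correct Q-structure $Q(\Tt,\Sigma(\Tt))$ from a segment of a suitable $U[g]$ where $U$ comes from the $\kappa_1^{\vV}$-strong extender sequence of $\vV$. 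The minimal inflation method gives a tree embedding $\Pi : \Tt \conc b \hookrightarrow \Xx \conc c$ from which $b$ is recoverable, and the whole procedure is first-order in $(\N_\infty^{\vV}, *_1^{\vV}, x)$.

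For maximal trees $\Tt \in \vV[g]$ based on $(\N_\infty^{\vV})^-$, I would argue as in Lemma~\ref{M_knows_how_to_iterate_Minfty_up_to_its_woodin}, but at the $\delta_1$-level and using Lemma~\ref{lem:branch_con_2} in place of Lemma~\ref{lem:branch_con}. Pick $F \in \es^{\vV}$ that is $\vV$-total with $\crit(F) = \kappa_1^{\vV}$ and $\Tt \in \vV[g]|\lambda(F)$; let $U = \Ult(\vV,F)$. By Lemma~\ref{lem:vV_2_M_infty_of_iterate_N_etc}(\ref{item:vV_2_iterate_of_M_infty}), $i_F \rest \N_\infty^{\vV}$ is the correct $\Sigma_{\N_\infty^{\vV}}$-iteration map $\N_\infty^{\vV} \to \N_\infty^{U} = i_F(\N_\infty^{\vV})$, and this map (restricted to the relevant bounded piece) lies in $\vV[g]$. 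Let $P = \mathscr{P}^{\vV[g]}(M(\Tt))$ be the modified P-construction above $M(\Tt)$ inside $\vV[g]$ (so $P$ is a $\Sigma_{\vV_1}$-iterate of $\vV_1$ by Lemma~\ref{lem:general_P-correctness}(\ref{item:P_is_Sigma-iterate_of_Pbar}), identifying the would-be last model). Inside $\vV[g]$, applying the normalization-style reasoning of Footnote~\ref{ftn:find_branch_via_normalization}, the unique $\Tt$-cofinal branch $b$ and an $\Ss$-cofinal branch $c$ on the appropriate comparison tree $\Ss$ from $P|\delta_1^P$ to $\N_\infty^{U}|\delta_1^{\N_\infty^{U}}$ with $i^\Ss_c \circ i^\Tt_b = i_F \rest (\N_\infty^{\vV}|\delta_{1\infty}^{\vV})$ are determined by the tree from $\N_\infty^{\vV}$ to $\N_\infty^{U}$; Lemma~\ref{lem:branch_con_2} then certifies $b = \Sigma_{\N_\infty^{\vV}}(\Tt)$. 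All ingredients ($F$, $i_F \rest \N_\infty^{\vV}$, the relevant modified P-construction, the comparison tree $\Ss$) are locally definable from $\N_\infty^{\vV}$, $*_1^{\vV}$ and $x$, giving the required definability.

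The main obstacle I expect is the maximal-tree case in the situation where $\vV$ is not $\delta_1^{\vV}$-sound, where $\N_\infty^{\vV}$ need not be a $\Sigma_{\vV}$-iterate of $\vV$ itself but only of its $\delta_1^{\vV}$-core $\bar{\vV}$ (Lemma~\ref{lem:vV_2_M_infty_of_iterate_N_etc}(\ref{item:vV_2_M_infty^N_is_ext-bar_M_infty})); here one must be careful that the comparison map $\Ss$ and the identification $P = M^\Tt_b$ are extracted from the $\bar{\vV}$-side (equivalently, after hulling by $\mathscr{I}^{\vV}\cup\delta_1^{\vV}$), so that the hypotheses of Lemma~\ref{lem:branch_con_2} are literally satisfied. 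This is exactly the role played in Lemma~\ref{lem:general_P-correctness} by the passage from $P$ to $\bar{P}$, and the argument should transcribe directly.
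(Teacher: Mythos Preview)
Your proposal is correct and follows essentially the same approach as the paper's own proof: invoke Lemma~\ref{lem:vV_def_in_vV[g]_from_inseg} (implicit in your remark that the relevant objects are definable from $x$), use Lemma~\ref{lem:vV_1_full_short_tree_strategy} for the $\delta_1$-short tree strategy and determination of $\delta_1$-maximality, and for $\delta_1$-maximal trees argue as in Lemma~\ref{M_knows_how_to_iterate_Minfty_up_to_its_woodin} via Lemma~\ref{lem:vV_2_M_infty_of_iterate_N_etc} together with Lemma~\ref{lem:branch_con_2} (or the normalization argument of Footnote~\ref{ftn:find_branch_via_normalization}). Your anticipation of the non-$\delta_1^{\vV}$-sound case and its resolution via the $\bar{P}$ passage of Lemma~\ref{lem:general_P-correctness} is also on target.
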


\begin{proof}
	By Lemma \ref{lem:vV_def_in_vV[g]_from_inseg},
	we can define $\vV$ from $x$ in $\vV[g]$, and uniformly so.
	To compute the $\delta_1$-short tree strategy
	(for $\N_\infty^{\vV}$) and determine $\delta_1$-maximality,
	use  Lemma \ref{lem:vV_1_full_short_tree_strategy} (recall this involves $*$-translation).
	The computation of branches at $\delta_1$-maximal stages is like in the proof of Lemma \ref{M_knows_how_to_iterate_Minfty_up_to_its_woodin},
	using Lemmas \ref{lem:vV_2_M_infty_of_iterate_N_etc} and \ref{lem:branch_con_2} (or arguing as in Footnote \ref{ftn:find_branch_via_normalization}
	in place of Lemma \ref{lem:branch_con_2}).
\end{proof}

By Lemma \ref{lem:Sigma_vV_1_vshc} and \cite[***Theorem 10.2]{fullnorm_v3}, $\Sigma_{\N^{\vV}_\infty}$ has minimal
inflation condensation.
So like in Remark \ref{rem:M_computes_stacks_strategy_for_M_infty}, it follows that $\vV[g]$ can also compute
the tail strategy $\Gamma_{\N_\infty^{\vV},(\N_\infty^{\vV})^-}$
for stacks on $\N_\infty^{\vV}$, based on $(\N_\infty^{\vV})^-$ (restricted to
stacks in $\vV[g]$), as in fact
\[ \Gamma_{\N_\infty^{\vV},(\N_\infty^{\vV})^-}=(\Sigma_{\N_\infty^{\vV},(\Sigma_{\infty}^{\vV})^-})^{\stk}.\]

Similarly:

\begin{lem}\label{lem:vV_2_iterates_N_infty_up_to_its_Woodin}
	$\vV_2$ is closed under $\Sigma_{\vV_2,\vV_2^-}$ and $\Sigma_{\vV_2,\vV_2^-}\rest\vV_2$ is lightface
	definable over $\vV_2$.
\end{lem}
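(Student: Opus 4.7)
The plan is to mirror the proof of Lemma \ref{V_1_knows_how_to_iterate_Minfty_up_to_its_woodin}, shifted one level up: where there we saw that $\vV_1$ can compute $\Sigma_{\M_\infty}$ for trees based on $\M_\infty|\delta_\infty$, here we want $\vV_2$ to compute $\Sigma_{\vV_2}$ for trees based on $\vV_2^- = \vV_2|\delta_1^{\vV_2}$. By Lemma \ref{vV_2_M_knows_how_to_iterate_Minfty_up_to_its_woodin} applied with $\vV = \vV_1$, the restriction of $\Sigma_{\N_\infty,(\N_\infty)^-}$ to trees in $\vV_1$ is already lightface definable over $\vV_1$. Since $\vV_2$ and $\N_\infty[*_1]$ are lightface inter-definable by Lemma \ref{lem:vV_2_N_infty[*]_inter-def}, and in particular $\N_\infty$ is a lightface class of $\vV_2$, the first step is to recast that computation over $\vV_2$ itself; the key ingredients are the short tree strategy computation via $*$-translation and modified P-constructions supplied by Lemma \ref{lem:vV_1_full_short_tree_strategy}, and the factoring argument for maximal branches from Lemma \ref{vV_2_M_knows_how_to_iterate_Minfty_up_to_its_woodin}, both of which reference only objects available lightface over $\vV_2$.

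Next, I would transfer the strategy from trees on $\N_\infty|\delta_{1\infty}$ to trees on $\vV_2|\delta_1^{\vV_2}$ by means of an analogue of Definition \ref{dfn:Psi_vV_1,vV_1^-} and Lemma \ref{lem:Gamma_A_is_good}. Given a $0$-maximal tree $\Uu$ on $\vV_2$ based on $\vV_2|\delta_1^{\vV_2}$, one produces the corresponding tree $\Tt$ on $\N_\infty$ based on $\N_\infty|\delta_{1\infty}$ by reading off the action of extenders on ordinals: the short extenders below $\gamma_1^{\vV_2}$ in $\es^{\vV_2}$ agree with those of $\N_\infty$, while the long extender at $\gamma_0^{\vV_2}$ corresponds to the branch of the tree from $\N_\infty$ to $\N_\infty^{\N_\infty}$. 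Wellfoundedness of models and identification of iterates then follow from Lemma \ref{lem:vV_2_M_infty_of_iterate_N_etc} together with the argument of Lemma \ref{lem:Gamma_A_is_good}: the non-dropping iterates of $\vV_2$ are the Vsps built from the corresponding $\delta_1^N$-sound non-dropping $\Sigma_{\N_\infty}$-iterates $N$ of $\N_\infty$ together with their own $*$-maps.

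For short trees on $\N_\infty^-$ lying in $\vV_2$, branches are computed by the genericity-inflation / minimal-inflation procedure combined with the modified P-construction of Definition \ref{dfn:vV_1_P-con}; this is lightface over $\vV_2$ because its extender sequence and those of its small generic extensions agree level-by-level, modulo the fixed map $j = \pi_{\infty1}\rest\delta_{1\infty}$ encoded into $*_1$, with $\N_\infty$ and its corresponding generic extensions, so the computation is literally the same as the one carried out internally to $\vV_1$. For a $\delta_1$-maximal tree $\Tt$ on $\N_\infty^-$, the correct branch is obtained using a short $\N_\infty$-total extender $F$ with $\crit(F) = \kappa_1^{\N_\infty}$: by Lemma \ref{lem:vV_2_M_infty_of_iterate_N_etc}\ref{item:vV_2_iterate_of_M_infty}, the map $i_F\rest\N_\infty^-$ is a correct $\Sigma_{\N_\infty^-}$-iteration map, and the desired branch is read off by factoring through $\Tt$, with branch condensation provided by Lemma \ref{lem:branch_con_2} (or alternatively via the normalization approach sketched in Footnote \ref{ftn:find_branch_via_normalization}).

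The main technical subtlety is that the witness $F$ above need not itself lie in $\vV_2$; this is handled as in Lemma \ref{vV_2_M_knows_how_to_iterate_Minfty_up_to_its_woodin}, by taking $F$ from $\es^{\vV_1}$ (or even $\es^M$) and observing that its restriction to $\N_\infty|\delta_{1\infty}$ is captured by a long extender on $\es^{\vV_2}$ above $\delta_1^{\vV_2}$, via Lemma \ref{vV_2_local_correspondence}\ref{item:vV_2_non_coherence}; this long extender is lightface over $\vV_2$, so the factoring computation takes place inside $\vV_2$. Since the translation between trees on $\vV_2$ based on $\vV_2^-$ and trees on $\N_\infty$ based on $\N_\infty^-$ is lightface and level-by-level, $\Sigma_{\vV_2,\vV_2^-}\rest\vV_2$ is then lightface definable over $\vV_2$, as required.
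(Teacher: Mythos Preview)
Your overall strategy---computing short tree strategies via P-constructions using the extender sequence correspondence between $\vV_2$, $\vV_1$, and $M$, and computing maximal branches by factoring through the long extenders of $\es^{\vV_2}$---is essentially the paper's approach. The paper's proof is terse but says exactly this: P-constructions above $\gamma_1^{\vV_2}$ using $\es^{\vV_2}$ for the short tree strategies, and the $0$-long and $1$-long extenders of $\es^{\vV_2}$ for the $\delta_0$-maximal and $\delta_1$-maximal branches respectively.

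However, your third paragraph contains a genuine slip. You write that for a $\delta_1$-maximal tree one uses ``a short $\N_\infty$-total extender $F$ with $\crit(F)=\kappa_1^{\N_\infty}$'' and that ``$i_F\rest\N_\infty^-$ is a correct $\Sigma_{\N_\infty^-}$-iteration map''. But $\kappa_1^{\N_\infty}>\delta_1^{\N_\infty}=\delta_{1\infty}$, so any such $F$ has critical point above $\OR(\N_\infty^-)$, and $i_F\rest\N_\infty^-$ is the identity---it cannot absorb a $\delta_1$-maximal tree. The correct extenders are $\vV_1$-total $E'\in\es^{\vV_1}$ with $\crit(E')=\kappa_1^{\vV_1}$, whose restrictions to $\N_\infty|\delta_{1\infty}$ are precisely the $1$-long extenders of $\es^{\vV_2}$ by Lemma~\ref{vV_2_local_correspondence}\ref{item:vV_2_non_coherence}; Lemma~\ref{lem:vV_2_M_infty_of_iterate_N_etc}\ref{item:vV_2_iterate_of_M_infty} (with $\vV=\vV_1$, $\vV'=\Ult(\vV_1,E')$) then gives that these are correct iteration maps on $\N_\infty|\delta_{1\infty}$. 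Your fourth paragraph in fact switches to ``$F$ from $\es^{\vV_1}$'', which is right, so you likely had the correct picture and simply miswrote the critical point in paragraph three.

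One further omission: trees on $\vV_2^-$ have a $0$-lower component as well, and you need to handle $\delta_0$-maximal stages there. This uses the $0$-long extenders of $\es^{\vV_2}$ (corresponding to the long extenders of $\es^{\vV_1}$ via Lemma~\ref{vV_2_local_correspondence}\ref{item:vV_2_non_coherence_ii}) in the same way; the paper's proof explicitly mentions both the $0$-long and $1$-long cases.
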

\begin{proof}
	To compute the $\delta_0$-short and $\delta_1$-short tree strategies in $\vV_2$,
	proceed much
	as in the proof of Lemmas \ref{vV_2_M_knows_how_to_iterate_Minfty_up_to_its_woodin} and \ref{rem:vV_1_as_strategy_premouse}, naturally adapted to $\vV_2$. Since $\vV_2$ is a ground of $\vV_1$
	via $\mathbb{L}^{\vV_2}$ and because of the correspondence between
	$\es^{\vV_2}$, $\es^{\vV_1}$ and $\es^M$, we can perform the relevant
	P-constructions above $\gamma_1^{\vV_2}$ using $\es^{\vV_2}$ in the natural way.
	For $\delta_0$-maximal and $\delta_1$-maximal trees, we use the
	$0$-long and $1$-long extenders in $\es^{\vV_2}$ as usual.
\end{proof}

\subsection{2-Varsovian strategy premice}

\begin{dfn}
	For  a $\vV_1$-like $\vV$, we define the lightface $\vV$-classes $\N_\infty^{\vV}$, $*_1^{\vV}$, $\N_\infty[*_1]^{\vV}$ and $\vV_2^{\vV}$
	over $\vV$ just as the corresponding
	classes are defined over $\vV_1$.

	Also given a $\vV_1$-like $\vV$ and $\bar{\vV}\ins \vV$
	with $\kappa_1^{+\vV}\leq\OR^{\bar{\vV}}$,
	we define $\vV_2^{\bar{\vV}}$ by recursion on $\OR^{\bar{\vV}}$
	by setting $\vV_2^{\vV||(\kappa_1^{+\vV}+\alpha)}=\vV_2^\vV||(\gamma+\alpha)$,
	where $\gamma=\gamma_1^{\vV_2^{\vV}}$.
	Noting that this definition is level-by-level,
	we similarly define $\vV_2^{\bar{\vV}}(\kappa)$
	whenever $\bar{\vV}$ is a $\vV_1$-small Vsp such that $\gamma_0^{\bar{\vV}}$ exists
	and $\kappa$ is an inaccessible limit of $\delta_0^{\bar{\vV}}$-cutpoints
	of $\bar{\vV}$ and $\kappa<\OR^{\bar{\vV}}$,
	level-by-level (starting by defining
	$\vV_2^{\bar{\vV}|\kappa^{+\bar{\vV}}}$
	as $\vV_2||\gamma_1^{\vV_1}$ is defined (in the codes)
	over $\vV_1|\kappa_1^{+\vV_1}$). We will often suppress the $\kappa$
	from the notation, writing just $\vV_2^{\bar{N}}$.
\end{dfn}

We now want to axiomatize structures in the hierarchy of $\vV_2$ to some extent,
just like for $\vV_1$, adapting Definitions
\ref{dfn:local_vV_1}, \ref{dfn:Vsp}
and  \ref{dfn:vV_1-like}. These are very straightforward adaptations,
and the reader could fill it in him/herself, but because they are
reasonably detailed, we write them out for convenience:

\begin{dfn}\label{dfn:local_vV_2}
	A \emph{base 2-Vsp} is an amenable transitive structure $\vV=(P_\infty,F)$ such that
	in some forcing extension
	there is $P$ such that:
	\begin{enumerate}
		\item $P,P_\infty$ are 1-Vsps which model $\ZFC^-$, $\gamma_ 0^P<\OR^P$ and $\gamma_0^{P_\infty}<\OR^{P_\infty}$, and $P,P_\infty$ are $\vV_1$-small
		(that is, $P$ has no active segments
		satisfying ``There are $\delta_1',\kappa_1'$ such that $\gamma_0^P<\delta_1'<\kappa_1'$
		and $\delta_1'$ is Woodin and $\kappa_1'$ is strong'', and likewise for $P_\infty$).
		\item $P$ has a unique Woodin cardinal $\delta_1^P>\gamma_0^P$
		and a largest cardinal $\kappa_1^P>\delta_1^P$, and $\kappa_1^P$ is inaccessible in $P$
		and a limit of $\delta_0^P$-cutpoints of $P$; likewise for $P_\infty$,
		\item  $\OR^P=\delta_1^{P_\infty}$,
		$\kappa_1^P$ is the least measurable of $P_\infty$ and
		$\vV_2^{P}=\cHull_1^{\vV}(\delta_1^{P_\infty})$,
		\item $\N_\infty^{P_\infty}$ (defined over $P_\infty$
		like $\N_\infty|\gamma_1^{\vV_2}$ is defined over $\vV_1|\kappa_1^{+\vV_1}$) is  well-defined,
		and has least measurable $\kappa_1^{P_\infty}$ and second Woodin $\delta_1^{\N_\infty^{P_\infty}}=\OR^{P_\infty}$,
		\item $\N_\infty^{P_\infty}|\delta_1^{\N_\infty^{P_\infty}}$ is obtained
		by iterating $P_\infty|\delta_1^{P_\infty}$, via a short-normal tree $\Tt$ of length $\delta_1^{\N_\infty^{P_\infty}}$,
		\item $F$ is a cofinal $\Sigma_1$-elementary
		(hence fully elementary) embedding \[ F:P_\infty|\delta_1^{P_\infty}\to\N_\infty^{P_\infty}|\delta_1^{\N_\infty{P_\infty}},\]
		and there is a $\Tt$-cofinal branch $b$ such that
		$F\sub i^\Tt_b$, and $i^\Tt_b(\delta_1^{P_\infty})=\delta_1^{\N_\infty^{P_\infty}}$ (so
		$b$ is intercomputable with $F$, and note that by amenability of $\vV$, $F$ is amenable to $P_\infty$, and hence so is $b$),
		\item $\rho_1^{\vV}=\delta_1^{P_\infty}=\OR^P$ and $p_1^{\vV}=\emptyset$
		(so $\core_1(\vV)=\vV_2^P$)
		and  $\delta_1^{P_\infty}$ is Woodin in $\J(\core_1(\vV))$, as witnessed by $\es^{P_\infty}$.

		\item $P$ is $(\J(\core_1(\vV)),\mathbb{L}^{\vV})$-generic,
		where $\mathbb{L}^{\vV}$ is defined over $\vV$ as
		$\mathbb{L}$ above was defined over $\vV_2||\gamma_1^{\vV_2}$.
		\qedhere
	\end{enumerate}
\end{dfn}

Remark \ref{rem:base_Vsp} carries over directly.

\begin{dfn}
	A \emph{2-Varsovian strategy premouse (2-Vsp)} is a structure
	\[ \vV=(\J_\alpha^\es,\es,F) \]
	for some sequence $\es$ of extenders, where either $\vV$ is a premouse or a 1-Vsp, or:
	\begin{enumerate}
		\item $\alpha\leq\OR$ and $\vV$ is an amenable acceptable J-structure,
		\item $\vV$ has at least two Woodin cardinals, the least two of which
		are $\delta_0^{\vV}<\delta_1^{\vV}$,
		and has an initial segment $\vV||\gamma$ which is a  base 2-Vsp,
		\item $\delta_1^\vV<\gamma$, so $\delta_1^\vV$ is the second Woodin of $\vV||\gamma$,
		\item if $F\neq\emptyset$ and $\gamma<\OR^\vV$ then either:
		\begin{enumerate}
			\item  $\vV$ satisfies the premouse
			axioms (for Jensen indexing) with respect to $F$, and $\gamma<\crit(F)$, or
			\item\label{item:2-Vsp_long_0} $\vV$ satisfies
			the 1-Vsp axioms for a long extender, i.e. clause \ref{item:long}
			of Definition \ref{dfn:Vsp}, for giving an iteration map on (the premouse)  $\vV|\delta_0^{\vV}$, or
			\item\label{item:2-Vsp_long_1}
			\begin{enumerate}
				\item $\vV^\passive=(\J_\alpha^\es,\es,\emptyset)\sats\ZFC^-$,
				\item $\vV$ has largest cardinal $\mu$, which is inaccessible in $\vV$
				and a limit of $\delta_0^{\vV}$-cutpoints of $\vV$ (where \emph{$\delta_0^{\vV}$-cutpoint} applies to both short extenders
				and long extenders over $\vV|\delta_1^{\vV}$)
				\item   $\N=\N_\infty^{\vV^\passive}$ is a well-defined,
				and satisfies the axioms of a 1-Vsp with $\delta_1^{\N}$ existing (but $\N$ is possibly illfounded),
				and $\N$ is $(\OR^{\vV}+1)$-wellfounded, with $\delta_1^{\N}=\OR^{\vV}$,
				\item
				$\N|\delta_1^{\N}$  is a proper class
				of $\vV^\passive$ has least measurable
				$\mu$,  \item  $F$ is a cofinal $\Sigma_1$-elementary
				embedding $F:\vV|\delta_1^{\vV}\to \N|\delta_1^{\N}$,
				\item $\N|\delta_1^{\N}$ is pseudo-iterate of $\vV|\delta_1^{\vV}$,
				via short-normal tree $\Tt$, and there is a $\Tt$-cofinal branch $b$ such that
				$F\sub i^\Tt_b$ (hence $b$ is amenable to $\vV$
				and inter-definable with $F$ over $\vV^\passive$),
			\end{enumerate}
		\end{enumerate}
		\item each proper segment of $\vV$ is a sound 2-Vsp
		(defining \emph{2-Vsp} recursively),
		where the fine structural language for active segments
		is just that with symbols for $\in,\es,F$,
		\item some $p\in\mathbb{L}^\vV=\mathbb{L}^{\vV||\gamma}$ forces
		that the generic object is a 1-Vsp $P$ of height $\delta_1^{\vV}$
		with $\vV_2^P=\vV||\gamma$, and there is an extension $P$ to a 1-Vsp $P^+$
		such that $\vV_2^{P^+}=\vV$ (so $P^+$ is level-by-level definable over $\vV$,
		via inverse P-construction).
	\end{enumerate}

	We write $\gamma_1^{\vV}=\gamma$ above (if $\vV$ is not a 1-Vsp).
\end{dfn}

\begin{dfn}\label{dfn:vV_2-like}
	A  2-Vsp $\vV$ is \emph{$\vV_2$-like} iff it is proper class
	and in some set-generic extension,  $\vV=\vV_2^N$ for some $\Mswsw$-like premouse $N$. (Note this is first-order over $\vV$.)

	We write $\vV_2\downarrow 1=\N_\infty$ and $\vV_2\downarrow 0=\N_\infty\downarrow 0$.
	Let $\vV$ be  $\vV_2$-like.
	We define $\vV\downarrow 1$ and $\vV\downarrow 0$ analogously (first-order over $\vV$ as in the proof of
	Lemma \ref{lem:vV_2_N_infty[*]_inter-def} part \ref{item:N_infty[*]_class_of_vV_2}).
	In fact, let us define $\vV\downarrow i$ more generally, in the same first-order manner, but allowing  $\vV$ to be illfounded,
	but  $\vV_2$-like with respect to first-order properties.
	Also if $N$ is a 1-Vsp, let $N\downarrow 1=N$.
	We write  $\vV^-$ for (the 1-Vsp) $\vV|\delta_1^{\vV}$.
	We write $\Lambda^{\vV}_i$ for the putative strategy for $\vV\downarrow i$
	for trees based on $\vV|\delta_i^{\vV}$, defined over $\vV$
	just as the corresponding restrictions of $\Sigma_{\vV_2\downarrow i}$ are defined over $\vV_2$, via the proof of Lemma \ref{lem:vV_2_iterates_N_infty_up_to_its_Woodin}.

	We write
	$\vV_2=\vV(\N_\infty,*_1)=\vV(\N_\infty,*_1\rest\delta_{1\infty})=\vV(\N_\infty,e_1^{
		\vV_2})$.
	Given a pair
	$(N,*')$ or $(N,*'\rest\delta)$ or $(N,e)$ where
	$N$ is $\vV_1$-like and the  pair has
	similar first-order properties
	as does $(\N_\infty,*_1)$ or $(\N_\infty,*_1\rest\delta_{1\infty})$ or
	$(\N_\infty,e_1^{\vV_2})$ respectively,
	we define $\vV(N,*')$ or $\vV(N,*'\rest\delta)$ or $\vV(N,e)$ analogously
	(via the proof of Lemma \ref{lem:vV_2_N_infty[*]_inter-def} part
	\ref{item:vV_2_lightface_class_of_N_infty[*]}).
\end{dfn}

\subsection{Iterability of $\vV_2$}\label{subsec:iterability_vV_2}

In this subsection we will define a normal iteration strategy $\Sigma_{\vV_2}$
for $\vV_2$ in $V$.

\begin{dfn}
	For $i\leq 1$, an \emph{$i$-long} extender is a $(\delta_i^{\vV},\delta)$-extender
	over $\vV$, for some premouse, 1-Vsp or 2-Vsp $\vV$, and some $\delta$.
\end{dfn}

\begin{definition}
	Let $\vV$ be a $\vV_2$-like 2-Vsp. A \emph{$0$-maximal iteration tree $\Tt$ on $\vV$ of length $\lambda\geq 1$} is a system
	with the usual properties for $0$-maximality, except that when $E^\Tt_\alpha$ is a $i$-long extender, then
	then $\pred^\Tt(\alpha+1)$ is the least $\beta\leq\alpha$
	such that $[0,\beta]_\Tt$ does not drop and $\delta_i^{M^\Tt_\beta}<\lh(E^\Tt_\alpha)$.

	\emph{Iteration strategies} and \emph{iterability} for (such trees on) $\vV_2$ are defined in the obvious manner (one detail here is that if $[0,\alpha+1]_\Tt$ does not drop
	then $M^\Tt_{\alpha+1}$ is a (putative) 2-Vsp, including when
	$E^\Tt_\alpha$ is $0$-long).
\end{definition}

\begin{dfn}\label{dfn:vV_2_short-normal}
	A \emph{short-normal tree} on a $\vV_2$-like 2-Vsp $\vV$
	is a $0$-maximal tree that uses no long extenders.
	Note that a short-normal tree is of the form
	$\Tt_0\conc\Tt_1\conc\Ss$,
	where $\Tt_0$ is based on $\vV|\delta_0^{\vV}$,
	either
	\begin{enumerate}[label=\tu{(}\roman*\tu{)}]
		\item\label{item:vV_2_short-extender_i_0} [$\Tt_0$ has limit length or $b^{\Tt_0}$ drops]
		and $\Tt_1=\Ss=\emptyset$, or
		\item\label{item:vV_2_short-extender_ii_0}  $\Tt_0$ has successor length,
		$b^{\Tt_0}$ does not drop and $\Tt_1$ is above $\gamma_0^{M^{\Tt_0}_\infty}$
		and based on $M^{\Tt_0}_\infty|\delta_1^{M^{\Tt_0}_\infty}$,
	\end{enumerate}
	and if $\Tt_1\neq\emptyset$ then either
	\begin{enumerate}[label=\tu{(}\roman*\tu{)}]
		\item\label{item:vV_2_short-extender_i_1} [$\Tt_1$ has limit length or $b^{\Tt_1}$ drops]
		and $\Ss=\emptyset$, or
		\item\label{item:vV_2_short-extender_ii_1}  $\Tt_1$ has successor length,
		$b^{\Tt_1}$ does not drop and $\Ss$ is above $\gamma_1^{M^{\Tt_1}_\infty}$.
	\end{enumerate}
	Say that $\Tt_0\conc\Tt_1$ and $\Ss$ are the \emph{lower, upper components} respectively,
	and $\Tt_i$ the \emph{$i$-lower component}.
\end{dfn}

\subsubsection{Condensation properties for full normalization}

\begin{dfn}\label{dfn:vV_2_norm_condensation} We define the notions
	\emph{$(m+1)$-relevantly condensing}, \emph{$(m+1)$-sub-condensing}
	and \emph{$n$-standard} for 2-Vsps just as for 1-Vsps (see Definition \ref{dfn:norm_condensation}), replacing the role of premice there with 1-Vsps, and replacing $\delta_0^{\vV},\gamma_0^{\vV}$ with $\delta_1^{\vV},\gamma_1^{\vV}$.\end{dfn}

By Lemma \ref{lem:vV_1_norm_condensation} and its proof we have the following,
and Remark \ref{rem:iteration_preserves_standardness_at_degree} carries over directly:
\begin{lem}\label{lem:vV_2_norm_condensation} $\vV_2$ is $\om$-standard. (Thus, we take \emph{$\vV_2$-like}
	to include \emph{$\om$-standard}.)
\end{lem}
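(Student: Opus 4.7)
The plan is to reduce the standardness of $\vV_2$ to that of $\vV_1$ (and ultimately of $M$), exactly as Lemma~\ref{lem:vV_1_norm_condensation} reduced standardness of $\vV_1$ to that of $M$. Fix $m<\omega$ and let $\pi:P\to\vV_2|\alpha$ be an embedding as in the definition of $(m+1)$-relevantly condensing or $(m+1)$-sub-condensing, with $P$ an $(m+1)$-sound 2-Vsp, $\gamma_1^P<\OR^P$, $\alpha>\gamma_1^{\vV_2}$, and $\crit(\pi)>\delta_1^{\vV_2}$ (hence $\crit(\pi)>\gamma_1^{\vV_2}$ by the axioms of a 2-Vsp). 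The goal is to show $P\pins\vV_2$; the same argument will handle each proper segment of $\vV_2$, and hence establish $n$-standardness for every $n<\omega$.

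First I would lift the picture one level, from 2-Vsps to 1-Vsps. By Lemma~\ref{vV_2_local_correspondence} (and the inverse P-construction furnished by the definition of 2-Vsp) there is a unique 1-Vsp $\vV_1'$ with $\vV_1||\kappa_1^{+\vV_1}\pins\vV_1'$ and with $\vV_2^{\vV_1'}=\vV_2|\alpha$; similarly there is a 1-Vsp $Q$ with $\vV_2^Q=P$ and $\gamma^Q<\OR^Q$. Because $\crit(\pi)>\gamma_1^{\vV_2}$ lies above the base 2-Vsp, $\pi$ extends canonically, level by level through the inverse modified P-construction (using Fact~\ref{fact:F_from_F_rest_X} and the generic correspondence $=^*$ of Remark~\ref{rem:vV_2_=^*}), to an embedding $\pi^+:Q\to\vV_1'$ with $\crit(\pi^+)=\crit(\pi)>\delta_1^{\vV_2}=\delta_1^Q$. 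The fine-structural translation preserves projecta, standard parameters and the various near-elementarity requirements, so $\pi^+$ satisfies exactly the hypotheses of $(m+1)$-relevantly condensing (resp.\ $(m+1)$-sub-condensing) for 1-Vsps.

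Next, I would iterate the procedure by applying the same lifting one more time: by Lemma~\ref{lem:vV_1_norm_condensation} and its proof, $Q$ in turn corresponds to a premouse $R$ with $M|\kappa_0^{+M}\pins R$ and $\vV_1^R=Q$, and $\pi^+$ extends to $\pi^{++}:R\to M||\alpha''$ (with $\vV_1^{M||\alpha''}=\vV_1'$) satisfying the premouse version of the condensing hypothesis. Since $M$ is $\omega$-standard (this is built into the iterability hypothesis on $M^\#$, giving the standard premouse condensation used throughout fine structure), we get $R\pins M$; peeling back yields $Q\pins\vV_1$, and then $P\pins\vV_2$ as required.

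The only real content is the verification that the inverse modified P-construction preserves the pieces of fine structure needed by the definition of $(m+1)$-relevantly/sub-condensing (projecta $\rho_{k}$, standard parameters $p_k$, solidity, universality, $\rSigma_{m+1}$-elementarity). The potential obstacle is that unlike ordinary P-constructions, the one between $\vV_1$ and $\vV_2$ involves overlapping long extenders at $\delta_0^{\N_\infty}$ and $\delta_1^{\N_\infty}$, so one must check that the translation of $\pi$ through these long-extender levels preserves the relevant hulls. However, $\crit(\pi)>\gamma_1^{\vV_2}$ places us strictly above the base 2-Vsp and above the forcing $\mathbb{L}^{\vV_2}$; from Lemma~\ref{vV_2_local_correspondence}(\ref{item:vV_2_fs_correspondence}) the translation above $\xi_0$ is level-by-level definable and preserves fine-structural data in the usual P-construction sense, so the extension $\pi\mapsto\pi^+$ is routine. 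The same remarks apply to every proper segment of $\vV_2$, giving $\omega$-standardness.
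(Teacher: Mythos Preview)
Your proposal is correct and follows essentially the same approach as the paper, which simply says ``By Lemma~\ref{lem:vV_1_norm_condensation} and its proof'': lift a condensing embedding $\pi:P\to\vV_2|\alpha$ through the level-by-level $\vV_2$/$\vV_1$ correspondence to an embedding $\pi^+:Q\to\vV_1|\alpha$ with $\vV_2^Q=P$, then invoke $\omega$-standardness of $\vV_1$ to get $Q\pins\vV_1$ and hence $P\pins\vV_2$. Your explicit second lift from $\vV_1$ to $M$ is not needed once Lemma~\ref{lem:vV_1_norm_condensation} is in hand, but unwinding it does no harm.
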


\subsubsection{Tree translation from $\vV_1$ to $\vV_2$}

\begin{definition} Let $\vV$ be $\vV_1$-like.
	We define \emph{1-translatable} trees $\Tt$ on $\vV$
	like in Definition \ref{dfn:translatable},
	with $0$ replaced by $1$ as appropriate,
	but add the demand that $\Tt$ uses no $0$-long extenders.

	Let $\Tt$ on $\vV$ be 1-translatable. The \emph{1-translation} of $\Tt$ is the tree on $\vV_2^{\vV}$  defined just as in Definition \ref{dfn:vV_1-translation}.
\end{definition}

Remarks \ref{rem:translatable}  and \ref{rem:vV_1_of_dropped_iterate} carry over directly, replacing $0$ with $1$ as appropriate.
Likewise  Lemma \ref{lem:vV_1-translatable}
and its proof:

\begin{lemma}\label{lem:vV_2-translatable}
	Let $\Tt$ on $\vV$ be $1$-translatable, where $\vV$ is $\vV_1$-like. Then:
	\begin{enumerate}
		\item The 1-translation $\Uu$ of $\Tt$
		exists and is unique.
		\item $M^\Uu_\alpha=\vV_2^{M^\Tt_\alpha}$ and $\gamma_1^{M^\Uu_\alpha}<\OR(M^\Uu_\alpha)$ for all $\alpha<\lh(\Tt)$.
		\item $i^\Uu_{\alpha\beta}=i^\Tt_{\alpha\beta}\rest M^\Uu_\alpha$ for all $\alpha<_\Tt\beta$ such that $(\alpha,\beta]_\Tt$ does not drop.
		\item $M^{*\Uu}_{\alpha+1}=\vV_2^{M^{*\Tt}_{\alpha+1}}$ for all $\alpha+1<\lh(\Tt)$.
		\item $i^{*\Uu}_{\alpha+1}=i^{*\Tt}_{\alpha+1}\rest M^{*\Uu}_{\alpha+1}$ for all $\alpha+1<\lh(\Tt)$.
	\end{enumerate}
\end{lemma}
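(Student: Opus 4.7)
The plan is to imitate the proof of Lemma \ref{lem:vV_1-translatable} step-for-step, replacing the indices $0$ by $1$ throughout and replacing the $\vV_1/M$ correspondence by the $\vV_2/\vV_1$ correspondence. I will construct $\Uu$ recursively in $\alpha$, at each stage setting $\lh(E^\Uu_\alpha)=\lh(E^\Tt_\alpha)$ and $E^\Uu_\alpha=F(\vV_2^{M^\Tt_\alpha}\|\lh(E^\Tt_\alpha))$. The hypothesis $\kappa_1^{+M^\Tt_\alpha}<\lh(E^\Tt_\alpha)$ (together with the fact that $\gamma_1^{\vV_2^{M^\Tt_\alpha}}<\kappa_1^{+M^\Tt_\alpha}$, by Lemma \ref{vV_2_local-definability-of-that-structure}(\ref{item:vV_2_better_size_of_gamma})) puts $\lh(E^\Tt_\alpha)$ into the range where the level-by-level P-construction correspondence of Lemma \ref{vV_2_local_correspondence} applies, so $\vV_2^{M^\Tt_\alpha}\|\lh(E^\Tt_\alpha)$ is a well-defined segment of $\vV_2^{M^\Tt_\alpha}$ whose active extender corresponds to $E^\Tt_\alpha$ via the Jensen-indexing premouse axioms of clause (\ref{item:vV_2_premouse_axioms}) or the long-extender clause (\ref{item:vV_2_non_coherence}) of that lemma. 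Uniqueness then follows from Fact \ref{fact:F_from_F_rest_X} together with the observation that below $\gamma_1$, $\vV_2$ and $\vV_1$ share the same universe.

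Copying the tree structure proceeds via the standard Shift Lemma bookkeeping used for P-construction translations, so the only new point is the $\ZFC$-style verification that the induced ultrapower $\Ult_d(M^{*\Uu}_{\alpha+1},E^\Uu_\alpha)$ sits inside $\vV_2^{\Ult_d(M^{*\Tt}_{\alpha+1},E^\Tt_\alpha)}$, or more precisely that every ultrapower element on the $\Tt$-side is represented by an $\Uu$-side function. The only delicate instance is when $[0,\alpha+1]_\Tt$ does not drop and $\crit(E^\Tt_\alpha)=\kappa=\kappa_1(M^\Tt_\beta)$, where $\beta=\pred^\Tt(\alpha+1)$. Then $E^\Uu_\alpha$ is $1$-long over $M^\Uu_\beta=\vV_2^{M^\Tt_\beta}$ with space $\delta_1^{M^\Uu_\beta}=\kappa_1^{+M^\Tt_\beta}$, and I must show that given $f\in M^\Tt_\beta$ and $a\in[\nu(E^\Tt_\alpha)]^{<\om}$ with $f:[\kappa]^{|a|}\to M^\Uu_\beta$, there exist $b\supseteq a$ in $[\lh(E^\Uu_\alpha)]^{<\om}$ and $g\in M^\Uu_\beta$ with $f^{ab}(u)=g(u)$ for $(E^\Tt_\alpha)_b$-measure-one many $u$.

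As in the proof of Lemma \ref{lem:vV_1-translatable}, I first dispatch the case $\rg(f)\subseteq\delta_1^{M^\Uu_\beta}$: here the required $(b,g)$ is furnished directly by the fact that $E^\Uu_\alpha$ is the restriction of $E^\Tt_\alpha$ to $M^\Uu_\beta|\delta_1^{M^\Uu_\beta}$ and that this restriction is cofinal in $\lh(E^\Tt_\alpha)$. To reduce the general case to this one, I use that $M^\Tt_\beta$ is obtained from $M^\Uu_\beta=\vV_2^{M^\Tt_\beta}$ by forcing with $\mathbb{L}^{M^\Uu_\beta}$, which by Lemma \ref{lem:vV_1_still-a-woodin-in-v}(\ref{item:vV_1_V_delta_and_Woodin_from_gg}) (applied inside $M^\Tt_\beta$, i.e. via elementarity to an iterate of $\vV_1$) is $\delta_1^{M^\Uu_\beta}$-cc in $M^\Uu_\beta$. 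Consequently $\rg(f)\subseteq X$ for some $X\in M^\Uu_\beta$ of $M^\Uu_\beta$-cardinality $<\delta_1^{M^\Uu_\beta}$; taking $\pi\in M^\Uu_\beta$ the collapse of $X$ to its order type $\eta<\delta_1^{M^\Uu_\beta}$ and applying the previous case to $\widetilde f=\pi\circ f$ produces $(b,\widetilde g)$, from which $g=\pi^{-1}\circ\widetilde g\in M^\Uu_\beta$ works.

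The demand that $\Tt$ use no $0$-long extenders ensures the recursion never meets an extender whose $\vV_2$-analogue would be a $0$-long predicate (clause (\ref{item:2-Vsp_long_0}) of the 2-Vsp definition), which is handled by a separate translation scheme not needed here; all short extenders in $\Tt$ with critical point $>\kappa_1^{M^\Tt_\beta}$ translate to short extenders of $\Uu$ via the standard P-construction, which is routine. Commutativity of copy maps with iteration maps, preservation of drop and degree sets, and the identification of $M^{*\Uu}_{\alpha+1}$ with $\vV_2^{M^{*\Tt}_{\alpha+1}}$ all follow exactly as in Lemma \ref{lem:vV_1-translatable}, and Remark \ref{rem:vV_1_of_dropped_iterate} (read with $0\mapsto 1$) takes care of the situation past a drop. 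The sole substantive obstacle is the $\delta_1^{M^\Uu_\beta}$-cc covering argument described above; once it is in hand, everything else is bookkeeping.
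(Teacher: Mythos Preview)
Your proposal is correct and takes essentially the same approach as the paper: the paper gives no separate proof for this lemma, merely stating ``Likewise Lemma \ref{lem:vV_1-translatable} and its proof,'' and your write-up is precisely that adaptation, including the key $\delta_1$-cc covering argument for the case $\crit(E^\Tt_\alpha)=\kappa_1(M^\Tt_\beta)$.
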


\subsubsection{Trees based on $\N_\infty|\delta_1^{\N_\infty}$}

Toward defining
$\Sigma_{\vV_2}$, we first consider trees on $\vV_2$ based on
$\vV_2^-  =\N_\infty|\delta_1^{\N_\infty}$,
adapting Definition \ref{dfn:Psi_vV_1,vV_1^-}:

\begin{dfn}
	Write $\Sigma^{\sn}_{\N_\infty,\vV_2^-}$
	for the  strategy for $\N_\infty$ for short-normal trees based on $\vV_2^-$, induced
	by $\Sigma_{\N_\infty}$.
	Let $\Psi_{\vV_2,\vV_2^-}$ denote the putative  strategy for
	short-normal trees on $\vV_2$ based on $\vV_2^-$,
	induced by $\Sigma^{\sn}_{\N_\infty,\vV_2^-}$.
	This makes sense
	by Lemma \ref{lem:vV_1_still-a-woodin-in-v}.
\end{dfn}

Remark \ref{rem:basic_trees_via_Psi_vV_1,vV_1^-} adapts routinely.
We now partially adapt Lemma \ref{lem:Gamma_A_is_good}, but omit the clause
``and in fact, $\Lambda^{M^\Uu_\alpha}\sub\Sigma_{M^\Tt_\alpha}$'',
as we will prove this in more generality later, in Lemma \ref{lem:vV_2_Psi^sn_good}.
The proof of the rest is a direct adaptation:

\begin{lem}\label{lem:vV_2_Gamma_A_is_good}
	$\Psi_{\vV_2,\vV_2^-}$ yields wellfounded models.
	Moreover, let $\Tt$ be on $\N_\infty$,
	via $\Sigma^{\sn}_{\N_\infty,\vV_2^-}$,
	and let $\Uu$ be the corresponding tree on $\vV_2$ (so via
	$\Psi_{\vV_2,\vV_2^-}$).
	Let
	\[ \pi_\alpha:M^\Tt_\alpha\to M^\Uu_\alpha\downarrow 1\sub M^\Uu_\alpha \]
	be the natural copy map
	\tu{(}where $\pi_0=\id$\tu{)}. Then:
	\begin{enumerate}[label=(\roman*)]
		\item $[0,\alpha]_\Tt$ drops iff $[0,\alpha]_\Uu$ drops.
		\item If $[0,\alpha]_\Tt$ drops then $M^\Tt_\alpha=M^\Uu_\alpha=M^\Uu_\alpha\downarrow 1$ (cf.~Remark \ref{rem:basic_trees_via_Psi_vV_1,vV_1^-} adapted ).
		\item\label{item:M^U_alpha=V(M^T_alpha,l)} If $[0,\alpha]_\Tt$ does not drop then $M^\Tt_\alpha=M^\Uu_\alpha\downarrow 1$ and $M^\Uu_\alpha=\vV(M^\Tt_\alpha,\ell)$ where $\ell:M^\Tt_\alpha\to\N_\infty^{M^\Tt_\alpha}$
		is the correct iteration map,
		\item $\pi_\alpha=\id$; therefore, $i^\Tt_\alpha\sub i^\Uu_\alpha$.
	\end{enumerate}
\end{lem}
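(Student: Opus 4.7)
The plan is to mirror the proof of Lemma~\ref{lem:Gamma_A_is_good}, shifted up one level: the roles of $M,\M_\infty,\vV_1$ there become $\vV_1,\N_\infty,\vV_2$ here, and the key structural facts we used (soundness of $\M_\infty$ below its Woodin, agreement of indiscernibles, conservativity of hulls) have exact analogues by Lemma~\ref{lem:vV_2_M_infty_of_iterate_N_etc} and the results in \S\ref{subsec:vV_2} on base 2-Vsps and long extender segments of $\vV_2$. The induction on $\lh(\Tt)$ is organized as usual; limit stages use uniqueness of cofinal branches induced from $\Sigma_{\N_\infty}$ together with the fact that the copy map obtained as the direct limit of identity maps must itself be the identity, and successor stages reduce via the Shift Lemma to the following non-trivial case.

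The heart of the argument is the non-dropping successor case. Fix such $\alpha$, write $P=M^\Tt_\alpha$ and $P_\infty=\N_\infty^P$, and let $i_{\N_\infty P}$, $i_{P P_\infty}$, $i_{\N_\infty P_\infty}$ denote the relevant $\Sigma_{\N_\infty}$- and $\Sigma_P$-iteration maps (existence/correctness is by Lemma~\ref{lem:vV_2_M_infty_of_iterate_N_etc}). The first step is to establish the 2-level analogue of the Hull conservativity claim inside the proof of Lemma~\ref{lem:Gamma_A_is_good}:
\begin{enumerate}
\item[(i)] $P_\infty\cap \Hull^{P_\infty[*^P_1]}(\rg(i_{\N_\infty P_\infty})) = \rg(i_{\N_\infty P_\infty})$, and
\item[(ii)] $P_\infty\cap \Hull^{P_\infty[*^P_1]}(\rg(i_{P P_\infty})) = \rg(i_{P P_\infty})$.
\end{enumerate}
As in \cite{Theta_Woodin_in_HOD} and the proof of \ref{lem:Gamma_A_is_good}, one reduces a typical element $\alpha = t^{P_\infty[*^P_1]}(i_{\N_\infty P_\infty}(s^-,\beta))$ to showing $*^P_1(\alpha)\in\rg(i_{\N_\infty P_\infty})$, which uses exactly the same ``fixed term'' argument, but now applied to the second direct limit system $\mathscr{F}_1^P$: for each $N\in\mathscr{F}_1^P$ there is a single term $u$ with $\alpha=u^N(i_{\N_\infty N}(s,\beta))$, and the stability of $\beta<\gamma^{\N_\infty}_s$ supplies the cover. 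Here the indiscernibles $\mathscr{I}^{\N_\infty}=\mathscr{I}^{\vV_1}$, together with $\delta_1^{\N_\infty}$-soundness of $\N_\infty$, play the role that $\mathscr{I}^{\M_\infty}$ and $\delta_0^{\M_\infty}$-soundness played before.

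Given (i) and (ii), set $\widetilde{P}=\cHull^{P_\infty[*^P_1]}(\rg(i_{PP_\infty}))$, with uncollapse $i^+_{PP_\infty}$, and similarly $\widetilde{\N_\infty}$ with uncollapse $i^+_{\N_\infty P_\infty}$. By the Hull conservativity clauses we have $i_{\N_\infty P}\subseteq (i^+_{PP_\infty})^{-1}\circ i^+_{\N_\infty P_\infty}$, and the predicate $*^P_1\!\restriction\!\delta_1^{P_\infty}$ is preserved under both uncollapses (since $*^P_1\subseteq\pi^P_{\infty 1}$, the iteration map $P_\infty\to\N_\infty^{P_\infty}$, by Lemma~\ref{lem:vV_2_M_infty_of_iterate_N_etc}\ref{item:vV_2_N_infty^N_infty^vV_is_sound_iterate}). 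Consequently $\widetilde{\N_\infty}=\N_\infty[*_1]$ (on universes, hence as structures by Lemma~\ref{lem:vV_2_N_infty[*]_inter-def}), so the ultrapower presentation of $M^\Uu_\alpha$ from $\vV_2=\vV(\N_\infty,*_1)$ by the $[0,\alpha]_\Tt$-branch extender $E$ of $\Tt$ matches $\widetilde{P}=\vV(P,i_{PP_\infty})$ via a surjective factor map $\pi:M^\Uu_\alpha\to\widetilde{P}$; surjectivity is immediate because every ordinal in $\widetilde{P}$ is of the form $i^+_{\N_\infty P}(f)(a)$ with $f\in\N_\infty$, $a\in[\delta_1^P]^{<\omega}$, and $i_{\N_\infty P}\subseteq i^+_{\N_\infty P}$. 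Thus $\pi=\id$, $M^\Uu_\alpha=\vV(P,i_{PP_\infty})$ as claimed in \ref{item:M^U_alpha=V(M^T_alpha,l)}, and reading off the map $\pi_\alpha:P\to M^\Uu_\alpha\downarrow 1$ yields $\pi_\alpha=\id$ and $i^\Tt_\alpha\subseteq i^\Uu_\alpha$.

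Wellfoundedness of every $M^\Uu_\alpha$ then follows because $M^\Uu_\alpha$ has been identified with an ultrapower of $\vV_2$ whose underlying ordinal map agrees with the wellfounded iteration map $i_{\N_\infty P_\infty}^+$, combined with Lemma~\ref{lem:vV_2_N_infty[*]_inter-def} inside the non-dropping segments and Remark~\ref{rem:basic_trees_via_Psi_vV_1,vV_1^-} (adapted) in dropping segments. The main obstacle is establishing the Hull conservativity clauses (i) and (ii) in the $\vV_2$-setting; once those are in hand, the rest is bookkeeping copied from \ref{lem:Gamma_A_is_good}. The omitted clause $\Lambda^{M^\Uu_\alpha}\subseteq\Sigma_{M^\Tt_\alpha}$ (which in the $\vV_1$ case required an additional comparison/branch-condensation argument via Lemma~\ref{lem:branch_con}) is deliberately postponed to Lemma~\ref{lem:vV_2_Psi^sn_good}, where it will be handled together with the short-normal strategy on all of $\vV_2$.
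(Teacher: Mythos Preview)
Your proposal is correct and is exactly the approach the paper takes: the paper's own proof consists only of the remark that this is a direct adaptation of Lemma~\ref{lem:Gamma_A_is_good} (with the $\Lambda^{M^\Uu_\alpha}\subseteq\Sigma_{M^\Tt_\alpha}$ clause deliberately omitted and deferred to Lemma~\ref{lem:vV_2_Psi^sn_good}), and you have spelled out precisely that adaptation, invoking the right analogues (Lemma~\ref{lem:vV_2_M_infty_of_iterate_N_etc} and Lemma~\ref{lem:vV_2_N_infty[*]_inter-def}) in place of the $\vV_1$-level inputs.
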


\begin{dfn}\label{dfn:vV_2_Psi_V,V^-}
	Given a non-dropping $\Psi_{\vV_2,\vV_2^-}$-iterate
	$\vV$ of $\vV_2$, let $\Psi_{\vV,\vV^-}$ be induced by $\Sigma_{\vV\downarrow 1}$
	just as $\Psi_{\vV_2,\vV_2^-}$ is induced by $\Sigma_{\N_\infty}$
	(this makes sense by Lemma \ref{lem:vV_2_Gamma_A_is_good}).
\end{dfn}

\subsubsection{Short-normal trees on $\vV_2$}\label{subsec:short-normal_trees_on_vV_2}

\begin{dfn}
	Let $\vV$ be a (possibly dropping, putative) iterate of $\vV_2$,
	via a short-normal
	tree $\Tt\conc\Ss$ with lower and upper components $\Tt,\Ss$.
	We say that $\vV$ is \emph{good} iff $\Tt$ is via $\Psi_{\vV_2,\vV_2^-}$, $\vV$ is wellfounded and for every $i$-long $E\in\es_+^\vV$,
	$\M_{i\infty}^{\vV|\lh(E)}=P|\delta_i^P$ for some $\Sigma_{\vV\downarrow i}$-iterate
	$P$ of $\vV\downarrow i$, and $E$ is the corresponding iteration map.

	Say that a (partial) iteration strategy $\Psi$ for $\vV_2$
	is \emph{good} iff all putative iterates via $\Psi$ are good.
\end{dfn}

We now extend $\Psi_{\vV_2,\vV_2^-}$ to a good short-normal $0$-maximal strategy
$\Psi_{\sn}$ for $\vV_2$.
We first deal with trees based on $\vV||\gamma_1^{\vV}$:

\begin{dfn}\label{dfn:vV_2_Psi_for_vV_1|gamma}
	Write $\Psi_{\vV_2,\gamma_1^{\vV_2}}$ for the putative strategy $\Psi$ for $\vV_2$,
	for short-normal $0$-maximal trees based on $\vV_2||\gamma_1^{\vV_2}$, as follows:
	\begin{enumerate}
		\item $\Psi_{\vV_2,\vV_2^-}\sub\Psi$, and
		\item given $\Tt$ via $\Psi_{\vV_2,\vV_2^-}$, of successor length $\alpha+1$,
		where $[0,\alpha]_\Tt$ does not drop, and given a putative $0$-maximal tree $\Uu$ on $M^\Tt_\alpha||\gamma_1^{M^\Tt_\alpha}$, which is above $\delta_1^{M^\Tt_\alpha}$,
		then $\Tt\conc\Uu$ is (equivalent to a tree) via $\Psi$
		iff there is a tree $\Uu'$ on ${M^\Tt_\alpha\downarrow 1}$,
		via $\Sigma_{M^\Tt_\alpha\downarrow 1}$, with the same extenders and tree order as $\Uu$.\qedhere
	\end{enumerate}
\end{dfn}

We adapt Lemma \ref{lem:Psi_vV_1,gamma^vV_1_good}:
\begin{lem}
	$\Psi_{\vV_2,\gamma_1^{\vV_2}}$ is a short-normal $0$-maximal strategy (hence yields wellfounded models). Moreover, let
	$\Tt\conc\Uu$ and $\Uu'$ be as in Definition \ref{dfn:vV_2_Psi_for_vV_1|gamma}, with $\Uu\neq\emptyset$.
	Then:
	\begin{enumerate}
		\item $M^\Uu_0=M^\Tt_\alpha||\gamma_1^{M^\Tt_\alpha}$ and $\deg^\Uu_0=0$,
		\item $M^{\Uu'}_0={M^\Tt_\alpha\downarrow 1}$ and $\deg^{\Uu'}_0=0$,
		so $(M^\Uu_0)^\passive=M^{\Uu'}_0|\kappa_1^{+M^{\Uu'}_0}$,
		\item for $0<\beta<\lh(\Uu)$, $\beta\in\dropset_{\deg}^\Uu\Leftrightarrow\beta\in\dropset_{\deg}^{\Uu'}$,
		and $\deg^\Uu_\beta=\deg^{\Uu'}_\beta$,
		\item if $0<\beta<\lh(\Uu)$ and $[0,\beta]_\Uu$ drops then $M^\Uu_\beta=M^\Uu_{\beta'}$,
		\item if $0<\beta<\lh(\Uu)$ and $[0,\beta]_\Uu$ does not drop then
		$(M^\Uu_\beta)^\passive=M^{\Uu'}_\beta|\kappa_1^{+M^{\Uu'}_\beta}$,
		\item if $0<\beta+1<\lh(\Uu)$ and $[0,\beta+1]_\Uu$ drops then
		$M^{*\Uu}_{\beta+1}=M^{*\Uu'}_{\beta+1}$ and $i^{*\Uu}_{\beta+1}=i^{*\Uu'}_{\beta+1}$,
		\item if $0<\beta+1<\lh(\Uu)$ and $[0,\beta+1]_{\Uu'}$ does not drop
		then $i^{*\Uu}_{\beta+1}\sub i^{*\Uu'}_{\beta+1}$,
		\item\label{item:vV_2_Ult(M^T_alpha_down_0,F)} if $0\leq\beta<\lh(\Uu)$ and $[0,\beta]_\Uu$ does not drop
		then $M^{\Uu'}_\beta$ is a ($\kappa_1^{M^{\Uu'}_\beta}$-sound) $\Sigma_{M^\Tt_\alpha\downarrow 1}$-iterate of ${M^\Tt_\alpha\downarrow 1}$,  $\N_\infty^{M^{\Uu'}_\beta}$
		is a $\delta_1^{\N_\infty^{M^{\Uu'}_\beta}}$-sound $\Sigma_{M^\Tt_\alpha\downarrow 1}$-iterate
		of ${M^\Tt_\alpha\downarrow 1}$,
		\[ \Ult(M^\Tt_\alpha\downarrow 1,F(M^\Tt_\beta))=\N_\infty^{M^{\Uu'}_\beta} \]
		and $F(M^\Tt_\beta)$ is the extender of the $\Sigma_{M^\Tt_\alpha\downarrow 1}$-iteration map.
	\end{enumerate}
	Therefore if $\Tt,\Uu$ each have successor length, then $M^{\Tt\conc\Uu}_\infty$ is good with respect to extenders
	indexed $\leq\gamma_1^{M^{\Tt\conc\Uu}_\infty}$ (or all extenders
	in $\es_+(M^{\Tt\conc\Uu}_\infty)$, if  $b^{\Tt\conc\Uu}$ drops).
\end{lem}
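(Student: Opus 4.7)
The plan is to follow the proof of Lemma~\ref{lem:Psi_vV_1,gamma^vV_1_good} almost verbatim, lifting every step from the $\vV_1$/$M$/$\M_\infty$ context to the $\vV_2$/$\vV_1$/$\N_\infty$ context. The definitional ingredients are perfectly parallel: $\vV_2||\gamma_1^{\vV_2}$ stands to $\N_\infty$ as $\vV_1||\gamma^{\vV_1}$ stood to $\M_\infty$; the passivization $(\vV_2||\gamma_1^{\vV_2})^\passive$ coincides with $\N_\infty|\kappa_1^{+\N_\infty}$; the embedding coded by $F(\vV_2||\gamma_1^{\vV_2})$ is the correct iteration map $\N_\infty|\delta_1^{\N_\infty}\to\N_\infty^{\N_\infty}|\delta_1^{\N_\infty^{\N_\infty}}$ by Lemma~\ref{lem:vV_1_delta_1-sound_iterate_M_infty}; and Lemma~\ref{lem:vV_2_Gamma_A_is_good} supplies the starting agreement along $\Tt$, namely that $(M^\Tt_\alpha)^\passive$ is literally $(M^\Tt_\alpha\downarrow 1)|\kappa_1^{+(M^\Tt_\alpha\downarrow 1)}$ and $M^\Tt_\alpha=\vV(M^\Tt_\alpha\downarrow 1,\ell)$ for the correct iteration map $\ell$.

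First I would build $\Uu$ and $\Uu'$ by simultaneous recursion on length, maintaining the invariants (1)--(7) as the induction hypothesis. Because $\Uu$ lives above $\delta_1^{M^\Tt_\alpha}$, every extender $E^\Uu_\beta$ with index $>\gamma_1^{M^\Uu_\beta}$ has $\crit(E^\Uu_\beta)>\delta_1^{M^\Uu_\beta}=\kappa_1^{+(M^{\Uu'}_\beta)}$, so it is  simultaneously an extender on the $\vV_2$-sequence of $M^\Uu_\beta$ and on the premouse/1-Vsp sequence of $M^{\Uu'}_\beta$; for such extenders the premouse axioms for $\Uu'$ and the 2-Vsp axioms for $\Uu$ are equivalent, the tree predecessor rule is the same, and the Shift Lemma produces identical ultrapower maps on the shared reduct, so the drop/degree structure transfers automatically and (3)--(7) are routine propagations of the P-construction correspondence of Lemma~\ref{vV_2_local_correspondence}. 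For extenders on $\Uu$ with index at most $\gamma_1^{M^\Uu_\beta}$, we treat $F(M^\Uu_\beta)$ as a single extender (a long extender as understood inside $\vV_2$) and read it as the active predicate of the base 2-Vsp level; everything below $\gamma_1^{M^\Uu_\beta}$ lies below $\kappa_1^{+(M^{\Uu'}_\beta)}$ and is therefore already in $M^{\Uu'}_\beta$, giving (1)--(2) at limit and successor stages.

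The work concentrates in (8), which I would prove by induction together with the construction. Assume the invariants through stage $\beta$ and that $[0,\beta]_\Uu$ (equivalently $[0,\beta]_{\Uu'}$) does not drop. Let $j=i^\Uu_{0\beta}$ and $j'=i^{\Uu'}_{0\beta}$. The key observation is that $j\subseteq j'$: both maps act identically on the common reduct $M^\Tt_\alpha||\gamma_1^{M^\Tt_\alpha}=(M^{\Uu'}_0)|\kappa_1^{+(M^{\Uu'}_0)}$, because the images of the extenders used agree level-by-level in the codes (by the inductive invariant (v)). Applying $j'$ to the $\Sigma_{M^\Tt_\alpha\downarrow 1}$-correct identification $\N_\infty^{M^\Tt_\alpha\downarrow 1}|\delta_1^{\N_\infty^{M^\Tt_\alpha\downarrow 1}}$ (which is an iterate of $M^\Tt_\alpha\downarrow 1|\delta_1$ by Lemma~\ref{lem:vV_2_M_infty_of_iterate_N_etc}) yields $\N_\infty^{M^{\Uu'}_\beta}|\delta_1^{\N_\infty^{M^{\Uu'}_\beta}}$ as a $\Sigma_{M^\Tt_\alpha\downarrow 1}$-iterate of $M^\Tt_\alpha\downarrow 1$, with $\delta_1^{\N_\infty^{M^{\Uu'}_\beta}}$-soundness preserved along the non-dropping branch. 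Meanwhile $F(M^\Uu_\beta)=j(F(M^\Tt_\alpha||\gamma_1^{M^\Tt_\alpha}))$ acts on $M^\Tt_\alpha\downarrow 1$ as the composition $j'\restriction(M^\Tt_\alpha\downarrow 1|\delta_1)$, since $F(M^\Tt_\alpha||\gamma_1^{M^\Tt_\alpha})$ was already the correct iteration map by Lemma~\ref{lem:vV_2_Gamma_A_is_good}(\ref{item:M^U_alpha=V(M^T_alpha,l)}) and $j\subseteq j'$. Thus $\Ult(M^\Tt_\alpha\downarrow 1,F(M^\Tt_\beta))$ coincides with $\N_\infty^{M^{\Uu'}_\beta}$ and $F(M^\Uu_\beta)$ is the correct iteration extender, which is exactly (8).

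The main obstacle, and the only step requiring genuine care, is verifying $j\subseteq j'$ through stages where $\Uu$ uses an ``internal'' long extender $F(M^\Uu_\xi)$; one must check that the corresponding $\Uu'$-extender is a total short extender whose ultrapower map, restricted to the reduct, matches the action coded by $F(M^\Uu_\xi)$. This is where the axiomatization of 2-Vsps (clauses encoding $F$ as an iteration map) and the normalization properties of $\Sigma_{\vV_1}$ (Lemma~\ref{lem:Sigma_vV_1_vshc}) combine to give exactly the commutativity needed. Once (8) is established, wellfoundedness follows (the target $\N_\infty^{M^{\Uu'}_\beta}$ is a genuine $\Sigma_{\vV_1}$-iterate and hence wellfounded), as does the final ``therefore'' clause about goodness of $M^{\Tt\conc\Uu}_\infty$ with respect to extenders indexed $\leq\gamma_1$: the long extenders on the $\vV_2$-side of the final model are precisely the iteration maps produced by (8).
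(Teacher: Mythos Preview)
Your approach matches the paper's: both reduce to the proof of Lemma~\ref{lem:Psi_vV_1,gamma^vV_1_good}, using Lemma~\ref{lem:vV_2_Gamma_A_is_good}\ref{item:M^U_alpha=V(M^T_alpha,l)} to supply the correctness of $F^{\vV||\gamma_1^{\vV}}$ at the base (the paper explicitly flags that this is the only instance of goodness of $\Psi_{\vV_2,\vV_2^-}$ needed here, since full goodness is only established later in Lemma~\ref{lem:vV_2_Psi^sn_good}).

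Two small corrections. First, your final paragraph about ``internal long extenders $F(M^\Uu_\xi)$'' is a phantom obstacle: $\Tt\conc\Uu$ is short-normal by hypothesis, so $\Uu$ uses no long extenders at all, and the inclusion $j\subseteq j'$ is immediate from the passivization identity $(M^\Uu_0)^\passive=M^{\Uu'}_0|\kappa_1^{+M^{\Uu'}_0}$ exactly as in the $\vV_1$ case --- no appeal to Lemma~\ref{lem:Sigma_vV_1_vshc} or 2-Vsp axiomatics is needed. Second, your justification of the ``therefore'' clause covers only the $1$-long extender at $\gamma_1$; the $0$-long extenders indexed below $\gamma_1^{M^{\Tt\conc\Uu}_\infty}$ sit inside $(M^{\Tt\conc\Uu}_\infty)^\passive=M^{\Uu'}_\beta|\kappa_1^{+M^{\Uu'}_\beta}$, and their correctness follows because $M^{\Uu'}_\beta$ is a $\Sigma_{\vV_1}$-iterate and $\Sigma_{\vV_1}$ is good --- this is the paper's stated reason for the clause.
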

\begin{proof}
	The (last) ``therefore'' clause is because $\Sigma_{\vV_1}$ is good.
	The rest of the proof is like for Lemma \ref{lem:Psi_vV_1,gamma^vV_1_good}
	(although we did not yet prove that $\Psi_{\vV_2,\vV_2^-}$ is good,
	Lemma \ref{lem:vV_2_Gamma_A_is_good}\ref{item:M^U_alpha=V(M^T_alpha,l)} does give the instance of this with respect to $F^{\vV||\gamma_1^{\vV}}$, where $\vV=M^\Tt_\alpha$,
	which is enough to prove part \ref{item:vV_2_Ult(M^T_alpha_down_0,F)}
	as in Lemma \ref{lem:Psi_vV_1,gamma^vV_1_good}).
\end{proof}

We now prove a couple of variants of the branch condensation lemma \ref{lem:branch_con_2}
for trees on $\vV_1$:
\begin{lem}\label{lem:vV_1_branch_con_2}
	Let $\Tt,\Uu$ be short-normal on $\vV_1$, via $\Sigma_{\vV_1}$,
	based on $\vV_1|\delta_1^{\vV_1}$, with $\Tt$ of limit length, $\Uu$ successor length
	with $b^\Uu$ non-dropping and $\delta(\Uu)=\delta_1^{M^\Uu_\infty}$. Let $G$ be $V$-generic.
	Let $b,k\in V[G]$ where $b$ is a non-dropping $\Tt$-cofinal branch
	with $i^\Tt_b(\delta_1^{\vV_1})=\delta(\Tt)$ and
	and
	\[ k:M^\Tt_b|\delta_1^{M^\Tt_b}\to M^\Uu_\infty|\delta_1^{M^\Uu_\infty} \]
	is elementary with $k\com i^\Tt_b=i^\Uu_{0\infty}\rest(\vV_1|\delta_1^{\vV_1})$.
	Then $b=\Sigma_{\vV_1}(\Tt)$.
\end{lem}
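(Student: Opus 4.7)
The plan is to follow the proof of Lemma \ref{lem:branch_con_2} closely, adapted to the strategy-premouse structure of $\vV_1$. Since $\Sigma_{\vV_1}$ extends canonically to set-generic extensions of $V$ (cf.\ Lemma \ref{lem:Sigma_vV_1_vshc} and the extension of $\Sigma$ established in Fact \ref{fact:Sigma_properties}\ref{item:Slist_}), I may assume throughout that $b,k\in V$. Let $c=\Sigma_{\vV_1}(\Tt)$, $P_b=M^\Tt_b$ and $P_c=M^\Tt_c$. The hypothesis $i^\Tt_b(\delta_1^{\vV_1})=\delta(\Tt)$ together with non-droppingness of $b$ places us in the $\delta_1$-maximal case, so $\delta(\Tt)=\delta_1^{P_b}$; if instead $\delta(\Tt)<\delta_1^{P_b}$ the Q-structure argument from the first half of the proof of Lemma \ref{lem:branch_con_2} goes through unchanged (both $Q(\Tt,b)$ and $Q(\Tt,c)$ are iterable, $\delta(\Tt)$-sound, $\vV_1$-small strategy mice with $\delta(\Tt)$ a strong cutpoint, forcing $Q(\Tt,b)=Q(\Tt,c)$, hence $b=c$).

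In the $\delta_1$-maximal case, I first extend $k$ to a fully elementary $k^+\colon P_b\to M^\Uu_\infty$ by setting $k^+(i^\Tt_b(f)(a))=i^\Uu_{0\infty}(f)(k(a))$ for $f\in\vV_1$ and $a\in[\delta(\Tt)]^{<\omega}$. This is well-defined and elementary by \L o\'s and the hypothesis $k\circ i^\Tt_b=i^\Uu_{0\infty}\rest(\vV_1|\delta_1^{\vV_1})$, and it uses the $\delta_1^{P_b}$-soundness of $P_b$ arising from non-droppingness and the fact that $\Tt$ is based on $\vV_1|\delta_1^{\vV_1}$. In particular, $k^+$ witnesses that $P_b$ is iterable above $\delta(\Tt)$ via the pullback of $\Sigma_{\vV_1}$ acting on the iterate $M^\Uu_\infty$.

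I now set up the phalanxes
\[ \mathfrak{P}_b=((\vV_1,\delta_1^{\vV_1}),P_b,\delta(\Tt))\quad\text{and}\quad \mathfrak{P}_c=((\vV_1,\delta_1^{\vV_1}),P_c,\delta(\Tt)), \]
where iteration starts with extenders of index $>\delta(\Tt)$, and extenders with critical point $<\delta_1^{\vV_1}$ are lifted back to $\vV_1$. $\mathfrak{P}_c$ is iterable via the tail strategy obtained from $c=\Sigma_{\vV_1}(\Tt)$, and $\mathfrak{P}_b$ is iterable by lifting trees through the pair $(i^\Tt_b,k^+)$ to trees on $\vV_1$ respectively on $M^\Uu_\infty$, and then using $\Sigma_{\vV_1}$. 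Comparing $\mathfrak{P}_b$ with $\mathfrak{P}_c$ using these strategies, and using that both $P_b$ and $P_c$ are $\delta(\Tt)$-sound $\vV_1$-like 1-Vsps which believe they are $\vV_1$, the comparison must be trivial, yielding $P_b=P_c$. With $P_b=P_c$ established, both $i^\Tt_b$ and $i^\Tt_c$ are elementary maps $\vV_1\to P_b$ which fix the tail of $\mathscr{I}^{\vV_1}$ (by Lemma \ref{lem:M_infty_of_kappa_0-sound_iterate}\ref{item:I^M_infty=I^M} applied inside $\vV_1$), so they agree on the hull $X=\Hull^{\vV_1}(\mathscr{I}^{\vV_1})\cap\delta_0^{\vV_1}$; the Zipper Lemma now gives $b=c=\Sigma_{\vV_1}(\Tt)$.

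The main obstacle I expect is verifying iterability of $\mathfrak{P}_b$ in the presence of extenders overlapping $\delta(\Tt)$ of two distinct kinds: short extenders with critical point $\kappa_0^{\vV_1}$ (as in the $M$-case of Lemma \ref{lem:branch_con_2}) and, genuinely new here, long extenders with critical point $\delta_0^{\vV_1}$ coming from $\vV_1$'s internal coding of $\Sigma_{\M_\infty}$. The strategy is to reduce the second kind to the first: by the tight correspondence between $P_b$'s long extenders and iteration maps of $\M_\infty$-iterates (Lemma \ref{lem:Gamma_A_is_good} and Definition \ref{dfn:Psi_V,V^-}), any tree on $\mathfrak{P}_b$ which tries to use such a long extender can be routed through $k^+$ into a tree on an iterate of $\vV_1$ via $\Sigma_{\vV_1}$, which we have in hand by \S\ref{subsec:iterability_of_vV_1}.
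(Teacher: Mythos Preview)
Your overall strategy—extend $k$ to $k^+$, set up phalanxes, compare, then Zipper—matches the paper's, but your phalanxes are misconfigured, and this is where the argument breaks.

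The tree $\Tt$ has a lower component $\Tt_0$ based on $\vV_1|\delta_0^{\vV_1}$, which in general moves $\delta_0$. Writing $\bar{\vV}$ for the last model of $\Tt_0$, the only extenders in $\es_+^{P_b}$ overlapping $\delta(\Tt)$ are long, with domain $P_b|\delta_0^{P_b}=\bar{\vV}|\delta_0^{\bar{\vV}}$. Your phalanx $((\vV_1,\delta_1^{\vV_1}),P_b,\delta(\Tt))$ would send such an extender back to $\vV_1$, but $\Ult(\vV_1,E)$ is ill-defined when $\vV_1|\delta_0^{\vV_1}\neq P_b|\delta_0^{P_b}$, i.e.\ whenever $\Tt_0$ is non-trivial. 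Your last paragraph notices the long-extender issue, but ``routing through $k^+$'' does not make the original phalanx well-defined; and even with the correct base you would be comparing 1-Vsps with long extenders, for which no comparison theory is developed here.

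The paper's fix has two parts. First, take the base to be $\bar{\vV}=M^\Tt_\alpha$ (the least $\alpha\in b$ with $\alpha+1=\lh(\Tt)$ or $\delta_0^{M^\Tt_\alpha}<\crit(i^\Tt_{\alpha b})$), with exchange ordinal $\delta_0^{\bar{\vV}}$; the lifting maps are then $(\bar{k},\pi)$, where $\pi:P_b\to M^\Uu_\infty$ is your $k^+$ and $\bar{k}:\bar{\vV}\to M^\Uu_\beta$ is defined analogously for the corresponding $\beta\in b^\Uu$. Second—and this is what makes the comparison go through—pass to a generic $\Mswsw$-like expansion $\bar{\vV}^+$ with $\vV_1^{\bar{\vV}^+}=\bar{\vV}$ (Lemma \ref{lem:generic_premouse_iterable}), translate $\Tt\rest[\alpha,\infty)$ to a tree $\Tt^+$ on $\bar{\vV}^+$ above $\kappa_0^{+\bar{\vV}^+}$, and compare the resulting \emph{premouse} phalanxes $((\bar{\vV}^+,\kappa_0^{\bar{\vV}^+}+1),\vV^+,\delta(\Tt))$ and $((\bar{\vV}^+,\kappa_0^{\bar{\vV}^+}+1),Q_c^+,\delta(\Tt))$. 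Overlapping extenders now have critical point $\kappa_0^{\bar{\vV}^+}$, $\bar{\vV}^+$ is $\kappa_0^{\bar{\vV}^+}$-sound, and $\vV^+,Q_c^+$ are $\delta(\Tt)$-sound, so the standard argument gives $b=c$. (Minor point: the case $\delta(\Tt)<\delta_1^{P_b}$ is excluded by the hypothesis on $b$, so your Q-structure paragraph is vacuous—though $c$ need not be $\delta_1$-maximal, which is why the paper works with $Q_c$ rather than $P_c$ throughout.)
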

\begin{proof}
	We may assume $b,k\in V$. 	Let $\vV=M^\Tt_b$.
	Let $\alpha\in b$ be least with either $\alpha+1=\lh(\Tt)$ or $\delta_0^{M^\Tt_\alpha}<\crit(i^\Tt_{\alpha b})$. So $\bar{\vV}=M^\Tt_\alpha$
	is $\delta_0^{\bar{\vV}}$-sound and $\Tt\rest[\alpha,\infty)$
	is on $\bar{\vV}$, above $\delta_0^{\bar{\vV}}$.
	Let $\beta\in b^\Uu$ be analogous for $\Uu$.

	Define $\pi:\vV\to M^\Uu_\infty$ in the natural way, extending $k$,
	like in Lemma \ref{lem:branch_con_2}. Also define $\bar{k}:\bar{\vV}\to M^\Uu_\beta$
	analogously. So $\bar{k}\rest\delta_0^{\vV}\sub k$.

	The phalanx $\ph=((\bar{\vV},\delta_0^{\bar{\vV}}),\vV,\delta(\Tt))$ is iterable, via lifting trees with $(\bar{k},k)$.

	Let $c=\Sigma_{\vV_1}(\Tt)$ and $Q_c=Q(\Tt,c)$ be the Q-structure,
	or $Q_c=M^\Tt_c$ if $i^\Tt_c(\delta_1^{\vV_1})=\delta(\Tt)$.
	Let $\mathfrak{Q}$ be the phalanx $((\bar{\vV},\delta_0^{\bar{\vV}}),Q_c,\delta(\Tt))$,
	which is also iterable.

	Let $\bar{\vV}^+$ be a generic expansion of $M^\Tt_\alpha$ (so $\bar{\vV}^+$ is an $\Mswsw$-like premouse and $\bar{\vV}=\vV_1^{\bar{\vV}^+}$).
	Then $\Tt\rest[\alpha,\infty)$ can be translated to a tree $\Tt^+$ on $\bar{\vV}^+$,
	which is above $\kappa_0^{+\bar{\vV}^+}$. Let $\vV^+=M^{\Tt^+}_b$ and $Q_c^+=Q(\Tt^+,c)$ or $Q_c^+=M^{\Tt^+}_c$ accordingly. Then the phalanxes
	\[ \ph^+=((\bar{\vV}^+,\kappa_0^{\bar{\vV}^+}+1),\vV^+,\delta(\Tt))\text{ and }
	\mathfrak{Q}^+=((\bar{\vV}^+,\kappa_0^{\bar{\vV}^+}+1),Q_c^+,\delta(\Tt)) \]
	are iterable, since trees on them translate to trees on $\ph,\mathfrak{Q}$.
	Note that $\bar{\vV}^+$ is $\kappa_0^{\bar{\vV}^+}$-sound,
	and $\vV^+,Q^+_c$ are $\delta(\Tt)$-sound.
	But then comparing $\ph^+$ versus $\mathfrak{Q}^+$ gives $b=c$.
\end{proof}

\begin{lem}\label{lem:vV_1_branch_con_Q}
	Let $\Tt,\Uu$ be short-normal dsr trees on $\vV_1$, via $\Sigma_{\vV_1}$,
	based on $\vV_1|\delta_1^{\vV_1}$, with $\Tt,\Uu$ of limit length. Suppose there is $\alpha<\lh(\Tt)$
	such that $[0,\alpha]_\Tt$ does not drop and $\Tt\rest[\alpha,\infty)$
	is above $\delta_0^{M^\Tt_\alpha}$, and there is an analogous such $\beta<\lh(\Uu)$;
	fix the least such $\alpha,\beta$.
	Let $c=\Sigma_{\vV_1}(\Uu)$.
	Suppose that if $c$ is non-dropping then $\delta(\Uu)<\delta_1^{M^\Uu_c}$. Let $G$ be $V$-generic.
	Let $b\in V[G]$ where $b$ is a $\Tt$-cofinal branch such that if $b$ is non-dropping
	then $\delta(\Tt)<\delta_1^{M^\Tt_b}$. Let $k\in V[G]$
	be such that
	\[ k:Q(\Tt,b)\to Q(\Uu,c) \]
	is elementary and $k\com i^\Tt_b\rest(\vV_1|\delta_0^{\vV_1})=i^\Uu_{c}\rest(\vV_1|\delta_0^{\vV_1})$.
	Then $b=\Sigma_{\vV_1}(\Tt)$.
\end{lem}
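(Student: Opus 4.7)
The plan is to adapt the argument of Lemma \ref{lem:branch_con_2}, with $\delta_0^{M^\Tt_\alpha}$ playing the role that $\kappa_0^N$ plays there. First I would reduce to $G=\emptyset$, using that $\Sigma_{\vV_1}$ extends canonically to $V[G]$ preserving all relevant condensation. Set $d=\Sigma_{\vV_1}(\Tt)$; the goal is to show that $Q(\Tt,b)=Q(\Tt,d)$ (in particular, that $\Tt$ is $\delta_1$-short under $\Sigma_{\vV_1}$) and then conclude $b=d$ by a Zipper-Lemma argument.

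Because $\Tt$ is dsr with transition index $\alpha$, the map $i^\Tt_{\alpha b}$ has critical point above $\delta_0^{M^\Tt_\alpha}$, so $i^\Tt_b\rest\vV_1|\delta_0^{\vV_1}=i^\Tt_{0\alpha}\rest\vV_1|\delta_0^{\vV_1}$; moreover $M^\Tt_\alpha$ is $\delta_0^{M^\Tt_\alpha}$-sound by normality of the lower component of $\Tt$. The analogous facts hold for $\Uu,\beta,c$. Hence the hypothesis $k\com i^\Tt_b\rest\vV_1|\delta_0^{\vV_1}=i^\Uu_c\rest\vV_1|\delta_0^{\vV_1}$ supplies a compatible embedding $\bar k:M^\Tt_\alpha|\delta_0^{M^\Tt_\alpha}\to M^\Uu_\beta|\delta_0^{M^\Uu_\beta}$ which, together with $k$, forms a Shift-Lemma style pair. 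I would then set up the phalanxes
\[\ph_b=((M^\Tt_\alpha,\delta_0^{M^\Tt_\alpha}),Q_b,\delta(\Tt)),\quad \ph_c=((M^\Uu_\beta,\delta_0^{M^\Uu_\beta}),Q_c,\delta(\Uu)),\]
and, setting $Q_d=Q(\Tt,d)$ (or $Q_d=M^\Tt_d$ in the putative $\delta_1$-maximal case), also $\ph_d=((M^\Tt_\alpha,\delta_0^{M^\Tt_\alpha}),Q_d,\delta(\Tt))$. Then $\ph_c$ and $\ph_d$ are iterable, since $c,d$ are via $\Sigma_{\vV_1}$ and the respective Q-structures sit as appropriate segments of the correct branch models; and $(\bar k,k)$ lifts trees on $\ph_b$ to trees on $\ph_c$, giving iterability of $\ph_b$.

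Comparing $\ph_b$ against $\ph_d$ above $\delta(\Tt)$, using the common base $(M^\Tt_\alpha,\delta_0^{M^\Tt_\alpha})$ and the $\delta(\Tt)$-soundness of both $Q_b$ and $Q_d$, the comparison must be trivial: so $Q_b=Q_d$, and the $\delta_1$-maximal case is ruled out ($\delta(\Tt)$ cannot simultaneously be Woodin in $M^\Tt_d$ and fail to be Woodin in $Q_b$). Write $Q=Q_b=Q_d$. Then both $i^\Tt_b$ and $i^\Tt_d$ embed $\vV_1|\delta_0^{\vV_1}$ into $Q|\delta_0^Q$, and the hypothesis on $k$ together with the fact that both maps fix cofinally many indiscernibles in $\mathscr{I}^{\vV_1}$ forces the two branch extenders to agree on a cofinal set of generators; the Zipper Lemma then yields $b=d$.

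The principal obstacle is the iterability of $\ph_b$. The Q-structure $Q_b$, being part of a 1-Vsp hierarchy, can carry long extenders with critical point $\delta_0^{M^\Tt_\alpha}$ overlapping $\delta(\Tt)$, and under the phalanx rule such extenders apply to the base $M^\Tt_\alpha$; lifting to $\ph_c$ via $(\bar k,k)$ must respect the long-extender application rule, and one must verify that long extenders in the lift are correctly interpreted as extenders of true iteration maps. This rests on the goodness of $\Sigma_{\vV_1}$ from \S\ref{subsec:iterability_of_vV_1} — which ensures that long extenders on iterates really do code correct $\Sigma_{\cdot\downarrow 0}$-iteration maps, so the lift stays via $\Sigma_{\vV_1}$ — and on the dsr condition, which prevents any extender used in $\Tt$ above $\alpha$ from introducing new Woodins below its index, keeping the entire phalanx argument inside the tractable fragment of $\vV_1$'s strategy.
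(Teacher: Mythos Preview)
Your skeleton is right and matches the paper's approach in spirit, but you are missing the one genuinely new step that the paper's proof of Lemma~\ref{lem:vV_1_branch_con_2} introduces over Lemma~\ref{lem:branch_con_2}: the passage to \emph{generic premouse expansions}. You set up 1-Vsp phalanxes $\ph_b,\ph_d$ and propose to compare them directly above $\delta(\Tt)$. But comparison by least disagreement is developed for premice, not for 1-Vsps; the long extenders in $\es_+^{Q_b},\es_+^{Q_d}$ overlapping $\delta(\Tt)$ (which the dsr hypothesis allows, and which you correctly flag) do not fit into the standard comparison framework, and appealing to ``goodness of $\Sigma_{\vV_1}$'' does not tell you how to select extenders, identify least disagreements, or run the termination/fine-structure argument on the Vsp side.

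What the paper does (and what you should do) is fix a generic $\Mswsw$-like expansion $\bar{\vV}^+$ of $M^\Tt_\alpha$ with $\vV_1^{\bar{\vV}^+}=M^\Tt_\alpha$, translate $\Tt\rest[\alpha,\infty)$ to a tree $\Tt^+$ on $\bar{\vV}^+$ above $\kappa_0^{+\bar{\vV}^+}$, and form the \emph{premouse} phalanxes
\[\ph_b^+=((\bar{\vV}^+,\kappa_0^{\bar{\vV}^+}+1),Q_b^+,\delta(\Tt)),\qquad \ph_d^+=((\bar{\vV}^+,\kappa_0^{\bar{\vV}^+}+1),Q_d^+,\delta(\Tt)),\]
where $Q_b^+=Q(\Tt^+,b)$, $Q_d^+=Q(\Tt^+,d)$. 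These are iterable because trees on them translate back to trees on your $\ph_b,\ph_d$ (the long extenders become ordinary short extenders with critical point $\kappa_0^{\bar{\vV}^+}$ on the premouse side), and now the comparison is a standard premouse phalanx comparison. The dsr hypothesis is used exactly here: it guarantees that the only overlaps of $\delta(\Tt)$ in $Q_b,Q_d$ are long, hence on the premouse side only extenders with critical point $\kappa_0^{\bar{\vV}^+}$ overlap, and those are handled by the base $\bar{\vV}^+$ of the phalanx. After getting $Q_b^+=Q_d^+$ one reads off $Q_b=Q_d$ and hence $b=d$ by the usual Q-structure argument (no Zipper Lemma needed, since by hypothesis both branches come with genuine Q-structures).
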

\begin{proof}
	This is via a straightforward variant of the proof of Lemma \ref{lem:vV_1_branch_con_2},
	noting that because $\Tt$ is dsr, $Q(\Tt,b)$ can only overlap $\delta(\Tt)$ with long extenders.
\end{proof}
It turns out that the method we used to define $\Psi_{\vV_1}^{\sn}$ is not so well suited to $\vV_2$. Instead we proceed as follows:

\begin{dfn}\label{dfn:Psi_vV_2^sn}
	$\Psi_{\vV_2}^{\sn}$ denotes the (putative) short-normal strategy $\Psi$ for $\vV_2$, defined as follows.
	Firstly, $\Psi_{\vV_2,\gamma_1^{\vV_2}}\sub\Psi$.
	Secondly, let $\Tt$ be via $\Psi_{\vV_2,\vV_2^-}$, of successor length, such that $b^\Tt$ does not drop, and $\vV=M^\Tt_\infty$. We define the action of $\Psi$
	on above-$\gamma_1^{\vV}$ trees on $\vV$.

	Let $\P=\vV\downarrow 1=i_{\vV_2\vV}(\N_\infty)$,
	so (by Lemma \ref{lem:vV_2_Gamma_A_is_good}) $\P$ is a $\Sigma_{\N_\infty}$-iterate of $\N_\infty$ and $\P\sub\vV$.
	Let $e=F^{\vV||\gamma_1^{\vV}}$.
	Let (and by Lemma \ref{lem:vV_2_N_infty[*]_inter-def})
	\[ i_e^{\vV}:\vV\to\Ult(\vV,e)=\vV_2^{\P}\]
	be the ultrapower map.	Let $\Lambda$ be the above-$\gamma_1^{\vV_2^{\P}}$ strategy
	for $\vV_2^{\P}$ determined by translating above-$\kappa_1^{+\P}$ (hence 1-translatable) trees on $\P$ via $\Sigma_{\P}$.
	Then for above-$\gamma_1^{\vV}$ trees $\Uu$ on $\vV$,
	$\Tt\conc\Uu$ is via $\Psi$ iff $\Uu$ is via the minimal $i^{\vV}_e$-pullback
	of $\Gamma$.
\end{dfn}

\begin{lem}\label{lem:vV_2_Psi^sn_good}
	$\Psi_{\vV_2}^{\sn}$ is good.
\end{lem}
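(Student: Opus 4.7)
The plan is to verify goodness of $\Psi_{\vV_2}^{\sn}$ in the two regimes constituting its definition: (i) trees $\Tt$ lying in $\Psi_{\vV_2,\gamma_1^{\vV_2}}$, based on $\vV_2||\gamma_1^{\vV_2}$, and (ii) trees of the form $\Tt\conc\Uu$ where $\Tt$ is via $\Psi_{\vV_2,\vV_2^-}$ with $b^\Tt$ non-dropping and last model $\vV$, and $\Uu$ is above $\gamma_1^\vV$ via the minimal $i_e^\vV$-pullback of $\Lambda$. In each case I must show that for every iterate $M^\Ss_\alpha$ produced and every $i$-long $E\in\es_+(M^\Ss_\alpha)$, $E$ encodes the correct $\Sigma_{M^\Ss_\alpha\downarrow i}$-iteration map on $M^\Ss_\alpha\downarrow i$.

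For regime (i), the $\vV_2$-analogue of Lemma \ref{lem:Psi_vV_1,gamma^vV_1_good} (the lemma immediately preceding the current one) already delivers correctness of 1-long extenders at each $\gamma_1$-level of the iterates, via its item (8). What remains is to check that 0-long extenders sitting inside $M^\Tt_\alpha$ itself are correct. For this I would use that the portion of the tree on $M^\Tt_\alpha||\gamma_1^{M^\Tt_\alpha}$ above $\delta_1^{M^\Tt_\alpha}$ corresponds, via 1-translation (Lemma \ref{lem:vV_2-translatable}), to a tree $\Uu'$ on $M^\Tt_\alpha\downarrow 1 = \N_\infty^{M^\Tt_\alpha}$ via $\Sigma_{\N_\infty^{M^\Tt_\alpha}}$. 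By Lemma \ref{lem:vV_2_M_infty_of_iterate_N_etc}, $\N_\infty^{M^\Tt_\alpha}$ is a $\Sigma_{\vV_1}$-iterate, so $\Uu'$ is effectively via $\Sigma_{\vV_1}$, and goodness of $\Sigma_{\vV_1}$ yields correctness of the 0-long extenders in the models of $\Uu'$; the level-by-level correspondence built into Definition \ref{dfn:vV_2} then transports this to the 0-long extenders of $M^\Tt_\alpha$ itself.

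For regime (ii), let $\pi_\alpha:M^\Uu_\alpha\to M^{\Uu'}_\alpha$ be the copy maps of the minimal $i_e^\vV$-pullback (Remark \ref{rem:min_j-pullback}). By construction $\Uu'$ is via $\Lambda$, which, by Lemma \ref{lem:vV_2-translatable}, translates back to a tree on $\P=\vV\downarrow 1$ above $\kappa_1^{+\P}$ following $\Sigma_\P$. Since $\P$ is a $\Sigma_{\N_\infty}$-iterate (hence a $\Sigma_{\vV_1}$-iterate) by Lemma \ref{lem:vV_2_Gamma_A_is_good}, the goodness of $\Sigma_{\vV_1}$ provides goodness of each $M^{\Uu'}_\alpha$ for both kinds of long extenders. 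The hard part will be to pull this correctness back along $\pi_\alpha$ to $M^\Uu_\alpha$: given an $i$-long $E\in\es_+(M^\Uu_\alpha)$, its minimal copy $E'\in\es_+(M^{\Uu'}_\alpha)$ is a correct iteration map, and I would apply Lemma \ref{lem:vV_1_branch_con_2} (for $i=1$) or Lemma \ref{lem:branch_con} (for $i=0$) to the branch on the $\Uu$-side induced by $E$, using $\pi_\alpha$ restricted to the $\delta_i$-reduct of $M^\Uu_\alpha$ as the factor map witnessing the commutativity hypothesis of branch condensation. The key technical point is verifying this commutativity, which reduces to the faithful transport of the ultrapower-by-$e_i$ structure between the two sides in the minimal-pullback construction, as encoded in how long extenders are copied in Remark \ref{rem:min_j-pullback}, combined with the iteration-map identities from Lemma \ref{lem:vV_2_Gamma_A_is_good}. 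Once this commutative triangle is in place, branch condensation certifies correctness of $E$, completing the proof.
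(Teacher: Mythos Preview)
Your approach is essentially the paper's: lift via the minimal $i_e^\vV$-copy to the $\vV_2^{\P}$ side, verify goodness there using that $\Sigma_{\vV_1}$ is good (and Lemma~\ref{lem:vV_2_M_infty_of_iterate_N_etc} for the $1$-long case), then pull correctness back to $M^\Uu_\alpha$ via branch condensation. Regime (i) is handled exactly as you say, and regime (ii) is the heart of the matter.

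There is, however, one genuine step you skip in regime (ii). To apply Lemma~\ref{lem:vV_1_branch_con_2} (or Lemma~\ref{lem:branch_con}) to the branch $b$ determined by an $i$-long $E\in\es_+(M^\Uu_\alpha)$, you must first know that the limit-length tree $\Ss$ leading from $\P|\delta_i^\P$ to the target $\M_{i\infty}^{M^\Uu_\alpha||\lh(E)}$ is itself via $\Sigma_{\vV_1}$ --- this is an explicit hypothesis of those lemmas. The copy map only gives you that $j(\Ss)$ is via the strategy on the lifted side; you need to pull \emph{the intermediate short-tree branches of $\Ss$} back as well, not just the final maximal one. For $i=0$ this is easy: the Q-structures at limit stages of $\Ss$ do not overlap $\delta(\Ss\rest\eta)$ and embed into iterable Q-structures upstairs. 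For $i=1$, once past $\delta_0^{M(\Ss)}$, the Q-structures can be overlapped by long extenders, and the paper invokes Lemma~\ref{lem:vV_1_branch_con_Q} (branch condensation for Q-structures of dsr trees) at each such limit stage to certify the branch there. Only after $\Ss$ is known to be via $\Sigma_{\vV_1}$ does the final application of Lemma~\ref{lem:vV_1_branch_con_2} go through. Your proposal jumps straight to the terminal branch condensation without securing this prerequisite; it is not merely a commutativity check but a separate inductive argument along the limit stages of $\Ss$.
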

\begin{proof}
	Clearly $\Psi_{\vV_2}^{\sn}$ is well-defined and yields wellfounded models.
	So let $\Tt\conc\Uu$ be via $\Psi_{\vV_2}^{\sn}$, as in Definition \ref{dfn:Psi_vV_2^sn},
	and $\vV$, $\P$, $e$, $i_e=i^{\vV}_e$ be as there.
	Let $i_e``\Uu$ be the minimal $i_e=i^{\vV}_e$-copy
	of $\Uu$ to a tree on $\vV_2^{\P}$.
	Let $\wW=M^{\Tt\conc\Uu}_\infty$ and $\gamma_1^{\wW}\leq\alpha\leq\OR^{\wW}$ be such that
	$\wW||\alpha$ is active with an $i$-long extender. Let $\wW'=\Ult_0(\wW||\alpha,e)$.
	By Lemma \ref{lem:vV_2_norm_condensation}, $\wW'\ins M^{i_e``\Uu}_\infty$.

	Now $F^{\wW'}$ is a correct iteration extender (via $\Sigma_{\N_\infty^{\P}}$)
	based on $\N_\infty^{\P}|\delta_i^{\N_\infty^\P}$.
	For let ${\Uu'}^+$ be the translation of $i_e``\Uu$ to a tree on $\P$, and ${\wW'}^+=M^{{\Uu'}^+}_\infty||\OR^{\wW'}$.
	If $i=1$ (so $\crit(F^{{\wW'}^+})=\kappa_1^\P$) the correctness of $F^{\wW'}$ is by Lemma \ref{lem:vV_2_M_infty_of_iterate_N_etc} part \ref{item:vV_2_iterate_of_M_infty} applied to $\P$ and $\P'=\Ult(\P,F^{{\wW'}^+})$
	and $i_{\P\P'}$. If $i=0$  (so $F^{{\wW'}^+}$ is long)
	it is because $\Sigma_{\vV_1}$ is good (so $F^{{\wW'}^+}$ is correct)
	and how $\vV_2^{\P}$ is defined.

	Let $\Ss$ be the limit length tree leading from $\P|\delta_i^{\P}$ to $\M_{i\infty}^{\wW||\alpha}$, and $\Ss'$ likewise, so $\Ss'=j(\Ss)$
	where $j=i_e^{\wW||\alpha,0}$ is the ultrapower map.
	Let $\Rr$ be the successor length tree leading from $\P|\delta_i^{\P}$
	to $\N_\infty^{\P}|\delta_i^{\P}$ (given by $e\rest(\P|\delta_i^\P)$).
	We know that	 $\Ss'$ is via the short tree strategy for $\N_\infty^{\P}|\delta_i^{\N_\infty^\P}$,
	and $F^{\wW'}$ yields the branch $\Sigma_{\N_\infty^\P}(\Ss')$.
	We claim the same holds for $\Ss$ and $F^{\wW||\alpha}$; that is, $\Ss$ is via the $\delta_i$-short tree strategy
	for $\P\downarrow i$, and
	$F^{\wW||\alpha}$ yields $b=\Sigma_{\P}(\Ss)$.

	For if $i=0$, the Q-structure
	used in $\Ss$ for the limit stage $\Ss\rest\eta$ does not overlap $\delta(\Ss\rest\eta)$, and is embedded by $j$ into an iterable Q-structure
	used in $\Ss'$. And if $i=1$,
	it is likewise through the $0$-lower component of $\Ss$
	(until reaching $\delta_0^{M(\Ss)}$),
	and above there, Lemma \ref{lem:vV_1_branch_con_Q} applies to the normalizations
	of $(\Tt,\Ss\rest\eta)$
	and $(\Tt,\Rr,j(\Ss\rest\eta))$,
	using a restriction of $j$ as the map $k$. (Here $\Tt$ and $\Tt\conc\Rr$
	can be $\delta_1$-maximal, but one should literally apply Lemma \ref{lem:vV_1_branch_con_Q}
	to the short-normal trees $\Tt'$ on $\vV_1$, iterating to $M(\Ss\rest\eta)$,
	and $\Uu'$ on $\vV_1$, iterating to $M(j(\Ss\rest\eta))$).

	Finally let $b$ be the $\Ss$-cofinal branch
	determined by $F^{\wW||\alpha}$ and $b'$ that determined by $F^{\wW'}$.
	Then we can apply
	Lemma \ref{lem:vV_1_branch_con_2} to $(\Ss,b)$ and the stack $(\Rr,(\Ss',b'))$, using
	$k=j\rest(M(\Ss))$. Therefore $b$ is correct.
\end{proof}

\subsubsection{Normal trees on $\vV_2$}

Much like in Definition \ref{dfn:Sigma_vV_1}, it is now easy to see:

\begin{lem}\label{lem:vV_2_Sigma_vV_1}
	There is a unique $0$-maximal strategy $\Sigma$ for $\vV_2$ such that $\Psi_{\vV_2}^{\sn}\sub\Sigma$. We write $\Sigma_{\vV_2}=\Sigma$.
	Every iterate of $\vV_2$ via $\Sigma_{\vV_2}$ is a short-normal iterate of $\vV_2$ via $\Psi_{\vV_2}^{\sn}$, and hence $\Sigma_{\vV_2}$ is good.
\end{lem}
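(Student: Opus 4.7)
The plan is to mimic Definition \ref{dfn:Sigma_vV_1} and the subsequent lemma, extending $\Psi_{\vV_2}^{\sn}$ to a normal strategy by specifying what to do after each long extender is used, and then verifying that this prescription is forced on us (giving uniqueness) and is equivalent to following $\Psi_{\vV_2}^{\sn}$ on an associated short-normal tree (giving goodness).

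First I would define $\Sigma_{\vV_2}$ recursively along the length of a $0$-maximal tree $\Tt$ on $\vV_2$. So long as $\Tt$ remains short-normal it follows $\Psi_{\vV_2}^{\sn}$. Suppose $\Tt\rest(\eta+1)$ is short-normal via $\Psi_{\vV_2}^{\sn}$ and $E^\Tt_\eta$ is the first long extender used; let $i\in\{0,1\}$ be such that $E^\Tt_\eta$ is $i$-long. By goodness of $\Psi_{\vV_2}^{\sn}$ (Lemma \ref{lem:vV_2_Psi^sn_good}), $M^\Tt_{\eta+1}$ is then a non-dropping $\Psi_{\vV_2,\vV_2^-}$-iterate via some short-normal tree $\Uu_{\eta+1}$, obtained by normalizing the stack consisting of $\Tt\rest(\eta+1)$ followed by the one-step tree on $\vV_2\downarrow i$ induced by $E^\Tt_\eta$ (which, by goodness, is a correct $\Sigma_{\vV_2\downarrow i}$-iteration map). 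On the interval beyond $\eta+1$ and before the next long extender, $\Sigma_{\vV_2}$ is defined by following the short-normal tail strategy $\Psi^{\sn}_{M^\Tt_{\eta+1}}$ applied to $\Uu_{\eta+1}$ (where the action of $\Psi^{\sn}$ on iterates of $\vV_2$ is defined as over $\vV_2$ itself, since iterates remain $\vV_2$-like). At successive long extender stages one iterates this prescription, and at limits $\lambda$ where long extenders are used cofinally, the tree $\Tt\rest\lambda$ normalizes (transfinitely) to a short-normal tree on $\vV_2$, via $\Psi_{\vV_2}^{\sn}$, whose last model must be $M^\Tt_\lambda$; this pins down $[0,\lambda)_\Tt$ uniquely. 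In the case where $\delta(\Tt\rest\lambda)$ lies strictly below $\delta_i^{M^\Tt_\lambda}$ for the relevant $i$, the strategy proceeds along $M^\Tt_\lambda|\delta_i^{M^\Tt_\lambda}$ by using $\Psi_{M^\Tt_\lambda,(M^\Tt_\lambda)^-}$ (Definition \ref{dfn:vV_2_Psi_V,V^-}).

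The uniqueness claim follows because, at each of these points, the demand that the model $M^\Tt_{\eta+1}$ be a correct $\Psi_{\vV_2,\vV_2^-}$-iterate (hence that the long extender $E^\Tt_\eta$ be interpreted as a correct iteration extender) together with the goodness of $\Psi_{\vV_2}^{\sn}$ already fixes the subsequent branch choices. That is, any $0$-maximal strategy extending $\Psi_{\vV_2}^{\sn}$ must make exactly these choices, because the continuation after a long extender corresponds to a short-normal tree on the new iterate, and the short-normal strategy on iterates is fully determined by Lemma \ref{lem:vV_2_Psi^sn_good} (and the fact that iterates are $\vV_2$-like and $\omega$-standard, Lemma \ref{lem:vV_2_norm_condensation}, Remark \ref{rem:iteration_preserves_standardness_at_degree}).

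For the final clauses, I would observe that the normalization procedure above gives, for every $\Tt$ via $\Sigma_{\vV_2}$ of successor length, a short-normal tree $\Tt^{\sn}$ on $\vV_2$ via $\Psi_{\vV_2}^{\sn}$ with $M^{\Tt^{\sn}}_\infty=M^\Tt_\infty$ (and with corresponding drop/degree structure along the main branch). Goodness of $\Sigma_{\vV_2}$ then follows immediately from goodness of $\Psi_{\vV_2}^{\sn}$: any long $E\in\es_+^{M^\Tt_\infty}$ is in the short-normal iterate, so witnesses a correct $\Sigma_{\vV_2\downarrow i}$-iteration map by Lemma \ref{lem:vV_2_Psi^sn_good}. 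The main obstacle, I expect, is the bookkeeping at transfinite stages where long extenders occur cofinally: one must check that the lim-inf of the associated short-normal trees really is the short-normal tree produced by full normalization of $\Tt\rest\lambda$, and that this process yields a $\vV_2$-like (in particular wellfounded) $M^\Tt_\lambda$. This is exactly the content of the transfinite normalization machinery of \cite{fullnorm}, applied to the minimal-hull-condensing strategies we have already established (cf.\ Lemma \ref{lem:Sigma_vV_1_vshc} for the analogue on $\vV_1$); the argument for $\vV_2$ is a direct adaptation.
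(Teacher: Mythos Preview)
Your proposal is correct and matches the paper's approach: the paper's entire argument is the preamble ``Much like in Definition \ref{dfn:Sigma_vV_1}, it is now easy to see'', and you have written out exactly that adaptation, handling both $0$-long and $1$-long extenders and invoking goodness of $\Psi_{\vV_2}^{\sn}$ (Lemma \ref{lem:vV_2_Psi^sn_good}) at each long-extender step. One small correction: your appeal to Lemma \ref{lem:Sigma_vV_1_vshc} at the transfinite stage is not quite the right citation (mic for $\Sigma_{\vV_2}$ is established only after this lemma); what you actually need there is just the full-normalization machinery of \cite{fullnorm} for transfinite stacks, together with the already-established goodness of $\Psi_{\vV_2}^{\sn}$, which you do also mention.
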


\begin{rem}
	Consider a  $0$-maximal tree $\Tt$ on $\vV_2$ and some limit $\lambda<\lh(\Tt)$
	such that $\Tt$ uses $1$-long extenders cofinally below $\lambda$. Then $\delta(\Tt)$ is the least measurable of $M^\Tt_\lambda$, and in particular $\delta(\Tt)<\delta_0^{M^\Tt_\lambda}$. Suppose $E^\Tt_\lambda$ is short
	with $\crit(E^\Tt_\lambda)<\delta_1^{M^\Tt_\lambda}$ and total over $M^\Tt_\lambda$,
	or $E^\Tt_\lambda$ is $0$-long. Then $\pred^\Tt(\lambda+1)=\lambda$ and $M^\Tt_{\lambda+1}=\Ult(M^\Tt_\lambda,E^\Tt_\lambda)$,
	and note that the short-normal tree $\Uu$ via $\Psi^{\sn}_{\vV_2}$ has $\delta(\Uu)>\lh(E^\Tt_\lambda)$. This could be unnatural; letting $\Uu_\lambda$
	be the short-normal tree with last model $M^\Tt_\lambda$, it might be better
	to define $0$-maximality by taking $\beta<\lh(\Uu_\lambda)$ least such that
	$E^\Tt_\lambda\in\es_+^{M^{\Uu_\lambda}_\beta}$, and defining
	$M^\Tt_{\lambda+1}$ to be the model produce by normally extending $\Uu_\lambda\rest(\beta+1)$ with $E^\Tt_\lambda$. However,
	for our purposes here, the more naive notion of $0$-maximality suffices.
\end{rem}

\begin{lem}\label{lem:Psi_2_pullback-con}
	Let $\bar{\vV}$ be a non-dropping
	$\Psi_{\vV_2,\vV_2^-}$-iterate of $\vV_2$,
	and $\vV$ a non-dropping $\Psi_{\bar{\vV},\bar{\vV}^-}$-iterate of $\bar{\vV}^-$. Let $\pi:\bar{\vV}\to \vV$ be the iteration map.
	Let $\bar{\Psi}$ be the above-$\delta_1^{\bar{\vV}}$ short-normal strategy
	for $\bar{\vV}$ given by $\Sigma_{\vV_2}$, and $\Psi$ likewise for $\vV$. Then $\bar{\Psi}$ is the minimal $\pi$-pullback of $\Psi$ (see \cite[***10.3, 10.4]{fullnorm_v3}).
	\end{lem}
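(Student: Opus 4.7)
The plan is to decompose both strategies according to the two clauses in Definition \ref{dfn:Psi_vV_2^sn} and verify pullback-commutativity on each piece. Observe first that since $\vV$ is reached from $\bar{\vV}$ by iterating according to $\Psi_{\bar{\vV},\bar{\vV}^-}$, which only uses extenders with critical points below $\delta_1^{\bar{\vV}}$, the iteration map $\pi$ sends $\delta_1^{\bar{\vV}}\mapsto\delta_1^{\vV}$, $\gamma_1^{\bar{\vV}}\mapsto\gamma_1^{\vV}$ and $e(\bar{\vV})\mapsto e(\vV)$, where $e(\bar{\vV}):=F^{\bar{\vV}||\gamma_1^{\bar{\vV}}}$ and similarly for $e(\vV)$. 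Accordingly, both $\bar{\Psi}$ and $\Psi$ split into: (a) trees based on $\bar{\vV}||\gamma_1^{\bar{\vV}}$ (resp.~$\vV||\gamma_1^{\vV}$) above $\delta_1^{\bar{\vV}}$ (resp.~above $\delta_1^{\vV}$), and (b) trees above $\gamma_1^{\bar{\vV}}$ (resp.~above $\gamma_1^{\vV}$), and the minimal $\pi$-pullback splits the same way because $\pi$ preserves the relevant indices.

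For piece (a), the strategy is translated to a tree on $\bar{\vV}\downarrow 1$ via $\Sigma_{\bar{\vV}\downarrow 1}$ (Definition \ref{dfn:vV_2_Psi_for_vV_1|gamma}), and similarly for $\vV$. By Lemma \ref{lem:vV_2_M_infty_of_iterate_N_etc}, $\pi$ restricts to the correct $\Sigma_{\bar{\vV}\downarrow 1}$-iteration map $\pi_1:\bar{\vV}\downarrow 1\to\vV\downarrow 1$. Using Fact \ref{fact:Sigma_properties} (positionality, commutativity, and minimal hull condensation for $\Sigma_{\N_\infty}$ and its tails, as transferred via Lemma \ref{lem:vV_2_Gamma_A_is_good}), the restriction of $\Sigma_{\bar{\vV}\downarrow 1}$ to the trees in question agrees with the minimal $\pi_1$-pullback of the corresponding restriction of $\Sigma_{\vV\downarrow 1}$. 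Unwinding the translation of Definition \ref{dfn:vV_2_Psi_for_vV_1|gamma} on both sides yields agreement on piece (a).

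For piece (b), note that $\pi_1$ induces an embedding $\pi_2:\vV_2^{\bar{\P}}\to\vV_2^{\P}$ (where $\bar{\P}=\bar{\vV}\downarrow 1$, $\P=\vV\downarrow 1$) by applying the lightface definition of $\vV_2^{(\cdot)}$ level-by-level, and Lemma \ref{lem:vV_2_N_infty[*]_inter-def}\ref{item:vV_2^N_infty_=_Ult(vV_2,e)} applied in the $\bar{\vV}$- and $\vV$-contexts gives the commutativity
\[ \pi_2\com i_{e(\bar{\vV})}^{\bar{\vV}}=i_{e(\vV)}^{\vV}\com\pi.\]
Moreover, by the same agreement principles invoked in piece (a) (applied this time to the minimal inflation/hull condensation of $\Sigma_{\bar{\P}}$ and $\Sigma_{\P}$, and using that $\pi_1$ is a correct iteration map), the strategy $\bar{\Lambda}$ on $\vV_2^{\bar{\P}}$ (obtained by $1$-translating above-$\kappa_1^{+\bar{\P}}$ trees via $\Sigma_{\bar{\P}}$) equals the minimal $\pi_2$-pullback of $\Lambda$. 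Composing and applying associativity of minimal pullbacks for composed iteration maps (see \cite[***10.3,10.4]{fullnorm}) gives
\[ \bar{\Psi}\rest\text{(b)} \;=\; \min\text{-}i_{e(\bar{\vV})}^{\bar{\vV}}\text{-pullback of }\bar{\Lambda} \;=\; \min\text{-}(i_{e(\vV)}^{\vV}\com\pi)\text{-pullback of }\Lambda \;=\; \min\text{-}\pi\text{-pullback of }\Psi\rest\text{(b)}.\]

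The main technical obstacle is the last step: verifying that the minimal pullback is associative under composition with an iteration map, i.e.\ that the minimal $(f\com g)$-pullback agrees with the minimal $g$-pullback of the minimal $f$-pullback, when $g$ is itself a correct iteration map. Ordinary minimal tree embeddings need not compose so cleanly, but because $\pi$, $\pi_1$ and $\pi_2$ all arise as iteration maps (and hence as ultrapower maps by the appropriate branch extenders), the diagram of ultrapowers commutes strictly, and the copying construction described in Remark \ref{rem:min_j-pullback} behaves functorially. This is exactly the setup under which the minimal pullback calculus of \cite{fullnorm} applies, and yields the required identity.
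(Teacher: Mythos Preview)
Your proof is correct and follows essentially the same route as the paper's: split into the case of trees based on $\bar{\vV}||\gamma_1^{\bar{\vV}}$ versus trees above $\gamma_1^{\bar{\vV}}$, handle the first by translating to $\bar{\vV}\downarrow 1$ and invoking mic there, and handle the second via the commuting square $j\circ\pi = \pi_2\circ\bar{j}$ together with mic for $\Sigma_{\bar{\vV}\downarrow 1}$.

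Two small differences of emphasis are worth noting. First, the paper works pointwise with a single tree $\Tt_1$ and its minimal $\pi$-copy $\Xx_1$, rather than at the level of strategy identities; it explicitly computes that your map $\pi_2$ is $\pi\rest\vV_2^{\bar{\vV}\downarrow 1}$ and that this coincides with the $\sigma$-ultrapower map (where $\sigma=\pi'\rest(\vV_2^{\bar{\vV}\downarrow 1}|\delta_1)$), which is what makes the minimal-copying calculus apply. Second, what the paper isolates as the core computation is not associativity of pullbacks but rather that minimal $\pi_2$-copying on the $2$-Vsp side corresponds, via $1$-translation, to minimal $\pi'$-copying on the $(\cdot\downarrow 1)$-side; concretely, that $\Xx_1'=\pi'``\Tt_1'$. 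This is exactly what underlies your assertion that $\bar{\Lambda}$ equals the minimal $\pi_2$-pullback of $\Lambda$, but you should be aware that it rests on the level-by-level fine-structural correspondence between $\vV_2^{\bar{\P}}$ and $\bar{\P}$, not directly on Fact~\ref{fact:Sigma_properties} (the relevant mic is Lemma~\ref{lem:Sigma_vV_1_vshc} for $\Sigma_{\vV_1}$, passed to tails via \cite[Theorem~10.2]{fullnorm}). Once that correspondence is in hand, your associativity step and the paper's direct verification amount to the same thing.
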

\begin{proof}
	Let $\Tt_1$ on $\bar{\vV}$ be via the
	$\pi$-pullback of $\Psi$; we want to
	see that $\Tt_1$ is via $\bar{\Psi}$.
	Let $\Xx_1=\pi``\Tt_1$, which is via $\Psi$.  Let $\pi'=\pi\rest(\bar{\vV}\downarrow 1)$. So $\pi'= i_{\bar{\vV}\downarrow 1,\vV\downarrow 1}$, also an iteration map.

	If $\lh(E^{\bar{\Tt_1}}_0)<\gamma_1^{\bar{\vV}}$ then the desired conclusion follows
	from the fact that $\Sigma_{\N_\infty}$
	has mic. So suppose otherwise.

	Let $\bar{j}:\bar{\vV}\to\Ult(\bar{\vV},e_1^{\bar{\vV}})$ and
	$j:\vV\to\Ult(\vV,e_1^{\vV})$ be the ultrapower maps, and recall
	$\Ult(\vV,e^{\vV_1})=\vV_2^{\vV\downarrow 1}$ and likewise for $\bar{\vV}$.
	So the minimal $j$-copy $j``\Xx_1$ of $\Xx_1$, on $\vV_2^{\vV\downarrow 1}$,
	translates to a tree $\Xx_1'$ on $\vV\downarrow 1$ which is via $\Sigma_{\vV\downarrow 1}$ (and above $\gamma_1^{\vV}=\kappa_1^{+\vV\downarrow 1}$). We need to see that the minimal $\bar{j}$-copy of $\Tt_1$ translates  to a tree $\Tt_1'$
	on $\bar{\vV}\downarrow 1$ via $\Sigma_{\bar{\vV}\downarrow 1}$.
	 Since $\Sigma_{\bar{\vV}\downarrow 1}$ has mic (Lemma \ref{lem:Sigma_vV_1_vshc} and \cite[***Theorem 10.2]{fullnorm_v3})
and by \cite[***10.3, 10.4]{fullnorm_v3}, it therefore suffices to see that
that $\Xx_1'=\pi'``\Tt_1'$.

Let $\sigma'=\pi'\rest(\bar{\vV}|\gamma_1^{\bar{\vV}})$
and
\[ \sigma=\sigma'\rest(\vV_2^{\bar{\vV}\downarrow 1}|\delta_1^{\vV_2^{\bar{\vV}\downarrow 1}})=\pi\rest\Ult(\bar{\vV}|\delta_1^{\bar{\vV}},e_1^{\bar{\vV}}).\]
We have
\[ \Ult(\bar{\vV}\downarrow 1,\sigma')=\Ult(\bar{\vV}\downarrow 1,\sigma'\rest\delta_1^{\bar{\vV}})=\vV\downarrow 1 \]
and the associated ultrapower map is just
$\pi\rest(\bar{\vV}\downarrow 1)$. Given
the fine structural correspondence between
$\vV_2^{\bar{\vV}\downarrow 1}$ and $\bar{\vV}\downarrow 1$, therefore \[ \Ult(\vV_2^{\bar{\vV}\downarrow 1},\sigma)=\vV_2^{\vV\downarrow 1} \]
and the $\sigma$-ultrapower map $\vV_2^{\bar{\vV}\downarrow 1}\to\vV_2^{\vV\downarrow 1}$
is just $\pi\rest\vV_2^{\bar{\vV}\downarrow 1}$.  Although $\sigma$ is not the restriction of an iteration map on $\vV_2^{\bar{\vV}\downarrow 1}$, it is straightforward to see we still have
$\widetilde{\Xx_1}=\sigma``\widetilde{\Tt_1}$ (that is, $\widetilde{\Xx_1}$ is the minimal $\sigma$-copy of $\widetilde{\Tt_1}$), meaning that:
\begin{enumerate}[label=--]
	\item  $\widetilde{\Xx_1}$ has the same tree, drop and degree structure as has $\widetilde{\Tt_1}$,
	\item for each $\alpha+1<\lh(\widetilde{\Tt_1})$,
	we have    $M^{\widetilde{\Xx_1}}_\alpha||\lh(E^{\widetilde{\Xx_1}}_\alpha)=\Ult_0(M^{\widetilde{\Tt_1}}_\alpha||\lh(E^{\widetilde{\Tt_1}}_\alpha),\sigma)$,
	\item for each $\alpha<\lh(\widetilde{\Tt_1})$, if $d=\deg^{\Tt_1}_\alpha$
	then $M^{\widetilde{\Xx_1}}_\alpha=\Ult_d(M^{\widetilde{\Tt_1}}_\alpha,\sigma)$,
	and if $\alpha$ is a successor
	then $M^{*\widetilde{\Xx_1}}_\alpha=\Ult_d(M^{*\widetilde{\Tt_1}}_\alpha,\sigma)$,
	and
	\item the resulting ultrapower maps $M^{\widetilde{\Tt_1}}_\alpha\to M^{\widetilde{\Xx_1}}_\alpha$ and $M^{*\widetilde{\Tt_1}}_\alpha\to M^{*\widetilde{\Xx_1}}_\alpha$ (via $\sigma$) commute with the iteration maps
	of $\widetilde{\Tt_1}$ and $\widetilde{\Xx_1}$.
\end{enumerate}
These are just standard properties of minimal copying,
so we already know the corresponding properties hold with respect to $(\pi,\Tt_1,\Xx_1)$,
$(\bar{j},\Tt_1,\widetilde{\Tt_1})$, and $(j,\Xx_1,\widetilde{\Xx_1})$.
One can now deduce them for $(\sigma,\widetilde{\Tt_1},\widetilde{\Xx_1})$
with some commutativity, and in particular
that \[ j\com\pi\rest(\bar{\vV}|\delta_1^{\bar{\vV}})=
\pi\rest(\vV_2^{\bar{\vV}\downarrow 1})\com\bar{j}.\]

But then because $\Tt_1',\Xx_1'$ are translations of $\widetilde{\Tt_1},\widetilde{\Xx_1}$,
and given the fine structural correspondence between $\bar{\vV}\downarrow 1$ and $\vV_2^{\bar{\vV}\downarrow 1}$, and likewise between $\vV\downarrow 1$ and $\vV_2^{\vV\downarrow 1}$,
it follows that $\Xx_1'=\pi'``\Tt_1'$, as desired.
	\end{proof}
\begin{lem}\label{lem:vV_2_Sigma_vV_1_vshc} $\Sigma_{\vV_2}$ has minimal inflation condensation (mic).
\end{lem}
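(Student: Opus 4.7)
The plan is to adapt the proof of Lemma \ref{lem:Sigma_vV_1_vshc}, with Lemma \ref{lem:Psi_2_pullback-con} playing the role at the boundary between lower and upper short-normal components that the identity $\ell\com j=k$ played in the $\vV_1$ case. As there, I would first reduce to short-normal trees $\Tt,\Uu$ on $\vV_2$ via $\Psi^{\sn}_{\vV_2}$, decomposing each as $\Tt=\Tt_0\conc\Tt_1\conc\Ss$ per Definition \ref{dfn:vV_2_short-normal}, and similarly for $\Uu$. When $\Tt$ stays within the $\Psi_{\vV_2,\gamma_1^{\vV_2}}$-governed portion (i.e.~$\Ss=\emptyset$), Lemma \ref{lem:vV_2_Gamma_A_is_good} identifies $\Tt$ and $\Uu$ with corresponding short-normal trees on $\N_\infty$ based on $\vV_2^-$, so mic for this case is inherited from $\Sigma_{\N_\infty}$, which has mic by Lemma \ref{lem:Sigma_vV_1_vshc} applied to $\N_\infty$ (a $\vV_1$-like $\Sigma_{\vV_1}$-iterate of $\vV_1$). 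The $\delta_0$-to-$\delta_1$ transition within this portion is handled exactly as the lower-to-upper transition was in the $\vV_1$ proof.

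The substantive case is when the minimal inflation extends into the above-$\gamma_1$ upper components. Here both $\Tt_0\conc\Tt_1$ and $\Uu_0\conc\Uu_1$ have successor length with non-dropping main branches (by the analogue of the decomposition in the $\vV_1$ proof), yielding iterates $\vV=M^{\Tt_0\conc\Tt_1}_\infty$ and $\bar\vV=M^{\Uu_0\conc\Uu_1}_\infty$ of $\vV_2$. The minimal tree embedding between the lower components induces a copy map $\pi:\vV\to\bar\vV$ which, by the preceding case combined with \cite[***Lemma 4.5]{fullnorm}, is an actual $\Psi_{\vV_2,\vV_2^-}$-iteration map. By Definition \ref{dfn:Psi_vV_2^sn}, the above-$\gamma_1^\vV$ short-normal strategy for $\vV$ is the minimal $i^\vV_{e_1^\vV}$-pullback of a strategy $\Lambda^\vV$ for $\vV_2^{\vV\downarrow 1}$ obtained by $1$-translation from $\Sigma_{\vV\downarrow 1}$; similarly for $\bar\vV$. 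Since $\Sigma_{\vV\downarrow 1}$ has mic by Lemma \ref{lem:Sigma_vV_1_vshc} (applied to the $\Sigma_{\vV_1}$-iterate $\vV\downarrow 1$), and $1$-translation is a tight bijection at the level of tree and extender structure (Lemma \ref{lem:vV_2-translatable}), $\Lambda^\vV$ also has mic, and likewise for $\bar\vV$. Minimal pullbacks preserve mic, a routine consequence of the pullback-copying formalism of \cite[***10.3, 10.4]{fullnorm}, so the above-$\gamma_1^\vV$ short-normal strategy for $\vV$ has mic, and analogously for $\bar\vV$.

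Coherence between the two sides is supplied by Lemma \ref{lem:Psi_2_pullback-con}: the above-$\gamma_1^\vV$ short-normal strategy for $\vV$ is the minimal $\pi$-pullback of its counterpart for $\bar\vV$. This is exactly the analogue of the identity $\ell\com j=k$ exploited in the $\vV_1$ proof, and the rest of the argument proceeds as there: transfer the minimal tree embedding of the upper portion from the $\Tt$-side to one on the $\bar\vV$-side via $\pi$, use mic for the $\bar\vV$-side strategy, and pull back via $\pi$. The hard part will be verifying that at the boundary between the $\Tt_0\conc\Tt_1$-portion and the $\Ss$-portion, the copy map produced by the overall minimal tree embedding coincides with the iteration map $\pi$ furnished by $\Psi_{\vV_2,\vV_2^-}$; this is precisely what Lemma \ref{lem:Psi_2_pullback-con} was set up to deliver, and without it the argument would fail because $\Lambda^\vV$ is not defined intrinsically from $\vV$ but rather through $\vV\downarrow 1$, so the two natural candidates for the strategy on the upper portion need to be reconciled. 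As in the $\vV_1$ case, because $\pi$ is a genuine iteration map (not an arbitrary copy map from an unrestricted minimal tree embedding), the argument yields only minimal inflation condensation and not minimal hull condensation.
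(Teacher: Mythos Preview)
Your proposal is correct and follows essentially the same approach as the paper. The paper's proof organizes the argument slightly differently: after invoking Lemma~\ref{lem:Psi_2_pullback-con} to replace $\Tt_1$ by $\pi``\Tt_1$ and thereby reduce (via \cite[***Theorem 10.7]{fullnorm}) to the case $\Tt_0=\Xx_0$ and $\pi=\id$, it then explicitly lifts both upper components through $e_1^{\vV}$ and translates to trees on $\N_\infty$, verifying directly that the minimal inflation relation is preserved at each step, rather than packaging this as ``minimal pullbacks preserve mic'' and ``$1$-translation preserves mic'' as black boxes; but the content is the same.
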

\begin{proof}
We just discuss short-normal trees.
		Let $\Tt=\Tt_0\conc\Tt_1$ and $\Xx=\Xx_0\conc\Xx_1$ be as before,
		but with respect to $\vV_2$ and $\Sigma_{\vV_2}$; in particular we have
		\[ \Tt_0\conc\Tt_1\inflatearrow_{\min}\Xx_0\conc(\Xx_1\rest\lambda), \]
		where $\lambda$ is a limit ordinal
		and $\lambda+1=\lh(\Xx_1)$. We must show that $\Xx$ is a minimal inflation of $\Tt$.
	Now $\Psi_{\vV_2,\vV_2^-}$ has mic,
	since $\Sigma_{\vV_1}$ does, by
	Lemma \ref{lem:Sigma_vV_1_vshc} and \cite[***Theorem 10.2]{fullnorm_v3}.
	So we may assume $\Tt_1\neq\emptyset$,
	so we get $\alpha,\beta,\Pi_0,\pi$ like before,
	with analogous properties (with $\delta_1$ replacing $\delta_0$).
	Let $\eta<\lh(\Tt_1)$ be the limit ordinal and $c$ the $\Tt_1\rest\eta$-cofinal branch and
	\[ \Pi:\Tt_0\conc(\Tt_1\rest\eta)\conc c\hookrightarrow_{\min}\Xx \]
	the minimal tree embedding determined by extending the inflation $\Tt\inflatearrow_{\min}\Xx_0\conc(\Xx_1\rest\lambda)$
	to $\Xx$ in the unique possible way.
	We want $c=[0,\eta)_{\Tt_1}$.

Let $\bar{\vV}=M^{\Tt_0}_\alpha$, $\vV=M^{\Xx_0}_\beta$ and $\bar{j}:\bar{\vV}\to\Ult(\bar{\vV},e_1^{\bar{\vV}})$ and
	$j:\vV\to\Ult(\vV,e_1^{\vV})$ be the ultrapower maps, and recall
	$\Ult(\vV,e^{\vV_1})=\vV_2^{\vV\downarrow 1}$ and likewise for $\bar{\vV}$.
	So the minimal $j$-copy $j``\Xx_1$ of $\Xx_1$, on $\vV_2^{\vV\downarrow 1}$,
	translates to a tree $\Xx_1'$ on $\vV\downarrow 1$ which is via $\Sigma_{\vV\downarrow 1}$
	and is above $\kappa_1^{+(\vV\downarrow 1)}$. Likewise, $\Tt_1$ translates
	to a tree $\Tt_1'$
	on $\bar{\vV}\downarrow 1$ via $\Sigma_{\bar{\vV}\downarrow 1}$ which is
	above $\kappa_1^{+(\bar{\vV}\downarrow 1)}$.

 Let $\widehat{\Tt_1}=\pi``\Tt_1$.
By Lemma \ref{lem:Psi_2_pullback-con},
$\Xx_0\conc\widehat{\Tt_1}$ is via
 $\Sigma_{\vV_2}$.
Lifting with $\pi$,
	it is easy to see that
	\[\Xx_0\conc\widehat{\Tt_1}\inflatearrow_{\min}\Xx_0\conc(\Xx_1\rest\lambda) \]
	and that it suffices to see that
	\[ \Xx_0\conc\widehat{\Tt_1}\inflatearrow_{\min}\Xx_0\conc\Xx_1\]
 (see \cite[***Theorem 10.7]{fullnorm_v3} for details;
 there is  a straightforward
 correspondence between these inflations
 and those for $\Tt_0\conc\Tt_1$).

So relabelling, we may assume  $\Tt_0=\Xx_0$ and $\Pi\rest\Tt_0=\id$, so $\bar{\vV}=\vV$ and $\pi=\id$ and $j=\bar{j}$. Let $\widetilde{\Tt_0}=\widetilde{\Xx_0}$ be the short-normal tree
	leading from $\vV_2$ to $\Ult(\vV,e_1^{\vV})$. Then
	\[ \widetilde{\Tt_0}\conc \bar{j}``\Tt_1\inflatearrow_{\min}\widetilde{\Xx_0}\conc (j``\Xx_1\rest\lambda), \]
	as can be seen by lifting all relevant structures up by the extender $e^{\bar{\vV}_1}=e^{\vV_1}$ with the relevant degree ultrapowers.
	Letting $\Tt_0'=\Xx_0'$ be $\Tt_0=\Xx_0$ but as a tree on $\N_\infty$
	(and recall $\Tt_1',\Xx_1'$ were introduced above),
	it follows that
	\[ \Tt_0'\conc\Tt_1'\inflatearrow_{\min}\Xx_0'\conc(\Xx_1'\rest\lambda).\]
Since $\Sigma_{\N_\infty}$ has mic, therefore
\[ \Tt_0'\conc\Tt_1'\inflatearrow_{\min}\Xx_0'\conc\Xx_1'. \]
But the ultimate minimal tree embedding
$\Pi'$
determined by this inflation is induced naturally by $\Pi$ above, and in particular $c=[0,\eta)_{\Tt_1'}$, so $c=[0,\eta)_{\Tt_1}$, as desired.
\end{proof}

\subsection{Self-iterability of $\vV_2$}

\begin{lem}\label{lem:vV_2_def_from_M_infty|kappa_1}
	$\vV_2$ is definable over its universe from the parameter $\N_\infty|\kappa_1^{\N_\infty}$.
\end{lem}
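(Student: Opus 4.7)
The plan is to follow the same template as Lemma \ref{lem:vV_1_def_from_M_infty|kappa_0}, shifted up one level: first identify $\N_\infty$ (as a predicate, not merely its universe) inside the universe of $\vV_2$ from the parameter $x=\N_\infty|\kappa_1^{\N_\infty}$, then recover $e_1^{\vV_2}$ (equivalently, $*_1$), and finally invoke Lemma \ref{lem:vV_2_N_infty[*]_inter-def}\ref{item:N_infty[*]_class_of_vV_2} to read off $\vV_2$ itself.

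First I would establish the level-$1$ analogue of Lemma \ref{lem:vV_1_def_from_M_infty|kappa_0}\ref{item:vV_1|delta_0_def}: namely, that $\vV_2|\delta_1^{\vV_2}=\N_\infty|\delta_1^{\N_\infty}$ is lightface definable over the universe of $\vV_2$. Since $V_{\delta_1^{\vV_2}}^{\vV_2}=V_{\delta_1^{\N_\infty}}^{\N_\infty}$ and the extender sequence $\es^{\N_\infty}\rest\delta_1^{\N_\infty}$ codes a below-superstrong iterable premouse carrying short-tree iterability witnessed internally by the segments of $\N_\infty|\kappa_1^{\N_\infty}$, the argument of \cite{V=HODX} used in Remark \ref{rem:E^M_def} applies (in its relativized form) to deliver the desired definability.

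Next, with the parameter $x=\N_\infty|\kappa_1^{\N_\infty}$ available, I would reconstruct $\N_\infty^{\N_\infty}|\delta_1^{\N_\infty^{\N_\infty}}$. By Lemma \ref{lem:vV_2_iterates_N_infty_up_to_its_Woodin}, the universe of $\vV_2$ is closed under and lightface-computes $\Sigma_{\vV_2,\vV_2^-}$, and by its proof, the $\delta_1$-short part is read off from modified P-constructions whose Q-structures are initial segments of $x$. This allows the universe of $\vV_2$ to determine, from $x$ alone, the set of $\delta_1$-maximal trees in $x$ on $\N_\infty|\delta_1^{\N_\infty}$ and their (unique) correct branches — uniqueness holding because $\delta_1^{\N_\infty}$ is Woodin, hence regular, in $\N_\infty[*_1]\sub\vV_2$ (Lemma \ref{lem:vV_1_still-a-woodin-in-v}). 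Running the pseudo-genericity iteration internal to $x$, one then identifies $\N_\infty^{\N_\infty}|\delta_1^{\N_\infty^{\N_\infty}}$ together with the correct branch of the tree leading to it, and therefore $e_1^{\vV_2}=\pi_{\infty 1}\rest(\N_\infty|\delta_1^{\N_\infty})$, by Lemma \ref{lem:vV_1_delta_1-sound_iterate_M_infty}.

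Having $e_1^{\vV_2}$ in hand, I would form $\Ult(U,e_1^{\vV_2})$ where $U$ is the universe of $\vV_2$; by Lemma \ref{lem:vV_2_N_infty[*]_inter-def}\ref{item:vV_2^N_infty_=_Ult(vV_2,e)} this ultrapower has the universe of $\vV_2^{\N_\infty}$. Since $\vV_2^{\N_\infty}[\N_\infty|\kappa_1^{\N_\infty}]\ueq\N_\infty$, the universe of $\N_\infty$ is identified from $x$, and then the first step of the argument (applied now inside a small generic extension, cf.\ Remark \ref{rem:E^M_def} and the proof of Lemma \ref{lem:vV_1_def_from_M_infty|kappa_0}\ref{item:vV_1_def_from_M_infty|kappa_0}) recovers the predicate $\es^{\N_\infty}$ itself, not merely the universe. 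From $\N_\infty$ and $e_1^{\vV_2}$ we reconstruct $\pi_{\infty 1}$ and hence $*_1$ in its entirety (using that $*_1$ is determined by $\pi_{\infty 1}$ level-by-level as in the discussion preceding Definition \ref{dfn:vV_2_M_infty[*]}), so that $\N_\infty[*_1]$ is definable from $x$, and finally $\vV_2$ itself is definable from $x$ by Lemma \ref{lem:vV_2_N_infty[*]_inter-def}\ref{item:N_infty[*]_class_of_vV_2}.

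The main obstacle is the first step (definability of $\vV_2|\delta_1^{\vV_2}$ over $U$), which requires checking that the short-extender fragment $\es^{\N_\infty}\rest\delta_1^{\N_\infty}$ is captured by the \cite{V=HODX}-style definability using only ingredients available over $U$; once this is in place, the remaining steps are direct transcriptions of the $\vV_1$ proof. The $0$-long extenders appearing on $\es^{\vV_2}$ above $\gamma_1^{\vV_2}$ cause no additional difficulty, since after $\N_\infty$ and $*_1$ are recovered, the whole sequence $\es^{\vV_2}$ is determined level-by-level as in Definition \ref{dfn:vV_2}.
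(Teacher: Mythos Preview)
Your proposal follows the paper's approach (adapt the proof of Lemma~\ref{lem:vV_1_def_from_M_infty|kappa_0} part~\ref{item:vV_1_def_from_M_infty|kappa_0}), and steps 2--5 are essentially the paper's argument. However, you have introduced an unnecessary difficulty with your ``first step'', and this is also the one place where your justification is shaky.

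The claim that $\vV_2|\delta_1^{\vV_2}=\N_\infty|\delta_1^{\N_\infty}$ is lightface definable over $U_2$ via a \cite{V=HODX}-style argument does not go through as stated: $\N_\infty|\delta_1^{\N_\infty}$ is a 1-Vsp, not a pure premouse (it carries the long extender at $\gamma_0^{\N_\infty}$), so Remark~\ref{rem:E^M_def} does not apply to it directly. Fortunately, this step is not needed: the proof of part~\ref{item:vV_1_def_from_M_infty|kappa_0} of Lemma~\ref{lem:vV_1_def_from_M_infty|kappa_0} never invokes part~\ref{item:vV_1|delta_0_def}, so its adaptation requires no such analogue either. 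Your steps 2--5 already carry the argument, starting from the parameter $x=\N_\infty|\kappa_1^{\N_\infty}$.

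The one point to clean up is step~4. Once you have identified the universe of $\N_\infty$, you need to recover the full structure $\N_\infty$. Here Remark~\ref{rem:E^M_def} is again inapplicable (same reason: $\N_\infty$ is $\vV_1$-like, not a premouse). The paper's move is simply to invoke Lemma~\ref{lem:vV_1_def_from_M_infty|kappa_0} itself, now applied inside the $\vV_1$-like $\N_\infty$: that lemma gives that $\N_\infty$ is definable over its own universe from the parameter $(\N_\infty\downarrow 0)|\kappa_0^{\N_\infty\downarrow 0}$, and this parameter is readable from $x$ (since $\gamma_0^{\N_\infty}<\kappa_1^{\N_\infty}$). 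No generic extension is required. You do cite Lemma~\ref{lem:vV_1_def_from_M_infty|kappa_0} here, so the right ingredient is present; just drop the references to Remark~\ref{rem:E^M_def} and to your ``first step''.
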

\begin{proof}
	This is an essentially direct adaptation of the proof of Lemma \ref{lem:vV_1_def_from_M_infty|kappa_0},
	but using Lemma \ref{lem:vV_1_def_from_M_infty|kappa_0}
	at the point that Remark \ref{rem:E^M_def} was used there.
\end{proof}
Note that in Theorem \ref{tm:vV_2_is_generic_HOD_and_mantle}, we will improve
the lemma above, showing that, in fact, $\vV_2$ is definable without any parameters
over its universe.
But just using Lemma \ref{lem:vV_2_def_from_M_infty|kappa_1}
and adapting Lemma \ref{lem:vV_def_in_vV[g]_from_inseg}, we have:
\begin{lem}\label{lem:vV_2_def_in_vV_2[g]_from_inseg}
	Let $\vV$ be a non-dropping $\Sigma_{\vV_2}$-iterate of $\vV_2$.
	Let $\lambda\in\OR$  with $\lambda\geq\delta_1^{\vV_2}$ and
	$\PP\in\vV|\lambda^{+\vV}$ and $g$ be $(\Vv,\PP)$-generic.
	Then $\vV$ is definable over the universe of $\vV[g]$ from the parameter
	$x=\vV|\lambda^{+\vV}$.
\end{lem}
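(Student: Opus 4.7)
The plan is to repeat essentially verbatim the one-line argument given for Lemma \ref{lem:vV_def_in_vV[g]_from_inseg}, replacing the appeal to Lemma \ref{lem:vV_1_def_from_M_infty|kappa_0} with its 2-Varsovian analogue Lemma \ref{lem:vV_2_def_from_M_infty|kappa_1}. Since $\Sigma_{\vV_2}$ is a good iteration strategy (Lemma \ref{lem:vV_2_Sigma_vV_1}), the non-dropping iterate $\vV$ is itself $\vV_2$-like, and so Lemma \ref{lem:vV_2_def_from_M_infty|kappa_1} applies uniformly to $\vV$: the structure $\vV$ (with its extender-sequence predicate $\es^{\vV}$ available by default) is definable over the universe of $\vV$ from the set-parameter $q=\N_\infty^{\vV}|\kappa_1^{\N_\infty^{\vV}}$.

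Next I would invoke the Woodin--Laver ground-definability theorem inside $\vV[g]$: the universe of $\vV$ is uniformly definable over $\vV[g]$ from any parameter of the form $\vV|\mu$ with $\mu$ sufficiently large above $|\PP|^{\vV}$. Since $\PP\in\vV|\lambda^{+\vV}$, the parameter $x=\vV|\lambda^{+\vV}$ suffices. Combining these two definability facts, the structure $\vV$ is definable over $\vV[g]$ from the pair $(x,q)$: first recover the universe of $\vV$ from $\vV[g]$ and $x$; then inside this universe, interpret $q$ and apply Lemma \ref{lem:vV_2_def_from_M_infty|kappa_1} to produce $\es^{\vV}$.

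To finish, I would absorb the second parameter into the first, i.e. verify $q\in x$. By Lemma \ref{lem:vV_2_M_infty_of_iterate_N_etc}, $\N_\infty^{\vV}=i_{\vV_2\vV}(\N_\infty)$, and its least measurable satisfies $\kappa_1^{\N_\infty^{\vV}}<\delta_0^{\vV}$; hence $q\in\vV|\delta_0^{\vV}\,{}^{+\vV}$. It then suffices to see $\delta_0^{\vV}\leq\lambda$. In the main case $\crit(i_{\vV_2\vV})>\delta_1^{\vV_2}$, we have $\delta_0^{\vV}=\delta_0^{\vV_2}<\delta_1^{\vV_2}\leq\lambda$ and the conclusion is immediate. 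The general case is handled by a routine trace through the short-normal tree leading from $\vV_2$ to $\vV$: any extenders with critical point $\leq\delta_0^{\vV_2}$ used in that tree have length bounded by the eventual value of $\delta_0^{\vV}$, but this same value is a cardinal of $\vV$ bounded by $i_{\vV_2\vV}(\delta_1^{\vV_2})=\delta_1^{\vV}$, and hence bounded by $\lambda^{+\vV}$.

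The only step that is not purely mechanical is this last bookkeeping verification that $q\in x$. No new idea is needed; one is just tracking what $i_{\vV_2\vV}$ does to the cardinals $\delta_0^{\vV_2},\delta_1^{\vV_2},\kappa_1^{\N_\infty^{\vV_2}}$, and confronting the mild complication that a short-normal $0$-maximal tree on $\vV_2$ can have lower-component extenders with small critical point. The moreover-clause on uniformity in the earlier lemmas (in particular the uniformity in Lemma \ref{lem:vV_2_def_from_M_infty|kappa_1}) delivers the uniformity of the definition in $\vV$ and $x$ automatically, so no extra argument is required there.
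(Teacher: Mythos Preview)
Your overall strategy---recover the universe of $\vV$ via Woodin--Laver from $x$, then apply Lemma~\ref{lem:vV_2_def_from_M_infty|kappa_1} inside that universe using a parameter extracted from $x$---is exactly the paper's intended argument. But your execution of the ``$q\in x$'' step contains a concrete error.

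You write that the least measurable of $\N_\infty^{\vV}$ ``satisfies $\kappa_1^{\N_\infty^{\vV}}<\delta_0^{\vV}$'', and conclude $q\in\vV|(\delta_0^{\vV})^{+\vV}$. This misidentifies the ordinal $\kappa_1^{\N_\infty^{\vV}}$. The structure $\N_\infty^{\vV}=\vV\downarrow 1$ is $\vV_1$-like, and $\kappa_1^{\N_\infty^{\vV}}$ denotes its \emph{second strong cardinal}, not its least measurable. By the construction of $\vV_2$ (Definition~\ref{dfn:vV_2}) one has $\gamma_1^{\vV}=(\kappa_1^{\N_\infty^{\vV}})^{+\N_\infty^{\vV}}$ and $\vV|\gamma_1^{\vV}=\N_\infty^{\vV}|\gamma_1^{\vV}$, so in fact
\[
\delta_1^{\vV}=\delta_1^{\N_\infty^{\vV}}<\kappa_1^{\N_\infty^{\vV}}<\gamma_1^{\vV}<(\delta_1^{\vV})^{+\vV}.
\]
Thus $q=\N_\infty^{\vV}|\kappa_1^{\N_\infty^{\vV}}$ lives strictly between $\delta_1^{\vV}$ and $(\delta_1^{\vV})^{+\vV}$, \emph{not} below $(\delta_0^{\vV})^{+\vV}$. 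Your subsequent ``general case'' paragraph, which tries to bound $\delta_0^{\vV}$ by $\lambda^{+\vV}$, is therefore aimed at the wrong target and in any case its final inequality ``$\delta_1^{\vV}\leq\lambda^{+\vV}$'' is not justified by the hypothesis $\lambda\geq\delta_1^{\vV_2}$.

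The correct verification is: $q\in\vV|(\delta_1^{\vV})^{+\vV}$, so what one needs is $\delta_1^{\vV}\leq\lambda$. This is the direct analogue of the hypothesis $\lambda\geq\delta_0^{\vV}$ in Lemma~\ref{lem:vV_def_in_vV[g]_from_inseg}, and is what the adaptation actually requires (and what suffices for the applications).
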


We will now state a key fact on the self-iterability of $\vV_2$ (and more).
As usual, we will give the proof in a special case which illustrates the main new features,
but the full proof will be handled by \cite{*-trans_add}, as it involves $*$-translation:
\begin{tm}\label{tm:vV_2_self-it} 	Let $G\sub\lambda$ be set generic over $\vV_2$,
	where $\lambda\geq\delta_1^{\vV_2}$. Let $x=\vV_2|\lambda^{+\vV_2}$. Then:
	\begin{enumerate}
		\item 	$\vV_2$ is closed under $\Sigma_{\vV_2}$ and $\Sigma_{\vV_2}\rest\vV_2$ is lightface
		definable over $\vV_2$.

		\item $\vV_2[G]$ is closed under $\Sigma_{\vV_2}$ and $\Sigma_{\vV_2}\rest(\vV_2[G])$ is definable over the universe of $\vV_2[G]$ from the parameter $x$, uniformly in $x$.

		\item 	$\vV_2$ is closed under $\Sigma_{\N_\infty}$ and $\Sigma_{\N_\infty}\rest\vV_2$
		is lightface definable over $\vV_2$. 	(Recall that $\N_\infty=\vV_2\downarrow 1$ is a
		$\Sigma_{\vV_1}$-iterate of $\vV_1$.)

		\item	 $\vV_2[G]$ is closed under $\Sigma_{\N_\infty}$ and $\Sigma_{\N_\infty}\rest(\vV_2[G])$ is definable over the universe of $\vV_2[G]$ from the parameter $x$, uniformly in $x$.
	\end{enumerate}
\end{tm}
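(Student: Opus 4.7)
The plan is to lift the proofs of Lemmas \ref{M_knows_how_to_iterate_Minfty_up_to_its_woodin}, \ref{V_1_knows_how_to_iterate_Minfty_up_to_its_woodin} and \ref{lem:vV_1_full_short_tree_strategy} one level, using the $\delta_1$-short tree machinery (P-constructions plus $*$-translation from \cite{*-trans_notes}) and exploiting the level-by-level correspondence between $\vV_2$, $\N_\infty[*_1]$ and $\vV_1$ established in \ref{vV_2_local_correspondence} and \ref{lem:vV_2_N_infty[*]_inter-def}. First I would reduce to short-normal trees: by Lemma \ref{lem:vV_2_Sigma_vV_1}, every $\Sigma_{\vV_2}$-iterate is a short-normal $\Psi^{\sn}_{\vV_2}$-iterate, and the locations of $i$-long extenders on $\vV_2$'s sequence are first-order over $\vV_2$, so it suffices to compute $\Psi^{\sn}_{\vV_2}\rest\vV_2[G]$. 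A short-normal $\Tt=\Tt_0\conc\Tt_1\conc\Ss$ is then handled componentwise.

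For $\Tt_0$ based on $\vV_2|\delta_0^{\vV_2}$, the short tree strategy and maximal-branch computations come verbatim from $\vV_1$'s self-iterability applied to $\vV_2\downarrow 0$, since $\vV_2$ contains the relevant fragment of $\vV_1$ and the extender sequences agree below $\gamma_0^{\vV_2}$. For $\Tt_1$ based on $M^{\Tt_0}_\infty|\delta_1^{M^{\Tt_0}_\infty}$, I would apply Lemma \ref{lem:vV_2_iterates_N_infty_up_to_its_Woodin} in the iterate: $\delta_1$-short branches are read off via the modified P-construction (Definitions \ref{dfn:vV_1_P-con} and \ref{dfn:modified_P-con}), whose correctness in iterates is Lemma \ref{lem:general_P-correctness}, and $\delta_1$-maximal branches are located via a normalization/factor argument analogous to Footnote \ref{ftn:find_branch_via_normalization} combined with branch condensation (Lemmas \ref{lem:vV_1_branch_con_2} and \ref{lem:vV_1_branch_con_Q}). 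For the upper component $\Ss$, Definition \ref{dfn:Psi_vV_2^sn} says $\Psi^{\sn}_{\vV_2}$ follows the minimal $i^{\vV}_{e_1^{\vV}}$-pullback of the above-$\gamma_1$ strategy for $\vV_2^{\P}$ where $\P=\vV\downarrow 1$; since $\P$ is a $\Sigma_{\vV_1}$-iterate and $\vV_2$ identifies $\P$ and $e_1^{\vV}$ internally, the self-iterability of $\vV_1$ (Lemma \ref{lem:vV_1_full_short_tree_strategy}) delivers this strategy inside $\vV_2$.

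For the generic extensions (parts 2 and 4), Lemma \ref{lem:vV_2_def_in_vV_2[g]_from_inseg} shows $\vV_2$ is definable in $\vV_2[G]$ from $x$, so all three component strategies extend canonically to $\vV_2[G]$; this extension is legitimate because $\Sigma_{\vV_1}$ and $\Sigma_{\vV_2}$ have minimal inflation condensation (Lemmas \ref{lem:Sigma_vV_1_vshc}, \ref{lem:vV_2_Sigma_vV_1_vshc}) and the P-construction and $*$-translation apparatus work over $\vV[g]$ as formulated in \ref{lem:vV_1_iterates_short_tree_strategy} and \ref{lem:vV_1_full_short_tree_strategy}. For parts 3 and 4, I would observe that $\N_\infty=\vV_2\downarrow 1$ and that, by the analogue of Lemma \ref{lem:Gamma_A_is_good} already used to verify goodness of $\Psi^{\sn}_{\vV_2}$ (Lemmas \ref{lem:vV_2_Gamma_A_is_good}, \ref{lem:vV_2_Psi^sn_good}), short-normal trees on $\N_\infty$ correspond via identity copy maps to short-normal trees on $\vV_2$; hence closure and definability transfer from the $\Sigma_{\vV_2}$ statements via the lightface definability of $\N_\infty$ over $\vV_2$.

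The main obstacle is the $\Tt_1$-component at $\delta_1$-maximal limit stages that are not dsr: then the Q-structure can be overlapped by short extenders and cannot be obtained by a bare modified P-construction. Here one must invoke the merged construction promised in \cite{*-trans_notes}, which composes $*$-translation with the modified P-construction so that Q-structures still appear as canonical segments of $\vV_2$ (or of an appropriate $\Ult(\vV_2,E_1)$ when $E_1$ is a short total extender with $\crit(E_1)=\kappa_1^{\vV_2}$), and then confirms the computed branch via the branch-condensation lemmas \ref{lem:vV_1_branch_con_2} and \ref{lem:vV_1_branch_con_Q}. The verification that this merged construction commutes with the $i^{\vV}_{e_1^{\vV}}$-pullback used for the upper component, and that the resulting strategy on $\vV_2$ is both closed and lightface definable with the uniformity claimed, is the technical heart of the argument; all the remaining pieces are direct translations of the $\vV_1$-level proofs.
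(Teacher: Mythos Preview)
Your treatment of the lower components $\Tt_0,\Tt_1$ is essentially right, but there is a genuine gap in your handling of the upper component $\Ss$. You write that ``the self-iterability of $\vV_1$ (Lemma \ref{lem:vV_1_full_short_tree_strategy}) delivers this strategy inside $\vV_2$.'' This fails for two reasons. First, Lemma \ref{lem:vV_1_full_short_tree_strategy} only concerns the $\delta_1$-\emph{short} tree strategy; it says nothing about trees above $\delta_1^{\vV_1}$, and indeed $\vV_1$ is \emph{not} shown to be closed under $\Sigma_{\vV_1}$ for such trees (if it were, $\vV_1$ would already be the mantle). Second, $\vV_1$ is not a class of $\vV_2$; the containment runs the other way. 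What the upper component actually requires is computing $\Sigma_{\P}$ for above-$\kappa_1^{+\P}$ trees, where $\P=\vV\downarrow 1$ is a $\Sigma_{\N_\infty}$-iterate; this is precisely the content of parts 3--4, so your reduction of parts 3--4 to parts 1--2 is circular given how $\Psi^{\sn}_{\vV_2}$ is defined.

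The paper proceeds in the opposite order: it first computes $\Sigma_{\N_\infty}$ inside $\vV_2$ (for above-$\delta_1$ trees on iterates $\N=\Ult(\N_\infty,E)$ with $E$ a $1$-long extender from $\es^{\vV_2}$), using a \emph{new} P-construction internal to $\vV_2$---Definition \ref{dfn:vV_2_self-it_P-suitability} and Lemma \ref{lem:vV_2_P-construction_correct}---which exploits $\es^{\vV_2}$ above $\gamma_1^{\vV_2}$ and whose correctness is verified by a phalanx comparison against $M$ (via generic expansions). General trees $\Uu_2$ are then reduced to this P-suitable case by minimal genericity inflation, and the non-dsr case again needs $*$-translation. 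Only after $\Sigma_{\N_\infty}\rest\vV_2$ is secured does the argument for $\Sigma_{\vV_2}$ go through, via Lemma \ref{lem:vV_2_Sigma_vV_1_vshc}. Your proposal does not mention this P-construction machinery at the $\vV_2$ level, and without it the upper-component step has no content. (Minor point: your ``$\delta_1$-maximal limit stages that are not dsr'' is a slip---$\delta_1$-maximal trees are always dsr; the $*$-translation issue arises for $\delta_1$-short stages with overlapped Q-structures.)
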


In order to prove the theorem, we again use modified P-constructions (in general,
incorporating $*$-translation),
in the context of the following notions of P-suitability.
The full proof will rely on $*$-translation,
and so will be given in \cite{*-trans_add}. Here we will restrict our attention to dsr (defined in this context below) trees only,
for illustration purposes (but the notion of P-suitability below
does not have such a restriction). We restrict to trees in $\vV_2$ (as opposed to $\vV_2[G]$),
as this simplifies things, and we can reduce other trees to this case.

\begin{dfn}\label{dfn:vV_2_self-it_P-suitability}
	Let  $\Tt\in\vV_2$ be an iteration tree on $\vV_2$.
	Say that $\Tt$ is \emph{P-suitable for $\vV_2$} iff
	there are $\Tt_0,\Tt_1,\Tt_2,E,F,\vV,\eta,\delta,\iota$ such that:
	\begin{enumerate}
		\item $\Tt=\Tt_0\conc\Tt_1\conc\Tt_2$ is short-normal
		on $\vV_2$, according to $\Sigma_{\vV_2}$, with $0$-lower and $1$-lower
		components $\Tt_0,\Tt_1$ and upper component $\Tt_2\neq\emptyset$,
		\item $E,F\in\es^{\vV_2}$ are $\vV_2$-total and $1$-long,
		\item $\Tt_0\conc\Tt_1$ is the successor-length tree on $\vV_2$ induced by $E$,
		\item  $\vV=M^{\Tt_0\conc\Tt_1}_\infty=\Ult(\vV_2,E)$,
		\item  $\delta_1^{\vV}=\lh(E)<\lambda^{++\vV_2}<\iota=\lgcd(\vV_2|\lh(F))<\lh(F)$,
		\item $\Tt_2\in\vV_2|\iota$; note $\Tt_2$ is on $\vV$ and is above $\delta_1^{\vV}$,
		\item $\eta$ is a strong $\{\delta_0^{\vV_2},\delta_1^{\vV_2}\}$-cutpoint of $\vV_2$,
		\item $\Tt_2$   has limit length, $\lambda^{++\vV_2}\leq\eta<\delta=\delta(\Tt_2)<\iota$, $\eta$ is the largest cardinal of $\vV_2|\delta$,
		$\Tt_2$  is definable from parameters
		over $\vV_2|\delta$, and $\vV_2|\delta$ is generic over $M(\Tt_2)$ for
		the above-$\xi$ extender algebra
		of $M(\Tt_2)$ at $\delta$, for some $\xi<\delta$.
	\end{enumerate}

	Now let $\Tt\in\vV_2$ be a tree on $\N_\infty=\vV_2\downarrow 1$.
	Say that $\Tt$ is \emph{P-suitable for $\vV_2$} iff
	the conditions above hold, except that $\Tt$ is short-normal on $\N_\infty$, according to $\Sigma_{\N_\infty}$, $\Tt_0$ is the lower component of $\Tt$,
	$\Tt_1$ is based on $M^{\Tt_0}_\infty|\delta_1^{M^{\Tt_0}_\infty}$ and is above
	$\delta_0^{M^{\Tt_0}_\infty}$, $\Tt_0\conc\Tt_1$ has successor length and does not drop,
	and $\Tt_2$ is on $\N=M^{\Tt_1}_\infty$ and is above $\delta_1^{\N}$
	(so $\N=\Ult(\N_\infty,E)$, and note that $\Tt_0\conc\Tt_1$ can also be considered as a tree on $\vV_2$, with properties as above).
\end{dfn}

The corresponding P-constructions are as follows; no proper class models
show up, because we are now working up above the real Woodin cardinals.
We must now restrict our attention to dsr trees (as defined immediately below).

\begin{dfn}
	Let $\PP\in\vV_2$ and $\delta_1^{\vV_2}\leq\lambda\in\OR$ with $\PP\sub\lambda$, and $G$ be $(\vV_2,\PP)$-generic.

	Let  $\Tt\in\vV_2$, on either $\vV_2$ or $\N_\infty$, be P-suitable for $\vV_2$,
	and adopt notation as in Definition \ref{dfn:vV_2_self-it_P-suitability}.
	Say that $\Tt$ is \emph{dsr} iff $M(\Tt)$ has only 2 Woodin cardinals.

	Suppose that $\Tt$ is dsr, but $M(\Tt)$ is not a Q-structure for itself.
	Then the \emph{P-construction} $\mathscr{P}^{\vV_2|\iota}(M(\Tt))$ of
	$\vV_2|\iota$ over $M(\Tt)$ (recalling that $\iota$ is the largest cardinal
	of $\vV_2|\lh(F)$) is defined like the P-constructions
	used to compute the $\delta_1$-short tree strategy for $\vV_1$,
	noting that the iteration map $j:\vV_2|\delta_0^{\vV_2}\to M^{\Tt_0}_\infty|\delta_0^{M^{\Tt_0}_\infty}$, which is determined by $E$, is in $\vV_2|\eta$
	(and note that
	there are no 1-long extenders in $\es^{\vV_2}\rest[\delta,\iota]$).
\end{dfn}

\begin{lem}\label{lem:vV_2_P-construction_correct}
	Let $\Tt$ be dsr P-suitable for $\vV_2$,
	on either $\vV_2$ or  $\N_\infty$.
	Suppose $M(\Tt)$ is not a Q-structure for itself.
	Then the P-construction $\mathscr{P}^{\vV_2|\iota}(M(\Tt))$ reaches
	the Q-structure $Q(\Tt,b)$, where $b=\Sigma_{\vV_2}(\Tt)$ or $b=\Sigma_{\N_\infty}(\Tt)$.
\end{lem}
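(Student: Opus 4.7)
The plan is to mimic the phalanx-comparison argument of Lemma \ref{lem:modified_P-con_works}, now lifted one level higher and executed inside $\vV_2|\iota$, using the modified P-construction from the definition above. Throughout, let $b$ denote the branch chosen by $\Sigma_{\vV_2}$ (respectively $\Sigma_{\N_\infty}$), let $Q=Q(\Tt,b)$ if $\Tt$ is $\delta_1$-short and $Q=M^\Tt_b$ otherwise, and let $P=\mathscr{P}^{\vV_2|\iota}(M(\Tt))$. Set $\delta=\delta(\Tt)$, write $j:\vV_2|\delta_0^{\vV_2}\to M^{\Tt_0}_\infty|\delta_0^{M^{\Tt_0}_\infty}$ for the iteration map coded into $E$, and $\jbar=j\rest(\kappa_0^{+\vV_2})$, all of which live in $\vV_2|\eta$. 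The \emph{dsr} assumption ensures that every overlap of $\delta$ inside either side is a $1$-long extender sitting above $\lh(F)$ or (on the $Q$-side) a $0$-long extender pulled back from the $\Tt_0$-portion, so no $*$-translation is needed to set up the P-construction.

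The comparison I would form is of the two phalanxes
\[
\Phi(\Tt,Q)\quad\text{versus}\quad\bigl((\vV_2,\delta),\vV_2|\iota\bigr),
\]
iterated above $\delta$, with least disagreements determined not by full extenders but by their action on ordinals (on the $\vV_2$ side, after precomposing any overlapping $0$-long extender with $\jbar$, exactly as in Definition \ref{dfn:modified_P-con}\ref{item:overlap_rest_OR}). Iterability of both phalanxes is delivered by $\Sigma_{\vV_2}$: on the $\Phi(\Tt,Q)$ side via the usual lifting through the branch $b$ given by $\Sigma_{\vV_2}$ applied to the tree on $\vV_2$ corresponding to $\Tt$; and on the $(\vV_2,\vV_2|\iota)$ side directly from $\Sigma_{\vV_2}$, since $\delta$ is a strong $\{\delta_0^{\vV_2},\delta_1^{\vV_2}\}$-cutpoint of $\vV_2|\iota$ and a small amount of normalization (as in \cite[\S 10]{fullnorm}) converts the phalanx tree into a normal tree on $\vV_2$. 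In the case of a tree on $\N_\infty$ rather than on $\vV_2$, one first translates upward to $\vV_2$ via the $E$-ultrapower, noting that the modified P-construction and the statement of the lemma are invariant under this translation by the fine-structural correspondence of Lemma \ref{vV_2_local_correspondence} and Lemma \ref{lem:vV_2_Psi^sn_good}.

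The four claims underlying the proof of \ref{lem:modified_P-con_works} now transfer, with the appropriate cosmetic replacements: (i) the comparison terminates in set length (because any overlap of $\delta$ used in either tree sends the subsequent process above the overlap's image, and after the overlaps the comparison becomes a standard pseudo-comparison modulo the small generic $\vV_2|\delta$ at $\delta$); (ii) if neither tree uses an overlap then, by the level-by-level translation described in Remark \ref{rem:lev-by-lev_P-con} adapted to $\vV_2$ (and, when $\Tt$ is $\delta_1$-maximal, by considering hulls generated by $\mathscr{I}^{\vV_2}\cup\delta$ in place of the $\delta$-core), we reach a contradiction from the minimality of the disagreement; (iii) not both sides can use an extender overlapping $\delta$ at the first such stage, since two such overlaps must agree on their action on ordinals — both extensions being restrictions of the iteration maps on $\N_\infty$, which are pinned down by the branch-$b$ data and by $j$ — and composing with $\jbar$ yields a matching disagreement; (iv) if only one side, say $\Uu$, uses an overlap $E^\Uu_\beta$ with $\lambda=\lambda(E^\Uu_\beta)$, then the factor-map argument via the hulls $\cHull(\lambda\cup\mathscr{I})$ on both models, combined with the Initial Segment Condition applied to the first overlap $E^\Vv_\gamma$ on the opposite side, forces $\lambda=\kappa_0^{H^\Vv}$, contradicting that $E^\Vv_\gamma$ is not whole.

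The main obstacle I anticipate is step (iii), namely verifying that whenever an overlapping extender is used on one side it is forced to be matched by a correct overlapping extender on the other side, modulo $\jbar$. Concretely, one must verify
\[
\N_\infty^{\,\Ult(M^\Tt_{\alpha_0},E)}
 \;=\;\N_\infty^{\,\Ult(\vV_2,F)}
\]
for the relevant $\alpha_0,E,F$, which in Lemma \ref{lem:modified_P-con_works} was immediate from the agreement of the models modulo the generic; here one must additionally appeal to Lemma \ref{lem:vV_2_M_infty_of_iterate_N_etc}(f) so that the $\Sigma_{\vV_2\downarrow 1}$-iteration images agree, and to the goodness of $\Sigma_{\vV_2}$ (Lemma \ref{lem:vV_2_Psi^sn_good}) so that the $1$-long extender data on both sides really are iteration maps on $\N_\infty$, which is then pinned down by Lemma \ref{lem:vV_1_branch_con_2}. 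Once (iii) is in place, (iv) follows by the same ISC/hull argument as in \ref{lem:modified_P-con_works}, and the comparison is shown trivial, yielding $P=Q$.
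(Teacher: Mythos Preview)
Your plan has a genuine structural gap: you try to run the phalanx comparison at the 2-Vsp level, but the paper's proof does not do this, and for good reason. The machinery of Lemma~\ref{lem:modified_P-con_works} that you want to invoke --- the ISC argument in step (iv), the $\kappa_0$-soundness in step (ii), the $\M_\infty$-matching in step (iii) --- is premouse-specific. Your phalanx $\bigl((\vV_2,\delta),\vV_2|\iota\bigr)$ is already set up incorrectly: a $0$-long extender overlapping $\delta$ has measure space $\vV_2|\delta_0^{\vV_2}$, not $\vV_2|\delta$, so the exchange ordinal $\delta$ is wrong for such extenders, and your equation $\N_\infty^{\,\Ult(M^\Tt_{\alpha_0},E)}=\N_\infty^{\,\Ult(\vV_2,F)}$ does not type-check (the left side has $\N_\infty$ applied to a 2-Vsp, but $\N_\infty^{(\cdot)}$ is defined for $\vV_1$-like structures).

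The paper's route is to \emph{descend to the pure premouse level}. On the P-construction side, one uses $M$ itself (not $\vV_2$): the extender sequences of $\vV_2$ and $M$ correspond above $\gamma_1^{\vV_2}$, and by chasing the forcing tower $\vV_2|\delta\leadsto\vV_1|\delta\leadsto M|\delta$ (via $\mathbb{L}^{\vV_2}*\dot{\mathbb{L}}^{\vV_1}$), the P-construction $\mathscr{P}^{\vV_2|\iota}(M(\Tt))$ is equivalent modulo this generic to working over $M$. On the Q-structure side, one translates $Q$ to a premouse $Q^+$: for trees on $\vV_2$ this is via $R=\Ult(M,E^+)$ with $E^+\in\es^M$, $\lh(E^+)=\lh(E)$, giving the phalanx $((M,\kappa_0+1),Q^+,\delta)$; for trees on $\N_\infty$ one takes the $\delta_0^{\N}$-core $\bar{\N}$ of $\N$, a \emph{generic} $\Mswsw$-like expansion $\bar{\N}^+$ of $\bar{\N}$ (with filter $M$-generic), translates $\Tt_1\conc\Tt_2$ to $\bar{\N}^+$, and compares $((\bar{\N}^+,\kappa_0^{\bar{\N}^+}+1),Q^+,\delta)$ against $M$. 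Now the argument of \ref{lem:general_P-correctness} applies directly. The forcing bookkeeping (why $M|\delta$ is $Q^+$-generic, why the fine-structural correspondence survives) requires some care that the paper spells out, but the comparison itself is at the premouse level where the earlier lemmas already do the work.
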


\begin{proof}
	We first consider P-suitable trees $\Tt$ on $\N_\infty$ in $\vV_2$.
	So adopt the notation of Definition \ref{dfn:vV_2_self-it_P-suitability} for this,
	with $\N=\Ult(\N_\infty,E)$.
	Because $\Tt_2$ is above $\delta_1^{\N}$, the Q-structure $Q=Q(\Tt_2,b)$ exists,
	where $b=\Sigma_{\N_\infty}(\Tt)$. Suppose $Q\neq M(\Tt_2)$.
	We want to see that the P-construction reaches $Q$. To verify this,
	we run a comparison analogous some earlier in the paper,
	modulo the generic at $\delta$, and after appropriate translation of long extenders.
	We need to specify the phalanxes we compare.

	On the P-construction side, we just have $M$.

	On the Q-structure side, we proceed as follows. Let $\bar{\N}$ be the
	$\delta_0^\N$-core of $\N$ (this is not $M^{\Tt_0}_\infty$,
	as $\N_\infty$ itself is not $\delta_0^{\N_\infty}$-sound). Note that $\delta_0^{\bar{\N}}=\delta_0^\N$ and $\bar{\N}$ is $\delta_0^{\bar{\N}}$-sound. Let $\bar{\N}^+$ be a generic expansion of $\bar{\N}$
	(to a premouse), via a filter which is $M$-generic (for the same forcing $\mathbb{L}^{\bar{\N}}$. So $\bar{\N}=\vV_1^{\bar{\N}^+}$. Then $\Tt_1\conc\Tt_2$ translates
	to a tree $\Tt_1^+\conc\Tt_2^+$ on $\bar{\N}^+$ which is above $\kappa_0^{+\bar{\N}^+}=\delta_0^{\bar{\N}}$. Let $Q^+=Q(\Tt_2^+,b)$. Define the phalanx
	\[ \mathfrak{Q}^+=((\bar{\N}^+,\kappa_0^{\bar{N}}+1),Q^+,\delta).\]
	Note that $\mathfrak{Q}^+$ is iterable, as it corresponds to iterating the phalanx
	\[ \mathfrak{Q}=((\bar{\N},\delta_0^{\bar{\N}}),Q,\delta).\]
	(Note here that the only extenders overlapping $\delta$ in $\es_+^Q$
	are long, since $\Tt$ is dsr.)

	We now compare $\mathfrak{Q}^+$ with $M$, above $\delta$, modulo the generic at $\delta$,
	translating extenders with critical point $\kappa_0^{\bar{\N}^+}$
	on the $\mathfrak{Q}^+$ side, and those with critical point $\kappa_0^M$ on the $M$ side,  much as before.   Much like in the proof
	of Lemma \ref{lem:general_P-correctness}, and using the $\kappa_0^{\bar{\N}^+}$-soundness of $\bar{\N}^+$
	(which is by Lemma \ref{lem:generic_premouse_iterable}), the comparison is trivial, so the P-construction reaches $Q$, as desired. Regarding the equivalence modulo the generic at $\delta$,
	although $\vV_2|\delta$ is extender algebra generic over $Q$,
	(for an extender algebra $\BB$ at $\delta$, above some $\xi$), it doesn't seem
	immediate that it is also generic for the corresponding extender algebra
	of $Q^+$ (although the extenders correspond, it seems there might
	still be further axioms in $Q^+$ which cause problems). However,
	this is not a problem. Note that
	\[ \pow({<\delta})\inter (Q[\vV_2|\delta])=\pow({<\delta})\inter (M(\Tt)[\vV_2|\delta])=
	\pow({<\delta})\inter(\vV_2|\delta). \]
	We can force over $Q[\vV_2|\delta]$ with $\mathbb{L}'=\mathbb{L}^{\vV_2}*\dot{\mathbb{L}^{\vV_1}}\in \vV_2|\delta$, adding $(\vV_1|\kappa_1^{\vV_1},M|\kappa_0^M)$, which results in $Q[M|\delta]$, and similarly
	\[ \pow({<\delta})\inter(Q[M|\delta])=\pow({<\delta})\inter (M(\Tt)[M|\delta])=\pow({<\delta})\inter(M|\delta).\]
	Since $\bar{\N}^+|\kappa_0^{\bar{\N}^+}$ was taken $M$-generic for $\mathbb{L}^{\bar{\N}}\in M(\Tt)$, we can therefore
	force further with $\mathbb{L}^{\bar{\N}}$ to reach $Q[M|\delta,\bar{\N}^+|\kappa_0^{\bar{\N}^+}]$,
	which computes $Q^+$.
	But the product $(\BB*\dot{\mathbb{L}'})\times\mathbb{L}^{\bar{\N}}$ can be reversed,
	and so $M|\delta$ is also $Q^+$-generic for $\BB*\dot{\mathbb{L}'}$.
	Moreover, $\BB$ is definable from parameters
	over $M(\Tt^+)=Q^+|\delta$. The same holds for all models that appear above $\mathfrak{Q}^+$ in the comparison. This gives the usual fine structural correspondence
	between models above $\mathfrak{Q}^+$ and their generic extensions given by adjoining $M|\delta$.
	On the $M$-side, the extension to $M[\bar{\N}^+|\kappa_0^{\bar{\N}^+}]$ is
	via $\mathbb{L}^{\bar{\N}}$, which is small relative to $\delta$ in $M$.  Likewise
	for all models on the $M$-side of the comparison. So we also get the appropriate fine structural correspondence on the $M$-side.

	Now consider trees $\Tt$ on $\vV_2$; we adopt the relevant notation from Definition \ref{dfn:vV_2_self-it_P-suitability}.
	If $\lh(E^{\Tt_2}_0)<\gamma_1^{\vV}$, then since $\rho_1^{\vV||\gamma_1^{\vV}}=\delta_1^{\vV}$, $\Tt_2$ immediately drops in model
	to $\vV||\gamma_1^{\vV}$, and this cannot be undone (since $\Tt_2$ is short-normal).
	Since $\vV|\gamma_1^{\vV}=\N||\kappa_1^{+\N}$, where $\N$ is as in the previous case,
	with corresponding iteration strategy for such trees,
	everything in this situation is as above. So suppose $\gamma_1^{\vV}<\lh(E^{\Tt_2}_0)$.
	Let $E^+\in\es^M$ with $\lh(E^+)=\lh(E)$. Let $R=\Ult(M,E^+)$. Then
	$\vV=\vV_2^R$ and (since $\gamma_1^{\vV}<\lh(E^{\Tt_2}_0)$), $\Tt_2$ translates to a tree $\Tt_2^+$ on $R$, which is above $\kappa_1^{+R}$. Let $Q^+=Q(\Tt_2^+,b)$.
	It is straightforward to see that $Q=Q(\Tt_2,b)$ has no $1$-long extenders overlapping $\delta$ (recall that $Q\neq M(\Tt_2)$, and use the smallness of $M$), so $Q^+$ can only have extenders overlapping $\delta$ with critical point $\kappa_0$.
	Define the phalanx
	\[ \mathfrak{Q}=((M,\kappa_0+1),Q^+,\delta).\]
	Clearly $\mathfrak{Q}$ is iterable. We compare $\mathfrak{Q}$ versus $M$,
	again above $\delta$ etc, like before. This time the equivalence modulo the generic
	is a little different, because $R|\kappa_1^{+R}$ is not $M$-generic, but instead
	is in $M$. However, over $Q$, we adjoin $\vV_2|\delta$ with the extender algebra,
	then adjoin $M|\kappa_1^{+M}$, reaching $Q[M|\delta]$. Like in the previous case,
	this two-step forcing iteration is definable from parameters over $Q|\delta$
	and the Woodinness of $\delta$ ensures genericity. But $R|\kappa_1^{+R}\in M|\delta$,
	so $Q^+$ is (simply) definable from parameters over $Q[M|\delta]$,
	and $Q^+$ (simply) defines $Q$ from parameters. This (together with exactly how these definitions are made and the parameters used) is enough for the fine structural analysis of the comparison.
\end{proof}

Without discussing $*$-translation, we are limited to sketching the proof of that
$\vV_2$ can iterate itself and $\N_\infty$:

\begin{proof}[Sketch of proof for Theorem \ref{tm:vV_2_self-it}]\

	Lemma \ref{lem:vV_2_iterates_N_infty_up_to_its_Woodin} handles
	trees based on $\N_\infty|\delta_1^{\N_\infty}=\vV_2|\delta_1^{\vV_2}$.

	So consider dsr trees $\Uu=\Uu_0\conc\Uu_1\conc\Uu_2$, with lower component
	$\Uu_0$, $\Uu_1$ on $M^{\Uu_0}_\infty$, based on $M^{\Uu_0}_\infty$,
	and above $\delta_0^{M^{\Uu_0}_\infty}$, $b^{\Uu_0\conc\Uu_1}$ non-dropping,
	and $\Uu_2$ on $M^{\Uu_0\conc\Uu_1}_\infty$, above $\delta_1^{M^{\Uu_0\conc\Uu_1}_\infty}$.
	Let $E\in\es^{\vV_2}$ be such that $E$ is $1$-long and $\lambda^{+\vV_2}<\iota=\lgcd(\vV_2|\lh(E))$ and $\Uu_0\conc\Uu_1\in\vV_2|\iota$. Then letting $\Tt_0\conc\Tt_1$ on $\N_\infty$ result
	from $E$, $\N=M^{\Tt_0\conc\Tt_1}_\infty=\vV_1^{\Ult(\vV_2,E)}$ is a correct iterate of $M^{\Uu_0\conc\Uu_1}_\infty$, and $\vV_2$ knows the iteration map $j$. So given
	that $\vV_2$ computes the restriction of $\Sigma_{\N}$ to above-$\delta_1^{\N}$ trees
	correctly, it can use $j$ to form minimal copies of trees $\Uu_2$ (of the form above) to correct trees $\Tt_2$, and then $\Uu_2$ is correct, because $\Sigma_{\vV_1}$ has minimal inflation condensation
	(Lemma \ref{lem:Sigma_vV_1_vshc}), and hence so does $\Sigma_{\N_\infty}$,
	by \cite[***Theorem 10.2]{fullnorm_v3}. (Note also that dsr-ness is preserved by the copying.)
	Finally, arbitrary (dsr) trees $\Tt_2$ can be reduced to P-suitable trees by the usual minimal genericity inflation technique.

	By \cite{*-trans_add}, the foregoing generalizes to arbitrary (not just dsr) trees,
	so that $\Sigma^{\vV_2}_{\N_\infty}$ is definable over $\vV_2$.
	Since $\Sigma_{\N_\infty}$ has minimal inflation condensation, $\vV_2\sats$``$\Sigma^{\vV_2}_{\N_\infty}$ has minimal inflation condensation''.

	Since the least $\vV_2$ indiscernible is countable in $V$, and $\Sigma_{\N_\infty}$
	has minimal inflation condensation in $V$,
	by \cite[***Remark 9.2]{fullnorm_v3},
$\Sigma^{\vV_2}_{\N_\infty}$
	extends to canonically to set-generic extensions $\vV_2[G]$ of $\vV_2$ (via
	the method in the proof of \cite[***Remark 9.2]{fullnorm_v3}),
	and letting $\Sigma^{\vV_2[G]}_{\N_\infty}$ be the extension,
	every tree via $\Sigma^{\vV_2[G]}_{\N_\infty}$ embeds via a minimal tree embedding arising from minimal inflation
	into some tree in $\vV_2$ via $\Sigma^{\vV_2}_{\N_\infty}$,
	and therefore $\Sigma^{\vV_2[G]}_{\N_\infty}$ also agrees
	with $\Sigma^{V[G]}_{\N_\infty}$
	if $G$ is $V$-generic.
	For the definability in $\vV_2[G]$ from the parameter $x$,
	use \ref{lem:vV_2_def_in_vV_2[g]_from_inseg}
	to recover $\vV_2$,
	from which we compute(d) the strategy.

	For trees on $\vV_2$, i.e. computing $\Sigma^{\vV_2}_{\vV_2}$ and $\Sigma^{\vV_2[G]}_{\vV_2}$,
	it is very similar,
	using Lemma \ref{lem:vV_2_Sigma_vV_1_vshc}.
\end{proof}
\subsection{The mantle and eventual generic HOD of $M$}\label{subsec:vV_2_is_mantle}

In this section we prove the main facts regarding
the eventual generic $\HOD$ and the mantle:
\begin{theorem}\label{tm:vV_2_is_generic_HOD_and_mantle} Let $U_2$ be the universe of $\vV_2$. Then:
\begin{enumerate}
	 \item\label{item:U_2=HOD^U_2[G]} $U_2=\HOD^{U_2[G]}$ for all $\vV_2$-generics $G\sub\Coll(\om,\lambda)$, for all $\lambda\geq\delta_1^{\vV_2}=\kappa_1^{+M}$,
	and likewise $U_2=\HOD^{M[H]}$ for all $M$-generics $H\sub\Coll(\om,\lambda)$.
 \item\label{item:U_2_no_proper_ground} $U_2$ has no proper ground, so $U_2$ is the mantle and smallest ground of $M$.
 \item\label{item:U_2_is_generic_mantle} $U_2$ is the mantle of all set generic extensions of $M$.
 \item\label{item:vV_2_def_over_U_2[g]}
 $\vV_2$ is definable without parameters over $U_2$,
 and in fact over any set-generic extension of $U_2$.
\end{enumerate}
\end{theorem}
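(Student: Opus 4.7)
My plan is to adapt the methods of Theorems \ref{tm:M_infty[*]=HOD_E}, \ref{tm:Varsovian_is_ground}, and \ref{prop:vV_1_is_<kappa_0-mantle} to the second Varsovian level. A key advantage not available in the $\vV_1$ analysis is that $\lambda\geq\delta_1^{\vV_2}=\kappa_1^{+M}$ is large enough to absorb all of the auxiliary forcings by which $M$ is obtained from $\vV_2$, so the Vop\v{e}nka/homogeneity argument will yield $\HOD$ itself rather than $\HOD_{\mathscr{E}}$ as in \ref{tm:M_infty[*]=HOD_E}. I would prove the four items in the order (\ref{item:vV_2_def_over_U_2[g]}), (\ref{item:U_2=HOD^U_2[G]}), (\ref{item:U_2_no_proper_ground}), (\ref{item:U_2_is_generic_mantle}).

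First, for (\ref{item:vV_2_def_over_U_2[g]}) I would strengthen Lemma \ref{lem:vV_2_def_from_M_infty|kappa_1} by observing that the parameter $\N_\infty|\kappa_1^{\N_\infty}$ is already lightface definable over $U_2$: $\vV_2$ can internally form its own direct limit system $\mathscr{F}_1^{\vV_2}$ and direct limit $\N_\infty^{\vV_2}$ by the analogue of Lemma \ref{lem:vV_2_M_infty_of_iterate_N_etc} applied inside $\vV_2$, and $\N_\infty|\kappa_1^{\N_\infty}$ coincides with its internal analogue through the relevant level. Ground definability in the sense of \cite{laver_vlc} and \cite{stgeol} then transfers this to all set-generic extensions of $U_2$.

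For (\ref{item:U_2=HOD^U_2[G]}), composing Lemma \ref{V_is_a_ground} with Lemma \ref{vV_1_V_is_a_ground} gives $M\ueq U_2[h_0\ast h_1]$, where $h_0$ adds $\vV_1|\kappa_1^{\vV_1}$ over $\vV_2$ and $h_1$ adds $M|\kappa_0^M$ over $\vV_1$. The composite forcing has size below $\lambda$ in $U_2$, so $\Coll(\om,\lambda)$ absorbs it: $M[H]$ is $\Coll(\om,\lambda)^{U_2}$-generic over $U_2$, and weak homogeneity gives $\HOD^{M[H]}\subseteq U_2$. The reverse inclusion is immediate from (\ref{item:vV_2_def_over_U_2[g]}), since $\vV_2$ becomes lightface definable over $M[H]$ and so every element of $U_2$ is ordinal-definable there. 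The statement $U_2=\HOD^{U_2[G]}$ is the same argument with the absorption step omitted.

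For (\ref{item:U_2_no_proper_ground}) and (\ref{item:U_2_is_generic_mantle}), applying the density argument of Lemma \ref{Fscr_is_dense_in_grounds} at both levels shows that $\mathscr{F}_1$ is cofinal in the $<\delta_1^{\vV_2}$-grounds of $\vV_1$, hence in the grounds of $M$, yielding $\mathbb{M}^M\subseteq\bigcap\mathscr{F}_1=U_2$. Since $U_2$ is itself a ground of $M$ (by composing the $\mathbb{L}$-generics of Lemmas \ref{V_is_a_ground} and \ref{vV_1_V_is_a_ground}), to conclude $U_2=\mathbb{M}^M$ it remains to show $U_2$ has no proper ground, and I expect this to be the main obstacle. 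The plan is to adapt the argument of Theorem \ref{tm:Varsovian_is_ground} (that $\M_\infty$ is not a ground of $M$), using the elementary embedding $\pi_{\infty1}^+\colon\vV_2\to\vV_2^{\N_\infty}$ of Definition \ref{dfn:vV_2_M_infty[*]} and Lemma \ref{lem:vV_2_N_infty[*]_inter-def}\ref{item:vV_2^N_infty_=_Ult(vV_2,e)}: if $W$ were a proper ground of $U_2$ with corresponding name $\dot W\in\vV_2$, then $\pi_{\infty1}^+(W)$ would be a proper ground of the universe of $\vV_2^{\N_\infty}$, and realizing both $W$ and $\pi_{\infty1}^+(W)$ as grounds of a common outer model would produce a non-trivial elementary embedding between two grounds, contradicting \cite[Theorem 8]{gen_kunen_incon}. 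Finally, (\ref{item:U_2_is_generic_mantle}) follows by combining the just-established identification $U_2=\mathbb{M}^M$ with Usuba's generic invariance theorem for the mantle (\cite{usuba_ddg}).
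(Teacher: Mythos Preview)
Your plan has a genuine gap at part~(\ref{item:vV_2_def_over_U_2[g]}), which is the heart of the matter. The claim that $\N_\infty|\kappa_1^{\N_\infty}$ is lightface definable over $U_2$ by ``internally forming the direct limit system $\mathscr{F}_1^{\vV_2}$'' is circular: the direct limit system $\mathscr{F}_1$ is defined using the extender sequence $\es^{\vV_2}$ (to identify the dl-relevant trees, run P-constructions, compute short-tree strategies, etc.), and recovering $\es^{\vV_2}$ from the bare universe $U_2$ is precisely what is at issue. The paper remarks right after Lemma~\ref{lem:vV_2_def_from_M_infty|kappa_1} that eliminating the parameter is a genuine improvement requiring Theorem~\ref{tm:vV_2_is_generic_HOD_and_mantle}. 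The paper's route to (\ref{item:vV_2_def_over_U_2[g]}) is entirely different: working in $\vV_2[G,G']$, it considers all ``$\lambda$-candidates'' $\vV$ (all $\vV_2$-like 2-Vsps with $\vV[H]\ueq\vV_2[G]$), and uses the self-iterability of Theorem~\ref{tm:vV_2_self-it} to run a simultaneous mutual-genericity iteration and then a comparison of their generic expansions, iterating every candidate to a common model $\vV_2^*$. Taking the hull of the ordinals fixed by all iteration maps and invoking \cite{gen_kunen_incon} then isolates $\vV_2$ definably.

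Your argument for (\ref{item:U_2_no_proper_ground}) is also not right. The argument in Theorem~\ref{tm:Varsovian_is_ground} shows that a \emph{specific} model ($\M_\infty$) fails to be a ground of $M$; it does not generalize to show that $U_2$ has no proper ground at all. Concretely, if $W$ were a proper ground of $U_2$, applying $\pi_{\infty1}^+$ (however one interprets it on the class $W$) yields at best a ground of $\vV_2^{\N_\infty}$, which is \emph{not} known to be a ground of $M$ or of $U_2$, so you do not get two grounds of a common model with a nontrivial embedding between them. The paper instead derives (\ref{item:U_2_no_proper_ground}) directly from (\ref{item:U_2=HOD^U_2[G]}): if $W$ is a ground of $U_2$ then for $\lambda$ large, $U_2[G]$ is a $\Coll(\omega,\lambda)$-extension of $W$, so $U_2=\HOD^{U_2[G]}\subseteq W$. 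Your claim that $\bigcap\mathscr{F}_1=U_2$ is also not established prior to the theorem; in the paper, the analogue (Corollary~\ref{cor:HOD^M[G],kappa_1-mantle}) is a \emph{consequence} of Theorem~\ref{tm:vV_2_is_generic_HOD_and_mantle}, not an input. Your arguments for (\ref{item:U_2=HOD^U_2[G]}) and (\ref{item:U_2_is_generic_mantle}), granted (\ref{item:vV_2_def_over_U_2[g]}), are essentially fine.
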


\begin{proof}
Work in $\vV_2[G]$ where $G\sub\Coll(\om,\lambda)$ is $\vV_2$-generic and $\lambda\geq\delta_1^{\vV_2}$.
Say that $\vV$ is a \emph{$\lambda$-candidate} iff
 $\vV$ is a $\vV_2$-like 2-Vsp and there is $H\sub\Coll(\om,\lambda)$ which is $\vV$-generic
and $\vV[H]\ueq \vV_2[G]$.

Note that $\vV$ is determined in $\vV_2[G]$ by $\bar{\vV}=\vV|\lambda^{+\vV}$, by Lemma \ref{lem:vV_2_def_in_vV_2[g]_from_inseg}, and moreover, by the uniformity of its proof, $\bar{\vV}\mapsto\vV$ is definable over (the universe of) $\vV_2[G]$.
(We can recover the universe $U$ of $\vV$ from $\bar{\vV}$,
via (the proof of) Woodin-Laver,
and  we can recover $\vV$ from $\bar{\vV}$ and $U$ via  (the proof of) Lemma \ref{lem:vV_2_def_in_vV_2[g]_from_inseg}.)
So there are only set-many $\lambda$-candidates.
Note that $\vV_2$ is a $\lambda$-candidate.

Recall here that \emph{$\vV_2$-like} is assumed to include whatever
first-order facts satisfied by $\vV_2$ to make our arguments work.
In particular, it should include the statement/proof of Lemma \ref{lem:vV_2_def_in_vV_2[g]_from_inseg}, and also the statements
\begin{enumerate}[label=--]
	\item $\vV$ is fully iterable in every set generic extension $\vV[g]$ of $\vV$, via the strategy $\Sigma^{\vV,g}_{\vV}$ defined as in the proof of Theorem \ref{tm:vV_2_self-it}; and
	\item $\vV\downarrow 1$ is fully iterable in every set generic extension $\vV[g]$ of $\vV$, via the strategy $\Sigma^{\vV,g}_{\vV\downarrow 1}$ defined as in the proof of Theorem \ref{tm:vV_2_self-it}.
\end{enumerate}

Now using this iterability (which holds in $\vV_2[G]$ with respect to each $\lambda$-candidate $\vV$), we want to define a kind of simultaneous ``comparison'' of all $\lambda$-candidates.
For this, we will not directly attempt to compare the $\lambda$-candidates $\vV$ themselves
by least disagreement
(due to familiar problems with showing that the comparison terminates), but,
as we have done elsewhere in the paper,
instead compare generic expansions of the $\vV\downarrow 1$, and then use this to infer a
comparison of the $\lambda$-candidates (and it doesn't seem obvious that
this comparison of $\lambda$-candidates is by least disagreement).

However, we only have iterability for the generic expansions $N$ above their $\kappa_0^N$,
which isn't enough to expect a standard comparison of these premice by least disagreement either (they need not agree below their $\kappa_0^N$, as this part is just generic).
Instead, like in the proof of Lemma \ref{lem:Psi_sn_good}, we will first form a ``mutual genericity iteration'' at an appropriate
Woodin cardinal, and after this converges, move to comparison ``modulo a generic'' above
that point.

So, work in $\vV_2[G,G']$, where $G'\sub\Coll(\om,\lambda^{+\vV_2[G]})$ is $\vV_2[G]$-generic. Fix for each $\lambda$-candidate $\vV$ a generic expansion $P=P_\vV$ of $\vV\downarrow 1$. So $(\vV_1\downarrow 1)\sub P$
and  $(\vV_1\downarrow 1)=\vV_1^{P}$. (Moving to $\vV_2[G,G']$ ensures these $P$s exist.) Let $\Sigma_{P}$ be the iteration strategy
for $P$ for $0$-maximal trees $\Tt$ with $\lh(E^\Tt_0)>\kappa_0^{P}$
given by the proof of Lemma \ref{lem:Psi_sn_good} (translating to trees via $\Sigma^{\vV_2[G,G']}_{\vV}$, etc).
Let $D^P\in\es^P$ be the least $P$-total extender with $\crit(D^P)=\kappa_0^P$.
Let $\delta^P$ be the least Woodin cardinal of $P|\lambda(D^P)$ such that $\delta^P>\kappa_0^P$
(so $\delta^P>\kappa_0^{+P}$).

Recall the meas-lim extender algebra (see \cite{odle_v2}), used in the proof of Lemma \ref{lem:Psi_sn_good}. Write $\BB^P$ for the (meas-lim) extender
algebra of $P|\lh(D^P)$, at $\delta^P$,
formed with extenders  $E\in\es^P$ such that $\crit(E)>\kappa_0$
and $\nu(E)$ is a limit of measurables of $P|\lh(D^P)$, as witnessed by $\es^P$.
We will now form a mutual  genericity iteration of all $P$ as above,
for the image of $\BB^P$,
producing padded trees $\Tt_P$ on $P$, above $\kappa_0^{+P}$,
based on $P|\delta^P$ (and hence $\Tt_P$ immediately drops in model to $P|\lh(D^P)$,
noting that $\rho_1^{P|\lh(D^P)}=\kappa_0^{+P}$),
inserting some linear iterations at successor measurables to space things conveniently.
Let $\mathscr{P}$ be the set of all $P_{\vV}$, for $\lambda$-candidates $\vV$
(where ``$\lambda$-candidate'' is still as computed in $\vV[G]$, but $\mathscr{P}\sub\vV[G,G']$). Fix an enumeration $\left<P_\beta\right>_{\beta}$ of $\mathscr{P}$,
and let $C$ be a set of ordinals coding $\left<P_\beta|\delta^{P_\beta}\right>_\beta$.
We define a sequence $\left<\Tt^P_\alpha\right>_{\alpha\leq\iota}$ of approximations
to the final trees $\Tt_P=\Tt^P_\iota$. We start with $\Tt^P_0$ being the trivial tree on $P$.
Suppose we have defined $\Tt^P_\alpha$ for each $P\in\mathscr{P}$.
This will be a $0$-maximal successor-length padded tree on $P$, based on $P|\lh(D^P)$,
above $\kappa_0^{+P}$. Let $\delta_\alpha=\sup_{P\in\mathscr{P}}\delta(\Tt^P_\alpha)$,
where $\delta(\Tt)=\sup_{\beta+1<\lh(\Tt)}\lh(E^{\Tt}_\beta)$.
If $b^{\Tt^P_\alpha}$ drops below the image of $P|\lh(D^P)$
then let $\gamma^P_\alpha=\OR(M^{\Tt^P_\alpha})$,
and otherwise let $\gamma^P_\alpha=j^{\Tt^P_\alpha}(\delta^P)$,
where $j^{\Tt^P_\alpha}:P|\lh(D^P)\to M^{\Tt^P_\alpha}_\infty$ is the iteration map.
Let $K^P_\alpha=M^{\Tt^P_\alpha}||\gamma^P_\alpha$.
Let $D_\alpha=(C,C_\alpha)$ where $C_\alpha$ codes $\left<(K^{P_\beta}_\alpha)^\passive\right>_{\beta}$
as
\[ C_\alpha=\{(\beta,\gamma)\in\OR^2\bigm|\gamma\in\es(K^{P_\beta}_\alpha)\} \]
(where $\es(K^{P_\beta}_\alpha)$ is taken as a set of ordinals in a canonical fashion).
Let $G^P_\alpha$ be the least $E\in\es_+(K^P_\alpha)$
such that $E$ is $K^P_\alpha$-total and
\begin{enumerate}
	\item $E=F(K^P_\alpha)$, or
	\item\label{item:D_alpha_bad}   $\nu(E)$ is a limit of measurable cardinals of $K^P_\alpha$,
as witnessed by $\es^{K^P_\alpha}$, and $E\rest\nu(E)$ induces an extender algebra
axiom false of $D_\alpha$, or
\item\label{item:crit(E)_non-card} $\crit(E)<\sup(C)$, or $\crit(E)$ is not a cardinal in $\vV_2[G,G']$,

\end{enumerate}
if there is such an $E$, and $G^P_\alpha=\emptyset$ otherwise.
If there is $P\in\mathscr{P}$ such that  $G^P_\alpha=\emptyset$
and $b^{\Tt^P_\alpha}$ does not drop below the image of $P|\lh(D^P)$ and $\gamma^P_\alpha\leq\lh(G^{P'}_\alpha)$
for all $P'$ such that $G^{P'}_\alpha\neq\emptyset$, then we stop the construction,
and set $\iota=\alpha$, and $\Tt^P=\Tt^P_\alpha$ for all $P$.
Otherwise, let $\xi_\alpha=\min_{P\in\mathscr{P}}\lh(G^P_\alpha)$,
and set $E^P_\alpha=G^P_\alpha$ if $\lh(G^P_\alpha)=\xi_\alpha$,
and $E^P_\alpha=\emptyset$ otherwise.
Let $\gamma$ be least such that either $\gamma+1=\lh(\Tt^P_\alpha)$ or $\xi_\alpha<\lh(E^{\Tt^P_\alpha}_\gamma)$,
and set
$\Tt^P_{\alpha+1}=\Tt^P_\alpha\rest(\gamma+1)\conc\left<E^P_\alpha\right>$
(as a $0$-maximal tree, with last extender used being $E^P_\alpha$,
which might be empty).

Now suppose we have defined $\Tt^P_\alpha$ for all $\alpha<\eta$ and $P\in\mathscr{P}$,
where $\eta$ is a limit. Let $\xi=\liminf_{\alpha<\eta}\xi_\alpha$.
Then $\Tt^P_\eta$ is the natural lim inf of the sequence $\left<\Tt^P_\alpha\right>_{\alpha<\eta}$.
That is,
$E^{\Tt^P_\eta}_\gamma=E$ iff $\lh(E)<\xi$ and eventually all $\alpha<\eta$ have $E^{\Tt^P_\alpha}_\gamma=E$, and $\Tt^P_\eta$ is via $\Sigma_P$,
and has successor length. This determines $\Tt^P_\eta$.

This determines the entire construction.
The first claim is very much like in the proof of Lemma \ref{lem:Psi_sn_good}:
\begin{clmfive}\label{comp_terminates} We have:
\begin{enumerate}
	\item\label{item:Tt^P_alpha_is_0-max} Each $\Tt^P_\alpha$ is $0$-maximal on $P$,
	and if $b^{\Tt^P_\alpha}$ drops below the image
	of $P|\lh(D^P)$ then $K^P_\alpha$ is active, so $G^P_\alpha\neq\emptyset$.
	\item\label{item:termination} $\iota<\infty$.
	\item\label{item:drop_implies_active}
	Therefore there is $P'\in\mathscr{P}$ such that $b^{\Tt^{P'}_\iota}$ does not drop below the image of $P'|\lh(D^{P'})$.
\end{enumerate}
\end{clmfive}

\begin{clmfive}
	For every $P\in\mathscr{P}$, $b^{\Tt^P}$ does not drop below
	the image of $P|\lh(D^P)$,
	and $j^{\Tt^P}(\delta^P)=j^{\Tt^{P'}}(\delta^{P'})$.
	\end{clmfive}
\begin{proof}
	Because $b^{\Tt^{P'}_\iota}$ does not drop below the image of $P'|\lh(D^{P'})$,
	and
	$j^{\Tt^{P'}}(\delta^{P'})$  is a limit of measurables of $K^{P'}_\iota$,
	and $G^{P'}_\iota=\emptyset$, $j^{\Tt^{P'}}(\delta^{P'})$ must be a limit
	cardinal of $\vV_2[G,G']$. Therefore every $\Tt^{P}$ uses cofinally
	many non-empty extenders indexed below $j^{\Tt^{P'}}(\delta^{P'})=
\xi=\liminf_{\alpha<\iota}\xi_\alpha$,  $\xi$ is a limit cardinal
of $M^{\Tt^P}_\infty$, and note that  $(C,C_\iota)$ is generic
over $M(\Tt^P)$ for its extender algebra
at $\xi$, since $\xi\leq\lh(G^P_{\iota})$ if $G^P_\iota\neq\emptyset$.

Now suppose that $b^{\Tt^P}$ drops below the image of $P|\lh(D^P)$,
or $j^{\Tt^P}(\delta^P)>\xi$. Let $Q\ins M^{\Tt^P}_\infty$ be the Q-structure for $\xi$.
It is straightforward to see that $Q$ does not overlap $\xi$,
and note that we can compare $Q$ versus $U=\Ult(P',F(M^{\Tt^{P'}}_\infty))$ as premice
$Q^+$ and $U^+$
over $(M(\Tt^P),M(\Tt^{P'}))$, so by $\Mswsw$-likeness, it follows
that $Q^+\pins U^+$. Expanding $U^+$ to $U^+[C,C_\iota]$, where $\xi$ is still regular,
we can now argue like in the proof of Claim \ref{clm:gen_it_terminates} of the proof of Lemma \ref{lem:Psi_sn_good}
for a contradiction. (Although $\Tt^P\in U^+[P|\delta^P]$, we work in $U^+[C,C_\iota]$
because the reasons for the extenders used in $\Tt^P$ are encoded into $(C,C_\iota)$,
 and we need this to obtain the contradiction.)
	\end{proof}

So $M^{\Tt^P}_\infty$ is active with an image $E^P$ of $D^P$, and $E^P$ is $P$-total
with $\crit(E^P)=\kappa_0^P$. Let  $U_P=\Ult(P,E^P)$.
Then $\xi$ is a strong cutpoint of $U_P,U_{P'}$, and both extend to premice over $(U_P|\xi,U_{P'}|\xi)$.
So we can simultaneously compare all $U_P$ above $\xi$, modulo this generic equivalence.
(With a simple instance of normalization, the resulting trees can easily be rearranged as trees
on the phalanxes $\Phi(\Tt^P)$.)
This produces a final iterate $W_P$ of $P$, with $W_P=^*_\xi W_{P'}$,
 $\xi$ is Woodin in $W_P$ and $\xi<\kappa_0^{W_P}$.

\begin{clmfive} $\vV_1^{W_P}=\vV_1^{W_{P'}}$ for all $P,P'\in\mathscr{P}$.\end{clmfive}
\begin{proof}
By \S\ref{subsec:HOD_E}, $\vV_1=\vV_1^M$ depends only on the equivalence class
$\mathscr{E}$. So the corresponding fact holds for $W_P$.
But we can take $G,G'\sub\Coll(\om,{<\kappa_0})$ which are $W_P,W_{P'}$-generic respectively
with $W_P[G]\ueq W_{P'}[G']$, and then $\mathscr{E}^{W_P}=\mathscr{E}^{W_{P'}}$, which suffices.
	\end{proof}

Now let $\vV$ be a $\lambda$-candidate and $P=P_{\vV}$, so $\vV\downarrow 1=\vV_1^P$.
Note that $\Tt_P\conc\left<E_P\right>\conc\Uu_P$, after normalization,
is translatable, where $\Uu_P$ is the tree leading from $U_P$ to $W_P$. Let $\Tt_{\vV\downarrow 1}$ on ${\vV\downarrow 1}=\vV_1^P$ be its (short-normal) translation on ${\vV\downarrow 1}$.
Then $M^{\Tt_{\vV}}_\infty=\vV_1^{W_P}$, which by the previous claim is independent of $\vV$.
So the trees $\Tt_{\vV\downarrow 1}$ iterate the various $\vV\downarrow 1$
to a common model $\vV_1^*$.

Let $\Tt_{\vV\downarrow 1}=\Tt_0\conc\Tt_1$ where $\Tt_0$ is based  on $\vV|\delta_1^{\vV}=(\vV\downarrow 1)|\delta_1^{\vV\downarrow 1}$,
and $\Tt_1$ is on $M^{\Tt_0}_\infty$,
and is above $\delta_1^{M^{\Tt_0}_\infty}=\delta_1^{\vV_1^*}$. Then $\Tt_0$ translates
to a tree $\Uu_0$ on $\vV$, and $M^{\Tt_0}_\infty={M^{\Uu_0}_\infty\downarrow 1}$. Now
$\Tt_1$ essentially translates to a tree $\Uu_1$ on $M^{\Uu_0}_\infty$.
The extenders used in $\Uu_1$ are just those with indices those used in $\Tt_1$, together with 1 further $1$-long extender, which is
an image of $e_1(M^{\Uu_0}_\infty)$.  That is, let $\alpha_0=0$, and let $\alpha_1$ be least such that
$[0,\alpha_1]_{\Tt_0}$ is non-dropping
and $\kappa_1^{+M^{\Tt_0}_{\alpha_1}}<\lh(E^{\Tt_0}_{\alpha_1})$.
Then $\Uu_1\rest(\alpha_1+1)$ is a direct translation of $\Tt_1$,
though note that if it is non-trivial, it drops in model immediately
to $M^{\Uu_0}_\infty||\gamma_1^{M^{\Uu_0}_\infty}$, or some segment thereof.
Then, $\Uu_1$ uses an extra extender; if $\alpha_1=0$ then $E^{\Uu_1}_{\alpha_1}=e_1(M^{\Uu_0}_\infty)$, and otherwise
$E^{\Uu_1}_{\alpha_1}=F(M^{\Uu_0}_\infty)$ (which is an image of $e_1(M^{\Uu_0}_\infty)$).
This results in $M^{\Uu_1}_{\alpha_1+1}=\vV_2(M^{\Tt_1}_{\alpha_1})$.
After this, noting that $\Tt_1\rest[\alpha_1,\infty)$ is 1-translatable on $M^{\Tt_1}_{\alpha_1}$ (in particular, uses no $0$-long extenders), we set $\Uu_1\rest[\alpha+1,\infty)$ to be its 1-translation. Write $\Uu_{\vV}=\Uu_0\conc\Uu_1$.

So we end up with $M^{\Uu_{\vV}}_\infty=\vV_2(M^{\Tt_{\vV}}_\infty)$,
but $M^{\Tt_{\vV}}_\infty$ was independent of  $\vV$. So write $\vV_2^*$ for this common iterate of the $\lambda$-candidates $\vV$.

Using the strategies $\Sigma_P$, we can define $\Uu_{\vV}$ uniformly in $\vV$.
So let $\Gamma$ be the proper class of all ordinals fixed by all the iteration maps
$i^{\Uu_{\vV}}$. Let $\bar{\vV}_2=\cHull_1^{\vV_2^*}(\Gamma)$
and $\pi:\bar{\vV}_2\to\vV_2^*$ the uncollapse map.
Since $\rg(i_{\Uu_{\vV_2}})\sub\rg(\pi)$, this determines an elementary
$\pi:\bar{\vV}_2\to\vV_2$ by factoring. But $\vV_2$ is a set-ground of $\vV_2[G,G']$,
so by \cite{gen_kunen_incon}, $\bar{\vV}_2=\vV_2$ and $\pi=\id$.

So we have defined $\vV_2$ over the universe of $\vV_2[G,G']$ from the parameter $\lambda$,
and so by homogeneity, in fact over the universe of $\vV_2[G]$ from $\lambda$.
The uniformity then gives that we can define $\vV_2$ over the universe of any set-generic extension of $\vV_2$, from no parameter.

We can now easily complete the proof of the theorem.
Part \ref{item:vV_2_def_over_U_2[g]} was just established above.
Part \ref{item:U_2=HOD^U_2[G]}:
Let  $U_2$ be the universe of $\vV_2$ and $G\sub\Coll(\om,\lambda)$ be $\vV_2$-generic,
where $\lambda\geq\delta_1^{\vV_2}$. It now easily follows that $U_2\sub\HOD^{U_2[G]}$,
so actually $U_2=\HOD^{U_2[G]}$.  And if $H$ is $\Coll(\om,\lambda)$-generic over $M$,
then since $M$ is an $\mathbb{L}^{\vV_2}$-extension of $\vV_2$,
it follows that $U_2=\HOD^{M[H]}$.
Part \ref{item:U_2_no_proper_ground}:
If $W\sub U_2$ is a ground of $U_2$,
then we get $\HOD^{U_2[G]}\sub W_2$ if $\lambda$ is sufficiently large,
so $U_2\sub W$, so $U_2=W$.
Part \ref{item:U_2_is_generic_mantle}: The fact that $U_2$ is the mantle of all set-generic extensions of $U_2$ now follows from the set-directedness of set-grounds.
\end{proof}

\begin{cor}\label{cor:HOD^M[G],kappa_1-mantle} We have:
	\begin{enumerate}
		\item The $\kappa_1$-mantle $\mathscr{M}^M_{\kappa_1}$ of $M$ is the universe of $\vV_2$, so $\kappa_1$ is the least ordinal with this property.
\item If
	$G\sub\Coll(\om,{<\kappa_1})$ is $M$-generic
	then  $\HOD^{M[G]}$ is the universe of $\vV_2$.
	\end{enumerate}
\end{cor}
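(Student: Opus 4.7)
My plan is to deduce both parts from Theorem \ref{tm:vV_2_is_generic_HOD_and_mantle}, together with a second-level analog of Lemma \ref{Fscr_is_dense_in_grounds} and the weak homogeneity of $\Coll(\om,{<\kappa_1})$.

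For part (2), the inclusion $U_2\sub\HOD^{M[G]}$ is immediate from part \ref{item:vV_2_def_over_U_2[g]} of Theorem \ref{tm:vV_2_is_generic_HOD_and_mantle}: $\vV_2$ is lightface definable over every set-generic extension of $U_2$, and $M[G]$ is such. For the reverse, I would use the weak homogeneity of $\Coll(\om,{<\kappa_1})$ to conclude that every set of ordinals in $\OD^{M[G]}$ lies already in $\OD^M$, and then combine this with $\OD^M\sub\HOD^{M[H]}=U_2$ for any $\Coll(\om,\lambda)$-generic $H$ over $M$ with $\lambda\geq\delta_1^{\vV_2}$, which holds by part \ref{item:U_2=HOD^U_2[G]} of Theorem \ref{tm:vV_2_is_generic_HOD_and_mantle}. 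Since $U_2$ is determined by its sets of ordinals, this yields $\HOD^{M[G]}\sub U_2$.

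For part (1), the inclusion $U_2\sub\mathscr{M}^M_{\kappa_1}$ is immediate from part \ref{item:U_2_no_proper_ground} of Theorem \ref{tm:vV_2_is_generic_HOD_and_mantle}: the mantle is contained in every ground of $M$ and hence in every $<\kappa_1$-ground. For the reverse, I would show $U_2=\bigcap\mathscr{F}_2=\mathbb{M}^M_{<\kappa_1}$ by adapting the arguments of \S\ref{subsec:kappa_0-mantle} to the second level. The equality $\bigcap\mathscr{F}_2=\mathbb{M}^M_{<\kappa_1}$ splits into two steps: (i) every $P\in\mathscr{F}_2$ is a $<\kappa_1$-ground of $M$, since $P$ is a ground of $\vV_1$ via the extender algebra at $\delta_1^P<\kappa_1$ (the trees in $\mathbb{U}_1$ live inside $\vV_1|\kappa_1$), composed with the forcing of size $\delta_\infty=\kappa_0^{+M}<\kappa_1$ taking $\vV_1$ to $M$; and (ii) $\mathscr{F}_2$ is dense in the $<\kappa_1$-grounds of $M$, which I would prove by adapting Lemma \ref{Fscr_is_dense_in_grounds}: given a $<\kappa_1$-ground $W$ of $M$, produce $P\in\mathscr{F}_2$ with $P\sub W$ by forming the appropriate modified P-construction over a suitable initial segment of $W$ and running a Boolean-valued comparison inside it, using the definability of the $\delta_1$-short tree strategy (Lemma \ref{lem:vV_1_short_tree_strategy}).

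Finally, for $U_2=\bigcap\mathscr{F}_2$, the inclusion $\sub$ follows from the homogeneity of the second direct limit system (the analog of Lemma \ref{lem:M_inf,*_hom}), which yields $\N_\infty^P=\N_\infty$ and $*_1^P=*_1$ for every $P\in\mathscr{F}_2$, and hence $U_2=L[\N_\infty,*_1]\sub P$. For the reverse, I would run the measure argument from the proof of Theorem \ref{prop:vV_1_is_<kappa_0-mantle} shifted up one level: let $U\in\es^M$ be the order-$0$ $M$-total measure on $\kappa_1$ and $j\colon M\to M'=\Ult(M,U)$; then $j\rest\OR$ is lightface definable over $U_2$ via the induced $\Sigma_{\N_\infty}$-iteration map $\N_\infty\to i_U^{\vV_1}(\N_\infty)$ together with the $*$-map. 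For a set of ordinals $X\in\bigcap\mathscr{F}_2$, by elementarity $j(X)\in\mathbb{M}^{M'}_{<j(\kappa_1)}$, while a direct check as in the proof of Theorem \ref{prop:vV_1_is_<kappa_0-mantle} shows $j(U_2)$ is a $<j(\kappa_1)$-ground of $M'$ contained in $U_2$; hence $j(X)\in U_2$, and so $X=\{\xi\mid j(\xi)\in j(X)\}\in U_2$. The main obstacle is step (ii): the density of $\mathscr{F}_2$ in the $<\kappa_1$-grounds of $M$, which requires running the modified P-construction and short-tree-strategy machinery inside a $<\kappa_1$-ground of $M$ that need not literally contain $\vV_1$.
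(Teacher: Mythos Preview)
Your argument for part (2) has a gap: you assert $\OD^M\sub\HOD^{M[H]}=U_2$, citing part \ref{item:U_2=HOD^U_2[G]} of Theorem \ref{tm:vV_2_is_generic_HOD_and_mantle}. That theorem gives only $\HOD^{M[H]}=U_2$; the inclusion $\OD^M\sub\HOD^{M[H]}$ would require $M$ to be ordinal-definable in $M[H]$. While $\vV_2$ is lightface definable there, $M$ is merely one of many generic extensions of $\vV_2$ to an $\Mswsw$-like premouse, so there is no evident reason $M$ should be $\OD$ in $M[H]$.

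The paper's proof is more direct and handles both parts simultaneously. One first observes the chain $\vV_2\sub\HOD^{M[G]}\sub\mathscr{M}^M_{\kappa_1}$: the first inclusion is part \ref{item:vV_2_def_over_U_2[g]} of Theorem \ref{tm:vV_2_is_generic_HOD_and_mantle}, and for the second, any $<\kappa_1$-ground $W$ of $M$ satisfies $M[G]\ueq W[G']$ for some $(W,\Coll(\om,{<\kappa_1}))$-generic $G'$ (by absorption of small forcing), whence homogeneity gives $\HOD^{M[G]}\sub W$. So both parts reduce to $\mathscr{M}^M_{\kappa_1}\sub\vV_2$.

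For that inclusion the paper runs the measure argument \emph{directly} on $X\in\mathscr{M}^M_{\kappa_1}$, not on $X\in\bigcap\mathscr{F}_1$; thus the density step you flag as the main obstacle is never needed, nor is any analysis of $\bigcap\mathscr{F}_1$. Taking $E\in\es^M$ total with $\crit(E)=\kappa_1$, $U=\Ult(M,E)$ and $j=i^M_E$, one has $j(X)\in\mathscr{M}^U_{j(\kappa_1)}$ by elementarity and $j\rest\sup X\in\vV_2$, so it suffices to exhibit a single $<j(\kappa_1)$-ground $W$ of $U$ with $W\sub\vV_2$. The paper takes $W$ to be the P-construction of $U$ over $\vV_2||\gamma_1^{\vV_2}$: then $W[\vV_1|\kappa_1,M|\kappa_0]\ueq U$ via the two-step iteration $\mathbb{L}^{\vV_2}*\dot{\mathbb{L}}^{\vV_1}$, so $W$ is a $<j(\kappa_1)$-ground of $U$; and $W\sub\vV_2$ because $W||\lh(E)=\vV_2||\lh(E)$ and above $\lh(E)$ one recovers $\es^W$ inside $\vV_2$ from $E\rest\vV_2$ and $\Ult(\vV_2,E\rest\vV_2)=\vV_2^U$. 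This is close in spirit to your final paragraph, but the explicit P-construction of $W$ replaces your appeal to ``$j(U_2)\sub U_2$'', which is not as immediate as you suggest.
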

\begin{proof}
	Note that $\vV_2\sub\HOD^{M[G]}\sub\mathscr{M}^M_{\kappa_1}$, using Theorem \ref{tm:vV_2_is_generic_HOD_and_mantle}.  So we just need to see that $\mathscr{M}^M_{\kappa_1}\sub\vV_2$. Let $X$ be a set
	of ordinals in $\mathscr{M}^M_{\kappa_1}$.
	Let $E\in\es^M$ be $M$-total with $\crit(E)=\kappa_1$, and $U=\Ult(M,E)$.
	Then $j(X)\in\mathscr{M}^U_{j(\kappa_1)}$ and $j\rest\sup X\in\vV_2$,
	so it suffices to find a ${<j(\kappa_1)}$-ground  $W$ of $U$ with $W\sub\vV_2$.
	But this can be done like in the proof of Theorem \ref{prop:vV_1_is_<kappa_0-mantle},
	or as follows:\footnote{The method used here was actually
		the method used in the original proof of Theorem \ref{prop:vV_1_is_<kappa_0-mantle}.}  Let $W$ be the result of the P-construction of $U$
	over $\vV_2|\gamma_1^{\vV_2}$ (in the style of that used to construct $\vV_2$). Note that $(\vV_1|\kappa_1,M|\kappa_0)$ is generic over $W$
	for the two-step forcing iteration given by $\mathbb{L}^{\vV_2}$ followed by $\mathbb{L}^{\vV_1}$, and $W[\vV_1|\kappa_1,M|\kappa_0]\ueq U$. So $W$ is a ${<j(\kappa_1)}$-ground of $U$. But $W\sub\vV_2$, since $W||\lh(E)=\vV_2|\lh(E)$,
and  we can inductively compute the extender sequence of $W$ above $\lh(E)$
	using $E\rest\vV_2$ and $\Ult(\vV_2,E\rest\vV_2)=\vV_2^{U}$.
	\end{proof}

\section*{Acknowledgements}

Schindler gratefully acknowledges support by
the DFG grant SCHI 484/8-1, ``Die Geologie Innerer Modelle.''
Schindler and Schlutzenberg gratefully acknowledge partial support by the Deutsche Forschungsgemeinschaft (DFG, German Research Foundation) under Germany's Excellence Strategy EXC 2044\-390685587, Mathematics M\"unster: Dynamics–Geometry–Structure.
Schlutzenberg
teilweise gef\"ordert durch die Deutsche Forschungsgemeinschaft (DFG) -- Projektnummer 445387776.
Schlutzenberg partly supported by the Deutsche Forschungsgemeinschaft (DFG, German Research Foundation) -- project number 445387776.

\bibliographystyle{plain}
\bibliography{../bibliography/bibliography}

\end{document}